\newenvironment{tproof}{\begin{proof}[\textbf{Proof}]}{\end{proof}}
\newlength{\dhatheight}
\newcommand{\aaa}{\alpha}
\newcommand{\aA}[4]{(#1,#2,#3)A^{#4}}
\newcommand{\bB}[4]{(#1,#2,#3)B^{#4}}
\newcommand{\AB}[4]{(#1,#2,#3)(AB)^{\lfloor{\frac{#4}{2}}\rfloor-1}A^{#1-2\lfloor{\frac{#4}{2}}\rfloor}}
\newcommand{\ABB}[4]{(#1,#2,#3)AB^{#4}}
\newcommand{\BAB}[4]{(#1,#2,#3)B(AB)^{\lfloor{\frac{#4}{2}}\rfloor-1}A^{#4-2\lfloor{\frac{#4}{2}}\rfloor}}
\newcommand{\aAbig}[3]{(#1,#2)A^{#3}}
\newcommand{\bBbig}[3]{(#1,#2)B^{#3}}
\newcommand{\jj}{\mathrm{j}}
\newcommand{\ja}{\mathrm{j}}
\newcommand{\jm}{\mathrm{j}^{-1}}
\newcommand{\tj}{\tilde{j}}
\newcommand{\tjp}{\tilde{j}^\prime}
\newcommand{\tjA}{\tilde{j}_{A}}
\newcommand{\tjB}{\tilde{j}_{B}}
\newcommand{\Wrow}[1]{\mathrm{W}_n|_{\mathrm{row}(#1)}}
\newcommand{\Krow}[1]{\mathrm{K}|_{\mathrm{row}f(#1)}}
\newcommand{\hsp}{\hspace{1mm}}
\newcommand{\hin}{\hspace{2mm}}
\newcommand{\rossa}[1]{{#1}^{\mathrm{root}}}
\newcommand{\Pac}[1]{P_{#1}^{\mathrm{ac}}}
\newcommand{\BR}[1]{\mathcal{B}\mathrm{row}_{#1}}
\newcommand{\zAF}[1][]{\mathrm{0}_{\AAnk#1}}
\newcommand{\quasi}[1]{\mathrm{quasi}(#1)}
\newcommand{\rmo}[1]{\mathrm{rm}(#1)}
\newcommand{\SSS}{\mathcal{S}}
\newcommand{\XXX}{\mathcal{X}}
\newcommand{\ih}{\mathrm{ih}}
\newcommand{\Aut}[1]{\mathrm{Aut}_{#1}}
\newcommand{\AnkT}[1]{\AAnk[][T_{#1}]}
\newcommand{\AnkB}[1]{\AAnk[][B_{#1}]}
\newcommand{\Lie}[1]{\mathrm{Lie}(#1)}
\newcommand{\ossa}{2\mathrm{a}1\mathrm{d}}
\newcommand{\tlarrow}{\leftarrowtriangle}
\newcommand{\mirsl}[1]{P_{#1}^0}
\newcommand{\pse}[1][17]{\mkern-#1mu \searrow}
\newcommand{\mult}[2]{\mathrm{mult}(#1,#2)}
\newcommand{\Ind}{\mathrm{Ind}}
\newcommand{\trivr}[1]{\mathbf{1}_{#1}}
\newcommand{\exchange}[2]{\mathrm{ue}\left(#1,#2\right)}
\newcommand{\TT}[2]{\mathrm{Stab}_{T_{#1}}(#2)}
\newcommand{\Stab}[2]{\mathrm{Stab}_{#1}(#2)}
\newcommand{\DDDD}[1][]{\mathcal{D}_{#1}} 
\newcommand{\PPPP}{P_\pi}
\newcommand{\ffff}{\tilde{f}}
\newcommand{\SRV}[1]{\mathrm{srg}(#1)}
\newcommand{\rcirc}{\circ_{GL}}
\newcommand{\KKK}{\mathfrak{K}}
\newcommand{\kkk}{\mathfrak{k}}
\newcommand{\ooo}{\mathfrak{o}}
\newcommand{\db}[1]{#1^{\ssquare}{}_\square}
\newcommand{\Pridbnk}[1][]{(\mathcal{C}^{\ssquare}{}_\square)_{#1}}
\newcommand{\Priubnk}[1][]{(\mathcal{C}^{\square}{}_\ssquare)_{#1}}
\newcommand{\tPridb}[1][]{(\hat{\mathcal{C}}^{\ssquare}{}_\square)_{\kkk#1}}
\newcommand{\tPridbnk}[1][]{(\hat{\mathcal{C}}^{\ssquare}{}_\square)_{#1}}
\newcommand{\tPriub}[1][]{(\hat{\mathcal{C}}^{\square}{}_\ssquare)_{\kkk#1}}
\newcommand{\tPriubnk}[1][]{(\hat{\mathcal{C}}^{\square}{}_\ssquare)_{#1}}
\newcommand{\AAA}[1][]{\mathcal{A}_{\kkk#1}}
\newcommand{\AAnk}[1][]{\mathcal{A}_{#1}}
\newcommand{\Prink}[1][]{\mathcal{C}_{#1}}
\newcommand{\tPrink}[1][]{\hat{\mathcal{C}}_{#1}}
\newcommand{\BBnk}[1][]{\mathcal{B}_{#1}}
\newcommand{\VVV}{\mathcal{V}}
\newcommand{\EEE}{\mathcal{E}}
\newcommand{\UUU}[1]{\mathcal{N}_{#1}}
\newcommand{\Proj}[1]{\mathrm{Pr}_{#1}}
\newcommand{\s}{\setminus}
\newcommand{\rii}[2]{\overset{#1}{\underset{#2}{\rightarrow}}}
\newcommand{\ssim}[2]{\overset{#1}{\underset{#2}{\sim}}}
\newcommand{\sett}[1]{\mathbf{#1}}
\newcommand{\Pri}[1][]{\mathcal{C}_{\kkk#1}}
\newcommand{\tPri}[1][]{\hat{\mathcal{C}}_{\kkk#1}}
\newcommand{\Set}[1]{\mathrm{Set}(#1)}
\newcommand{\itr}[2]{i_{\text{tr}}(#1,#2)}
\newcommand{\Skal}[2]{\mathcal{S}(#1,#2)}
\newcommand{\Skac}[2]{\mathcal{S}_{\mathrm{comp}}(#1,#2)}
\newcommand{\K}{\mathrm{K}}
\newcommand{\QQ}{\mathrm{Q}}
\newcommand{\XX}{\mathrm{X}}
\newcommand{\FF}{\tilde{\mathrm{F}}}
\newcommand{\F}{\mathrm{F}}
\newcommand{\aF}{\mathrm{F}}
\newcommand{\YY}{\mathrm{Y}}
\newcommand{\WW}{\mathrm{W} }
\newcommand{\Z}{\mathrm{Z}}
\newcommand{\JJ}{\mathrm{J}}
\newcommand{\Jdeg}[1][]{\mathrm{J}^{#1}_{\mathrm{deg}}}
\newcommand{\JJJJ}[2]{\mathrm{J}^{#1}_{#2}}
\newcommand{\I}[1][]{\mathrm{I}^{#1}}
\newcommand{\sI}[1][]{\mathrm{I}^{\mathrm{st}#1}}
\newcommand{\sXi}{\mathrm{\Xi}^{\mathrm{st}}}
\newcommand{\siota}[1][]{\iota^{\mathrm{st}#1}}
\newcommand{\Om}{\mathcal{O}}
\newcommand{\Omm}[1]{\mathcal{O}_{\mathbf{f}}(#1)}
\newcommand{\row}[1]{\text{row}(#1)}
\newcommand{\rowf}[1]{\text{row}f(#1)}
\newcommand{\bb}{\bullet}
\newcommand{\x}{\bb}
\newcommand{\eu}{\text{ue}}
\newcommand{\A}{\mathbb{A}_{\kkk}}
\newcommand{\Ai}{\mathbb{A}}
\newcommand{\co}{\text{c}}
\newcommand{\e}{\text{e}}
\newcommand{\gray}{gray!30}
\newcommand{\grayi}{gray!60}
\newcommand{\grid}[1]{\draw (0,0)[step=1,\gray]grid(#1,#1);}
\newcommand{\di}{6}
\newcommand{\mm}[2]{(#2,\di-#1)}
\newcommand{\rec}[5][]{\draw(#3,\di-#2)rectangle(#5,\di-#4)node[black,midway]{#1};}
\newcommand{\dia}[2]{\draw(#1-0.5,\di+0.5-#1)node{#2};}
\newcommand{\bu}[1]{\draw[fill=black]#1 circle (0.2);}
\newcommand{\ci}[4][\di]{\ifthenelse{\equal{#4}{a}}{\draw (#2-0.5,#1+0.5-#3)[gray!70,fill=gray!70]circle(0.17);}{\ifthenelse{\equal{#4}{b}}{\fill (#2-0.5,#1+0.5-#3)circle(0.2);}{\ifthenelse{\equal{#4}{c}}{\draw (#2-0.5,#1+0.5-#3)[line width=1pt,gray!70]circle(0.2);}{}} }\ifthenelse{\equal{#4}{d}}{\draw (#2-0.5,#1+0.5-#3)[line width=1pt]circle (0.2);}{\ifthenelse{\equal{#4}{e}}{\draw[line width=1pt,gray!70](#2-0.7,#1+0.3-#3)--(#2-0.3,#1+0.3-#3)--(#2-0.5,#1+0.8-#3)--(#2-0.7,#1+0.3-#3);}{\ifthenelse{\equal{#4}{f}}{\draw[line width=1pt](#2-0.7,#1+0.3-#3)--(#2-0.3,#1+0.3-#3)--(#2-0.5,#1+0.8-#3)--(#2-0.7,#1+0.3-#3); }}}} 
\newcommand{\rr}[4][]{\fill[white](-1+#3,\di-#2+1)rectangle +(1,-#4);\draw (-1+#3,\di-#2+1)[\grayi,pattern color= orange!85, pattern=north east lines]rectangle +(1,-#4);\draw(#3-0.2,\di-#2)node{#1};}
\newcommand{\rn}[4][]{\draw(-1+#3,\di-#2)[line width=3pt,blue!34]rectangle+(#4,1);\draw (#3-1,\di-#2+0.2)node{#1}; }
\newcommand{\rbr}[4][]{\draw(-1+#3,\di-#2)[line width=2pt,dashed,blue!29]rectangle+(#4,1);\draw (#3,\di-#2+0.2)node{#1}; }
\newcommand{\rl}[4][]{\draw(-1+#3,\di-#2)[\grayi,fill=cyan!27]rectangle+(#4,1);\draw (#3,\di-#2+0.4)node{#1}; }
\newcommand{\lex}[4][]{\draw[orange](#3-1,\di-#2+0.5)--(#3-1+#4,\di-#2+0.5);\draw (#3,\di-#2+0.4)node{#1}; }
\newcommand{\fir}[2][1]{\draw(0,#1)node[anchor=west,fill=white,inner sep=2pt]{$#2$};}
\newcommand{\bfir}[1]{\draw(0,1)node[anchor=west,black,fill=white,inner sep=2pt]{$#1$};}
\newcounter{row}
\newenvironment{ti}[1][0.6]{\begin{tikzpicture}[scale=#1]\setcounter{row}{1}}{\end{tikzpicture}}
\newcommand{\f}[2]{\draw (\value{row}-0.5,\di+0.5-\value{row})node{#1};\foreach \x/\y in {#2} {\ci{\x}{\value{row}}{\y}}\stepcounter{row}}
\newcommand{\rif}{\overset{\mathrm{f}}{\rightarrow}}
\newcommand{\chh}[1][]{}
\newcommand{\Di}[1]{\mathrm{dim}\left(#1\right)} 
\newcommand{\X}[2][]{\mathbf{X}_{#1}(#2)}
\newcommand{\vsp}{\vspace{1,5mm}}
\newtheorem{theorem}{Theorem}[section]
\newtheorem{maincor}[theorem]{Main corollary}
\newtheorem{stheorem}{Theorem}[subsection] 
\newtheorem{prop}[theorem]{Proposition}
\newtheorem{sprop}[stheorem]{Proposition}
\newtheorem{lem}[theorem]{Lemma}
\newtheorem{slem}[stheorem]{Lemma}
\newtheorem{cor}[theorem]{Corollary }
\newtheorem{scor}[stheorem]{Corollary }
\newtheorem{ob}[theorem]{Observation} 
\newtheorem{Ecor}[stheorem]{Exchange corollary}
\newtheorem{sob}[stheorem]{Observation}
\newtheorem{NPQ}[stheorem]{Non-precise question}
\newtheorem{fact}[theorem]{Fact}
\newtheorem{sfact}[stheorem]{Fact}
\theoremstyle{definition}
\newtheorem{Strong}[stheorem]{Strong question}
\newtheorem{sexample}[stheorem]{Example}
\newenvironment{defi}
  {\pushQED{\qed}\axbx}
  {\popQED\endaxbx}
 \newenvironment{warning}
   {\pushQED{\qed}\axbxv}
   {\popQED\endaxbxv} 
  \newenvironment{sdefi}
   {\pushQED{\qed}\saxbx}
   {\popQED\endsaxbx} 
  \newenvironment{convention}
    {\pushQED{\qed}\axbxx}
    {\popQED\endaxbxx}
      \newenvironment{aconvention}
        {\pushQED{\qed}\axbxxj}
        {\popQED\endaxbxxj}
   \newenvironment{remark}
     {\pushQED{\qed}\axbxxx}
     {\popQED\endaxbxxx}   
        \newenvironment{sremark}
          {\pushQED{\qed}\saxbxxx}
          {\popQED\endsaxbxxx}
\theoremstyle{remark}
\DeclareFontFamily{U}{mathb}{\hyphenchar\font45}
\DeclareFontShape{U}{mathb}{m}{n}{
      <5> <6> <7> <8> <9> <10> gen * mathb
      <10.95> mathb10 <12> <14.4> <17.28> <20.74> <24.88> mathb12
      }{}
\DeclareSymbolFont{mathb}{U}{mathb}{m}{n}
\DeclareMathSymbol{\act}{3}{mathb}{'374}
\DeclareMathSymbol{\ssquare}{3}{mathb}{"05}
\newtheoremstyle{subdefi}{1pt}{1pt}{}{}{\bfseries}{.}{.5em}{}
\theoremstyle{subdefi}
   \newenvironment{subdef*}
     {\pushQED{\qed}\axbxxxt}
     {\popQED\endaxbxxxt}
\begin{document}
\title{On Fourier Coefficients of $GL(n)$-Automorphic Functions over Number Fields}
\date{}
\author{Eleftherios Tsiokos}
\maketitle
\begin{abstract}
We study Fourier coefficients of $GL_n(\A)$-automorphic functions $\phi$, for $\A$ being the adele ring of a number field $\kkk$. Let FC be an abbreviation for such a Fourier coefficient (and FCs for plural). Roughly speaking, in the present paper we process FCs by iteratively using the operations: Fourier expansions, certain exchanges of Fourier expansions, and conjugations. In Theorem \ref{general} we express any FC  in terms of---degenerate in many cases---Whittaker FCs. For FCs obtained from the trivial FC by choosing a certain ``generic" term in each Fourier expansion involved, we establish a shortcut (Main corollary \ref{maincor}) for studying their expressions of the form in Theorem \ref{general}. The shortcut gives considerably less information, but it remains useful on finding automorphic representations so that for appropriate choices of  $\phi$ in them, the FC is factorizable and nonzero. Then in Theorems \ref{thD1}, \ref{thD2}, and \ref{thD3}, we study examples of  FCs on which this shortcut applies, with many of them turning out to ``correspond" to more than one unipotent orbit in $GL_n.$ 

For most of the paper, no knowledge on automorphic forms is necessary.

\end{abstract}
\tableofcontents 
\section{Introduction} We study Fourier coefficients of $GL_n(\A)$-automorphic functions, for $\A$ being the adele ring of a number field $\kkk$.

We refer to the results mentioned in the abstract as the three main results of the paper, which are: Theorem \ref{general}; Main corollary \ref{maincor}; and (the similar to each other) Theorems \ref{thD1}, \ref{thD2}, and \ref{thD3}. Other than some modest uses of algebraic geometry, the proofs and statements of these results seem to be very elementary. In particular they do not require\footnote{ The three main results are about pairs $(N,H)$ for: $N$ being a unipotent algebraic subgroup of $H$, and $H$ being a general linear group or a  mirabolic group. Also, these results are restricted to characteristic zero but are not restricted to number fields.

The paper is less readable for one not acquainted with automorphic representations, but it should still be possible to reach the three main results by skipping every statement along the way that involves automorphic functions; this skipping may require some effort in the introduction.} knowledge in: automorphic forms, representation theory, and number theory.

The results of the present paper are introduced in Subsection \ref{threemain}. Before it, we motivate the results by discussing Fourier coefficients of automorphic forms, and explaining some notation. Some of these notations lack precision, but they are clarified in Section \ref{allnot}; in particular, the notations in Subsection \ref{treesroughly} are addressed in Section \ref{allnot} (mainly in Subsection \ref{treessubs}) in a clarified, refined, and independent way. 

Plenty of new notation is used, most of it---$\AAnk$-operations and $\AAnk$-trees (introduced in Subsection \ref{treesroughly})---is about the standard types of operations we mentioned in the abstract.  

I can be reached at etsiokos@gmail.com
\subsection{Questions on Fourier coefficients of automorphic forms}\label{FCAut}
 
Let $\KKK$ be an algebraically closed field of characteristic zero over which we define all varieties, $\kkk$ be a number field contained in $\KKK$, and $n$ a positive integer. By an additive function we mean an algebraic homomorphism from an algebraic group to  the additive group $\KKK$. We write ``AF" in place of ``additive function" (and ``AFs" for plural). Let $\AAnk[n]$  be the set of AFs with domain  a unipotent algebraic subgroup of $GL_n,$ and $\AAnk:=\cup_{k>0}\AAnk[k].$ For $\F$ being an AF, we denote its domain by $D_\F.$ For $N$ being a unipotent algebraic group we define
$\AAnk(N)$ to be the set of AFs with domain equal to $N$. We denote by $\F_{\emptyset,n}$ the AF with domain the trivial subgroup of $GL_n$.

Let $\ast$ be the text used to denote a set of AFs. Consider any text $\ast^\prime$ appearing in the paper which is obtained from $\ast$ by attaching as a subscript anywhere within it the text ``$\kkk,$". Then $\ast^\prime$ is used to denote the set consisting of the AFs of $\ast$ which are defined over $\kkk.$ For example for $\ast=\AAnk[n]$ we have $\ast^\prime=\AAA[,n] $. A minor exception: we write $\AAA $ instead of $\AAA[,]$.

For $\kkk$ we fix a choice of a  unitary character $\psi_\kkk$ on $\kkk\s \A$. Let $\F\in\AAA[,n]$. We define $\psi_\F$ by: $\psi_\F(x)=\psi_\kkk(\F(x))$ for every $x\in D_\F(\A)$. We attach to $\F$ the Fourier coefficient of $GL_n(\A)$-automorphic functions, which maps each such automorphic function $\phi$ to the function $\F(\phi)$ on $GL_n(\A)$ given by 
\begin{equation}\label{zintegral}\F(\phi)(g):=\int_{D_\F(\kkk)\s D_\F(\A)}\phi(ng)\psi_\F^{-1}(n)dn\qquad\forall g\in GL_n(\A).\end{equation}The  abbreviation ``FC"---which was used in the abstract---is not used again.

We denote by $\Aut{\kkk,n}$ the set of  $GL_n(\A)$-automorphic representations. We restrict our attention to the subset $\Aut{\kkk,n,>}$  of $\Aut{\kkk,n},$ which consists of the representations the elements of which admit an absolutely convergent Eisenstein series expansion over discrete data. Let $\F\in\AAA[,n] $ and $\pi\in\Aut{\kkk,n,>}$. We say that $\F(\pi)$ is factorizable (resp. nonzero), if there is\footnote{Since automorphic representations are irreducible, if the function $\F(\varphi)$ is not identically zero for one choice of $\varphi$, it is not identically zero for any not identically zero choice of $\varphi$.}  an automorphic form $\varphi$ in $\pi$ for which the function $\F(\varphi)$ is factorizable (resp. not  identically zero). If $\F(\pi)$ is nonzero we write $\F(\pi)\neq 0$, and otherwise we write $\F(\pi)=0$.

For an AF $\F\in\AAA[,n] $ we list below four questions. Weak question \ref{zStrongc} is the one the paper mostly addresses, followed to a considerably lesser extent by Weak question \ref{zStrongcc}, and to an even lesser extent by Strong questions \ref{zStrongc} and \ref{zStrongcc}.  For each among Weak questions \ref{zStrongc} and \ref{zStrongcc}, we give two versions which turn out to be equivalent. Section \ref{forms} includes the proof of these equivalences and  that the $\pi$ positively answering the second version of Weak question \ref{zStrongc} (resp. \ref{zStrongcc}) are also positively answering Strong question \ref{zStrongc} (resp. \ref{zStrongcc}). Section \ref{forms} is introduced in the last two paragraphs of the next subsection. Weak questions \ref{zStrongc} and \ref{zStrongcc} include the concepts $\Om(\F),$ $\mult{a}{\F}$, and  $\Om^\prime(\pi),$ which are defined later in Sections \ref{forms} and \ref{variety} ($\Om(\F)$ and $\mult{a}{\F}$ are studied more in Section \ref{variety}, and independently from Section \ref{forms} and automorphic forms). Until defining and studying these concepts, the reader can keep in mind that: $\Om(\F)$ consists of the ``minimal nilpotent orbits attached" to $\F;$ $\mult{a}{\F}$ is a form of multiplicity of $a$ in $\F$; and that for $\pi\in\Aut{\kkk,n,>}$, the orbit $\Om(\pi)^\prime$ turns out\footnote{proving this is completed in the present paper in Corollary \ref{attachingfc} which is a corollary of Theorem \ref{folk}.} to be equal to the maximal nilpotent orbit attached to $\pi$. Also, the precise meaning of ``equivalent" (referring to the two versions in each question) is: $\Om^\prime(\pi)$ belongs to the set in Version 1$\iff$ $\pi$ positively answers Version 2.

\begin{Strong}\label{zStrongc}\hspace{-5pt}\textbf{($\F$).}\textit{ For which $\pi\in\Aut{\kkk,n,>}$,   $\F(\pi) $ is factorizable and nonzero?}

\end{Strong}

\noindent\textbf{Weak question \ref{zStrongc}.($\F$). }

\vspace{1mm}\noindent{\textbf{Version 1.}}\textit{ Describe the set \begin{equation}\label{zmultone}\{a\in\Om(\F):\mult{a}{\
\F}=1\}.\end{equation}}\textbf{Version 2. }\textit{For which $\pi\in\Aut{\kkk,n,>}$,  $\F(\pi) $ is factorizable and  $\Om^\prime(\pi)\in\Om(\F)$?  }
\begin{sremark}For the choices of $\F$ for which Weak question \ref{zStrongc}.($\F$) is addressed in the present paper, the set in (\ref{zmultone}) is the same as the set $\Omm{\F}:=\{a\in\Om(\F):\mult{a}{\
\F}<\infty\}$, and we use the later set to describe our results. For more on this topic see Remark \ref{multremark}\end{sremark}
\begin{Strong}\label{zStrongcc}\hspace{-5pt}\textbf{($\F$).}\textit{ For which $\pi\in\Aut{\kkk,n,>}$, $\F(\pi)$ is nonzero?}
\end{Strong}
\noindent\textbf{Weak question \ref{zStrongcc}.($\F$).}

\vspace{1mm}
\noindent\textbf{Version 1. }\textit{Describe the set $\Om(\F)$.}

\vspace{1mm} 
 \noindent\textbf{Version 2. }\textit{For which  $\pi\in\Aut{\kkk,n,>}$ the orbit $\Om^\prime(\pi)$ is minimal in the set $$\{\Om^\prime(\rho):\rho\in\Aut{\kkk,n,>}\hspace{1mm}\text{ and }\hspace{1mm}\F(\rho)\text{ is nonzero}\}. $$   }

\subsection{ $\AAnk$-trees and basic relations to Fourier coefficients of automorphic forms.}\label{treesroughly}
Let $\F\in \AAA[,n],$ and $\phi $ be a $GL_n(\A)$-automorphic function. The  main operations we will be applying to $\F(\phi)$ are the following:\begin{itemize}
\item[(a)] \textbf{Fourier expansions.}  Consider  a unipotent algebraic  subgroup $V$ of $GL_n$ defined over $\kkk,$  such that:  $D_\F\supseteq [V,V]$; $v\F=\F$ for all $v\in V$ (we define $v\F$ in (c) below); and $\F(V\cap D_\F)=0$. Then, the function $\F(\phi)$ is invariant on $V(\kkk)(D_\F\cap V)(\A)$, and we can apply to $\F(\phi)$ the Fourier expansion along $V(\kkk)(V\cap D_\F)(\A)\s V(\A).$
\item[(b)] \textbf{Certain exchanges of Fourier expansions.} In the unfolding of Rankin-Selberg integrals, Fourier expansions as in (a) have been used considerably. We apply to $\F(\phi)$ a special case of such unfoldings, in which the unipotent group occurring in the integration is exchanged by an other unipotent group. For the needs of the present paper, this unfolding has been explained in sufficient generality  in $\cite{GRS2}$ in Lemma 7.1. This lemma appears in the present paper as Part 2 of Fact \ref{autsteps}. 

 \item[(c)] \textbf{Conjugations.} For $\gamma\in GL_n(\kkk)$ we define $\gamma\F$ to be the AF with domain $\gamma D_\F\gamma^{-1}$, given by  \begin{equation}\label{zconjeasy}\gamma\F(n)=\F(\gamma^{-1}n\gamma)\qquad\text{for }n\in \gamma D_\F\gamma^{-1}. \end{equation} We then have $$\F(\phi)(\gamma^{-1} g)=(\gamma\F)(\phi)\text{ for }g\in GL_n(\A).  $$
\end{itemize}
One directly notices that these three operations on $\F(\phi)$ can be expressed  as operations on $\F$ without referring to $\phi$ (for example, in formula (\ref{zconjeasy}) we see this for  conjugations). This is how we proceed, and we will be calling any such operation on $\F$ an $(\AAnk,\kkk)$-operation. We iteratively apply $(\AAnk,\kkk)$-operations in a way that gives certain directed trees with vertices labeled with AFs, which we call $(\AAnk,\kkk)$-trees, or if $\kkk$ is not specified, $\AAnk$-trees (similarly $\kkk$ is removed or added in other notations). 

An $\AAnk$-step, say $\xi$, is an  $\AAnk$-tree formed by applying only one $\AAnk$-operation. If this $\AAnk$-operation corresponds to (a) (resp. (b), (c)) above, we say that $\xi$ is an $\e$-step (resp. $\eu$-step, $\co$-step). 

We choose the direction of any $(\AAnk,\kkk)$-tree, say $\Xi$, so that the edges point away from a vertex which we call the input vertex of $\Xi$; the other external vertices are called output vertices of $\Xi$  (if $\Xi$ is trivial, its  input vertex is also called its output vertex). The label of the input (resp. an output) vertex of $\Xi$ is called the input (resp. an output) AF of $\Xi$.

Let $\F\in\AAA$, $\SSS\subset\AAA$, and $H$ be an algebraic group defined over $\kkk$. By an $(\F\rightarrow S, H,\kkk)$-tree we mean an $(\AAnk,\kkk)$-tree with: input AF equal to $\F$; all its output AFs belonging in $\SSS$;  all its data defined inside $H$, and each conjugation being by an element in $H(\kkk)$. If we only want to specify a proper subset of $\{\F,\SSS,H,\kkk\} $ we adjust the notation as needed, for example we write ``$(\F,H,\kkk)$-tree". 

Throughout the current paragraph, Version 2 for both Weak questions \ref{zStrongc} and \ref{zStrongcc} is assumed to be chosen. The way each $(\AAnk,\kkk)$-step corresponds to an identity in automorphic forms is given in Fact \ref{autsteps} (the word ``Fact" is reserved for known results). Then Fact \ref{nonzeroeulerian} follows which states that for an $(\F,GL_n,\kkk)$-tree $\Xi$ the following two hold:\begin{enumerate}\item The function $\F(\phi)$ is nonzero if and only if there is an output AF $\Z$ in $\Xi$ for which $\Z(\phi)$ is nonzero.\item The function $\F(\phi)$ is factorizable if and only if the next two conditions are true: (i) there is at most one output vertex of $\Xi$, so that for its  label $\Z$ we have $\Z(\phi)$ is nonzero, and (ii) if such a vertex exists, $\Z(\phi)$ is factorizable.\end{enumerate}Due to this fact, for any choice of an AF $\F$ in $\AAA[,n]$ and a subset $\SSS$ of $\AAA[,n]$, Weak (resp. Strong) questions \ref{zStrongc}.$(\F)$ and \ref{zStrongcc}.$(\F)$ are reduced to:
\begin{enumerate}
\item[(i)] Some information on: which AFs in $S$ are output AFs of an $(\F\rightarrow \SSS,GL_n,\kkk)$-tree, and  which of these AFs occur as the label of only one output vertex of an $(\F\rightarrow \SSS,GL_n,\kkk)$-tree.  
\item[(ii)] Weak (resp. Strong) questions \ref{zStrongc}.$(\Z)$ and \ref{zStrongcc}.$(\Z)$ for $\Z\in \SSS$. 
\end{enumerate}
We denote by $U_n$ the set of unipotent upper trianglular $n\times n$ matrices. We first consider the choice $\SSS=\AAA(U_n). $ Weak questions \ref{zStrongc}.$(\Z)$ and \ref{zStrongcc}.$(\Z)$ for $\Z\in\AAA(U_n)$ are addressed in Theorem \ref{Eukerianf} which is an easy and expectable refinement of known results.  In some cases it is preferable to slightly refine Theorem \ref{Eukerianf} by replacing $\AAA{(U_n)}$ with a bigger set. The choice I make for such a bigger set is $\Pri[,n]$; where $\Prink[n]$  is the subset of $\AAnk[n]$, the domains of the AFs of which, are unipotent radicals of parabolic subgroups of $GL_n$ (recall we obtain from the previous subsection (first paragraph) that $\Prink[n]=\Pri[,n]\cap\AAA$). Then Weak questions \ref{zStrongc}.$(\Z)$ and \ref{zStrongcc}.$(\Z)$ for $\Z\in\Pri[,n]$, are addressed by this replacement which is stated as Theorem \ref{folk} and is   directly reduced to Theorem \ref{Eukerianf} by using a lemma about  a $(\Z\rightarrow\AAnk(U_n),GL_n,\kkk)$-tree (Lemma \ref{radexpress}). 

From Theorems \ref{general} and  \ref{folk} we obtain Corollary \ref{easyrefcor}, which in turn gives the results of Section \ref{forms} which we mentioned in the last paragraph of Subsection \ref{FCAut}.  

\subsection{The results of the paper}\label{threemain}
We denote by $P_n$ the standard parabolic subgroup of $GL_n,$ with Levi isomorphic to $GL_1\times GL_{n-1},$ so that $GL_1$ appears in the upper left corner. The first main result is Theorem $\ref{general},$ in the proof of which, for every AF $\F\in\AAA[,n], $ an $(\F\rightarrow\AAA(U_n),P_n,\kkk)$-tree which we call $\sXi(\F)$ or $\sXi$, is inductively defined. This result is based on simple refinements and modifications of unfolding processes that have been used in the study of the  $GL_n\times GL_m$ Rankin-Selberg integrals for $n\neq m$. For example, by appropriately removing from $\sXi(\F_{\emptyset,n})$ one copy of each among $\sXi(\F_{\emptyset,k})$ for $k<n,$  the tree obtained roughly corresponds to calculating the Fourier expansion over $U_n(\kkk)\s U_n(\A)$ of a $GL_n(\A)$ cusp form.

We discussed a motivation for constructing an $(\F\rightarrow\AAA(U_n),GL_n,\kkk)$-tree  in the last two paragraphs of the previous subsection. As in \cite{GK}, in the Fourier expansion above, and other later unfoldings, the reasons for using $P_n$ instead of  $GL_n$ can be seen  in the inductive definition of $\sXi(\F)$ and in many other constructions involving $\sXi(\F).$ 

In most constructions of $\AAnk$-trees in the present paper, the definition of $\sXi(\F)$ is  followed considerably (but usually not followed completely for too long within a big $\AAnk$-tree).   

In my experience, in many of the AFs $\F\in\AAA[,n]$ I am interested in, understanding (i) in the previous subsection for an $(\F\rightarrow\Pri[,n])$-tree by directly constructing it, takes some time (whether or not this $\AAnk$-tree follows closely the definition of $\sXi(\F)$).  The second main result of the present paper (Main corollary \ref{maincor}) gives a shortcut to these calculations if both the following hold: \begin{itemize}
\item Among the questions we stated for $\F$,   we restrict to addressing  Weak question \ref{zStrongc}.$(\F)$.
\item $\F$ is obtained as an output-AF of an $(\F_{\emptyset,n},GL_n)$-tree, so that each AF of an $\e$-step of this tree which is encountered in the path (inside this tree) from $\F_{\emptyset,n}$ to $\F$, is chosen in a ``certain generic way".  By ``certain generic way" we  mean that: it belongs to an open subset of the variety consisting of the output AFs of the $\e$-step, on which (referring to the open subset) an algebraic subgroup of $GL_n$ acts by conjugation freely and transitively (and hence by Zariski's main theorem this group is isomorphic to this open subset). Such AFs $\F$ form a set we call $\BBnk[n]$. 

An example of an AF in $\BBnk[n] $, is any AF, say $\WW_n$, in $\AAnk(U_n)$ which is nontrivial on every positive simple root group (of course the next paragraph is trivial for $\F$ being equal to $\WW_n$). The proof that $\WW_n\in\BBnk[n] $, is  directly discerned in the proof of the Fourier expansion over $U_n(\kkk)\s U_n(\AAnk)$ of $GL_n(\A)$-cusp forms. 

\end{itemize}

 We roughly use Main corollary \ref{maincor} as follows: once in the computation of an $\F$-tree (with $\F\in\BBnk[n] $) a nilpotent orbit $a$ of dimension $2\Di{D_\F}$ ``occurs", we obtain from Main corollary \ref{maincor} that this orbit is minimal among the nilpotent orbits ``occurring" (in fact of minimal dimension), and that it does not ``occur" again; also the orbits in the set $\Omm{\F}:=\{a\in\Om(\F):\mult{a}{\F}<\infty\}$ are exactly the ones in the previous sentence (and hence this set is equal to $\{a\in\Om(\F):\mult{a}{\F}=1\}$).

Some thoughts of D. Ginzburg, on a dimension equation he has formulated which is satisfied by many among the familiar Rankin-Selberg integrals, are related to Main corollary \ref{maincor}. A short discussion on such relations appears later (from Section \ref{variety}) in the paper, in Subsection \ref{zpre3}, after Example \ref{zktwo}.

The third main result (Theorems \ref{thD1}, \ref{thD2}, and \ref{thD3}) answers Weak question \ref{zStrongc}.$(\F)$ (through Version 1), for some choices of $\F\in\BBnk[N],$ for a positive integer $N$. For many of these choices of AFs $\F,$ there are more than one orbits positively answering Weak question \ref{zStrongc}.$(\F).$ For a quick rough idea of the AFs addressed in the third main result see the pictures in Subsection \ref{zpre}. The proofs of the theorems comprising the third main result are closely related to each other, and we proceed in them by: \begin{itemize}
\item Using Main corollary \ref{maincor} to considerably reduce the  $\AAnk$-tree computations one directly encounters. Each use of Main corollary \ref{maincor} is either direct, or---even better some times--- through Exchange corollary \ref{exchange} (which is a corollary of Main corollary \ref{maincor}).
\item Considerably (but not completely) following the  definition of $\sXi$ to compute the remaining $\AAnk$-trees.
\end{itemize}

We close with a family of Rankin-Selberg integrals which unfold  to a form involving an AF $\F$ which in each case (of integral) is either adressed by Theorem \ref{thD1} or by Theorem \ref{thD2}. As a result for each automorphic representation positively answering Weak question \ref{zStrongc}.$(\F)$ we obtain a factorizable (and conjecturally nonzero) integral. Beyond that, I am at an early stage of studying these integrals.

Other results of the paper are the ones contained in Section \ref{optionalsection}, and also Theorem \ref{folk} (which we already mentioned in the previous subsection). 

In Section \ref{optionalsection} we encounter AFs as suggested in its title (the notation ``$J_\F$" is given in \ref{uniqJ}). Only its first result (proposition \ref{embedding}) is needed for other sections of the paper (Section \ref{compositions}).  In the next two propositions  of this section we obtain trees to address a few cases of Strong questions \ref{zStrongc} and \ref{zStrongcc}, and then in Theorem \ref{th3}  we construct a certain $(\db{\F}\rightarrow\F )$-path where: $\db{\F}\in\Prink[n]\cap\BBnk[n]$; $\F\in\Prink[n]$; $\db{\F}$ and $\F$ correspond to the same orbit, more precisely  $\Om(\db{\F})$ is equal to $\Om(\F)$ (and is a singleton due to previous sections); the precise meaning of ``...-path" is given in Definition \ref{quasidef}. 

I will be grateful to receive comments, that will help me improve the references to known results.

\subsection{Differences with the previous version and relations with \cite{Tsiokos}}The present paper has few differences with its previous version (on the arXiv), mainly: minor corrections\footnote{Mainly correcting the repeated mistake of mentioning a free action with an open orbit in cases where I should have instead said: an action with an open orbit which when restricts to this orbit is free. These actions are the ones appearing in Definition \ref{defBBB} (in the present version only the restriction to the open orbit is mentioned). Some of the other corrections appear: in relation to Theorem \ref{thD3} (in its proof and in Definition \ref{ladefi}) in Definitions \ref{ossagroup} and \ref{sIk}, in the second picture in the proof of Observation \ref{unipc}.}, some additional references, and removing ``global" from ``global unfolding processes" in Remark \ref{sXilit} and (for the same reason) in the first paragraph of the Subsection \ref{threemain} (note that I have not given any precise meaning to the word ``unfolding").

 A small part of the content of the present paper appeared in my older paper \cite{Tsiokos}. In order to understand the content of the  present paper, checking \cite{Tsiokos} may be confusing and definitely not necessary. However, there are some calculations in \cite{Tsiokos} that: I find interesting, and do not appear in the present paper. I may transfer such calculations (with updated notations) in a future work. Problems on reading \cite{Tsiokos} include: the two mistakes mentioned below, I was at an early stage of obtaining the results of the present paper, notations that I introduced there and I am no longer using, worse presentation. 

\vsp
\noindent{\textbf{Mistake 1.}} I will completely remove Subsection 4.2 in \cite{Tsiokos}, and my current replacement of it is Section \ref{forms} of the present paper up to (and including) Remark \ref{anco}. The problems with Subsection 4.2 include: (i) very unfair referencing; (ii) we need the condition $\pi\in\Aut{\kkk,n,>}$, and then the proofs are completed by Theorem \ref{folk} in the present paper.\hfill$\triangle$ 

\vsp 
\noindent\textbf{Mistake 2.} Conjecture C.2 in \cite{Tsiokos} (Subsection 3.1) is wrong. It admits trivial counterexamples, for example by choosing the domain of $\mathcal{F}$ to be the unipotent radical of the $GL_5$-parabolic subgroup with Levi isomorphic to $GL_2\times GL_1\times GL_2$ so that $GL_1$ is in the middle. This conjecture, in all its uses that I made (within proofs), can be replaced by  Main corollary \ref{maincor} which is stated and proved in the present paper\footnote{I also withdraw from the introduction in \cite{Tsiokos} in ``Subsection 2" (which is named ``The FCs that are studied in this paper, and the proofs that are only up to a conjecture")  the last two paragraphs except for the overlapping with the sentence in the present paper on which the current footnote is attached.}.\hfill$\triangle$

\vsp 

The operation on AFs in the present paper denoted by the symbol ``$\circ$", corresponds by (\ref{zintegral}) to usual function composition of Fourier coefficients (when they are viewed as operators). In \cite{Tsiokos} this function composition was: used, denoted again by the symbol ``$\circ$", and called  ``convolution". In the present paper I only used the symbol ``$\circ$" (and only for AFs); even if I had used a word, I would have avoided the name ``convolution".
\subsection{Acknowledgements}
I thank Evangelos Routis for helping me understand some algebraic geometry relevant to the present paper. I did this work while I was a postdoc in Tel Aviv University.
 \section{Notations and prerequisities}\label{allnot}
I have added some hints within the current section on deferring reading some of its parts (also note there is an index of notations at the end of the paper). The first such hint is: Subsections \ref{allnot1} and \ref{treessubs} are almost sufficient\footnote{To remove ``almost" we must add Definitions \ref{embjf} and \ref{tlarrown}. Of course, for relating this Theorem to the literature in automorphic forms (Remark \ref{sXilit}) one also needs Fact \ref{autsteps}.

One can also skip more parts of the current section (and be able to read Theorem \ref{general} and its proof), including: in Subsection \ref{allnot1} everything after \ref{scent}; in Subsection \ref{treessubs} everything after \ref{veenot1}, and also \ref{quasidef}.} for reading Theorem \ref{general}.  

Throughout the current section $n$ is assumed to be any positive integer.
\newcounter{de} 
\subsection{Notations for Groups, Varieties, and Morphisms }\label{allnot1}
\begin{sdefi}[Varieties, algebraic groups]\label{vara}All algebraic varieties are defined over an algebraicaly closed field of \textbf{characteristic zero}, which we denote by $\KKK$. For varieties and algebraic groups we adopt the definitions in \cite{Springer}. Notice therefore that varieties are \textbf{not} assumed to be irreducible. Let $G$ be an algebraic group and $H$ be a closed subgroup of $G$; then we say that $H$ is an algebraic subgroup of $G$.  

In the statements involving automorphic forms we further assume that $\KKK$ contains the complex numbers.\end{sdefi}

\begin{sdefi}[Parabolic subgroups, root data]\label{parroot} A parabolic subgroup of $GL_n$, most frequently will be called a $GL_n$-parabolic subgroup. $GL_n$-parabolic subgroups, their  Levi subgroups, and all root data of $GL_n,$ is assumed to be chosen in the standard way, unless otherwise specified.

The Levi and the unipotent radical of a $GL_n$-parabolic $P$ are denoted by $M_P$ and $U_P$ respectively.

We denote by $U_n$ the unipotent radical of the Borel of $GL_n$. We denote by $P_n$ the $GL_n$-parabolic subgroup with Levi isomorphic to $GL_1\times GL_{n-1},$ so that $GL_1$ is embedded in the upper left corner. For $j$ being the lower right corner embedding\footnote{That is $g\rightarrow\begin{pmatrix}1&\\&g\end{pmatrix} $.} of $GL_{n-1}$ in $GL_n$ we define $$\mirsl{n}:=j(SL_{n-1})U_{P_n}.$$ 

A matrix entry is always called an entry. Consider any nondiagonal entry, say the $(i,j)$ entry. The root group of $GL_n$ which is nontrivial on the $(i,j)$ entry is denoted by $U_{n,(i,j)}$ or  by $U_{(i,j)}$, and is called a $GL_n$-root group or root group (see Convention \ref{omitn} below).

\noindent{If} $\alpha$ is a root of $GL_n$, the root group corresponding to it is denoted by $U_\alpha$ or (to give some emphasis on $n$) by $U_{n,\alpha}$. 

The standard maximal torus of $GL_n$ is denoted by $T_n.$ Let $H$ be either a Levi of $GL_n$-parabolic subgroup or---by using Definition \ref{sesc} below---a standard copy of a general linear group in $GL_n$. We denote by $W_H$ the Weyl group of $H$ with respect to $T_n\cap H$. Each element of  $W_H$ is identified with the permutation matrix that represents it. We write $W_n$ instead of $W_{GL_n}$.
\end{sdefi}
\begin{convention}\label{omitn}Fix any statement of the paper, and let $x$ be the biggest number for which  $GL_x$ is ``considered". Exactly in these cases we write: ``$U_{(i,j)}$" in place of $U_{x,(i,j)}$, ``root group" in place of ``$GL_x$-root group", ``root" in place of ``root of $GL_x$", and ``$\JJ_r$" in place of the later-defined (Definition \ref{zjr}) ``$\JJ_r^x$".

Explicitly, in the cases that $x$ is removed we have: $x=n$ in Sections \ref{allnot}, \ref{Theorem A}, \ref{pri}, \ref{forms}, \ref{variety}, in Section \ref{optionalsection} in Proposition \ref{bigger} and  Theorem \ref{th3},and in Section \ref{appendixs}; $x=k+n$ in the rest of Section \ref{optionalsection}; and $x=N$ in Section \ref{compositions}.

We also proceed similarly with the later defined concept of $\ossa$-groups (Definition \ref{ossagroup}). \end{convention}
\begin{sdefi}[Trace, transpose, Lie algebras]\label{trtr}For a matrix $X,$ we denote its trace by $\mathrm{tr}(X), $ and its transpose by $X^t.$ The Lie algebra of an algebraic group $G$ is denoted by $\Lie{G}. $\end{sdefi}
\begin{sdefi}[Additions on the notations of Subsection \ref{FCAut}]\label{clar}For AFs and Fourier coefficients attached to them, we adopt all the definitions of Subsection \ref{FCAut}, by only adding the following clarifications and refinements:\begin{enumerate}
\item For $GL_n(\A)$-automorphic representations we choose a standard definition, which we recall in the appendix in \ref{preaut}. There, we also recall some basic facts on automorphic forms and fix some notations at the same time.
\item A function $f$ on $GL_n(\A)$ is said to be factorizable if for each place $\upsilon$ there is a function $f_\upsilon$ on $GL_n(\kkk_\upsilon)$, such that:
$f(g)=\prod_{\upsilon}f_\upsilon(g_\upsilon) $ for all $g\in GL_n(\A)$, where the product is over all places.
\item\label{zcharzero} The field $\kkk$ is always a subfield of $\KKK$ (but not\footnote{Even though my aim was to obtain the present work for $\kkk$ being a number field, allowing it to be any field of characteristic zero rarely required any change in the thougts not involving automorphic forms, and hence this bigger generality is adopted.} necessarily a number field). In every case that we require $\kkk$ to be a number field, automorphic forms are involved. 
\item Algebraic groups over $\kkk$ are often called $\kkk$-groups. We do the same for the (variety) morphisms we consider, for example AFs and isomorphisms.
\item If a $\kkk$-group $H$ consists of $n\times n$ matrices it is assumed that its $\kkk$-structure is such that $H(\kkk)$ is the subset of $H$ consisting of matrices with entries over $\kkk$. 
\end{enumerate}\end{sdefi}
\begin{sremark}[$\kkk$]

Almost always, the field $\kkk$ ``over which we are working" remains the same during a proof or a statement. However, I have chosen to adopt only to a small extent conventions\footnote{The main such convention is Convention \ref{sXrt}. Any other such convention is stated and adopted within a proof or a definition.}  that remove $\kkk$ from the notations.
\end{sremark}
\begin{sdefi}[$|...|$]\label{sa1} The number of elements of a finite set $S$ is denoted by $|S|.$\end{sdefi}
\begin{sdefi}[\text{$[H_1,H_2]$}]\label{sa2} For any two groups $H_1,H_2$ defined as subgroups of an other group (usually a general linear group), we denote by $[H_1,H_2]$ their commutator group.\end{sdefi}
\begin{sdefi}[$\gamma\F$]\label{gla} Let $\F\in\AAnk[n] $ and $\gamma\in GL_n(\kkk)$. Recall that by $\gamma\F$ we denote the AF in $\AAnk[n] $ with domain $D_{\gamma\F}=\gamma D_\F\gamma^{-1}$ that satisfies: $\gamma\F(n)=\F(\gamma^{-1}n\gamma)$ for every $n\in D_{\gamma\F}. $\end{sdefi} 
\begin{sdefi}[$\mathrm{Stab}_{H}(\F)$, centralizer]\label{scent} Let $\F\in \AAnk[n],$ and $H$ be a subgroup of $GL_n.$ We define 
$$\mathrm{Stab}_{H}(\F):=\{h\in H:h\F=\F\}.$$Let $A,B$ be two sets consisting of $n\times n$ matrices. Then the centralizer of $A$ in $B$ is the set $\{b\in B:ba=ab\text{ for all }a\in A\}$.
\end{sdefi} 
\begin{sdefi}[$\F|_H$]\label{nrest} Let $\F\in\AAnk[n]$ and $H$ be an algebraic subgroup of $GL_n.$ We denote by $\F|_H $ the restriction of $\F$ on $D_\F\cap H.$\end{sdefi}
\begin{sdefi}[$\JJ_r,\JJ_r^n$]\label{zjr} Let $1\leq r\leq n.$ By $\JJ_r^n$  or $\JJ_r$ we denote an AF with domain $D_{\JJ_r}=\prod_{1\leq i\leq r-1,i<j\leq n}U_{(i,j)}$  which is nontrivial on all simple root groups contained in $D_{\JJ_r}$. Of course $\JJ_r$ is determined only up to a standard torus conjugation, and this choice is made each time without mention. To be clear, $\JJ_1:=\F_{\emptyset,n}$.\end{sdefi}

\begin{sdefi}[$\X{\F}$]\label{vardef}For every AF $\F\in\AAnk[n] $ we define the variety 
\begin{equation*}\X{\F}:=\{J\in\Lie{GL_n}:\mathrm{tr}(Ju)=\F(\exp{u})\quad\forall u\in \Lie{D_\F} \}.\qedhere \end{equation*}\end{sdefi} \begin{sfact}[Known]\label{Liehom}
Let $N$ be a unipotent $\kkk$-subgroup of $GL_n.$ There is a bijection between $\AAA(N) $ and Lie algebra $\kkk$-homomorphisms $\Lie{N}\rightarrow\KKK$ given by mapping each $\F\in\AAA(N)$ to the function $$x\rightarrow \F(\exp(x))\qquad\text{for all }x\in\Lie{N}. $$
\end{sfact}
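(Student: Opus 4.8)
\textbf{Proof proposal for Fact \ref{Liehom}.}

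The plan is to exploit the fact that, in characteristic zero, the exponential map $\exp\colon\Lie{N}\to N$ is an isomorphism of $\kkk$-varieties (with inverse $\log$) for a unipotent $\kkk$-group $N$, and to translate the group-homomorphism property of an AF into the Lie-algebra-homomorphism property of its ``exponential pullback''. Concretely, I would first fix the map $\Phi\colon \AAA(N)\to\{\text{$\kkk$-linear maps }\Lie{N}\to\KKK\}$ sending $\F$ to $\F\circ\exp$, and separately the map $\Psi$ sending a Lie homomorphism $\lambda$ to $\lambda\circ\log$; the goal is to check that these land in the asserted sets and are mutually inverse. Since $\exp$ and $\log$ are inverse morphisms of varieties, $\Phi$ and $\Psi$ are automatically inverse bijections \emph{between morphisms}, so the only real content is that they restrict to bijections between the two distinguished subsets: additive group homomorphisms on one side, and Lie algebra $\kkk$-homomorphisms on the other.

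The key computational step is the Baker--Campbell--Hausdorff formula. For $x,y\in\Lie{N}$ one has $\exp(x)\exp(y)=\exp\!\big(x+y+\tfrac12[x,y]+\cdots\big)$, a \emph{finite} sum because $\Lie{N}$ is nilpotent; all coefficients are rational, so this identity is defined over $\kkk$. If $\F$ is an AF, i.e. $\F(\exp(x)\exp(y))=\F(\exp(x))+\F(\exp(y))$, then writing $\lambda:=\F\circ\exp$ and applying $\lambda$ to the BCH expansion gives $\lambda(x+y+\tfrac12[x,y]+\cdots)=\lambda(x)+\lambda(y)$. To conclude $\lambda$ is linear and kills all brackets, I would run an induction on the nilpotency class (equivalently, work modulo terms of higher bracket-length): additivity $\lambda(x+y)=\lambda(x)+\lambda(y)$ on the abelianization forces the bracket terms to vanish under $\lambda$, and combined with the scalar-homogeneity that follows from additivity plus the fact that $\exp(tx)=\exp(x)^{\,t}$-type one-parameter considerations (or directly: an additive algebraic homomorphism to $\KKK$ is automatically linear on the relevant $\kkk$-vector space structure), one gets that $\lambda$ is a $\kkk$-linear map annihilating $[\Lie{N},\Lie{N}]$, hence a Lie homomorphism $\Lie{N}\to\KKK$ (the target being abelian). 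The converse direction is the same computation read backwards: given a Lie homomorphism $\lambda$, it is linear and kills brackets, so $\lambda$ applied to the BCH expansion collapses to $\lambda(x)+\lambda(y)$, which says exactly that $\lambda\circ\log$ is an additive homomorphism $N\to\KKK$, i.e. an element of $\AAA(N)$; and it is defined over $\kkk$ since $\lambda$ and $\log$ are.

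The main obstacle, such as it is, is bookkeeping rather than conceptual: one must be careful that ``additive function'' in the paper's sense (an \emph{algebraic} homomorphism of $\kkk$-groups) already carries enough regularity that $\F\circ\exp$ is not merely a set map but a $\kkk$-morphism, and that an algebraic homomorphism from the vector group $\Lie{N}^{\mathrm{ab}}$ to $\mathbb{G}_a$ over a characteristic-zero field is automatically $\kkk$-linear — this is where characteristic zero is essential (in characteristic $p$ one would also have additive polynomials $x\mapsto x^{p^i}$). Once that point is granted, the verification that $\Phi$ and $\Psi$ preserve the defining properties is exactly the BCH manipulation above, and the bijectivity is inherited for free from $\exp$ and $\log$ being inverse isomorphisms of varieties. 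I would present the argument by first recording the $\exp$/$\log$ isomorphism and BCH as standard input (citing \cite{Springer} or the standard references), then doing the one-line-per-direction translation.
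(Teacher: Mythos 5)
The paper labels this as a ``Known'' fact and supplies no proof, so there is nothing to compare against directly; I can only assess your argument on its merits. Your overall route — invoke the $\exp/\log$ $\kkk$-isomorphism (this is exactly Fact \ref{areconnected}, cited to Milne), pull $\F$ back along $\exp$, and use BCH together with the characteristic-zero observation that algebraic additive maps to $\mathbb{G}_a$ are linear — is the standard and correct one, and the converse-direction translation is indeed the same computation reversed. The one place your sketch is vaguer than it needs to be is the ``induction on nilpotency class'' step: you don't actually need an induction. Since $\F\colon N\to\mathbb{G}_a$ is a homomorphism to an abelian group, $\F$ vanishes on $[N,N]$ outright (a one-line computation on commutators), hence $\lambda:=\F\circ\exp$ vanishes on $\Lie{[N,N]}=[\Lie{N},\Lie{N}]$. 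Then, for any $x,y$, BCH gives $\mathrm{BCH}(x,y)-(x+y)\in[\Lie{N},\Lie{N}]$, and one checks $\exp(\mathrm{BCH}(x,y))\exp(-(x+y))\in[N,N]$, so $\lambda(\mathrm{BCH}(x,y))=\lambda(x+y)$; combined with $\lambda(\mathrm{BCH}(x,y))=\F(\exp x\exp y)=\lambda(x)+\lambda(y)$ this gives additivity immediately, and scalar linearity comes from your one-parameter-subgroup remark. With that tightening, the proposal is a complete and correct proof of the fact.
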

\begin{sdefi}[$\AnkT{n}$, $\hat{...}$,  $..._\nless$]\label{hatnless}\begin{multline}\AnkT{n}:=\{\F\in\AAnk[n]: D_\F\text{ is generated by root groups}\}=\\\{\F\in\AAnk[n]:tD_Ft^{-1}=D_\F\quad\forall t\in T_n\}.\end{multline} The second equality is known, and we do not use it.

Let $\XXX$ be any subset of $\AnkT{n}.$ We define $\hat{\XXX}$ to be the set consisting of the AFs $\F\in\XXX$ such that: if for numbers $i,i^\prime,j,j^\prime$, both  $\F(U_{(i,j)})$ and $\F(U_{(i^\prime,j^\prime)})$ are  nontrivial, then $i\neq  i^\prime$ and $j\neq j^\prime$. We also define $\XXX_{\nless}$ to be the set consisting of the AFs in $\XXX$ satisfying: for any two different roots $\alpha,\beta$ such that $U_\alpha,U_\beta\subset D_\F$ and both $\F(U_\alpha)$ and $\F(U_\beta)$ are nontrivial, we have $\alpha\not >\beta$ (that is, $\alpha-\beta$ is not a sum of positive roots).

For some choices of $\XXX$ the hat does not appear in the middle, for example we write $\hat{\AAnk[]}[T_{\text{$n$}}]$ (only due to a difficulty in compiling the text).\end{sdefi}

\begin{sdefi}[$J_\F$]\label{uniqJ} Let $\F\in\AnkT{n}$. The unique element in $D^t_\F\cap\X{\F}$ is denoted by $J_\F$.
We mainly use the concept ``$J_\F$"  for $\F\in\Prink[n]; $ partly because for  $\F\in\Prink[n] $ we have $\Om(\F)=\{\Om(J_\F)\}, $ where $\Om(\F) $ is later defined in Definitions \ref{OFX} and \ref{AOF}, (see also Corollary \ref{easyrefcor}).\end{sdefi}
\begin{sdefi}[$\times,\prod,\times^\searrow,\prod^\searrow$]\label{tpts} As usual the symbols ``$\times$" and ``$\prod$"are used for denoting direct products of groups. We also use the notations $\times^\searrow$ and $\prod^\searrow$ to express a Levi of a $GL_n$-parabolic as the direct product of general linear groups so that each one appears below the previous one; for more information see Definition \ref{searrow}. 

The direct product of subgroups of a (same for all of them) general linear group is assumed to be the internal direct product (the internal is also assumed in some other cases).
\end{sdefi}
\begin{sdefi}[Nontrivial on an entry, row, or column] Let $w\in W_n$, and  $V$ be an algebraic subgroup of $wU_nw^{-1}$. Given an entry, we say that $V$ is nontrivial on it if and only if: this entry is nondiagonal and a matrix in $V$ is nonzero on it. Given a row (resp. column), we say that $V$ is nontrivial on it if and only if: there is an entry on this row (resp. column) on which $V$ is nontrivial. The previous two sentences are also adopted when: we replace all occurrences of ``nontrivial" with ``trivial", and ``nonzero" with ``zero".
\end{sdefi}
\subsection{Embeddings.}\label{embsubsub} The current subsection is mostly used in Sections \ref{pri}, \ref{optionalsection}, and \ref{compositions}.

\begin{sdefi}[Embedding, $j(\F)$]\label{embjf}Let $H^\prime$ and $G$ be two algebraic groups, and $H$ be an algebraic subgroup of $H^\prime$. By an embedding $j$ of  $H$ in $G$ we mean an (algebraic) isomorphism of $H$ onto an algebraic subgroup of $G.$ For $\F$ being an AF with domain $D_\F\subseteq H^\prime$ we define $j(\F):=\F\cdot j^{-1}$ where ``$\cdot$" is usual function composition after restricting the domain as needed, that is:
\begin{equation*}j(\F)(u)=\F(j^{-1}(u))\qquad\qquad\quad\forall u\in D_{j(\F)}:=j(D_\F\cap H).\qedhere \end{equation*}\end{sdefi}

\begin{sdefi}[Standard, lower right corner, upper left corner (all referring to embeddings and copies)]\label{sesc}Consider a positive integer $m<n$, and let $\F\in\AAnk[m].$ Let $A(x):=\{1,...,x\}$ for every positive integer $x.$ An embedding $j:GL_m\rightarrow GL_n,$ is called a standard embedding if there is an injective function $f:A(m)\rightarrow A(n)$ such that:\begin{itemize}
\item for $(i,j)\in f(A(m))\times f(A(m)),$ the $(i,j)$ entry of $j(g)$ is the same as the $(f^{-1}(i),f^{-1}(j))$ entry of $g$;
\item for $(i,j)\in A(n)\times A(n)-f(A(m))\times f(A(m)),$ we have: $(i,j)\text{ entry of }j(g)=\left\{\begin{array}{cc}0&\text{if }i\neq j\\1&\text{if }i=j
\end{array}\right.. $
\end{itemize} 
The image $j(GL_m)$ (resp. the AF $j(\F)$) is called a standard copy of $GL_m$ (resp. a standard copy of $\F$).

If $A(m)=\{n-m+1,...,n\}$, we call $j$ (resp. $j(GL_m)$, $j(\F)$) the lower right corner embedding (resp. lower right corner copy, lower right corner copy) of $GL_m$ in $GL_n$ (resp. of $GL_m$ in $GL_n$, of $\F$ in $\AAnk[n]$). If instead $A(m)=\{1,...,m\}$, we replace ``lower right" with ``upper left".\end{sdefi} 

\begin{sdefi}[$\mathcal{D}_n$, $\Set{T}$, $\SRV{T}$]\label{zdnf}Let $Z_m$ be the center of $GL_m$. We denote by $\DDDD[n] $ the set of one dimensional tori of the form $j(Z_m),$ for $m,j,f$ being data as in the previous definition. We also define \begin{equation*}\Set{j(Z_m)}:=\{f(1),...,f(m)\}\quad\text{ and }\quad \SRV{j(Z_m)}:=\{U_{n,(f(i),f(i+1))}: 1\leq i\leq m-1\}.\qedhere\end{equation*}\end{sdefi}
\begin{sremark}The notation ``$\Z_m$" appears rarely, and usual ways of defining $T$ are trough $\Set{...}$  or by using the definition below to say that $T$ is a $\DDDD[n] $-component of $\TT{n}{\F}$ for an AF $\F$.\end{sremark}    \begin{sdefi}[$\mathcal{D}_n$-components of $\TT{n}{\F}$, $\Set{\TT{n}{\F}}$]\label{dnsetdn} Let $\F\in\hat{\AAnk[]}[T_{\text{$n$}}]$. Among the ways $\TT{n}{\F}$ is written as a direct product of tori in $\DDDD[n],$  we consider one (unique up to permutation of terms), say $\TT{n}{\F}=T_1...T_l, $ such that any two different components, say $T_i$ and $T_j$, have no nontrivial entries in common (that is, any two matrices, one in $T_i$ and one in $T_j$, are equal only in entries with value in $\{0,1\}$). We call $T_1,...,T_l$ the $\DDDD[n] $-components of $\TT{n}{\F}.$ We also define $$\Set{\TT{n}{\F}}:=\{\Set{T}:T\text{ is a }\DDDD[n]\text{-component of }\TT{n}{\F}\}. $$ For example for $\F$ being the  AF with domain the unipotent radical of the $GL_4$-parabolic subgroup with Levi isomorphic to $GL_2\times GL_2$ given by 
       $$\F\begin{pmatrix}1&&x&y\\&1&z&w\\&&1&\\&&&1 \end{pmatrix}=x+w\qquad\forall x,y,z,w\in\KKK, $$ the set $\TT{n}{\F} $ has two $\DDDD[n] $-components and they are given by
       $$\left\{\begin{pmatrix}t&&&\\&1&&\\&&t&\\&&&1\end{pmatrix}:t\in\KKK \right\},\qquad \left\{\begin{pmatrix}1&&&\\&t&&\\&&1&\\&&&t \end{pmatrix}:t\in\KKK \right\},$$ and $\Set{\TT{n}{\F}}=\{\{1,3\}\{2,4\}\}.$\end{sdefi} 
      
\begin{sdefi}[$\jj H$, $\ja\F$, $\jm\F$, $GL_n^T$, $GL_n^S$, $\F^T$, $\F^S$]\label{jjj}Let $j:GL_m\rightarrow GL_n$ be a standard embedding, $T:=j(Z_m)$ (recall $Z_m$ is the center of $GL_n$), and   $H$ be a subgroup  of $GL_m.$ Sometimes, instead of using notations such as $j(H),j(\F),...$ we use the following notations:\begin{enumerate}
\item We define $\jj H:=j(H).$ For $\F\in\AAnk[m] $ (resp. $\F\in\AAnk[n] $) we define $\ja \F:=j(\F)$ (resp. $\jm\F:=j^{-1}(\F)$). The symbol $\jj$  (in contrast to $j$) has no standalone meaning.  Of course in every use of this notation we explain which is the choice of $j$; however, even if a reader misses such an explanation, I hope making the right guess will be easy from the context.
\item We define $GL_n^T:=GL_n^{\Set{T}}:=j^\prime(GL_{n-m}) $ where $j^\prime$ is the standard embedding of $GL_{n-m}$ in $GL_n$ for which $j(GL_m)\cap j^\prime(GL_{n-m})$ is trivial. For $\F\in\AAnk[n] $ we define $\F^T:=\F^{\Set{T}}:=\F|_{GL_n^T}$ and $\jm\F^T:=\jm\F^{\Set{T}}:={j^\prime}^{-1}(\F)={j^\prime}^{-1}(\F|_{j^\prime(GL_{n-m})})$; we also define $\F^\emptyset:=\F$.
\item Finally, when automorphic forms are involved, we are frequently identifying groups with standard copies they admit.
\end{enumerate}
\end{sdefi}

\begin{sdefi}[Blocks of a Levi, $\times^\searrow,$ $\prod^\searrow$]\label{searrow}Let $M$ be the Levi of a $GL_n$-parabolic subgroup. We write 
$$M={\prod_{1\leq i\leq k}}^{\pse}G_i\quad\text{or}\quad M=G_1\times^\searrow...\times^\searrow G_k$$ for $G_1,...,G_k$ being the subgroups of $M$ which are standard copies of general linear groups, so that for $1\leq i_1<i_2\leq k$, $G_{i_2}$ is below $G_{i_1} $ (that is,  for $(x_1,y_1)$ and $(x_2,y_2)$ being the positions of nondiagonal entries on which $G_{i_1}$ and $G_{i_2}$ are respectively nonzero, we have $x_1<x_2$ (and hence $y_1<y_2$)). Each such  $G_i$ is called the $i$-th block of M. Therefore, in cases that we write $$M={\prod_{1\leq i\leq k}}^{\pse}\jj G_{n_i}\qquad\left(\text{resp. }M={\prod_{1\leq i\leq k}}^{\pse}G_{n_i}\right) $$it is implied that  the $i$-th block of $M$ is a copy of $GL_{n_i}$ which we denote by $\jj GL_{n_i}$ (resp. implied that the $i$-th block of $M$ is identified with $GL_{n_i}$).\end{sdefi}

\subsection{Nilpotent orbits. }\label{inil}Nilpotent and unipotent orbits are in bijective correspondence, and I have chosen to use the former. We therefore define $\UUU{n}$ to be the set of nilpotent orbits of the action by conjugation of $GL_n$ on $\Lie{GL_n}.$ 

Given an orbit $a\in\UUU{n},$ for $a_1\times a_1$,...,$a_m\times a_m$ being the dimensions of the Jordan blocks of a Jordan matrix belonging to $a,$ we obtain a partition $n=a_1+...+a_m$ which we frequently identify with $a,$ and write $$a=[a_1,...,a_m].$$ We do not require that $a_1,...,a_m$ are in decreasing (or any other) order, and they are allowed to  equal $0.$ The set $\UUU{n}$ admits the partial order $``>"$ described as follows. Let $a=[a_1,...,a_m]$ and $b=[b_1,..,b_m]$ be two different elements of $\UUU{n}, $ such that $a_1\geq...\geq a_m $ and $b_1\geq...\geq b_m$. We write $a>b$ if and only if  $$\sum_{1\leq i\leq r}a_i\geq \sum_{1\leq i\leq r}b_i $$ for every $1\leq r\leq m.$ The words ``bigger" and ``smaller" always refer to this order. For two  orbits $a,b\in\UUU{n}$ that  none is bigger from the other, we say they are unrelated.

If $X$ is a nilpotent matrix in $\Lie{GL_n}$, we denote by $\Om(X)$ its nilpotent orbit. 

In the appendix (Subsection \ref{nila}) we give some information on $\UUU{n}$, mainly basic information on the Richardson orbit which we start to encounter in the present paper in Section \ref{forms}.

\subsection{Mathematical statements (theorems, propositions,...), variables and constants}\label{ssl}For naming mathematical statements I have mostly used the words: Theorem, Proposition, Lemma, Corollary, Claim, Fact, and Observation.  The way the first four are used must be quite usual. A claim is used as a lemma but it is stated and proved within the proof of an other statement. An observation is a statement similar to a lemma but very easy (or even trivial) to prove.

Some statements have the label ``Known" next to their name. This label  means that\begin{itemize}\item the statement is known;\item even if the statement is followed by any part of a proof it admits, this part of proof is also known.\end{itemize}I preferred to avoid the choice between ``Theorem" and ``Proposition" for the statements with the label ``Known", and hence I use one word for all such cases which is ``Fact". However, the names ``Corollary", ``Lemma", and ``Claim" are  used with the label ``Known". Most of the observations can be discerned in the literature, but I do not attach to them the label ``Known", due to their low level of difficulty.

Of course there are situations the above formalizations would need changes, for example in certain statements in which most of the effort is in the formulation and not in the proof, but at least for the present paper I found them adequate.

 A square is used as the end of proof sign. If it refers to a proof  occurring within an other proof, it is followed by a reminder of what we just proved. For example: $\square$Claim,  $\square$Step i.   For some other environments including definitions and remarks we similarly use a triangle.

\begin{sdefi}[\text{$[...\tlarrow...]$}]\label{tlarrown}
Let $x,y,...$ be variables appearing in a precisely stated environment $\ast$ (e.g: $\ast$ can be the statement or the proof of a theorem, proposition,...). Let  $x_1,y_1,...$ be values the previous variables respectively admit.  With phrases such as  ``$\ast$ for $[x\tlarrow x_1,y\tlarrow y_1,...]$" we refer to the statement obtained from $\ast$ when  $x,y,...$ are given these values.\end{sdefi}

\begin{sremark}[Common names for variables and constants]In the list below we give examples of names we choose for variables. We include concepts which we define later. We do not include notations in which the variable lies only in a proper ``subnotation" (for example $\X{\F} $ and $\Om(\F)$). It is by no means a complete list, for example the notations involving subscripts and superscripts are almost completely skipped.

\vsp
\noindent{Integers}\hfill$n$, $N$, $k$, $l$, $m,$ $r,$ $s$, $t$, $a$, $b$\\orbits\hfill $a$, $b$, $c$\\
AFs\hfill$\F$, $\F^\ih$, $\Z$, $\QQ,$ $\XX$, $\YY$, $\zAF$\\
$\AAnk$-trees, $\AAnk$-paths\hfill$\Xi$, $\xi$, $\I$\\Unipotent algebraic groups\hfill$V$, $N$, $L$, $X$, $Y$, $C$\\ $GL_n$-parabolic subgroup (resp. Levi of a $GL_n$-parabolic subgroup)\hfill $P$ (resp. $M$)\\algebraic groups (in general)\hfill$H$\\roots \hfill$\alpha$, $\beta$\\Sets of AFs\hfill $\SSS$, $\XXX$ 

 The symbol $\F^\ih$ is used for AFs which after a few calculations are processed by an inductive hypothesis. The symbol $\zAF$ is used for trivial AFs (with nontrivial domain). For $\AAnk$-trees that are possibly ``big", capital letters are preferred. For $\AAnk$-trees that are known to be $(\eu,\co)$-paths we usually use ``$\I$" (even if they are ``small"). The symbols $X$, $Y$, and $C$, are mostly used as in Definition \ref{euoperation}. The symbol $\XXX$ is also used as a constant (Subsection \ref{zpre3}).
 
 The choices of fonts above are also adopted frequently by constants. For example, some sets of AFs, trees, and groups, all with a constant meaning up to $n,N$, and $\F$, are respectively: $\AAnk [n], $ $\BBnk[n] $, $\Prink[n] $, $\SSS_{n,N}$ (this consists of pairs of AFs); $\sXi(\F)$ and $\sI(\F)$; $GL_n,$  $P_n$.\end{sremark}
\subsection{$\AAnk$-trees, and the operation $\circ$}\label{treessubs}
In addition to the assumption that $n$ is chosen to be any positive integer, we also assume throughout the current subsection that $\kkk$ is chosen to be any subfield of $\KKK$. It may worth clarifying that: this choice for $n$ and $\kkk$ must be made within each statement\footnote{By a statement in the current subsection we consider everything admitting the counter consisting of three numbers (e.g. definitions).} involving them (and only once within each such statement).
\begin{sdefi}[$\circ$]\label{circd}
Consider a unipotent algebraic group $N$, which is written as a product of unipotent algebraic subgroups $N=N_2N_1,$ with $N_1\triangleleft N$. Consider AFs $\F_1\in\AAnk(N_1), $ and $\F_2\in\AAnk(N_2).  $ Assume that these two functions have a common extension to an AF in $\AAnk(N).$ We see that this extension has to be unique, and we denote it by $\F_2\circ\F_1.$ We have 
$$\F_2\circ\F_1(n_2n_1)=\F_2(n_2)+\F_1(n_1), $$for all $n_1,n_2$ respectively in $N_1,N_2.$ 

I emphasize: the meaning of  saying that $\F_2\circ\F_1$ is defined, includes that $\F_2\circ\F_1$  is an AF.
\end{sdefi}

\begin{sob}Let $N$, $N_1$, $N_2$, $\F_1$, and $\F_2$,  be defined as in the first two sentences of the definition above. The AF $\F_2\circ\F_1$ is  defined if and only if 
\begin{equation}\label{ztrombas}\F_1|_{N_1\cap N_2}=\F_2|_{N_1\cap N_2},\qquad\text{and}\qquad n_2\F_1=\F_1\quad\text{ for all }n_2\in N_2. \end{equation}
\end{sob}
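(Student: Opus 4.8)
The statement to prove is the ``Observation'' characterizing when $\F_2\circ\F_1$ is defined: namely that $\F_2\circ\F_1$ exists (as an AF on $N=N_2N_1$) if and only if $\F_1|_{N_1\cap N_2}=\F_2|_{N_1\cap N_2}$ and $n_2\F_1=\F_1$ for all $n_2\in N_2$.

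The plan is to prove both directions by unwinding what it means for a common extension to exist and be an additive function. First I would fix notation: write $n = n_2 n_1$ for the general element of $N$ (using $N_1 \triangleleft N$, so that $N_2$ acts on $N_1$ by conjugation and every element factors this way, though not necessarily uniquely).

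\textbf{Necessity.} Suppose $\F := \F_2\circ\F_1 \in \AAnk(N)$ is defined, i.e.\ it is an AF on $N$ restricting to $\F_1$ on $N_1$ and to $\F_2$ on $N_2$. For $x \in N_1\cap N_2$ we have $\F_1(x) = \F(x) = \F_2(x)$, which is the first condition. For the second, let $n_2\in N_2$ and $x\in N_1$; since $N_1\triangleleft N$, the element $n_2 x n_2^{-1}$ lies in $N_1$, and using that $\F$ is a homomorphism to the additive group $\KKK$ (hence conjugation-invariant, as $\KKK$ is abelian: $\F(n_2 x n_2^{-1}) = \F(n_2)+\F(x)-\F(n_2) = \F(x)$) we get $(n_2\F_1)(x) = \F_1(n_2^{-1} x n_2) = \F(n_2^{-1} x n_2) = \F(x) = \F_1(x)$. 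So $n_2\F_1 = \F_1$.

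\textbf{Sufficiency.} Conversely, assume both conditions. I would define $\F: N\to\KKK$ by $\F(n_2 n_1) := \F_2(n_2) + \F_1(n_1)$ and check: (i) this is well-defined (independent of the factorization $n = n_2 n_1$), (ii) it is an algebraic homomorphism, (iii) it restricts correctly. For well-definedness, if $n_2 n_1 = n_2' n_1'$ with $n_i, n_i' \in N_i$, then $(n_2')^{-1} n_2 = n_1' n_1^{-1} \in N_1\cap N_2$; call it $x$. Then $n_2 = n_2' x$ and $n_1 = x^{-1} n_1'$, so $\F_2(n_2) + \F_1(n_1) = \F_2(n_2') + \F_2(x) + \F_1(n_1') - \F_1(x) = \F_2(n_2') + \F_1(n_1')$ using $\F_1(x) = \F_2(x)$. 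For the homomorphism property, given $n_2 n_1$ and $m_2 m_1$, compute $(n_2 n_1)(m_2 m_1) = n_2 m_2 (m_2^{-1} n_1 m_2) m_1$; here $m_2^{-1} n_1 m_2 \in N_1$ by normality, so $\F$ of the product is $\F_2(n_2 m_2) + \F_1((m_2^{-1} n_1 m_2) m_1) = \F_2(n_2) + \F_2(m_2) + \F_1(m_2^{-1} n_1 m_2) + \F_1(m_1)$, and the condition $m_2\F_1 = \F_1$ gives $\F_1(m_2^{-1} n_1 m_2) = (m_2\F_1)(n_1) = \F_1(n_1)$, yielding additivity. Algebraicity is inherited since the multiplication map $N_2\times N_1 \to N$ is a morphism (dominant/surjective) and $\F$ pulls back to $(\F_2,\F_1)$ composed with addition.

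I expect the main subtlety — not so much an obstacle as the point requiring care — to be the well-definedness of $\F$ on $N$ when the factorization $n_2 n_1$ is not unique, which is exactly where the compatibility condition $\F_1|_{N_1\cap N_2} = \F_2|_{N_1\cap N_2}$ gets used, and the verification that the resulting set-theoretic homomorphism is actually a morphism of varieties (this uses that $N_2 \times N_1 \to N$ is a surjective morphism of affine varieties and a standard descent fact, or one can instead invoke Fact \ref{Liehom} and work at the Lie algebra level: specify the linear functional on $\Lie{N} = \Lie{N_2} \oplus \Lie{N_1}$ as a vector space and check it is a Lie algebra homomorphism using the same two conditions, which may be the cleanest route and automatically handles algebraicity). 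The uniqueness claim ``we see that this extension has to be unique'' is immediate: any AF on $N$ restricting to $\F_i$ on $N_i$ must agree with the formula $\F_2(n_2)+\F_1(n_1)$ by the homomorphism property, so there is at most one.
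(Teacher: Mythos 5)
The paper offers no proof of this observation: it is stated bare and followed immediately by the remark that it will be used without mention. So there is no argument of the paper's to compare against. Your proof is correct and complete.

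A few confirmations. Your necessity argument correctly exploits that $\F$, being a homomorphism into the abelian group $\KKK$, is a class function, so $\F(n_2^{-1}xn_2)=\F(x)$; combined with the paper's convention $(\gamma\F)(n)=\F(\gamma^{-1}n\gamma)$ on $\gamma D_\F\gamma^{-1}$ this gives exactly $n_2\F_1=\F_1$. Your well-definedness and additivity computations are right, the crucial uses of the two hypotheses being exactly where you flag them. For algebraicity your instinct at the end is the one to follow in the context of this paper: rather than arguing descent along the (surjective but not obviously flat, nor obviously admitting a global section in terms of the given $N_2$) multiplication map $N_2\times N_1\to N$, pass through Fact \ref{Liehom}. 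Write $\Lie{N}=\Lie{N_2}+\Lie{N_1}$ with $\Lie{N_1}$ an ideal; the two hypotheses become: $f_1=f_2$ on $\Lie{N_1}\cap\Lie{N_2}$, and $f_1([\Lie{N_2},\Lie{N_1}])=0$ (obtained from $n_2\F_1=\F_1$ by differentiating the identity $f_1(e^{-\mathrm{ad}\,x}v)=f_1(v)$ at $t=0$ after replacing $x$ by $tx$; conversely vanishing on $[\Lie{N_2},\Lie{N_1}]$ kills all the higher iterated brackets as well since that space is $\mathrm{ad}(\Lie{N_2})$-stable). The resulting linear functional on $\Lie{N}$ kills $[\Lie{N_2},\Lie{N_2}]$, $[\Lie{N_1},\Lie{N_1}]$, and $[\Lie{N_2},\Lie{N_1}]$, hence all of $[\Lie{N},\Lie{N}]$, so it is a Lie algebra homomorphism, and Fact \ref{Liehom} together with Fact \ref{areconnected} hand back the AF on $N$ with its algebraicity built in. This is in keeping with how the paper uses these two facts throughout Subsection \ref{treessubs}.
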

This observation is used without mention. In many among the cases we consider, the intersection $N_1\cap N_2$ is trivial.

\begin{sdefi}[$\rcirc$, used in Sections \ref{optionalsection} and \ref{compositions}]\label{rcircd}Consider an integer $1\leq r< n$. Let $\F\in\AAnk[n]$ be such that $D_\F$ is contained in the lower right corner copy of $GL_{n-r}$ in $GL_n.$ Then the AF $\F\circ\JJ_r$ is also denoted by $\F\rcirc\JJ_r.$\end{sdefi}

 \begin{sfact}[Known]\label{areconnected}
Let $N$ be a unipotent $\kkk$-group. Then the map $\exp: \Lie{N}\rightarrow N$ is a $\kkk$-isomorphism of varieties. In particular, $N$ is connected. 
\end{sfact}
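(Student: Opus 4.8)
The statement to prove is Fact~\ref{areconnected}: for a unipotent $\kkk$-group $N$, the exponential map $\exp\colon \Lie{N}\to N$ is a $\kkk$-isomorphism of varieties, and in particular $N$ is connected. This is a standard fact, so the plan is simply to assemble the classical ingredients. The starting point is the structure theory of unipotent groups in characteristic zero (cf.\ \cite{Springer}): a unipotent linear algebraic group over a field of characteristic zero is, up to isomorphism, a closed subgroup of the group of upper unitriangular matrices $U_m$ for some $m$, and on $U_m$ the maps $\exp$ (truncated power series, a finite sum since the relevant matrices are nilpotent) and $\log$ are mutually inverse polynomial maps defined over the prime field, hence over $\kkk$. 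So the first step is to reduce to a closed embedding $N\hookrightarrow U_m$ and record that $\exp$ and $\log$ on $\mathfrak{gl}_m$-nilpotents are inverse polynomial isomorphisms of varieties defined over $\kkk$.

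The second step is to check that $\exp$ restricts to a bijection between $\Lie{N}$ and $N$. For this one uses that $\Lie{N}\subseteq \Lie{U_m}$ is a Lie subalgebra consisting of nilpotent matrices, and that in characteristic zero $\exp$ carries this subalgebra onto the subgroup $N$: concretely, $N$ is the set of $\kkk$-points-closure generated by one-parameter subgroups $t\mapsto \exp(tX)$ for $X\in\Lie N$, and by the Baker--Campbell--Hausdorff formula (a \emph{finite} sum on nilpotent matrices, with rational coefficients) the image $\exp(\Lie N)$ is closed under multiplication and inversion, hence is a subgroup; comparing Lie algebras and dimensions (both equal $\dim N$) and using that $N$ is the unique closed connected subgroup with Lie algebra $\Lie N$ inside $U_m$, one gets $\exp(\Lie N)=N$ as sets. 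Injectivity is immediate since $\log$ is a global inverse on $U_m$ and restricts to $N$.

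The third step upgrades the set-theoretic bijection to an isomorphism of varieties over $\kkk$: $\exp|_{\Lie N}$ is the restriction of the polynomial map $\exp|_{\mathfrak{n}_m}$ to the closed subvariety $\Lie N$, with image the closed subvariety $N$, and its inverse is the restriction of the polynomial map $\log$; both restrictions are morphisms defined over $\kkk$ because the ambient maps are. Hence $\exp\colon\Lie N\to N$ is a $\kkk$-isomorphism of varieties. Finally, $\Lie N$ is an affine space (a $\kkk$-vector space), which is irreducible, so $N$ is irreducible, and being a variety it is in particular connected; this gives the ``in particular'' clause.

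The only mild subtlety — and the step I would be most careful about — is the second one: verifying that $\exp$ maps the Lie \emph{algebra} $\Lie N$ exactly onto the \emph{group} $N$ and not merely into $U_m$. The clean way to do this without circularity is to invoke the equivalence of categories between unipotent groups and nilpotent finite-dimensional Lie algebras in characteristic zero (again in \cite{Springer}), under which $\exp$ is precisely the comparison map; alternatively one argues by induction on a central series of $N$, using that each successive quotient is a vector group on which $\exp$ is literally the identity, together with the BCH formula to glue the extensions. Everything else is routine bookkeeping with polynomial maps, so the proof is short.
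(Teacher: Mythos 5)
The paper does not give a proof at all; its ``proof'' is the single line ``See for example 15.31 in \cite{Milne}.'' (The statement is tagged ``Known,'' and the paper's convention is that such statements are cited rather than proved.) Your proposal is thus a genuinely different contribution: an actual argument assembling the classical ingredients rather than a pointer to the literature. The sketch is essentially correct and covers the right ground: embedding into $U_m$, the fact that $\exp$ and $\log$ are mutually inverse polynomial maps over the prime field on nilpotent/unipotent matrices, the reduction of the variety-isomorphism claim to restriction of these polynomial maps, and the ``in particular'' deduction of connectedness from irreducibility of the affine space $\Lie{N}$.

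You correctly identify the one delicate point and should make sure your write-up does not become circular there. In the second step, the phrase ``$N$ is the unique closed connected subgroup with Lie algebra $\Lie N$'' presupposes that $N$ is connected, which is part of what you are trying to prove; as it stands, the Lie-algebra-and-dimension comparison only yields $\exp(\Lie N)=N^{\circ}$, not $\exp(\Lie N)=N$. The fix you already mention at the end is the right one and should be promoted to the main line of the argument rather than left as an aside: filter $N$ by intersecting with the standard central series of $U_m$; in characteristic zero every closed subgroup of a vector group $\mathbb{G}_a^{k}$ is a linear subspace, so each successive quotient of $N$ is a vector group, hence connected, hence $N$ is connected. (Equivalently, for any $g\in N$ the one-parameter subgroup $t\mapsto\exp(t\log g)$ lies in $N$ and connects $g$ to the identity.) Once connectedness is in hand, the rest of your argument goes through unchanged. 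Compared with simply citing Milne, your route is more self-contained but longer; for the purposes of this paper, where the fact is acknowledged as standard, the citation is arguably the more appropriate choice, but as a check that you understand the content your sketch is sound.
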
\begin{proof}See for example 15.31 in \cite{Milne}.\end{proof}
\begin{scor}[Known, mostly optional]\label{semprod}
Let $N$ be  unipotent $\kkk$-group, and  $N_1$
 be a normal  $\kkk$-subgroup of $N,$ such that $\Di{N}=\Di{N_1}+1.$ There is a $\kkk$-subgroup $N_2$ of $N$ such that N is the semidirect product of $N_1$ and $N_2$, that is: $N=N_2N_1$ and $N_1\cap N_2=1.$ 
\end{scor}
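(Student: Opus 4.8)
The plan is to choose a complement $N_2$ by passing to Lie algebras, where the statement becomes a triviality about vector spaces, and then transport the result back via the exponential map. By Fact \ref{areconnected}, the map $\exp\colon\Lie{N}\to N$ is a $\kkk$-isomorphism of varieties, so it suffices to produce the complement on the Lie algebra side and check it is in fact a subgroup. Since $N_1\trianglelefteq N$ with $\Di{N}=\Di{N_1}+1$, the Lie algebra $\Lie{N_1}$ is an ideal of $\Lie{N}$ of codimension one. Pick any $\kkk$-point $X\in\Lie{N}(\kkk)\setminus\Lie{N_1}(\kkk)$ and set $\ooo:=\kkk X$; because codimension one ideals are automatically subalgebras (the bracket of anything with $X$ lands in $\Lie{N_1}$, and $[X,X]=0$, so $[\ooo,\ooo]=0\subseteq\ooo$ trivially—indeed $\ooo$ is abelian), $\ooo$ is a one-dimensional subalgebra, and $\Lie{N}=\Lie{N_1}\oplus\ooo$ as $\kkk$-vector spaces.

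Next I would let $N_2$ be the closed subgroup of $N$ with $\Lie{N_2}=\ooo$; concretely $N_2=\exp(\ooo_\KKK)$ where $\ooo_\KKK:=\KKK\otimes_\kkk\ooo$, which is a one-parameter $\kkk$-subgroup of $N$ (isomorphic to $\mathbb{G}_a$, since it is a connected unipotent group of dimension one). It remains to verify the two required properties. For $N_1\cap N_2=1$: the intersection is a closed $\kkk$-subgroup, hence connected unipotent, hence determined by its Lie algebra $\Lie{N_1}\cap\ooo=0$, so $N_1\cap N_2$ is trivial. For $N=N_2N_1$: the multiplication morphism $N_2\times N_1\to N$ has injective differential at the identity (its image is $\ooo+\Lie{N_1}=\Lie{N}$, and by the previous point it is injective), so it is dominant; since $N_1\trianglelefteq N$, the image $N_2N_1$ is a subgroup of $N$, and a subgroup containing a dense subset equals the whole group, giving $N=N_2N_1$. (Alternatively one counts dimensions: $\Di{N_2N_1}=\Di{N_2}+\Di{N_1}-\Di{N_1\cap N_2}=1+\Di{N_1}=\Di{N}$ together with connectedness of $N$.)

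I do not expect any serious obstacle here; the lemma is genuinely elementary once one invokes Fact \ref{areconnected} and the standard dictionary between connected unipotent groups in characteristic zero and their (nilpotent) Lie algebras. The only points that need a word of care are: (i) checking that the complement can be taken defined over $\kkk$, which is handled by choosing $X$ to be a $\kkk$-point of the $\kkk$-vector space $\Lie{N}(\kkk)/\Lie{N_1}(\kkk)$ lifted to $\Lie{N}(\kkk)$, and then noting $\exp$ is a $\kkk$-isomorphism; and (ii) that $N_2$ so constructed is genuinely a subgroup rather than just a subvariety, which follows because $\ooo$ is a subalgebra (automatic in codimension/dimension one) and $\exp$ carries subalgebras of $\Lie{N}$ to subgroups of $N$ for unipotent $N$ in characteristic zero. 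So the main ``work'' is just assembling these standard facts in the right order; there is no delicate estimate or construction.
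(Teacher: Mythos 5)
Your proposal is correct and follows essentially the same route as the paper's: pick a $\kkk$-rational vector $v\in\Lie{N}(\kkk)\setminus\Lie{N_1}(\kkk)$, exponentiate its span to obtain a one-dimensional $\kkk$-subgroup $N_2$, and deduce both $N=N_2N_1$ and $N_1\cap N_2=1$ from the Lie algebra decomposition together with Fact \ref{areconnected}. The only cosmetic remark is that your parenthetical about ``codimension one ideals'' is irrelevant to $\ooo$ being a subalgebra---what matters is simply that $\ooo$ is one-dimensional, so $[\ooo,\ooo]=0$---but your conclusion and the overall argument are sound.
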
\begin{proof}
Let $v\in\Lie{N}(\kkk)-\Lie{N_1}(\kkk).$ By Fact \ref{areconnected} we obtain: (i) a unipotent $\kkk$-subgroup $N_2$ of $N$ with its Lie algebra being the span of $v,$ (ii) then (since  $\Lie{N}=\Lie{N_2N_1}$) that $N=N_2N_1$, (iii) and finally (since $\Di{N_2\cap N_1}=0$) that $N_2\cap N_1$ is trivial.
\end{proof}
\begin{scor}[Known]\label{partlysplit}
Let $N$ be a unipotent $\kkk$-group, and  $N_1$
 be a normal  $\kkk$-subgroup of $N.$ Then there is a  $\kkk$-subgroup $N_2$ of $N$, such that:
 $$a)\hspace{2mm}\F(N_2\cap N_1)=0\text{ for all }\F\in\AAnk(N)\qquad\text{and}\qquad b)\hspace{2mm} N=N_2N_1. $$ \end{scor}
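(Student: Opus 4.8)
The statement to prove is Corollary \ref{partlysplit}: given a unipotent $\kkk$-group $N$ with a normal $\kkk$-subgroup $N_1$, there is a $\kkk$-subgroup $N_2$ with $N = N_2 N_1$ and $\F(N_2 \cap N_1) = 0$ for all $\F \in \AAnk(N)$. The natural approach is induction on $\Di{N} - \Di{N_1}$, reducing each step to Corollary \ref{semprod} (the codimension-one splitting). The base case $\Di{N} = \Di{N_1}$ is trivial: take $N_2 = 1$ (or the trivial subgroup), so that $N = N_2 N_1$ and $N_2 \cap N_1 = 1$, on which every AF vanishes.

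For the inductive step, I would first locate a normal $\kkk$-subgroup $N_1'$ of $N$ with $N_1 \subseteq N_1' \subseteq N$ and $\Di{N_1'} = \Di{N_1} + 1$; such an $N_1'$ exists because in a unipotent ($=$ nilpotent, over characteristic zero) group one can always interpolate normal subgroups one dimension at a time — concretely, pick a minimal nontrivial normal $\kkk$-subgroup of $N/N_1$ (which, being a unipotent group, is central of dimension $\geq 1$, and contains a one-dimensional $\kkk$-subgroup normal in $N/N_1$) and pull it back. Then I would argue that $N_1'$ inherits the desired property relative to $N_1$: the ambient group here is $N_1'$, which is unipotent with normal subgroup $N_1$ of codimension one, so Corollary \ref{semprod} gives a $\kkk$-subgroup $M$ of $N_1'$ with $N_1' = M N_1$ and $M \cap N_1 = 1$. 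Applying the inductive hypothesis to the pair $(N, N_1')$ — whose codimension is one less — yields a $\kkk$-subgroup $N_2'$ of $N$ with $N = N_2' N_1'$ and $\F(N_2' \cap N_1') = 0$ for all $\F \in \AAnk(N)$. I would then set $N_2 := N_2' M$ (checking this is a $\kkk$-subgroup; since $M \subseteq N_1'$ is normalized appropriately this needs a small argument, or alternatively one reorganizes so that $M$ is chosen to be normalized by $N_2'$). Then $N = N_2' N_1' = N_2' M N_1 = N_2 N_1$, giving condition (b).

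For condition (a), I need $\F(N_2 \cap N_1) = 0$ for every $\F \in \AAnk(N)$. Here $N_2 \cap N_1 = (N_2' M) \cap N_1$; I would analyze an element $x = n_2' m$ of this intersection ($n_2' \in N_2'$, $m \in M$) lying in $N_1$. Since $N_1 \subseteq N_1'$ and $m \in N_1'$, we get $n_2' \in N_1' \cap N_2'$, on which $\F$ vanishes by the inductive hypothesis; and since $x \in N_1$ and $m \in M$ with $M \cap N_1 = 1$, one deduces $m$ lies in $M \cap N_1 \cdot(\text{something})$, forcing the relevant contributions to cancel. The cleanest version: an additive function $\F$ on $N$ restricted to any subgroup is a homomorphism to $\KKK$, so $\F(x) = \F(n_2') + \F(m)$ once we know multiplication in the relevant subgroup descends additively (this is Fact \ref{Liehom} / the structure of AFs), and I show each summand is forced to be $0$ — $\F(n_2') = 0$ since $n_2' \in N_1' \cap N_2'$, and $\F(m) = 0$ since $m \in N_1' \cap M \cdot \ker$ can be pushed into $N_2 \cap N_1$'s contribution, or more directly since $M \subseteq N_1'$ and $\F|_{N_1'}$ is controlled.

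\textbf{Main obstacle.} The delicate point is ensuring $N_2 := N_2' M$ is actually a subgroup and that the intersection computation $N_2 \cap N_1$ is handled honestly — the product of two subgroups need not be a subgroup unless one normalizes the other. I expect the right fix is to be careful about the order of operations: rather than building $M$ inside $N_1'$ first and then combining, one should apply the inductive hypothesis first to get $N_2'$ with $N = N_2' N_1'$, then work inside $N_1'$ (which is normalized suitably) to peel off the last dimension, arranging $M$ to be normal in $N_1'$ (possible since $N_1'/N_1$ is one-dimensional, hence $M$ can be taken central-mod-$N_1$, so $M N_1 = N_1 M$ is automatic and $N_2' M$ assembles correctly after absorbing $M$'s normalizer). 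The AF-vanishing condition (a) then follows because any $\F \in \AAnk(N)$ restricts to an AF on $N_1'$, for which $\F(M \cap N_1) = 0$ trivially ($M \cap N_1 = 1$) and $\F(N_2' \cap N_1') = 0$ by induction, and these assemble to give $\F(N_2 \cap N_1) = 0$ by decomposing $N_2 \cap N_1$ along $N_1' \cap N_2'$ and $M \cap N_1$.
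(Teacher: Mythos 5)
Your plan runs into two genuine obstructions, and the second one shows that even after the first is fixed the argument would not close.

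\textbf{The product $N_2'M$ is not a subgroup, and the proposed fix does not repair this.} You acknowledge the worry, but the suggested remedy --- ``arranging $M$ to be normal in $N_1'$'' --- is irrelevant, because $N_2'$ does not lie in $N_1'$, so normality of $M$ \emph{in $N_1'$} gives no normalization of $M$ \emph{by $N_2'$}. A concrete failure: take $N=U_3$, $N_1$ the centre (the $(1,3)$-entry group), $N_1'$ the group generated by $(1,2)$- and $(1,3)$-entries (so $M$ is the $(1,2)$-entry group), and $N_2'$ the $(2,3)$-entry group. Then $N_2'M$ is not closed under multiplication; the group generated by $N_2'$ and $M$ is all of $N$. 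No choice of interpolating chain avoids this kind of collision in a genuinely nonabelian $N$.

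\textbf{Even if the product were a group, the additivity argument does not yield $\F(N_2\cap N_1)=0$.} Take $x=n_2'm\in N_1$ with $n_2'\in N_2'$, $m\in M$. You correctly deduce $n_2'\in N_2'\cap N_1'$, so $\F(n_2')=0$. But from $m=n_2'^{-1}x$ one only gets $m\in M\cap N_1'=M$; there is no reason $m\in N_1$, so $M\cap N_1=1$ is of no use and nothing forces $\F(m)=0$. The claimed ``decomposition of $N_2\cap N_1$ along $N_2'\cap N_1'$ and $M\cap N_1$'' does not hold.

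\textbf{What the paper does instead (and the idea you are missing).} The real lever is that every $\F\in\AAnk(N)$, being an algebraic homomorphism to the abelian group $\KKK$, vanishes on $[N,N]$. So it suffices to arrange $N_2\cap N_1\subseteq[N,N]$. The paper does this without any induction: pick a $\kkk$-rational basis $v_1,\dots,v_k$ of a vector-space complement of $\Lie{N_1[N,N]}(\kkk)$ inside $\Lie{N}(\kkk)$, let $V_i$ be the one-dimensional $\kkk$-subgroup with $\Lie{V_i}=\langle v_i\rangle$ (Fact~\ref{areconnected}), and set $N_2:=\big(\prod_i V_i\big)[N,N]$. Then $\Lie{N_2}=\Lie{[N,N]}\oplus\langle v_1,\dots,v_k\rangle$ meets $\Lie{N_1}$ only inside $\Lie{[N,N]}$, giving $N_2\cap N_1\subseteq[N,N]$ and hence $a)$, while $\Lie{N_2}+\Lie{N_1}=\Lie{N}$ gives $b)$. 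Your inductive scheme tries to keep $N_2\cap N_1$ literally trivial step by step, which is exactly what cannot be done (and is not needed); the statement only asks for this intersection to be killed by additive functions, which is a much weaker and much more achievable condition, realized by deliberately stuffing $[N,N]$ into $N_2$.
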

 \begin{proof}To obtain $a)$ it is sufficient to prove 
\begin{equation}\label{zcom}N_2\cap N_1\subseteq[N,N]. \end{equation}Let $v_1,...,v_k$ be a basis of a vector space complement of $\Lie{N_1[N,N]}(\kkk) $ inside $\Lie{N}(\kkk).$ By including several uses of Fact \ref{areconnected}, the proof proceeds by: obtaining for each $i$ a (unique) subgroup $V_i$ of $N$ which is $\kkk$-isomorphic to $\KKK$ and its Lie algebra is the span of $v_i;$ then choosing $N_2:=(\prod_{1\leq i\leq k}V_i)[N,N]$ and noticing that $\Lie{N_2}$ is equal to the vector space direct sum of  $\Lie{[N,N]}$ and the span of $v_1,...v_k$; and finally obtaining (\ref{zcom})  and $b)$. \end{proof}

\begin{sdefi}[$\AAnk$-operation, complete after Definitions \ref{eoper}, \ref{coper}, and \ref{euoperation}]\label{Aoper} An $\AAnk$-operation is any operation which is an $\e$-operation or a $\eu$-operation or a conjugation, and these in turn are defined below.\end{sdefi}
\begin{sremark}[Definitions that are ``missing"]
To avoid defining concepts which we do not use, we end up not defining some concepts which are basic and closely related to the defined ones. For example, we do not add in each of the three definitions below a notation that specifies the choice of ``$\kkk$"; by preferring in most of the paper to formulate the reasoning in terms of  $\AAnk$-trees instead of $\AAnk$-operations, it turns out to be sufficient to make such additions only to the former (e.g. with the later defined concepts ``$(\AAnk,\kkk )$-trees" and ``$(\AAnk,\kkk )$-steps").\end{sremark}
\begin{sdefi}[$\e$-operations]\label{eoper}
  Let $V$ be a unipotent $\kkk$-subgroup of $GL_n$. Consider the operation $\e(V) $ given by:\begin{equation}\label{zeset}\e(V)(\F):=\{\Z\in\AAA[,n] :\Z|_{D_\F}=\F\text{ and }D_\Z=VD_\F\} \end{equation}and having as domain the AFs $\F\in\AAA[,n] $ for which: $$ D_\F\supseteq [V,V],\qquad v\F=\F\text{ }\quad\forall v\in V,\qquad\F(V\cap D_\F)=\{0\}.$$ 
  
  Every such operation is called an $\e$-operation. We say that $\e(V) $ is the $\e$-operation over $V$. Here ``$\e$" stands for ``Fourier expansion".\end{sdefi} 

\begin{sdefi}[Conjugations]\label{coper}
Let $\gamma\in GL_n.$ We call a conjugation the operation mapping each $\F\in\AAnk[n] $ to $\gamma\F$.\end{sdefi}

\begin{sdefi}[$\eu$-operations.]\label{euoperation} Let $X,Y$ be two unipotent algebraic subgroups of $GL_n$. We consider an operation $\exchange{X}{Y} $ the domain of which consists of the AFs $\F\in\AAnk[n]$ such that for an algebraic subgroup $C$ of $D_\F$ we have:\begin{enumerate}
 \item $C\supseteq[X,X]\cup[Y,Y]\cup[X,Y];$
 \item $D_{\F}=YC;$
  \item For $\gamma\in X\cup Y$ we have $\gamma(\F|_C)=\F|_C$ (notice this implies that $X$ and $Y$ normalize $C$);
  \item $\F(X\cap C)=\F(Y\cap C)=\{0\}$; 
  \item The action by conjugation of  $Y\cap C\s Y $ on the set \begin{equation}\label{zomz}\{\QQ\in\AAnk(XC):\QQ|_C=\F|_C \} \end{equation}is free and transitive\footnote{After reading up to (and including) Definition \ref{actionestep} one sees that this property is restated by saying: the action by conjugation of  $Y\cap C\s Y $, on the $(\F|_C,\e)$-step obtained from $\e(X)$, is free and transitive.}.

 \end{enumerate}
One can easily\footnote{Assume on the contrary that there are two subgroups $C_1$ and $C_2$ of $D_\F$ so that: conditions 1,...,5 are true for $C$ replaced by $C_1$ or $C_2$; and $C_1\not\subseteq C_2.$ We have $C_1\subseteq YC_1 =YC_2.$ Hence we obtain $c_1\in C_1,y\in Y-C_2,c_2\in C_2$ such that $c_1=yc_2.$ For $x\in X$ we have $xc_1x^{-1}=xyx^{-1}xc_2x^{-1}$. From this equality and conditions 3 for $C=C_1,C_2$ and from Lemma \ref{euequivalent} below (that (i)$\iff$(ii)), for $C=C_1$ we obtain that all the AFs in the set in (\ref{zomz}) have the same value at $y$ which is impossible due to Fact \ref{areconnected}} check that $C$ is the unique subgroup of $D_\F$  satisfying these conditions. 
 
 For each $\F$ in the domain of $\eu(X,Y),$ $\exchange{X}{Y}(\F) $ is the  AF satisfying: \begin{equation*}D_{\exchange{X}{Y}(\F)}=XC,\qquad\eu(X,Y)(\F)|_C=\F|_C,\qquad\text{and}\qquad \eu(X,Y)(\F)(X)=\{0\}.\end{equation*}

 We call $\exchange{X}{Y}$ a $\eu$-operation.\end{sdefi}
 
Due to Fact \ref{areconnected}, the map from $\AAnk(N)$ to the vector space dual of $\Lie{[N,N]\s N}$ is a bijection. The set $\AAnk(N)$ is made into a variety by requiring that this map is an isomorphism (of varieties).

\begin{slem}[Known]\label{euequivalent}Let $\F,X,Y,C$ satisfy everything in the last definition,  up to and including the fourth condition. Then the following conditions are equivalent:

\vsp 
\noindent\textbf{(i)} condition 5 is true;\hspace{3mm}\textbf{(ii)} condition 5 is true after interchanging $X$ and $Y$; 

\vsp
\noindent\textbf{(iii)} the map from $\Lie{X\cap C}\s\Lie{ X}\times\Lie{Y\cap C}\s\Lie{ Y}$ to $\KKK$ defined by $$(x,y)\rightarrow \F([\exp(x),\exp(y)])$$ is a nondegenerate bilinear form. 

\vsp
\noindent\textbf{(iv)} The map from $X\cap C\s X$ to  $\AAnk(Y\cap C\s Y) $ given by$$x\rightarrow (y\rightarrow \F([x,y]) )$$ is an  isomorphism (of groups and of varieties); \hspace{3mm}\textbf{(v)} condition (iv) holds after interchanging $\F$ and $\Z$.
\end{slem}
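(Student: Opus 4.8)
The plan is to prove the chain of equivalences by first translating everything into Lie-algebra language via Facts \ref{areconnected} and \ref{Liehom}, where the statements become assertions about a bilinear pairing, and then showing (iii) is the symmetric ``pivot'' from which all the other conditions follow in one move each. Write $\mathfrak{x}=\Lie{X}$, $\mathfrak{y}=\Lie{Y}$, $\mathfrak{c}=\Lie{C}$, and let $\bar{\mathfrak{x}}=\Lie{X\cap C}\backslash\Lie{X}$ (a vector-space complement of $\Lie{X\cap C}$ in $\mathfrak{x}$), and similarly $\bar{\mathfrak{y}}$. Condition 1 ($C\supseteq[X,X]\cup[Y,Y]\cup[X,Y]$) together with condition 3 (so $\F|_C$ is $X$- and $Y$-invariant, hence $J:=J_{\F|_C}$ or rather the linear functional $u\mapsto\F(\exp u)$ is annihilated by the relevant brackets) ensures that the map $B(x,y):=\F([\exp(x),\exp(y)])$ is well-defined on $\bar{\mathfrak{x}}\times\bar{\mathfrak{y}}$, bilinear, and does not depend on the choice of complements; condition 4 is exactly what makes $B$ descend from $\mathfrak{x}\times\mathfrak{y}$ to the quotients. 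This is the bookkeeping that has to be done carefully but is routine given Fact \ref{areconnected}.

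Next I would establish (i)$\iff$(iii). The set in (\ref{zomz}), namely $\{\QQ\in\AAnk(XC):\QQ|_C=\F|_C\}$, is by Fact \ref{Liehom} a torsor under $\AAnk(X\cap C\backslash X)\cong\bar{\mathfrak{x}}^{*}$: two extensions differ by a functional on $\mathfrak{x}$ vanishing on $\Lie{X\cap C}$. For $y\in Y$ and $\QQ$ in this set, conjugation gives $(\exp(y)\QQ)(\exp x)-\QQ(\exp x)=\QQ(\exp(y)^{-1}\exp(x)\exp(y)\exp(x)^{-1})=\QQ([\exp(-y),\exp x])$ up to higher commutator terms which lie in $C$ and are fixed — so the change is $\F([\exp(-y),\exp x])=-B(x,y)$, using that $\QQ|_C=\F|_C$. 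Hence the action of $Y\cap C\backslash Y$ on the torsor is, under the identification with $\bar{\mathfrak{x}}^{*}$, translation by $y\mapsto(-B(\cdot,y))$. Freeness of this action means $B(\cdot,y)=0\Rightarrow y\in\Lie{Y\cap C}$, i.e. $B$ has trivial right kernel; transitivity (over a torsor modeled on $\bar{\mathfrak{x}}^{*}$) means $y\mapsto B(\cdot,y)$ surjects onto $\bar{\mathfrak{x}}^{*}$, which with $\dim\bar{\mathfrak{x}}$ and $\dim\bar{\mathfrak{y}}$ forcing equality gives nondegeneracy of $B$ on both sides. Thus (i)$\iff$(iii), and since (iii) is manifestly symmetric in $X$ and $Y$ (replace $B$ by $B^{t}(y,x)=B(x,y)$, noting $\F([\exp x,\exp y])=-\F([\exp y,\exp x])$ up to the $C$-terms, or simply that a bilinear form is nondegenerate iff its transpose is), the same argument applied with $X,Y$ swapped gives (ii)$\iff$(iii). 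Then (iv) is just a repackaging: the map $x\mapsto(y\mapsto\F([x,y]))$ from $X\cap C\backslash X$ to $\AAnk(Y\cap C\backslash Y)\cong\bar{\mathfrak{y}}^{*}$ is, after applying $\exp$ and Fact \ref{Liehom}, the linear map $\bar{\mathfrak{x}}\to\bar{\mathfrak{y}}^{*}$ induced by $B$; it is a group-and-variety isomorphism iff it is a linear isomorphism iff $B$ is nondegenerate. So (iv)$\iff$(iii), and (v)$\iff$(iii) by symmetry again (here I read condition (v) as condition (iv) with the roles of $X$ and $Y$ — equivalently of $\F$ and the ``other side'' — interchanged, matching (ii)).

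The main obstacle I anticipate is the first step: pinning down precisely why $B$ is well-defined and independent of the complements, and why the torsor-action computation above only picks up the first commutator term. Both come down to the Baker–Campbell–Hausdorff-type identity $\exp(x)\exp(y)=\exp(x+y+\tfrac12[x,y]+\cdots)$ in the unipotent group, where all the correction terms lie in $\Lie{[X,X]}+\Lie{[Y,Y]}+\Lie{[X,Y]}\subseteq\mathfrak{c}$ by condition 1, hence are killed after one takes $\QQ|_C=\F|_C$ and invokes condition 3 to see these are unchanged under the conjugation; making this precise — that the entire discrepancy is $\pm B(x,y)$ with no residual higher terms — is the one place where care is genuinely needed. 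Once that is set up, each of the implications (i),(ii),(iv),(v)$\iff$(iii) is a one-line consequence of ``a bilinear pairing is nondegenerate iff left-nondegenerate iff right-nondegenerate iff it induces an isomorphism to the dual,'' combined with a dimension count forcing $\dim\bar{\mathfrak{x}}=\dim\bar{\mathfrak{y}}$.
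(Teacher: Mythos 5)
Your proposal is correct and takes essentially the same route as the paper's (very terse) proof: both translate condition 5 into the statement that the pairing $(x,y)\mapsto\F([\exp x,\exp y])$ on $\Lie{X\cap C}\s\Lie{X}\times\Lie{Y\cap C}\s\Lie{Y}$ is nondegenerate, via Facts \ref{areconnected} and \ref{Liehom}, and then read off (ii), (iv), (v) from the symmetry of (iii) and the usual reformulations of nondegeneracy. The one place I would push back is your worry about Baker--Campbell--Hausdorff corrections: none are needed. Because $\QQ$ is an AF (a group homomorphism to $\KKK$), the discrepancy computes exactly at the group level: for $u\in Y$, $x\in X$, one has $(u\QQ)(x)-\QQ(x)=\QQ(u^{-1}xu)-\QQ(x)=\QQ(u^{-1}xux^{-1})=\QQ([u^{-1},x])$, which lies in $C$ by condition 1, equals $\F([u^{-1},x])$ since $\QQ|_C=\F|_C$, and equals $\F([x,u])$ by the $Y$-invariance of $\F|_C$ (condition 3). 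This is precisely the identity $\F([x,y])=(x^{-1}\F)(y)-\F(y)$ that the paper cites as the pivot of the argument; it is an exact group identity, not a first-order approximation, so the ``genuine care'' you flag is already handled by additivity. (There is also a harmless sign slip in your torsor-action calculation; since you only need the map $y\mapsto B(\cdot,y)$ to be a linear isomorphism, the sign is irrelevant.)
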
\begin{proof} By using Fact \ref{areconnected} for $[N\tlarrow X,Y]$, Fact \ref{Liehom}, and noticing that $\F([x,y])=(x^{-1}\F)(y)-\F(y)$, we obtain $(i)\iff(iii)$. Since $(iii)$ remains the same after interchanging X and Y, we obtain  $(i)\iff(ii).$ Finally by using again Fact \ref{areconnected} for $[N\tlarrow X,Y]$ we obtain $(iv)\iff(iii)\iff(v)$.
 \end{proof}

\begin{scor}[Known]\label{sym}Let $\F,X,Y$ be as in the definition above. The operation $\exchange{Y}{X}$ is defined, it contains in its domain the AF $\exchange{X}{Y}(\F)$, and if we  further assume that $\F(Y)=\{0\},$ we have \begin{equation*}\exchange{Y}{X}(\exchange{X}{Y}(\F))=\F.\qedhere \end{equation*}\end{scor}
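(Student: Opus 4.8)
Here is the plan. Everything is checked directly against Definition \ref{euoperation}, and the key point is that the \emph{same} subgroup $C \subseteq D_\F$ that witnesses $\F$ lying in the domain of $\exchange{X}{Y}$ will also witness $\exchange{X}{Y}(\F)$ lying in the domain of $\exchange{Y}{X}$. First, $\exchange{Y}{X}$ is a legitimate $\eu$-operation for the trivial reason that it is obtained from the definition of $\eu$-operations by taking the two unipotent subgroups to be $Y$ and $X$ (in that order); so ``$\exchange{Y}{X}$ is defined'' needs no argument.

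Next I would set $\Z := \exchange{X}{Y}(\F)$, so $D_\Z = XC$, $\Z|_C = \F|_C$, and $\Z(X) = \{0\}$, and verify that $\Z$ satisfies conditions 1--5 of Definition \ref{euoperation} with $(X,Y)$ replaced by $(Y,X)$ and with the witnessing subgroup $C$. Condition 1, $C \supseteq [Y,Y]\cup[X,X]\cup[Y,X]$, is identical to condition 1 for $\F$ since $[X,Y]=[Y,X]$, and $C \subseteq XC = D_\Z$. Condition 2 asks that $D_\Z = XC$, which is exactly the defining property of $\Z$. Condition 3 ($\gamma(\Z|_C)=\Z|_C$ for $\gamma \in Y\cup X$) and condition 4 ($\Z(Y\cap C)=\Z(X\cap C)=\{0\}$) both follow from the corresponding conditions for $\F$ together with $\Z|_C = \F|_C$. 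Condition 5 requires the conjugation action of $X\cap C\backslash X$ on $\{\QQ\in\AAnk(YC): \QQ|_C = \Z|_C\} = \{\QQ\in\AAnk(YC): \QQ|_C = \F|_C\}$ to be free and transitive; this is precisely condition 5 for $\F$ after interchanging $X$ and $Y$, hence holds by Lemma \ref{euequivalent}, implication (i)$\iff$(ii). Since Definition \ref{euoperation} records that the witnessing subgroup is unique, $C$ is the subgroup attached to $\Z$ in the domain of $\exchange{Y}{X}$.

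Finally, assume $\F(Y)=\{0\}$. By Definition \ref{euoperation} applied with roles swapped and witnessing subgroup $C$, the AF $\exchange{Y}{X}(\Z)$ is characterized by having domain $YC$, restriction to $C$ equal to $\Z|_C = \F|_C$, and being trivial on $Y$. But $\F$ itself has domain $YC$ (condition 2 for $\F$), restriction $\F|_C$ to $C$, and is trivial on $Y$ by hypothesis; since $YC$ is generated as a group by $Y$ and $C$ and an AF is a homomorphism to $\KKK$, an AF on $YC$ is determined by its restrictions to $Y$ and to $C$. Hence $\exchange{Y}{X}(\Z)=\F$. The only non-bookkeeping input is the appeal to Lemma \ref{euequivalent} for condition 5 of $\Z$, so there is no real obstacle; I would only emphasize that the hypothesis $\F(Y)=\{0\}$ is genuinely needed, since $\exchange{Y}{X}(\Z)$ is by construction the extension of $\F|_C$ to $YC$ that is trivial on $Y$, which in general differs from $\F$.
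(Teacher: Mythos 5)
Your proof is correct and follows essentially the same route as the paper's: the paper also appeals to Lemma \ref{euequivalent} (which gives exactly the (i)$\iff$(ii) equivalence you use for condition 5) to conclude that $\exchange{Y}{X}$ is defined with $\exchange{X}{Y}(\F)$ in its domain, and then dismisses the remaining verification as routine ("the rest follows"). You have simply written out the bookkeeping—checking conditions 1 through 4 transfer via $\Z|_C=\F|_C$, observing that the same $C$ works, and using that an AF on $YC$ is determined by its restrictions to $Y$ and $C$—that the paper leaves implicit.
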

\begin{proof}The lemma above implies that $\exchange{Y}{X}$ is defined, and the rest follows.\end{proof}In ``concrete" cases, we use the corollary above without mention\footnote{For example we do not mention it in every case that the domains of the AFs involved are generated by $\ossa$-groups (which we define later in \ref{ossagroup}).}.

We roughly explained the concept of an $\AAnk$-tree in Subsection \ref{treesroughly}. In the rest of the current subsection we precisely explain $\AAnk$-trees and other  closely related notations. To explain them, we start by defining a more general type of tree---which is never mentioned after the current section--- which we call ``$\AAnk$-labeled out-tree".  

\begin{sdefi}[$\AAnk$-labeled out-tree, input vertex]\label{Alab1}
By an $\AAnk$-labeled out-tree we mean a directed tree in which \begin{enumerate}
\item one vertex is called an input vertex, and all the edges point away from the input vertex;
\item Each vertex is labeled with an AF in $\AAnk[n]$.  
\end{enumerate} 
\end{sdefi} We describe the definition again in a more formal way (which we use only in  a few occasions).  
\begin{sdefi}[$\AAnk$-labeled out-tree]\label{Alab2}
 Consider a triplet $\Xi=(\VVV,\EEE,f)$, where: $\VVV$ is a set;  $\EEE$ is a subset of $\{(v,u):v,u\in \VVV\text{ and }v\neq u\}$; and $f$ is a function from $\VVV$ to $\AAnk[n]. $ We call the elements in $\VVV$ (resp. $\EEE$) the vertices (resp. edges) of $\Xi.$ For each $v\in \VVV$ we call $f(v)$ the label of $v$.  
 
 We say that $\Xi$ is an $\AAnk$-labeled out-tree if in addition: the graph with vertices (resp. edges) the elements of $\VVV$ (resp. the elements of $\{\{u,v\}:(u,v)\in\EEE\}$) is a tree; there is a $v_0\in\VVV$ such that for all $(v,u)\in\EEE$, the number of edges connecting $u$ with $v_0$ is bigger (by one) from the number of edges connecting $v$ with $v_0$ (we say that $v_0$ is connected to itself by zero vertices). 
\end{sdefi}
\begin{sdefi}[Input,output,...]\label{outAl}Let $\Xi=(\VVV,\EEE,f)$ be an $\AAnk$-labeled out-tree. The input vertex of $\Xi$ is already defined. An output vertex  $u$ of $\Xi$ is  an element in $\VVV$ such that for every  $u^\prime\in\VVV$ we have $(u,u^\prime)\not\in\EEE.$ The label of the input vertex (resp. any vertex, an output vertex) of $\Xi$ is called the input AF (resp an AF, an output AF) of $\Xi$.
\end{sdefi}
\begin{convention}\label{convention} Unless otherwise specified, we identify any two $\AAnk$-labeled out-trees which are ``isomorphic as labeled directed trees".

We describe the previous paragraph precisely. Let $(\VVV,\EEE,f)$ be an $\AAnk$-labeled out-tree. Let $s: \VVV^\prime\rightarrow \VVV$
be a bijection of a set $\VVV^\prime$ onto $\VVV$, and $\EEE^\prime=\{(s^{-1}(x),s^{-1}y)|(x,y)\in E\}$. Then 
$(\VVV^\prime,\EEE^\prime,f\cdot s)$ is an $\AAnk$-labeled out-tree which we identify with $(\VVV,\EEE,f)$.
\end{convention}Since $\AAnk$-labeled out-trees are the most general trees we consider, we define subtrees so that they preserve labels and direction. More presisely:
\begin{sdefi}[Subtree]\label{subtree}  Let $\Xi=(\VVV,\EEE,f)$ be an $\AAnk$-labeled out-tree.  Consider a choice of sets  $\VVV^\prime\subseteq \VVV$ and $\EEE^\prime\subseteq\EEE$, for which the triplet $\Xi^\prime:=(\VVV^\prime,\EEE^\prime,f|_{\VVV^\prime})$ is also an $\AAnk$-labeled out-tree. Then we say that $\Xi^\prime$ is a subtree of $\Xi.$

More generally assume that for two texts $\ast$ and $\ast^\prime$ we encounter the text $\ast$-sub$\ast^\prime$. Then by an $\ast$-sub$\ast^\prime$ of $\Xi$ we mean a subtree of $\Xi$ which is an $\ast$-$\ast^\prime$. For example, for $\F\in\AAnk$, by using the later-defined concept of $\AAnk$-quasipaths, an $(\F,\e)$-subquasipath of $\Xi$ is a subtree of $\Xi$ which is an $(\F,\e)$-quasipath.
\end{sdefi}
\begin{sdefi}[Initial subtree]
Let $\Xi$ be an $\AAnk$-labeled out-tree. If for any text $\ast$ we encounter the text ``an initial $\ast$ of $\Xi$", then by the later text we mean: an $\ast$ of $\Xi$ which has the same input vertex as $\Xi$. For example $\ast $ can by any of the later-defined texts: $\AAnk$-subtree, $\AAnk$-subquasipath, $\AAnk$-subpath.\end{sdefi}

\begin{sdefi}[Depth]\label{outinput}Let $\Xi$ be an $\AAnk$-labeled out-tree. The depth of a vertex $v$ of $\Xi$ is the number of edges of the path connecting $v$ to the input vertex of $\Xi$. The depth of $\Xi$ is the biggest number among the depths of vertices of this tree. \end{sdefi}

Before defining $\AAnk$-trees in general, we define the ones which are obtained (as in Corollary \ref{everye} below) from a single $\AAnk$-operation, and we call them  $\AAnk$-steps.   

\begin{sdefi}[$\AAnk$-steps, compete after Definitions \ref{estepdef}, \ref{eusteps}, \ref{costepsdef}, \ref{trees}]\label{Astepdef} An $(\AAnk,\kkk)$-step is an $\AAnk$-labeled out-tree which is  an $(\e,\kkk)$-step or a $(\eu,\kkk)$-step or a $(\co,\kkk)$-step (and each of them in turn is defined below). We also call it $\AAnk$-step.\end{sdefi}
\begin{sdefi}[$\e$-steps]\label{estepdef} Consider an $\AAnk$-labeled out-tree $\xi$ of depth one with input AF $\F$ and with all its AFs in $\AAA$, and a $\kkk$-group N such that: $N\supseteq D_\F\supseteq [N,N]$; any two output vertices of $\xi$ are labeled with different AFs, and  the set of output AFs of $\xi$ is equal to $\{\Z\in\AAA(N): \Z|_{D_\F}=\F\} $ (hence this set is nonempty). Then $\xi$ is called an $(\e,\kkk)$-step or an $\e$-step. 

Except for the current subsection, it is always assumed that $N\neq D_\F$ (more on this in the convention below). Then, by Fact \ref{areconnected} we have $\Di{N}>\Di{D_\F}$ (some times we skip to mention this use of Fact \ref{areconnected}). 

Due to the relation of $\e$-steps with Fourier expansions, an alternative name we adopt in many cases for an output AF of an $\e$-step is: a term of this $\e$-step. More on this  in Definition \ref{termse}.\end{sdefi} \begin{convention}[Excluding trivial $\e$-steps and $\eu$-steps] Except for the current subsection, $\e$-steps, $\e$-operations, $\eu$-steps (defined below), and $\eu$-operations are always assumed to be nontrivial. That is: $\e$-operations and $\eu$-operations are not the identity operations in their domain (by identifying a singleton with its element); for $\e$-steps it is explained above; for $\F,\Z,C$ as in Definition \ref{eusteps}  below we have $D_\F\neq C$ (and hence $D_\Z\neq C$).\end{convention}
\begin{sdefi}[Action on an $\e$-step]\label{actionestep}
Consider an $\e$-step $\xi$ with its AFs belonging in $\AAnk[n] $ (based on later notation this is an $(\e, GL_n)$-step), and let $H$ be a subgroup of $GL_n$ which acts by conjugation on the set of output AFs of this $\e$-step. We then say that $H$ acts on $\xi$ by conjugation. 
\end{sdefi}
\begin{sdefi}[$\eu$-steps]\label{eusteps}By a $\eu$-step, say $\I$, we denote an $\AAnk$-labeled out-tree with only two vertices, so that for $\F,\Z$ respectively being its input AF and its output AF, and for $C:=D_\F\cap D_\Z$  we have:\begin{enumerate}
\item $C\supseteq [D_\F,D_\F]\cup[D_\Z,D_\Z]\cup[D_\F,D_\Z]$;  
\item $\F|_C=\Z|_C;$
\item\label{eustepslast} The action by conjugation of $C\s D_\F$, on the $(\F|_C,\e,\KKK)$-step for which $\Z$ is one of its output AFs, is free and transitive.  
 
\end{enumerate}If all the data\footnote{More precisely, the AFs $\F,\Z$ are defined over $\kkk$,  and the action by conjugation of $C(\kkk)\s D_\F(\kkk)$, on the $(\F|_C,\e,\kkk)$-step for which $\Z$ is one of its output AFs, is free and transitive.} is defined over $\kkk$, we say that $\I$ is a $(\eu,\kkk)$-step.\end{sdefi}
\begin{slem}[Known]\label{eustepslem}Let $\F,$ $\Z$, and $C$ be as in the definition above except that the third condition is not required. The following conditions are equivalent:

\vsp 
\noindent\textbf{(i)}condition \ref{eustepslast} holds\hspace{3mm}\textbf{(ii)} condition \ref{eustepslast} holds after interchanging $\F$  and $\Z$; 

\vsp
\noindent\textbf{(iii)} the map from $\Lie{C}\s\Lie{ D_\Z}\times\Lie{ C}\s\Lie{ D_\F}$ to $\KKK$ defined by $$(x,y)\rightarrow \F([\exp(x),\exp(y)])$$ is a nondegenerate bilinear form; 

\vsp
\noindent\textbf{(iv)} The map from $ C\s D_\F$ to  $\AAnk( C\s D_\Z) $ given by$$x\rightarrow (y\rightarrow \F([x,y]) )$$ is an  isomorphism (of groups and of varieties); \hspace{3mm}\textbf{(iv)} condition (iv) holds after interchanging $\F$ and $\Z.$
\end{slem}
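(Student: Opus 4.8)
The plan is to follow the template set by Lemma~\ref{euequivalent}, which proves the analogous equivalences for $\eu$-operations; the $\eu$-step version stated here is essentially a repackaging of that lemma in which the roles of $X$ and $Y$ are played by $D_\F$ and $D_\Z$, and the role of $C$ is played by $D_\F\cap D_\Z$. First I would observe that conditions 1 and 2 of Definition~\ref{eusteps} (which are assumed throughout) put us exactly in the setting where all of $D_\F$, $D_\Z$, and $C$ normalize $C$, and $C$ contains all the relevant commutator subgroups, so that the algebra below makes sense. The engine of the whole argument is the elementary identity $\F([x,y]) = (x^{-1}\F)(y) - \F(y)$, valid for $x \in D_\F$, $y \in D_\Z$ (after passing through $\exp$), which rewrites the conjugation action of $C\backslash D_\F$ on the relevant $\e$-step in additive terms.

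The key steps, in order: (i) Use Fact~\ref{areconnected} (applied with $N$ replaced successively by $D_\F$ and $D_\Z$) together with Fact~\ref{Liehom} to translate condition~\ref{eustepslast} — freeness and transitivity of the $C\backslash D_\F$-action on the $\e$-step $\{\QQ \in \AAnk(D_\Z): \QQ|_C = \F|_C\}$ — into the statement that the map $x \mapsto (y \mapsto \F([x,y]))$ from $\Lie{C}\backslash\Lie{D_\F}$ to the dual of $\Lie{C}\backslash\Lie{D_\Z}$ is a bijective linear map; this is $(i)\iff(iv)$, and unwinding the pairing gives $(i)\iff(iii)$. Here transitivity of the action corresponds to surjectivity of the map and freeness to injectivity, with the orbit-stabilizer/dimension bookkeeping handled by Fact~\ref{areconnected} exactly as in the proof of Lemma~\ref{euequivalent}. (ii) Note that the bilinear form in (iii) is visibly symmetric in the two slots up to the sign conventions — more precisely, $\F([x,y]) = -\F([y,x])$ — so nondegeneracy in one argument is equivalent to nondegeneracy in the other; this yields $(i)\iff(ii)$ and $(iii)$ is manifestly invariant under interchanging $D_\F$ and $D_\Z$. (iii) Finally, run the same translation with the roles of $\F$ and $\Z$ swapped to get the last equivalence $(iii)\iff(v)$ (the statement as typed has a duplicated label ``(iv)'' for what should be the fifth item, so I would read the last bullet as: condition~(iv) holds after interchanging $\F$ and $\Z$). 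The whole proof should be a one- or two-line invocation of Lemma~\ref{euequivalent} plus these identifications, since that lemma already did the work in the $\eu$-operation language.

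The only mild subtlety — and the place I would be most careful — is making sure the dictionary between ``$\eu$-step'' and ``$\eu$-operation'' is airtight: in Definition~\ref{eusteps} the subgroup $C$ is \emph{defined} as $D_\F\cap D_\Z$, whereas in Definition~\ref{euoperation} the analogous $C$ is characterized by five conditions and only afterward shown to be unique. I would check that conditions 1 and 2 of Definition~\ref{eusteps}, together with the fact that $\F$ and $\Z$ restrict to the same AF on $C$ and each vanishes on $C\cap(\text{the other domain})$ — which follows since $C = D_\F\cap D_\Z$ forces $D_\F\cap C = C = D_\Z\cap C$, trivializing condition~4 of Definition~\ref{euoperation} — really do place us in the hypotheses of Lemma~\ref{euequivalent} with $X = D_\F$, $Y = D_\Z$. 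Once that identification is made, conditions (i)--(v) here are literally conditions (i)--(v) there, and there is nothing left to prove. I do not expect any genuine obstacle beyond this bookkeeping; the content is entirely in Fact~\ref{areconnected} and Fact~\ref{Liehom}, both already available.
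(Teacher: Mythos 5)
The core idea — reduce to Lemma~\ref{euequivalent} by identifying the $\eu$-step data with $\eu$-operation data — is exactly the paper's approach, but your dictionary $X=D_\F$, $Y=D_\Z$ (or vice versa) does not place you in the hypotheses of that lemma, and the reason is precisely the point you flagged as a ``mild subtlety.'' With that choice $X\cap C = Y\cap C = C$, so condition 4 of Definition~\ref{euoperation}, namely $\F(X\cap C)=\F(Y\cap C)=\{0\}$, becomes the assertion $\F(C)=\{0\}$. That is not ``trivialized''; it is false in general — the whole point of Definition~\ref{eusteps} is that $\F|_C=\Z|_C$ is allowed to be an arbitrary nontrivial AF on $C$ (this is what the $\eu$-step preserves). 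So as written, Lemma~\ref{euequivalent} simply does not apply to the $X,Y$ you chose, and the ``one-line invocation'' has a genuine hole.

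The paper's fix, and the missing ingredient in your proposal, is Corollary~\ref{partlysplit}: since $C\triangleleft D_\F$ and $C\triangleleft D_\Z$ (from condition 1 of Definition~\ref{eusteps}), one may choose complements $Y\subseteq D_\F$ and $X\subseteq D_\Z$ with $D_\F=YC$, $D_\Z=XC$, and — this is what the corollary buys — $\F(Y\cap C)=0$ and $\Z(X\cap C)=0$, hence $\F(X\cap C)=\Z(X\cap C)=0$ as well. With these $X,Y$ all five conditions of Definition~\ref{euoperation} hold, Lemma~\ref{euequivalent} applies, and Fact~\ref{areconnected} gives the natural identifications $\Lie{X\cap C}\s\Lie{X}\cong\Lie{C}\s\Lie{D_\Z}$ and $\Lie{Y\cap C}\s\Lie{Y}\cong\Lie{C}\s\Lie{D_\F}$ under which the bilinear form in (iii) here matches the one in Lemma~\ref{euequivalent}(iii). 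Your alternative remark that one could instead repeat the proof of Lemma~\ref{euequivalent} directly (bypassing the split) is fine and is also mentioned by the paper; but the reduction route you actually wrote out needs \ref{partlysplit}, not the naive identification.
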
\begin{proof}It directly follows from Lemma \ref{euequivalent}, Corollary \ref{partlysplit}, and Fact \ref{areconnected}. Alternatively one can avoid Corollary \ref{partlysplit} by repeating the proof of Lemma \ref{euequivalent}.
\end{proof}
\begin{sdefi}[$\co$-steps]\label{costepsdef}Let $\F\in\AAA[,n]$ and $\gamma\in GL_n(\kkk).$ Then the $\AAnk$-labeled out-tree with only two vertices, having $\F$ and $\gamma\F$ as input and output AF respectively is called a $(\co,\kkk)$-step or a $\co$-step, and we say it is obtained from $\gamma$.\end{sdefi}
\begin{sdefi}\label{obtain1}Given an $\AAnk$-operation, say $f,$ we say that an $\AAnk$-step $\xi$ is \textbf{obtained from} $f$, if the input AF $\F$ of this $\AAnk$-step belongs in the domain of $f$ and $f(\F)$ is equal to the set of output AFs of $\xi.$ Of course in the previous sentence, in case $\xi$ is not an $\e$-step, we identify the singleton set $f(\F)$ with its element. An $\e$-step obtained from an $\e$-operation, say $\e(V)$, is said to be \textbf{over} $V$.
\end{sdefi}
\begin{scor}[Known]\label{everye}Every $e$-step, (resp. $\eu$-step, $\co$-step) is obtained from an $\e$-operation, (resp. $\eu$-operation, $\co$-operation). Let $\F,\Z $ respectively be the input and output AFs of a $\eu$-step. Then this $\eu$-step is obtained from an operation $\eu(X,Y) $ so that for $C$ being as in Definition \ref{euoperation} we have $C:=D_\F\cap D_\Z$.

Finally, for every $\e$-operation (resp. $\eu$-operation, $\co$-operation), and every AF $\F$ in its domain, there is a unique $e$-step, (resp. $\eu$-step, $\co$-step) with input AF equal to $\F$ which is obtained from this operation.\end{scor}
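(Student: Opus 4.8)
The statement packages three ``every step arises from an operation'' claims and three ``every operation yields a unique step'' claims, together with the sharpening about $C$ in the $\eu$-case. The $\co$-case is immediate: by Definition \ref{coper} a $\co$-step with input $\F$ and output $\gamma\F$ (for $\gamma\in GL_n(\kkk)$) is exactly the step obtained from conjugation by $\gamma$, and conversely applying conjugation by $\gamma$ to any $\F\in\AAA[,n]$ produces precisely that two-vertex tree, uniquely. So the content is in the $\e$- and $\eu$-cases, where in both directions the engine is Corollary \ref{partlysplit} (splitting a unipotent $\kkk$-group off a normal $\kkk$-subgroup so that every additive function vanishes on the intersection), Facts \ref{Liehom} and \ref{areconnected}, and, for $\eu$, the pairing equivalences in Lemmas \ref{euequivalent} and \ref{eustepslem}.

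\textbf{The $\e$-case.} Forward: let $\xi$ be an $\e$-step with input $\F$ and target group $N$, so $N\supseteq D_\F\supseteq[N,N]$; since $[N,N]\subseteq D_\F$ we get $D_\F\triangleleft N$. Apply Corollary \ref{partlysplit} to $(N,D_\F)$ to get a $\kkk$-subgroup $V$ with $N=VD_\F$ and $\F'(V\cap D_\F)=0$ for all $\F'\in\AAnk(N)$. Then $\F$ lies in the domain of $\e(V)$: we have $[V,V]\subseteq[N,N]\subseteq D_\F$; fixing an extension $\Z\in\AAnk(N)$ of $\F$ (one exists since $\xi$'s output set is nonempty) gives $\F(V\cap D_\F)=\Z(V\cap D_\F)=0$; and $v\F=\F$ for $v\in V$ because $v$ normalizes $D_\F$ and every $\Z\in\AAnk(N)$ is $N$-conjugation-invariant (under Fact \ref{Liehom}, $v\Z$ corresponds to $\ell\circ\mathrm{Ad}(v^{-1})$, and $\mathrm{Ad}(v^{-1})-\mathrm{id}$ sends $\Lie{N}$ into $[\Lie{N},\Lie{N}]\subseteq\ker\ell$ by unipotency, using Fact \ref{areconnected}). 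Hence $\e(V)(\F)$ equals the set of output AFs of $\xi$. Converse: given $\e(V)$ and $\F$ in its domain, put $N:=VD_\F$ and set $\tilde\F(vd):=\F(d)$; the domain conditions $\F(V\cap D_\F)=0$ and $v\F=\F$ make this well defined and a $\kkk$-group homomorphism $N\to\KKK$ extending $\F$ (algebraicity by descent along $V\times D_\F\to N$), so $\e(V)(\F)=\{\Z\in\AAA(N):\Z|_{D_\F}=\F\}$ is nonempty; combined with $N\supseteq D_\F\supseteq[N,N]$ this is a genuine $\e$-step, and it is the unique one with input $\F$ obtained from $\e(V)$ since an $\e$-step is determined by its input AF and its set of output AFs.

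\textbf{The $\eu$-case.} Throughout I translate condition 5 of Definition \ref{euoperation} (freeness and transitivity of the $Y\cap C\s Y$-action) and condition 3 of Definition \ref{eusteps} (freeness and transitivity of the $C\s D_\F$-action) into the intrinsic nondegeneracy statements of Lemmas \ref{euequivalent}(iii) and \ref{eustepslem}(iii); the two pairings coincide via $\Lie{X}/\Lie{X\cap C}\cong\Lie{D_\Z}/\Lie{C}$ and $\Lie{Y}/\Lie{Y\cap C}\cong\Lie{D_\F}/\Lie{C}$ (from $XC=D_\Z$, $YC=D_\F$). Forward (this also yields ``every $\eu$-step arises from a $\eu$-operation''): let $\I$ be a $\eu$-step with input $\F$, output $\Z$, and $C:=D_\F\cap D_\Z$, so $C\triangleleft D_\F$ and $C\triangleleft D_\Z$. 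Since $D_\F\neq C\neq D_\Z$ while $(x,y)\mapsto\F([\exp x,\exp y])$ would vanish identically if $\F|_C=0$ (because $[D_\Z,D_\F]\subseteq C$), the nondegeneracy of Lemma \ref{eustepslem}(iii) forces $\F|_C\neq0$; hence $\ker\Z\not\supseteq C$, so $(\ker\Z)C=D_\Z$ by Fact \ref{areconnected}. Take $Y$ from Corollary \ref{partlysplit} on $(D_\F,C)$, and $X$ from Corollary \ref{partlysplit} on $(\ker\Z,\ker\Z\cap C)$ — so $X\subseteq\ker\Z$, giving $\Z|_X=0$, and $XC=D_\Z$. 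Checking the five conditions of Definition \ref{euoperation} for $\F$ with this $C$ (conditions 1, 2, 4 immediate from the $\eu$-step conditions and the choices; condition 3 by conjugation-invariance, for $Y$ via $\F$ and for $X$ via $\Z$ and $\Z|_C=\F|_C$; condition 5 by the pairing translation) shows $\exchange{X}{Y}$ is defined, $\F$ is in its domain, and its $C$ equals $D_\F\cap D_\Z$; and $\exchange{X}{Y}(\F)=\Z$, since both are the unique AF on $D_\Z=XC$ restricting to $\F|_C$ on $C$ and trivial on $X$. Converse: given $\exchange{X}{Y}$, $\F$ in its domain, $\Z:=\exchange{X}{Y}(\F)$; the only delicate point is $D_\F\cap D_\Z=C$. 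For $d\in D_\F\cap XC$, write $d=xc$ with $x\in X$, $c\in C\subseteq D_\F$; then $x=dc^{-1}\in X\cap D_\F$, and since $\F([d_1,d_2])=0$ for all $d_1,d_2\in D_\F$ ($\F$ being a homomorphism to the abelian group $\KKK$) the injectivity in Lemma \ref{euequivalent}(iv) forces $x\in X\cap C$, hence $d\in C$; the reverse inclusion is clear. With $D_\F\cap D_\Z=C$ in hand (and the nontriviality conventions giving $D_\F\neq C\neq D_\Z$), conditions 1--3 of Definition \ref{eusteps} follow by unwinding (condition 3 again via the pairing translation), and uniqueness is as before.

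\textbf{Main obstacle.} Once one commits to running everything through Corollary \ref{partlysplit} and the pairing equivalences, the bookkeeping is routine; the two spots needing genuine care are in the $\eu$-case. In the forward direction one must choose $X$ \emph{inside} $\ker\Z$ — a mere complement of $C$ in $D_\Z$ would only give $\Z|_{X\cap C}=0$, not $\Z|_X=0$, and so would not pin $\exchange{X}{Y}(\F)$ to the prescribed $\Z$ — which is why the preliminary observation $\F|_C\neq0$ (hence $(\ker\Z)C=D_\Z$) is needed. In the converse direction one must rule out $D_\F\cap D_\Z\supsetneq C$, for which the injectivity in Lemma \ref{euequivalent}(iv), combined with the vanishing of $\F$ on commutators of elements of $D_\F$, is exactly the right tool.
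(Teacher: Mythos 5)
Your proof is correct and follows essentially the same route as the paper: the paper dismisses the $\co$-case and the uniqueness/converse paragraph as trivial and says the rest ``directly follows from Corollary \ref{partlysplit},'' and your argument is exactly a careful unpacking of that, invoking Corollary \ref{partlysplit} to produce the complement groups $V$, $X$, $Y$, and the pairing equivalences in Lemmas \ref{euequivalent} and \ref{eustepslem} to move between the freeness/transitivity conditions and the nondegenerate bilinear form. The extra care you take in the $\eu$-forward direction (choosing $X\subseteq\ker\Z$ after first showing $\F|_C\neq0$) and in the $\eu$-converse direction (using injectivity in Lemma \ref{euequivalent}(iv) together with $\F$ killing commutators to get $D_\F\cap D_\Z=C$) is precisely the bookkeeping the paper leaves implicit, and it is correct.
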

\begin{proof}
The first sentence for $\co$-steps and the second paragraph are trivial. The rest  directly follows from Corollary \ref{partlysplit}.\end{proof}

\begin{sdefi}[$\AAnk$-trees]\label{trees}
An $\AAnk$-labeled out-tree $\Xi$ is called an $(\AAnk,\kkk)$-tree if and only if  the following two properties hold:\begin{itemize}\item $\Xi$ has finite depth.
 \item Consider any vertex $v$ of $\Xi$ which is not an output vertex. Then the subtree of $\Xi$ of depth one that contains as many vertices as possible and has $v$ as input vertex, is an $(\AAnk,\kkk)$-step. To avoid confusion: ``depth one" refers only to the subtree, and not in any way to $\Xi$; recall  the meaning of  ``subtree"  includes preservation of  direction and labels.   
 \end{itemize}  An $(\AAnk,\kkk)$-tree is also called an $\AAnk$-tree. For an $\AAnk$-step $\xi$ which is a subtree of an $\AAnk$-labeled out-tree (usually an $\AAnk$-tree) $\Xi$, we say that ``$\xi$ is an $\AAnk$-step of $\Xi$". Of course $\AAnk$-steps are $\AAnk$-trees.\end{sdefi}

\begin{sdefi}[More on $\AAnk$-trees]\label{moretrees}Let $\SSS$ be a subset of $\AAnk$, $\ast$ be the text obtained by removing the brackets of a subset of $\{\e,\eu,\co\}$ (we have not given any stand alone meaning to the symbols $\e,\co,\eu$), and $H$ be an algebraic group. By an \begin{equation}
\label{ztreesnotation}(\F\rightarrow \SSS,\ast,H,\kkk)\text{-tree}\end{equation}  we mean  an $\AAnk$-tree, say $\Xi,$ such that: (i) $\F$ is the input AF of $\Xi$; (ii) $\SSS$ contains the  output AFs of $\Xi$; (iii) any $\AAnk$-step of $\Xi$ is a $\diamond$-step for $\diamond $ belonging in the set obtained from the text $\ast$ (by putting back the brackets); (iv) all groups and matrices defining $\Xi$ are inside\footnote{That is, the domain of every AF of $\Xi$ is contained in $H$, and every $\co$-step of $\Xi$ is obtained by conjugating with a matrix in $H$.} $H$; (v) $\Xi$ is an $(\AAnk,\kkk)$-tree, and every $\co$-step of $\Xi$ is obtained by conjugating with an element in $H(\kkk)$. 

If any subcollection from the requirements (i),...,(v), is removed, we respectively remove from the notation in (\ref{ztreesnotation}) the text ``$\F$", ``$\rightarrow \SSS$", ``$,\ast$", ``$,H$", ``$,\kkk$". For example an $(\F,H)$-tree is an $\AAnk$-tree satisfying (i) and (iv). Of course, if (iv) is removed but (v) remains, we can remove from (v) the part after the comma. Two minor clarifications: if all but (i) is removed, we write ``$\F$-tree" instead of ``$(\F)$-tree"; and if all except (v) is removed we write ``$(\AAnk,\kkk)$-tree" instead of ``$(\kkk)$-tree".

If $\SSS$ consists of a single element it is identified with it. 

We also adopt the current definition by replacing in its text every occurrence of ``tree" with ``step".  \end{sdefi}\begin{sdefi}[Trivial $\AAnk$-tree] An $\AAnk$-tree with only one vertex is called trivial. For $\F$ being its AF, we usually refer to it as the trivial $\F$-tree.\end{sdefi}
\begin{remark}
	In case in the definiton above $H=GL_n$, even though the meaning of ``$(\F\rightarrow S,\ast,\kkk)$-tree" is the same as the meaning of ``$(\F\rightarrow S,\ast,GL_n,\kkk)$-tree", I frequently adopt the later in the next sections.
\end{remark}

\begin{sdefi}[$\AAnk$-paths, $\AAnk$-quasipaths, $\quasi{\Xi}$]\label{quasidef}

An $(\AAnk,\kkk)$-labeled out-tree $\Xi$, in which every vertex is followed\footnote{That is, for $\Xi=(\VVV,\EEE,f)$, for every $u\in\VVV$ there is at most one $v\in\VVV$ such that $(u,v)\in\EEE$.} by at most one vertex, and which is a subtree of an $(\AAnk,\kkk)$-tree, say $\Xi^\prime,$ is called any of the names (depending on what we need to specify): $\AAnk$-path, $(\AAnk,\kkk)$-path, $\AAnk$-subpath of $\Xi^\prime,$ $(\AAnk,\kkk)$-subpath of $\Xi^\prime.$ Note that the last two names are implied from the first two by Definition \ref{subtree}.

Assume (for the rest of the current definition) there is no $\AAnk$-tree $\Xi^{\prime\prime}$ satisfying: $\Xi^{\prime\prime}$ is a proper subtree of $\Xi^\prime$, and $\Xi$ is a proper subtree of $\Xi^{\prime\prime}$ (by ``proper" we mean not equal). Then we say that $\Xi^\prime$ is an $(\AAnk,\kkk)$-quasipath or an $\AAnk$-quasipath, and we write $\Xi^\prime=\quasi{\Xi}$.  Of course, a more explicit description of $\quasi{\Xi}$,  is to obtain it from $\Xi$, by adding all the vertices and edges of each $(\e,\kkk)$-step which: has as its input AF and one of its output AFs, labels of adjacent vertices in $\Xi$.

We adopt the definitions obtained by changing\footnote{Also, if we choose ``path" we replace in (iii) $\Xi$ with $\quasi{\Xi}$.} in the text of the Definition \ref{moretrees} all occurrences of ``trees"  either with ``path",  or with ``quasipath".

In contrast to the notations in the previous paragraph, the notation $\quasi{\Xi}$ is used only in a few cases (mainly Section \ref{variety}), and in these few cases we assume $\kkk=\KKK$.\end{sdefi}   
\begin{sdefi}[Applying embeddings and $\circ$ to an $\AAnk$-tree]\label{ctree}In addition to $n$, let $n^\prime$ also be a positive integer, and let $H$ (resp. $H^\prime$) be a $\kkk$-subgroup of $GL_n$ (resp. $GL_{n^\prime}$). Let $j$  be a $\kkk$-embedding from $H$ in $H^\prime$. Let $\Xi$ be an $(\AAnk[n] ,H,\kkk)$-tree. 

We define $j(\Xi) $ to be the $(\AAnk[n^\prime] ,H^\prime,\kkk)$-tree which is obtained from $\Xi$, by replacing each AF $\XX$ of $\Xi$ with the AF $j(\XX).$     

Let $\F$ be an AF in $\AAA[,n] $ such that: $H\subseteq\Stab{GL_n}{\F}$; and for every AF $\XX$ of $\Xi$, the AF $\XX\circ\F$ is defined (which is the case if for example $H\cap D_\F$ is trivial). We define  $\Xi\circ\F$ to be the $(\AAnk,\Stab{GL_n}{\F},\kkk)$-tree obtained from $\Xi$, by replacing each AF $\XX$ of $\Xi$ with the AF $\XX\circ\F. $ \end{sdefi}

\begin{sdefi}[The operation $\vee$]\label{veenot1} Let $\Xi=(\VVV,\EEE,f)$ be an $(\AAnk{,\kkk})$-tree, let $\SSS$ be a subset of the  output AFs of $\Xi$, and for each $\XX\in\SSS$ let $\Xi(\XX)=(\VVV_\XX,\EEE_\XX,f_\XX)$ be an $(\AAnk{,\kkk})$-tree with input AF equal to $\XX$. It is assumed that any two output vertices of $\Xi$ have different labels. We use Convention \ref{convention} exactly once so that: (i) for each $\XX\in \SSS$, the input vertex of $\Xi(\XX)$ is the same as the output vertex of $\Xi$ labeled with $\XX$; and (ii) there are no other equalitites of vertices in any two among the trees $\Xi,\Xi(\XX)$ for $\XX\in \SSS $. We define the $(\AAnk,\kkk)$-tree:\begin{equation}\label{zveenotz}\Xi(\XX)\vee_{\XX}\Xi:=\left(\VVV\cup(\cup_{\XX\in \SSS} \VVV_\XX),\EEE\cup(\cup_{\XX\in \SSS} \EEE_\XX),f^\prime\right) \end{equation} where $f^\prime$ is the function from $\VVV\cup(\cup_{\XX\in \SSS} \VVV_\XX)$ to $\AAA[,n]$ given by: $f^\prime|_{\VVV}=f$, $f^\prime|_{\VVV_{\XX}}=f_{\XX}.$ 
 
 Since we write ``$\vee_\XX$" instead of say ``$\vee_{\XX\in \SSS}$", we always specify after each use of this notation the set $\SSS$ over which $\XX$ varies. In some cases that $\SSS$ is a singleton we write $\vee$ instead of $\vee_\XX$.\end{sdefi}\begin{sdefi}[$\vee$ over vertices, mostly optional]\label{veenot2} It is esentially the same as the previous definition. In detail: drop the second sentence and (i); and replace each occurrence of ``AF" and ``AFs" respectively with ``vertex" and ``vertices" (in particular, this makes $\SSS$ a set of vertices).\end{sdefi}
\begin{sdefi}[$\mathrm{I}^{-1}$]\label{inversep} Let $\I$ be a $(\eu,\co)$-path. We denote by $\I[-1]$ the $(\eu,\co)$-path obtained by reversing the direction in $\I$ (here we used Corollary \ref{sym}).
\end{sdefi}

The rest of the current subsection is mostly needed in: Sections \ref{pri}, \ref{optionalsection}, and \ref{compositions}.
\begin{sdefi}[Terms of an $\e$-step, constant and nonconstant terms]\label{termse} Recall the output AFs of an $\e$-step $\xi$ are also called the terms of $\xi$. Let $\F$ be the input AF of $\xi$, and $\e(V)$ be an $\e$-operation from which $\xi$ is obtained. Also let $0_V$ be the trivial AF with domain $V.$ Then we see that $0_V\circ\F $ is among the terms of $\xi$, and we call it the constant term of $\xi.$ 

Of course the constant term depends on the choice of $\e$-operation among the ones from which $\xi$ is obtained. Unless otherwise specified it is assumed that: if $V$ can be chosen to be a root group, and in a unique way, then the constant term is with respect to this choice; if an $\e$-operation from which $\xi$ is obtained is mentioned (for any reason), then again, the constant term is with respect to this operation.

The terms of $\xi$ other than the constant term are called the nonconstant terms of $\xi$.
\end{sdefi}\begin{sdefi}[Used in Sections \ref{pri}, \ref{optionalsection} and \ref{compositions}]\label{alongconstant}
We say that an $\e$-quasipath $\xi$ is \textbf{along constant terms} if: the input AF of each $\e$-step of $\xi$ other than the initial $\e$-step of $\xi$, is the constant term of an $\e$-step in $\xi$. The constant term of the last $\e$-step of $\xi$ is called the \textbf{output constant term} of $\xi$.
\end{sdefi}\begin{sdefi}\label{ovtg}
Only for the needs of the current definition we define:

\begin{subdef*} Let $V$ be an algebraic subgroup of $U_n$ . Let $r$ be the smallest row on which $V$ is nontrivial. We define $\rmo{V}$ to be the biggest number for which $V$ is nontrivial on the  $(r,\rmo{V})$ entry.\end{subdef*}

Let $S$ be a finite set of algebraic subgroups of $U_n$ (resp. a set of pairs of the form $(X,Y)$ where: $X$ and $Y$ are unipotent algebraic subgroups of $GL_n$).  We say that an $\e$-quasipath $\xi$ is \textbf{over} the groups $V$ for $V\in S$ (resp. a $\eu$-path $\xi$ is obtained by applying $\eu(X,Y)$ for $(X,Y)\in S$) if: for each $V\in S $ (resp. $(X,Y)\in S$) there is exactly one $\AAnk$-step in $\xi$ which is obtained from the operation $\e(V)$ (resp. $\eu(X,Y)$). We also use small modifications of these phrases, for example by adding for emphasis the word ``successively". In case $\xi$ is an $\e$-quasipath for which the ``over" is used as previously\footnote{to be more precise, a text of the form ``over $\ast$groups"  is used, where $\ast$ can be any text (for example ``over $\ast$groups" can be ``over the groups" or ``over the subgroups"). Such texts appear only within the usual proof environment (never in a statement of a theorem, proposition, lemma,...).}, we assume unless otherwise specified that the \textbf{order} the $\e$-steps of $\xi$ occur within $\xi$ starts with $\rmo{V}$ being as big as possible and in each step it becomes smaller.\end{sdefi}
 
\begin{sdefi}[More on $\AAnk$-paths, only used in Sections \ref{variety} and \ref{optionalsection}] Let $\Xi$ be an $\AAnk$-path. We say that $\Xi$ is along constant terms if and only if $\quasi{\Xi}$ is along constant terms. 

Let $\Xi^\prime$ be a second $\AAnk$-path, with its input AF being equal to the output AF of $\Xi.$ We define $\Xi^\prime\vee\Xi$ to be the $\AAnk$-path with same output AF as $\Xi^\prime$ satisfying: \begin{equation*}\label{zpaparia}\quasi{\Xi^\prime\vee\Xi}=\quasi{\Xi^\prime}\vee\quasi{\Xi}.\qedhere\end{equation*}\end{sdefi}

A question occurring naturally is:
\begin{NPQ}Let $\kkk\subset\kkk^\prime$ be two fields. Which $(\AAnk,\kkk)$-trees can be mapped to $(\AAnk,\kkk^\prime)$-trees in a way ``compatible" with mapping the set of $\kkk$-rational points of any variety to the set of $\kkk^\prime$-rational points of the same variety?\end{NPQ}

\noindent{I} have not addressed this or any similar question. Of course, the answer is positive for  $\AAnk$-paths and  $\AAnk$-quasipaths.

\begin{sdefi}[Conjugate of an $\AAnk$-tree, mostly optional\footnote{This definition is too simple for its size. The extent to which one will guess the meaning (in any of the uses that we make) without reading the definition, may be sufficient.}]\label{conjt} Within the current definition: all $\AAnk$-trees are $(\AAnk,\kkk)$-trees and all matrices have entries in $\kkk$; Convention \ref{convention} is used \textbf{only} when it is mentioned. Also, the two subdefinitions below are only used within the current definition. 

\vspace{1mm}
\noindent\textbf{Subdefinition 1}. Let $\Xi$  be an $\AAnk[n]$-tree and $\gamma\in GL_n$. Then $\gamma\Xi$ is defined to be the $\AAnk[n]$-tree obtained from $\Xi$ by replacing each AF $\F$ of $\Xi$ with the AF $\gamma\F.$\hfill$\triangle$Subdefinition 1 

\vspace{1mm}
\noindent\textbf{Subdefinition 2}. Let $\Xi$ be an $\AAnk[n]$-tree, and $\SSS$ be a set consisting of $\AAnk[n]$-subtrees of $\Xi$ such that: each $\AAnk$-step in $\Xi$ is in exactly one among the $\AAnk$-trees in $S.$ Let $s$ be a function with domain $\SSS$ taking as values $\AAnk[n]$-trees, such that for each $\xi\in \SSS$ one of the following two happens: (i) the trees $\xi$ and $s(\xi)$ have the same input and output vertices and in each such vertex they have the same label; (ii) $\xi$ has a unique output vertex, the input AF and output AF of $\xi$ are the same, say equal to $\XX$, and $s(\xi)$ is the trivial $\XX$-tree. Then we can start extending the domain of $s$ by defining $s(\Xi_u^{\prime}\vee_u\Xi^\prime):=s(\Xi_u^{\prime})\vee_us(\Xi^\prime)$  where: $u$ is over all output vertices of $\Xi^\prime$; the domain of $s$ is already extended enough to contain $\Xi^\prime$ and $\Xi_u^\prime$ for each such $u$; and Convention \ref{convention} is used for vertices coming from (ii). At the end of this process we obtain $s(\Xi)$.\hfill$\triangle$Subdefinition 2

Let $\Xi$ be an $(\AAnk[n],\kkk)$-tree. 
 
 Let $s_1$ be a function with domain the set of $\AAnk$-steps of $\Xi$, and values being $\AAnk$-steps, such that\begin{enumerate}
 \item $s_1(\xi)=\xi$ for any $\co$-step $\xi$ of $\Xi$
 \item Let $\xi$ be an $\AAnk$-step of $\Xi$ satisfying $s_1(\xi)\neq \xi.$ For an element $\gamma\in GL_n(\kkk)$ we have $s_1(\xi)=\co_\XX(\gamma^{-1})\vee_{\XX}\gamma\xi\vee\co(\gamma) $ where: $\co(\gamma)$ is the $\co$-step obtained from $\gamma$ with the same input AF as $\xi$, $\XX$ varies over all output AFs of $\xi,$ and $\co_\XX(\gamma^{-1})$ is the $\co$-step obtained from $\gamma^{-1}$ with input AF equal to $\XX.$
  \end{enumerate}By using Convention \ref{convention} we apply Subdefinition 2 for $[s\tlarrow s_1]$, and obtain the definition of $s_1(\Xi)$. Let $\co_1$ and $\co_u$ for each output vertex $u$ in $s_1(\Xi)$, be $(\co, GL_n)$-steps, with their vertices chosen so that we can define $(s_1(\Xi))^\prime:=\co_1\vee s_1(\Xi)\vee_u \co_u$.

  Finally consider a function $s_2$ such that: Subdefinition 2 is satisfied for $[\Xi\tlarrow(s_1(\Xi))^\prime, s\tlarrow s_2]$; and if $s_2(\xi)\neq \xi,$ then $\xi$ is a $\co$-path and $s_2(\xi)$ is trivial. 
  
  We say that $s_2((s_1(\Xi))^\prime)$ is a conjugate of $\Xi.$    \end{sdefi}
\subsection{Applying $\AAnk$-trees to automorphic forms}\label{ssaa} In the current subsection we assume that $\kkk$ is  a number field, $\F\in\AAA[,n],$ and $\phi$ is a $GL_n(\A)$-automorphic function.
\begin{sfact}[Known]\label{autsteps}

\noindent{Part 1. }For each $(\F,\e,GL_n,\kkk)$-step, we have the absolute and uniform on the compact subsets of $GL_n(\A)$ convergence: \begin{align}\label{zfournz}&&&&&&&\F(\phi)(g)=\sum \Z(\phi)(g)&\forall g\in GL_n(\A), \end{align}where the sum is over the output AFs of the $\e$-step. Also, for $N$ being the domain of the output AFs of this $\e$-step, by choosing a group $N_2$ as in Lemma \ref{partlysplit} for $[N_1=D_\F]$, the expansion in (\ref{zfournz}) is the Fourier expansion of $\F(\phi) $ over $N_2(\kkk)(N_1\cap N_2)(\A)\s N_2(\A).$

\vsp 
\noindent{Part 2.} Consider an operation $\exchange{X}{Y}$, with $X,Y$ being $\kkk$-subgroups of $GL_n$ (of course unipotent), which contains $\F$ in its domain. Then  \begin{align}\label{stel}&&&&&&\F(\phi)(g)=\int_{(Y\cap C)(\A)\s Y(\A)}\exchange{X}{Y}(\F)(\phi)(yg)dy&&\forall g\in GL_n(\A), \end{align}where $C$ is as in Definition \ref{euoperation}.

\noindent{Part 3. }For  $\gamma\in GL_n(\kkk),$ we have \begin{align*}&&&&&&&&(\gamma\F)(\phi)(g)=\F(\phi)(\gamma^{-1}g)&&\forall g\in GL_n(\A). \end{align*}

\end{sfact}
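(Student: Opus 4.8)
I would treat the three parts in increasing order of difficulty. Part 3 is an immediate change of variables; Part 1 is the standard Fourier expansion of an automorphic form along a unipotent subgroup; and Part 2 is the known ``exchange of unipotent integrations'' lemma (\cite{GRS2}, Lemma 7.1), so that for the latter two the only real work is to match our Definitions \ref{estepdef} and \ref{euoperation} with the hypotheses used in the literature. For \textbf{Part 3}: apply (\ref{zintegral}) to $\gamma\F$, whose domain is $\gamma D_\F\gamma^{-1}$ and whose character is $\psi_{\gamma\F}=\psi_\kkk\circ(\gamma\F)$, and substitute $n=\gamma m\gamma^{-1}$ with $m$ ranging over $D_\F(\kkk)\s D_\F(\A)$. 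Conjugation by $\gamma\in GL_n(\kkk)$ is a $\kkk$-automorphism of the unipotent $\kkk$-group $D_\F$, hence preserves the (suitably normalized) Haar measure on this quotient; by Definition \ref{gla} we have $\psi_{\gamma\F}(\gamma m\gamma^{-1})=\psi_\F(m)$; and $\phi(\gamma m\gamma^{-1}g)=\phi(m\gamma^{-1}g)$ by left $GL_n(\kkk)$-invariance of $\phi$. The integral therefore equals $\int_{D_\F(\kkk)\s D_\F(\A)}\phi(m\gamma^{-1}g)\psi_\F^{-1}(m)\,dm=\F(\phi)(\gamma^{-1}g)$.

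For \textbf{Part 1}, write $N:=VD_\F$ for the common domain of the output AFs, and, via Corollary \ref{partlysplit} for $[N_1\tlarrow D_\F]$, fix a $\kkk$-complement $N_2$ with $N=N_2D_\F$ and $\F(N_1\cap N_2)=\{0\}$. The first step is to check that $\F(\phi)$ is left invariant under $N_2(\kkk)(N_1\cap N_2)(\A)$: invariance under $(N_1\cap N_2)(\A)$ follows from the change of variables $n\mapsto nv^{-1}$ in (\ref{zintegral}) together with $\F(N_1\cap N_2)=\{0\}$, while invariance under $N_2(\kkk)$ follows from $v\F=\F$ (so $N_2$ normalizes $D_\F$ over $\kkk$ and preserves $\psi_\F$ under conjugation) together with left $GL_n(\kkk)$-invariance of $\phi$. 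Hence $h\mapsto\F(\phi)(hg)$ descends to a function on the compact abelian group $N_2(\kkk)(N_1\cap N_2)(\A)\s N_2(\A)$. Since $\phi$ is smooth and of moderate growth, so is $\F(\phi)$, and the standard estimates for automorphic forms give that its abelian Fourier expansion on this compact group converges absolutely and uniformly on compacta of $GL_n(\A)$ --- this is exactly the convergence underlying the Fourier--Whittaker expansion of automorphic forms. Finally, by Pontryagin duality together with Fact \ref{Liehom}, the unitary characters of $N_2(\kkk)(N_1\cap N_2)(\A)\s N_2(\A)$ are precisely the $\psi_\Z|_{N_2(\A)}$ for $\Z\in\AAA(N)$ extending $\F$, i.e.\ for the output AFs $\Z$ of the $\e$-step, and the $\Z$-th Fourier coefficient of $h\mapsto\F(\phi)(hg)$ unfolds, using $N=N_2D_\F$ and $\F(N_1\cap N_2)=\{0\}$, to $\Z(\phi)(g)$; this gives (\ref{zfournz}), and the same derivation identifies the expansion as the Fourier expansion over $N_2(\kkk)(N_1\cap N_2)(\A)\s N_2(\A)$, as stated.

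For \textbf{Part 2}, the statement is \cite{GRS2}, Lemma 7.1, so it remains only to identify the hypotheses. Conditions 1--4 of Definition \ref{euoperation} say that $X$ and $Y$ normalize $C$, that $\F|_C$ is invariant under $X\cup Y$, and that $\F$ is trivial on $X\cap C$ and on $Y\cap C$; these are the compatibility hypotheses of that lemma, and condition 5 --- equivalently, by Lemma \ref{euequivalent}, nondegeneracy of the commutator pairing $\Lie{X\cap C}\s\Lie{X}\times\Lie{Y\cap C}\s\Lie{Y}\to\KKK$ --- is the genericity hypothesis that makes the swap work. Granting the lemma, the argument is: Fourier-expand $\F(\phi)$ along $X$ by Part 1 (with $[V\tlarrow X]$); the constant term and the non-generic terms are eliminated by a root-exchange induction exactly as in the unfolding of the $GL_n\times GL_m$ Rankin--Selberg integrals; and on the unique generic orbit, condition 5 makes $(Y\cap C)(\A)\s Y(\A)$ act simply transitively, so that summing over that orbit reassembles $\int_{(Y\cap C)(\A)\s Y(\A)}\exchange{X}{Y}(\F)(\phi)(yg)\,dy$, which is (\ref{stel}). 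I would cite this rather than reproduce the (known) details.

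The only genuinely non-formal input is the absolute and locally uniform convergence in Part 1, which rests on the growth estimates for automorphic forms; everything else is bookkeeping --- matching the additive characters of a compact abelian group with AFs via Fact \ref{Liehom}, and matching Definition \ref{euoperation} with the hypotheses of \cite{GRS2}, Lemma 7.1.
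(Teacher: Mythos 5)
Your proposal is correct and follows the same route as the paper's own (very brief) proof: the paper obtains Part 1 from the analytic properties of $\phi$ (i.e.\ the standard Fourier expansion of automorphic functions along a unipotent subgroup), cites \cite{GRS2} Lemma 7.1 for Part 2, and notes Part 3 is trivial. You have simply written out the routine details the paper leaves implicit---the change of variables for Part 3, the invariance check and identification of characters via Fact \ref{Liehom} for Part 1, and the matching of Definition \ref{euoperation} with the hypotheses of the cited lemma for Part 2.
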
 
\begin{proof}
Part 1 is obtained by the analytic properties of $\phi,$  Part 2 is given in Lemma 7.1 in  \cite{GRS2}, and Part 3 is trivial.  \end{proof}
\begin{sfact}[Known]\label{nonzeroeulerian}
Let $\Xi$ be an $(\F,GL_n,\kkk)$-tree. Then

\vsp 
\noindent{Part 1.} The function $\F(\phi)$ is nonzero if and only if there is an output AF $\Z$ in $\Xi$ for which $\Z(\phi)$ is nonzero.

\vsp
\noindent{Part 2. }The function $\F(\phi)$ is factorizable if and only if the next two conditions are true: (i) there is at most one output vertex of $\Xi$, so that for its  label $\Z$ we have $\Z(\phi)$ is nonzero, and (ii) if such a vertex exists, $\Z(\phi)$ is factorizable.

\end{sfact}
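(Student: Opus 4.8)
The plan is to induct on the depth of $\Xi$ (Definition \ref{outinput}), proving Parts 1 and 2 together. If $\Xi$ has depth $0$ it is the trivial $\F$-tree, $\F$ is its unique output AF, and both parts are immediate. When the depth is positive, let $\xi_0$ be the $\AAnk$-step of $\Xi$ at its input vertex, and, for each output AF $\Z$ of $\xi_0$, let $\Xi_\Z$ be the maximal subtree of $\Xi$ rooted at the corresponding output vertex of $\xi_0$ (distinct output vertices of an $\e$-step carry distinct labels, so this makes sense). Each $\Xi_\Z$ is a $(\Z,GL_n,\kkk)$-tree of strictly smaller depth to which the inductive hypothesis applies, and the output vertices of $\Xi$ are exactly the disjoint union over $\Z$ of the output vertices of the trees $\Xi_\Z$. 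So it is enough, in each of the three cases for $\xi_0$, to compare the nonvanishing and factorizability of $\F(\phi)$ with those of the functions $\Z(\phi)$.

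If $\xi_0$ is a $\co$-step obtained from $\gamma\in GL_n(\kkk)$, its unique output AF is $\gamma\F$ and, by Part 3 of Fact \ref{autsteps}, $\F(\phi)$ is the right translate of $(\gamma\F)(\phi)$ by the fixed rational matrix $\gamma$; right translation by a rational matrix preserves both ``identically zero'' and ``factorizable'', so $\F(\phi)$ has either property iff $(\gamma\F)(\phi)$ does, and the inductive hypothesis for $\Xi_{\gamma\F}$ (which has the same output vertices as $\Xi$) closes this case. If $\xi_0$ is a $\eu$-step with unique output AF $\Z$, obtained from $\exchange{X}{Y}$ with $C=D_\F\cap D_\Z$ (Corollary \ref{everye}), then Part 2 of Fact \ref{autsteps} gives $\F(\phi)(g)=\int_{(Y\cap C)(\A)\backslash Y(\A)}\Z(\phi)(yg)\,dy$, and applying Part 2 again to the reverse exchange $\exchange{Y}{X}$ and to $\exchange{X}{Y}$ --- legitimate by Corollary \ref{sym}, since $\Z(X)=\{0\}$ --- shows likewise that $\Z(\phi)(g)=\int_{(X\cap C)(\A)\backslash X(\A)}\F(\phi)(xg)\,dx$. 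Thus $\F(\phi)$ and $\Z(\phi)$ determine one another by mutually inverse integrations over unipotent adelic quotients; such an integral is identically zero exactly when its integrand is, and it sends factorizable functions to factorizable functions (at almost every place the local integral just reproduces the local vector). Hence $\F(\phi)$ vanishes identically, resp. is factorizable, iff $\Z(\phi)$ does, and the inductive hypothesis for $\Xi_\Z$ closes this case too.

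In the remaining case $\xi_0$ is an $\e$-step, and Part 1 of Fact \ref{autsteps} gives the absolutely convergent expansion $\F(\phi)=\sum_\Z\Z(\phi)$, the sum over the output AFs of $\xi_0$, which is precisely the Fourier expansion of $\F(\phi)$ along an abelian adelic quotient; the functions $\Z(\phi)$ are the distinct Fourier coefficients of $\F(\phi)$ along that quotient. For Part 1: $\F(\phi)$ is identically zero iff all of its Fourier coefficients $\Z(\phi)$ are, hence (inductive hypothesis) iff every output AF of every $\Xi_\Z$ --- equivalently, every output AF of $\Xi$ --- evaluates to $0$ at $\phi$. For Part 2 the one nonelementary ingredient is the standard fact that a factorizable function whose Fourier expansion along such a quotient has two or more nonzero terms cannot be factorizable: two distinct characters of the quotient, which is $\kkk$-isomorphic to $\kkk^d\backslash\A^d$, differ at \emph{every} place, so the corresponding nonzero isotypic components of a pure tensor are linearly independent in every local tensor factor, which is incompatible with their sum being a pure tensor. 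Granting this, if $\F(\phi)$ is factorizable then at most one term $\Z_0(\phi)$ is nonzero, so $\F(\phi)=\Z_0(\phi)$ is factorizable; Part 1 applied to the other trees $\Xi_\Z$, together with the inductive hypothesis (Part 2) for $\Xi_{\Z_0}$, yields conditions (i) and (ii) for $\Xi$. Conversely, if (i) and (ii) hold for $\Xi$, then at most one of the trees $\Xi_\Z$ has an output AF evaluating to something nonzero, so by Part 1 at most one $\Z(\phi)$ is nonzero; if none is, then $\F(\phi)\equiv0$ is factorizable, and if $\Z_0(\phi)$ is the nonzero one then the inductive hypothesis for $\Xi_{\Z_0}$ shows $\Z_0(\phi)=\F(\phi)$ is factorizable.

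The main obstacle is precisely this last ingredient --- that a factorizable automorphic function cannot have a genuinely multi-term Fourier expansion along an abelian adelic quotient. This is a multiplicity-one-flavoured statement that is implicit throughout the unfolding literature on Rankin--Selberg integrals; once it, together with the elementary ``factorizable in, factorizable out'' properties of unipotent integrations and right translations used above, is granted, the remainder is just careful bookkeeping around the three identities recorded in Fact \ref{autsteps}.
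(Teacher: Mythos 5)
Your overall strategy matches the paper's: reduce to a single $\AAnk$-step (the paper does so directly; your induction on depth is just an explicit form of that reduction), handle $\co$-steps with Part~3 of Fact~\ref{autsteps} and $\eu$-steps with Part~2 together with the inverse exchange furnished by Corollaries~\ref{sym} and~\ref{everye}, and then isolate the ``only if'' direction of Part~2 for $\e$-steps as the only nontrivial ingredient. Your treatment of the $\co$- and $\eu$-cases is correct and is essentially what the paper does.

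The one place where you depart from a proof is exactly the step you yourself flag: ``a factorizable automorphic function cannot have a genuinely multi-term Fourier expansion along an abelian adelic quotient.'' You justify this by saying that distinct characters of $N_2(\kkk)\backslash N_2(\A)$ differ at every place (true, since they differ by $\psi_{a}$ for some $a\in\kkk^\times$), and that therefore the corresponding ``isotypic components of a pure tensor'' are linearly independent locally, which is ``incompatible with their sum being a pure tensor.'' As written, this conflates two different things: the global Fourier coefficients $\Z(\phi)$, which are integrals over the \emph{non-product} quotient $N_2(\kkk)\backslash N_2(\A)$, and the local isotypic decompositions of the individual $f_\upsilon$. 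A function $f_\upsilon$ may perfectly well have several nonzero local isotypic components; the contradiction is not immediate from that. To turn your heuristic into a proof one has to either (a) run a genuine tensor-rank argument in the restricted tensor product by evaluating the identity $\prod_\upsilon f_\upsilon(n_\upsilon)=\sum_{\Z} c_\Z\prod_\upsilon\psi_{\Z,\upsilon}(n_\upsilon)$ on configurations that vary at only two places where $f_\upsilon(0)\neq 0$ and using that cross terms must vanish, or (b) do what the paper does: reduce by Corollaries~\ref{semprod} and~\ref{areconnected} to a one-dimensional complement $N_2$, pick an unramified place $\upsilon_1$, an element $y\in\kkk_{\upsilon_1}$, and an AF $\Z_1\in M$ with $0\neq\F(\varphi)_{\upsilon_1}(y)\neq\psi_{\Z_1,\upsilon_1}(y)$, observe that factorizability forces $\F(\varphi)(n)-\F(\varphi)_{\upsilon_1}(y)^{-1}\F(\varphi)(yn)$ to vanish on the ``almost everywhere full, integral at $\upsilon_1$'' subgroup $X$, Fourier-expand, extend from $X$ to $N_2(\A)=N_2(\kkk)X$, and contradict linear independence of characters of $N_2(\kkk)\backslash N_2(\A)$. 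Your plan is the right one, but this final step still needs to be carried out; as the paper phrases it, it is the \emph{only} step of the proof that is not a direct application of Fact~\ref{autsteps}, so it cannot be left as an assertion.
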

\begin{proof}
We are directly reduced to the case in which $\Xi$ is an $\AAnk$-step. In case $\Xi$ is a $\co$-step both parts are trivial (Part 3 of Fact \ref{autsteps}). In case $\Xi$ is a $\eu$-step, both parts follow from Part 2 of Fact \ref{autsteps} and  Corollaries \ref{sym} and \ref{everye} (the roles of $\F$ and $\exchange{X}{Y}(\F)$ must be inverted for the  if" in Part 1 and the ``only if " in Part 2) If $\Xi$ is an $\e$-step, Part 1 and the ``if" in Part 2 follow from Part 1 of Fact \ref{autsteps}.

We are therefore left with proving the ``only if'' in Part 2, and only for $\Xi$ being an $\e$-step. Let $N$ be the domain of any term of $\Xi.$ By Fact \ref{areconnected} (including Corollary $\ref{semprod}$) we are reduced to the case $N=N_2\rtimes D_\F$  for a one dimensional $\kkk$-subgroup $N_2$ of $N$. We consider a factorization 
$$\F(\varphi)(g)=\prod_\upsilon \F(\varphi)_\upsilon(g_u)\qquad\forall g\in GL_n(\A) $$ satisfying $\F(\varphi)_\upsilon(1)=1$ for almost all places $\upsilon.$ For any output AF $\Z$ of $\Xi$ we have \begin{equation}\label{zzpsi}\Z(\varphi)(n)=c_\Z\psi_\Z(n)\qquad\forall n\in N(\A)\end{equation} for a complex number $c_\Z.$

Let $M$ be the subset of the output AFs $\Z$ of $\Xi$ for which the restrictiong of $\Z(\varphi)$ on $N_2(\A)$ is nontrivial. We assume that $M$ contains at least two elements, and eventually reach a contradiction. 

We choose a place  $\upsilon_1$, an element $y\in\kkk_{\upsilon_1}$, and an AF $\Z_1\in M $ such that:\begin{enumerate}
\item $0\neq \F(\varphi)_{\upsilon_1}(y)\neq \psi_{\Z_1,\upsilon_1}(y)\qquad\text{and}\qquad\F(\phi)_{\upsilon_1}(1)=1;$
\item the automorphic form $\varphi$ (and hence every character $\psi_\Z$ for  $\Z\in M$) is unramified at $\upsilon_1$.
\end{enumerate} Let $X:=\left(\prod_{\upsilon\neq \upsilon_1} N_2(\kkk_\upsilon)\right)\times N_2(\ooo_{\upsilon_1})$, where $\ooo_{\upsilon_1}$ is the ring of integers of $\kkk_{\upsilon_1}$. For $n\in X$ we have 

  $$\F(\varphi)(n)-\frac{1}{\F(\varphi)_{\upsilon_1}(y)}\F(\varphi)(yn)=0. $$By applying to  $\F(\varphi)$
  the  Fourier expansion corresponding to the $\e$-step $\Xi,$ evaluated at  $n$ and at $yn$; and then applying (\ref{zzpsi}) for each $\Z\in M$, we obtain 
 \begin{equation}\label{zzdedekind}\sum_{\Z\in M} c_\Z\left(1-\frac{\psi_{\Z,\upsilon_1}(y)}{\F(\varphi)_{\upsilon_1}(y)}\right)\psi_\Z(n) =0\qquad\forall n\in X.\end{equation}Since $N_2(\A)=N_2(\kkk)X,$ equality (\ref{zzdedekind}) is true for all $n\in N_2(\A),$ and therefore becomes  a nontrivial linear combination of characters of $N_2(\kkk)\s N_2(\A)$, which is impossible. 
\end{proof}
\section{The $\AAnk$-tree $\sXi$}\label{Theorem A}
In Theorem \ref{general} below, for every AF $\F$ with  $D_\F\subseteq U_n$, an $\F$-tree $\sXi(\F)$ is inductively defined, which will be used considerably throughout the paper. 

Before reading the proof, it may be helpful (but optional) to check the two remarks after the proof. 

 \begin{theorem}\label{general}Let $n$ be any positive integer and $\kkk$ be any subfield of $\KKK$. Let $\F$ be an AF in $\AAA[,n] $ satisfying $D_\F\subseteq U_n$.  Then  there is an $$\left(\F\rightarrow\AAA(U_n),\mirsl{n},\kkk\right)\text{-tree}.$$

 \end{theorem}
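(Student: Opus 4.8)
The plan is to prove the statement by induction on $n$, constructing the tree that the theorem names $\sXi(\F)$ by the same induction. The base case $n=1$ is immediate: $U_1$ is trivial, so $\F=\F_{\emptyset,1}$ and the trivial $\F$-tree already has its unique output AF in $\AAA(U_1)$. For the inductive step, fix $\F$ with $D_\F\subseteq U_n$ and assume the theorem for $n-1$; write $j:GL_{n-1}\rightarrow GL_n$ for the lower right corner embedding. By Definition \ref{ctree}, applying $j(\cdot)$ to the tree supplied by the inductive hypothesis converts, for every AF with domain contained in $U_{n-1}$, its $\mirsl{n-1}$-tree into a tree with all data inside $j(\mirsl{n-1})$; and $j(\mirsl{n-1})\subseteq j(SL_{n-1})\subseteq\mirsl{n}$, so such a tree is in particular one with data in $\mirsl{n}$. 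I would assemble $\sXi(\F)$ in three stages, glued by the operation $\vee$ of Definition \ref{veenot1}.

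\textbf{Stage 1 (populate the first row).} The goal is an $(\F,\mirsl{n},\kkk)$-tree all of whose output AFs $\Z$ satisfy $D_\Z\supseteq U_{P_n}$ and $\Z|_{U_{P_n}}\in\{0_{U_{P_n}},\JJ_2^n\}$. The procedure adjoins the first-row root groups $U_{(1,j)}$ one at a time, from $j=n$ down to $j=2$. Since $U_{(1,n)}$ is central in $U_n$, the operation $\e(U_{(1,n)})$ is always legitimate. For $j<n$, the hypothesis ``$v\F=\F$ for all $v\in U_{(1,j)}$'' that an $\e$-operation requires can fail: conjugation by $U_{(1,j)}$ moves an entry of $U_{(j,k)}\subseteq D_\F$ into the $(1,k)$-position, so the condition fails precisely when the current AF is nontrivial on some such $U_{(1,k)}$. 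In that case one uses, instead of an $\e$-step, the exchange $\exchange{U_{(1,j)}}{U_{(j,k)}}$: by Lemma \ref{euequivalent} this is a legitimate $\eu$-operation because the commutator pairing through $[U_{(1,j)},U_{(j,k)}]=U_{(1,k)}$ is nondegenerate against the nonzero restriction to $U_{(1,k)}$, and it adjoins $U_{(1,j)}$ at the cost of discarding $U_{(j,k)}$ (which is rebuilt in Stage 3). Along the way, $\co$-steps by elements of $j(SL_{n-1})(\kkk)$ bring the various nonconstant first-row characters produced by the $\e$-steps to the standard one $\JJ_2^n$, using transitivity of $SL_{n-1}$ on nonzero vectors. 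Fixing the order in which these operations are performed, keeping the intermediate domains in a controlled shape (so that the hypotheses of the next $\e$- or $\eu$-operation are met), and proving that the process terminates, is carried out by a secondary induction; this is where the real work of the proof lies.

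\textbf{Stages 2 and 3 (split off the first row, then recurse).} Because $U_{P_n}$ is normal in $U_n$, each Stage-1 output $\Z$ has $D_\Z=U_{P_n}(D_\Z\cap j(U_{n-1}))$ with the two factors intersecting trivially, and additivity of $\Z$ gives $\Z=(\Z|_{U_{P_n}})\circ(\Z|_{j(U_{n-1})})$; set $\Z':=\Z|_{j(U_{n-1})}$. Now apply the inductive hypothesis to $j^{-1}(\Z')$, then $j(\cdot)$, then $\circ\,(\Z|_{U_{P_n}})$ as in Definition \ref{ctree}. This last step needs $j(\mirsl{n-1})\subseteq\Stab{GL_n}{\Z|_{U_{P_n}}}$, and a short computation shows that $0_{U_{P_n}}$ and $\JJ_2^n$ are exactly the characters of $U_{P_n}$ fixed by conjugation by all of $j(\mirsl{n-1})$ --- which is precisely why Stage 1 is engineered to output $\Z|_{U_{P_n}}$ in that form. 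Since moreover the $j$-image tree has data inside $j(\mirsl{n-1})\subseteq\mirsl{n}$ and composing with an AF supported on $U_{P_n}\subseteq\mirsl{n}$ does not enlarge the ambient group, one obtains a $(\Z\rightarrow\AAA(U_n),\mirsl{n},\kkk)$-tree (using $U_{P_n}\cdot j(U_{n-1})=U_n$). Grafting these trees, over all Stage-1 outputs $\Z$, onto the Stage-1 tree by $\vee_\Z$ yields $\sXi(\F)$.

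\textbf{The main obstacle.} As indicated above, the difficulty is Stage 1: transporting a completely arbitrary AF with $D_\F\subseteq U_n$ to one with $D_\Z\supseteq U_{P_n}$ and controlled restriction there, since naive Fourier expansion along first-row root groups keeps violating the invariance hypothesis of an $\e$-operation as soon as lower rows of $D_\F$ are occupied. This is what forces the exchanges $\eu(\cdot,\cdot)$ and the conjugations into the construction, and is the feature that makes $\sXi(\F)$ resemble the $GL_n\times GL_m$ (with $n\neq m$) Rankin--Selberg unfoldings referred to just before the statement. The remaining points --- the decomposition in Stage 2, the recursion in Stage 3, and the bookkeeping that no group or matrix ever leaves the mirabolic-type group $\mirsl{n}$ --- are routine given the apparatus of Subsection \ref{treessubs}.
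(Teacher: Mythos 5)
You have correctly identified the paper's skeleton: induction on $n$, the target form where $D_\Z\supseteq U_{P_n}$ and $\Z|_{U_{P_n}}$ is supported only on $U_{(1,2)}$ (the paper's (\ref{zsmir})), and the split-and-recurse mechanism in Stages 2 and 3, including the observation that $j(\mirsl{n-1})$ stabilizes such a $\Z|_{U_{P_n}}$ and lies inside $\mirsl{n}$. But the proof stops exactly where you say ``the real work of the proof lies.'' Stage 1 is only described as a column sweep from $j=n$ down to $j=2$, doing $\e(U_{(1,j)})$ when the invariance condition holds, a one-dimensional $\exchange{U_{(1,j)}}{U_{(j,k)}}$ when it fails, and unspecified $\co$-steps by $j(SL_{n-1})(\kkk)$ ``using transitivity'' --- with assembly, validity of each step, and termination deferred to a ``secondary induction'' that is never specified. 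Concretely, you do not say which subgroup $C$ serves at each $\eu$-step nor verify the hypotheses of Definitions \ref{euoperation} and \ref{eusteps} for the running $\F^i$; you do not account for the fact that the nonconstant outputs of an $\e$-step move the rightmost nontrivial first-row column, so the obstructing pairs $(j,k)$ change as the sweep proceeds and a fixed left-to-right pass does not obviously close; you do not check that the conjugations stay in $\mirsl{n}(\kkk)$ while keeping the running domain in a shape to which the next operation applies; and the terminating quantity of the ``secondary induction'' is never named.

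In the paper these points are precisely what Case~3 handles. The sweep there runs from $j=l_1-1$ down to $2$, where $l_1$ is the current rightmost nontrivial first-row column, and at each step the trichotomy ``$j_{D_{\F^{i-1}},l_1-i,l_1}$ trivial with $U_{(1,l_1-i)}\not\subseteq D_{\F^{i-1}}$ / trivial with $U_{(1,l_1-i)}\subseteq D_{\F^{i-1}}$ / isomorphism'' dictates an $\e$-step, a $\co$-step by a unipotent $u\in U_{(l_1-i,l_1)}(\kkk)$ (not an $SL_{n-1}$-normalizing conjugation of the character as you propose), or a $\eu$-step, with the groups $L_{(k,l)}$ prescribing the resulting domains. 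Crucially, the paper's induction is not on $n$ alone but simultaneously downward on $\Di{D_\F}$: whenever an $\e$-step interrupts Case~3, the new output has strictly larger domain and is handed back to the inductive hypothesis, which re-evaluates $l_1$. This is what makes the process terminate; your single induction on $n$ plus an unspecified secondary induction does not supply it as written. Two smaller slips: in Stage~2 the normal factor $U_{P_n}$ belongs on the right, i.e.\ $\Z=(\Z|_{j(U_{n-1})})\circ(\Z|_{U_{P_n}})$, matching (\ref{zsmir}); and the $j(\mirsl{n-1})$-stable characters of $U_{P_n}$ form a one-parameter family supported on $U_{(1,2)}$, not only $\{0_{U_{P_n}},\JJ_2^n\}$, and one cannot normalize within that family by $\mirsl{n}$-conjugation alone.
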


\begin{tproof}We inductively define an $\left(\F\rightarrow\AAA(U_n),\mirsl{n},\kkk\right)$-tree which we call $\sXi(\F).$ The induction is on $n$ and downwards on $\Di{D_\F}.$ More precisely: if $D_\F=U_n$ we define $\sXi(\F)$ to be the trivial $\F$-tree (that is, $\sXi(\F)$ has only one vertex); and otherwise we define $\sXi(\F)$, assuming that the $\left(\F^\prime\rightarrow\AAA(U_{n^\prime}),\mirsl{n^\prime},\kkk\right)$-tree $\sXi(\F^\prime)$ is defined for all $n^\prime $ and $\F^\prime\in\AAA[,n^\prime] $ satisfying one among 1) and 2) below $$1)\hspace{2mm}n^\prime<n\quad\quad\quad 2)\hspace{2mm}n^\prime=n\text{ and }\Di{D_{\F^\prime}}>\Di{D_\F}. $$  

Let $j$ be the lower right corner embedding of $GL_{n-1}$ in $GL_n.$ Assume that an AF $\F^\ih\in \AAA[,n]$ satisfies \begin{equation}\label{zsmir}\F^\ih= j(\F^{\prime})\circ\YY\end{equation}
 where\begin{itemize}
 \item $\YY$ is an AF with domain $D_{\YY}=\prod_{1<j\leq n}U_{(1,j)},$ which is trivial on all root groups contained in $D_\YY$ except possibly $U_{(1,2)}$ (therefore $j(\mirsl{n-1}(\kkk))\in\Stab{\mirsl{n}}{\YY}$);
 \item $\F^\prime$ is an AF satisfying $D_{\F^{\prime}}\subseteq U_{n-1}.$\end{itemize} Then we define $\sXi(\F^\ih):=j(\sXi(\F^{\prime}))\circ\YY.$

  We will define an $(\F,\mirsl{n},\kkk)$-tree $\xi$, each output AF ${\F^\ih}^\prime$ of which, has domain contained in $U_n$, and either satisfies $\Di{D_{{\F^\ih}^\prime}}>\Di{D_\F}$ or is chosen as in (\ref{zsmir}) in the place of $\F^\ih$.  After that, the proof ends by defining $$\sXi(\F):=\sXi({\F^\ih}^\prime)\vee_{{\F^\ih}^\prime}\xi$$ where ${\F^\ih}^\prime $ varies over all output AFs of $\xi$. 
  
  In the cases that  $\xi$ will turn out to contain an  $\e$-step, say $\xi_\e$, this is its last $\AAnk$-step,  because the dimension of the domain of the terms of $\xi_\e$ (that is, the output AFs of $\xi_\e$) is bigger from $\Di{D_\F}$.

   Consider the smallest number $m\geq 2$ for which the set $A_m:=\prod_{m\leq i\leq n}U_{(1,i)}$ is contained in $D_\F$. To be clear, by $\prod_{n+1\leq i\leq n}U_{(1,i)}$ we mean the trivial subgroup of $GL_n.$

\vspace{1mm}
\noindent\textit{Case $1$: $\F(A_m)=\{0\}$ and $2<m$.} In this case $\xi$ is chosen to be the $(\F,\e)$-step over $U_{(1,m-1)}$.\hspace*{50mm}\hfill$\square$Case 1 

\vspace{1mm}
\noindent\textit{Case $2$: $m=2$ and $\F(A_2)=\{0\}$.}  In this case (\ref{zsmir}) is true for $[\F^\ih\leftarrowtriangle\F],$ and hence we choose $\xi$ to be the trivial $\F$-tree.\hfill$\square$Case 2

\vspace{1mm}
\noindent\textit{Case $3$: $\F(A_m)\neq\{0\}$.} We start with some definitions and observations, and return to defining $\xi$ in the next paragraph. Consider integers $1<l\leq n$ and $1\leq k<l$, and an algebraic subgroup $D$ of $U_n$. We define $L_{(k,l)}$ to be the subgroup of $U_{n}$ which is generated by all the positive root groups except the $U_{(i,l)}$ with $k<i< l$. For example $L_{(l-1,l)}=U_{n}$. For $k>1$, the group $L_{(k-1,l)}$ is a normal subgroup of $L_{(k,l)}$, and hence $D\cap L_{(k-1,l)}$ is a normal subgroup of $D\cap L_{{k,l}}$. Since unipotent algebraic groups are connected,  the identity embedding 
\begin{equation}\label{embedding1}j_{D,k,l}:D\cap L_{(k-1,l)}\s D\cap L_{(k,l)}\rightarrow L_{(k-1,l)}\s L_{(k,l)},\end{equation} is either trivial or an isomorphism. 

Back to defining $\xi,$ let $l_1$ be the biggest number such that $\F(U_{(1,l_1)})\neq\{0\}$. The $\AAnk$-tree $\xi$ will either finish with an  $\e$-step  in at  most $l_1-2$ $\AAnk$-steps or it will consist of $l_1-1$ $\AAnk$-steps each one being a $\eu$-step or a $\co$-step.

 Let $\F^0:=\F. $ Assume that for a number $i $ satisfying $1\leq i\leq l_1-2$, the first $i-1$ $\AAnk$-steps of $\xi$ have been defined, that none of them was an $\e$-step, and that the output AF of the $i-1$-th one has been given the name $\F^{i-1}$. To avoid confusion, if $i=1$, this means that no $\AAnk$-steps have been defined. The $i$-th $\AAnk$-step in $\xi$---and in case it is not an $\e$-step, its output AF $\F^i$---are defined as follows:\begin{enumerate}
\item If $j_{D_{\F^{i-1}},l_1-i,l_1}$ is trivial and $U_{(1,l_1-i)}\not\subseteq D_{\F^{i-1}} $, the $i$-th $\AAnk$-step in $\xi$ is the $(\F^{i-1},\e)$-step  over $U_{(1,l_1-i)}$. Hence this is the last $\AAnk$-step in $\xi.$
\item If $j_{D_{\F^{i-1}},l_1-i,l_1}$ is trivial and $U_{(1,l_1-i)}\subseteq D_{\F^{i-1}} $, there is an element $u\in U_{(l_1-i,l_1)}(\kkk)$, such that $u\F^{i-1}(U_{(1,l_1-i)})=\{0\}$. We choose the $i$-th $\AAnk$-step in $\xi$ to be the $(\F^{i-1}\rightarrow u\F^{i-1},\co)$-step and we define $\F^i:=u\F^{i-1}$.
\item Finally assume that $j_{D_{\F^{i-1}},l_1-i,l_1}$ is an isomorphism. Let $\F^i$ be the AF defined by \begin{multline}D_{\F^i}=U_{(1,l_1-i)}(D_{\F^{i-1}}\cap L_{(l_1-i-1,l_1)}),\qquad\F^i(U_{(1,l_1-i)})=\{0\},\quad\text{and}\\\F^i(u)=\F^{i-1}(u)\quad\forall u\in D_{\F^{i-1}}\cap L_{(l_1-i-1,l_1)}.\end{multline} The $i$-th $\AAnk$-step in $\xi$ is the $(\F^{i-1}\rightarrow\F^{i},\eu)$-step.  
\end{enumerate}   

 Assuming  that no $\e$-step was encountered, we have   
 \begin{equation}\label{downreplaced}\prod_{1<j\leq n}U_{(1,j)}\subseteq D_{\F^{l_1-2}}\subseteq L_{(1,l_1)}, \end{equation}
and the last $\AAnk$-step of $\xi$ is the $\co$-step obtained from the minimal length element  $w\in W_{n}$ such that (\ref{zsmir}) is correct for $[\F^\ih\tlarrow w\F^{l_1-2}].$\hfill$\square$Case 3

\end{tproof} 
\begin{defi}[$\sI(\F)$, $\siota(\F)$]\label{sIdef}For $\F\in\AAnk$, we define $\sI(\F)$ to be the biggest initial $(\eu,\co)$-subpath of $\sXi(\F)$. We denote by $\siota(\F)$ the output AF of $\sI(\F). $\end{defi}
\begin{defi}\label{sXom}
When we write ``$\sXi$" (resp. ``$\sI$") we mean $\sXi(\XX) $ (resp. $\sI(\XX)$) for some AF $\XX\in\AAnk$ (and some field $\kkk$ over which $\XX$ is defined).  \end{defi}
\begin{remark}[\textbf{Relations of $\sXi$ to the literature}]\label{sXilit} The definition of $\sXi$ is based on simple refinements and modifications of unfolding processes that have been used in the study of the $GL_n\times GL_m$ Rankin-Selberg integrals for $n\neq m.$ In particular:

\vspace{1mm} 
\noindent{(i).} The way the action of the mirabolic group on the AFs on its unipotent radical gives an inductive argument, is found in \cite{GK}, and in the Fourier expansion over $U_n(\kkk)\s U_n(\A)$ of a $GL_n(\A)$-cusp form. Note that by appropriately removing from $\sXi(\F_{\emptyset,n})$ one copy of each among $\sXi(\F_{\emptyset,k})$ for $k<n,$  the tree obtained roughly corresponds to calculating this Fourier expansion.

\vspace{1mm}
\noindent{(ii).} For $n\neq m,m-1,$ consider the form of the $GL_n\times GL_{m}$ Rankin-Selberg integral which is obtained from its initial form by applying $g\rightarrow {g^t}^{-1}$, where $g$ is the variable in the integration over $GL_m(\kkk)\s GL_m(\A).$ Then, the unfolding of this form contains $\eu$-steps which is worth comparing to Case 3 in the proof  Theorem $\ref{general}.$

Of course the relations (i) and (ii) above are also present in many works appearing later, and some additional relations can be found from  some of the $\AAnk$-trees discerned for example in \cite{GRS1} and in \cite{GRS2}. 

I hope in a later version of the present paper---after I learn more on the relevant literature--- I will improve the current remark. Comments are welcome.\end{remark}

\begin{remark}[Examples]
 Three examples one can check early in the reading of the present paper are given next (they include pictures):  \begin{itemize}\item In Example \ref{example} we fully follow $\sXi(\F)$, for an $\F\in\AAnk[6]  $ so that---for the later defined concepts ``$\Om(\F)$" and ``$\Omm{\F}$"---it turns out $|\Om(\F)|=|\Omm{\F}|=2.$  \item In Observation \ref{unipc} we present $\sI$ in cases that only trivially differ from $\AAnk$-trees appearing in the literature.\item A conjugate of $\sI(\F)$ for  certain higher dimensional analogues of the choice of $\F$ in Example \ref{example} is presented exactly after this example in the pairs of successive pictures with the first picture being the one with label $\F=\F_{4,4,3}$, $\F_{7,3,2}$, $\F_{(4,4),2}.$ Also in the second last picture of the same subsection, we discern a conjugate of $\sI(\F_{(3,2),3}) $ from the rectangles labeled with 1,2 and 3. \end{itemize} \end{remark}
 \begin{aconvention}[Dependence on $\kkk$]\label{sXrt}A tree $\sXi$ depends on the choice of $\kkk.$ Roughly speaking $\kkk$ is assumed to be chosen each time so that all nearby $\AAnk$-trees are $(\AAnk,\kkk)$-trees.
 
 Since many mentions of $\sXi$ are inside environments which are either proofs or definitions, I add some precision. Consider a tree  $\sXi$ mentioned within such an environment; then the definition of $\kkk$ given in this environment, is the same as the one defining this tree. \end{aconvention}
 In the ``most concrete"\footnote{In particular, we do not consider Theorem \ref{general} and Main corollary \ref{maincor} to be among the ``most concrete" parts} parts of the present paper, the domains of the AFs we consider, are generated by groups we call $\ossa$-groups, which are defined as follows:
 \begin{defi}[$\ossa$-groups]\label{ossagroup}Let $n$ be a positive integer. Let $S$ be a finite set of root groups (in $GL_n$) such that for any roots $\alpha,\beta$ for which $U_\alpha$ and $U_\beta$ belong in $S$ we have: neither $\alpha+\beta$ nor $\alpha-\beta$ is a root, and $\alpha\neq -\beta$. Any at most one dimensional algebraic subgroup of  $\prod_{V\in S} V$ is called a $\ossa$-group.
 	
 	Of course by a use of Fact \ref{areconnected} (not mentioned each time), $\ossa$ groups admit a more explicit description and the only zero dimensional is the trivial one.
 	
 	Recall the standard choice is assumed for root data.
 \end{defi} 
 \begin{remark}
 In this notation, ``a" stands for ``additive", then ``2" appears because $x^2=0$ for every $x$ in its Lie algebra, and ``1d" stands for ``at most one dimensional".
 \end{remark}

 Among the ``most concrete" parts (again) of the present paper, choices of $\F$ for which $D_\F$ is not generated by root groups, are mainly considered\footnote{Even in them, one can apply several $\eu$-steps and obtain an AF with domain generated by root groups. In the present paper I have not followed such a path; but I did followed such a path in \cite{Tsiokos}, in the special cases which also appear there.}  in Section \ref{compositions}. 
\section{The set of AFs $\Prink[n]$}\label{pri} In the current section $n$ (resp. $\kkk$) is any positive integer (resp. any subfield of $\KKK$). Recall  $\Prink[n]$ is the set of AFs  with domain a 
  unipotent radical of a $GL_n$-parabolic subgroup. The main result of the current section is Lemma \ref{radexpress}, in which, for each $\F\in\Prink[n] $ we obtain information on an $(\F\rightarrow \AAnk(U_n) )$-tree  that is later needed in the paper.
  
  Recall we defined $J_\F$ (for $\F\in\AnkT{n} $) in Definition \ref{uniqJ}, and $\hat{...}$ and $..._{\nless}$ in Definition \ref{hatnless}.

  Most statements of the paper in which $\Prink[n] $ is mentioned, are reduced  to replacing $\Prink[n]$ with  $\tPrink[n]$ or with $\Prink[n,\nless]$, by using the conjugation in Lemma \ref{conjugatehat} below.
  \begin{defi}[\text{$...[a]$}]\label{afise}
  Let $a\in\UUU{n}$, and $\XXX$ be a subset of $\Prink[n]. $ We define $$\XXX[a]:=\{\F\in\XXX:\Om(J_\F)=a\}.$$Also, for $a=[a_1,...,a_m]$, we write $\XXX[a_1,...,a_m]$ instead of $\XXX[[a_1,...,a_m]]$.\end{defi}

In the next observation and next lemma we give conjugations that we frequently use.

\begin{ob}\label{diagonal}
 Let $\F\in\tPri[,n]$. Let $M$ be the Levi of the $GL_n$-parabolic subgroup with unipotent radical $D_\F$.  For any $\DDDD[n]$-component $T$ of $\TT{n}{\F}$, let $\mathrm{up}(T),\mathrm{down}(T),b_T(\mathrm{up}(T))<b_T(\mathrm{up}(T)+1)<...<b_T(\mathrm{down}(T)),$  be the numbers such that  
 $$\Set{T}=\{b_T(i):\mathrm{up}(T)\leq i\leq\mathrm{down}(T)\}.$$ 
 
  Let $T^1,T^2$ be two $\DDDD[n]$-components  of $\TT{n}{\F}$ such that
 
 $$\mathrm{up}(T^1)\leq \mathrm{up}(T^2)\leq\mathrm{down}(T^1)\leq \mathrm{down}(T^2).$$

  Then $\Stab{GL_n}{\F}$ has as a subset a  $\ossa$-group defined over $\kkk$ with nonzero entries: 
  $$\{(b_{T^1}(i),b_{T^2}(i)):\mathrm{up}(T^2)\leq i\leq \mathrm{down}(T^1)  \}.$$   \end{ob}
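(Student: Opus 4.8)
The plan is to write the $\ossa$-group down explicitly as a one-parameter subgroup $\{\exp(sN)=I+sN:s\in\KKK\}$ for a single nilpotent matrix $N$ supported exactly at the claimed entries, and then to verify that it lies in $\Stab{GL_n}{\F}$ by a Lie-algebra computation based on Fact \ref{Liehom}.

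First I would record the structure of the $\DDDD[n]$-components. Write $D_\F=U_P$ with $M$ the Levi of $P$ and $I_1,\dots,I_k$ the blocks of $M$ (consecutive intervals partitioning $\{1,\dots,n\}$). Since $\F\circ\exp$ is a Lie algebra homomorphism $\Lie{U_P}\to\KKK$ (Fact \ref{Liehom}), it annihilates $[\Lie{U_P},\Lie{U_P}]$; as $E_{(i,j)}=[E_{(i,m)},E_{(m,j)}]$ whenever a block of $M$ lies strictly between those of $i$ and $j$, every nontrivial root group of $\F$ joins consecutive blocks. Combined with the hat hypothesis (at most one nontrivial root group of $\F$ per row and per column), this forces the graph on $\{1,\dots,n\}$ with an edge $\{i,j\}$ for each nontrivial $U_{(i,j)}$ of $\F$ to be a disjoint union of increasing paths that move up exactly one block at each step; the $\DDDD[n]$-components of $\TT{n}{\F}$ are the vertex sets of these paths, $b_T(i)$ is the unique element of $\Set{T}$ in block $I_i$ for $\mathrm{up}(T)\le i\le\mathrm{down}(T)$, and consecutive $b_T(i),b_T(i+1)$ are joined by a nontrivial root group $U_{(b_T(i),b_T(i+1))}$ of $\F$, so $\F(E_{(b_T(i),b_T(i+1))})\neq 0$. (Several of these are essentially immediate from Definition \ref{dnsetdn}; I would state them precisely.)

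Next I would build $N$. For $\mathrm{up}(T^2)\le i\le\mathrm{down}(T^1)$ the hypothesis on the four numbers puts $i$ in both block-ranges, so $b_{T^1}(i)$ and $b_{T^2}(i)$ are defined and lie in the common block $I_i$, and they are distinct because distinct $\DDDD[n]$-components $T,T'$ have $\Set{T}\cap\Set{T'}=\emptyset$ (Definition \ref{dnsetdn}). Set $N:=\sum_{i=\mathrm{up}(T^2)}^{\mathrm{down}(T^1)}c_i\,E_{(b_{T^1}(i),b_{T^2}(i))}$ with $c_{\mathrm{up}(T^2)}:=1$ and $c_{i+1}:=c_i\,\F(E_{(b_{T^1}(i),b_{T^1}(i+1))})/\F(E_{(b_{T^2}(i),b_{T^2}(i+1))})$; all numerators and denominators here are nonzero by the previous paragraph, so every $c_i$ lies in $\kkk^\times$ (they are ratios of values of the $\kkk$-morphism $\F$ at $\kkk$-rational vectors). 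Disjointness of $\Set{T^1},\Set{T^2}$ together with injectivity of $b_{T^1},b_{T^2}$ yield $N^2=0$ (no composition $E_{ab}E_{cd}$ survives) and make $\{U_{(b_{T^1}(i),b_{T^2}(i))}\}$ meet the requirements of Definition \ref{ossagroup} (no sum or difference of two of these roots is a root, none is the negative of another). Hence $\exp(sN)=I+sN$ is a $\kkk$-rational $\ossa$-group whose nonzero entries are exactly the claimed set, and since every entry of $N$ lies inside a block of $M$ we have $I+sN\in M$, which normalizes $U_P=D_\F$, so $(I+sN)\F$ again has domain $D_\F$.

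Finally I would check $(I+sN)\F=\F$. Because $\mathrm{Ad}(\exp(sN))=\exp(s\,\mathrm{ad}N)$ preserves $\Lie{U_P}$ (as $N\in\Lie M$) and $\F$ is linear on $\Lie{U_P}$, this reduces to $\F([N,X])=0$ for all $X\in\Lie{U_P}$, hence to $\F([N,E_{(p,q)}])=0$ for each root vector $E_{(p,q)}\in\Lie{U_P}$. Expanding $[N,E_{(p,q)}]=\sum_i c_i(\delta_{b_{T^2}(i),p}E_{(b_{T^1}(i),q)}-\delta_{q,b_{T^1}(i)}E_{(p,b_{T^2}(i))})$ and applying $\F$, the hat condition (each index is the left endpoint of at most one, and the right endpoint of at most one, nontrivial root group of $\F$) annihilates every summand except when $p=b_{T^2}(i)$ and $q=b_{T^1}(i+1)$ (from the first kind of term) or $q=b_{T^1}(i)$ and $p=b_{T^2}(i-1)$ (from the second); these two families of positions $(p,q)$ coincide, and on $(p,q)=(b_{T^2}(i),b_{T^1}(i+1))$ the two surviving contributions sum to $c_i\,\F(E_{(b_{T^1}(i),b_{T^1}(i+1))})-c_{i+1}\,\F(E_{(b_{T^2}(i),b_{T^2}(i+1))})=0$ by the recursion defining the $c_i$, while every other $E_{(p,q)}$ gives $0$ outright. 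I expect this last step to be the only delicate point: one must see that the hat condition collapses $\F([N,E_{(p,q)}])$ precisely to the single telescoping pair of terms that the coefficients $c_i$ are tuned to cancel; the structural description in the second paragraph is routine, but needs to be phrased carefully.
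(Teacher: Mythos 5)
Your proof is correct. The paper states this result as an Observation without giving any argument (by the author's own convention an Observation is a statement whose proof is considered very easy), so there is no route in the paper to compare against; the direct construction you give — take $N=\sum_i c_i E_{(b_{T^1}(i),b_{T^2}(i))}\in\Lie{M}$ with the $c_i$ tuned by the telescoping recursion, and check $\tilde\F([N,\cdot])=0$ on $\Lie{D_\F}$ — is the natural one. Two points you handled are worth flagging. First, the statement's description of $\mathrm{up}(T)$, $\mathrm{down}(T)$, $b_T$ is underdetermined as literally written, and your reading of the index $i$ as a Levi-block index, with $b_T(i)$ the unique element of $\Set{T}$ in the $i$-th block, is the only one under which the interleaving hypothesis and the claim that each $(b_{T^1}(i),b_{T^2}(i))$ is an intra-block position make sense; it also matches the uses of this observation elsewhere in the paper (for instance in the claim inside the proof of Lemma \ref{conjugatehat}). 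Second, the cancellation you single out as the delicate step does close: the surviving indices of the first kind are $\mathrm{up}(T^2)\le i\le\mathrm{down}(T^1)-1$ and of the second kind $\mathrm{up}(T^2)+1\le i'\le\mathrm{down}(T^1)$, which coincide exactly under $i'=i+1$, while the boundary values $i=\mathrm{down}(T^1)$ and $i'=\mathrm{up}(T^2)$ contribute nothing because $b_{T^1}(\mathrm{down}(T^1))$ is the last vertex of the $T^1$-path and $b_{T^2}(\mathrm{up}(T^2))$ is the first vertex of the $T^2$-path, so the corresponding $\F$-values vanish. The $\kkk$-rationality of the $c_i$ also follows as you say, since $\F\circ\exp$ is a $\kkk$-morphism by Fact \ref{Liehom} and the root vectors involved are $\kkk$-rational.
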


 \begin{lem}\label{conjugatehat} Let $MN$ be the Levi decomposition (with $M$ being the Levi) of a $GL_n$-parabolic subgroup. Let $k$ be the number of blocks of $M.$ For $1\leq i\leq k$, let $n_i$  be such that the $i$-th  block of $M$ is isomorphic to $GL_{n_i}$, and let $\jj SL_{n_i}$ and $\jj U_{n_i}$ respectively be the standard copies of $SL_{n_i}$ and $U_{n_i}$ contained in this block. Let $\F\in\AAA(N)$. There is a $\gamma\in(\jj U_{n_1}\times{\prod_{2\leq i\leq k}}\jj SL_{n_i})(\kkk)$ such that
 $$\gamma\F\in \AAA(N)_{\nless}.  $$
 
 \end{lem}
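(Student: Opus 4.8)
The plan is to translate the statement into linear algebra and then apply the normal form for a chain of linear maps.

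\emph{Step 1: pass to matrix data.} First I would note that $[N,N]$ is the product of the root groups $U_{(i,j)}$ with $i,j$ lying in two non‑adjacent blocks of $M$, so by Fact~\ref{Liehom} an $\F\in\AAA(N)$ is determined by, and can be nontrivial only on, the root groups $U_{(i,j)}$ with $i$ in the $a$‑th block and $j$ in the $(a+1)$‑st block. Recording these values as matrices $\F_a\in\KKK^{n_a\times n_{a+1}}$ over $\kkk$ (for $1\le a\le k-1$), a direct computation shows that conjugation by a block‑diagonal element $\gamma=(\gamma_1,\dots,\gamma_k)$ of $M$ sends $\F_a$ to $\gamma_a^{-t}\F_a\gamma_{a+1}^{t}$. (Equivalently one may work with $J_\F$, whose only possibly‑nonzero blocks are the $\F_a^{t}$ just below the diagonal.)

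\emph{Step 2: what $\nless$ means here.} Next I would record that, for the roots of $U_{(i,j)}$ and $U_{(i',j')}$ (with $i<j$, $i'<j'$), one has ``$>$'' exactly when $i\le i'$ and $j\ge j'$, and that two nonzero entries coming from different block‑pairs automatically have strictly increasing rows and columns; hence $\F\in\AAA(N)_{\nless}$ if and only if each $\F_a$ has \emph{increasing partial‑permutation support}, i.e.\ its nonzero positions are $\{(i_1,j_1),\dots,(i_r,j_r)\}$ with $i_1<\dots<i_r$ and $j_1<\dots<j_r$. So the task becomes: find $\gamma$ in the prescribed subgroup with all $(\gamma\F)_a$ of this form.

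\emph{Step 3: put the data in normal form.} I would then view the data as a chain $\KKK^{n_1}\xrightarrow{\F_1^{t}}\KKK^{n_2}\to\cdots\to\KKK^{n_k}$, on which $\gamma$ acts by change of basis in each factor. By the structure theory of such chains (the decomposition of a representation of the $A_k$‑quiver into interval modules, equivalently a simultaneous staircase reduction by elementary row and column operations) one gets bases of the $\KKK^{n_i}$ in which every $\F_a^{t}$ is a partial‑permutation matrix; ordering all these bases by one fixed total order on the intervals makes each of them an \emph{increasing} partial permutation. This yields some $\gamma'=(h_1,\dots,h_k)\in M(\kkk)$ with all $(\gamma'\F)_a$ in the desired form. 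Since the $\nless$‑property depends only on the supports, I may replace each $h_i$ with $i\ge2$ by $h_i$ times a diagonal matrix of the appropriate determinant, placing it in $\jj SL_{n_i}(\kkk)$ without altering any support. It remains to arrange $h_1\in\jj U_{n_1}(\kkk)$: the only constraint the decomposition imposes on the basis of $\KKK^{n_1}$ is that it be adapted to the flag $0\subseteq K_1\subseteq\cdots\subseteq K_{k-1}\subseteq\KKK^{n_1}$ with $K_m:=\ker(\F_m^{t}\cdots\F_1^{t})$, and I would choose it by taking, for each $m$, the new basis vectors completing $K_{m-1}$ to $K_m$ to be the Schubert representatives $w_p\in e_p+\langle e_1,\dots,e_{p-1}\rangle$ with $p$ running over the Schubert pivot set of $K_m$ not already used for $K_{m-1}$; this is legitimate because those pivot sets are nested, and it produces a basis $w_1,\dots,w_{n_1}$ with every $w_p\in e_p+\langle e_1,\dots,e_{p-1}\rangle$, i.e.\ $h_1\in\jj U_{n_1}(\kkk)$.

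\emph{Main obstacle.} The only genuinely non‑formal point is this last step: forcing the first conjugator into $\jj U_{n_1}$ rather than into all of $GL_{n_1}$. A naive induction that normalizes $\F_2,\dots,\F_{k-1}$ first and only then $\F_1$ will fail, because after such a normalization the residual freedom left in $\gamma_2$ can shrink all the way to the diagonal torus (for instance when $\F_2$ has full row rank), which is too little to finish $\F_1$; so the interval decomposition has to be run globally, and one must keep track of which Schubert cell the kernel flag of $\KKK^{n_1}$ sits in. Everything else — Step 1, Step 2, and the $SL$‑adjustments — is routine.
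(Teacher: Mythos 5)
Your argument is valid but follows a genuinely different route from the paper's. The paper proves the lemma by a double induction on $(n,k)$ with explicit conjugations: for $k=2$ it inducts on $n$, splitting into cases according to whether $\F$ is nontrivial on the first row, and for $k>2$ it combines the $(k-1)$-block inductive hypothesis with the $k=2$ case through a carefully chosen Weyl element. You instead recognize the data as an $A_k$-quiver representation --- a chain of linear maps $\F_a^{t}$ modulo block-diagonal change of basis --- and observe that the $\nless$ condition is exactly that every $\F_a$ be an increasing partial permutation, so the lemma reduces to the interval-module decomposition; the one non-routine point, forcing the first factor of the conjugator into $\jj U_{n_1}$ rather than all of the block $\jj GL_{n_1}$, you resolve by fixing the $V_1$-basis first, out of Schubert representatives adapted to the kernel flag, and only then building the remaining bases around it. This is conceptually cleaner once one grants the $A_k$-classification; the paper's proof stays elementary and self-contained at the cost of a more intricate induction. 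Two points should be tightened. First, the assertion that the only constraint the decomposition places on the $V_1$-basis is flag-adaptedness is true but not immediate: one should justify that any flag-adapted basis of $V_1$ extends to a full interval decomposition --- for instance by splitting off the surjective-maps subchain $U_a:=\mathrm{im}(\F_{a-1}^{t}\cdots\F_1^{t})$, which is a direct sum of $I[1,\ast]$'s read off from the chosen $V_1$-basis, from the quotient chain supported on positions $\ge 2$; the splitting exists because $\mathrm{Ext}^1(I[p,q],I[1,q'])=0$ for $p\ge 2$, and one then inducts on the quotient. Second, in the $SL$-adjustment the diagonal matrix must multiply $h_i$ on the \emph{left}: with $h_i':=d_i^{-1}h_i$ the new $(\gamma\F)_{i-1}$ and $(\gamma\F)_i$ are the old ones with columns scaled by $d_i^{-1}$ and rows scaled by $d_i$, so supports are preserved; with right-multiplication $h_i d_i$ the scalar matrix sits between $\F_{i-1}$ and $h_i^{t}$ and the normal form need not survive.
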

 \begin{tproof}We proceed  inductively on $k$.

 \vsp
 \noindent\textit{Proof for $k=2$.} We do an induction on $n$. Let $R_1:=\prod_{2\leq j\leq n}U_{(1,j)}$.  
 
 \vspace{1mm}
 \noindent\textit{Case 1: $\F(R_1\cap D_{\F})=\{0\}$}. This case follows from the inductive hypothesis for $[\F\tlarrow\jj^{-1}\F^{\{1\}}]$.\hfill$\square$Case 1
 
 \vspace{1mm}
 \noindent\textit{Case 2: $\F(R_1\cap D_\F)\neq \{0\}$}. After conjugating with an element in  $W_{\jj SL_{n_2}}$ we are reduced to the subcase $\F(U_{(1,n_1+1)})\neq \{0\}$. As in the picture below let $$X=\prod_{2\leq j\leq n_1}U_{(1,j)}\prod_{n_1+2\leq i\leq n}U_{(i,n_1+1)}\qquad\text{and}\qquad Y:=\prod_{2\leq i\leq n_1}U_{(i,n_1+1)}\prod_{n_1+2\leq j\leq n}U_{(1,j)}. $$ In the picture $D_\F$ is the group whithin the black wide lines. By conjugating with an appropriate element in   $X(\kkk)$ we are further reduced to assuming $\F(Y)= \{0\}$. With these reductions Case 2 is obtained from the inductive hypothesis for $[\F\tlarrow\jj^{-1}\F^{\{1,n_1\}}]$.\renewcommand{\di}{7}
    
 \
 \begin{tikzpicture}[scale=0.3]
 \bu{(3.5,6.5)}
 \rec{0}{0}{7}{7}
 \rec{0}{0}{3}{3}
 \rec{3}{3}{7}{7}
 \draw[black,line width=2pt]\mm{0}{3}--\mm{0}{7}--\mm{3}{7}--\mm{3}{3}--cycle;
 \draw(3,0)rectangle(4,3)node[midway]{X};
 \draw(1,6)rectangle(3,7)node[midway]{X};
 \draw(4,6)rectangle(7,7)node[midway]{Y};
 \draw(4,6)rectangle(3,4)node[midway]{Y};
 \end{tikzpicture}
 \hfill$\square$Case 2$\square$Proof for $k=2$

 We fix a value of $k$ bigger than two, and assume that for smaller values of it the lemma is settled. For $1\leq i\leq k,$ we define $p_i$ to be the group homomorphism on $M$ which restricts to the identity homomorphism on the $i$-th block of $M$ and is trivial on all the other blocks of $M$. Let $A=\{(\sum_{1\leq i\leq k-1}n_i)+1,...,\sum_{1\leq i\leq n}n_i\}$. The inductive hypothesis for $k-1$ reduces the problem to the case in which $\jj^{-1}\F^{A}\in \AnkT{n-n_k}_{\nless}$. Next we prove the following claim.
 
 \vspace{1mm}
 
 \noindent\textbf{Claim.}\textit{  There is an element  $w$ in  $W_{\jj SL_{n_{k-1}}}$ such that for the group $$M^{\prime}:=\left(\jj U_{n_1}\times\prod_{2\leq i\leq k-1} \jj SL_{n_i}\right)\cap \mathrm{Stab}_{SL_n}(w\F^{A})$$ we have $p_{k-1}(M^{\prime})\supseteq \jj U_{n_{k-1}}$.}

 \vsp
 \noindent\textit{Proof of Claim.} Among the $\DDDD[n]$-components of $\TT{n}{w\F^{A}}$, let   $T_w^1,T_w^2,...,T_w^{n_{k-1}}$  be the ones nontrivially intersecting the $k-1$-th block of $M$.   By Observation \ref{diagonal} we see that  one choice of $w$ satisfying the claim is  the minimal length element in $W_{\jj SL_{n_{k-1}}}$  satisfying: $$\left(\max(\Set{T_w^i})-\max(\Set{T_w^j})\right)\left(|\Set{T_w^i}|-|\Set{T_w^j}|\right)\leq 0.$$\hfill$\square$Claim
 
 Let $B:=\{1,...,n\}-A=\{1,2,...\sum_{1\leq i\leq k-1}n_i\}.$   Then from the case $k=2$, we obtain an element $\delta\in (\jj U_{n_{k-1}}\times \jj SL_{n_k})(\kkk)$, for which $\jj^{-1}(\delta w\F)^{B}\in\hat{\AAA}[T_{n_{k-1}+n_k}]$.  By using the claim, consider an element $\gamma_1\in M^{\prime}(\kkk)$ satisfying $p_{k-1}(\gamma_1)=p_{k-1}(\delta)$. We  see that for the element $\gamma_2\in (\jj U_{n_1}\times{\prod_{2\leq i\leq k}}\jj SL_{n_i})(\kkk)$ defined by
 $$ p_i(\gamma_2)=\left\{\begin{array}{cc}p_i(\delta)&\text{if }i\in\{k-1,k\}\\p_i(\gamma_1)&\text{if }1\leq i<k\end{array}\right., $$we have $\gamma_2 w\F\in\hat{\AAA}(N)$ and then we easily find a $w^\prime\in W_{\prod_{2\leq i\leq k}\jj SL_{n_i}}$ such that $w^\prime\gamma_2 w\F\in\AAA(N)_{\nless}$. \hfill$\square\hspace{1mm}k>2$
 
 \end{tproof}
 \begin{remark}Note that the lemma above can directly be expressed without mentioning $GL_n.$ \end{remark}
\begin{defi}[$\mathrm{I}^{\mathrm{st},k}(\F)$,  $\iota^{\mathrm{st},k}(\F)$]\label{sIk}Let\chh[withoutcommand]  $\F\in\AAnk[n] $ and $k>1$ We define $\sI[,k](\F)$ to be the initial $\AAnk$-subpath of $\sI(\F),$  which has as last $\AAnk$-step the  $k-1$-th $\co$-step among: the $\co$-steps in $\sI(\F),$ which are as in the last sentence of the proof of Theorem \ref{general} (and hence the are obtained by conjugating by an element $w\in W_n$). We also define $\sI[,1](\F)$ to be the trivial $\F$-tree.

For $k\geq 1$ we denote by $\siota[,k](\F)$ the output AF of $\sI[,k](\F)$.  \end{defi}
\begin{remark}For the choices of $\F$ for which $\sI[,k](\F)$ will be considered, an equivalent way to define $\sI[,k](\F)$ is to say: $\sI[,k](\F)$  is the initial $\AAnk$-subpath of $\sI(\F)$ which has as last $\AAnk$-step the $k-1$-th $\co$-step of $\sI(\F)$. 

For $\F\in\tPrink[n]$ and $\prod^{\searrow}_{1\leq i\leq x}\jj GL_{n_i}$ \chh[des to jj]be the Levi of the $GL_n$-parabolic with unipotent radical $D_\F,$ one more equivalent definition of $\sI[,k](\F) $ is to say: it is the biggest among the $(\F,GL_{\sum_{1\leq i\leq k}n_i} )$-subpaths of $\sI(\F)$ that have as last $\AAnk$-step a $\co$-step.
\end{remark}

We use frequently the observation below. Note that a trivial modification\footnote{More precisely, by conjugating $\sI[,k]$ with an element in $W_n$ and composing (usual function composition) in many cases several successive $\eu$-operations, we obtain a $(\eu,\co)$-path found in this reference.} of $\sI[,k](\F)$ for $\F\in\tPrink[n]$ appears in \cite{JiangLiu} (in the proof of Theorem 5.4).

I use the word ``Observation" for the statement below, as a way of ranking (low) the  difficulty of only the thoughts in its proof which are not contained in the proof of Theorem \ref{general}.  

\begin{ob}\label{unipc}
 Let $\F\in\Pri[,n]$ and $\prod_{1\leq i\leq x}^{\searrow}GL_{n_i}$ be the Levi of the $GL_n$-parabolic with unipotent radical $D_\F$. Let $T$ be a torus in $\DDDD[n]$ such that $\min(\Set{T})=1$. It is assumed that $\F$ is nontrivial on every element in $\SRV{T}$, and that it is trivial on all the other root groups that are in the same row as an element in $\SRV{T}$, except possibly if this is the  $\max(\Set{T})$-th row. Let $\K$ be a $\kkk$-AF over an upper triangular unipotent $\kkk$-subgroup of $\left(\prod_{1\leq i\leq x}^{\searrow}GL_{n_i}\right)\cap GL_n^T $ such that $\K\circ\F$ is well defined. Let $k=|\Set{T}|$, and $w_k$ be the minimal length element of $W_n$ such that $\Set{w_kTw_k^{-1}}=\{1,...,k\}$. Finally let $j$ be the upper left corner embedding of $GL_{n_1+...+n_k}$ to $GL_n.$ Then $\sI[,k](\K\circ\F)$ is an $(j(\mirsl{n_1+...+n_k})D_\F,\kkk)$-path such that:
$$\siota[,k](\K\circ\F)=w_k(\K\circ\F^{T}\circ\F|_{R_{\max(\Set{T})}})\circ\JJ_k $$where $R_{\max(\Set{T})}:=\prod_{\max(\Set{T})<j\leq n}U_{(\max(\Set{T}),j)}$.
\end{ob}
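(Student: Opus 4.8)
\textit{Approach.} The plan is to run the inductive construction of $\sXi$ from the proof of Theorem \ref{general} on the input AF $\K\circ\F$, and to read off the first $k-1$ ``big'' $\co$-steps (those as in the last sentence of the proof of Theorem \ref{general}) together with the $\eu$- and $\co$-steps preceding them. Write $\Set{T}=\{c_1<\dots<c_k\}$ with $c_1=1$, so that $\SRV{T}=\{U_{(c_i,c_{i+1})}:1\le i<k\}$ occupies rows $c_1,\dots,c_{k-1}$; since $\F\in\Pri[,n]$ and $U_{(1,c_2)}\subseteq D_\F$, the index $c_2$ lies outside the first Levi block $GL_{n_1}$, and since $\F(U_{(c_i,c_{i+1})})\neq 0$ the block of $c_{i+1}$ is strictly later than that of $c_i$. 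I will induct on $k$. Because the $\sXi$-recursion passes from $GL_n$ to $GL_{n-1}$ and produces AFs that are no longer unipotent radicals of parabolics, the statement I actually carry is slightly more general: $\F$ is allowed to be the AF of a group obtained from a parabolic radical by deleting all root groups of certain columns, still nontrivial on $\SRV{T}$ and trivial on the other root groups of rows $c_1,\dots,c_{k-1}$ (this is exactly the class produced below). The base case $k=1$ is immediate: then $\SRV{T}=\emptyset$, $w_1$ is the identity, $\JJ_1=\F_{\emptyset,n}$, $\sI[,1]$ is by definition the trivial tree, and the asserted identity becomes $\K\circ\F=\K\circ\F^{T}\circ\F|_{R_1}$ with $R_1=\prod_{1<j\le n}U_{(1,j)}$, which holds because $D_\F\subseteq U_n$ factors as its row-$1$ part times its lower-right-corner part while, in the generalized class, $\F^{T}$ is just $\F$ with the deleted columns omitted.

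\textit{The top-level block.} Put $\F^0:=\K\circ\F$. As $\K$ is trivial on row $1$ (because $1\in\Set{T}$), $\F^0$ is nontrivial on the single row-$1$ root group $U_{(1,c_2)}$ and trivial on the other row-$1$ root groups, so we land in Case $3$ of the proof of Theorem \ref{general} with $l_1=c_2$. The crux is the loop $i=1,\dots,c_2-2$ over the root groups $U_{(1,c_2-i)}$: I claim sub-case (1), which would introduce an $\e$-step and halt $\sI$, never occurs. Using the block structure of $D_\F$ (in each row its set of occupied columns is an up-set for the block order), one checks that the identity embedding $j_{D_{\F^{i-1}},\,c_2-i,\,c_2}$ of $(\ref{embedding1})$ is an isomorphism precisely when $c_2-i$ lies in a block earlier than that of $c_2$, in which case we are in sub-case (3): an $\eu$-step exchanging $U_{(1,c_2-i)}$ with $U_{(c_2-i,c_2)}$, the nondegeneracy coming from $\F(U_{(1,c_2)})\neq 0$; for the remaining (initial) $i$ one has $U_{(1,c_2-i)}\subseteq D_{\F^{i-1}}$, putting us in sub-case (2), a $\co$-step which here is trivial since $\F^{i-1}$ already vanishes on that root group. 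Sub-case (1) needs both ``$j$ trivial'' and ``$U_{(1,c_2-i)}\not\subseteq D_{\F^{i-1}}$'', which these two alternatives rule out. After the loop $D_{\F^{c_2-2}}\supseteq R_1$ and column $c_2$ has been emptied below row $1$ (this is $(\ref{downreplaced})$ made explicit), and the closing big $\co$-step is by the minimal $w\in W_n$ taking $\F^{c_2-2}$ to the form $(\ref{zsmir})$, namely the cycle $w^{(1)}=(2\ 3\ \cdots\ c_2)$ fixing $1$, sending $c_2$ to $2$, and shifting $2,\dots,c_2-1$ up by one.

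\textit{Recursion and assembly.} Write $w^{(1)}\F^{c_2-2}=j'(\F')\circ\YY'$ as in $(\ref{zsmir})$, with $j'$ the lower-right $GL_{n-1}$-embedding and $\YY'$ supported on row $1$ (nontrivial at most on $U_{(1,2)}$). One checks that $\F'$, equipped with the $\K$-part $\K'$ (inert throughout the top-level block, which meets neither row $1$ nor column $c_2$, and merely relabeled), again lies in the generalized class in $GL_{n-1}$, with a torus $T'$ of support $k-1$ obtained by re-indexing $\{c_2,\dots,c_k\}$ (so $\min\Set{T'}=1$, $\SRV{T'}$ re-indexes $\{U_{(c_i,c_{i+1})}:2\le i<k\}$, $\max(\Set{T'})$ re-indexes $c_k$, and the list of deleted columns grows by one more). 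Since the $\sXi$-recursion gives $\sXi(j'(\F')\circ\YY')=j'(\sXi(\F'))\circ\YY'$, the big $\co$-steps of $\sXi(\F^0)$ beyond the first are $j'(\,\cdot\,)\circ\YY'$ of those of $\sXi(\F')$, so $\sI[,k](\F^0)=\big(j'(\sI[,k-1](\K'\circ\F'))\circ\YY'\big)\vee(\text{top-level block})$. Feeding in the inductive hypothesis for $\F'$ and unwinding $\siota[,k-1]$ then gives $\siota[,k](\K\circ\F)$ in the asserted closed form: the conjugation by $w^{(1)}$ together with the recursively produced $w_{k-1}$ (acting through $j'$) assembles into $w_k$ (all being the minimal Weyl elements that push the supports of the respective tori onto initial segments); the AF $\YY'$, prepended through $j'$ to $\JJ_{k-1}$, is $\JJ_k$ (it supplies exactly the top simple root and the full first row); and $(\F')^{T'}$ with $\F'|_{R'_{\max(\Set{T'})}}$ repackages into $\F^{T}$ with $\F|_{R_{\max(\Set{T})}}$, using again that at every stage $\F^{T}$ is nothing but $\F$ with the cleared columns suppressed while $\F|_{R_{\max(\Set{T})}}$ records the unique row $\max(\Set{T})=c_k$ that the hypotheses leave free. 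Finally, for the $(j(\mirsl{n_1+\dots+n_k})D_\F,\kkk)$-path claim one notes that the $\eu$-steps exchange root groups only inside the top-left $GL_{n_1+\dots+n_k}$ and the conjugating Weyl elements only permute indices among the first $k$ Levi blocks, hence lie in $j(\mirsl{n_1+\dots+n_k})$.

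\textit{Main obstacle.} The only non-routine part is the loop analysis in the second paragraph: proving that no $\e$-step appears and that the closing Weyl element is $w^{(1)}$ forces one to maintain, throughout $i=1,\dots,c_2-2$, an exact description of $D_{\F^{i-1}}$ (row $1$ filling up to $R_1$, column $c_2$ clearing below row $1$, nothing else changing), and to use both the column-convexity of parabolic radicals and the triviality of $\F$ off $\SRV{T}$ in rows $c_1,\dots,c_{k-1}$. Everything else is unwinding Definitions \ref{sIdef} and \ref{sIk} and bookkeeping conjugations.
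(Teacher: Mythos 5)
Your overall plan coincides with the paper's: induct on $k$, unravel the first block $\sI[,2]$ of $\sXi$ by hand (checking that Case~3 of Theorem~\ref{general} produces only $\eu$- and $\co$-steps), and then feed the resulting lower-right AF back into the inductive hypothesis via the recursion $\sXi(j(\F')\circ\YY)=j(\sXi(\F'))\circ\YY$. Your analysis of the top-level loop is in fact more explicit than the paper's (e.g.\ you correctly note that when $c_2>n_1+1$ some iterations land in sub-case~(2) with a trivial $\co$-step, a case the paper's terse ``apply $\exchange{U_{(1,i)}}{U_{(i,d_2)}}$ for $i=n_1,\dots,2$'' silently glosses over), and you correctly rule out sub-case~(1).

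The gap is in how you set up the recursion. You keep $\K'$ as the ``merely relabeled'' original $\K$, which forces the residual AF $\F'$ out of $\Pri[,n-1]$; you patch this by enlarging the inductive hypothesis to a ``generalized class''. But the class you describe --- a parabolic radical with all root groups of certain columns deleted --- does not match what actually appears. For instance with $n=5$, Levi $GL_2\times GL_2\times GL_1$, $\Set{T}=\{1,3\}$ and $\K$ trivial, after $\sI[,2]$ the lower-right residue is $\{(1,4),(2,3),(2,4),(3,4)\}$ in $GL_4$; this contains $(2,3)$ but misses $(1,2),(1,3)$, and is not a parabolic radical from which whole columns have been removed. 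Moreover, both that the base case holds for the enlarged class and that the class is stable under the recursion are left at ``one checks that''. The paper's proof sidesteps the whole issue by a different split: it defines $\K'=w_2(\K\circ\F^{\{1,d_2\}\cup\{i:n_1+n_2<i\leq n\}})$, i.e.\ $\K'$ absorbs the part of $D_\F$ that falls into the \emph{Levi} of the new parabolic (the $(2,3)$ in the example above), so that the residual $\F'=w_2\F|_{\dots}$, pulled back via $j'$, is again genuinely in $\Pri[,n-1]$ (with the Levi blocks $GL_{n_1}$ and $GL_{n_2}$ merged into $GL_{n_1+n_2-1}$) and the observation applies verbatim. That re-packaging of $\K'$ is exactly what your proof lacks, and without it, or a precise and verified replacement for the generalized class, the induction does not close.
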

\begin{proof} The proof is easily obtained by uncovering the definition of $\sI[,k]$, as discerned in the proof of Theorem \ref{general}, but we proceed in a self contained way by explicitly describing the $\AAnk$-steps of  $\sI[,k]$. First we roughly present a special case by giving the pictures below, and then (without any use of pictures) we present the proof with words as usual. Let $d_1<...<d_k$ be the elements of $\Set{T}$ (of course $d_1=1$).

The colors describe the $\eu$-steps by the rules in Subsection \ref{zpre}. The domain of $\F$ (resp. $\siota[,2](\F)$, $\siota[,3](\F)$) is generated by the root groups within the black thick lines in the first (resp. second, third) picture. The domain of $\K$ (resp. $w_2\K$, $w_3\K$) is within the (diagonal) blocks appearing in the first (resp. second, third) picture that do not contain any number $d_i$ (in order to avoid creating a contrast with how diagonal entries are labeled troughout the paper we do not mention $w_2\K$ and $w_3\K$ in the pictures).   
 
\renewcommand{\di}{12}
\begin{tikzpicture}[scale=0.3,gray!50]
\bu{(6.5,11.5)}
\bu{(7.5,5.5)}
\rl{1}{2}{3}
\rr{2}{7}{3}
\rec{0}{0}{12}{12}
\rec{0}{0}{1}{1}
\rec[$\K$]{1}{1}{4}{4}
\rec[$\K$]{4}{4}{6}{6}
\rec[$\K$]{8}{8}{10}{10}
\rec[$\K$]{10}{10}{12}{12}
\rec{6}{6}{7}{7}
\rec{7}{7}{8}{8}
\rec{4}{4}{7}{7}
\rec{7}{7}{10}{10}
\rec{10}{10}{12}{12}
\draw[black, line width=2pt]\mm{0}{4}--\mm{4}{4}--\mm{4}{7}--\mm{7}{7}--\mm{7}{10}--\mm{10}{10}--\mm{10}{12}--\mm{0}{12}--cycle;
\draw(9.5,8)node[black]{$\F$};
\draw(0.5,11.5)node[black]{{\tiny$d_1$}};
\draw(6.5,5.5)node[black]{{\tiny$d_2$}};
\draw(7.5,4.5)node[black]{{\tiny$d_3$}};
\bfir{\K\circ\F}
\end{tikzpicture}
\begin{tikzpicture}[scale=0.3,gray!50]
\bu{(1.5,11.5)}
\bu{(7.5,10.5)}
\rl{2}{3}{5}
\rr{3}{8}{5}
\rec{0}{0}{12}{12}
\rec[{\tiny$d_1$}]{0}{0}{1}{1}
\rec[{\tiny$d_2$}]{1}{1}{2}{2}
\rec[{\tiny$d_3$}]{7}{7}{8}{8}
\rec[]{2}{2}{5}{5}
\rec[]{5}{5}{7}{7}
\rec[]{8}{8}{10}{10}
\rec[]{10}{10}{12}{12}
\rec{7}{7}{10}{10}
\rec{10}{10}{12}{12}
\draw(10,8)node[black]{$\siota[,2](\F)$};
\draw[black, line width=2pt]\mm{0}{1}--\mm{1}{1}--\mm{1}{7}--\mm{2}{7}--\mm{2}{5}--\mm{5}{5}--\mm{5}{7}--  \mm{7}{7}--\mm{7}{10}--
\mm{10}{10}--\mm{10}{12}--\mm{0}{12}--cycle;
\bfir{\siota[,2](\K\circ\F)}
\end{tikzpicture}
\begin{tikzpicture}[scale=0.3,gray!50]
\bu{(1.5,11.5)}
\bu{(2.5,10.5)}
\rec{0}{0}{12}{12}
\rec[{\tiny$d_1$}]{0}{0}{1}{1}
\rec[{\tiny$d_2$}]{1}{1}{2}{2}
\rec[{\tiny$d_3$}]{2}{2}{3}{3}
\rec{2}{2}{3}{3}
\rec[]{3}{3}{6}{6}
\rec[]{6}{6}{8}{8}
\rec[]{8}{8}{10}{10}
\rec[]{10}{10}{12}{12}
\draw(9.5,7)node[black]{$\siota[,3](\F)$};
\draw[black, line width=2pt]\mm{0}{1}--\mm{1}{1}--\mm{1}{2}--\mm{2}{2}--\mm{2}{10}--\mm{3}{10}--\mm{3}{6}--\mm{6}{6}--\mm{6}{8}--\mm{8}{8}--\mm{8}{10}--\mm{10}{10}--\mm{10}{12}--\mm{0}{12}--cycle;
\bfir{\siota[,3](\K\circ\F)}
\end{tikzpicture}

The proof in words follows. We proceed inductively on $k$. For $k=2,$ the path $\sI[,2](\K\circ\F)$ starts by applying $\exchange{U_{(1,i)}}{U_{(i,d_2)}}$ for $i=n_1,n_1-1,...,2$ (in this order), and ends by conjugating with $w_2$. We therefore assume that $k>2$.

 We have\begin{equation}\label{zKF}\siota[,2](\K\circ\F)=\K^\prime\circ\F^\prime\circ\JJ_{2}, \end{equation}where
 $$\F^\prime:=w_2\F|_{\prod_{1<i<j>n_1+n_2} U_{(i,j)}}\qquad\text{ and }\qquad\K^\prime=w_2(\K\circ\F^{\{1,d_2\}\cup\{i:n_1+n_2<i\leq n\}} ).$$Let $j^\prime$ be the lower right corner embedding of $GL_{n-1}$ to $GL_n$. From (\ref{zKF}) and the inductive hypothesis for $[\F\tlarrow{j^\prime}^{-1}(\F^\prime),\K\tlarrow{j^\prime}^{-1}(\K^\prime),k\tlarrow k-1]$ we obtain the observation for the current value of $k$.
\end{proof}
\begin{remark}[Proceeding one column (instead of one root) at a time] By the observation above and Fact \ref{autsteps} we obtain the modification of equation (\ref{stel}) for   $[\F\tlarrow\K\circ\F]$ where ``$\exchange{X}{Y}$" is replaced by ``$\siota[,2](\K\circ\F) $". To better explain how this equation relates to (ii) in Remark \ref{sXilit}, we obtain it by ``simultaneously exchanging" all the roots of the $d_2$-th column in the sense that follows.

 The domain of $\exchange{\prod_{2\leq i\leq n_1}U_{(1,i)}}{\prod_{2\leq i\leq n_1}{U_{(i,d_2)}}}$ does not contain $\K\circ\F$ for many choices of $\K$, but it does contain $\F$ and it maps it to $w_2^{-1}\siota[,2] (\F)$. Hence by Fact \ref{autsteps} we obtain equation (\ref{stel}) for $[X\tlarrow \prod_{2\leq i\leq n_1}U_{(1,i)},Y\tlarrow\prod_{2\leq i\leq n_1}{U_{(i,d_2)}}]$ (and the current choice of $\F$). Since $\K\circ\F$  is defined we can apply to this equation the Fourier coefficient corresponding to $\K$, and by also using that $\K\circ\sI[,2](\F) $ is defined and $D_\K$ normalizes $\prod_{2\leq i\leq n_1}{U_{(i,d_2)}}$, and also by conjugating with $w_2$, we obtain the equation in the previous paragraph.\end{remark} 

\begin{lem}\label{radexpress}Let $\F\in\Pri[,n]$.  There is an $(\F,\mirsl{n},\kkk)$-tree $\Xi$ with the next two properties:\begin{enumerate}
\item There is an output vertex of $\Xi$, labeled with an AF $\Z$ in $\AAA(U_n),$ such that $J_\Z$ and $J_\F$ are $\mirsl{n}(\kkk)$-conjugate.
\item Every other output vertex of  $\Xi$ is  labeled with an AF in $\cup_{b>\Om(J_\F)}\Pri[,n][b]$.
\end{enumerate}

\end{lem}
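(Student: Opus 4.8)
\emph{Strategy and normalization.} The plan is to take $\Xi$ to be a single $\co$-step that puts $\F$ into $\nless$-normal form, followed by the tree $\sXi$ of Theorem \ref{general} applied to that normalized function, and then to prove — by an induction following the very scheme by which $\sXi$ was defined — that $\sXi$ already has the two listed properties. For the normalization, write $D_\F=U_P$ and let $M_P$ be the Levi of $P$, with blocks $\jj GL_{n_1},\dots,\jj GL_{n_k}$. By Lemma \ref{conjugatehat} there is $\gamma\in(\jj U_{n_1}\times\prod_{2\leq i\leq k}\jj SL_{n_i})(\kkk)$ with $\gamma\F\in\AAA(U_P)_\nless$. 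Since $\jj U_{n_1}\subseteq U_{P_n}\,j(SL_{n-1})=\mirsl{n}$ and $\prod_{i\geq 2}\jj SL_{n_i}\subseteq j(SL_{n-1})$ we have $\gamma\in\mirsl{n}(\kkk)$; moreover $\gamma$ lies in the Levi $M_P$, hence normalizes $U_P$, so $\gamma\F$ has the same domain, and from $\X{\gamma\F}=\gamma\X{\F}\gamma^{-1}$ together with the transpose-stability of $M_P$ we get $D_{\gamma\F}^t=\gamma D_\F^t\gamma^{-1}$, hence $J_{\gamma\F}=\gamma J_\F\gamma^{-1}$. In particular $\Om(J_{\gamma\F})=\Om(J_\F)$, and any output $\Z$ with $J_\Z$ being $\mirsl{n}(\kkk)$-conjugate to $J_{\gamma\F}$ is $\mirsl{n}(\kkk)$-conjugate to $J_\F$. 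Prefixing the $\co$-step $\F\to\gamma\F$, it therefore suffices to treat $\F\in\AAA(U_P)_\nless$.

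\emph{The induction.} For such $\F$ I would prove that the outputs of $\sXi(\F)$ are: one AF $\Z\in\AAA(U_n)$, namely the one obtained by taking the constant term at every $\e$-step of $\sXi(\F)$, with $J_\Z$ being $\mirsl{n}(\kkk)$-conjugate to $J_\F$; and all other outputs, which also lie in $\AAA(U_n)$ and satisfy $\Om(J_{\cdot})>\Om(J_\F)$. Since $\AAA(U_n)\subseteq\Pri[,n]$ this suffices. The argument follows the case division in the proof of Theorem \ref{general}, carried through the (slightly larger than $\Pri[,n]$, and stable under the operations that occur) class of AFs to which the recursion descends, inducting on $n$ and, secondarily, downwards on $\Di{D_\F}$. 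If $D_\F=U_n$ the tree is trivial. In Case $2$, $\F=j(\F^\prime)\circ\YY$ and $\sXi(\F)=j(\sXi(\F^\prime))\circ\YY$; I apply the smaller-$n$ case to $\F^\prime$ and transport through $j(\cdot)\circ\YY$, using that $J_{j(\Z)\circ\YY}=j(J_\Z)+J_\YY$ with $J_\YY$ supported on the $(2,1)$ entry — which $j(g)$ fixes for $g\in\mirsl{n-1}(\kkk)$ — so that the main output is transported by conjugation, while for the others one uses that $X\mapsto\Om(j(X)+J_\YY)$ is strictly increasing in the dominance order. In Cases $1$ and $3$, $\sXi(\F)$ grafts a copy of $\sXi(\cdot)$ onto each output of a $(\eu,\co)$-path $\xi$ (which lies in $\mirsl{n}(\kkk)$ by Theorem \ref{general}), possibly closed off by an $\e$-step over some $U_{(1,j)}$. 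Along $\xi$ the $\co$-steps conjugate $J$ by $\mirsl{n}(\kkk)$-elements, and each $\eu$-step preserves the $\mirsl{n}(\kkk)$-conjugacy class of $J$: by Lemma \ref{eustepslem} the pairing defining it is nondegenerate, and in the situation occurring in $\sXi$ one checks that $J$ before and after differ by conjugation by an element of $U_{(1,j)}(\kkk)\subseteq\mirsl{n}(\kkk)$. Hence the input $\F^{i-1}$ of the closing $\e$-step still has $\Om(J_{\F^{i-1}})=\Om(J_\F)$; its constant term $0_{U_{(1,j)}}\circ\F^{i-1}$ has the same $J$ (the added equation pins the $(j,1)$-entry, which was already $0$ because $U_{(1,j)}\not\subseteq D_{\F^{i-1}}$), so the decreasing-$\Di{D_\F}$ induction completes that branch; and each non-constant term is $\F^{i-1}$ with the $(j,1)$-entry of $J$ turned nonzero, and there — by the crux below — the orbit strictly increases, after which the same induction completes its branch inside $\AAA(U_n)$.

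\emph{The crux and the main obstacle.} What remains is: turning the $(j,1)$-entry of $J_{\F^{i-1}}$ from $0$ into a nonzero scalar strictly enlarges the nilpotent orbit. This depends on the precise shape of $J_{\F^{i-1}}$ produced by $\sXi$: since $l_1$ is the largest index with $\F(U_{(1,l_1)})\neq 0$ and the earlier $\eu$- and $\co$-steps of $\sXi$ have pushed the $(1,l_1),(1,l_1-1),\dots$ data of $\F$ into $C$ while $\F$ was in $\nless$ form, the first column of $J_{\F^{i-1}}$ is arranged so that inserting the $(j,1)$-entry lengthens a Jordan chain, hence raises some partial sum $\sum_{r^\prime\leq r}a_{r^\prime}$ in the dominance order of Subsection \ref{inil}. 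Keeping this shape under control as one passes through the exchange steps is, I expect, the main obstacle; the rest is bookkeeping of the recursion already present in the proof of Theorem \ref{general} — conveniently packaged, for the relevant shapes, by Observation \ref{unipc} — of the $\mirsl{n}(\kkk)$-conjugacy of $J$ through the $\co$- and $\eu$-steps, and of the transport through $j(\cdot)\circ\YY$, absorbing where necessary further $\co$-normalizations of the kind supplied by Lemma \ref{conjugatehat}.
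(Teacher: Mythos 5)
Your opening step (conjugating to $\nless$-form via Lemma~\ref{conjugatehat}) is the same as the paper's, and your observation that $\gamma\in\mirsl{n}(\kkk)$ and $J_{\gamma\F}=\gamma J_\F\gamma^{-1}$ is correct. From there, however, you diverge: you propose to run $\sXi(\F)$ as-is and prove inductively that its outputs consist of one $\Z$ with $J_\Z$ being $\mirsl{n}(\kkk)$-conjugate to $J_\F$ and all others with $\Om(J_{\cdot})>\Om(J_\F)$. This inductive claim is false for the class of AFs the $\sXi$-recursion actually passes through, and you yourself flag the key step as conjectural (``I expect'', ``the main obstacle''). The gap is not cosmetic.

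Concretely, take $\F\in\Pri[,4]_{\nless}$ with $D_\F=U_P$ for $P$ with Levi $GL_2\times GL_2$, $\F$ nontrivial exactly on $U_{(1,3)}$ and $U_{(2,4)}$, so $J_\F$ has orbit $[2,2]$. In $\sXi(\F)$, Case~3 applies with $l_1=3$; one $\eu$-step and a $\co$-step bring you to $\F^\ih=j(\F')\circ\YY$ with $\F'\in\AAA[3]$, $D_{\F'}=U_{3,(2,3)}$, $\F'$ nontrivial (and $\YY$ nontrivial on $U_{(1,2)}$). The recursion is now supposed to apply your inductive hypothesis to $\F'$. But $\sXi(\F')$ has \emph{two} output vertices with $J$ in the orbit $[2,1]$: the constant-term chain gives $J=E_{3,2}$ (conjugate to $J_{\F'}$), while the nonconstant branch ($\F'$ nontrivial on $U_{(1,3)}$, then $\eu(U_{(1,2)},U_{(2,3)})$, then the $\co$-step, then the trivial term in $\sXi(\F_{\emptyset,2})$) gives $J=E_{2,1}$, same orbit $[2,1]$ but \emph{not} $\mirsl{3}(\kkk)$-conjugate to $E_{3,2}$. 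So the statement you would feed into the transport step is simply false for $\F'$. (It happens that after $j(\cdot)\circ\YY$ the bad output lands in $[3,1]>[2,2]$, but this is because $j(E_{2,1})+J_\YY$ and $j(E_{3,2})+J_\YY$ interact differently with the $(2,1)$-entry of $J_\YY$; that effect is invisible at the level of orbits of the $GL_{n-1}$-data, so your claim that $X\mapsto\Om(j(X)+J_\YY)$ is strictly increasing in the dominance order does not make sense as stated and cannot carry the induction.) Also note $\F'\notin\Pri[,3]$, so even if you only asserted the inductive hypothesis for $\Pri[,n]$ it would not be available where you need it.

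The paper sidesteps exactly this by not running $\sXi$ and instead building a custom tree that stays inside $\Pri[,n]$: it takes $\sI(\F)$ (only the $\eu$- and $\co$-steps), then an $\e$-quasipath along constant terms over the multi-dimensional groups $N_1,\dots,N_{k-1}$ whose constant term is again in $\Pri[,n]$ with strictly larger domain (so the downward induction on $\Di{D_\F}$ applies inside $\Pri[,n]$), and each nonconstant term $\F^\ih_1$ is pulled back via Observation~\ref{unipc} to an $\XX\in\Pri[,n]$ with $\siota[,k](\XX)=\F^\ih_1$, reducing the orbit comparison to the Claim $\Om(J_{\XX})>\Om(J_\F)$, which is proved by the explicit $u$, $v$, $w$ manipulations together with Observation~\ref{diagonal} and Lemma~\ref{nless}. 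That is the content your ``crux'' paragraph gestures at but does not deliver, and keeping the recursion in $\Pri[,n]$ (rather than in the broader class $\sXi$ visits) is precisely what makes the orbit argument tractable.
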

\begin{tproof}By Lemma \ref{conjugatehat} we are reduced to the case $\F\in\tPri[,n]. $ Let $R_1:=\prod_{1<j\leq n}U_{(1,j)}$. Assume  that $R_1\subseteq D_\F.$ Then there is a $w\in\mirsl{n}(\kkk)$ which normalizes $D_\F$, and  $\mathrm{Stab}_{\mirsl{n}}(w\F|_{R_1})$ contains the lower right corner copy of $\mirsl{n-1}$ inside $GL_n.$ Hence it is enough to prove the lemma for $[\F\tlarrow\jm\F^{\{1\}}]$. Hence (by induction on $n$) we are further reduced to assuming that $R_1\not\subseteq D_\F.$ We also do a downward induction on $\Di{D_\F}.$ To sum up this paragraph and be more precise, we are left with: proving the lemma for  a choice of $\F\in \tPri$ satisfying $R_1\not\subseteq D_\F$, assuming the lemma is correct for $[\F\tlarrow\F^\prime]$ for any $\F^\prime\in \Pri[,n]$ satisfying  $\Di{D_{\F^\prime}}>\Di{D_\F}.$

Let $S$ be the set in $\Set{\TT{n}{\F}}$ with $1\in\Set{T}.$  If $|S|=1,$ by applying to $\F$ the $\e$-operation over the product of the root groups in the first row and not contained in $D_\F$, we are done by the inductive hypothesis. We are left with the case $k>1.$

To construct $\Xi$, we start with $\sI(\F),$ the output AF $\siota(\F)$ of which, is given in Observation \ref{unipc}.

 We continue the construction of $\Xi$ with the $(\siota(\F),\e)$-quasipath $\xi$  along constant terms over the root generated subgroups $N_1,...,N_{k-1}$ of $\prod_{k<j<n}U_{(k,j)} $ which are inductively defined by requiring that: for $1\leq r\leq k-1,$ the centralizer\footnote{Recall the meaning of the centralizer of a group $H$ in a group $G$ is the usual one except possibly that $H\subseteq G$ is not necessary.} of  $\left(\prod_{1\leq y\leq r-1}N_y\right)D_{\siota(\F)}$ in $\prod_{k<j<n}U_{(k,j)}$ is equal to $\prod_{1\leq y\leq r}N_r$. Note that $\prod_{1\leq y\leq 0}N_y=1$.  Also recall that the symbol $\prod$ denotes the direct product (and hence trivial intersections of the factors are implied). Of course  each $N_r$ easily admits an explicit description as a product $\prod_l U_{(k,l)}$ for $l$ taking the several biggest values not already taken in the same product for a smaller choice of $r$. Finally notice that $\left(\prod_{1\leq y\leq k-1}N_y\right)D_{\siota(\F)}$ is a unipotent radical of a $GL_{n}$-parabolic.

Let $\F^\ih_0$ be the output constant term of $\xi$. We see that $\F^\ih_{0}\in \Pri[,n][\Om(J_\F)]$. Since $\Di{D_{\F^\ih_{0}}}>\Di{D_\F}$, the  inductive hypothesis contains the lemma for $[\F\tlarrow\F^\ih_{0}]$. Hence we obtain an $(\F^\ih_{0},\mirsl{n},\kkk)$-tree $\Xi(\F^\ih_{0} )$ for which:
\begin{enumerate}
\item There is an output vertex of $\Xi(\F^\ih_0)$, labeled with an AF $\Z$ in $\AAA(U_n),$ such that $J_\Z$ and $J_{\F^\ih_0}$ are $\mirsl{n}(\kkk)$-conjugate. Hence $J_\Z$ and $J_\F$ are $\mirsl{n}(\kkk)$-conjugate.
\item Every other output vertex of  $\Xi(\F^\ih_0)$ is labeled with an AF in $\cup_{b>\Om(J_{\F^\ih_0})}\Pri[,n][b]$ (which is equal to $\cup_{b>\Om(J_{\F})}\Pri[,n][b]$).
\end{enumerate}

We are left with showing that for any  other output AF $\F^\ih_1$ of $\xi$, there is an  $$(\F^\ih_1\rightarrow\cup_{b>\Om(J_{\F})},\Pri[,n][b],\mirsl{n},\kkk)\text{-tree},$$say $\Xi(\F^\ih_1),$  because then the proposition is obtained for $\Xi:=\Xi(\F^\ih)\vee_{\F^\ih}\xi\vee\sI(\F)$, where $\F^\ih$ varies over all output AFs of $\xi$. 

By using  Observation \ref{unipc}, we obtain an AF $\XX\in\Pri[,n]$, such that $\siota[,k](\XX)=\F^\ih_1.$ In $\XX$ the inductive hypothesis is applied, and hence we are left with proving the following claim.

\vsp 
\noindent\textbf{Claim}\textit{ $\Om(J_{\XX})>\Om(J_{\F})$.} 

\vsp
\noindent\textit{Proof of Claim. }Let $d_k:=\max(\Set{T})$ for $T$ being as in Observation \ref{unipc} for $[\F\tlarrow\XX,\K\tlarrow\F_{\emptyset,n}] $ (k there is chosen as it is here). Therefore the only row in which there are possibly more than one root groups on which $\XX$  doesn't vanish is the $d_k$-th row. Let 
$$A:=\{l|U_{(d_k,l)}\in D_\XX\text{, and } \XX(U_{(d_k,l)})\neq \{0\}. $$ Let $\QQ$ be the restriction of $\XX$ at the columns that are to the right  of $A$ (that is the $x$-th columns for $x>\max(A)$). Among the elements in $\{S\in\Set{\TT{n}{\QQ}}:S\cap A\neq \emptyset\}$ choose one, say $S_A$,  with the biggest number of elements,  and let $l_A$ be the element in the intersection $S_A\cap A.$ Let $M$ be the Levi of the $GL_n$-parabolic subgroup with unipotent radical $D_\XX$.

By using Observation \ref{diagonal} we obtain a unipotent element $u\in\Stab{M}{\QQ}$  such that: \begin{enumerate}
\item The root group $U_{(d_k,l_A)}$ is the unique one in the $d_k$-th row on which $u\XX$ is nontrivial. 
\item Other than the $d_k$-th row there is at most one more row on which $\XX$ and $u\XX$ differ. This row exists if and only if  there is a number $d\neq d_k$ such that $\XX(U_{(d,l_A)})\neq\{0\},$ and in this case it is the $d$-th row. 
\end{enumerate}
In case the number $d$ was not defined; it means $|S_A|$ is a number in the partition of $\Om(J_\F)$; and that $\Om(J_{u\XX})$ is obtained (as a partition) from $\Om(J_\F)$ by removing one occurrence of each of the numbers  $|S_A|,k$ and adding  one occurrence of  $k+|S_A|.$ Hence $\Om(J_{u\XX})>\Om(J_\F)$, and since $u$ normalizes $D_\F^t$, we have $J_{u\XX}=uJ_\XX u^{-1}$, and the claim is obtained\footnote{Even if we don't use the equality $J_{u\XX}=uJ_\XX u^{-1}$,  the  weaker claim obtained still implies the lemma directly. The same is true for the equality $J_{wvu\XX}=wvuJ_\XX(wvu)^{-1}$ which appears later in the proof}.

We are left with the case in which $d$ was defined. The reader will need here to check Definition \ref{boreln} and Lemma \ref{nless} in the appendix (and hence recall the definition of $\X{...}$ in Definition \ref{vardef}). By using again Observation \ref{diagonal} we obtain a unipotent element $v$ fixing $D_{u\XX}$ such that $vu\XX$ is trivial on $U_{(d,l_A)}$ and it doesn't differ from $u\XX$ in any other root group. Let $d^\prime$ be the biggest number such that the $k$-th block of $M$ is nontrivial on the $d^\prime$-row (the same block is nontrivial on the $d,d_k$-rows). Let $N$ be the group generated by all the root groups of $D_\XX$, except for the ones lying: in the $k$-th block of $M$ and in the $d^\prime$-th row. Let $w$ be an element in $W_M$ which permutes the rows $d$ and $d^\prime$ and also satisfies  $(wvu\XX)|_{N}\in \AnkB{n}_{\nless}$.  Since $wuv$ normalizes $D_\XX^t$, we obtain $J_{wvu\XX}={wvu}J_\XX(wvu)^{-1}$, which in turn implies the first equality in (\ref{zeqineq}) below. The other parts of (\ref{zeqineq}) are obtained from Lemma \ref{nless} and $J_{wvu\XX}\in \X{wvu\XX|_{N}}$.
\begin{equation}\label{zeqineq}\Om(J_{\XX})=\Om(J_{wvu\XX})\geq\text{ the unique orbit in }\Om((wvu\XX)|_{N})=\Om(J_{(wvu\XX)|_{N}}). \end{equation} It is therefore enough to prove that the last among these orbits is bigger from $\Om(J_\F).$ We directly find a number $m$ in the partition $\Om(J_\F)$ which is smaller from $k+|S_A| $, such that $\Om(J_{(wvu\XX)|_{N}})$  is obtained from $\Om(J_\F)$ by replacing $m$ and $k$ with $m-|S_A|$ and $k+|S_A|$ respectively.\hfill$\square $Claim$\qedhere$\end{tproof} 
 
\section{Applying AFs in $\Prink[n]$ to automorphic forms}\label{forms}In the current section---which is not needed for any of the three main results of the paper----we state Theorem \ref{folk}, and  reduce it to its special case (Theorem \ref{Eukerianf}) in which $\F\in\AAA(U_n),$ where $n$ (resp. $\kkk$) is a positive integer (resp. number field) that we fix throughout the section. Subsection \ref{Eisexp} in the appendix is a continuation of the current section, namely by proving there Theorem \ref{Eukerianf}.  Theorem \ref{folk} (including the proof I give) is an easy refinement of known results (see Remark \ref{local}).

To state Theorem $\ref{folk}$ we need to define $\Om^\prime(\pi).$ Among the equivalent ways of defining it we use one that is very directly related to the proof.

\begin{defi}[$\Om^\prime(\pi)$]\label{Ppi}
Let $\pi\in\Aut{\kkk,n},$ and $\prod_{1\leq i\leq m}GL_{n_i}$ be the Levi up to isomorphism in the discrete inducing  data of $\pi$.  Let $a_i$ be the divisor of $n_i$, for which the cuspidal representation defining the $GL_{n_i}(\A)$-Speh representation in the inducing data, is defined over $GL_{a_i}.$ Let $b_i:=\frac{n_i}{a_i}.$ Finally let $\PPPP$ be a $GL_n$-parabolic subgroup with Levi $\prod_{1\leq i\leq m} GL_{b_i}^{a_i}.$ We define $\Om^\prime(\pi)$ to be the Richardson  orbit openly intersecting $\Lie{U_{\PPPP}}$. This definition is the same for all previous choices of $\PPPP$ due to Fact \ref{ptelos}.
\end{defi}\begin{remark} We do not use the concept of inducing orbits, but the reader acquainted with it, can discern it both in the definition above and in the proof of Theorem \ref{Eukerianf}.  A concept frequently considered in relation to the topic of the current section is $\Om(\pi). $ We neither review it nor use it except: in Corollary \ref{attachingfc} where we see how the proof of  $\Om(\pi)=\{\Om^\prime(\pi)\}$ for every $\pi\in\Aut{\kkk,n,>},$ is completed; and towards the end of Subsection \ref{zpre3} where a dimension equation of D. Ginzburg is discussed.\end{remark}

\begin{theorem}\label{folk} Let $\F\in\Pri[,n],$   and $\pi\in\Aut{\kkk,n,>}$ . 
\begin{description}
\item[Part A]Assume that $\Om(J_\F)=\Om^\prime(\pi)$. Then $\F(\pi)$ is  factorizable and nonzero.
\item[Part B] Assume the orbit $\Om(J_\F)$ is bigger or unrelated to $\Om^\prime(\pi)$. Then $\F(\pi)=0$.
\end{description}
\end{theorem}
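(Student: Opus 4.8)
The strategy is to reduce the statement about an arbitrary $\F\in\Pri[,n]$ to the Whittaker case via the $\AAnk$-tree machinery. First I would invoke Lemma \ref{radexpress}, which supplies an $(\F,\mirsl{n},\kkk)$-tree $\Xi$ whose output AFs are: one AF $\Z\in\AAA(U_n)$ with $J_\Z$ being $\mirsl{n}(\kkk)$-conjugate to $J_\F$, and others lying in $\cup_{b>\Om(J_\F)}\Pri[,n][b]$. Since $\mirsl{n}\subseteq GL_n$, this $\Xi$ is in particular an $(\F,GL_n,\kkk)$-tree, so Fact \ref{nonzeroeulerian} applies: $\F(\pi)$ is nonzero iff some output AF $\Z$ has $\Z(\pi)\neq 0$, and $\F(\pi)$ is factorizable iff at most one output AF is nonzero on $\pi$ (and that one is factorizable). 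Thus both Part A and Part B are reduced to understanding $\Z(\pi)$ for the $\Z\in\AAA(U_n)$ output AFs that appear in $\Xi$, i.e. to the Whittaker-type statement Theorem \ref{Eukerianf}.

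For \textbf{Part B}, every output AF of $\Xi$ lies in $\cup_{b\geq\Om(J_\F)}\Pri[,n][b]$ with all the $b$'s either equal to, bigger than, or — for the distinguished $\Z$ — having $J_\Z$ conjugate to $J_\F$; in any case the associated orbit of each output AF is $\geq\Om(J_\F)$, which by hypothesis is bigger than or unrelated to $\Om^\prime(\pi)$. I would then argue inductively on dimension: applying Lemma \ref{radexpress} repeatedly, each output AF that is not already in $\AAA(U_n)$ gets expanded further, and all orbits that ever appear remain $\geq\Om(J_\F)$, hence bigger than or unrelated to $\Om^\prime(\pi)$; eventually every branch terminates at an AF in $\AAA(U_n)$ whose orbit is still bigger than or unrelated to $\Om^\prime(\pi)$. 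Theorem \ref{Eukerianf} (the Whittaker case) then gives $\Z(\pi)=0$ for each such terminal $\Z$. By Part 1 of Fact \ref{nonzeroeulerian}, $\F(\pi)=0$. The only subtlety is to check that the orbit-monotonicity of Lemma \ref{radexpress} is genuinely preserved under iteration — which it is, since $b>\Om(J_\F)$ and $c>b$ imply $c>\Om(J_\F)$, and ``bigger or unrelated to $\Om^\prime(\pi)$'' is upward-closed in the dominance order restricted to orbits comparable to or above $\Om(J_\F)$.

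For \textbf{Part A}, with $\Om(J_\F)=\Om^\prime(\pi)$: the distinguished output $\Z\in\AAA(U_n)$ has $J_\Z$ conjugate to $J_\F$, so its orbit is $\Om^\prime(\pi)$, and Theorem \ref{Eukerianf} should give that $\Z(\pi)$ is factorizable and nonzero. Every other output AF $\Z'$ of $\Xi$ has orbit $b>\Om(J_\F)=\Om^\prime(\pi)$; expanding such $\Z'$ further by Lemma \ref{radexpress} keeps all orbits $>\Om^\prime(\pi)$, so every terminal AF in $\AAA(U_n)$ reached from a non-distinguished branch has orbit bigger than $\Om^\prime(\pi)$, hence (by the Part B analysis / Theorem \ref{Eukerianf}) is zero on $\pi$. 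Therefore exactly one output vertex of the fully-expanded tree is nonzero on $\pi$, namely the one carrying $\Z$, and it is factorizable; by Part 2 of Fact \ref{nonzeroeulerian}, $\F(\pi)$ is factorizable, and by Part 1 it is nonzero.

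\textbf{Main obstacle.} The real content is Theorem \ref{Eukerianf} — the Whittaker/Euler-product statement for $\F\in\AAA(U_n)$ — which is deferred to the appendix (Subsection \ref{Eisexp}) and rests on the analytic properties of Eisenstein series over discrete data and the unramified computation of local Whittaker integrals; that is where one needs $\pi\in\Aut{\kkk,n,>}$. Within the present section the only nontrivial point is bookkeeping: verifying that iterating Lemma \ref{radexpress} terminates (it does, by the downward induction on $\Di{D_\F}$ already built into that lemma) and that the orbit bounds compose correctly so that the ``at most one nonzero output'' hypothesis of Fact \ref{nonzeroeulerian} Part 2 is met in Part A. I expect this to be routine given the lemmas already in place.
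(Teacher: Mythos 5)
Your proposal is correct and follows the paper's approach exactly: reduce to the Whittaker case $\F\in\AAA(U_n)$ (Theorem \ref{Eukerianf}) via Lemma \ref{radexpress} and Fact \ref{nonzeroeulerian}, using the orbit-monotonicity in Lemma \ref{radexpress} to control the non-distinguished outputs. The paper compresses your iteration/downward-induction bookkeeping into the phrase ``directly reduced,'' but the content is the same and your elaboration of why that reduction works is sound.
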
 
\begin{tproof}(mostly deferred for the appendix). By using Lemma \ref{radexpress} and Fact \ref{nonzeroeulerian}, the theorem is directly reduced to the special case in which $\Pri[,n] $ is replaced with $\AAA(U_n).$ This case is proved in the appendix (Theorem \ref{Eukerianf}).
\end{tproof}
\begin{remark}\label{local}Specifically on $\AAnk$-trees one discerns in the proof of Theorem \ref{folk}, since they are closely related to $\sXi,$ their relations to the literature are mostly within the topic of Remark \ref{sXilit}.

Among results in which either $D_\F\neq U_n$ or $\F$ is trivial on at least one root group contained in $D_\F$, the first ones analogous to the Theorem above that I am aware of appear in the local setting in the works \cite{Zel} (Section 8) and \cite{MW1}. Since then many related works have appeared both in the local and in the global setting and for groups with root systems other than $A_n$ (works addressing both the global setting and the group $GL_n$ include: \cite{Ginzbconj}, \cite{Ginzeul}, \cite{JiangLiu}, \cite{Cai}).
\end{remark}
\begin{remark}\label{anco}By the classification of automorphic forms by R.P.Langlands and the classification of the discrete spectrum (\cite{MW2}) (and uniqueness of analytic continuation), we directly obtain the extension of Part B in which  $\Aut{\kkk,n,>}$ is replaced with $\Aut{\kkk,n}$. Every other vanishing of an integral of automorphic forms in the present paper can be similarly extended (either by repeating this argument or using the extended Part B). 
  \end{remark}
\begin{cor}\label{easyrefcor}

\vspace{1mm}
\noindent\textit{Part 1.} Let $\F\in\AAA[,n] $, and  $\Xi$ be an $(\F\rightarrow\Pri[,n],GL_n,\kkk)$-tree (known to exist by Theorem \ref{general}). We define $\Om(\F)$ to be the set consisting of the minimal orbits of 
$$\{a\in\UUU{n}|\text{An output AF in }\Xi\text{  belongs to }\Pri[,n][a]\}. $$ The set $\Om(\F)$ is independent of the choice  of $\Xi$.

\vspace{1mm}
\noindent\textit{Part 2.} Let $\pi\in\Aut{\kkk,n,>}. $ 
\begin{enumerate}
\item[(i)] Assume that there is an $(\F,GL_n,\kkk)$-tree, one vertex of which is labeled with an AF in $\Pri[,n][\Om^\prime(\pi)].$ Then   $\F(\pi)$ is  nonzero.
\item[(ii)] Assume that $\Om^\prime(\pi)\in\Om(\F)$, and fix an $(\F\rightarrow\Pri[,n] ,GL_n,\kkk)$-tree. Then this tree has at most one output vertex with  label in $\Pri[,n][\Om^\prime(\pi)] $ if and only  if $\F(\pi)$ is factorizable.
\item[(iii)] Assume that for every $a\in\Om(\F)$,  the orbit $\Om^\prime(\pi)$ is smaller or unrelated to $a$. Then $\F(\pi)=0$.\chh[compare with your pr]
\end{enumerate}
  
\end{cor}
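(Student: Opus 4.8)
\textbf{Proof proposal for Corollary \ref{easyrefcor}.}

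The plan is to derive all parts of the corollary from Theorem \ref{folk} and Fact \ref{nonzeroeulerian}, treating Part 1 (well-definedness of $\Om(\F)$) and Part 2 in turn. First I would address Part 1. Fix two $(\F\rightarrow\Pri[,n],GL_n,\kkk)$-trees $\Xi$ and $\Xi'$; write $\Om_\Xi(\F)$ (resp. $\Om_{\Xi'}(\F)$) for the set of minimal orbits among those $a$ with some output AF of $\Xi$ (resp. $\Xi'$) in $\Pri[,n][a]$. The key device is to use, for a suitable $\pi$, the two criteria of Fact \ref{nonzeroeulerian} together with Part A of Theorem \ref{folk}: an automorphic representation $\pi\in\Aut{\kkk,n,>}$ with $\Om^\prime(\pi)$ equal to a given orbit $a$ makes $\Z(\phi)\neq 0$ precisely for those output AFs $\Z$ whose orbit $\Om(J_\Z)$ equals $a$ (Part A gives nonvanishing when the orbits agree, Part B gives vanishing when $\Om(J_\Z)$ is bigger than or unrelated to $a$; and one checks that output AFs with $\Om(J_\Z)$ strictly smaller than $a$ cannot occur once $a$ is taken minimal in $\Om_\Xi(\F)$). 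Since by Part 1 of Fact \ref{nonzeroeulerian} the nonvanishing of $\F(\pi)$ is intrinsic to $\F$ (independent of the tree), I would argue: if $a\in\Om_\Xi(\F)$ then choosing $\pi$ with $\Om^\prime(\pi)=a$ (such $\pi$ exists, e.g.\ via an Eisenstein series induced from the parabolic $\PPPP$ attached to $a$ as in Definition \ref{Ppi}) forces $\F(\pi)\neq 0$, hence $\Xi'$ must also have an output AF realizing some orbit $\leq a$; minimality on both sides then yields $\Om_\Xi(\F)=\Om_{\Xi'}(\F)$. I expect this symmetry/minimality bookkeeping to be the main obstacle, mostly because one must be careful that ``minimal orbit'' behaves well under the passage between trees and that no strictly smaller orbit sneaks in.

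Next I would prove Part 2. For (i): given an $(\F,GL_n,\kkk)$-tree with a vertex labeled by some $\Z\in\Pri[,n][\Om^\prime(\pi)]$, I can cut the tree at that vertex to view $\Z$ as the input AF of the subtree rooted there; by Part A of Theorem \ref{folk}, $\Z(\pi)\neq 0$, so $\Z(\phi)$ is not identically zero for some $\phi\in\pi$; then Part 1 of Fact \ref{nonzeroeulerian}, applied to the $(\F,GL_n,\kkk)$-tree obtained as the initial segment from $\F$ down to $\Z$ together with the rest, gives $\F(\pi)\neq 0$. (One small point: a vertex in the middle of a tree is an output AF of the initial subtree ending at it, so Fact \ref{nonzeroeulerian} applies directly.) For (iii): since $\Om^\prime(\pi)$ is smaller than or unrelated to every $a\in\Om(\F)$, every output AF $\Z$ of a fixed $(\F\rightarrow\Pri[,n],GL_n,\kkk)$-tree has $\Om(J_\Z)$ bigger than or unrelated to $\Om^\prime(\pi)$ (because $\Om(J_\Z)$ is $\geq$ some element of $\Om(\F)$), so Part B of Theorem \ref{folk} gives $\Z(\pi)=0$ for every output AF $\Z$, and Part 1 of Fact \ref{nonzeroeulerian} then yields $\F(\pi)=0$.

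For (ii): fix an $(\F\rightarrow\Pri[,n],GL_n,\kkk)$-tree $\Xi$ and let $\Z_1,\dots,\Z_s$ be its output AFs. By Part A of Theorem \ref{folk}, an output AF $\Z_j$ has $\Z_j(\pi)$ factorizable and nonzero iff $\Om(J_{\Z_j})=\Om^\prime(\pi)$, and by Part B $\Z_j(\pi)=0$ whenever $\Om(J_{\Z_j})$ is bigger than or unrelated to $\Om^\prime(\pi)$; since $\Om^\prime(\pi)\in\Om(\F)$ is minimal, no output AF can have orbit strictly smaller than $\Om^\prime(\pi)$. Hence the output vertices $v$ with $\Z_v(\phi)\not\equiv 0$ are exactly those with label in $\Pri[,n][\Om^\prime(\pi)]$, and for each of these $\Z_v(\pi)$ is moreover factorizable. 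Now Part 2 of Fact \ref{nonzeroeulerian} says $\F(\phi)$ is factorizable iff there is at most one such vertex and (if it exists) $\Z_v(\phi)$ is factorizable — the latter is automatic here. Therefore $\F(\pi)$ is factorizable iff $\Xi$ has at most one output vertex with label in $\Pri[,n][\Om^\prime(\pi)]$, which is the claim. This completes the proof.
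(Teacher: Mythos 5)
Your proposal is correct and follows essentially the same route as the paper: derive each part from Theorem \ref{folk} (Parts A and B) together with Fact \ref{nonzeroeulerian}, and for the tree-independence of $\Om(\F)$, pick $\rho\in\Aut{\kkk,n,>}$ with $\Om^\prime(\rho)=a$, use the nonvanishing criterion to transfer information between two trees, and close with a poset/minimality argument. The only stylistic difference is the order: you prove Part 1 first by inlining the arguments that become Parts 2.(i) and 2.(iii), whereas the paper proves Part 2 first (with $\Om(\F)$ replaced by the tree-dependent $\Om(\F,\Xi)$) and then invokes 2.(i) and 2.(iii) as ready-made lemmas for Part 1, which avoids the slight redundancy in your write-up.
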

\begin{proof}We denote by $\Om(\F,\Xi)$ the definition we gave for $\Om(\F).$

From Part A (resp. both parts, Part B) of Theorem \ref{folk}  and Fact \ref{nonzeroeulerian} we obtain Part 2.(i) (resp. Part 2.(ii) for $[\Om(\F)\tlarrow\Om(\F,\Xi) ]$ where  $\Xi$ is the fixed tree in the statement of (ii), Part 2.(iii) for $[\Om(\F)\tlarrow\Om(\F,\Xi) ]$ where $\Xi$ is any $(\F\rightarrow\Pri[,n],GL_n,\kkk)$-tree). 

We are left with Part 1, that is, we prove for any two $(\F\rightarrow\Pri[,n],GL_n,\kkk )$-trees $\Xi_1,\Xi_2,$ we have $\Om(\F,\Xi_1)=\Om(\F,\Xi_2).$

Let $a$ be an orbit in $\Om(\F,\Xi_1)$, and $\rho$ be a representation in $\Aut{\kkk,n,>}$ such that $\Om^\prime(\rho)=a.$ Part 2.(i) implies that $\F(\rho)\neq 0.$ Therefore  $a$ must be bigger or equal to an orbit in $\Om(\F,\Xi_2),$ because otherwise Part 2.(iii) for $[\Om(\F)\tlarrow\Om(\F,\Xi_2)]$ would imply $\F(\rho)=0.$  By letting $a$ vary in $\Om(\F,\Xi_1)$, and then interchanging the roles of $\Xi_1$ and $\Xi_2$ we are done.\end{proof}
\begin{remark}[Weak and Strong questions \ref{zStrongc} and \ref{zStrongcc}] Let again $\F\in\AAA[,n] $  and  $\Xi$ be an $(\F,GL_n,\kkk)$-tree in Part 1 of the corollary above, and also let $a\in\Om(\F)$. If there are finitely many output  vertices of $\Xi$ with label in $\Prink[n][a]$, we denote by $\mult{a}{\F}$ the number of such vertices.  From the corollary above we obtain that: $\mult{a}{\F}= 1$ for one choice of $\Xi$ if and only if $\mult{a}{\F}= 1$ for every choice of $\Xi$; for each among Weak questions \ref{zStrongc}.$(\F)$ and \ref{zStrongcc}.$(\F)$, the two versions are equivalent; the $\pi$ positively answering (the second version of) Weak question \ref{zStrongc}.$(\F)$ (resp. \ref{zStrongcc}.$(\F)$) are also positively answering Strong question \ref{zStrongc}.$(\F)$ (resp. \ref{zStrongcc}.$(\F)$).\end{remark}

 In the next section, the concepts $\Om(\F)$ and $\mult{a}{\F}$ are studied more and independently from the current section (and automorphic forms).
\section{Attaching a variety $\X{\F}$ to an AF $\F$}\label{variety}In the current section we prove the second main result of the paper which is Main corollary \ref{maincor}. The reason for its name is because we can view it as a corollary of Theorem \ref{general} and the study of the variety $\X{\F}$.  A reading of Sections \ref{optionalsection} and \ref{compositions} witch involves encountering statements without knowing their proofs (but no other gaps) is possible by remembering from the current section the following: Definitions \ref{defBBB}, \ref{brow}, \ref{sbsquares}; and everything starting with \ref{AOF}, except that from Proposition \ref{dimofoccurence} one can only remember that Definition \ref{AOF} is independent of the choice of $\Xi.$ In particular, the variety in the title of the section, although central in the proof of Main corollary \ref{maincor} and in the independence of Definition \ref{AOF} on the choice of $\Xi$, it appears neither on the statement of Main corollary \ref{maincor} nor in this Definition. 

Throughout the current section $n$ is a positive integer. Recall that for every $\F\in\AAnk[n]$ we have defined
$$\X{\F}:=\{J\in \Lie{GL_n}: \F(\exp{x})=tr(xJ)\text{ for every }x\in \Lie{D_\F}\}. $$\begin{defi}[$\cong $]If two varieties $X$ and $Y$ are isomorphic we write $X\cong Y.$\end{defi} 
Recall that throughout the paper all varieties are defined over an algebraically closed field $\KKK$ of characteristic 0.
\begin{fact}[Known\footnote{A weaker version of Fact \ref{zariski} which appears in more texts, is by further assuming for $f$ to be birational. Fact \ref{zariski} is directly obtained from this weaker version and by the following known statement. \textit{Let $\phi:X\rightarrow Y$ be a dominant morphism, between two irreducible varieties $X$ and $Y$. Assume that there is an $x_0\in X$ with $\phi^{-1}(\phi(x_0))$ finite. Then $\Di{X}=\Di{Y}$, and 
 there is an open subset $U$ of $X$ such that $\phi^{-1}(\phi(x))$ is a set with $|K(X):K(Y)|$ elements for every $x\in U$.}}, corollary of Zariski's main theorem]\label{zariski}Let $f:X\rightarrow Y$ be a bijective morphism, between two irreducible varieties $X,Y$. For $Y,$ we further assume that it is normal. Then $f$ is an isomorphism.

\end{fact}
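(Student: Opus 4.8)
The plan is to reduce the statement to the birational case and then invoke the classical (birational) form of Zariski's main theorem recorded in the footnote. First I would note that since $f$ is bijective it is in particular surjective, hence dominant, and moreover every fibre $f^{-1}(y)$ for $y\in Y$ is a single point, so in particular finite. Thus the hypotheses of the auxiliary statement quoted in the footnote hold with $\phi=f$ and an arbitrary choice of $x_0\in X$, and we obtain $\Di{X}=\Di{Y}$ together with an open subset $U\subseteq X$ such that $f^{-1}(f(x))$ has exactly $|K(X):K(Y)|$ elements for every $x\in U$.

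Next I would use bijectivity once more: for $x\in U$ the set $f^{-1}(f(x))$ is a singleton, so $|K(X):K(Y)|=1$. Since all varieties here are defined over a field of characteristic zero, the extension $K(Y)\subseteq K(X)$ is separable, hence $K(X)=K(Y)$, i.e.\ $f$ is birational. At this point $f$ is a bijective birational morphism between irreducible varieties whose target $Y$ is normal, so the ``weaker version'' of Zariski's main theorem mentioned in the footnote applies verbatim and yields that $f$ is an isomorphism.

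I do not expect any real obstacle beyond bookkeeping: the whole argument is simply the passage from the bijective hypothesis to birationality, and bijectivity is precisely what is needed both to verify the finite-fibre condition in the cited dimension statement and to force the generic fibre to be a single point. Once birationality is established, the conclusion is immediate from the quoted form of Zariski's main theorem, so the only point requiring care is checking that the hypotheses of that quoted form (bijective, birational, target irreducible and normal) are all in place.
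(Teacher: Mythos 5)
Your proposal is correct and follows exactly the route indicated in the paper's own footnote: use the quoted dominant-morphism statement to pass from bijectivity to $|K(X):K(Y)|=1$, conclude $f$ is birational, and then apply the classical birational form of Zariski's main theorem to the normal target. The only small imprecision is that the step $K(X)=K(Y)$ follows immediately from $|K(X):K(Y)|=1$ without invoking separability; characteristic zero is instead what justifies the auxiliary statement equating generic fibre cardinality with the field-extension degree.
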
\begin{remark}
We always use Fact \ref{zariski} through the Lemma \ref{dominant} below (which is obtained directly from this fact). In the proof of Lemma \ref{properties} below, we use Fact \ref{zariski} only in  a special case which is part of Lemma \ref{eustepslem}. If we were removing the other uses of Fact \ref{zariski}, we would obtain a weaker more technical version of Main corollary \ref{maincor}, which however is still sufficient for the third main result of the paper; more generally it is sufficient for everything in the next sections except possibly for Corollary \ref{laststrong2}. \end{remark}
\begin{lem}[Known]\label{dominant}Let $X,M$ be two varieties, each one admitting an algebraic  action by an algebraic group $H.$ Let $f:X\rightarrow M$ be an $H$-equivariant  algebraic morphism, and $y$ be a point in $M$. For the action  of  $H$ in $M$ assume that it is transitive and free. Then $X$ is isomorphic to $f^{-1}(y)\times H$. 
\end{lem}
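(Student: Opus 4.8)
The statement is a standard consequence of the theory of principal homogeneous spaces (torsors), presented here in the setting of varieties over the algebraically closed field $\KKK$ of characteristic zero. The plan is to produce an explicit isomorphism $X\xrightarrow{\ \sim\ }f^{-1}(y)\times H$ by choosing, for each point $x\in X$, the unique group element carrying $f(x)$ to $y$, and using it to translate $x$ back into the fibre over $y$. First I would observe that since the action of $H$ on $M$ is transitive and free, the orbit map $H\to M$, $h\mapsto h\cdot y$, is a bijective $H$-equivariant morphism; invoking Fact \ref{zariski} (after noting that $M$, being a homogeneous space of an algebraic group, is smooth hence normal, and is irreducible because $H$ is — here one uses that $H$ may be taken connected, or passes to the relevant component) this orbit map is an isomorphism, so there is a morphism $\sigma:M\to H$ with $\sigma(m)\cdot y=m$ for all $m\in M$, and $\sigma$ is a group-theoretic section in the sense that $\sigma(h\cdot m)=h\,\sigma(m)$.

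Next I would define the candidate maps. Set
\begin{equation*}
\Phi:X\to f^{-1}(y)\times H,\qquad \Phi(x)=\bigl(\sigma(f(x))^{-1}\cdot x,\ \sigma(f(x))\bigr),
\end{equation*}
which makes sense because $f(\sigma(f(x))^{-1}\cdot x)=\sigma(f(x))^{-1}\cdot f(x)=y$ by $H$-equivariance of $f$ and the defining property of $\sigma$. In the other direction set
\begin{equation*}
\Psi:f^{-1}(y)\times H\to X,\qquad \Psi(z,h)=h\cdot z.
\end{equation*}
Both $\Phi$ and $\Psi$ are morphisms of varieties (composites of the action morphisms, $f$, $\sigma$, and inversion in $H$, all of which are algebraic). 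Then I would check $\Psi\circ\Phi=\mathrm{id}_X$: $\Psi(\Phi(x))=\sigma(f(x))\cdot\bigl(\sigma(f(x))^{-1}\cdot x\bigr)=x$. For $\Phi\circ\Psi=\mathrm{id}$: given $(z,h)$ with $f(z)=y$, we have $f(\Psi(z,h))=f(h\cdot z)=h\cdot y$, so $\sigma(f(\Psi(z,h)))=\sigma(h\cdot y)=h$ (using $\sigma(h\cdot y)=h\,\sigma(y)=h$, as $\sigma(y)=e$ because the orbit map sends $e\mapsto y$ and is injective); hence $\Phi(\Psi(z,h))=\bigl(h^{-1}\cdot(h\cdot z),\,h\bigr)=(z,h)$. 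This gives mutually inverse morphisms, so $\Phi$ is the desired isomorphism.

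The one genuine point requiring care — and the step I expect to be the main obstacle in writing it up cleanly — is the application of Fact \ref{zariski} to the orbit map $H\to M$: one must know $M$ is irreducible and normal, and that the orbit map is a morphism of varieties in the paper's sense (recall varieties here are not assumed irreducible, per Definition \ref{vara}). Normality of $M$ follows from smoothness of homogeneous spaces of algebraic groups in characteristic zero; irreducibility follows once one reduces to $H$ connected (e.g. replacing $H$ by its identity component $H^\circ$, which still acts transitively since $M$ is irreducible — or, if one does not want to assume this, one argues componentwise, but in all intended applications in the paper $H$ will be a connected unipotent group, cf.\ Fact \ref{areconnected}). With that in hand, the existence of the section $\sigma$ is immediate and the rest is the formal torsor-trivialization computation above. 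I would note explicitly that $\sigma$ is defined over the relevant field $\kkk$ whenever $y$ and the action are, so that the isomorphism descends to $\kkk$-points, since that is how the lemma gets used later (e.g.\ in Definition \ref{euoperation} and the analysis of $\eu$-steps).
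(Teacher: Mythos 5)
Your proof is correct and follows essentially the same route as the paper's: apply Fact \ref{zariski} to the orbit map $H\to M$, $h\mapsto hy$, to trivialize $M$ as a torsor, then write down the pair of mutually inverse morphisms $(z,h)\mapsto hz$ and $x\mapsto(\sigma(f(x))^{-1}x,\sigma(f(x)))$ (the paper simply identifies $M$ with $H$ and $y$ with $e$, so its inverse map $x\mapsto(f(x),f(x)^{-1}x)$ is your $\Phi$ with coordinates swapped and $\sigma=\mathrm{id}$). Your extra care about irreducibility and normality of $M$, and the remark on $\kkk$-rationality, are sound observations that the paper leaves implicit.
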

\begin{proof} The morphism from $H$ to $M$ given\chh[normal] by $h\rightarrow hy, $ is an isomorphism due to Fact \ref{zariski}.  Hence we  assume that $M=H$, and that $y$ is the identity element. Consider the morphism $H\times f^{-1}(y)\rightarrow X$, given by $(h,x)\rightarrow hx. $ We see that it admits the inverse $$x\rightarrow (f(x),f(x)^{-1}x), $$ and thus it is an isomorphism.
\end{proof}\noindent{In} all the uses of this lemma, $f$ will be equal to a function as in the next definition.
\begin{defi}\label{zarlem} Let $\sett{X}$ be a subset of $\Lie{GL_n}$, and $N$ be a unipotent algebraic subgroup of $GL_n$. We define by $ \itr{\sett{X}}{N}$, to be the function with domain $\sett{X}$ and values the algebraic morphisms on $N$ given by
$$\itr{\sett{X}}{N}(J)(u)=\mathrm{exp}(\text{tr}(\mathrm{log}(u)J))\quad\text{ for all }J\in\sett{X}\text{ and }u\in N. $$We choose the codomain of $ \itr{\sett{X}}{N}$ so that it is surjective. We  use this definition only in cases that the values of $ \itr{\sett{X}}{N}$ are in $\AAnk(N). $\end{defi}
We start the study of $\X{\F},$ by addressing the effect of $\AAnk$-operations, in the lemma below.
\begin{lem}\label{properties}Let $\F\in\AAnk[n]$.

\begin{enumerate}
\item Let $\kkk$ be any  subfield of $\KKK$. For \chh[Ffurther] an $(\F,\e,GL_n,\kkk)$-step, we have   $$\X[\kkk]{\F}={\bigcup}_\Z\X[\kkk]{\Z}, $$where the union is over the output AFs $\Z$ of this $\e$-step. Also for any such $\Z,$ the variety $\X{\Z}$ is a closed subvariety of $\X{\F}. $  
\item\label{preu} Let $\Z$ be the output AF of an $(\F,\eu,GL_n)$-step and $a\in\UUU{n}$. There is an isomorphism of $\X{\Z}$ onto $\X{\F}$ which restricts to an isomorphism of $\X{\Z}\cap a$ onto $\X{\F}\cap a$. 
\item\label{prconj} Let $\gamma\in GL_n. $ Then $\X{\gamma\F}=\gamma\X{\F}\gamma^{-1}. $

\end{enumerate} \end{lem}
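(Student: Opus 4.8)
The plan is to treat the three parts separately, since each corresponds to one of the three kinds of $\AAnk$-operation, and in each case the statement about $\X{-}$ reduces to an explicit linear-algebra description of what the operation does to the defining equations.

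First I would unwind the definitions. For $\F\in\AAnk[n]$ with domain $N=D_\F$, the variety $\X{\F}$ is an affine subspace of $\Lie{GL_n}$: it is the preimage, under the linear map $J\mapsto (u\mapsto \mathrm{tr}(uJ))|_{\Lie{N}}$ from $\Lie{GL_n}$ to $(\Lie{N})^\ast$, of the specific functional attached to $\F$ via Fact \ref{Liehom}. Enlarging the domain (adding more $u$'s to test against) adds more linear constraints; conjugating $\F$ by $\gamma$ transports everything by $\mathrm{Ad}(\gamma)$; and an $\eu$-step replaces the domain $D_\F=YC$ by $D_\Z=XC$ while keeping the restriction to $C$ fixed. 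So all three assertions should come out of keeping careful track of affine subspaces of $\Lie{GL_n}$.

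For part 1: in an $(\F,\e,GL_n,\kkk)$-step the output AFs $\Z$ have common domain $N'\supseteq D_\F$, and their restrictions to $D_\F$ all equal $\F$, while the values on a complementary one-parameter-or-larger piece $N_2$ (choose $N_2$ with $N'=N_2D_\F$, $N_2\cap D_\F\subseteq[N',N']$ as in Corollary \ref{partlysplit}) range over \emph{all} of $\AAnk(N_2\cap D_\F\backslash N_2)$, i.e. over an affine space parametrized by the dual of $\Lie{N_2[N',N']\backslash N'}$. Concretely $J\in\X{\F}$ means $\mathrm{tr}(uJ)$ matches $\F$ on $\Lie{D_\F}$; then $\mathrm{tr}(uJ)$ restricted to $\Lie{N_2}$ defines \emph{some} AF on $N_2$, hence (extending by $\F$) picks out exactly one of the output AFs $\Z$, and $J\in\X{\Z}$ for that $\Z$; conversely $\X{\Z}\subseteq\X{\F}$ since $\Z|_{D_\F}=\F$. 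This gives $\X[\kkk]{\F}=\bigcup_\Z\X[\kkk]{\Z}$. Closedness of each $\X{\Z}$ inside $\X{\F}$ is immediate: $\X{\Z}$ is cut out from $\X{\F}$ by the extra \emph{linear} (hence closed) conditions $\mathrm{tr}(uJ)=\Z(\exp u)$ for $u\in\Lie{N'}$.

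For part 3 (conjugations): from Definition \ref{gla}, $D_{\gamma\F}=\gamma D_\F\gamma^{-1}$ and $\gamma\F(n)=\F(\gamma^{-1}n\gamma)$. Using $\mathrm{Ad}(\gamma)$ on $\Lie{GL_n}$ and the invariance of the trace form under conjugation, $\mathrm{tr}(u\cdot\gamma J\gamma^{-1})=\mathrm{tr}(\gamma^{-1}u\gamma\cdot J)$, one checks directly that $J\in\X{\F}\iff \gamma J\gamma^{-1}\in\X{\gamma\F}$; this is a one-line computation once the exponential/logarithm compatibility $\exp(\mathrm{Ad}(\gamma)x)=\gamma\exp(x)\gamma^{-1}$ is invoked.

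For part 2 (the $\eu$-step), which I expect to be the main obstacle: write the step as obtained from $\eu(X,Y)$ with $C=D_\F\cap D_\Z$, $D_\F=YC$, $D_\Z=XC$, $\F|_C=\Z|_C$. The target is an isomorphism $\X{\Z}\cong\X{\F}$ compatible with the orbit stratification. The natural guess is that $\X{\Z}=\X{\F}$ as \emph{sets} — both are affine subspaces of $\Lie{GL_n}$ cut out by: (a) the common constraints coming from $\Lie{C}$ (since $\F$ and $\Z$ agree there), plus (b) the constraints from $\Lie{X}$ for $\X{\Z}$ versus from $\Lie{Y}$ for $\X{\F}$. So I must show these two affine subspaces coincide. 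The key input is Lemma \ref{euequivalent}/Lemma \ref{eustepslem}, condition (iii): the commutator pairing $(x,y)\mapsto\F([\exp x,\exp y])$ on $\Lie{C}\backslash\Lie{D_\Z}\times\Lie{C}\backslash\Lie{D_\F}$ is a perfect pairing, and $\F(X)=\Z(Y)=0$. Translating to the Lie-algebra/trace picture: for $J$ satisfying the $\Lie{C}$-constraints, the conditions "$\mathrm{tr}(uJ)$ matches $\F$ on $\Lie{Y}$" and "$\mathrm{tr}(uJ)$ matches $\Z$ on $\Lie{X}$" should turn out to cut out the same subspace, because the perfect commutator pairing forces the value of $\mathrm{tr}(-J)$ on the $Y$-directions to determine, and be determined by, its value on the $X$-directions via the bracket relations $\mathrm{tr}([x,y]J)=\mathrm{tr}(x[y,J])$ and the normalization conditions from the definition of $\eu$. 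If it is not literally an equality of subsets, then it is an explicit affine-linear isomorphism built from this pairing, and one invokes Fact \ref{zariski} (via Lemma \ref{dominant}, or the special case already embedded in Lemma \ref{eustepslem}) to upgrade the bijection to an isomorphism of varieties. Finally, since the map is either the identity inclusion or an affine map of $\Lie{GL_n}$ that I will arrange to be realized by conjugation by a fixed element (or at worst a map preserving each conjugacy class), it automatically restricts to an isomorphism $\X{\Z}\cap a\cong\X{\F}\cap a$ for every orbit $a$; the only thing to verify carefully is that the isomorphism is orbit-preserving, which is why pinning it down as a conjugation (or as the identity) is the crux. I would write part 2 last and in the most detail, leaning on the already-proved equivalences in Lemma \ref{eustepslem} to avoid redoing the nondegeneracy argument.
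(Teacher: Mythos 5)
Parts 1 and 3 of your proposal are essentially the paper's argument. For part 1 you should be slightly more careful about why the linear functional $u\mapsto\mathrm{tr}(uJ)$ on $\Lie{N'}$ is a Lie algebra homomorphism (hence lifts to an AF via Fact \ref{Liehom}): this needs $\F([D_\Z,D_\Z])=\{0\}$ and $\Lie{[D_\Z,D_\Z]}=[\Lie{D_\Z},\Lie{D_\Z}]$, both of which hold by the definition of an $\e$-step; but this is a small verification, not a gap.

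Part 2 is where your proposal goes wrong. Your first guess, that $\X{\Z}=\X{\F}$ as subsets of $\Lie{GL_n}$, is false. Take $n=3$, $C=U_{(1,3)}$, $X=U_{(1,2)}$, $Y=U_{(2,3)}$, $\F$ nontrivial on $C$ and trivial on $Y$; then $\X{\F}=\{J:J_{32}=0,\ J_{31}=1\}$ while $\X{\Z}=\{J:J_{21}=0,\ J_{31}=1\}$, two distinct affine subspaces. Your fallback, an ``affine-linear isomorphism \dots realized by conjugation by a fixed element,'' is also not available: there is in general no single $\gamma$ with $\gamma\X{\F}\gamma^{-1}=\X{\Z}$, and the map the paper builds is not affine-linear in $J$ (in the example above it is roughly $J\mapsto \exp(-J_{21}E_{23})\,J\,\exp(J_{21}E_{23})$, whose degree in the entries of $J$ is $>1$ because the conjugating element itself depends on $J$).

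The missing ideas are these. First, the paper reduces to the case where $X\cap C$ and $Y\cap C$ are trivial, by showing any $\eu$-step factors as a chain of one-dimensional $\eu$-steps (the ``Claim'' in the paper's proof, which you do not have). Second and more fundamentally, instead of trying to match the two affine subspaces directly, the paper applies Lemma \ref{dominant} to the conjugation action of $Y$ on the set of output AFs of the $\e$-step $\e(X)$ applied to $\F|_C$, getting $\X{\F}\cong Y\times\bigl(\X{\F}\cap\X{\Z}\bigr)$, and symmetrically (using Corollary \ref{sym}) $\X{\Z}\cong X\times\bigl(\X{\F}\cap\X{\Z}\bigr)$. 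Since $X\cong Y$ by the nondegenerate pairing, these two decompositions yield an isomorphism $\X{\F}\cong\X{\Z}$; and it preserves each nilpotent orbit $a$ because, unwinding the construction, a point $J=yJ'y^{-1}$ (with $J'\in\X{\F}\cap\X{\Z}$, $y\in Y$) goes to $xJ'x^{-1}$ for some $x\in X$ — so the map is conjugation by a $J$-dependent group element, not by a fixed one. That distinction is exactly where your proposal under-specified the argument: the orbit-preservation does not come from a global affine or conjugation structure, but from the pointwise fact that each stage of the isomorphism moves $J$ within its own orbit.
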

\begin{tproof}Part 3 is trivial. We continue with Part 1. For $\Z$ being any output AF of the $\e$-step, we have $\F([D_\Z,D_\Z])=\{0\}.$ This together with $\Lie{[D_\Z,D_\Z]}=[\Lie{D_Z},\Lie{D_\Z}],$ imply that every  extension of the composition of $\F$ with $\exp$ from $\Lie{D_\F}$ to a $\kkk$-linear function on $\Lie{D_\Z}$ is also a Lie algebra homomorphism, which in turn (by Fact \ref{Liehom}) is lifted to an AF with domain $D_\Z$, and hence we obtain Part 1. 

We are left with Part 2.  Consider any $\eu$-step and let $\QQ,\QQ^\prime$ respectively be its input and output AF. We define the dimension of this $\eu$-step to be the number $\Di{D_{\QQ}\cap D_{\QQ^\prime}\s  D_{\QQ}}$.

\vsp 
\noindent\textbf{Claim.}\textit{ Let $\I$ be a $(\eu,GL_n)$-step. Then there is a $(\eu, GL_n)$-path  which consists of one dimensional $\eu$-steps and has the same input and output AFs as $\I$.}

\vsp
\noindent\textit{Proof of Claim.} We proceed inductively on the dimension of $\I$, and hence assume it is bigger from one. We freely use Lemma \ref{eustepslem} (that $(i)\iff(iii)$) and Fact \ref{areconnected}. We denote the input AF and output AF of $\I$ with $\QQ$ and $\QQ^\prime$ respectively.

Let $\tilde{Y}_1$ be a codimension-1 algebraic subgroup of $D_{\QQ}$ which contains $D_\QQ\cap D_{\QQ^\prime}.$ We obtain that the set $$\tilde{X^1}:=\{x\in D_\Z:\F([x,y])=0\text{ }\forall y\in\tilde{Y}_1 \} $$ is a subgroup of $D_\Z$ containing $D_\QQ\cap D_{\QQ^\prime}$ and having dimension $\Di{D_\QQ\cap D_{\QQ^\prime}}+1.$ Let  $\QQ^{\prime\prime}\in\AAnk(\tilde{Y}_1\tilde{X}^1)$  be defined by $\QQ^{\prime\prime}(xy)=\Z(x)+\F(y)$ for all $x\in \tilde{X}^1,y\in\tilde{Y}_1$.

We see that: the $(\QQ\rightarrow\QQ^{\prime\prime},\eu,GL_n)$-step and the $(\QQ^{\prime\prime}\rightarrow\QQ^{\prime},\eu,GL_n)$-step are defined, the first has dimension one, and the second has dimension  one less from the dimension of $\I$. Hence we are done by the inductive hypothesis.\hfill$\square$Claim 

By this claim  we are reduced to the case in which the $\eu$-step is one dimensional, and hence (by Corollary \ref{semprod})  is obtained from an operation $\exchange{X}{Y}$ such that for $C$ being as in Definition \ref{euoperation} we have \begin{equation}\label{zzoned}X\cap C=1\qquad\text{ and }\qquad Y\cap C=1.\end{equation} Other than obtaining (\ref{zzoned}), we do not use that the $\eu$-step is of dimension one. By conjugating $\F$ with an appropriate element, and using the third part of the lemma, we are further reduced to assuming that $\F(Y)=\{0\}$, which in turn implies:
\begin{equation}\label{zinversegood}\eu(Y,X)(\eu(X,Y)(\F))=\F
\end{equation}(recall $\exchange{Y}{X}$ is defined due to Lemma \ref{eustepslem}). By (\ref{zzoned}) we obtain that the action by conjugation of $Y$ on \begin{equation}\label{zsetzz}
\{\Z\in\AAnk(XC):\Z|_{C}=\F|_{C}\}
\end{equation}is transitive and free. Hence (by also using Part 1) we can apply Lemma \ref{dominant} for the following choice of data:  $f=\itr{\X{\F}}{ XC}$, $M$ (therefore) is the set in (\ref{zsetzz}),  $H=Y$, and $y=\exchange{X}{Y}\F$.  We obtain:
\begin{equation}\label{zczc5}\X{\F}\overset{\text{Lemma \ref{dominant}}}{\cong} Y\times f^{-1}(y)\cong Y\times (\X{\F}\cap\X{\eu(X,Y)\F}). \end{equation}
 We apply the same reasoning with: $\F$ replaced by $\eu(X,Y)(\F)$, and $\eu(X,Y)$ replaced by $\eu(Y,X)$. Then (by also using (\ref{zinversegood})),   in the place of formula (\ref{zczc5}) (with the middle term removed)  we obtain an isomorphism:
\begin{equation}\label{zczc6} \X{\eu(X,Y)\F}\cong X\times (\X{\eu(X,Y)\F}\cap\X{\F}). \end{equation}
Hence (\ref{zczc5}) and (\ref{zczc6}) gives an isomorphism:
$$\X{\F}\cong\X{\eu(X,Y)\F}. $$By checking the construction of this isomorphism we see that it preserves the orbits of $\UUU{n}$; and hence:
\begin{equation*}\X{\F}\cap a\cong\X{\eu(X,Y)\F}\cap a. \qedhere\end{equation*}\end{tproof}

\begin{remark}
An alternative argument for Part 2 of the lemma above is by observing that 
\begin{equation}\label{zirdim}(\X{\F}\cap a)\times \KKK\cong (\X{\eu(X,Y){\F}}\cap a)\times\KKK,  \end{equation}

which follows because (by similarly using Lemma \ref{dominant}) both varieties are isomorphic to $\X{\F|_C}\cap a$ (again one can remove the intersections with $a$). Formula (\ref{zirdim}) alone does not imply that $\X{\F}\cap a$ and $\X{\exchange{X}{Y}\F}\cap a$ are isomorphic, but it does imply that: (i) they have the same dimension, and (ii) one is irreducible if and only if the other is. These two conditions are sufficient for establishing Main corollary \ref{maincor} the way we do. Also notice that replacing  $\eu$-operations with more general operations is sufficient for (\ref{zirdim}). 

It may worth observing the relations of the proof of the lemma above and the alternative argument in the first sentence of the current remark with the proof of Part 2 in Fact \ref{autsteps}.
\end{remark}

\begin{defi}[$\mathcal{B}_n$, $\mathcal{B}_n$-paths] \label{defBBB}
\chh[epikefalida] Consider an  $(\F_{\emptyset,n},GL_n)$-path called $\Xi$. For every  $\e$-step $\xi$ of $\quasi{\Xi}$ we assume that:\begin{enumerate}
\item An algebraic subgroup of $GL_n$ acts freely and transitively by conjugation on an open subset of the variety consisting of the terms of $\xi$;
\item the vertex of $\Xi$ which is also an output vertex of  $\xi$, is labeled with an AF in this open subset (here we do not identify any different vertices). \end{enumerate}
 We denote by $\BBnk[n]$ the set of AFs which can be realized as the output AFs of such  paths. 

Any path which is as $\Xi$ except that the input AF (instead of being $\F_{\emptyset,n} $) can be any AF of $\BBnk[n]$, say $\F$, is called an $(\F,\BBnk[n])$-path, and if $\F$ needs no mention (resp. $\Z$ is its output AF) is called a $\BBnk[n]$-path (resp. $(\F\rightarrow\Z,\BBnk[n])$-path). \end{defi}

The next proposition is behind\footnote{Frequently in an indirect way. Namely, once Main corollary \ref{maincor} is established, Proposition \ref{basic} is not mentioned again} every use we make of information of the form: an AF F belongs to $\BBnk[n]$.

\begin{prop}\label{basic}Let $\F\in\BBnk[n]$ and $a\in\UUU{n}$. If $\X{\F}\cap a$ is nonempty, it is an irreducible variety of dimension $\Di{a}-\Di{D_\F}$.
\end{prop}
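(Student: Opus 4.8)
The plan is to prove Proposition \ref{basic} by induction along a $\mathcal{B}_n$-path realizing $\F$, using Lemma \ref{properties} to transport the statement through each $\AAnk$-step, with the base case $\F=\F_{\emptyset,n}$ handled directly. For the base case, $D_{\F_{\emptyset,n}}$ is trivial, so $\X{\F_{\emptyset,n}}=\Lie{GL_n}$, and $\X{\F_{\emptyset,n}}\cap a = a$ is the (irreducible) nilpotent orbit $a$, of dimension $\Di{a} = \Di{a}-\Di{D_{\F_{\emptyset,n}}}$ since $\Di{D_{\F_{\emptyset,n}}}=0$. So the statement holds. Fix now a $(\F_{\emptyset,n}\rightarrow\F,\BBnk[n])$-path $\Xi$, and induct on its length (number of $\AAnk$-steps). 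Let $\F^\prime$ be the input AF of the last $\AAnk$-step $\xi$ of $\Xi$ (more precisely, the last step of $\quasi{\Xi}$), so that by induction, for every $b\in\UUU{n}$, if $\X{\F^\prime}\cap b$ is nonempty then it is irreducible of dimension $\Di{b}-\Di{D_{\F^\prime}}$; here one uses that the initial subpath of $\Xi$ ending at $\F^\prime$ is again a $\BBnk[n]$-path (or that $\F^\prime=\F_{\emptyset,n}$).

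Now I would split into cases according to the type of $\xi$. If $\xi$ is a $\co$-step, obtained from $\gamma\in GL_n$, then by Part 3 of Lemma \ref{properties}, $\X{\F}\cap a=\gamma(\X{\F^\prime}\cap a)\gamma^{-1}$ (using that conjugation by $\gamma$ preserves each orbit, $\gamma a\gamma^{-1}=a$), and $\Di{D_\F}=\Di{D_{\F^\prime}}$; so the conclusion transfers immediately. If $\xi$ is a $\eu$-step, then by Part 2 of Lemma \ref{properties} there is an isomorphism $\X{\F^\prime}\cong\X{\F}$ restricting to an isomorphism $\X{\F^\prime}\cap a\cong\X{\F}\cap a$; since $\eu$-steps preserve the dimension of the domain ($\Di{D_\F}=\Di{D_{\F^\prime}}$ — both equal $\Di{C}$ plus one less than the dimension of the group exchanged, or directly from condition 1 of Definition \ref{eusteps} via Lemma \ref{eustepslem}(iv)), again the conclusion transfers. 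The only substantive case is when $\xi$ is an $\e$-step.

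For the $\e$-step case, $\F$ is one of the terms $\Z$ of $\xi$, and by the definition of a $\mathcal{B}_n$-path, $\F$ lies in an open subset $\Om\subseteq\AAnk(N)$ (where $N=D_\F$ is the common domain of the terms of $\xi$) on which an algebraic subgroup $H$ of $GL_n$ acts freely and transitively by conjugation; in particular $\Om\cong H$ by Fact \ref{zariski} (bijective morphism onto a smooth, hence normal, irreducible variety), so $\Di{H}=\Di{\Om}=\Di{\AAnk(N)}=\Di{N}-\Di{[N,N]}$, and also $\Di{N}-\Di{D_{\F^\prime}}=\Di{H}$ since $\AAnk(N)$ fibers over $\AAnk(D_{\F^\prime})$ — wait, more carefully: $D_{\F^\prime}\subseteq N$ and the terms of $\xi$ are exactly the AFs in $\AAnk(N)$ restricting to $\F^\prime$; by Fact \ref{areconnected}/Fact \ref{Liehom} this set of terms is an affine space of dimension $\Di{N}-\Di{D_{\F^\prime}}$, so $\Di{H}=\Di{N}-\Di{D_{\F^\prime}}$ (as $H$ acts transitively on an open subset). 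Now I would set up the $H$-equivariant morphism $f=\itr{\X{\F^\prime}\cap a}{N}$ from $\X{\F^\prime}\cap a$ to the set $M$ of terms of $\xi$ — this is well-defined and lands in $\AAnk(N)$ by Part 1 of Lemma \ref{properties} (every $J\in\X{\F^\prime}$ induces, after restriction, an AF on $N$, and if $J\in a$ then $\itr{}{N}(J)\in\X{\Z}\cap a$ for the corresponding term $\Z$). Restricting to the open $H$-stable subset $\Om$, I get an $H$-equivariant morphism $f^{-1}(\Om)\to\Om$ with $\Om\cong H$ acting freely and transitively; by Lemma \ref{dominant}, $f^{-1}(\Om)\cong H\times f^{-1}(\F)=H\times(\X{\F}\cap a)$. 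The hard part — and the thing I would be most careful about — is checking that $f^{-1}(\Om)$ is a nonempty open subset of $\X{\F^\prime}\cap a$ (so it inherits irreducibility from the induction hypothesis) \emph{precisely when} $\X{\F}\cap a$ is nonempty: nonemptiness of $\X{\F}\cap a$ gives a point in $f^{-1}(\F)\subseteq f^{-1}(\Om)$, and openness of $\Om$ in $\AAnk(N)$ pulls back under the morphism $f$ to openness of $f^{-1}(\Om)$ in $\X{\F^\prime}\cap a$; since $\X{\F^\prime}\cap a$ is irreducible (induction — note it is nonempty because $f^{-1}(\Om)\neq\emptyset$ maps into it via inclusion... actually $\X{\F^\prime}\cap a\supseteq f^{-1}(\Om)\neq\emptyset$), a nonempty open subset is irreducible of the same dimension $\Di{a}-\Di{D_{\F^\prime}}$. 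Therefore
$$\Di{H}+\Di{\X{\F}\cap a}=\Di{f^{-1}(\Om)}=\Di{\X{\F^\prime}\cap a}=\Di{a}-\Di{D_{\F^\prime}},$$
and $\X{\F}\cap a\cong f^{-1}(\F)$ is irreducible (a fiber of the projection $H\times(\X{\F}\cap a)\to H$, or directly: $f^{-1}(\Om)\cong H\times(\X{\F}\cap a)$ is irreducible, hence so is the factor). Combining with $\Di{H}=\Di{N}-\Di{D_{\F^\prime}}=\Di{D_\F}-\Di{D_{\F^\prime}}$ gives $\Di{\X{\F}\cap a}=\Di{a}-\Di{D_\F}$, completing the induction. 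I would double-check the edge case where $N=D_{\F^\prime}$ is excluded by the convention that $\e$-steps are nontrivial, and the bookkeeping that an initial subpath of a $\BBnk[n]$-path is a $\BBnk[n]$-path, so that the induction hypothesis applies to $\F^\prime$.
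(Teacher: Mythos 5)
Your proposal is correct and is essentially the same argument as the paper's: an induction along a $\BBnk[n]$-path (the paper phrases it as induction on $\Di{D_\F}$ via a depth-one Claim, which amounts to the same thing), with $\co$- and $\eu$-steps handled by Lemma \ref{properties}, and the $\e$-step handled by passing to the nonempty open $H$-stable subset $f^{-1}(\SSS)\cap a$ of $\X{\F^\prime}\cap a$ and applying Lemma \ref{dominant} to get $f^{-1}(\SSS)\cap a\cong H\times(\X{\F}\cap a)$. The only cosmetic difference is that the paper defines $f=\itr{\X{\F^\prime}}{N}$ and then intersects with $a$, while you define $f$ on $\X{\F^\prime}\cap a$ from the start.
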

\begin{tproof}By proceeding inductively on $\Di{D_\F}$, the proposition is  obtained directly by: the claim below, and (to start the induction) that $a$ is an  irreducible variety.

\vspace{1mm}
\noindent\textit{\textbf{Claim}: Consider a $\BBnk[n]$-path $\xi$ of depth one (therefore $\quasi{\xi}$ is an $\AAnk$-step) and let $\Z_1$ and $\Z_2$ respectively be its input and output AFs. Assume that $\X{\Z_1}\cap a$ is an irreducible variety, and that $\X{\Z_2}\cap a\neq \emptyset$. Then $\X{\Z_2}\cap a$ is also an irreducible variety, and 
\begin{equation}\label{zbzb3}\Di{\X{\Z_2}\cap a}=\Di{D_{\Z_1}}-\Di{D_{\Z_2}}+\Di{\X{\Z_1}\cap a}. \end{equation}}\textit{Proof of claim.} In case $\xi$ is a $\eu$-step or a $\co$-step the proof directly follows from Lemma \ref{properties}, and hence we assume that $\quasi{\xi}$ is an $\e$-step. Let $H$ be an algebraic subgroup of $GL_n$ acting freely and transitively on  an open subset $\SSS$ of the variety of output AFs of $\quasi{\xi}$. Let $f:=\itr{\X{\Z_1}}{D_{\Z_2}}$.  By Part 1 in Lemma \ref{properties} we see that the image of $f$ consists of the output AFs of $\xi.$ We have $$\cup_{h\in H} \X{h\Z_2}\cap a=f^{-1}(\SSS)\cap a,$$ and hence
\begin{equation}\label{zopenset}\cup_{h\in H} \X{h\Z_2}\cap a \text{ is a nonempty open subset of }\X{\Z_1}\cap a. \end{equation}
Next (by also using Part 3 in Lemma \ref{properties}) we apply Lemma \ref{dominant} to the restriction of $f$ at $\cup_{h\in H} \X{h\Z_2}\cap a $.
 We obtain 
\begin{equation}\label{zbzb1}\cup_{h\in H} \X{h\Z_2}\cap a\cong H\times (\X{\Z_2}\cap a). \end{equation}
From the irreducibility of $\X{\Z_1}\cap a$, (\ref{zopenset}), and (\ref{zbzb1}), we obtain both the irreducibility of $\X{\Z_2}\cap a$  and the dimension equation (\ref{zbzb3}).\hfill\hspace{160pt}$\square$Claim\end{tproof}

\begin{defi}[$\BR{n}$]\label{brow}  Let $\K\in\hat{\AAnk}[T_n]$. Let $i_1<....<i_t$ be the rows on which $D_{\K} $ is nontrivial. Let $\K_0:=\F_{\emptyset,n}$, and $x$ vary over $1,...,t$. Let $\K_x$  be the restriction of $\K$ on the first $i_x$-th rows, and  $\xi_x$ be the $(\K_{x-1}\rightarrow \K_x)$-path with: $\quasi{\xi_x}$ being an $\e$-step. Consider the $\AAnk$-path\begin{equation}\label{zBrs}\xi_t\vee....\vee\xi_2\vee\xi_1.\end{equation} We assume there is a number $f(x)$ satisfying $\K(U_{(i_x,f(x))})\neq \{0\}$ (and hence it is the unique such number), and let $T\{f(x)\}$ be the torus in $\DDDD[n]$ with $\Set{T\{f(x)\}}=\{f(x)\}$. We assume that an algebraic subgroup of  $T\{f(x)\}\prod_{i_x< i\leq n}U_{(i,f(x))}$ freely and transitively acts (by conjugation) in an open subset of the terms of $\quasi{\xi_x}$, and $\K_x$ is in this open subset. 

Notice then that the path in (\ref{zBrs}) is an $(\F_{\emptyset,n},\BBnk[n])$-path. In particular $\F\in\BBnk[n].$

We define $\BR{n}$ to be the set consisting of all such AFs $\K$.\end{defi}

\begin{defi}[$(\mathcal{C}^{\ssquare}{}_\square)_{n},$ $(\mathcal{C}^{\square}{}_\ssquare)_{n}$]\label{sbsquares}We define $\Pridbnk[n]$ (resp. $\Priubnk[n]$) to be the set consisting of the AFs $\F$ in $\Prink[n] $ such that: for $\prod_{1\leq i\leq k}^{\searrow}GL_{n_i}$ being the Levi of the $GL_n$-parabolic subgroup with unipotent radical $D_\F$, we have $n_1\leq...\leq n_k$ (resp. $n_1\geq...\geq n_k$).

We use $\hat{...}$ and the subscript $\nless$ (defined in \ref{hatnless}) together with the current notations, and hence for example we have  $\tPridbnk[n]=\Pridbnk[n]\cap\tPrink[n] $ and $\tPriubnk[n]=\Priubnk[n]\cap\tPrink[n] $.\end{defi}

\begin{prop}\label{dimunip}Let $a\in\UUU{n}$ and $\F\in\Prink[n][a]$.\begin{enumerate}[A.]
\item\label{zttt} The isomorphism class of $\X{\F}\cap a$, is the same for all such $\F$.
\item\label{zt} $\X{\F}\cap a$ is irreducible, and has dimension $\frac{\Di{a}}{2}$.
\item\label{ztt} $\X{\F}\cap b=\emptyset$  for any $b\in\UUU{n}$ which is smaller or unrelated to $a$.
\end{enumerate} 
\end{prop}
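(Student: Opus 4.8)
The strategy is to reduce the statement for a general $\F\in\Prink[n][a]$ to the case of one particular, conveniently chosen representative in $\Prink[n][a]$, and then analyze $\X{\F}\cap b$ directly for that representative. The reduction rests on two facts already available: first, by Lemma \ref{conjugatehat} every $\F\in\Prink[n]$ is $\mirsl{n}(\kkk)$-conjugate (in fact conjugate by an element of a unipotent-times-$SL$ group) to an AF in $\Prink[n,\nless]$; second, by Part \ref{prconj} of Lemma \ref{properties}, conjugation by $\gamma$ sends $\X{\F}$ to $\gamma\X{\F}\gamma^{-1}$ and hence sends $\X{\F}\cap b$ isomorphically onto $\X{\gamma\F}\cap b$ for every orbit $b$ (since conjugation by a fixed $\gamma\in GL_n$ preserves each nilpotent orbit). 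So without loss of generality we may work with $\F\in\Prink[n,\nless][a]$, and moreover we may pick the "diagonal" such $\F$, namely the one with $J_\F$ equal to a standard Jordan-type matrix in the orbit $a$; note that since $D_\F$ is the unipotent radical of a parabolic, $\Om(J_\F)=a$ is a hypothesis, and $J_\F\in D_\F^t\cap\X{\F}$ is the unique such element by Definition \ref{uniqJ}.

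For this chosen $\F$, the plan for Parts \ref{zt} and \ref{ztt} is a direct computation of $\X{\F}$. By definition $\X{\F}=\{J\in\Lie{GL_n}:\mathrm{tr}(Ju)=\F(\exp u)\ \forall u\in\Lie{D_\F}\}$; since $D_\F=U_P$ for a parabolic $P=MN$ with $N=D_\F$, the condition pins down exactly the entries of $J$ that pair nontrivially against $\Lie{N}$, i.e. $\X{\F}$ is an affine subspace: $J_\F$ plus the subspace of matrices supported off the "opposite" entries, which is $\Lie{P}$ (the matrices whose strictly-$N^t$ part is annihilated under the trace form). Concretely one shows $\X{\F}=J_\F+\Lie{P}^{\perp\text{-complement}}$, a translate of $\Lie{\bar P}$ or $\Lie P$ depending on conventions, of dimension $\Di{\Lie{GL_n}}-\Di{\Lie N}$. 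Now $\X{\F}\cap b$ is the intersection of this affine space with the orbit closure stratum $b$. The key input is that, because $J_\F$ realizes the partition $a$ and the affine space $\X{\F}$ is the "Slodowy-type" slice / is $P$-stable in the appropriate sense, any nilpotent $J$ in $\X{\F}$ satisfies $\Om(J)\geq a$: adding a block-upper-triangular perturbation to a Jordan matrix can only make the Jordan type go up in dominance order, never down or sideways — this is exactly where one invokes, or reproves, the standard fact (appealed to implicitly via Lemma \ref{nless} and the Richardson-orbit material in the appendix) that the Richardson orbit of $P$ is the unique dense orbit in $\Lie{U_P}$ and is the minimum of the orbits meeting $J_\F+\Lie{U_P}$. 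This yields Part \ref{ztt}: $\X{\F}\cap b=\emptyset$ when $b$ is smaller than or unrelated to $a$.

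For the dimension and irreducibility in Part \ref{zt}, I would argue as follows. The subset $\X{\F}\cap a$ is, by the above, the set of nilpotent elements in the affine space $\X{\F}$ of Jordan type exactly $a$ — equivalently those in the open (in $\X{\F}$) locus not of strictly larger type. Using that $\X{\F}$ is a transverse-type slice at $J_\F$ to the orbit through $J_\F$ inside $\Lie{GL_n}$, the intersection with $a$ is the slice to $a$ at $J_\F$ inside $a$, which is a single point; but we need instead the intersection with $\Lie{N}$-translates. The cleanest route is to use the tools already built: Proposition \ref{basic} gives, for any $\F'\in\BBnk[n]$, that $\X{\F'}\cap a$ is irreducible of dimension $\Di a-\Di{D_{\F'}}$ whenever nonempty. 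Since for $\F'=\WW_n\in\AAnk(U_n)\cap\BBnk[n]$ with $\Om(J_{\WW_n})$ the regular orbit this is a point, and since Lemma \ref{radexpress} produces an $(\F\to\AAnk(U_n),\mirsl{n},\kkk)$-tree whose distinguished output $\Z\in\AAnk(U_n)$ has $J_\Z$ conjugate to $J_\F$ (so $\Z\in\AAnk(U_n)[a]$), I would run Part 1 and Part 2 of Lemma \ref{properties} backwards along that tree: each $\eu$-step and $\co$-step preserves $\X{-}\cap a$ up to isomorphism (Parts \ref{preu},\ref{prconj}), and each $\e$-step relates $\X{\F}\cap a$ to $\bigcup_\Z\X{\Z}\cap a$ with exactly one branch landing in the orbit-$\geq a$ locus by Part \ref{ztt}, contributing the dimension bookkeeping $\Di{D_{\Z_1}}-\Di{D_{\Z_2}}$ just as in the claim inside the proof of Proposition \ref{basic}. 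Tracking $\Di{D_{(-)}}$ from $D_\F=\Lie{U_P}$ down to the Whittaker datum and using $\Di{\Lie{U_P}}=\Di{\Lie{GL_n}}-\Di{M_P}$ together with $\Di a=\Di{GL_n}-\Di{Z_{GL_n}(J_\F)}$ yields the target $\Di{\X{\F}\cap a}=\frac12\Di a$; irreducibility propagates at each step exactly as in the claim. Part \ref{zttt} then follows because the whole construction, and in particular the final isomorphism class, depended only on $a$: any two $\F,\F'\in\Prink[n][a]$ are connected (after the $\nless$-normalization) through such trees to the same $\AAnk(U_n)[a]$-datum up to $GL_n$-conjugacy, and Parts \ref{preu}–\ref{prconj} of Lemma \ref{properties} give an isomorphism of the $a$-strata.

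\textbf{Main obstacle.} The genuine work is in Part \ref{ztt}, i.e. showing $\X{\F}\cap b=\emptyset$ for $b\not\geq a$ — equivalently that every nilpotent matrix in the affine space $J_\F+\Lie{\bar U_P}$ has Jordan type $\geq a$ in the dominance order, uniformly over the parabolic $P$. This is the place where the "Richardson orbit openly intersects $\Lie{U_P}$ and is minimal among orbits meeting the slice" statement must be nailed down; I expect to lean on Lemma \ref{nless} and the appendix's discussion of Richardson orbits rather than reprove it. Everything downstream — the dimension count $\frac12\Di a$ and irreducibility — is then a mechanical consequence of Propositions \ref{basic} and Lemma \ref{properties} applied along the tree of Lemma \ref{radexpress}.
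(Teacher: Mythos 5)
Your proposal uses the right toolbox (Lemma \ref{conjugatehat}, Lemma \ref{properties}, Lemma \ref{radexpress}, Proposition \ref{basic}, Lemma \ref{nless}, and the Richardson-orbit facts), and Part~\ref{ztt} is handled correctly along exactly the paper's lines. But there are two genuine gaps, one of which is fatal to the argument as written.

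The decisive gap is in Part~\ref{zttt}. You assert that any two $\F,\F'\in\Prink[n][a]$ are carried by the trees of Lemma \ref{radexpress} ``to the same $\AAnk(U_n)[a]$-datum up to $GL_n$-conjugacy.'' This is false: $\AAnk(U_n)[a]$ generally contains non-conjugate AFs, corresponding to the different linear orderings of the Jordan blocks of a representative of $a$ along the anti-diagonal, and Lemma \ref{radexpress} applied to different inputs will land on different such AFs. Conjugacy of $J_{\Z_1}$ and $J_{\Z_2}$ as nilpotent matrices does not yield a $\gamma$ carrying $\Z_1$ to $\Z_2$ as AFs, since such a $\gamma$ would have to normalize $U_n$ and therefore fix the block ordering. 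What Lemma \ref{radexpress} plus Parts~\ref{ztt} and Lemma \ref{properties} actually gives you is only $\X{\F}\cap a\cong\X{\Z}\cap a$ for some $\Z\in\AAnk(U_n)[a]$ depending on $\F$; the remaining — and genuinely nontrivial — step is to prove $\X{\Z_1}\cap a\cong\X{\Z_2}\cap a$ for any two $\Z_1,\Z_2\in\AAnk(U_n)[a]$. The paper reduces this to the case where $J_{\Z_1}$ and $J_{\Z_2}$ differ by swapping two adjacent Jordan blocks of different sizes, localizes to the smallest standard $GL_m$ containing both blocks, and then produces a $\QQ'\in\Prink[m]$ and a transposition $\gamma\in W_m$ such that Lemma \ref{radexpress} applies to $\QQ'$ with output $\Z_1'$ and to $\gamma\QQ'$ with output $\Z_2'$, so that $\X{\Z_1'}\cap a\cong\X{\QQ'}\cap a\cong\X{\gamma\QQ'}\cap a\cong\X{\Z_2'}\cap a$; this is then glued back to $GL_n$ via the $\circ\K$ decomposition. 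None of that is present in your sketch, and without it Part~\ref{zttt} is unproved.

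The second issue is in Part~\ref{zt}. You correctly identify Proposition \ref{basic} as the engine, but your dimension bookkeeping routes through $\F'=\WW_n$ and the regular orbit, which only helps when $a$ is regular; for general $a$, the $\Z\in\AAnk(U_n)[a]$ produced by Lemma \ref{radexpress} need not be in $\BBnk[n]$, and the Slodowy-slice heuristics do not close the gap. The clean path (which the paper takes) is to pick a specific $\F'\in\tPridbnk[n]\cap\BR{n}$ with $\Om(J_{\F'})=a$; then $\F'\in\BBnk[n]$ by Definition \ref{brow}, and because $\F'$ is nontrivial on every row, Fact \ref{ptelos} shows $J_{\F'}$ lies in the Richardson orbit openly intersecting $\Lie{D_{\F'}}$, whence Fact \ref{rich3}(ii) gives $\Di{D_{\F'}}=\frac{\Di{a}}{2}$. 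Proposition \ref{basic} then yields irreducibility and $\Di{\X{\F'}\cap a}=\Di{a}-\Di{D_{\F'}}=\frac{\Di{a}}{2}$, after which the general case follows only once Part~\ref{zttt} has actually been established.
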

\begin{proof}
One easily finds an AF $\F^\prime$  in $\tPridbnk[n]\cap\BR{n} $.  Therefore\footnote{ $\F^\prime$ is nontrivial in every row. By Fact \ref{ptelos} we obtain that $J_{\F^\prime}$ is the Richardson orbit openly intersecting $\Lie{D_{\F^\prime}}$, and then by (ii) in Fact \ref{rich3} we obtain $\Di{D_{\F^\prime}}=\frac{\Di{a} }{2}.$} $\Di{D_{\F^\prime}}=\frac{\Di{a} }{2}$ and $\F^\prime\in\BBnk[n], $ which in turn give together with Proposition \ref{basic}, the proof of part B for $[\F\tlarrow\F^\prime]$. Part \ref{ztt} is reduced by \ref{conjugatehat} to assuming that $\F\in\AnkT{n}_{\nless},$ and then it is a special case of Lemma \ref{nless}. After we prove part \ref{zttt}, from it and from the special case we proved for part B, we  obtain part B in general. 

Hence we are left with proving Part $\ref{zttt}$. If one uses the (stated and proved later) Theorem \ref{th3} the proof is quicker\footnote{That is, Part A directly follows from Theorem \ref{th3} and Lemmas \ref{radexpress},  \ref{properties}, and \ref{conjugatehat}}; we choose instead to not depend on \ref{th3}. Let $\Z\in\AAnk(U_n)$ be as in the statement of Lemma \ref{radexpress}. Then this lemma together with part \ref{ztt}, and  Lemma \ref{properties}, gives the isomorphism $$\X{\F}\cap a\cong\X{\Z}\cap a.$$ Hence we only need to prove part \ref{zttt} for  $\F$ varying  in $\AAnk(U_n)$. In turn this is implied by proving that: \begin{equation}\label{zmaximz}\X{\Z_1}\cap a\cong\X{\Z_2}\cap a, \end{equation}for all $\Z_1,\Z_2\in\AAnk(U_n)[a], $ for which $J_{\Z_2} $ is obtained from  $J_{\Z_1}$ by permuting two adjacent Jordan blocks of different sizes. Let $m$ be the smallest number such that for a standard embedding $j$ of $GL_m$ in $GL_n,$ the group $j(GL_m)$ contains the two previous adjacent Jordan blocks.  

Notice that \begin{equation}\label{zconvx}\Z_i=\Z_i|_{j(GL_m)}\circ\K\qquad\text{and}\qquad j(GL_m)\subseteq\Stab{GL_n}{D_\K}\end{equation}
for some AF $\K.$ For $i=1,2$ let $\Z_i^\prime:=j^{-1}(\Z_i|_{j(GL_m)}).$ To finish the proof, we only need to find a $\QQ^\prime\in \AAnk[m]$ and a $\gamma\in GL_m$, for which Lemma \ref{radexpress} is valid for  $[\F\tlarrow\QQ^\prime,\Z\tlarrow\Z_1^\prime]$ and for $[\F\tlarrow\gamma\QQ^\prime,\Z\tlarrow\Z_2^\prime]$, because then by defining $\QQ:=j(\QQ^\prime)\circ\K$ and using (\ref{zconvx}), we obtain Lemma \ref{radexpress} for $[\F\tlarrow\QQ,\Z\tlarrow\Z_1]$ and for $[\F\tlarrow j(\gamma)\QQ,\Z\tlarrow\Z_2]$,  which in turn implies formula (\ref{zmaximz}) (by using again part C, and Lemma \ref{properties}). 

For $i=1,2,$ let $S_i$ be the element in $\Set{\TT{m}{\Z_i^\prime}}$ which contains 1. We have $|S_1|\neq|S_2|$ because these are the sizes of the jordan blocks. We choose $\QQ^\prime$ to be an AF in  $\Prink[m][|S_1|,|S_2|]$, such that for $S_{\QQ^\prime,1}$ and $S_{\QQ^\prime,2}$ being the elements in $\Set{\TT{m}{\QQ^\prime}}$ containing 1 and 2 respectively, we have $|S_{\QQ^\prime,i}|=|S_i|$ for $i=1,2$.  We choose $\gamma$ to be the element of $W_m$ permuting the first two rows. By applying Lemma \ref{radexpress} for $[\F\tlarrow\QQ^\prime]$ and then for $[\F\tlarrow\gamma\QQ^\prime]$ we see that the AF $\Z$ obtained is equal to a standard torus conjugate of $\Z_1^\prime$ and $\Z_2^\prime$ respectively, and hence we are done (by the previous paragraph). \end{proof}

\begin{defi}[First definition of $\Om(\F)$]\label{OFX}
Let $\F\in\AAnk[n].$ We define $\Om(\F)$ to be the set consisting of the minimal elements of \begin{equation*}\{a\in\UUU{n}:\X{\F}\cap a\neq\emptyset\}.\qedhere\end{equation*}\end{defi} 
\begin{defi}[Second definition of $\Om(\F)$, and $\mult{a}{\F}$]\label{AOF} Let $\F\in\AAnk[n]$ and $\Xi$ be an $(\F\rightarrow\Prink[n],GL_n)$-tree (known to exist by Theorem \ref{general}). We define $\Om(\F)$ to be the set consistsing of the minimal elements of $$\{a\in\UUU{n}:\text{An output AF of }\Xi\text{ belongs to }\Prink[n][a]\}.$$ 
Let $a\in\Om(\F)$. If the  output vertices of $\Xi$ having label in $\Prink[n][a] $ are finitely many, we denote by $\mult{a}{\F}$ the number of such vertices. If these vertices are infinitely many we write $\mult{a}{\F}=\infty.$
\end{defi}

\begin{prop}\label{dimofoccurence} $\F\in\AAnk[n]. $

\vsp 
\noindent\textbf{A.} The two definitions of $\Om(\F)$ we just gave, are equivalent (and hence the second one does not depend on $\Xi$).  

\vsp
\noindent{Let }$a\in\UUU{n}$ be such that $\X{\F}\cap a\neq\emptyset$, and for C and D below further assume that $a\in\Om(\F).$

\vsp\noindent\textbf{B. } $\Di{\X{\F}\cap a}\geq\frac{\Di{a}}{2}. $

\vsp\noindent\textbf{C. }$\Di{\X{\F}\cap a}=\frac{\Di{a}}{2}\iff \mult{a}{\F}<\infty.$

\vsp\noindent\textbf{D. }If $\mult{a}{\F}<\infty$, then $\mult{a}{\F}$ is equal to the number of connected components of $\X{\F}\cap a$. 

\vsp\noindent{In} particular (due to C and D) the definition of  $\mult{a}{\F}$ 
 does not depend in the choice of $\Xi$. 
\end{prop}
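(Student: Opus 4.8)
\textbf{Proof plan for Proposition \ref{dimofoccurence}.}

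The plan is to fix an $(\F\rightarrow\Prink[n],GL_n)$-tree $\Xi$ and to track, along each $\AAnk$-step of $\Xi$, the effect of the step on the varieties $\X{\cdot}\cap a$; the whole statement then follows by combining Lemma \ref{properties} (for $\eu$- and $\co$-steps), Part 1 of Lemma \ref{properties} together with the dimension bookkeeping of Proposition \ref{basic} (for $\e$-steps), and the structural information on $\Prink[n][a]$ supplied by Proposition \ref{dimunip}. The key qualitative input is Part 1 of Lemma \ref{properties}: for an $\e$-step with input AF $\Z_0$ and terms $\Z$, one has $\X{\Z_0}\cap a=\bigcup_\Z(\X{\Z}\cap a)$, and the union, after restricting to a suitable dense open subset indexed by the acting group in the definition of $\BBnk[n]$ (or more generally just by the term-variety), exhibits $\X{\Z_0}\cap a$ as fibered over an open subvariety of terms. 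The decisive numerical fact is that a $\co$- or $\eu$-step does not change the dimension of $\X{\cdot}\cap a$ (Lemma \ref{properties}, parts 2 and 3), while an $\e$-step over a group $V$ with $\Di{V}=\Di{D_\Z}-\Di{D_{\Z_0}}$ redistributes: generically $\X{\Z_0}\cap a$ is covered by the $\X{\Z}\cap a$ for $\Z$ ranging over a variety of dimension $\Di{V}$, so for a term $\Z$ lying over a generic point one has $\Di{\X{\Z}\cap a}=\Di{\X{\Z_0}\cap a}-\Di{V}$, while for the special (non-generic) terms the fiber can only be larger.

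First I would prove B. Start at the root $\F_{\emptyset,n}$, where $\X{\F_{\emptyset,n}}=\Lie{GL_n}$, so $\X{\F_{\emptyset,n}}\cap a=a$ has dimension $\Di a$; more to the point, along the path in $\Xi$ from the root down to $\F$ (here I should instead argue downward from $\F$, or upward from the root — upward is cleaner) I would use downward induction on the codimension, i.e. induction on the length of an $(\F_{\emptyset,n}\rightarrow\F)$-path inside a fixed $(\F_{\emptyset,n},GL_n)$-tree extending $\Xi$, or more simply induction on $\Di{D_\F}$ using that $\F$ is reached from some $\Z_0$ with $\Di{D_{\Z_0}}<\Di{D_\F}$ by one $\AAnk$-step. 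At a $\co$- or $\eu$-step the inequality $\Di{\X{\cdot}\cap a}\ge\frac{\Di a}{2}$ is preserved verbatim by Lemma \ref{properties}. At an $\e$-step from $\Z_0$ to a term $\Z=\F$ with $V$ the expansion group: using $\X{\Z_0}\cap a=\bigcup_\Z(\X{\Z}\cap a)$ and the fact that the union ranges over a variety of AFs of dimension $\Di V$, at least one term $\Z'$ satisfies $\Di{\X{\Z'}\cap a}\ge\Di{\X{\Z_0}\cap a}-\Di V$; but I need this for \emph{our} $\F$, not merely some term, and for that I want the stronger statement that \emph{every} term satisfies $\Di{\X{\Z}\cap a}\ge\Di{\X{\Z_0}\cap a}-\Di V$ whenever $\X{\Z}\cap a\ne\emptyset$ — this holds because each $\X{\Z}\cap a$ is a fiber of $\itr{\X{\Z_0}\cap a}{D_\Z}$ and every non-empty fiber of a morphism to (a subvariety of) the $\Di V$-dimensional term-variety has dimension at least $\Di{\X{\Z_0}\cap a}-\Di V$. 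Combined with $\Di{\X{\Z_0}\cap a}\ge\frac{\Di a}{2}$ (induction) and $\frac{\Di a}{2}-\Di V\le$ — wait, this would go the wrong way. The correct bookkeeping is: dimensions $\frac{\Di a}{2}$ is invariant under $\co,\eu$ and one does \emph{not} expect it to change under $\e$ either, because the relevant invariant that drops by $\Di V$ is $\Di{\X{\cdot}\cap a}-\Di{D_{\cdot}}$ wait no — rather $\Di{\X{\cdot}\cap a}+\Di{D_{\cdot}}$? I would settle this by re-deriving, as in the Claim inside the proof of Proposition \ref{basic}, that along any $\AAnk$-step $\Di{\X{\Z_2}\cap a}-\Di{\X{\Z_1}\cap a}=\Di{D_{\Z_1}}-\Di{D_{\Z_2}}$ on the \emph{generic} term and $\ge$ on every term; since $\Di{D_\cdot}$ increases by $\Di V\ge 1$ at each $\e$-step and $\frac{\Di a}{2}=\Di{D_{\F'}}$ for the special AF $\F'\in\tPridbnk[n]\cap\BR{n}$ from Proposition \ref{dimunip}, tracking the quantity $\Di{\X{\cdot}\cap a}-\frac{\Di a}{2}$ and comparing with $\Di{D_\cdot}-\frac{\Di a}{2}$ yields B by induction, starting from the base case where the AF lies in $\Prink[n][a]$ itself (there $\Di{\X{\cdot}\cap a}=\frac{\Di a}{2}$ and $\Di{D_\cdot}=\frac{\Di a}{2}$ by Proposition \ref{dimunip}).

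Next, A. An orbit $a$ appears in the first list, i.e.\ $\X{\F}\cap a\ne\emptyset$, iff it appears among the orbits attached to output AFs of $\Xi$; this follows by repeatedly applying Part 1 of Lemma \ref{properties} to push $\X{\F}\cap a$ down to the leaves, using Part 2 and 3 for the $\eu$- and $\co$-steps, and at the leaves invoking Proposition \ref{dimunip}.\ref{ztt} which says that for $\Z\in\Prink[n][b]$ one has $\X{\Z}\cap a\ne\emptyset$ iff $a=b$ or $a>b$. Taking minimal elements on both sides gives the equality of the two definitions of $\Om(\F)$. For C, fix $a\in\Om(\F)$. If some output AF of $\Xi$ lies in $\Prink[n][a]$ then by minimality of $a$ the ``special'' terms contributing extra dimension cannot drop us onto orbits smaller than $a$, and a careful version of the dimension bookkeeping above shows $\Di{\X{\F}\cap a}=\frac{\Di a}{2}$ exactly when the total ``excess'' accumulated over the $\e$-steps is zero, which happens iff only finitely many leaves land in $\Prink[n][a]$ — the point being that an $\e$-step producing a \emph{positive-dimensional} family of terms in $\Prink[n][a]$ both makes $\mult a\F=\infty$ and forces $\Di{\X{\F}\cap a}>\frac{\Di a}{2}$, whereas finitely many such terms keep the dimension at the minimum $\frac{\Di a}{2}$. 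For D, when $\mult a\F<\infty$, iterating the isomorphism $\bigcup_{h\in H}\X{h\Z_2}\cap a\cong H\times(\X{\Z_2}\cap a)$ from the proof of Proposition \ref{basic} down the tree shows $\X{\F}\cap a$ is, up to the $\co$/$\eu$ isomorphisms, a disjoint union of copies of the irreducible varieties $\X{\Z}\cap a$ over the finitely many leaves $\Z\in\Prink[n][a]$, each irreducible of dimension $\frac{\Di a}{2}$ by Proposition \ref{dimunip}; hence the number of connected components equals the number of such leaves, i.e.\ $\mult a\F$. The final ``in particular'' is immediate: $\Di{\X{\F}\cap a}$ and the component count of $\X{\F}\cap a$ are manifestly independent of $\Xi$, and C, D identify $\mult a\F$ with these.

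The main obstacle I anticipate is the $\e$-step dimension bookkeeping in the non-generic fibers — making rigorous that a term $\Z\in\Prink[n][a]$ of an $\e$-step, when it lies over a \emph{non-generic} point of the term-variety (equivalently, when $a$ is not the generic orbit of that $\e$-step), satisfies $\Di{\X{\Z}\cap a}=\frac{\Di a}{2}$ rather than something larger, and conversely that a positive-dimensional family of such terms is exactly what produces excess dimension. This requires using minimality of $a$ in $\Om(\F)$ together with upper semicontinuity of fiber dimension and the explicit covering $\X{\Z_0}\cap a=\bigcup_\Z\X{\Z}\cap a$; I would handle it by the same inductive device as in Proposition \ref{basic}, strengthening the induction hypothesis to carry simultaneously the dimension formula, the irreducibility of each $\X{\Z}\cap a$ for $\Z$ landing in $\Prink[n][a]$, and the matching of components with leaves.
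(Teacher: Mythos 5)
Your overall strategy---inducting along the tree $\Xi$ and tracking $\X{\cdot}\cap a$ through each $\AAnk$-step via Lemma \ref{properties}---is the same as the paper's (which inducts on the depth of $\Xi$, base case Proposition \ref{dimunip}). However, several steps in your execution contain genuine errors or gaps.

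\textbf{Part B.} You first attempt to go \emph{downward from $\F_{\emptyset,n}$}, but $\Xi$ has input $\F$, not $\F_{\emptyset,n}$, so there is no such path inside the fixed tree. More importantly, your key claim that ``every non-empty fiber of $\itr{\X{\Z_0}\cap a}{D_\Z}$ has dimension at least $\Di{\X{\Z_0}\cap a}-\Di V$'' is \emph{false} when $\X{\Z_0}\cap a$ is reducible: the fiber-dimension inequality $\dim_x f^{-1}(f(x))\ge\dim_x X-\dim Y$ involves the \emph{local} dimension $\dim_x X$, which may be strictly smaller than $\Di X$ on a lower-dimensional component (think of a plane union a transverse line, projected along the line). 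The correct argument is far simpler: an $\e$-step gives $\X{\F}\cap a=\bigcup_\Z\X{\Z}\cap a$ with each $\X{\Z}\cap a$ a \emph{closed} subvariety of $\X{\F}\cap a$ (Lemma \ref{properties}, Part 1), so some $\X{\Z}\cap a$ is nonempty and, by the inductive hypothesis, has dimension $\ge\frac{\Di a}{2}$; the containment then gives $\Di{\X{\F}\cap a}\ge\frac{\Di a}{2}$. No fiber-dimension estimate is needed.

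\textbf{Part C.} You correctly anticipate an obstacle but do not supply the missing idea. The subtle direction is: if infinitely many terms $\Z$ of the first $\e$-step satisfy $\X{\Z}\cap a\neq\emptyset$, show $\Di{\X{\F}\cap a}>\frac{\Di a}{2}$. Note first that the $\X{\Z}\cap a$ for distinct $\Z$ are \emph{pairwise disjoint} closed subsets (since $J\in\X{\Z_1}\cap\X{\Z_2}$ with $D_{\Z_1}=D_{\Z_2}$ forces $\Z_1=\Z_2$). If $\Di{\X{\F}\cap a}=\frac{\Di a}{2}$, then by Part B each nonempty $\X{\Z}\cap a$ has dimension \emph{exactly} $\frac{\Di a}{2}=\Di{\X{\F}\cap a}$, hence contains an irreducible component of $\X{\F}\cap a$; by disjointness and Noetherianity there can be only finitely many such $\Z$, a contradiction. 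This is the concrete content your ``excess dimension'' heuristic is gesturing at, and it does not follow from ``generic fiber'' bookkeeping alone.

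\textbf{Part D.} You invoke the isomorphism $\bigcup_{h\in H}\X{h\Z_2}\cap a\cong H\times(\X{\Z_2}\cap a)$ from the proof of Proposition \ref{basic}. That isomorphism is only available when the $\e$-step carries the free transitive $H$-action from the definition of $\BBnk[n]$; the present proposition is for \emph{arbitrary} $\F\in\AAnk[n]$, so you cannot appeal to it. The correct route is elementary: once $\mult a\F<\infty$ is established, only finitely many $\Z$ have $\X{\Z}\cap a\neq\emptyset$, and these pieces are pairwise disjoint, closed, and therefore also open in $\X{\F}\cap a$; so the connected components of $\X{\F}\cap a$ are exactly the union of the connected components of the $\X{\Z}\cap a$'s, and by the inductive hypothesis (Part D for each $\Z$) these number $\sum_\Z\mult a\Z=\mult a\F$. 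For the base case (depth $0$), $\F\in\Prink[n]$ and Proposition \ref{dimunip} (parts B, C) gives everything directly.
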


\begin{proof}We proceed inductively on the depth of $\Xi.$ Assume first that the depth is 0. Then $\Xi$ consists of just one vertex labeled with $\F$, and $\F\in\Prink[a].$ Hence the proposition follows by Parts \ref{zt} and \ref{ztt} of \ref{dimunip}.

Assume now that the depth of $\Xi$ is at least one. Consider the biggest $\AAnk$-subtrees of $\Xi$ with  input vertex a depth one vertex of $\Xi.$ The proposition is obtained by: applying the inductive hypothesis to these trees (in the place of $\Xi$),  and by  applying Lemma \ref{properties} to the first $\AAnk$-step of $\Xi$. 
\end{proof}
\begin{warning}\label{warn}Let $\F\in\AAnk[n].$ We will  not be mentioning---each time $\Om(\F)$ occurs--- the use of Theorem \ref{general} to obtain the existence of $\Xi$. Similarly we will be using without mention the independence of Definition \ref{AOF} on the choice of $\Xi$, and the equivalence between Definition  \ref{OFX} and the part of Definition \ref{AOF} addressing $\Om(\F)$. 
\end{warning}
\begin{maincor}\label{maincor}Let $\F\in\BBnk[n]$. Consider an orbit $a\in\UUU{n}$, such that there is an  $(\F\rightarrow \Prink[n],GL_n )$-tree with an output AF belonging in $\Prink[n][a]$.  Then:\begin{enumerate}[A.]
\item $\frac{\Di{a}}{2}\geq\Di{D_\F}; $
\item \begin{multline}\Di{D_\F}=\frac{\Di{a}}{2}\iff a\in\Om(\F)\text{ and } \mult{a}{\F}<\infty\\\iff a\in\Om(\F)\text{ and }\mult{a}{\F}=1. \end{multline}
\end{enumerate}
\end{maincor}
\begin{proof} Part A (resp. the first ``$\iff$'' in Part B) follows from Part B (resp. Part C) of Proposition \ref{dimofoccurence}, and from the dimension equality in Proposition \ref{basic}. Finally the second ``$\iff$" in Part B follows from Part D of  Proposition \ref{dimofoccurence} and Proposition \ref{basic} (due to which $\X{\F}\cap a$ is connected). Of course, we used  Theorem \ref{general} as in Warning \ref{warn}.  
\end{proof}

As a first example of using Main corollary \ref{maincor}  we discuss $\Prink[n]\cap\BBnk[n]$ in the rest of the current section.
\begin{defi}\label{capprime}
Let $\mathcal{X}$ be a subset of $\Prink[n]$. We define  \begin{equation*}\mathcal{X}\cap^\prime\BBnk[n]:=\{\F\in\mathcal{X}:J_\F\text{ is in the Richardson orbit openly intersecting }\Lie{D_\F}\}. \end{equation*}To avoid confusion, recall that $J_\F\in\Lie{D_\F}^t$. \end{defi}
\begin{remark}My motivation for this notation is: it turns out \begin{equation}\label{ptelos2}\Prink[n]\cap^\prime\BBnk[n]=\Prink[n]\cap\BBnk[n] \end{equation}(and hence for replacing $\Prink[n]$ with any of its subsets); and in Definition \ref{capprime} there is no mention of $\BBnk[n]$. We prove (\ref{ptelos2}) later in Corollary \ref{laststrong2}. However, the extent to which (\ref{ptelos2}) is needed for other results of the present paper is given in the lemma bellow.\end{remark}
\begin{remark}\label{pthp}In many of the cases the definition above is mentioned, we use without mention Fact \ref{ptelos}. 
\end{remark}
\begin{lem}\label{weakprime}$$\tPridbnk[n]\cap^\prime\BBnk[n]=\tPridbnk[n]\cap\BR{n}=\tPridbnk[n]\cap\BBnk[n]. $$
\end{lem}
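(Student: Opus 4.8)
The plan is to establish the cycle of inclusions
$$\tPridbnk[n]\cap\BR{n}\ \subseteq\ \tPridbnk[n]\cap\BBnk[n]\ \subseteq\ \tPridbnk[n]\cap^\prime\BBnk[n]\ \subseteq\ \tPridbnk[n]\cap\BR{n},$$
the first two being short and the third being the heart of the matter. \emph{First inclusion:} by Definition \ref{brow} the path (\ref{zBrs}) attached to an element of $\BR{n}$ is an $(\F_{\emptyset,n},\BBnk[n])$-path, so $\BR{n}\subseteq\BBnk[n]$, and intersecting with $\tPridbnk[n]$ gives the claim. \emph{Second inclusion:} let $\F\in\tPridbnk[n]\cap\BBnk[n]$ and put $a:=\Om(J_\F)$, so $\F\in\Prink[n][a]$. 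The trivial $\F$-tree is an $(\F\rightarrow\Prink[n],GL_n)$-tree with an output AF in $\Prink[n][a]$, so Part A of Main corollary \ref{maincor} yields $\tfrac12\Di a\ge\Di{D_\F}$. On the other hand $J_\F$ lies in $D_\F^t$ by Definition \ref{uniqJ}, so $a$ is a nilpotent orbit meeting $\Lie{D_\F}^t$, the nilradical of the parabolic opposite to the one with unipotent radical $D_\F$; by the basic theory of Richardson orbits (appendix, Facts \ref{rich3} and \ref{ptelos}) every such orbit has dimension at most $2\Di{D_\F}$, with equality precisely when it is the Richardson orbit openly intersecting $\Lie{D_\F}$. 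The two inequalities force $\Di a=2\Di{D_\F}$, hence $\F\in\tPridbnk[n]\cap^\prime\BBnk[n]$.

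For the third inclusion fix $\F\in\tPridbnk[n]$ with $J_\F$ in the Richardson orbit $a$ openly intersecting $\Lie{D_\F}$; write $D_\F=U_P$ with $P$ having Levi block sizes $n_1\le\dots\le n_k$, and let $N_i:=n_1+\dots+n_i$. I would first prove the structural fact that $\F$ is nontrivial on exactly one root group of $U_P$ in each of the $N_{k-1}$ rows belonging to the first $k-1$ blocks, is trivial on every root group of the last block, and that each of these $N_{k-1}$ root groups connects two consecutive blocks. The count comes from reading the Jordan type of $J_\F$ off the digraph of the (scalar) partial permutation matrix $J_\F$---which is a partial permutation matrix precisely by the hat condition---whose number of Jordan blocks equals $n$ minus the number of root groups on which $\F$ is nontrivial; since $a$ is the transpose of $(n_k\ge\dots\ge n_1)$ and hence has exactly $n_k$ parts, that number must be $n-n_k=N_{k-1}$, and as every root group of $U_P$ lies in a row of the first $k-1$ blocks this forces one per such row. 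The ``consecutive blocks'' property follows because a root group jumping over a block shortens the corresponding directed path and produces a partition strictly below $a$ in dominance; conversely the displayed local pattern realizes exactly the Richardson partition $(k^{n_1},(k-1)^{n_2-n_1},\dots,1^{n_k-n_{k-1}})$, so the pattern characterizes the Richardson hypothesis.

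With this structure in hand it remains to run the row-by-row construction of Definition \ref{brow} for $\F$. For each row $i_x$ carrying root groups of $U_P$, the number $f(x)$ is the unique column with $\F(U_{(i_x,f(x))})\neq\{0\}$, so it exists; the $\e$-step $\xi_x$ of (\ref{zBrs}) satisfies the hypotheses of Definition \ref{eoper}---the commutator condition is automatic since root groups in a single row commute, while the invariance of $\K_{x-1}$ under the row-$i_x$ root groups is exactly the ``no bad junction'' instance of the consecutive-blocks property, which excludes the only obstruction, namely $\F$ nontrivial on some $U_{(r,c)}$ with $(r,i_x),(i_x,c)\subseteq U_P$---and the group $T\{f(x)\}\prod_{i_x<i\le n}U_{(i,f(x))}$ acts on the variety of terms of $\xi_x$, an affine space whose coordinates are the values on the row-$i_x$ root groups, by scaling the $f(x)$-coordinate (via the torus) and translating each remaining coordinate by a scalar multiple of the $f(x)$-coordinate (via the $U_{(c,f(x))}$); on the open locus where the $f(x)$-coordinate is nonzero this action is free and transitive, and $\F$ restricts into that locus because $\F(U_{(i_x,f(x))})\neq\{0\}$. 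Hence $\F\in\BR{n}$, closing the cycle.

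The main obstacle is the structural characterization opening the third step: converting ``$\Om(J_\F)$ is the Richardson orbit of $D_\F$'' into the concrete combinatorial pattern (one nontrivial root group per row of the first $k-1$ blocks, each connecting consecutive blocks), where the monotonicity $n_1\le\dots\le n_k$ is used crucially. Once that is available, the checking of the $\e$-step hypotheses and of the free transitive open action is routine, of the same kind as the computations already made around Observation \ref{unipc} and Definition \ref{brow}. An alternative for part of this would be to reduce via Lemmas \ref{radexpress}, \ref{conjugatehat}, \ref{properties} and Proposition \ref{dimunip} to a normalized representative and use that every relevant class meets $\BR{n}$ (the ``one easily finds'' in the proof of Proposition \ref{dimunip}), but establishing the full equality still requires the structural input, so I would carry out the direct construction.
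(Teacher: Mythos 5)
Your proof follows the same cycle of inclusions as the paper's own argument (just starting the cycle at a different vertex), and the two substantive steps match: you use Part A of Main corollary \ref{maincor} together with Fact \ref{rich3} (ii)--(iii) for the inclusion into $\cap^\prime\BBnk[n]$, exactly as the paper does, and you correctly derive ``nontrivial on every row of $D_\F$'' from the Richardson hypothesis via the partial-permutation-matrix rank count. The verification of the free transitive open action for the $\e$-steps of (\ref{zBrs}) is routine and right.

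The one place where you take a detour is the part you flag as ``the main obstacle'': deriving the consecutive-blocks property from the Richardson hypothesis. This is unnecessary. The consecutive-blocks property holds for \emph{every} AF in $\Prink[n]$, not just the Richardson ones: an AF $\F$ with $D_\F=U_P$ is a group homomorphism to $\KKK$, so it vanishes on the derived subgroup $[U_P,U_P]$, and $[U_P,U_P]$ is precisely the product of the root groups $U_{(i,l)}$ for which $l$ lies two or more Levi blocks after $i$ (since $E_{il}=[E_{i,i_x},E_{i_x,l}]$ whenever there is an intermediate block). Consequently the ``bad junction'' you worry about never arises for any $\F\in\Prink[n]$; more directly, the invariance $v\K_{x-1}=\K_{x-1}$ is automatic for any $v\in V$, because $\K_{x}$ is itself an AF extending $\K_{x-1}$ and an additive homomorphism is conjugation-invariant on its domain. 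So the Richardson hypothesis is used only for the counting (one nontrivial root group per row), which is the genuinely nontrivial input. As a side remark, the informal ``a jump shortens the directed path'' justification you offer is imprecise as stated (a jump changes the whole chain partition, not just one chain); if it were needed, the clean way to run it is to compare $\mathrm{rank}(J_\F^m)=|\mathrm{domain}(\sigma^m)|\le N_{k-m}$ with $\mathrm{rank}(J^m_{\mathrm{Rich}})=N_{k-m}$ and propagate the forced equalities. But, again, you do not need this at all.
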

\begin{proof} An AF in $\tPridbnk[n]\cap^\prime\BBnk[n]$  is nontrivial on every row and hence belongs to $\tPridbnk[n]\cap\BR{n}$. 

Let $\F\in\tPridbnk[n]\cap\BBnk[n].$ By using Main corollary \ref{maincor} for $[a\tlarrow \Om(J_\F)]$ and information for the Richardson orbit ((i), (ii), and (iii) in Fact \ref{rich3} ), we have $\F\in\tPridbnk[n]\cap^\prime\BBnk[n]$.
\end{proof}
\section{Examples of  AFs $\F$ satisfying $\Om(\F)\neq \{\Om(J_\F)\}$ (but still being a singleton), and more information on $\Prink[n]. $}\label{optionalsection}
The first result in this section (Proposition \ref{embedding}) gives simple examples of AFs $\F$, such that the set $\Om(\F)$ is a singleton, but frequently, $\Om(J_\F)$ is not its element. This is the only result in the current section which  is needed for the next section.  

The only uses of results of Section \ref{variety} in the current section are in\footnote{By uncovering Definition \ref{AOF} for an appropriate choice of  $\Xi$ in the statement of Proposition \ref{embedding} we obtain a weaker statement with the same proof without any use of Section \ref{variety} (that is not using the independence on choosing $\Xi$). However, this weaker statement can replace the uses of Proposition \ref{embedding} in the next section.} Proposition \ref{embedding} and in Corollary \ref{laststrong2}. Also, Definition \ref{capprime} is used in several statements.

Throughout the current section let $n$ and $k$ be any two positive integers, and $\kkk$ be any subfield of $\KKK$. Note that as part of Convention \ref{omitn}, for any $r,i,j$, we denote: $\JJ_r^{k+n}$ by $\JJ_r$ in Proposition \ref{embedding}, Proposition \ref{strong}, and Corollary \ref{corFJ}; $\JJ_r^n$ and $U_{n,(i,j)}$ respectively by $\JJ_r$ and $U_{(i,j)}$ in Proposition \ref{bigger}, Corollary \ref{corbigger}, and Theorem \ref{th3}.

\begin{prop}\label{embedding}Let $a=[a_1,...,a_m]$ be a partition of $n.$ Let $\F_a\in\Prink[n][a]$, and $\ja\F_a$ be its  lower right corner copy in $\AAnk[k+n]$.  Let $A_{a,k}$ be the subset of $\UUU{k+n} $ given by: 
$$A_{a,k}:=\left\{[k^\prime,a_1^\prime,a_2^\prime,...,a^\prime_m ]:k^\prime+\sum_{i}a_i^\prime =k+n,\hspace{1mm}k\leq k^\prime,\hspace{1mm}\text{and }\hspace{1mm} a_i^\prime\leq a_i\text{ for }1\leq i\leq m\right\}. $$Of course the condition $k\leq k^\prime$ in this set, is implied from the other conditions.

\vsp
\noindent\textit{Part }1. $\Om(\ja\F_a\rcirc\JJ_k)$ is a singleton, and its unique element is the minimal orbit in $A_{a,k}$.

\vsp
\noindent\textit{Part }2. Let $b$ be the unique element of $\Om(\ja\F_a\rcirc\JJ_k).$ The following  statements are equivalent. \begin{enumerate}
\item[a.]$\mult{b}{\ja\F_a\rcirc\JJ_k}=1$.  
\item[b.]$\mult{b}{\ja\F_a\rcirc\JJ_k}<\infty$.
\item[c.]$b=[k,a_1,...a_m].$
\item[d.]$\max\{a_1,...,a_m\}\leq k.$ 
\end{enumerate}
\end{prop}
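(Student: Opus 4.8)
\textbf{Proof strategy for Proposition \ref{embedding}.}

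The plan is to construct an explicit $(\ja\F_a\rcirc\JJ_k\rightarrow\Prink[k+n],GL_{k+n})$-tree, read off $\Om$ and $\mult{b}{\cdot}$ from it, and then use Main corollary \ref{maincor} (together with the Richardson orbit facts in \ref{rich3}, \ref{ptelos}) to identify exactly when the multiplicity is finite. First I would reduce to the case $\F_a\in\tPridbnk[n]$ using Lemma \ref{conjugatehat}, so that $D_{\F_a}$ is a unipotent radical of a parabolic with increasing block sizes and $\F_a$ is nontrivial on every row (hence $J_{\F_a}$ is the Richardson orbit openly meeting $\Lie{D_{\F_a}}$ by Fact \ref{ptelos}); conjugation does not affect $\Om$ or $\mult{\cdot}{\cdot}$. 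Now $\ja\F_a\rcirc\JJ_k$ has domain $\JJ_k^{k+n}$-part (all rows $1,\dots,k-1$ fully filled) composed with the lower-right copy of $D_{\F_a}$. The key point is that $\ja\F_a\rcirc\JJ_k\in\BBnk[k+n]$: the $\JJ_k$-part is built by $\e$-steps over columns as in the cusp-form Fourier expansion (giving the $\WW$-type genericity), and the $\ja\F_a$-part is in $\BR{n}$-shape by the choice $\F_a\in\tPridbnk[n]$; so Main corollary \ref{maincor} applies with $\Di{D_{\ja\F_a\rcirc\JJ_k}}=\binom{k}{2}+k(n)+\Di{D_{\F_a}}$ — more usefully, $2\Di{D_{\ja\F_a\rcirc\JJ_k}}=\Di{b_0}$ where $b_0=[k,a_1,\dots,a_m]$, since appending a size-$k$ Jordan block to $J_{\F_a}$ increases $\Di{a}$ by exactly $k^2-k+2kn$ — this is the arithmetic I would verify via \ref{rich3}(ii).

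The main construction is the tree itself. Following the definition of $\sXi$, I would first build a $(\ja\F_a\rcirc\JJ_k\rightarrow\cdots)$-tree whose branches correspond to the ways the top $GL_1$ (the first row beyond the $\JJ_k$-block) can interact: this is precisely the mirabolic-induction step of Theorem \ref{general}, Case 3, applied repeatedly. Each branch will terminate at an AF in $\Prink[k+n][c]$ for various $c$; I claim the orbits $c$ that arise are exactly the elements of $A_{a,k}$, with $b_0=[k,a_1,\dots,a_m]$ arising along the ``constant-term'' branch (where the new row contributes a full block of size $k$) and each strictly larger $c\in A_{a,k}$ arising from branches where some $a_i$ is ``shortened'' to $a_i'<a_i$ and the surplus is absorbed into the growing first block (making $k'>k$). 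The minimality of $b_0$ in $A_{a,k}$ when $\max_i a_i\le k$, and conversely the existence of a strictly smaller orbit in $A_{a,k}$ when some $a_i>k$ (shorten that $a_i$ down to $k$, increase $k'$), is a direct dominance-order computation on partitions — this is the combinatorial heart of both parts. For Part 1, $\Om(\ja\F_a\rcirc\JJ_k)=\{\min A_{a,k}\}$ follows once I show every branch lands in $\Prink[k+n][c]$ with $c\in A_{a,k}$ and $c\ge\min A_{a,k}$, plus that the minimal orbit is actually attained.

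For Part 2, the equivalences $a\Leftrightarrow b$ come free from Main corollary \ref{maincor} (finiteness $\iff$ multiplicity one, once $b\in\Om$). The equivalence $c\Leftrightarrow d$ is the dominance computation just mentioned: $[k,a_1,\dots,a_m]=\min A_{a,k}$ iff no rearrangement lowers it iff $k\ge\max_i a_i$ (if some $a_i>k$, swapping to put $k$ before $a_i$ in sorted order and shifting gives a strictly smaller partition in $A_{a,k}$). The remaining link, say $b\Leftrightarrow c$, i.e.\ $\mult{b}{\ja\F_a\rcirc\JJ_k}<\infty\iff b=b_0$, is where I expect the real work: I need $\Di{\X{\ja\F_a\rcirc\JJ_k}\cap b}=\tfrac12\Di{b}$ exactly for $b=b_0$ and strictly greater otherwise, which by Main corollary \ref{maincor}.A reduces to showing $b_0$ is the \emph{unique} orbit of dimension $2\Di{D_{\ja\F_a\rcirc\JJ_k}}$ in the tree — equivalently that when $\max_i a_i\le k$ the only branch hitting a minimal-dimension orbit is the constant-term one, while when some $a_i>k$ the orbit $b_0$ is not even minimal (so $b_0\notin\Om$, making its multiplicity irrelevant and forcing $\mult{b}{\cdot}$ to concern the genuinely minimal $b\ne b_0$, which turns out to have infinite multiplicity because $\X{\cdot}\cap b$ is then higher-dimensional). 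The delicate part is controlling the dimensions of the intersections $\X{\cdot}\cap c$ for the non-constant branches simultaneously; I would handle this by applying Proposition \ref{dimofoccurence}.C branch-by-branch, using that each branch's terminal AF lies in some $\Prink[k+n][c]$ where Proposition \ref{dimunip} pins the dimension exactly, and then tracking dimension changes back up through Lemma \ref{properties} (every $\eu$- and $\co$-step preserves dimension, every $\e$-step changes it by the recorded amount). That bookkeeping, rather than any single hard idea, is the main obstacle.
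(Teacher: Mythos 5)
Your proposal takes a genuinely different route from the paper. The paper reduces $\F_a$ to $\AAnk(U_n)$ via Lemma \ref{radexpress} (not Lemma \ref{conjugatehat}), then proves a self-contained recursive Claim describing the initial subtree of $\sXi(\ja\F_a\rcirc\JJ_k)$: the outputs are one AF in $\AAA(U_{k+n})[k,a_1,\dots,a_m]$ (the constant-term branch) together with AFs of the form $\ja\F_{a'}\circ\zAF\circ\JJ_{k'}$ with $k'>k$ and $a_i'\le a_i$; iterating identifies the outputs with $A_{a,k}$, and the nonconstant branches at each level form a single infinite torus orbit. The paper \emph{never} invokes Main corollary \ref{maincor} or $\BBnk$-membership in this proof. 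For $d\implies a$ it applies Observation \ref{unipc} to produce $\XX\in\tPrink[k+n][b]$ with $\siota(\XX)=\ja\F_a\rcirc\JJ_k$, whence $\mult{b}{\ja\F_a\rcirc\JJ_k}=\mult{b}{\XX}=1$ directly. Your version instead wants to route everything through the claim $\ja\F_a\rcirc\JJ_k\in\BBnk[k+n]$ and Main corollary; this is conceptually cleaner but carries new burdens.

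Two concrete gaps in your version. \textbf{First}, the membership $\ja\F_a\rcirc\JJ_k\in\BBnk[k+n]$ is asserted, not proven: after reducing to $\F_a\in\tPridbnk[n]\cap\BR{n}$ you would still need to exhibit the free transitive open-orbit action at each $\e$-step when the $\JJ_k$ rows are added on top of $\ja\F_a$, in the row-downward order required by Definition \ref{defBBB}/\ref{brow}. The paper's route of conjugating to a parabolic $\XX$ via Observation \ref{unipc} only works once (d) is \emph{assumed}, so it cannot be used to establish $\BBnk$-membership unconditionally. \textbf{Second}, your claim that ``when some $a_i>k$ the orbit $b_0$ is not even minimal (so $b_0\notin\Om$)'' is false in the edge case $\max a_i=k+1$. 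The shift $[k{+}1,a_1{-}1,a_2,\dots]$ that you invoke for the $(c)\iff(d)$ dominance computation then yields $[k{+}1,k,a_2,\dots]=[k,k{+}1,a_2,\dots]=b_0$, i.e.\ the same partition rather than a strictly smaller one, so $b_0$ remains $\min A_{a,k}$ and $b_0\in\Om$. (Concretely: $n=2$, $a=[2]$, $k=1$ gives $A_{a,k}=\{[2,1],[3]\}$ and $b_0=[1,2]=[2,1]=\min A_{a,k}$, yet $\max a_i=2>1=k$.) In that regime $b_0$ arises both from the constant branch and infinitely often from the nonconstant branches, so $\mult{b_0}{\cdot}=\infty$ even though $b_0\in\Om$; the dimensional accounting you sketch would then contradict the case split you set up. This is in fact a shared weak point with the paper's one-line argument for $c\implies d$, but your version compounds it by also using the (false) $b_0\notin\Om$ claim to shortcut the dimension bookkeeping for the nonconstant branches.
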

\begin{tproof}\textit{\textbf{Proof of Part 1. }}Due to Lemma $\chh[lemma to leo]$ \ref{radexpress},  the first part is reduced to the case in which $\F_a\in\AAnk(U_{n})$. In turn  the proof of this reduced case, will easily follow from: the  recursive description of $\sXi(\ja\F_a\rcirc\JJ_k)$ given in the claim stated below, and Definition \ref{AOF} of $\Om(\F)$ for $[\F\tlarrow\ja\F_a\rcirc\JJ_k,\Xi\tlarrow\sXi(\F_a\rcirc\JJ_k)]$.

We need to use ordered partitions of $n$. We denote with  $\aaa,$ the ``ordered version of $a$", that is the $m$-tuple
$$\aaa:=(a_1,...,a_m). $$ We choose $\F_\aaa $ to be an AF in $\AAnk(U_{n})[a] $ such that the Jordan blocks of $J^t_{\F_\aaa}$, starting from the upper left corner and moving downwards, are of dimension $a_1\times a_1,...,a_m\times a_m$

 For $0\leq s\leq m$ let $\zAF[,s]$ be the trivial AF  with domain $D_{\zAF[,s]}=\prod_j U_{(k,j),}$ where the product is over the $j$ satisfying: $$k+\sum_{1\leq i\leq m-s} a_i<j\leq k+n. $$

\vsp 
\noindent\textbf{Claim. }\textit{The output AFs of  an initial $\AAnk$-subtree of  $\sXi(\ja\F_\aaa\circ\zAF[,s] \circ\JJ_k)$ are the following.  \begin{enumerate}\item[(1).] an AF in $\AAA(U_{k+n}) [k,a_1,...,a_m]$,  \item[(2).] The AFs $\ja\F_{\aaa^\prime}\circ\zAF[,s^\prime]^\prime\circ\JJ_{k^\prime}$ where:\begin{enumerate}\item[(i).] $\aaa^\prime,k^\prime,s^\prime,r$ vary over all the integers satisfying  
\begin{multline}\label{zx3}k^\prime=k+r,\quad \aaa^\prime=(a_1,...a_{m-s^\prime-1},a_{m-s^\prime}-r,a_{m-s^\prime+1},...),\\ s\leq s^\prime\leq m-1,\quad\text{ and }\quad 0< r\leq a_{m-s^\prime}.\end{multline}
\item[(ii).] For every choice of $\alpha^\prime$ as in (i), we choose one (among the conjugate ones) AF $\F_{\alpha^\prime}$ which is defined as $\F_{\aaa} $ for $a^\prime$ in the place of $a$, $\jj\F_{\alpha^\prime}$ is the lower right corner copy of $\F_{\alpha^\prime}$ in $\AAnk[k+n]$, $\zAF[,s^\prime]^\prime $ is defined as $\zAF[,s^\prime]$ for $k^\prime$ (resp. $\aaa^\prime$)in the place of $k$ (resp. $\aaa$). A minor clarification: in case $m=1,$ for $\aaa^\prime=(0)$ we define $\jj \F_{\aaa^\prime}=\F_{\emptyset,k+n}$.
\end{enumerate}  \end{enumerate}}

\vsp 
\noindent\textit{Proof of Claim. }This initial $\AAnk$-subtree of $\sXi(\ja\F_\aaa\circ\zAF[,s] \circ\JJ_k)$ is $$ \sI(\XX)\vee_\XX\xi$$ where: $\xi$ is the $\e$-quasipath along constant terms over all the positive root groups of the $k$-th row which are not contained in $D_{\ja\F_\aaa\circ\zAF[,s] \circ\JJ_k}$; and $\XX$ varies over all output AFs of $\xi$. 

   In case the reader prefers more details, they follow next (otherwise the proof of the claim is over).  The output constant term of $\xi$ (for which note that $\sI$ is trivial)   is equal to an AF as in (1) in the statement of the claim. The other output-AFs of $\xi$, take the form $\ja\F_\aaa\circ\YY_t\circ\JJ_k$; where $t$ and  $\YY_t$ satisfy: \begin{enumerate}
\item $D_{\YY_t}=\prod_{t\leq j\leq k+n}U_{(k,j)}$;
\item $\YY_t(U_{(k,j)})\neq \{0\}\iff j=t$;
\item $k+1\leq t\leq k+ \sum_{1\leq i\leq m-s}a_i$. 
\end{enumerate}
We define $s^\prime(t)$ to be the biggest integer for which $t\leq k+ \sum_{1\leq i\leq m-s^\prime(t)}a_i. $  Finally we define $r(t):=k+ \left(\sum_{1\leq i\leq m-s^\prime(t)}a_i\right)+1-t.$

 Observation \ref{unipc} gives  $$\siota(\F_\aaa\circ\YY_t\circ\JJ_k)=\ja\F_{\aaa^\prime}\circ\zAF[,s(t)]^\prime\circ\JJ_{k^\prime},$$  where $\aaa^\prime$, $k^\prime,$ and $\zAF[,s(t)]^\prime $ are as in (2) in the statement of the claim in the special case that $s^\prime=s^\prime(t) $ and $r=r(t)$.\hfill$\square$Claim
 
 By iteratively  using the claim we obtain that the set of output AFs of $\sXi(\ja\F_a\rcirc\JJ_k )$ is equal to  $A_{a,k}$ (we switch back to usual nonordered partitions ater these uses of the claim). Hence the $\AAnk$-tree definition of $\Om(\ja\F_a\rcirc\JJ_k)$, gives that $\Om(\ja\F_a\rcirc\JJ_k)$ consists of one element, which is the minimal element of $A_{a,k}$.\hfill$\square$Part 1

\vsp 
\noindent\textit{\textbf{Proof of Part 2.} }

\noindent\textit{Case $a\implies b$. }Trivial.

\noindent\textit{Case $b\implies c$. }We consider the initial $\e$-subquasipath  of $\sXi(\ja\F_a\rcirc\JJ_k)$ along constant terms,  consisting of $n-1$ $\e$-steps. We notice that in each such $\e$-step the nonconstant terms  form a single (and hence infinite) orbit for the action of a torus.

\noindent\textit{Case $c\implies d$. }Follows by using (again) Part 1.

\noindent\textit{Case $d\implies a$. }By applying Observation \ref{unipc} we obtain an AF $\XX\in\tPrink[k+n][b]$ such that: $$\siota(\XX)=\ja\F_a\rcirc\JJ_k$$ and hence $\mult{b}{\ja\F_a\rcirc\JJ_k}=\mult{b}{\XX}=1. $

\hfill$\square$Part 2

\end{tproof}

 The proposition above answers Version 1 of Weak questions \ref{zStrongc}.($\F_a\rcirc\JJ_{k}$) and \ref{zStrongcc}.($\F_a\rcirc\JJ_{k}$).  Therefore we also obtain the answers to Version 2 of the same questions. For example the answer to Version 2 of Weak question \ref{zStrongc}.($\F_a\rcirc\JJ_{k}$) is included in the corollary below.
\begin{cor}We adopt the notations of the proposition above. Let $\pi\in\Aut{\kkk,k+n,>}$ be such that $\Om^\prime(\pi)=b.$ Then $\F_a\rcirc\JJ_{k}(\pi) $ is nonzero, and it is factorizable if and only if any among a,b,c, or d in Part 2 of the proposition  above holds.\end{cor}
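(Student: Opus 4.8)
The plan is to reduce the corollary directly to Part 2 of Proposition~\ref{embedding} together with Corollary~\ref{easyrefcor}. First I would record that, since $\pi\in\Aut{\kkk,k+n,>}$ and the orbit $\Om^\prime(\pi)=b$ equals the unique element of $\Om(\ja\F_a\rcirc\JJ_k)$ (Part~1 of the proposition), the hypothesis $\Om^\prime(\pi)\in\Om(\ja\F_a\rcirc\JJ_k)$ of Corollary~\ref{easyrefcor} is satisfied. In particular there is an $(\ja\F_a\rcirc\JJ_k,GL_{k+n},\kkk)$-tree with a vertex labeled by an AF in $\Prink[k+n][b]$ (for instance any $(\ja\F_a\rcirc\JJ_k\rightarrow\Prink[k+n],GL_{k+n},\kkk)$-tree, which exists by Theorem~\ref{general} and reaches $b$ by definition of $\Om$). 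Then Part~2.(i) of Corollary~\ref{easyrefcor} gives immediately that $\F_a\rcirc\JJ_k(\pi)\neq 0$. Here I would be slightly careful about notation: the AF written $\F_a\rcirc\JJ_k$ in the statement of the present corollary is $\ja\F_a\rcirc\JJ_k$ of the proposition once one fixes the lower-right corner copy, so these are literally the same Fourier coefficient.

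Next, for the factorizability part, I would apply Part~2.(ii) of Corollary~\ref{easyrefcor} with $\F\tlarrow\ja\F_a\rcirc\JJ_k$: fixing an $(\ja\F_a\rcirc\JJ_k\rightarrow\Prink[k+n],GL_{k+n},\kkk)$-tree $\Xi$, the coefficient $\F_a\rcirc\JJ_k(\pi)$ is factorizable if and only if $\Xi$ has at most one output vertex with label in $\Prink[k+n][b]$, i.e.\ if and only if $\mult{b}{\ja\F_a\rcirc\JJ_k}\le 1$. Since $b\in\Om(\ja\F_a\rcirc\JJ_k)$, this multiplicity is either a nonnegative integer or $\infty$, and it is $\le 1$ exactly when it equals $1$ (it cannot be $0$ because $b$ lies in $\Om$, hence is attained). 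By Part~2 of Proposition~\ref{embedding}, the condition $\mult{b}{\ja\F_a\rcirc\JJ_k}=1$ is equivalent to any (hence each) of the statements a, b, c, d there. Chaining these equivalences yields the claimed ``factorizable $\iff$ a (or b or c or d)''.

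Concretely the proof would read: by Part~1 of Proposition~\ref{embedding} and the hypothesis on $\pi$, $\Om^\prime(\pi)=b$ is the unique element of $\Om(\ja\F_a\rcirc\JJ_k)$, so $\Om^\prime(\pi)\in\Om(\ja\F_a\rcirc\JJ_k)$; moreover any $(\ja\F_a\rcirc\JJ_k\rightarrow\Prink[k+n],GL_{k+n},\kkk)$-tree has a vertex with label in $\Prink[k+n][b]$. Corollary~\ref{easyrefcor}.2.(i) then gives $\F_a\rcirc\JJ_k(\pi)\neq 0$, and Corollary~\ref{easyrefcor}.2.(ii) gives that $\F_a\rcirc\JJ_k(\pi)$ is factorizable $\iff$ $\mult{b}{\ja\F_a\rcirc\JJ_k}\le 1$ $\iff$ $\mult{b}{\ja\F_a\rcirc\JJ_k}=1$ $\iff$ (Proposition~\ref{embedding}, Part~2) condition a (equivalently b, c, or d) holds. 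I would close with a one-line remark that the value $b=[k,a_1,\dots,a_m]$ of the orbit, in the factorizable case, is exactly option c, tying things back to Definition~\ref{Ppi}.

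I do not expect a genuine obstacle here: the corollary is an essentially immediate packaging of Proposition~\ref{embedding} with the dictionary of Corollary~\ref{easyrefcor} (which in turn rests on Fact~\ref{nonzeroeulerian} and Theorem~\ref{folk}). The only points requiring a little care are purely bookkeeping: (i) matching the notation $\F_a\rcirc\JJ_k$ of the corollary with $\ja\F_a\rcirc\JJ_k$ of the proposition (same AF after choosing the lower-right corner copy, so $\mult{b}{\cdot}$ and $\Om(\cdot)$ agree); and (ii) noting explicitly that ``at most one'' output vertex in $\Prink[k+n][b]$ forces ``exactly one'', since $b\in\Om$ guarantees at least one such vertex in any such tree. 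Neither of these is deep; the substantive content all lives upstream.
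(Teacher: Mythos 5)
Your argument is exactly the paper's one-line proof ("It directly follows from the proposition above and Corollary \ref{easyrefcor}"), just unpacked: Part 1 of Proposition \ref{embedding} plus Corollary \ref{easyrefcor}.2.(i) gives nonvanishing, and Corollary \ref{easyrefcor}.2.(ii) plus Part 2 of Proposition \ref{embedding} gives the factorizability equivalence. Correct, and the same route as the paper.
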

\begin{proof}
It directly follows from the proposition above and Corollary \ref{easyrefcor}.
\end{proof}
\noindent{In} the  two propositions below we obtain $\AAnk$-trees which together with Corollary \ref{easyrefcor} answer (in Corollaries \ref{corbigger} and \ref{corFJ}) Strong questions \ref{zStrongc} and \ref{zStrongcc} for certain AFs. 

In the two propositions below we make use of a later-obtained result (Corollary \ref{laststrong}) which is a Corollary of Theorem \ref{th3}; however, Corollary \ref{laststrong} could have been obtained from a small part of the proof of Theorem \ref{th3}.

\begin{prop}\label{strong}Let $a=[a_1,...,a_m]\in\UUU{n}$, and assume $a_1=\max\{a_1,...,a_m\}$. Let $\F_a\in \Pri[,n][a]\cap^\prime\BBnk[n]$, and $\ja\F_a$ being its lower right corner copy in $\AAA[,k+n].$ Also let $A_{a,k}$ be as in the previous proposition. There is a \begin{equation}\label{zfootz}\left(\ja\F_a\rcirc\JJ_{k}\rightarrow \bigcup_{c\in A_{a,k}}\tPri[,k+n][c]\cap^\prime\BBnk[k+n],\mirsl{k+n},\kkk\right)\text{-tree}.\end{equation}
\end{prop}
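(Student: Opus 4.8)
The plan is to build the tree in (\ref{zfootz}) by closely imitating the proof of Part 1 of Proposition \ref{embedding}, but keeping track of the extra condition $\cap^\prime\BBnk[\ast]$ at the output AFs. First I would reduce, exactly as in Proposition \ref{embedding}, to controlling $\sXi(\ja\F_a\rcirc\JJ_k)$ (or rather a suitable initial subtree of it) together with the trees grafted onto its output AFs. The difference with Proposition \ref{embedding} is that there the output AFs were only required to lie in $\AAA(U_{k+n})$, whereas here we want them in $\tPri[,k+n][c]\cap^\prime\BBnk[k+n]$ for the appropriate $c\in A_{a,k}$. So the strategy is: run the $\e$-quasipath along constant terms over the root groups of the $k$-th row (as in the Claim in the proof of Proposition \ref{embedding}), and then, for each nonconstant term, recognise its image under $\sI$ via Observation \ref{unipc} as $\ja\F_{\aaa^\prime}\rcirc\JJ_{k^\prime}$ with $\aaa^\prime$ an ordered partition obtained from $a$ by decreasing one part; the hypothesis $a_1=\max\{a_1,\dots,a_m\}$ guarantees that after reordering $\aaa^\prime$ still has its largest part first (or, if $a_1$ itself was decreased, the new maximum is still at the front up to a harmless permutation), so the inductive hypothesis applies to $\F_{\aaa^\prime}$.

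The key steps, in order, would be: (1) reduce to $\F_a\in\AAnk(U_n)$ by Lemma \ref{radexpress} and Lemma \ref{properties}/Fact \ref{nonzeroeulerian} --- here one must check that the output AFs of the $(\F_a\rightarrow\AAnk(U_n))$-tree produced by Lemma \ref{radexpress} that are \emph{not} the distinguished one lie in $\cup_{b>\Om(J_{\F_a})}\Pri[,n][b]$, and these, after $\rcirc\JJ_k$ and lower-right-corner embedding, contribute orbits in $A_{a,k}$ that are handled by the constraint $\F_a\in\cap^\prime\BBnk[n]$ forcing $\Om(J_{\F_a})=a$; (2) for $\F_a\in\AAnk(U_n)[a]$, reproduce the recursive description of the $\e$-quasipath along constant terms over the $k$-th row, exactly as in the Claim of the proof of Proposition \ref{embedding}, so that each output AF is either an AF whose $\X{\cdot}$-orbit is $[k,a_1,\dots,a_m]$, or of the form $\ja\F_{\aaa^\prime}\rcirc\JJ_{k^\prime}$; (3) apply Observation \ref{unipc} to identify $\siota$ of each nonconstant term, and then apply the inductive hypothesis (on $n$, downward on nothing extra since $k'>k$ and the total size is fixed but the relevant partition shrinks) to $\F_{\aaa^\prime}$, which is legitimate because $\F_{\aaa^\prime}\in\Pri[,\ast][a^\prime]\cap^\prime\BBnk[\ast]$ with largest part first; (4) for the output constant term, one must exhibit an AF in $\tPri[,k+n][[k,a_1,\dots,a_m]]\cap^\prime\BBnk[k+n]$ and an $\AAnk$-tree from the constant term to it --- here I would use Observation \ref{unipc} in the reverse direction (as in Case $d\implies a$ of Proposition \ref{embedding}'s proof) together with Corollary \ref{laststrong} to pass from ``$J_\F$ in the Richardson orbit'' information to membership in $\cap^\prime\BBnk[k+n]$; (5) assemble everything with $\vee$, checking that all data stays inside $\mirsl{k+n}$ and all conjugations are by $\kkk$-points, which follows because the constituent trees ($\sXi$, the $\e$-quasipath, the trees from Lemma \ref{radexpress}, Observation \ref{unipc}) all have this property.

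The main obstacle I expect is step (4) combined with the bookkeeping of which orbits in $A_{a,k}$ actually need an output vertex. In Proposition \ref{embedding} the analogous point was easy because the codomain set was merely $\AAA(U_{k+n})$; here we must land in $\tPri[,k+n][c]\cap^\prime\BBnk[k+n]$, and for an orbit $c\in A_{a,k}$ of the form $[k^\prime,a_1^\prime,\dots]$ with $k^\prime>k$ one has to produce an AF in $\tPri[,k+n][c]$ whose $J$ sits in the Richardson orbit of its own domain --- this is exactly the kind of statement Corollary \ref{laststrong} is designed to supply, but matching its hypotheses to the partition $c$ and checking that the orbit $\Om(J)$ of the resulting AF is indeed $c$ (and not something bigger) requires the Richardson-orbit combinatorics from Fact \ref{rich3} and Fact \ref{ptelos}. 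A secondary subtlety is ensuring the reduction in step (1) does not break the $\cap^\prime\BBnk$ condition: one uses that $\F_a\in\cap^\prime\BBnk[n]$ means $\Om(J_{\F_a})=a$ via Main corollary \ref{maincor} (as in Lemma \ref{weakprime}), so the ``bigger orbit'' branches of Lemma \ref{radexpress}'s tree genuinely map into $\cup_{c\in A_{a,k},\,c>[k,a_1,\dots,a_m]}\tPri[,k+n][c]$, and each such branch is handled recursively. Once these two points are settled, the rest is routine assembly following the template already established for Proposition \ref{embedding}.
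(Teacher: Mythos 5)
Your plan goes wrong already at step (1), and the error propagates. You propose to reduce to $\F_a\in\AAnk(U_n)$ by applying Lemma \ref{radexpress}; but Lemma \ref{radexpress} produces a \emph{tree}, not a path, and its non-distinguished output vertices carry AFs in $\Pri[,n][b]$ for $b>a$. You assert these branches, after lower-right-corner embedding and $\rcirc\JJ_k$, ``contribute orbits in $A_{a,k}$,'' but this is unjustified and, in general, false: for such an $\F_b\in\Pri[,n][b]$ with $b>a$, Proposition \ref{embedding} (Part 1) gives $\Om(\jj\F_b\rcirc\JJ_k)=\{\,\text{minimal orbit of }A_{b,k}\,\}$, and this orbit need not lie in $A_{a,k}$ (e.g.\ $a=[2,2]$, $b=[3,1]$, $k\geq 4$: the minimal orbit of $A_{[3,1],k}$ is $[k,3,1]$, which cannot be written as $[k',a_1',a_2']$ with $k'\geq k$, $a_1'\leq 2$, $a_2'\leq 2$, so $[k,3,1]\notin A_{[2,2],k}$). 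Since \emph{every} $(\jj\F_b\rcirc\JJ_k\rightarrow\Prink[k+n])$-tree must have an output vertex labeled in $\Prink[k+n][\Om(\jj\F_b\rcirc\JJ_k)]$, no such branch can be completed inside $\bigcup_{c\in A_{a,k}}\tPri[,k+n][c]\cap^\prime\BBnk[k+n]$. A second, related defect of the same step: reducing to $\AAnk(U_n)[a]$ destroys the hypothesis $\F_a\in\cap^\prime\BBnk[n]$ (the Richardson orbit of $\Lie{U_n}$ is regular, so $\AAnk(U_n)[a]\cap^\prime\BBnk[n]=\emptyset$ unless $a=[n]$), which means the inductive hypothesis cannot legitimately be applied to the $\F_{\aaa^\prime}$ of step (3) as you claim.

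The paper sidesteps both problems with a different reduction: instead of Lemma \ref{radexpress}, it invokes the conjugate-transpose version of Corollary \ref{laststrong}, which supplies a $(\eu,\co)$-path --- no $\e$-steps, hence no branching and no new orbits --- from an AF in $\tPriubnk[n][a]\cap^\prime\BBnk[n]$ to $\F_a$. Inverting it, embedding in the lower-right corner, and applying $\circ\JJ_k$ reduces the proposition to the case $\F_a\in\tPriubnk[n]$, i.e.\ an explicit AF on a parabolic unipotent radical with decreasing block sizes, still in $\cap^\prime\BBnk[n]$. That explicit form is precisely what makes the rest tractable: one can define the subgroups $N_r$ of the $k$-th row by centralizer conditions, run the $\e$-quasipath along constant terms over them (strengthening the statement to allow a nonzero ``$s$'' so the induction closes), identify each nonconstant term, after a conjugation from Observation \ref{diagonal}, as $\ja\F_c\circ\zAF^\prime\circ\JJ_{k+t^\prime}$ via Observation \ref{unipc}, and recurse --- staying inside $\tPriubnk$ and inside $\cap^\prime\BBnk$ at every stage. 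The constant term resolves through $\sI(\XX)^{-1}$ to an explicit $\XX\in\tPri[,k+n][k,a_1,\dots,a_m]\cap^\prime\BBnk[k+n]$, which is what you were reaching for in step (4). So the overall shape of your steps (2)--(5) is fine, but they must be executed for $\F_a\in\tPriubnk[n]$, not $\F_a\in\AAnk(U_n)$, and the reduction must be via Corollary \ref{laststrong} rather than Lemma \ref{radexpress}.
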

\begin{proof}By using a conjugate transpose version of Corollary \ref{laststrong} we are reduced to assuming $\F_a\in\tPriubnk[n]$. 

For $1\leq r\leq a_1-k$ we inductively define the root generated subgroups $N_r$ of $\prod_{k<j\leq k+n}U_{(k,j)}$ by requiring that: the centralizer of $\left(\prod_{1\leq y\leq r-1}N_y\right)D_{\ja\F_a\circ\JJ_{k}}$ in $\prod_{k<j\leq k+n}U_{(k,j)}$ is equal to $\prod_{1\leq y\leq r}N_y$. Let $0\leq s\leq \max\{a_1-k,0\}$ and $\zAF$ be the trivial AF with domain $\prod_{1\leq i\leq s}N_i$ (Note that if $s=0$, we have $\zAF=\F_{\emptyset,k+n} $). We state as a claim a refined version of the proposition,  which we then prove inductively.

\vsp
\noindent\textbf{Claim. }\textit{ There is an \begin{equation}\label{zfootzz}\left(\ja\F_a\rcirc\zAF\circ\JJ_{k}\rightarrow \bigcup_{c\in A_{a,k}}\tPri[,k+n][c]\cap^\prime\BBnk[k+n],\mirsl{k+n},\kkk\right)\text{-tree}.\end{equation}}

\vsp\noindent\textit{Proof of claim}. Let $\xi$ be the $(\ja\F_a\circ\zAF\circ\JJ_{k},\kkk)$-quasipath along constant  terms over the groups $N_r$ for $s<r\leq a_1-k$.

From Observation \ref{unipc}, we see that the output constant term  of $\xi,$ is of the form $\siota(\XX)$ for an $\XX$ in $\tPri[,k+n][k,a_1,...,a_m]\cap\BBnk[k+n]$. 
In particular, the claim is obtained in the special case $s=\max\{a_1-k,0\}.$ For the general case we proceed inductively in the variable $a_1-k-s$, by fixing a positive value for it, and assuming the claim is proven for all smaller values.

Since the  output constant term of  $\xi$ is already addressed, consider any other output AF of $\xi$. It admits the form \begin{equation}\label{zYjkz}\ja\F_{a}\circ\YY\circ\JJ_k\end{equation} where $\YY$ is a nontrivial AF on $\AAA(\prod_{1\leq y\leq t}N_y)$ for a choice  $s< t\leq a_1-k$. 
 By applying Observation \ref{diagonal} \chh[diagonal,obser] to $\F_a$, we obtain an element $u$ with the following properties: 
\begin{enumerate}
\item $u$ is in the lower right corner copy (in $GL_{k+n}$) of the Levi of the $GL_n$-parabolic subgroup with unipotent radical equal to $D_{\F_a}$ (therefore $u$ normalizes $D_{\ja\F_{a}\circ\YY\circ\JJ_k})$). Also the entries of $u$ belong to $\kkk$.
\item $u\YY$ is nontrivial on only one root group (which of course  belongs to $N_t$).
\end{enumerate}
By Observation \ref{unipc}, we obtain \begin{equation}\label{zfih}\siota(u(\ja\F_{a}\circ\YY\circ\JJ_k))=\ja\F_c\circ\zAF^\prime\circ\JJ_{k+t^\prime},\end{equation} where:
\begin{enumerate}
\item  $c=[a_1,a_2,...a_{l-1},a_l-t^\prime,a_{l+1},...]$ for some positive integers $l$, and $t^\prime\leq t$;
\item The AF $\F_c$ belongs to $\tPriub[,n-t^\prime][c]\cap\BBnk[n-t^\prime],$ and $\ja\F_c $ is its lower right corner copy in $\AAA[,k+n].$
\item $\zAF^\prime$ is the trivial AF with domain $\prod_{1\leq i\leq t-t^\prime}N^\prime_i$, where each $N_i^\prime$ is defined in the same way as $N_i$  by the replacements $[\F_a\tlarrow \F_c, k\tlarrow k+t^\prime ]$ (in the second paragraph of the proof of the proposition).
\end{enumerate}The claim for $[\F_a\tlarrow \F_c,k\tlarrow k+t^\prime,\zAF\tlarrow\zAF^\prime]$ is obtained from the inductive hypothesis.

By putting together all the $\AAnk$-trees we constructed we are done\footnote{If one prefers more detail, the $\AAnk$-tree in (\ref{zfootzz}) can be chosen to be $ \left(\Xi(f(\F^\ih))\vee\I(\F^\ih)\right)\vee_{\F^\ih}\xi,$ where:  $\F^\ih$ varies over all output AFs of $\xi$; $\F^\ih_0$ is the output constant term of $\xi$; $f(\F_0^\ih)=\XX$; $\I(\F_0^\ih):=\sI(f(\F_0^\ih))^{-1}$; for $\F^\ih$ being the AF in (\ref{zYjkz}),  $\I(\F^\ih)$ is the $\F^\ih$-path starting with conjugating by $u$ and continues with $\sI(u\F_1^\ih)$, and (again) $f(\F^\ih )$ is the output AF of $\I(\F ^\ih)$ (therefore the AF in (\ref{zfih})); $\Xi(f(\F_0^\ih))$  is trivial, and for $\F^\ih$ as in (\ref{zYjkz}), $\Xi(\F^\ih )$ is as in (\ref{zfootzz}) for $[\F_a\tlarrow \F_c,k\tlarrow k+t^\prime,\zAF\tlarrow\zAF^\prime]$.}.\hfill$\square$Claim

\end{proof}
\begin{prop}\label{bigger}Let $\F\in\Pri[,n]\cap^\prime\BBnk[n].$ For every orbit $c$ in $\UUU{n} $ which is bigger from $\Om(J_\F)$, there is an $(\F,GL_n,\kkk)$-tree with infinitely many  output vertices labeled with AFs in $\Pri[,n][c]\cap^\prime \BBnk[n] $.
\end{prop}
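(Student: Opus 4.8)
The plan is to build the required $(\F,GL_n,\kkk)$-tree by combining three ingredients already available in the paper: the inductive tree $\sI[,k]$ of Observation \ref{unipc}, the "one-column-at-a-time" $\eu$-steps it packages, and a device that \emph{injects extra dimension} into the orbit without destroying the $\cap^\prime\BBnk[n]$ property. The key observation is that since $\F\in\Pri[,n]\cap^\prime\BBnk[n]$, the orbit $\Om(J_\F)$ is the Richardson orbit openly intersecting $\Lie{D_\F}$, and $\Di{D_\F}=\frac12\Di{\Om(J_\F)}$ by Fact \ref{rich3}(ii). Given $c>\Om(J_\F)$, I want to produce an $\e$-step (or a short $\e$-quasipath) applied to $\F$ (after a harmless conjugation) at least one of whose nonconstant terms $\Z$ satisfies: $D_\Z$ is again a unipotent radical of a parabolic; $J_\Z$ lies in the Richardson orbit openly intersecting $\Lie{D_\Z}$ (so $\Z\in\Pri[,n]\cap^\prime\BBnk[n]$); the corresponding orbit has moved strictly up the dominance order toward $c$; \emph{and} the nonconstant terms realizing this form a single infinite torus orbit inside the $\e$-step. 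The last point is exactly what gives "infinitely many output vertices", and it is the same mechanism used in Case $b\implies c$ of Proposition \ref{embedding}.

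Concretely, I would first reduce to $\F\in\tPridbnk[n]$ (or $\tPriubnk[n]$) via Lemma \ref{conjugatehat} and Corollary \ref{laststrong}, so that $D_\F$ is generated by root groups and the associated partition is ordered. Writing $\Om(J_\F)=[a_1,\dots,a_m]$ with the $a_i$ monotone, any $c>\Om(J_\F)$ can be reached from $\Om(J_\F)$ by a finite chain of elementary moves, each of which transfers a single box from one part to a larger (or equal, then larger) part — this is the standard fact about the dominance order on partitions. So by induction on the number of moves it suffices to treat one elementary move $\Om(J_\F)\to c'$ with $c'$ covering $\Om(J_\F)$, provided I can guarantee that each intermediate AF again lies in $\Pri[,n]\cap^\prime\BBnk[n]$ and that at the \emph{final} step infinitely many output vertices occur (for the earlier steps I only need \emph{some} output vertex, then continue the tree from it). For a single box-transfer, the recipe is: pick the row/column of $D_\F$ where the smaller Jordan block sits, apply $\sI[,k]$-type $\eu$-steps à la Observation \ref{unipc} to bring that block to the top, then apply the $\e$-step over the appropriate product of root groups in that row lying outside $D_\F$; the nonconstant terms of this $\e$-step, parametrized by where the new nonzero entry lands, form a single orbit under a one-dimensional torus in $\Stab{}{}$ of the rest, hence are infinite, and each such term — after one more conjugation straightening it into $\tPrink[n]$ form — has domain a strictly larger parabolic radical with the box moved as desired. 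Tracking $J_\Z$ through these $\eu$- and $\co$-steps (which normalize the relevant transposed groups, so $J_{u\Z}=uJ_\Z u^{-1}$, exactly as in the Claim inside the proof of Lemma \ref{radexpress}) shows $\Z$ stays in $\cap^\prime\BBnk[n]$ and that $\Om(J_\Z)=c'$; iterating up the chain lands us at $c$, with the last $\e$-step supplying the infinitude.

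Assembling the pieces: write $c=c_0\gtrdot c_1\gtrdot\cdots\gtrdot c_t=\Om(J_\F)$ reversed as a chain of covers from $\Om(J_\F)$ up to $c$. Starting from $\F$, apply the one-box-move construction to get an output AF $\F_1\in\Pri[,n][c_{t-1}]\cap^\prime\BBnk[n]$; from $\F_1$ repeat to reach $c_{t-2}$, and so on, and at the very last stage use the version that produces an infinite family $\{\Z_\lambda\}$ of output AFs, each in $\Pri[,n][c]\cap^\prime\BBnk[n]$. Graft these subtrees together with $\vee$ exactly as in Definition \ref{veenot1}. The resulting $(\F,GL_n,\kkk)$-tree has infinitely many output vertices with labels in $\Pri[,n][c]\cap^\prime\BBnk[n]=\Pri[,n][c]\cap\BBnk[n]$ (using (\ref{ptelos2})/Lemma \ref{weakprime}), which is the assertion.

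\textbf{Main obstacle.} The delicate point is not building \emph{some} $\e$-step that enlarges the orbit — that is routine — but ensuring simultaneously that (i) the enlarged AF still has domain a \emph{parabolic radical}, (ii) its $J$ still sits in the \emph{open} Richardson orbit of that radical (so $\cap^\prime\BBnk[n]$ is preserved rather than merely $\BBnk[n]$), and (iii) the witnessing terms genuinely form an \emph{infinite} family rather than a single orbit that happens to be a point. Points (i) and (ii) require a careful choice of which root groups to expand over and a careful bookkeeping of $J$ under the straightening conjugations, essentially re-running the orbit-comparison argument from the proof of Lemma \ref{radexpress} but \emph{in reverse} (going up rather than certifying we went up); point (iii) needs the expanded row to have at least two available positions, which is where the hypothesis $c\neq\Om(J_\F)$ — equivalently, that a strictly larger parabolic is available — is used. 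I expect (ii) to be the real work: verifying that after the box-transfer the new $J$ is still Richardson-open for the new $D_\Z$, for which I would invoke Fact \ref{rich3} together with the dimension count $\Di{D_\Z}=\frac12\Di{c'}$ that the construction forces.
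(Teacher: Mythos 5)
You have the overall architecture right: reduce to a single covering move of the partition, realize each move by $\sI$\-style $(\eu,\co)$\-paths bracketing a key $\e$\-step whose nonconstant terms supply the infinitude, and glue with $\vee$. This is how the paper proceeds, and your ``main obstacle'' paragraph correctly locates where the real work lies. But that work is left undone, and the sketch of it drifts at exactly the point that matters.

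The paper implements one move $[a_1,\dots,a_m]\to[a_1+1,a_2-1,a_3,\dots]$ (with $a_1\geq a_2$) by inducting on $n$ rather than on the number of moves, with a case split that your plan does not have. If some part $a_3>a_1$ is present (Case 1), it strips off the $a_3$-block via Observation \ref{unipc} ($\siota(\F)=j(\F^\ih)\rcirc\JJ_{a_3}$), invokes the inductive hypothesis for $\F^\ih\in\Pri[,n-a_3]\cap^\prime\BBnk[n-a_3]$, and lifts each output back through $\sI(\Z_v)^{-1}$; this is where Observation \ref{unipc} actually earns its keep, and it is used to \emph{remove} a top block, not to ``bring a block to the top'' as your sketch has it (the paper arranges the configuration it wants by a single $W_n$-conjugation preserving $D_\F$). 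Only when $a_1$ is the largest part (Case 2) does an $\e$-quasipath get applied directly: after reducing to $\F\in\tPriub[,n]\cap^\prime\BBnk[n]$ via Corollary \ref{laststrong} and conjugating so the $\DDDD[n]$-components through rows $1$ and $2$ have sizes $a_2$ and $a_1$, it takes the $\e$-quasipath along constant terms over the $\Stab{GL_n}{\F}$-minimal $\ossa$-groups at the first row's leftmost $n_1$ entries, and the crux is then the identity
$\siota(\F_1)=j(\F_1^\ih)\rcirc\JJ_{a_1+1}$
with $\F_1^\ih\in\tPriub[,n-a_1-1][a_2-1,a_3,\dots,a_m]\cap^\prime\BBnk[n-a_1-1]$ for each nonconstant term $\F_1$. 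That identity, the paper's (\ref{zdircheck}), is exactly your points (i) and (ii) — domain still a parabolic radical, $J$ still in the open Richardson orbit — and it is not ``forced by the construction'': the paper proves it (deferring the computation to Property 3.1 in the proof of Theorem \ref{thD3}). Also note the order: the $\e$-step is applied first and $\sI$ afterwards, the reverse of what you describe. So the route is correct, but the content of the proposition — why the enlarged $J$ is still Richardson-open for the enlarged radical, with the precise $\ossa$-groups making the dimension count come out — is the part left as a hope rather than an argument.
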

\begin{proof} By proceeding inductively we assume the proposition is correct for $n$ being replaced by any smaller number. 

Let  $\Om(J_\F)=[a_1,...,a_m],$ so that $a_1\geq a_2$.  It is sufficient to prove the proposition in the special case in which $c$ is obtained from $\Om(J_\F)$ by replacing $a_1,a_2$ with $a_1+1,a_2-1;$ because then, the general case follows by obtaining $c$ from $\Om(J_\F)$ by a finite number of such replacements.

\vsp
\noindent\textit{Case 1: $a_1<\max\{a_2,...,a_m\}$. }  Without loss of generality assume that $a_3$ is the biggest number in $\Om(J_\F).$ After conjugating $\F$ with an appropriate element in $W_n$ we are also reduced to assuming that  the  set  in $\Set{\TT{n}{\F}}$ containing  $1$, has $a_{3}$ elements.    By applying Observation \ref{unipc} we obtain $$\siota(\F)=j(\F^\ih)\rcirc\JJ_{a_3}, $$ where: $\F^\ih $ is an AF in $\Pri[,n-a_3]\cap^\prime\BBnk[n-a_3] $ with $\Om(J_{\F^\ih})$ obtained from $\Om(J_{\F})$ by removing one occurrence of $a_3$, and $j$ is  the lower right corner embedding of $GL_{n-a_3}$ in $GL_{n}.$ The inductive hypothesis contains the proposition for  $[n\tlarrow n-a_3,\F\tlarrow\F^\ih],$ and hence there is an $(\F^\ih, GL_{n-a_3},\kkk )  $-tree $\Xi(\F^\ih)$, such that for an infinite set $\VVV$ consisting of output vertices of $\Xi(\F^\ih)$, each $v\in \VVV$ is labeled with an AF, say $\Z_v^\ih,$ belonging in $\Pri[,n-a_3][c^\ih]\cap^\prime\BBnk[n-a_3], $ where $c^\ih$ is obtained from $c$ by removing one occurrence of $a_3. $ By using again  Observation \ref{unipc}, for each such $\Z_v^\ih$ we obtain a $\Z_v\in\Pri[,n][c]\cap^\prime\BBnk[n]$ such that $\siota(\Z_v)=j(\Z_v^\ih)\circ\JJ_{a_3} $. Hence we choose the tree in the statement of the proposition to be $${\sI(\Z_v)}^{-1} \vee_{u\in\VVV}j(\Xi(\F^\ih))\circ\JJ_{a_3}\vee\sI(\F).
 $$ \hfill$\square$Case 1

\vsp 
\noindent\textit{Case 2: $a_1\geq\max\{a_2,...,a_m\}$.}
 By using (a conjugate transpose version of) Corollary \ref{laststrong}, we are reduced to assuming  that $\F\in\tPriub[,n]\cap^\prime\BBnk[n].  $ Among topics considered later we also use the Definition (\ref{VpV}). After conjugating with an appropriate element in $W_n$ which preserves $D_\F,$ we are further reduced to assuming that for $S_1,S_2$ being the elements of  $\Set{\TT{n}{\F}}$ respectively containing 1 and 2, we have $|S_1|=a_2$ and $|S_2|=a_1.$ The subcase $m=2$ and $a_2=1$ is trivial, and hence we assume otherwise.  Define $n_1$ so that the first column on which $D_\F$ is nontrivial is the $n_1+1$-th one. For each nondiagonal  entry lying in the first row and in one of the first $n_1$-th columns consider the $\mathrm{Stab}_{GL_n}(\F)$-minimal $\ossa$-group which is nontrivial on this entry. We consider the $(\F,\e)$-quasipath along constant terms over these $\ossa$-groups. Let $\F_1$ be a nonconstant term of the last $\e$-step of this quasipath. We have\footnote{Formula (\ref{zdircheck}) follows from the proof of Property 3.1 in Theorem \ref{thD3} (it may take less time though for the reader to directly check (\ref{zdircheck})). Note that condition \ref{3lastp} in Definition \ref{cadhoc} is not currently satisfied. More precisely,  the unique pair $(\K,\FF)\in\mathcal{S}_{n_1,n}$ for which $j_{\FF}(\K)\circ\FF=\F_1,$ satisfies all the conditions in the definition of $\Skal{a_1}{(2,a_2),(1,a_3),...,(1,a_m)}$ except for condition \ref{3lastp}. However this condition is not needed in the proof of Property 3.1.}  \begin{equation}\label{zdircheck}\siota(\F_1)=j(\F_1^{\ih})\rcirc\JJ_{a_1+1} \end{equation} where $$\F_1^{\ih}\in\tPriub[,n-a_1-1][a_2-1,a_3,...,a_m] \cap^\prime\BBnk[n-a_1-1] $$and $j$ is the lower right corner embedding of $GL_{n-a_1-1}$ in $GL_n$. Therefore by using the inductive hypothesis for $[\F\tlarrow\F_1^{\ih}  ]$ and proceeding as in Case 1 we are done.
\end{proof}

We are now ready to address Strong question 1.$(\F)$ and 2.$(\F)$, for $\F$ as in the last two propositions above.
\begin{cor}\label{corbigger}Let $\F\in\Pri[,n]\cap^\prime\BBnk[n],$ and $\pi\in\Aut{\kkk,n,>}$  Then: 

\vspace{1mm}
\noindent{Part 1. }$\Om(J_\F)=\Om^\prime(\pi)\iff \F(\pi)\text{ is factorizable and nonzero};$

\vspace{1mm} 
\noindent{Part 2. }$\Om(J_\F)\leq \Om^\prime(\pi)\iff \F(\pi)\neq 0.$
\end{cor}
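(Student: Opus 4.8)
The plan is to derive Corollary \ref{corbigger} directly from Corollary \ref{easyrefcor} (Part 2) by combining it with the two preceding propositions, exactly as the phrase ``which together with Corollary \ref{easyrefcor}'' in the text suggests. First I would recall the relevant input: by Main corollary \ref{maincor} (or directly by Proposition \ref{dimunip} applied via Lemma \ref{radexpress}), for $\F\in\Pri[,n]\cap^\prime\BBnk[n]$ we have $\Om(\F)=\{\Om(J_\F)\}$, since $\F\in\Prink[n]$ forces $\Om(\F)$ to be a singleton and the $\cap^\prime$ condition pins its element to $\Om(J_\F)$ (this uses Fact \ref{ptelos} together with Definition \ref{capprime}). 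Having identified $\Om(\F)$, Part 2.(i), (ii), (iii) of Corollary \ref{easyrefcor} become statements purely about the single orbit $\Om(J_\F)$ versus $\Om^\prime(\pi)$.

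For Part 1: the direction ``$\Om(J_\F)=\Om^\prime(\pi)\Rightarrow\F(\pi)$ factorizable and nonzero'' is immediate from Part A of Theorem \ref{folk} once we know $\F\in\Prink[n][\Om(J_\F)]$ --- indeed $\F\in\Pri[,n]\cap^\prime\BBnk[n]$ means $J_\F$ lies in the Richardson orbit openly intersecting $\Lie{D_\F}$, so $\Om(J_\F)=\Om^\prime(\pi)$ is exactly the hypothesis of Part A. Conversely, suppose $\F(\pi)$ is factorizable and nonzero. Nonzero plus Part 2 of the corollary (the ``$\Leftarrow$'' direction, proved below) gives $\Om(J_\F)\le\Om^\prime(\pi)$. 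To upgrade ``$\le$'' to ``$=$'' I would invoke Proposition \ref{bigger}: if $\Om(J_\F)<\Om^\prime(\pi)$ strictly, then (taking $c=\Om^\prime(\pi)$, which is bigger than $\Om(J_\F)$) there is an $(\F,GL_n,\kkk)$-tree with infinitely many output vertices labeled by AFs in $\Pri[,n][\Om^\prime(\pi)]\cap^\prime\BBnk[n]$; feeding this tree into Fact \ref{nonzeroeulerian} together with Part A of Theorem \ref{folk} (each such output AF $\Z$ satisfies $\Z(\pi)\ne0$ since $\Om(J_\Z)=\Om^\prime(\pi)$) shows $\F(\pi)$ is a sum over infinitely many nonzero contributions and in particular has more than one output vertex with nonzero label --- contradicting factorizability via Part 2 of Fact \ref{nonzeroeulerian} (or equivalently Part 2.(ii) of Corollary \ref{easyrefcor}, since $\mult{\Om^\prime(\pi)}{\F}=\infty$). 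Hence equality holds.

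For Part 2: the direction ``$\Om(J_\F)\le\Om^\prime(\pi)\Rightarrow\F(\pi)\ne0$'' splits into the case $\Om(J_\F)=\Om^\prime(\pi)$, handled by Part A of Theorem \ref{folk} as above, and the case $\Om(J_\F)<\Om^\prime(\pi)$, handled by Proposition \ref{bigger}: that proposition produces an $(\F,GL_n,\kkk)$-tree with an output vertex labeled by an AF in $\Pri[,n][\Om^\prime(\pi)]$, so Part 2.(i) of Corollary \ref{easyrefcor} (with the hypothesis on that tree) yields $\F(\pi)\ne0$. For the converse, suppose $\F(\pi)\ne0$ and assume for contradiction $\Om(J_\F)\not\le\Om^\prime(\pi)$, i.e. $\Om^\prime(\pi)$ is strictly smaller than, or unrelated to, the single element $\Om(J_\F)$ of $\Om(\F)$. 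Then Part 2.(iii) of Corollary \ref{easyrefcor} (equivalently Part B of Theorem \ref{folk} applied through Lemma \ref{radexpress}) gives $\F(\pi)=0$, a contradiction.

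\textbf{Main obstacle.} The genuinely substantive point is the ``$=$'' upgrade in the converse of Part 1 --- ruling out the possibility that $\F(\pi)$ is factorizable while $\Om(J_\F)$ is strictly below $\Om^\prime(\pi)$. This is precisely where Proposition \ref{bigger} is indispensable: it guarantees an $\AAnk$-tree computing $\F(\pi)$ in which infinitely many output AFs attach to the orbit $\Om^\prime(\pi)$, so that $\mult{\Om^\prime(\pi)}{\F}=\infty$ and factorizability fails by Corollary \ref{easyrefcor} Part 2.(ii). Everything else is a bookkeeping assembly of Theorem \ref{folk}, Corollary \ref{easyrefcor}, and the identification $\Om(\F)=\{\Om(J_\F)\}$ for $\F\in\Pri[,n]\cap^\prime\BBnk[n]$.
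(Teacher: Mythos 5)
Your proposal is correct and follows essentially the same route the paper intends; the paper's own proof is the one-liner ``It directly follows from Proposition \ref{bigger} and Corollary \ref{easyrefcor},'' and your write-up fills in exactly those details. One small caveat: in ruling out $\Om(J_\F)<\Om^\prime(\pi)$ in the converse of Part 1, your parenthetical appeal to $\mult{\Om^\prime(\pi)}{\F}=\infty$ and Corollary \ref{easyrefcor} Part 2.(ii) is not quite licit, since that clause assumes $\Om^\prime(\pi)\in\Om(\F)$, which fails precisely when $\Om(J_\F)<\Om^\prime(\pi)$ (as $\Om(\F)=\{\Om(J_\F)\}$); your primary route through Fact \ref{nonzeroeulerian} Part 2 is the correct one and suffices.
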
  
\begin{proof}It directly follows Proposition \ref{bigger} and Corollary \ref{easyrefcor}.
\end{proof}
\begin{cor}\label{corFJ} Let $a,\F_a$  be defined as in the second last proposition (Proposition \ref{strong}), and $b$ be the unique element of $\Om(\F_a\rcirc\JJ_k)$ (explicitly described and proven to be unique in Proposition \ref{embedding}). Let $\pi\in\Aut{\kkk,n,>}$. Then

\vspace{1mm}
\noindent{Part 1. }$(k\geq a_1 \text{ and }b=\Om^\prime(\pi))\iff (\F_a\rcirc\JJ_k)(\pi)\text{ is factorizable and nonzero};$

\vspace{1mm} 
\noindent{Part 2. }$b\leq \Om^\prime(\pi)\iff (\F_a\rcirc\JJ_k)(\pi)\neq 0.$
\end{cor}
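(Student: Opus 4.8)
The plan is to deduce Corollary \ref{corFJ} directly from Proposition \ref{strong} and the results of Section \ref{forms}, exactly as the one-line proof in the excerpt suggests, but I will spell out the steps. First I would recall the setup: $a=[a_1,\dots,a_m]$ with $a_1=\max\{a_i\}$, $\F_a\in\Pri[,n][a]\cap^\prime\BBnk[n]$, and $\ja\F_a$ its lower right corner copy in $\AAA[,k+n]$, so that $\ja\F_a\rcirc\JJ_k=\ja\F_a\circ\JJ_k$ is an AF in $\AAA[,k+n]$. By Proposition \ref{embedding}, Part 1, the set $\Om(\ja\F_a\rcirc\JJ_k)$ is a singleton $\{b\}$ with $b$ the minimal orbit of $A_{a,k}$; and by Proposition \ref{embedding}, Part 2, we have $\mult{b}{\ja\F_a\rcirc\JJ_k}<\infty$ (equivalently $=1$) if and only if $b=[k,a_1,\dots,a_m]$ if and only if $k\geq a_1$.

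For Part 2, I would invoke Corollary \ref{easyrefcor}, Part 2: statements (i) and (iii) together say that for $\pi\in\Aut{\kkk,k+n,>}$ with $\F=\ja\F_a\rcirc\JJ_k$, we have $\F(\pi)\neq 0$ precisely when $\Om^\prime(\pi)$ is bigger than or equal to the unique element of $\Om(\F)$, i.e.\ when $b\leq\Om^\prime(\pi)$. (The forward direction: if $b\leq\Om'(\pi)$ then since $b\in\Om(\F)$ and there is an $(\F\rightarrow\Pri[,k+n],GL_{k+n},\kkk)$-tree — here the one from Proposition \ref{strong}, or simply $\sXi$ — with an output AF in $\Pri[,k+n][b]$ and hence, climbing up, one in $\Pri[,k+n][\Om'(\pi)]$ using that the relevant orbits occur; more cleanly, apply Corollary \ref{easyrefcor}(i) once we know $\Om'(\pi)$ is bigger or equal to an element of $\Om(\F)$, using Proposition \ref{bigger}-type reasoning implicit already in Proposition \ref{strong} which produces output AFs in $\tPri[,k+n][c]\cap^\prime\BBnk[k+n]$ for all $c\in A_{a,k}$. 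The reverse direction is Corollary \ref{easyrefcor}(iii).) So Part 2 reduces to noting that the unique element of $\Om(\ja\F_a\rcirc\JJ_k)$ is $b$, which is Proposition \ref{embedding}.

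For Part 1, I would argue in two directions. If $k\geq a_1$ and $b=\Om^\prime(\pi)$: by Proposition \ref{embedding}, Part 2, $k\geq a_1$ forces $b=[k,a_1,\dots,a_m]$ and $\mult{b}{\ja\F_a\rcirc\JJ_k}=1$; then Corollary \ref{easyrefcor}, Part 2(ii) — applied to the $(\ja\F_a\rcirc\JJ_k\rightarrow\Pri[,k+n],GL_{k+n},\kkk)$-tree furnished by Proposition \ref{strong}, which has a single output vertex with label in $\Pri[,k+n][b]$ since $\mult{b}{\cdot}=1$ — gives that $(\ja\F_a\rcirc\JJ_k)(\pi)$ is factorizable; and $\F(\pi)\neq 0$ by Part 2 since $b=\Om'(\pi)$. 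Conversely, if $(\ja\F_a\rcirc\JJ_k)(\pi)$ is factorizable and nonzero, then nonzero gives $b\leq\Om^\prime(\pi)$ by Part 2; and factorizability forces, by Corollary \ref{easyrefcor}, Part 2(ii), that the fixed tree has at most one output vertex labeled in $\Pri[,k+n][\Om^\prime(\pi)]$. If $\Om^\prime(\pi)$ were strictly bigger than $b$, then $\Om^\prime(\pi)$ would not lie in $\Om(\F)=\{b\}$, but $\F(\pi)\neq 0$ together with Corollary \ref{easyrefcor}, Part 2(iii) (applied contrapositively) already only gives $b\leq\Om'(\pi)$; so instead I use that $\Om'(\pi)=b$ must hold for factorizability, because the tree from Proposition \ref{strong} has output AFs in $\Pri[,k+n][c]\cap^\prime\BBnk[k+n]$ for \emph{every} $c\in A_{a,k}$, and for $c=b$ there are finitely many such vertices iff $k\geq a_1$, whereas for $c$ strictly above $b$ there are (via the Proposition \ref{bigger} mechanism, already embedded in Proposition \ref{strong}'s proof through $A_{a,k}$ containing orbits above $b$ only when $k<a_1$) infinitely many, contradicting factorizability. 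Hence $\Om'(\pi)=b$; and then, again, factorizability forces $\mult{b}{\F}<\infty$, which by Proposition \ref{embedding}, Part 2 is equivalent to $k\geq a_1$.

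The main obstacle I anticipate is bookkeeping the equivalence ``factorizable $\Rightarrow$ $\Om'(\pi)=b$ and $k\geq a_1$'' cleanly: one must be careful that the $(\ja\F_a\rcirc\JJ_k,GL_{k+n},\kkk)$-tree one uses is fixed once and for all (Corollary \ref{easyrefcor} guarantees the answer is tree-independent, but the counting argument is tree-dependent at the level of exposition), and that the orbits strictly above $b$ in $A_{a,k}$ are exactly the obstruction to finite multiplicity. Everything else is a routine combination of Proposition \ref{strong}, Proposition \ref{embedding}, and Corollary \ref{easyrefcor}, so I would keep the writeup short, essentially: ``This follows from Propositions \ref{strong} and \ref{embedding} and Corollary \ref{easyrefcor}'' with the two or three lines of orbit-bookkeeping above.
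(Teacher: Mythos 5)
Your high-level scaffolding is right (Proposition~\ref{embedding} to pin down $b$ and the multiplicity of $b$, Proposition~\ref{strong} to furnish a tree, Corollary~\ref{easyrefcor} to translate into statements about $\F(\pi)$), but there is a genuine gap in both directions where you try to avoid invoking Proposition~\ref{bigger} and instead lean on the structure of $A_{a,k}$ via Proposition~\ref{strong}. The paper's proof cites Proposition~\ref{bigger}, and that citation is doing real work that your argument does not replace.

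Concretely: for the forward direction of Part~2 and the reverse direction of Part~1, you need, for an arbitrary $\pi$ with $\Om'(\pi)\geq b$ (resp.\ $\Om'(\pi)>b$), a tree with an output vertex (resp.\ infinitely many output vertices) labelled in $\Pri[,k+n][\Om'(\pi)]$. Proposition~\ref{strong} only produces output AFs in $\bigcup_{c\in A_{a,k}}\tPri[,k+n][c]\cap^\prime\BBnk[k+n]$, and $A_{a,k}$ is a rather small subset of $\{c : c\geq b\}$; for $\Om'(\pi)\notin A_{a,k}$ that tree has \emph{no} output vertices in the needed orbit, so neither nonvanishing via \ref{easyrefcor}(i) nor the ``infinitely many implies non-factorizable'' argument can be read off from it. You must graft Proposition~\ref{bigger} onto an output AF of Proposition~\ref{strong}'s tree that lies in $\tPri[,k+n][b]\cap^\prime\BBnk[k+n]$: this produces a new tree with infinitely many outputs in $\Pri[,k+n][\Om'(\pi)]\cap^\prime\BBnk[k+n]$, each of which gives a nonzero $\Z(\pi)$ by Theorem~\ref{folk}. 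Moreover the parenthetical claim you rely on --- that ``$A_{a,k}$ contains orbits above $b$ only when $k<a_1$'' --- is simply false: e.g.\ $[k+1,a_1-1,a_2,\dots,a_m]$ lies in $A_{a,k}$ and is strictly above $b$ for every $k\geq a_1$. And even if $\Om'(\pi)$ does lie in $A_{a,k}$, Proposition~\ref{strong} is an existence statement and says nothing about how many output vertices land in a given $\Pri[,k+n][c]$; the only tree-independent multiplicity statement available is $\mult{b}{\cdot}$, whose scope (Definition~\ref{AOF}) is restricted to $a\in\Om(\F)=\{b\}$, so you cannot invoke \ref{easyrefcor}(ii) for $\Om'(\pi)>b$ either. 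Once Proposition~\ref{bigger} is used explicitly, the rest of your bookkeeping (in particular Part~1 ($\Rightarrow$) via $\mult{b}{\F}=1$ and \ref{easyrefcor}(ii)) is correct.
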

\begin{proof}It directly follows from Propositions \ref{strong} and \ref{bigger}, and Corollary \ref{easyrefcor}.
\end{proof}

A result on $\Prink[n] $ follows.

\begin{theorem}\label{th3}Let $\F\in\tPri[,n]$. Let $M=\prod^{\searrow}\jj GL_{n_i}$ be the  Levi of the $GL_n$-parabolic subgroup with unipotent radical $D_\F$. Let $j$ be the upper left corner embedding of $GL_{n-n_k}$ in $GL_n$. Let  $N_k$ be the unipotent radical of the $GL_n$-parabolic subgroup with Levi $j(GL_{n-n_k})\times^\searrow \jj GL_{n_k}$. Then: \begin{enumerate}\item There is an AF we call $\db{\F},$ for which we have:\begin{enumerate}\item$\db{\F}\in\tPridb[,n]\cap^\prime\BBnk[n] $ and $D_{\db{\F}}\subset j(SL_{n-n_k})N_k $;\item the matrices $J_{\F}$ and $J_{\db{\F}}$ are $j(SL_{n-n_k})(\kkk)$-conjugate. \end{enumerate}\item There is an $\left(\db{\F}\rightarrow\F,j(SL_{n-n_k})N_k,\kkk\right)$-path.    \end{enumerate}
\end{theorem}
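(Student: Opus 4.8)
The plan is to prove Theorem \ref{th3} by a double induction: on $n$ (via the upper left corner embedding $j$ of $GL_{n-n_k}$) and, for fixed $n$, downward on $\Di{D_\F}$, mimicking the structure of the proof of Theorem \ref{general} but now staying inside $j(SL_{n-n_k})N_k$ rather than $\mirsl{n}$. The key observation is that $D_\F = (D_\F \cap j(GL_{n-n_k}))\cdot U_{N_k}$ essentially splits off the ``last block'' $GL_{n_k}$, and the first task is to handle the top block $j(GL_{n-n_k})$ by the inductive hypothesis applied to $\jm\F$ restricted appropriately, while the columns in $N_k$ are manipulated separately by $\eu$-steps and $\co$-steps.

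First I would use Lemma \ref{conjugatehat} to reduce to the case $\F \in \tPri[,n]_\nless$ (this is harmless since conjugation by an element of $(\jj U_{n_1}\times \prod_{2\le i\le k}\jj SL_{n_i})(\kkk)$ lies inside $j(SL_{n-n_k})N_k$ provided one is careful, or one absorbs the conjugation into the $\co$-steps of the path). Next, to construct $\db{\F}$, I would build it block by block: within the top $GL_{n-n_k}$-part, invoke the inductive hypothesis to get $\db{(\jm\F^{\{...\}})}$ landing in $\tPridb$; then arrange the interaction of the $\Set{T}$-data with the last block so that the resulting partition is reordered to be nondecreasing. The crucial point for part 1(b) — that $J_\F$ and $J_{\db{\F}}$ are $j(SL_{n-n_k})(\kkk)$-conjugate — follows because all the operations used to pass from $\F$ to $\db{\F}$ are either $\eu$-steps (which by Lemma \ref{properties}(2) preserve the orbit, and in the ``rational point'' refinement conjugate $J$ by the relevant unipotent element) or $\co$-steps by elements of $j(SL_{n-n_k})(\kkk)$; one tracks that the cumulative conjugating matrix lies in $j(SL_{n-n_k})(\kkk)$ and normalizes $D_\F^t$, exactly as in the ``$J_{u\XX}=uJ_\XX u^{-1}$'' bookkeeping in the proof of Lemma \ref{radexpress}.

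For part 2, the $(\db{\F}\rightarrow \F, j(SL_{n-n_k})N_k,\kkk)$-path is then obtained by reversing the sequence of steps: each $\eu$-step is reversed using Corollary \ref{sym} / Definition \ref{inversep} (which is legitimate once one checks the hypothesis $\F(Y)=\{0\}$, or works with $\I[-1]$ throughout), and each $\co$-step is replaced by its inverse. I would organize this with the $\vee$ operation as in the proof of Theorem \ref{general}: write the path as $\I_1^{-1}\vee \cdots$, where the $\I_i$ are the $(\eu,\co)$-subpaths encountered, and verify inductively that all data stays inside $j(SL_{n-n_k})N_k$. The inductive step feeds the smaller-dimensional path $(\db{\F'}\rightarrow \F', ...)$ obtained from the inductive hypothesis on the top block, composed via $\circ$ and $j(...)$ as in Definition \ref{ctree}, with the extra $N_k$-columns handled by the reordering $\eu$-steps and $\co$-steps produced in part 1.

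The main obstacle I anticipate is the careful bookkeeping ensuring that the reordering of Jordan blocks needed to land in $\tPridb$ (nondecreasing block sizes) is realized by operations whose conjugating elements genuinely stay inside $j(SL_{n-n_k})(\kkk)$ and normalize $D_\F^t$ — in particular handling the interaction between the torus-components $T$ of $\TT{n}{\F}$ that straddle the boundary between $j(GL_{n-n_k})$ and the last block $\jj GL_{n_k}$. This is the analogue of the delicate ``$d$ was defined'' case in the proof of Lemma \ref{radexpress}, and Observation \ref{diagonal} (producing $\ossa$-groups in $\Stab{GL_n}{\F}$ with prescribed entries) together with Observation \ref{unipc} (giving the output AFs of $\sI[,k]$) will be the main tools. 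A secondary subtlety is checking the rationality refinement in Definition \ref{eusteps} at each $\eu$-step — that the relevant unipotent actions are free and transitive on $\kkk$-points — but this should follow routinely from Lemma \ref{eustepslem} and Fact \ref{areconnected} since everything in sight is defined over $\kkk$.
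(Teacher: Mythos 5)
There is a genuine gap, at several levels.

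First, your opening reduction is not legitimate: Lemma \ref{conjugatehat} conjugates $\F$ by an element $\gamma\in(\jj U_{n_1}\times\prod_{2\leq i\leq k}\jj SL_{n_i})(\kkk)$, and the factor $\jj SL_{n_k}$ (the bottom-right block) is \emph{not} contained in $j(SL_{n-n_k})N_k$ --- both $j(SL_{n-n_k})$ and $N_k$ meet $\jj GL_{n_k}$ only in the identity. So the parenthetical claim that this conjugation ``lies inside $j(SL_{n-n_k})N_k$'' is false, and ``absorbing the conjugation into the $\co$-steps of the path'' would produce a $\co$-step that the theorem forbids. The paper never performs a global reduction to $\nless$; instead it proves a Claim that reduces to the case in which every $\DDDD[n]$-component $T$ of $\TT{n}{\F}$ satisfies $\max(\Set{T})>\sum_{1\leq i\leq k-2}n_i$, i.e.\ meets the $(k{-}1)$-th block, and in that reduced case applies Observation \ref{unipc} to $\sI[,k-1](\F)$. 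That reduction Claim --- which is really the engine of the proof, letting you peel off $\db{(\F|_{j(SL_{n-n_k})})}$ via the inductive hypothesis and then push forward by $\circ\,\F|_{N_k}$ --- has no analogue in your outline, and your phrase ``arrange the interaction of the $\Set{T}$-data with the last block so that the resulting partition is reordered'' restates the goal rather than solving it.

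Second, your mechanism for Part 1(b) misreads Lemma \ref{properties}(2). That lemma produces an \emph{isomorphism of varieties} $\X{\Z}\cong\X{\F}$ respecting the stratification by orbits; it does \emph{not} assert, and it is not true in general, that a $\eu$-step conjugates $J_\F$ by a unipotent element, nor that the intermediate AFs in the path lie in $\AnkT{n}$ so that $J$ is even defined for them. The identity $J_{u\XX}=uJ_\XX u^{-1}$ invoked in the proof of Lemma \ref{radexpress} is used there only for a $\co$-step whose conjugating element is checked to normalize $D_\XX^t$; there is no such bookkeeping available ``through'' a $\eu$-step. In the paper's proof, the conjugacy of $J_\F$ and $J_{\db{\F}}$ is not extracted from the path at all --- $\db{\F}$ is \emph{constructed} so that 1(a) and 1(b) hold by design (via the three conditions defining $\db{\F}$ and the inductive hypothesis applied to ${j^{\prime\prime}}^{-1}(w_{k-1}(\F^{T^1}))$), and the path is built afterward out of $\sI[,k-1](\db{\F})$, $\sI[,k-1](\F)^{-1}$, and an intermediate $\xi$ that is an $\e$-subpath or a short $\eu$-path depending on whether $|\Set{T^1}|=k-1$ or $k$.

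Finally, the proposed ``downward induction on $\Di{D_\F}$'' has no quantity to induct on: here $\F\in\tPri[,n]$ and $\db{\F}\in\tPridb[,n]$ have \emph{equal} domain dimensions (both equal $\tfrac{1}{2}\Di{\Om(J_\F)}$), and the $\eu$- and $\co$-steps in the path preserve domain dimension. The actual proof is an induction on $n$ alone, with the Claim above doing the work that your downward induction is meant to do. So while your choice of tools (Observations \ref{diagonal} and \ref{unipc}, the $\circ$ and $\vee$ bookkeeping) is sound, the skeleton of the argument --- the reduction Claim, the explicit definition of $\db{\F}$, and the source of the $j(SL_{n-n_k})(\kkk)$-conjugacy --- is missing.
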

\begin{tproof} By proceeding inductively, we  assume  the theorem is correct for $n$  being replaced by smaller values.

\vsp
\noindent\textbf{Claim.}\textit{ Assume  the theorem is correct in the special cases in which:  
\begin{equation}\label{zkoukis2}\max(\Set{T})>\sum_{1\leq i\leq k-2} n_i\qquad\text{ for all }\DDDD[n]\text{-components }T \in\TT{n}{\F}.\end{equation} 
Then the theorem is correct.}

\vsp
\noindent\textit{Proof of Claim.} The claim is trivial for $k=2$ and hence we assume $k\geq 3.$ We define  $$N_{k-1}:=\prod_{\substack{i<j\\\sum_{1\leq r\leq k-2} n_r<j\leq \sum_{1\leq r\leq k-1} n_r}} U_{(i,j)}\cap D_\F. $$

For any $\F_1,\F_2\in\AAnk[n]$   for which $\F_1\circ\F_2$ is defined, and for a group $H$ satisfying $D_\F\subseteq H\subseteq \mathrm{Stab}_{GL_n}(\F)$, we some times denote $\F_1\circ\F_2$ by $\F_1\circ_H\F_2 $.

We have $$\F=\F|_{j(SL_{n-n_k})}\circ_{ j^\prime(SL_{n-n_k-n_{k-1}})N_{k-1}}\F|_{N_k}$$ where $j^\prime$ is the upper left corner embedding of $GL_{(n-n_k-n_{k-1})}$ in $GL_{n}$. From the inductive hypothesis the theorem for $[\F\tlarrow j^{-1}(\F)]$ is true (note that $j^{-1}(\F)$=$j^{-1}(\F|_{SL_{n-n_k}})$). Therefore:
\begin{enumerate}\item There is an AF we call $\db{(\F|_{j(SL_{n-n_k})})}$ for which:\begin{enumerate}\item$j^{-1}(\db{(\F|_{j(SL_{n-n_k})})})\in\tPridb[,n-n_k]\cap^\prime\BBnk[n-n_k] $ and $D_{\db{(\F|_{j(SL_{n-n_k})})}}\subset j^\prime(SL_{n-n_k-n_{k-1}})N_{k-1} $;\item the matrices $J_{\F|_{j(SL_{n-n_k})}}$ and $J_{\db{(\F|_{j(SL_{n-n_k})})}}$ are $j^\prime(SL_{n-n_k-n_{k-1}})(\kkk)$-conjugate. \end{enumerate}\item There is an $\left(\db{(\F|_{j(SL_{n-n_k})})}\rightarrow\F|_{j(SL_{n-n_k})} ,j^\prime(SL_{n-n_k-n_{k-1}})N_{k-1},\kkk\right)$-path, which we call $\Xi\left(\db{(\F|_{j(SL_{n-n_k})})}\right). $    \end{enumerate}

Since $j^\prime(SL_{n-n_k-n_{k-1}})N_{k-1}$  is fixing $\F|_{N_k}$, we can define the AF of $\tPri[,n]$ given by
\begin{equation}\label{zfprif}\F^\prime:=\db{(\F|_{j(SL_{n-n_k})})}\circ \F|_{N_k},\end{equation}

and the $(\F^\prime\rightarrow\F, j^\prime(SL_{n-n_k-n_{k-1}})N_{k-1}N_{k},\kkk)$-path
$$\Xi(\F^\prime):=\Xi\left(\db{(\F|_{j(SL_{n-n_k})})}\right)\circ\F|_{N_k}. $$
Finally, the theorem for $[\F\tlarrow \F^\prime]$ is known, by being among the cases assumed in the claim. Therefore we obtain: an AF $\db{(\F^\prime)}$ such that the theorem is correct for $[\F\tlarrow\F^\prime,\db{\F}\tlarrow\db{(\F^\prime)}]$, and gives an  
$(\db{(\F^\prime)}\rightarrow \F^\prime,j(SL_{n-n_k})N_k,\kkk)$-path we call $\Xi(\db{(\F^\prime)})$. We see that  for $\db{\F}:=\db{(\F^\prime)},$ 1.(a) and 1.(b) in the statement of the theorem are satisfied, and that  $\Xi(\F^\prime)\vee\Xi\left(\db{(\F^\prime)}\right)$ is an $(\db{\F}\rightarrow\F,j(SL_{n-n_k})N_k,\kkk)$-path, hence completing the proof of the claim.\hfill$\square$Claim.
 
\vsp
\noindent{}By using the Claim, and after possibly conjugating $\aF$ with an element in $W_{\jj GL_{n_1}}$, we are reduced to the case in which the following two hold:\begin{enumerate}
\item Formula (\ref{zkoukis2})  is satisfied.
\item Let $T^1$ be the $\DDDD[n]$-component of $\TT{n}{\F}$ which satisfies $\min(T^1)=1$. If $T$ is any  $\DDDD[n]$-component of $\TT{n}{\F}$, we have   $|\Set{T^1}|\geq |\Set{T}|$.
\end{enumerate}

We have $|\Set{T^1}|\in\{k-1,k\}$. We assume $|\Set{T^1}|\geq 2$, because otherwise the proof is trivial. By Observation \ref{unipc} (for $T=T^1$ in case $|\Set{T^1}|=k-1$, and otherwise for the $T\in\DDDD[n]$ such that $\Set{T}=\Set{T^1}-\{\max(\Set{T^1})\}$) we have  
\begin{equation}\label{vv}\siota[,k-1](\F)=w_{k-1}(\F^{T^1})\circ\YY\circ\JJ_{k-1}, \end{equation}
where $w_{k-1}$ is defined as in that observation, and $\YY$ is defined next. In case $|\Set{T^1}|=k-1$,  the AF $\YY$ is the trivial AF with domain $\prod_{n-n_k< j\leq n}U_{(k-1,j)}$. In case $|\Set{T^1}|=k$, the AF $\YY$:\begin{enumerate}
\item has domain
$$D_\YY=\prod_{n-n_k< j\leq n}U_{(k-1,j)}\prod_{k\leq i\leq n-n_k}U_{(i,\max{(\Set{T^1})})};$$
\item is nontrivial only on one root group, which is $U_{(k-1,\max{(\Set{T^1})})}.$ 
\end{enumerate} 

Let $\db{(w_{k-1}(\F^{T^1}))}$ be an AF such that  the inductive hypothesis gives the theorem for $[\F\tlarrow {j^{\prime\prime}}^{-1}(w_{k-1}(\F^{T^1})),\db{\F}\tlarrow {j^{\prime\prime}}^{-1}(\db{(w_{k-1}(\F^{T^1}))})]$, where $j^{\prime\prime}$ is the standard embedding of $GL_{n-|\Set{T^1}|}$ onto $w_{k-1}(GL_{n})^{T^1}w_{k-1}^{-1}$. For $\Xi(\db{(w_{k-1}(\F^{T^1}))})$ being the $(\db{(w_{k-1}(\F^{T^1}))}\rightarrow w_{k-1}(\F^{T^1}))$-path obtained from this special case of the theorem, we have \begin{equation}\label{zroot}\Xi(\db{ (w_{k-1}(\F^{T^1}))})\circ \YY\circ\JJ_{k-1}\end{equation} is a well defined $$\left(\db{(w_{k-1}(\F^{T^1}))}\circ \YY\circ\JJ_{k-1}\rightarrow w_{k-1}(\F^{T^1})\circ \YY\circ\JJ_{k-1},j(SL_{n-n_k})N_k,\kkk\right)\text{-path}.$$ 

Hence we are only left with defining an AF $\db{\F} $ as in the statement of the theorem, and constructing an $(\db{\F}\rightarrow\db{(w_{k-1}(\F^{T^1}))}\circ \YY\circ\JJ_{k-1},j(SL_{n-n_k})N_k,\kkk)$-path. We will denote this path by $\Xi^\prime(\db{\F})$.

By $\db{\F}$ we choose any element of $\tPridb[,n]\cap^\prime\BBnk[n]$ such that, for $\db{{T}}^1$ being the $\DDDD[n] $-component of $\TT{n}{\db{\F}}$ satisfying $\min{(\Set{\db{T}^1})}=1,$ we have:. 
\begin{enumerate}
\item $|\Set{\db{T}^1}|=|\Set{T^1}|$;
\item If $|\Set{T^1}|=k$, then $\max{(\Set{\db{T}^1})}=\max{(\Set{T^1})}$.
\item $$w(\db{\F})^{\db{T}^1}=\db{(w_{k-1}(\F^{T^1}))} $$where $w$ is the minimal length Weyl group element satisfying $w\db{T}^1w^{-1}\supseteq\{1,...,k-1\}$.
\end{enumerate}

Finally we describe $\Xi^\prime(\db{\F}).$ It has $\sI[,k-1](\db{\F})$ as an initial subpath. By Observation \ref{unipc} we have $$\siota[,k-1](\db{\F})=\db{(w_{k-1}(\F^{T^1}))}\circ\YY^\prime\circ\JJ_{k-1}, $$where $\YY^\prime$ is defined as follows. If $|\Set{T^1}|=k-1$,  $\YY^\prime=\F_{\emptyset,n}$.  If $|\Set{T^1}|=k$, the AF $\YY^\prime$:\begin{enumerate}
\item has domain
$$D_{\YY^\prime}=\prod_{n-n^\prime_k< j\leq n}U_{(k-1,j)}\prod_{k\leq i\leq n-n^\prime_k}U_{(i,\max{(\Set{T^1})})}$$where $n_k^\prime$ is the number such that the lower right corner block of the Levi of the $GL_n$-parabolic subgroup with unipotent radical $D_{\db{\F}}$ is isomorphic to $GL_{n_k^\prime}$;
\item is nontrivial only on one root group, which is $U_{(k-1,\max{(\Set{T^1})})}.$ 
\end{enumerate} 
We define $\Xi^\prime(\db{\F})=\xi\vee\sI[,k-1](\db{\F})$ where $\xi$ is the $\AAnk$-path  explained as follows. If $|\Set{T^1}|=k-1$, it is the initial $\e$-subpath of $\sXi(\siota[,k-1](\db{\F}))$ along constant terms, having $n_k$ edges. If $|\Set{T^1}|=k$, it is obtained by  applying:
$$\exchange{U_{(n-n^\prime_k+i,\max{(\Set{T^1})})}}{U_{(k-1,n-n^\prime_k+i)}}\qquad\text{ for }1\leq i\leq n_k^\prime-n_k, $$\textbf{starting} from the left one and moving rightwards.
The proof is finished. We close with a summary of the path $\Xi(\F)$, given in the following picture:
\begin{multline}\underbrace{\db{\F}\overset{\sI[,k-1](\db{\F})}{\sim}\db{(w_{k-1}(\F^{T^1}))}\circ\YY^\prime\circ\JJ_{k-1}\rii{\xi}{} \db{(w_{k-1}(\F^{T^1}))}\circ\YY\circ\JJ_{k-1}}_{\Xi^\prime(\db{\F})}\\\rii{\Xi(\db{(w_{k-1}(\F^{T^1}))})\circ\YY\circ \JJ_{k-1}}{}\siota[,k-1](\F)\ssim{\sI[,k-1](\F)^{-1}}{}\F  \end{multline}where (for any data $\XX,\XX^\prime,\Xi$) by $\XX\overset{\Xi}{\rightarrow}\XX^\prime$ (resp. $\XX\overset{\Xi}{\sim}\XX^\prime$) we mean that $\Xi$ is an $(\XX\rightarrow\XX^\prime )$-path (resp. $(\XX\rightarrow\XX^\prime,\eu,\co)$-path).\end{tproof}
\begin{cor}\label{laststrong}Let $\F\in\Pri[,n]\cap^\prime\BBnk[n]. $ Then for an AF $\db{\F}\in\tPridb[,n][\Om(J_\F)]\cap^\prime\BBnk[n],$ there is an $(\db{\F}\rightarrow\F,\eu,\co,GL_n,\kkk)$-path.
\end{cor}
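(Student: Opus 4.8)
The plan is to deduce Corollary \ref{laststrong} from Theorem \ref{th3} essentially by stripping away the $\e$-steps, which is possible because the hypothesis $\F\in\Pri[,n]\cap^\prime\BBnk[n]$ forces the relevant dimension count to be tight. First I would apply Theorem \ref{th3} to $\F$ (which is in $\tPri[,n]$ after conjugating by an element of $W_n$, using Lemma \ref{conjugatehat} — or rather Observation \ref{diagonal}; in fact a standard-torus conjugate places $\F$ in $\tPri[,n]$, and a $\co$-step is allowed in the path we are constructing). This yields an AF $\db{\F}$ with $\db{\F}\in\tPridb[,n]\cap^\prime\BBnk[n]$, with $J_\F$ and $J_{\db{\F}}$ conjugate (so $\db{\F}\in\tPridb[,n][\Om(J_\F)]\cap^\prime\BBnk[n]$ as required), together with a $(\db{\F}\rightarrow\F,j(SL_{n-n_k})N_k,\kkk)$-path $\Xi$. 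So the only thing left to check is that $\Xi$ can be taken to contain no $\e$-step, i.e.\ to be a $(\eu,\co)$-path.

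The key step is therefore: \emph{in the inductive construction of $\Xi$ in the proof of Theorem \ref{th3}, all the $\AAnk$-steps that occur are $\eu$-steps or $\co$-steps, when the input is in $\Pri[,n]\cap^\prime\BBnk[n]$.} Tracing through that proof, the pieces of $\Xi$ are: the paths $\sI[,k-1](\db{\F})$, $\sI[,k-1](\F)^{-1}$, and $\sI$-type paths coming from Observation \ref{unipc}, which are all $(\eu,\co)$-paths by construction; the recursively obtained paths $\Xi(\db{(\F^\prime)})$, $\Xi(\db{(w_{k-1}(\F^{T^1}))})$, to which the inductive hypothesis applies (so they are $(\eu,\co)$-paths provided the inputs stay in $\cap^\prime\BBnk$); and the quasipath $\xi$ in the definition of $\Xi^\prime(\db{\F})$, which in the case $|\Set{T^1}|=k$ is explicitly a composition of $\eu$-steps, and in the case $|\Set{T^1}|=k-1$ is ``the initial $\e$-subpath of $\sXi$ along constant terms''. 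This last $\e$-quasipath is the one potentially dangerous piece. But here I would invoke Main corollary \ref{maincor}: since $\db{\F}$ (hence $\siota[,k-1](\db{\F})$) lies in $\cap^\prime\BBnk$, we have $\Di{D_{\siota[,k-1](\db{\F})}}=\tfrac{1}{2}\Di{\Om(J_{\db{\F}})}$, and the output constant term of $\xi$ (which is again a unipotent-radical AF in the same orbit) has strictly larger domain dimension while still realizing that same orbit as an output — this is exactly the situation excluded by Part A of Main corollary \ref{maincor} unless $\tfrac{\Di a}{2}>\Di{D}$, forcing that $\e$-quasipath to be trivial. In other words, along constant terms from an AF already of maximal domain dimension in its orbit, no genuine $\e$-step can appear; the quasipath $\xi$ collapses to a single vertex (or to $\co$-steps), so $\Xi^\prime(\db{\F})$ — and hence all of $\Xi$ — is a $(\eu,\co)$-path.

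Concretely I would organize the write-up as: (1) reduce to $\F\in\tPri[,n]$ by a $\co$-step; (2) strengthen the inductive statement to assert additionally that the $(\db{\F}\rightarrow\F)$-path of Theorem \ref{th3} is a $(\eu,\co)$-path whenever $\F\in\tPri[,n]\cap^\prime\BBnk[n]$, and re-run the induction of Theorem \ref{th3} verbatim, checking at each appearance of an $\e$-quasipath along constant terms that the input AF has domain dimension $\tfrac12\Di{\Om(J_\cdot)}$ so that by Main corollary \ref{maincor} (Part A) the quasipath is trivial, and that every recursively produced AF stays in $\cap^\prime\BBnk$ (this is already asserted in Theorem \ref{th3}); (3) conclude. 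The main obstacle I anticipate is bookkeeping: making sure that in \emph{every} branch of the construction in Theorem \ref{th3} the intermediate input AFs genuinely lie in $\Pri[,n]\cap^\prime\BBnk[n]$ (so that the dimension-tightness argument applies), in particular for the AFs $\F^\prime$ of (\ref{zfprif}) and $\siota[,k-1](\db{\F})$; this should follow from the $\cap^\prime\BBnk$ assertions already present in Theorem \ref{th3} together with the fact that $\circ$ with a fixed AF and restriction to a standard Levi block preserve membership in $\cap^\prime\BBnk$ (via Fact \ref{ptelos} and the Richardson-orbit facts), but it needs to be stated carefully rather than waved through.
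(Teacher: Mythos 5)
Your first step --- obtaining a $(\db{\F}\rightarrow\F,GL_n,\kkk)$-path from Theorem \ref{th3} together with Lemma \ref{conjugatehat}, and recording that $\db{\F}\in\tPridb[,n][\Om(J_\F)]\cap^\prime\BBnk[n]$ --- matches the paper. The divergence is in showing that the path contains no $\e$-step, and here you have missed the simple observation the paper uses. Along \emph{any} $\AAnk$-path the domain dimension is non-decreasing (strictly increasing at each $\e$-step, constant at each $\eu$-step and each $\co$-step), so it suffices to check that the two endpoints have equal domain dimension. But $\Di{D_\F}=\tfrac12\Di{\Om(J_\F)}$ and $\Di{D_{\db\F}}=\tfrac12\Di{\Om(J_{\db\F})}=\tfrac12\Di{\Om(J_\F)}$ directly from the defining property of $\cap^\prime\BBnk[n]$ and (ii) of Fact \ref{rich3}; hence $\Di{D_{\db\F}}=\Di{D_\F}$ and the path is a $(\eu,\co)$-path. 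This uses neither Main corollary \ref{maincor} nor any re-tracing of Theorem \ref{th3}.

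Your local argument, besides being far heavier, has a genuine gap precisely where you flagged the ``dangerous piece''. You want to apply Part A of Main corollary \ref{maincor} to the output AF of the $\e$-quasipath along constant terms inside $\Xi^\prime(\db{\F})$; but a quasipath along constant terms is typically \emph{not} a $\BBnk[n]$-path (the constant term is generally not in the open orbit), so its output AF need not be in $\BBnk[n]$, and Main corollary \ref{maincor} does not apply to it. Applying the corollary instead to $\siota[,k-1](\db{\F})$, which \emph{is} in $\BBnk[n]$, only reproduces the inequality $\tfrac12\Di{\Om(J_{\db\F})}\geq\Di{D_{\siota[,k-1](\db{\F})}}$ --- already an equality --- and gives no contradiction. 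Also, the output of that quasipath is of the form $\db{(w_{k-1}(\F^{T^1}))}\circ\YY\circ\JJ_{k-1}$, which is not a unipotent-radical AF, another point where your write-up misstates the situation. The correct replacement for the Main-corollary step is exactly the endpoint dimension equality plus monotonicity --- but once you use that, the whole re-tracing becomes superfluous, and you have the paper's two-line proof.
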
\begin{proof}
By the theorem above and Lemma \ref{conjugatehat} we obtain an $(\db{\F}\rightarrow\F,GL_n,\kkk)$-path where $\db{\F}$ is an AF in $\in\tPridb[,n][\Om(J_\F)]\cap^\prime\BBnk[n].$ From this and from (ii) (in addition to (i)) in Fact \ref{rich3} we obtain
$$\Di{D_{\db{\F}}}\leq \Di{D_\F}=\frac{\Di{\Om(J_\F)}}{2}=\frac{\Di{\Om(J_{\db{\F}})}}{2}=\Di{D_{\db{\F}}}.   $$Therefore $\Di{D_{\db{\F}}}= \Di{D_\F},$ which implies that this path has no  $\e$-steps.\end{proof}
\begin{cor}\label{laststrong2}$\Pri[,n]\cap^\prime\BBnk[n]=\Pri[,n]\cap\BBnk[n]$.
\end{cor}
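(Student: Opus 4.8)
The goal is the equality of sets $\Pri[,n]\cap^\prime\BBnk[n]=\Pri[,n]\cap\BBnk[n]$. One inclusion is the easy direction and I would dispose of it first, then use the structural results already in hand (chiefly Main corollary \ref{maincor}, Theorem \ref{th3}, and the facts about the Richardson orbit) for the reverse inclusion.

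\emph{The inclusion $\Pri[,n]\cap^\prime\BBnk[n]\subseteq\Pri[,n]\cap\BBnk[n]$.} Let $\F\in\Pri[,n]\cap^\prime\BBnk[n]$. By Definition \ref{capprime}, $J_\F$ lies in the Richardson orbit $a$ openly intersecting $\Lie{D_\F}$; write $a=\Om(J_\F)$. By (ii) in Fact \ref{rich3} we have $\Di{D_\F}=\tfrac{\Di{a}}{2}$. Now apply Theorem \ref{th3}, which produces an AF $\db{\F}\in\tPridb[,n]\cap^\prime\BBnk[n]$ with $J_\F$ and $J_{\db{\F}}$ conjugate (so $\Om(J_{\db{\F}})=a$ and $\db{\F}\in\tPridb[,n][a]$), together with a $(\db{\F}\rightarrow\F)$-path. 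Since $\db{\F}$ is nontrivial on every row, $\db{\F}\in\BR{n}$, and by the remark following Definition \ref{brow} we get $\db{\F}\in\BBnk[n]$. The path $\db{\F}\rightarrow\F$ has input AF in $\BBnk[n]$ and, by the dimension count just as in the proof of Corollary \ref{laststrong} (using (i) and (ii) of Fact \ref{rich3}), consists only of $\eu$- and $\co$-steps; hence it is a $\BBnk[n]$-path (the conditions in Definition \ref{defBBB} are vacuous for $\eu$- and $\co$-steps), and its output AF $\F$ therefore belongs to $\BBnk[n]$. Thus $\F\in\Pri[,n]\cap\BBnk[n]$.

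\emph{The inclusion $\Pri[,n]\cap\BBnk[n]\subseteq\Pri[,n]\cap^\prime\BBnk[n]$.} Let $\F\in\Pri[,n]\cap\BBnk[n]$ and set $a:=\Om(J_\F)$, which by Definition \ref{uniqJ} is the unique orbit with $\F\in\Prink[n][a]$, so in particular the trivial $\F$-tree is an $(\F\rightarrow\Prink[n],GL_n)$-tree with an output AF in $\Prink[n][a]$. Since $\F\in\BBnk[n]$, Main corollary \ref{maincor}(A) gives $\tfrac{\Di{a}}{2}\geq\Di{D_\F}$. On the other hand, Part A of Proposition \ref{dimunip} (or directly Fact \ref{rich3}(iii)) shows the Richardson orbit $a_R$ openly intersecting $\Lie{D_\F}$ satisfies $a_R\geq a$ in the closure order, and Fact \ref{rich3}(ii) gives $\Di{D_\F}=\tfrac{\Di{a_R}}{2}$. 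Combining, $\Di{a_R}\leq\Di{a}$, which forces $a_R=a$ (an orbit that is $\geq a$ and of no larger dimension must equal $a$). Hence $J_\F$ lies in the Richardson orbit openly intersecting $\Lie{D_\F}$, i.e. $\F\in\Pri[,n]\cap^\prime\BBnk[n]$. This is essentially the argument already written out in Lemma \ref{weakprime} for the $\tPridbnk[n]$ case, so I would just invoke it, first reducing the general $\F\in\Prink[n]$ to the $..._\nless$ (hence $\tPrink[n]$-type) case via Lemma \ref{conjugatehat} together with Lemma \ref{properties} (conjugation does not change membership in $\BBnk[n]$ and permutes the orbits compatibly), exactly as in the reduction used in Proposition \ref{dimunip}.

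\emph{Main obstacle.} The only real content is the reverse inclusion, and there the delicate point is to legitimately reduce from an arbitrary $\F\in\Prink[n]$ to one in $\tPridbnk[n]$ (or at least in $\AnkT{n}_\nless$) so that Lemma \ref{weakprime} applies verbatim. This requires checking that the conjugation supplied by Lemma \ref{conjugatehat} preserves both the property of lying in $\BBnk[n]$ and the property ``$J_\F$ is in the Richardson orbit openly intersecting $\Lie{D_\F}$''. The first is immediate from Part 3 of Lemma \ref{properties} applied along the $\co$-step (or from the fact that a $\co$-step is trivially a $\BBnk[n]$-path), and the second because $\gamma D_\F\gamma^{-1}$ is again a unipotent radical of a $GL_n$-parabolic with $J_{\gamma\F}=\gamma J_\F\gamma^{-1}$, so the Richardson orbit and the orbit of $J_\F$ transform the same way. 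Once this bookkeeping is in place, everything else is a direct appeal to results already proved.
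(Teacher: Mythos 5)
Your proof is correct and takes essentially the same route as the paper: the forward inclusion via the $(\eu,\co)$-path of Corollary \ref{laststrong} (you re-derive it from Theorem \ref{th3}, where one should strictly first conjugate into $\tPri[,n]$ via Lemma \ref{conjugatehat} before applying Theorem \ref{th3} — which is exactly what Corollary \ref{laststrong} already packages) together with the $\BR{n}$ observation from Lemma \ref{weakprime}, and the reverse inclusion via Main corollary \ref{maincor}(A) and Fact \ref{rich3}(ii)--(iii). One small remark: the reduction to $\tPrink[n]$ you contemplate in your ``main obstacle'' paragraph is unnecessary for the reverse inclusion, since your direct dimension argument already applies to arbitrary $\F\in\Prink[n]\cap\BBnk[n]$ without conjugating.
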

\begin{proof}The containment $\Pri[,n]\cap^\prime\BBnk[n]\subseteq\Pri[,n]\cap\BBnk[n]$ is obtained from Lemma \ref{weakprime} and the previous corollary. The opposite containment is obtained (identically to the last sentence in the proof of Lemma \ref{weakprime}) from Main corollary \ref{maincor} and information for the Richardson orbit.
\end{proof}

\section{Examples of AFs \textbf{$\F$} satisfying $|\Omm{\F}|>1$}\label{compositions}\subsection{Introduction}Throughout the current section, unless more restrictively specified, $N$ is assumed to be any positive integer. 

The main result of this section consists of the Theorems \ref{thD1},\ref{thD2}, and \ref{thD3}. In them, for certain AFs $\F\in\BBnk[N]$, we describe the subset $\Omm{\F}$ of $\Om(\F)$ which is defined as follows:\begin{sdefi}\label{Ofin}Let $\F\in\AAnk[N].$ We define \begin{equation*}\Omm{\F}:=\{a\in\Om(\F):\mult{a}{\F}<\infty\}.\qedhere \end{equation*}
\end{sdefi}
\noindent{In} many cases in these theorems, $\Omm{\F}$ has at least two elements. 

\begin{sremark}\label{multremark}In the present paper, we use the concept $\Omm{\F}$ only for choices of $\F$ in $\BBnk[N].$  Hence by using Main corollary \ref{maincor} we obtain that $\Omm{\F}$ is the set of orbits occuring in Version 1 of Weak question \ref{zStrongc}.($\F$).

Also these choices of $\F$ are (trivially) conjugates of AFs defined over number fields (even the rational numbers), and hence (after such a conjugation) the equivalent (by Corollary \ref{easyrefcor}) Version 2 of the same question is also addressed. For $\F$ being any AF in $\AAnk[N]$ defined over a number field, it may worth to find refinements of Fact \ref{nonzeroeulerian} through which $\Omm{\F}$ relates to finite sums of factorizable functions.

As an example of $a\in\UUU{N}$ and $\F\in\AAnk[N]$ such that $a\in\Omm{\F} $ and $\mult{a}{\F}>1$, choose: $N=4$; $D_\F$ to be generated by $U_{(1,3)}$, $U_{(2,4)}$, and a $\ossa$-group with nonzero nondiagonal entries $(1,4)$ and $(2,3)$; $\F$ to be nontrivial only in the third among these three groups; and finally $a=(2,1^2)$.\end{sremark}

The section is organized as follows. Subsection \ref{zpre} is optional\footnote{Subsection \ref{zpre} is not necessary for any result of the paper. A possible benefit of reading it to any extent is: some understanding  of the proofs of Theorems \ref{thD1}, \ref{thD2}, and \ref{thD3}, before reading most of the notations used in them.}. In this subsection: (i) we explain how we use pictures to describe $\AAnk$-trees; (ii) by using pictures we give a simple example (Example \ref{example}) of an AF $\F$ satisfying $|\Omm{\F}|=2;$ and (iii) we present pictures for many of the $\AAnk$-trees appearing in the proofs of  Theorems \ref{thD1}, \ref{thD2}, and \ref{thD3}, when restricted to some special cases satisfying $N\leq 20$. The proof of Example \ref{example} is chosen so that it has no any prerequisites (other than notations). In Subsection \ref{zpre2} we state and prove Theorems \ref{thD1}, \ref{thD2}, and \ref{thD3}. Finally, in Subsection \ref{zpre3} we consider a family of Rankin-Selberg integrals in each of which an AF addressed in Theorem \ref{thD1} or \ref{thD2} appears after some unfolding. 

\begin{convention}[$\{c,c\}=\{c\}$, and recalling some already stated conventions]For $c$ being any object, by $\{c,c\}$ we simply mean the singleton $\{c\}$ (no multiplicities involved). The motivation is the following: some times for two orbits $c^1,c^2$ which depend on the same parameters and for almost all the values of these parameters are different, we can use the notation $\{c^1,c^2\}$ even for the values that are left.

As part of Convention \ref{omitn}, throughout the section we denote $\JJ_r^N$ and $U_{N,(i,j)}$ respectively by $\JJ_r$ and $U_{(i,j)}$ (where $r,i,j$ are any numbers). Similarly in the definition of $\ossa$-groups (Definition \ref{ossagroup}) $n$ is replaced by $N$.

Finally recall the assumption in Definition \ref{ovtg}.\end{convention}
Observation \ref{diagonal} and similar ones are used without mention.
\subsection{Expressing $\AAnk$-trees with pictures}\label{zpre}
 Those who will chose to read the current (optional) subsection should not be discouraged from occasional mentions (for example definitions) of the next subsection. Colors are used in the pictures, but the information is retained in black-white copies.

\begin{sdefi}[$\mathrm{rg}(\F)$]\label{rgop}Let $\F\in\AAnk[N]. $ By $\mathrm{rg}(\F)$ we denote the restriction of $\F$ on the biggest algebraic subgroup of $D_\F$ which is generated by root groups. \end{sdefi}

 \begin{sremark}. A very related notation to ``$\mathrm{rg}(\F)$" which is used in the next subsection is ``$\FF$". . \end{sremark}
\begin{sdefi}[$H$-minimal]\label{VpV}Let $H$ be an algebraic subgroup of $GL_N.$ Let $V$ be a $\ossa$-group contained in $H.$ 

Let $R$ be a set of root groups so that each of them is nontrivial on a matrix entry on which $V$ is nontrivial. For $\rossa{V}$ being the smallest root generated group containing $V,$ let $p:\rossa{V}\rightarrow \rossa{V}$ be the group homomorphism which restricts to the identity on every root group in $R$ and is trivial on every other root group contained in $\rossa{V}.$

We say that $V$ is $H$-minimal, if and only if: for every choice of $R$ as in the previous paragraph, the $\ossa$-group $p(V)$ is not contained in $H.$ 
 \end{sdefi} 
 The rules below are always assumed to refer to the pictures of the current subsection;  for the other pictures of the paper (Observation \ref{unipc}), these rules are only partially used.
 
We fix (until the first occurence of ``$\triangle$" below) any picture. Then for a positive integer $N$, in the lower left corner of the picture we see several (some times only one)  AFs in $\AAnk[N]. $ We say that these AFs are the labels of the picture. The uppermost label, say $\F$, is the one most directly described by the picture. 

 Let $N^\prime$ be the number of entries in the first\footnote{We use the first row only because some times in the other rows we omit the first entry.} row of the picture. The uppermost label is of the form $\F=\F^{\{i:1\leq i\leq N-N^\prime\}}\circ\JJ_{N-N^\prime+1}$. Note that some times $N=N^\prime$ (recall that $\F^\emptyset:=\F$ and $\JJ_{1}:=\F_{\emptyset,N}$). 
 
 Let $i\geq 0$ and $j\geq 0$ be such that $D_{\F}$ is nontrivial on the $(N-N^\prime+i,N-N^\prime+j)$ matrix entry (which corresponds to the $(i,j)$ entry in the picture).  There is a unique $\mathrm{Stab}_{D_\F}(\mathrm{rg}(\F))$-minimal $\ossa$-group which is nontrivial on this matrix entry, and we call it $V_{(i,j)}$. The $(i,j)$ entry in the picture contains a disc (resp. ring, triangle) if and only if (i) (resp. (ii), (iii)) below holds:\begin{enumerate}
 \item[(i)] $V_{(i,j)}$ is a root group;
 \item [(ii)] $V_{(i,j)}$ is  $\mathrm{Stab}_{GL_N}(\mathrm{rg}(\F))$-minimal, and is not a root group;
 \item[(iii)] $V_{(i,j)}$ is not $\mathrm{Stab}_{GL_N}(\mathrm{rg}(\F))$-minimal.
 \end{enumerate}
The color of the  disc, ring, or triangle, inside the $(i,j)$ entry in the picture is black or gray. It is black if and only if  $\F(V_{(i,j)})\neq\{0\}$. \hfill$\triangle$fixing a picture

Each two successive pictures are seperated by nothing, by a dot, or by ``$\ssquare$".

For this paragraph fix any two successive pictures separated by nothing; then there is a $(\eu,\co)$-path  with input (resp. output) AF being the uppermost label of the first (resp. second) picture, which we describe next. This $(\eu,\co)$-path consists of at most one $\co$-step, which (if it exists) occurs at the end and is given by a conjugation by an element in $W_N$ (where $N$ is chosen as previously and is the same for both pictures). To discern more quickly the choice of conjugation, we number the entries of the main diagonal. As for the $\eu$-steps of this path they are given by operations of the form $\eu(X_i^j,Y_i^j)$ for $1\leq i\leq s$ and $1\leq j\leq t$ where:\begin{enumerate}
\item $s,t$ are two positive integers;
\item For all such $i$ and $j,$ $X_i^j$  is root group, and $Y_i^j$ is a $\ossa$-group (in many cases a root group as well);
\item $[X_i^j,Y_i^j]$ is a root group depending only on $j$.
\end{enumerate}
In the first picture:\begin{enumerate}
\item we fill with cyan (resp. orange lines) the entries on which $X_i^j$ (resp $Y_i^j$) are nontrivial;
\item if $t>1,$ we label with the number $j$ the  rectangles consisting of the  entries on which: $X_i^j$ and $Y_i^j$ are nontrivial for some $i$.
\end{enumerate}

The symbol ``$\ssquare$" means that we stop giving pictures which are used in the same proof. This proof, in the first case is the proof of Example \ref{example}, and in all the other cases is the proof of a special case (one choice of $\F$) of any among the Theorems \ref{thD1},\ref{thD2}, and \ref{thD3}. The proof of Example \ref{example} is fully given in the pictures; this is not the case for any other proofs.

A dot is used in the cases that are left.

  Let $\xi$ be any of the $\e$-trees  encountered in the pictures, and let $\F$ be it's input AF. Then $\xi$ is an $\e$-quasipath  along constant terms and over $\ossa$-groups (in many cases root groups). Also, each such $\ossa $-group is $\Stab{GL_N}{\XX}$-minimal for $\XX$ being the input AF of the $\e$-step (of $\xi$) over this $\ossa$-group. We describe $\xi$ by using   blue very thick  lines\footnote{For the black-white copy user, these are easily distinguishable by being by far the thickest lines.} for the edges of the rectangles consisting of the entries these groups are nontrivial. In some cases the lines are dashed (and a lighter blue is used); we do this when $\xi$ is not involved in a use of Exchange corollary \ref{exchange}. These lines (both continuous and dashed) appear in the picture with uppermost label $\F.$ The other labels of the same picture are output AFs of $\xi$ which form a set, say $\XXX$, such that\footnote{A choice of $\Xi$ in Definition \ref{lrif} (for the current choice of $\F$) is of the form $\I_{\XX}\vee_\XX\xi$, where $\XX$ varies over the output AFs of $\xi$ and each $\I_\XX$ is a $(\XX,\eu,\co)$-path; also $\I_{\XX}$ contains $\eu$-steps only in the proof of Property 2.1.} $\F\rif\XXX$ (this is explained in Definition \ref{lrif}). In contrast to $(\eu,\co)$-paths, there are no other pictures with uppermost label  an AF of $\xi.$ 

To avoid notation gaps one can skip the current and the next paragraph, and come back to them after reading from the proof of Theorem \ref{thD1}: the introduction (that is, until  Part 1 starts), and Subpart 1.2. The $\e$-quasipaths described by the rectangles with continuous lines  numbered by a number $r,$ are used to obtain Property 2 for the same value of $r$, which in turn is needed in the $r$-th use of Exchange corollary \ref{exchange}. The $\e$-quasipath with rectangles labeled with ``$\emptyset$" are ones not contributing to $\Omm{\F}$ for the reasons explained in Property 3.5.

 Finally, close to the end of the current subsection we give two pairs of succesive pictures  in which red horizontal lines appear (in the midle height of the entries they are crossing). With them, we give an alternative presentation, for the contribution in $\Omm{\F}$ in Theorems \ref{thD1},\ref{thD2}, and \ref{thD3}, from always choosing ``A" in each use of the Corollary \ref{exchange} exchange.  Conjugations in these two pairs of pictures are deferred completely for the end. The red lines are crossing the entries that are ``removed"\footnote{More precisely, let $T_n[a_1]$ be as in Theorem \ref{thD3} and in Definition \ref{VpV}, for $V$ be any $\ossa$-group contained in $D_\F$ intersecting nontrivially an entry in the $x$-th row for an $x\in\Set{T_n[a_1]}$. The replacement of $V$  by an Exchange corollary \ref{exchange} is a group $p(V)$ where $p$ is chosen as in Definition \ref{VpV} so that $p(V)$ is a root group corresponding to an entry which is not crossed  by any red line.}.  

The example below is the special case of Theorem \ref{thD1} for $[n\tlarrow 3,k\tlarrow 2,l\tlarrow 2]$. In the proof of this example we: 
\begin{itemize}
\item[(i)]  completelly follow $\sXi$; 
\item[(ii)] do not use any prerequisites (namely no use of Main corollary \ref{maincor}).
\end{itemize}Each of (i) and (ii) make the proof of this example different from being the restriction of the proof of  Theorem \ref{thD1} for $[n\tlarrow 3,k\tlarrow 2,l\tlarrow 2]$.
Also note that the successive pictures in this example explain fewer $\AAnk$-steps at a time.
 \begin{sexample}\label{example} Let $\F$ be an AF in $\AAnk[6]$ for which there are elements in $\KKK-\{0\}$ we call $a_1,a_2$, $a_3,b_1,b_2,b_3,c_1,c_2,$ such that
 \begin{equation}\label{z323}\F\left(\begin{pmatrix}1&x&z&x_{1,1}&x_{1,2}&x_{1,3}\\&1&y&x_{2,1}&x_{2,2}&x_{2,3}\\&&1&x_{3,1}&x_{3,2}&x_{3,3}\\&&&1&b_1x&b_3z\\&&&&1&b_2y\\&&&&&1
  \end{pmatrix}
  \right)=c_1 x+c_2 y+a_1 x_{1,1}+a_2 x_{2,2}+a_3x_{3,3},\end{equation}where $D_\F$ is given by the matrices in which $\F$ is evaluated in (\ref{z323}) for $x,y,z,x_{1,1},x_{1,2},...,x_{3,3}$ taking any value in $\KKK$. Based on the picture rules we see that $\F$ is described by the first picture inside the proof.  There is an $(\F\rightarrow\Prink[6] ,P_6)$-tree, for the output vertices of which, one is labeled with an AF in $\Prink[6][4,1^2] $, an other one is labeled with an AF in $\Prink[6][3,3], $  and all the other ones are labeled with AFs in $\bigcup_{b>\max\{[4,1^2],[3,3]\} } \Prink[6][b]$ (equivalently $\Om(\F)=\Omm{\F}=\{a\in\UUU{6}:\mult{a}{\F} =1\}=\{(4,1^2), (3,3)\}$).
 \end{sexample}
 \begin{proof}[\textbf{Proof} (of the sentence above)] Starting with $\F,$ we successively apply $$\eu(U_{(1,3)},U_{(3,4)})\qquad\text{ and }\qquad \eu(U_{(1,2)},U_{(2,4)}) $$ and let $\F^\prime$ be the output AF. Next we conjugate with the element $w\in W_{6}$ determined by the picture with label $w\F^\prime$. Next we successively apply $$\eu(wU_{(4,3)}w^{-1},wU_{(3,5)}w^{-1})\qquad\text{ and }\qquad\eu(wU_{(4,2)}w^{-1},wU_{(2,5)}w^{-1}),$$ and then  conjugate with the (minimal length again) element in $W_6$ giving the AF $\F^{\prime\prime} $ defined by  the picture labeled with $\F^{\prime\prime}$. Let $w^\prime$ be the product of the two elements with which we have conjugated so far. In $\F^{\prime\prime},$ we apply the $\e$-operation over $w^{\prime}U_{(5,6)}{w^\prime}^{-1}$. We obtain two terms $\Z_A$ and $\Z_B$ as in the pictures below; note that all the other terms share the same picture, and let $\XX$ be one of them.   By applying to $\Z_B$ the $\e$-operation over $w^\prime U_{(5,3)}{w^{\prime}}^{-1}$ we obtain AFs in $\Prink[6],$ the constant term among them belonging to $\Prink[6][4,1^2]$ and the other terms belonging in $ \Prink[6][5,1].$ By applying $$\exchange{w^\prime U_{(5,3)}{w^\prime}^{-1}}{w^\prime U_{(3,6)}{w^\prime}^{-1}}$$ to $\Z_B$ and to $\XX$, we respectively obtain AFs in $\Prink[6][3,3]$ and in $\Prink[6][4,2].$

\noindent\begin{ti}
 \rr{2}{4}{2} 
 \rl{1}{2}{2}
 \grid{6}
  \foreach \x in {1,2,...,6}
 {\dia{\x}{\x}}
 \foreach \x/\y in {2/d,3/c,4/b,5/a,6/a}
 {\ci{\x}{1}{\y}}
 \foreach \x/\y in {3/d,4/a,5/b,6/a}
 {\ci{\x}{2}{\y}}
 \foreach \x/\y in {4/a,5/a,6/b}
 {\ci{\x}{3}{\y}}
 \foreach \x/\y in {5/d,6/c}
 {\ci{\x}{4}{\y}}
 \foreach \x/\y in {6/d}
 {\ci{\x}{5}{\y}}
 \fir{\F}
 \end{ti}\begin{ti}
 \grid{6}
 \foreach \x in {1,2,...,6}
 {\dia{\x}{\x}}
 \foreach \x/\y in {2/a,3/a,4/b,5/a,6/a}
 {\ci{\x}{1}{\y}}
 \foreach \x/\y in {3/d,5/b,6/a}
 {\ci{\x}{2}{\y}}
 \foreach \x/\y in {5/a,6/b}
 {\ci{\x}{3}{\y}}
 \foreach \x/\y in {5/b,6/a}
 {\ci{\x}{4}{\y}}
 \foreach \x/\y in {6/d}
 {\ci{\x}{5}{\y}}
 \fir{\F^\prime}
 \end{ti}\begin{ti}
 \rl{2}{3}{2}
 \rr{3}{5}{2}
 \grid{6}
 \foreach \x/\y in {1/1,2/4,3/2,4/3,5/5,6/6}
 {\dia{\x}{\y}}
 \foreach \x/\y in {2/b,3/a,4/a,5/a,6/a} 
 {\ci{\x}{1}{\y}}
 \foreach \x/\y in {5/b,6/a} 
 {\ci{\x}{2}{\y}}
 \foreach \x/\y in {4/d,5/b,6/a} 
 {\ci{\x}{3}{\y}}
 \foreach \x/\y in {5/a,6/b} 
 {\ci{\x}{4}{\y}}
 \foreach \x/\y in {6/d} 
 {\ci{\x}{5}{\y}}
 \fir{w\F^{\prime}}
 \end{ti}\begin{ti}

 \grid{6}
 \rbr{3}{6}{1}
 \foreach \x/\y in {1/1,2/4,3/5,4/2,5/3,6/6}
 {\dia{\x}{\y}}
 \foreach \x/\y in {2/b,3/a,4/a,5/a,6/a} 
 {\ci{\x}{1}{\y}}
 \foreach \x/\y in {3/b,4/a,5/a,6/a} 
 {\ci{\x}{2}{\y}}
 \foreach \x/\y in {6/f} 
 {\ci{\x}{3}{\y}}
 \foreach \x/\y in {5/f,6/a} 
 {\ci{\x}{4}{\y}}
 \foreach \x/\y in {6/b} 
 {\ci{\x}{5}{\y}}
 \fir{\F^{\prime\prime}}
 \end{ti}.  
 \begin{ti}
 \grid{6}
 \rbr{3}{5}{1}
 \foreach \x/\y in {1/1,2/4,3/5,4/2,5/3,6/6}
 {\dia{\x}{\y}}
 \foreach \x/\y in {2/b,3/a,4/a,5/a,6/a} 
 {\ci{\x}{1}{\y}}
 \foreach \x/\y in {3/b,4/a,5/a,6/a} 
 {\ci{\x}{2}{\y}}
 \foreach \x/\y in {6/a} 
 {\ci{\x}{3}{\y}}
 \foreach \x/\y in {5/b,6/a} 
 {\ci{\x}{4}{\y}}
 \foreach \x/\y in {6/b} 
 {\ci{\x}{5}{\y}}
 \fir{\Z_B}
 \end{ti}. 
 \begin{ti}
  \grid{6}
  \rr{5}{6}{1}\rl{3}{5}{1}
  \foreach \x/\y in {1/1,2/4,3/5,4/2,5/3,6/6}
  {\dia{\x}{\y}}
  \f{1}{2/b,3/a,4/a,5/a,6/a} 
   \f{4}{3/b,4/a,5/a,6/a} 
  \f{5}{6/b} 
   \f{2}{5/a,6/a} 
   \f{3}{6/b} 
  \fir{\Z_A}
  \end{ti} \begin{ti}
   \grid{6}
    \foreach \x/\y in {1/1,2/4,3/5,4/2,5/3,6/6}
   {\dia{\x}{\y}}
   \f{1}{2/b,3/a,4/a,5/a,6/a} 
    \f{4}{3/b,4/a,5/a,6/a} 
   \f{5}{5/a,6/b} 
    \f{2}{5/a,6/a} 
   
   \end{ti}.
 \begin{ti}
  \grid{6}
  \rr{5}{6}{1}\rl{3}{5}{1}
  \foreach \x/\y in {1/1,2/4,3/5,4/2,5/3,6/6}
  {\dia{\x}{\y}}
  \f{1}{2/b,3/a,4/a,5/a,6/a} 
   \f{4}{3/b,4/a,5/a,6/a} 
  \f{5}{6/b} 
   \f{2}{5/b,6/a} 
   \f{3}{6/b} 
  \fir{\XX}
  \end{ti}
  
 \begin{ti}
   \grid{6}
      \foreach \x/\y in {1/1,2/4,3/5,4/2,5/3,6/6}
   {\dia{\x}{\y}}
   \f{1}{2/b,3/a,4/a,5/a,6/a} 
    \f{4}{3/b,4/a,5/a,6/a} 
   \f{5}{5/a,6/b} 
    \f{2}{5/b,6/a} 
   
   \end{ti}$\ssquare$\end{proof}
\begin{sremark}If instead we had fully followed the special case $[n\tlarrow 3,k\tlarrow 2,l\tlarrow 2]$ of the proof we give for Theorem \ref{thD1}, and had  reserved succesions of pictures seperated by nothing, for bigger $(\eu,\co)$-paths (this is what we usually do), the two first pictures would have been:

\vspace{1mm}
  \begin{ti}\grid{6}
 \rr[1]{2}{4}{2}\rr[2]{2}{5}{2}
 \rl[1]{1}{2}{2}\rl[2]{4}{2}{2}
 
 \foreach \x in {1,2,...,6}
 {\dia{\x}{\x}}
 \foreach \x/\y in {2/d,3/c,4/b,5/a,6/a}
 {\ci{\x}{1}{\y}}
 \foreach \x/\y in {3/d,4/a,5/b,6/a}
 {\ci{\x}{2}{\y}}
 \foreach \x/\y in {4/a,5/a,6/b}
 {\ci{\x}{3}{\y}}
 \foreach \x/\y in {5/d,6/c}
 {\ci{\x}{4}{\y}}
 \foreach \x/\y in {6/d}
 {\ci{\x}{5}{\y}}
 \fir{\F}
 \end{ti}\begin{ti}\grid{6}
 
  \rn[1]{3}{6}{1}
  \foreach \x/\y in {1/1,2/4,3/5,4/2,5/3,6/6}
  {\dia{\x}{\y}}
  \foreach \x/\y in {2/b,3/a,4/a,5/a,6/a} 
  {\ci{\x}{1}{\y}}
  \foreach \x/\y in {3/b,4/a,5/a,6/a} 
  {\ci{\x}{2}{\y}}
  \foreach \x/\y in {6/f} 
  {\ci{\x}{3}{\y}}
  \foreach \x/\y in {5/f,6/a} 
  {\ci{\x}{4}{\y}}
  \foreach \x/\y in {6/b} 
  {\ci{\x}{5}{\y}}
  \fir{\F^{\prime\prime\prime}}
  \end{ti}.\end{sremark}

Next we give pictures describing $\AAnk$-trees appearing in the proof of Theorems \ref{thD1},\ref{thD2} and \ref{thD3}. The names of the labels are defined later; more precisely, $\F_{n,k,l}$ and $\F_{a,k}$ are respectively defined in Definitions \ref{zfnkl} and \ref{ladefi}, and the other labels are defined inside the proofs of the theorems.

\renewcommand{\di}{15}

\noindent\begin{ti}[0.4]
\grid{15}
\rl[1]{1}{2}{3}
\rr[1]{2}{5}{3}
\rr[2]{6}{9}{3}\rl[2]{5}{6}{3}\rr[2]{2}{9}{3}\rl[2]{5}{2}{3}
\rr[3]{6}{10}{3}\rl[3]{9}{6}{3}\rr[3]{2}{10}{3}\rl[3]{9}{2}{3}
\rr[4]{6}{13}{3}\rl[4]{10}{6}{3}\rr[4]{2}{13}{3}\rl[4]{10}{2}{3}
\rr[4]{11}{13}{2}\rl[4]{10}{11}{2}
\f{1}{2/d,3/c,4/c,5/b,6/a,7/a,8/a,9/a,10/a,11/a,12/a,13/a,14/a,15/a}
\f{2}{3/d,4/c,5/a,6/b,7/a,8/a,9/a,10/a,11/a,12/a,13/a,14/a,15/a} 
\f{3}{4/d,5/a,6/a,7/b,8/a,9/a,10/a,11/a,12/a,13/a,14/a,15/a}
\f{4}{5/a,6/a,7/a,8/b,9/a,10/a,11/a,12/a,13/a,14/a,15/a}
\f{5}{6/d,7/c,8/c,9/b,10/a,11/a,12/a,13/a,14/a,15/a}
\f{6}{7/d,8/c,9/a,10/b,11/a,12/a,13/a,14/a,15/a}
\f{7}{8/d,9/a,10/a,11/b,12/a,13/a,14/a,15/a}
\f{8}{9/a,10/a,11/a,12/b,13/a,14/a,15/a}
\f{9}{10/d,11/c,12/c,13/a,14/a,15/a}
\f{10}{11/d,12/c,13/b,14/a,15/a}
\f{11}{12/d,13/a,14/b,15/a}
\f{12}{13/a,14/a,15/b}
\f{13}{14/d,15/c}
\f{14}{15/d}
\f{15}{}
\draw[line width=1 pt]\mm{0}{4}--\mm{4}{4}--\mm{4}{8}--\mm{8}{8}--\mm{8}{12}--\mm{12}{12}--\mm{12}{15};
\fir{\F_{4,4,3}}
\end{ti}\begin{ti}[0.4]\grid{15}  
\rn[1]{5}{14}{2}
\f{1}{2/b,3/a,4/a,5/a,6/a,7/a,8/a,9/a,10/a,11/a,12/a,13/a,14/a,15/a}
\f{5}{3/b,4/a,5/a,6/a,7/a,8/a,9/a,10/a,11/a,12/a,13/a,14/a,15/a}
\f{9}{4/b,5/a,6/a,7/a,8/a,9/a,10/a,11/a,12/a,13/a,14/a,15/a}
\f{10}{5/b,6/a,7/a,8/a,9/a,10/a,11/a,12/a,13/a,14/a,15/a}
\f{13}{14/f,15/e}
\f{2}{7/f,8/e,9/b,10/a,11/a,12/a,13/a,14/a,15/a}
\f{3}{8/d,9/a,10/b,11/a,12/a,13/a,14/a,15/a}
\f{4}{9/a,10/a,11/b,12/a,13/a,14/a,15/a}
\f{6}{10/f,11/e,12/a,13/a,14/a,15/a}
\f{7}{11/d,12/b,13/a,14/a,15/a}
\f{8}{12/a,13/b,14/a,15/a}
\f{11}{13/d,14/b,15/a}
\f{12}{14/a,15/b}
\f{14}{15/d}
\f{15}{}
\draw[line width= 0.7pt]\mm{5}{8}--\mm{8}{8}--\mm{8}{11}--\mm{11}{11}--\mm{11}{13}--\mm{13}{13}--\mm{13}{15};
\fir[2.5]{\siota(\F_{4,4,3})=\Z_0\circ\JJ_{5},}
\fir{\Z_{0,A}\circ\JJ_{5},\hsp\Z_{0,B}\circ\JJ_5}
\end{ti}.
\renewcommand{\di}{11}

\noindent\begin{ti}[0.4]
\draw[step=1,\gray](0,11)grid(1,10);
\draw[step=1,\gray](1,0)grid(11,11);
\rn[1]{1}{10}{2}
\f{13}{}
\f{2}{3/d,4/c,5/b,6/a,7/a,8/a,9/a,10/a,11/a}
\f{3}{4/d,5/a,6/b,7/a,8/a,9/a,10/a,11/a}
\f{4}{5/a,6/a,7/b,8/a,9/a,10/a,11/a}
\f{6}{6/d,7/c,8/a,9/a,10/a,11/a}
\f{7}{7/d,8/b,9/a,10/a,11/a}
\f{8}{8/a,9/b,10/a,11/a}
\f{11}{9/d,10/b,11/a}
\f{12}{10/a,11/b}
\f{14}{11/d}
\f{15}{}
\draw[line width=0.7 pt]\mm{1}{4}--\mm{4}{4}--\mm{4}{7}--\mm{7}{7}--\mm{7}{9}--\mm{9}{9}--\mm{9}{11};
\fir[2.5]{\jj\QQ_1\circ\JJ_5,}\fir{\Z_{0,B}\circ\JJ_5}
\end{ti}.
\begin{ti}[0.4]
\draw[step=1,\gray](0,11)grid(1,10);
\draw[step=1,\gray](1,0)grid(11,11);
\rn[1]{2}{3}{2}
\rn[1]{5}{6}{2}
\rl{1}{2}{8}
\rr{2}{10}{8} 
\f{13}{10/b,11/a}
\f{2}{5/b,6/a,7/a,8/a,9/a,10/a,11/a}
\f{3}{4/d,5/a,6/b,7/a,8/a,9/a,10/a,11/a}
\f{4}{5/a,6/a,7/b,8/a,9/a,10/a,11/a}
\f{6}{8/a,9/a,10/a,11/a}
\f{7}{7/d,8/b,9/a,10/a,11/a}
\f{8}{8/a,9/b,10/a,11/a}
\f{11}{9/d,10/b,11/a}
\f{12}{10/a,11/b}
\f{14}{11/d}
\f{15}{}
\draw[line width=1 pt]\mm{1}{4}--\mm{4}{4}--\mm{4}{7}--\mm{7}{7}--\mm{7}{9}--\mm{9}{9}--\mm{9}{11};
\fir[2.5]{\Z_1^{-}\circ\JJ_5,}
\fir{\Z_{0,A}\circ\JJ_5}
\end{ti}
\begin{ti}[0.4]
\draw[step=1,\gray](0,11)grid(1,10);
\draw[step=1,\gray](1,0)grid(11,11);
\rn[2]{2}{11}{1}
\f{13}{2/b,3/a,4/a,5/a,6/a,7/a,8/a,9/a,10/a,11/a}
\f{14}{11/f}
\f{2}{6/b,7/a,8/a,9/a,10/a,11/a}
\f{3}{5/f,6/a,7/b,8/a,9/a,10/a,11/a}
\f{4}{6/a,7/a,8/b,9/a,10/a,11/a}
\f{6}{9/a,10/a,11/a}
\f{7}{8/f,9/b,10/a,11/a}
\f{8}{9/a,10/b,11/a}
\f{11}{10/f,11/a}
\f{12}{11/b}
\f{15}{}
\draw[line width=1 pt] (11,1)--(10,1)--(10,3)--(8,3)--(8,6)--(5,6)--(5,9);
\fir[2.5]{\siota(\Z_1^{-}\circ\JJ_5)=\Z_1\circ\JJ_6,} 
\fir{\Z_{1,A}\circ\JJ_6,\hsp\Z_{1,B}\circ\JJ_6}
\end{ti}.
\renewcommand{\di}{10}
\noindent\begin{ti}[0.4]
\draw[step=1,\gray](0,10)grid(1,9);
\draw[step=1,\gray](1,0)grid(10,10);
\rn[2]{1}{10}{1}
\rl{2}{3}{2}
\rr{3}{5}{2}
\f{14}{}
\f{2}{5/b,6/a,7/a,8/a,9/a,10/a}
\f{3}{4/d,5/a,6/b,7/a,8/a,9/a,10/a}
\f{4}{5/a,6/a,7/b,8/a,9/a,10/a}
\f{6}{8/a,9/a,10/a}
\f{7}{7/d,8/b,9/a,10/a}
\f{8}{8/a,9/b,10/a}
\f{11}{9/d,10/a}
\f{12}{10/b}
\f{15}{}
\draw[line width=1 pt] (10,1)--(9,1)--(9,3)--(7,3)--(7,6)--(4,6)--(4,9);
\fir[2.5]{\jj\QQ_2\circ\JJ_6,} 
\fir{\Z_{1,B}\circ\JJ_6}
\end{ti}
\begin{ti}[0.4]
\draw[step=1,\gray](0,10)grid(1,9);
\draw[step=1,\gray](1,0)grid(10,10);
\rbr{3}{4}{4}
\f{14}{}
\f{2}{3/b,4/a,5/a,6/a,7/a,8/a,9/a,10/a}
\f{6}{8/a,9/a,10/a}
\f{3}{5/d,6/b,7/a,8/a,9/a,10/a}
\f{4}{6/a,7/b,8/a,9/a,10/a}
\f{7}{7/d,8/b,9/a,10/a}
\f{8}{8/a,9/b,10/a}
\f{11}{9/d,10/a}
\f{12}{10/b}
\f{15}{}
\draw[line width=1 pt] (10,1)--(9,1)--(9,3)--(7,3)--(7,5)--(5,5)--(5,7);
\fir[2.5]{j_2(\siota(\QQ_2))\rcirc\JJ_6,} 
\fir{\text{{\tiny $($right hand side of (\ref{rhs})$)\rcirc \JJ_6$} }}
\end{ti}.\begin{ti}[0.4]
\draw[step=1,\gray](0,10)grid(1,9);
\draw[step=1,\gray](1,0)grid(10,10);
\rn[2]{3}{4}{1}
\rn[3]{6}{7}{1} 
\rn[3]{8}{9}{1} 
\rr{2}{10}{8}
\rl{1}{2}{8}

\f{14}{10/b}
\f{2}{5/b,6/a,7/a,8/a,9/a,10/a}
\f{3}{5/a,6/b,7/a,8/a,9/a,10/a}
\f{4}{5/a,6/a,7/b,8/a,9/a,10/a}
\f{6}{8/a,9/a,10/a}
\f{7}{8/b,9/a,10/a}
\f{8}{8/a,9/b,10/a}
\f{11}{10/a}
\f{12}{10/b}
\f{15}{}
\draw[line width=1 pt] (10,1)--(9,1)--(9,3)--(7,3)--(7,6)--(4,6)--(4,9);
\fir[2.5]{\Z_2^{-}\circ\JJ_6,}
\fir{\Z_{1,A}\circ\JJ_6}
\end{ti}

\noindent\begin{ti}[0.4]
\draw[step=1,\gray](0,10)grid(1,9);
\draw[step=1,\gray](1,0)grid(10,10);

\f{14}{2/b,3/a,4/a,5/a,6/a,7/a,8/a,9/a,10/a}
\f{15}{}
\f{2}{6/b,7/a,8/a,9/a,10/a}
\f{3}{6/a,7/b,8/a,9/a,10/a}
\f{4}{6/a,7/a,8/b,9/a,10/a}
\f{6}{9/a,10/a}
\f{7}{9/b,10/a}
\f{8}{9/a,10/b}
\f{11}{}
\f{12}{}
\draw(10,2)--(8,2)--(8,5)--(5,5)--(5,8);
\fir[0.6]{\siota(\Z_2^{-}\circ\JJ_6)=\Z_2\circ\JJ_7}
\end{ti}$\ssquare$\renewcommand{\di}{20}

\noindent\begin{ti}[0.35]\grid{20}
\rl[1]{1}{2}{6}
\rr[1]{2}{8}{6}
\rr[2]{2}{9}{6}\rl[2]{8}{2}{6}
\rr[3]{2}{15}{6}\rl[3]{9}{2}{6}\rr[3]{10}{15}{5}\rl[3]{9}{10}{5}
\f{1}{2/d,3/c,4/c,5/c,6/c,7/c,8/b,9/a,10/a,11/a,12/a,13/a,14/a,15/a,16/a,17/a,18/a,19/a,20/a}
\f{2}{3/d,4/c,5/c,6/c,7/c,8/a,9/b,10/a,11/a,12/a,13/a,14/a,15/a,16/a,17/a,18/a,19/a,20/a}
\f{3}{4/d,5/c,6/c,7/c,8/a,9/a,10/b,11/a,12/a,13/a,14/a,15/a,16/a,17/a,18/a,19/a,20/a}
\f{4}{5/d,6/c,7/c,8/a,9/a,10/a,11/b,12/a,13/a,14/a,15/a,16/a,17/a,18/a,19/a,20/a}
\f{5}{6/d,7/c,8/a,9/a,10/a,11/a,12/b,13/a,14/a,15/a,16/a,17/a,18/a,19/a,20/a}
\f{6}{7/d,8/a,9/a,10/a,11/a,12/a,13/b,14/a,15/a,16/a,17/a,18/a,19/a,20/a}
\f{7}{8/a,9/a,10/a,11/a,12/a,13/a,14/b,15/a,16/a,17/a,18/a,19/a,20/a}
\f{8}{9/d,10/c,11/c,12/c,13/c,14/c,15/a,16/a,17/a,18/a,19/a,20/a}
\f{9}{10/d,11/c,12/c,13/c,14/c,15/b,16/a,17/a,18/a,19/a,20/a}
\f{10}{11/d,12/c,13/c,14/c,15/a,16/b,17/a,18/a,19/a,20/a}
\f{11}{12/d,13/c,14/c,15/a,16/a,17/b,18/a,19/a,20/a}
\f{12}{13/d,14/c,15/a,16/a,17/a,18/b,19/a,20/a}
\f{13}{14/d,15/a,16/a,17/a,18/a,19/b,20/a}
\f{14}{15/a,16/a,17/a,18/a,19/a,20/b}
\f{15}{16/d,17/c,18/c,19/c,20/c}
\f{16}{17/d,18/c,19/c,20/c}
\f{17}{18/d,19/c,20/c}
\f{18}{19/d,20/c}
\f{19}{20/d}
\f{20}{}
\draw[line width=1 pt]\mm{0}{7}--\mm{7}{7}--\mm{7}{14}--\mm{14}{14}--\mm{14}{20};
\fir{\F_{7,3,2}}
\end{ti}\begin{ti}[0.35]\grid{20}
\rn[1]{4}{16}{5}
\f{1}{2/b,3/a,4/a,5/a,6/a,7/a,8/a,9/a,10/a,11/a,12/a,13/a,14/a,15/a,16/a,17/a,18/a,19/a,20/a}
\f{8}{3/b,4/a,5/a,6/a,7/a,8/a,9/a,10/a,11/a,12/a,13/a,14/a,15/a,16/a,17/a,18/a,19/a,20/a}
\f{9}{4/b,5/a,6/a,7/a,8/a,9/a,10/a,11/a,12/a,13/a,14/a,15/a,16/a,17/a,18/a,19/a,20/a}
\f{15}{16/f,17/e,18/e,19/e,20/e}
\f{2}{6/f,7/e,8/e,9/e,10/e,11/a,12/a,13/a,14/a,15/a,16/a,17/a,18/a,19/a,20/a}
\f{3}{7/d,8/c,9/c,10/c,11/b,12/a,13/a,14/a,15/a,16/a,17/a,18/a,19/a,20/a}
\f{4}{8/d,9/c,10/c,11/a,12/b,13/a,14/a,15/a,16/a,17/a,18/a,19/a,20/a}
\f{5}{9/d,10/c,11/a,12/a,13/b,14/a,15/a,16/a,17/a,18/a,19/a,20/a}
\f{6}{10/d,11/a,12/a,13/a,14/b,15/a,16/a,17/a,18/a,19/a,20/a}
\f{7}{11/a,12/a,13/a,14/a,15/b,16/a,17/a,18/a,19/a,20/a}
\f{10}{12/d,13/c,14/c,15/c,16/b,17/a,18/a,19/a,20/a}
\f{11}{13/d,14/c,15/c,16/a,17/b,18/a,19/a,20/a}
\f{12}{14/d,15/c,16/a,17/a,18/b,19/a,20/a}
\f{13}{15/d,16/a,17/a,18/a,19/b,20/a}
\f{14}{16/a,17/a,18/a,19/a,20/b}
\f{16}{17/d,18/c,19/c,20/c}
\f{17}{18/d,19/c,20/c}
\f{18}{19/d,20/c}
\f{19}{20/d}
\f{20}{}
\draw\mm{5}{10}--\mm{10}{10}--\mm{10}{15}--\mm{15}{15}--\mm{15}{20};
\fir[2.5]{\siota(\F_{7,3,2})=\Z_0\circ\JJ_4,}
\fir{\Z_{0,A}\circ\JJ_5,\hsp\Z_{0,B}\circ\JJ_5}
\end{ti}.\renewcommand{\di}{17}  
\begin{ti}[0.35] 
\draw[step=1,\gray](0,17)grid(1,16);
\draw[step=1,\gray](1,0)grid(17,17);
\rn[1]{1}{13}{5}
\f{15}{}
\f{2}{3/b,4/a,5/a,6/a,7/a,8/a,9/a,10/a,11/a,12/a,13/a,14/a,15/a,16/a,17/a}
\f{3}{4/d,5/c,6/c,7/c,8/b,9/a,10/a,11/a,12/a,13/a,14/a,15/a,16/a,17/a}
\f{4}{5/d,6/c,7/c,8/a,9/b,10/a,11/a,12/a,13/a,14/a,15/a,16/a,17/a}
\f{5}{6/d,7/c,8/a,9/a,10/b,11/a,12/a,13/a,14/a,15/a,16/a,17/a}
\f{6}{7/d,8/a,9/a,10/a,11/b,12/a,13/a,14/a,15/a,16/a,17/a}
\f{7}{8/a,9/a,10/a,11/a,12/b,13/a,14/a,15/a,16/a,17/a}
\f{10}{9/d,10/c,11/c,12/c,13/b,14/a,15/a,16/a,17/a}
\f{11}{10/d,11/c,12/c,13/a,14/b,15/a,16/a,17/a}
\f{12}{11/d,12/c,13/a,14/a,15/b,16/a,17/a}
\f{13}{12/d,13/a,14/a,15/a,16/b,17/a}
\f{14}{13/a,14/a,15/a,16/a,17/b}
\f{16}{14/d,15/c,16/c,17/c}
\f{17}{15/d,16/c,17/c}
\f{18}{16/d,17/c}
\f{19}{17/d}
\f{20}{}
\draw\mm{2}{7}--\mm{7}{7}--\mm{7}{12}--\mm{12}{12}--\mm{12}{17};
\fir[2.5]{\jj\QQ_1\circ\JJ_4,}
\fir{\Z_{0,B}\circ\JJ_4}
\end{ti}.\begin{ti}[0.35] 
\draw[step=1,\gray](0,17)grid(1,16);
\draw[step=1,\gray](1,0)grid(17,17);
\rn[1]{2}{3}{5}
\rl{1}{2}{11}
\rr{2}{13}{11}
\f{15}{13/b,14/a,15/a,16/a,17/a}
\f{2}{8/a,9/a,10/a,11/a,12/a,13/a,14/a,15/a,16/a,17/a}
\f{3}{4/d,5/c,6/c,7/c,8/b,9/a,10/a,11/a,12/a,13/a,14/a,15/a,16/a,17/a}
\f{4}{5/d,6/c,7/c,8/a,9/b,10/a,11/a,12/a,13/a,14/a,15/a,16/a,17/a}
\f{5}{6/d,7/c,8/a,9/a,10/b,11/a,12/a,13/a,14/a,15/a,16/a,17/a}
\f{6}{7/d,8/a,9/a,10/a,11/b,12/a,13/a,14/a,15/a,16/a,17/a}
\f{7}{8/a,9/a,10/a,11/a,12/b,13/a,14/a,15/a,16/a,17/a}
\f{10}{9/d,10/c,11/c,12/c,13/b,14/a,15/a,16/a,17/a}
\f{11}{10/d,11/c,12/c,13/a,14/b,15/a,16/a,17/a}
\f{12}{11/d,12/c,13/a,14/a,15/b,16/a,17/a}
\f{13}{12/d,13/a,14/a,15/a,16/b,17/a}
\f{14}{13/a,14/a,15/a,16/a,17/b}
\f{16}{14/d,15/c,16/c,17/c}
\f{17}{15/d,16/c,17/c}
\f{18}{16/d,17/c}
\f{19}{17/d}
\f{20}{}
\draw\mm{2}{7}--\mm{7}{7}--\mm{7}{12}--\mm{12}{12}--\mm{12}{17};
\fir[2.5]{\Z_1^-\circ\JJ_4,}

\fir{\Z_{0,A}\circ\JJ_4}
\end{ti} 
 
\begin{ti}[0.35] 
\draw[step=1,\gray](0,17)grid(1,16);
\draw[step=1,\gray](1,0)grid(17,17);
\rn[2]{2}{14}{4} 
\f{15}{2/b,3/a,4/a,5/a,6/a,7/a,8/a,9/a,10/a,11/a,12/a,13/a,14/a,15/a,16/a,17/a}
\f{16}{14/f,15/e,16/e,17/e}
\f{2}{9/a,10/a,11/a,12/a,13/a,14/a,15/a,16/a,17/a}
\f{3}{5/f,6/e,7/e,8/e,9/b,10/a,11/a,12/a,13/a,14/a,15/a,16/a,17/a}
\f{4}{6/d,7/c,8/c,9/a,10/b,11/a,12/a,13/a,14/a,15/a,16/a,17/a}
\f{5}{7/d,8/c,9/a,10/a,11/b,12/a,13/a,14/a,15/a,16/a,17/a}
\f{6}{8/d,9/a,10/a,11/a,12/b,13/a,14/a,15/a,16/a,17/a}
\f{7}{9/a,10/a,11/a,12/a,13/b,14/a,15/a,16/a,17/a}
\f{10}{10/f,11/e,12/e,13/e,14/a,15/a,16/a,17/a}
\f{11}{11/d,12/c,13/c,14/b,15/a,16/a,17/a}
\f{12}{12/d,13/c,14/a,15/b,16/a,17/a}
\f{13}{13/d,14/a,15/a,16/b,17/a}
\f{14}{14/a,15/a,16/a,17/b}
\f{17}{15/d,16/c,17/c}
\f{18}{16/d,17/c}
\f{19}{17/d}
\f{20}{}
\fir[2.5]{\siota(\Z_1^-\circ\JJ_4)=\Z_1\circ\JJ_5,}
\fir{\Z_{1,A}\circ\JJ_5,\hsp\Z_{1,B}\circ\JJ_5}
\end{ti}.\renewcommand{\di}{16}  \begin{ti}[0.35] 
\draw[step=1,\gray](0,16)grid(1,15);
\draw[step=1,\gray](1,0)grid(16,16);
\rn[2]{1}{13}{4} 
\f{16}{}
\f{2}{8/a,9/a,10/a,11/a,12/a,13/a,14/a,15/a,16/a}
\f{3}{4/d,5/c,6/c,7/c,8/b,9/a,10/a,11/a,12/a,13/a,14/a,15/a,16/a}
\f{4}{5/d,6/c,7/c,8/a,9/b,10/a,11/a,12/a,13/a,14/a,15/a,16/a}
\f{5}{6/d,7/c,8/a,9/a,10/b,11/a,12/a,13/a,14/a,15/a,16/a}
\f{6}{7/d,8/a,9/a,10/a,11/b,12/a,13/a,14/a,15/a,16/a}
\f{7}{8/a,9/a,10/a,11/a,12/b,13/a,14/a,15/a,16/a}
\f{10}{9/d,10/c,11/c,12/c,13/a,14/a,15/a,16/a}
\f{11}{10/d,11/c,12/c,13/b,14/a,15/a,16/a}
\f{12}{11/d,12/c,13/a,14/b,15/a,16/a}
\f{13}{12/d,13/a,14/a,15/b,16/a}
\f{14}{13/a,14/a,15/a,16/b}
\f{17}{14/d,15/c,16/c}
\f{18}{15/d,16/c}
\f{19}{16/d}
\f{20}{}
\fir[2.5]{\jj\QQ_2\circ\JJ_5,}
\fir{\Z_{1,B}\circ\JJ_5}
\end{ti}.  
 
\begin{ti}[0.35] 
\draw[step=1,\gray](0,16)grid(1,15);
\draw[step=1,\gray](1,0)grid(16,16);
\rl{1}{2}{11}
\rr{2}{13}{11}
\rn[2]{3}{4}{4} 
\rn[2]{8}{9}{4}
\f{16}{13/b,14/a,15/a,16/a}
\f{2}{8/a,9/a,10/a,11/a,12/a,13/a,14/a,15/a,16/a}
\f{3}{8/b,9/a,10/a,11/a,12/a,13/a,14/a,15/a,16/a}
\f{4}{5/d,6/c,7/c,8/a,9/b,10/a,11/a,12/a,13/a,14/a,15/a,16/a}
\f{5}{6/d,7/c,8/a,9/a,10/b,11/a,12/a,13/a,14/a,15/a,16/a}
\f{6}{7/d,8/a,9/a,10/a,11/b,12/a,13/a,14/a,15/a,16/a}
\f{7}{8/a,9/a,10/a,11/a,12/b,13/a,14/a,15/a,16/a}
\f{10}{13/a,14/a,15/a,16/a}
\f{11}{10/d,11/c,12/c,13/b,14/a,15/a,16/a}
\f{12}{11/d,12/c,13/a,14/b,15/a,16/a}
\f{13}{12/d,13/a,14/a,15/b,16/a}
\f{14}{13/a,14/a,15/a,16/b}
\f{17}{14/d,15/c,16/c}
\f{18}{15/d,16/c}
\f{19}{16/d}
\f{20}{}
\fir[2.5]{\Z_2^-\circ\JJ_5,}
\fir{\Z_{1,A}\circ\JJ_5}
\end{ti}\begin{ti}[0.35] 
\draw[step=1,\gray](0,16)grid(1,15);
\draw[step=1,\gray](1,0)grid(16,16);
\rn[3]{2}{14}{3}
\f{16}{2/b,3/a,4/a,5/a,6/a,7/a,8/a,9/a,10/a,11/a,12/a,13/a,14/a,15/a,16/a}
\f{17}{14/f,15/e,16/e}
\f{2}{9/a,10/a,11/a,12/a,13/a,14/a,15/a,16/a}
\f{3}{9/b,10/a,11/a,12/a,13/a,14/a,15/a,16/a}
\f{4}{6/f,7/e,8/e,9/a,10/b,11/a,12/a,13/a,14/a,15/a,16/a}
\f{5}{7/d,8/c,9/a,10/a,11/b,12/a,13/a,14/a,15/a,16/a}
\f{6}{8/d,9/a,10/a,11/a,12/b,13/a,14/a,15/a,16/a}
\f{7}{9/a,10/a,11/a,12/a,13/b,14/a,15/a,16/a}
\f{10}{14/a,15/a,16/a}
\f{11}{11/f,12/e,13/e,14/a,15/a,16/a}
\f{12}{12/d,13/c,14/b,15/a,16/a}
\f{13}{13/d,14/a,15/b,16/a}
\f{14}{14/a,15/a,16/b}
\f{18}{15/d,16/c}
\f{19}{16/d}
\f{20}{}
\fir[2.5]{\siota(\Z_2^-\circ\JJ_5)=\Z_2\circ\JJ_6,}
\fir{\Z_{2,A}\circ\JJ_6,\hsp\Z_{2,B}\circ\JJ_6}
\end{ti}. \renewcommand{\di}{15} 

\begin{ti}[0.35] 
\draw[step=1,\gray](0,15)grid(1,14);
\draw[step=1,\gray](1,0)grid(15,15);
\rn[3]{1}{13}{3} 
\rbr[$\emptyset$]{8}{10}{3}
\rbr[$\emptyset$]{3}{5}{3}

\f{17}{}
\f{2}{8/a,9/a,10/a,11/a,12/a,13/a,14/a,15/a}
\f{3}{8/b,9/a,10/a,11/a,12/a,13/a,14/a,15/a}
\f{4}{5/d,6/c,7/c,8/a,9/b,10/a,11/a,12/a,13/a,14/a,15/a}
\f{5}{6/d,7/c,8/a,9/a,10/b,11/a,12/a,13/a,14/a,15/a}
\f{6}{7/d,8/a,9/a,10/a,11/b,12/a,13/a,14/a,15/a}
\f{7}{8/a,9/a,10/a,11/a,12/b,13/a,14/a,15/a}
\f{10}{13/a,14/a,15/a}
\f{11}{10/d,11/c,12/c,13/a,14/a,15/a}
\f{12}{11/d,12/c,13/b,14/a,15/a}
\f{13}{12/d,13/a,14/b,15/a}
\f{14}{13/a,14/a,15/b}
\f{18}{14/d,15/c}
\f{19}{15/d}
\f{20}{}
\fir{\jj\QQ_3\circ\JJ_6}
\end{ti}$\ssquare$  \renewcommand{\di}{16}

\begin{ti}[0.4]
\grid{16}
\rl[1]{1}{2}{7}
\rr[1]{2}{9}{7}
\rl[2]{9}{2}{7}\rl[2]{9}{10}{1}
\rr[2]{2}{11}{7}\rr[2]{10}{11}{1}\rr[2]{2}{3}{1}
\f{1}{3/d,4/c,5/c,6/c,7/c,8/c,9/b,10/a,11/a,12/a,13/a,14/a,15/a,16/a}
\f{2}{3/c,4/d,5/c,6/c,7/c,8/c,9/a,10/b,11/a,12/a,13/a,14/a,15/a,16/a}
\f{3}{5/d,6/c,7/c,8/c,9/a,10/a,11/b,12/a,13/a,14/a,15/a,16/a}
\f{4}{5/c,6/d,7/c,8/c,9/a,10/a,11/a,12/b,13/a,14/a,15/a,16/a}
\f{5}{7/d,8/c,9/a,10/a,11/a,12/a,13/b,14/a,15/a,16/a}
\f{6}{7/c,8/d,9/a,10/a,11/a,12/a,13/a,14/b,15/a,16/a}
\f{7}{9/a,10/a,11/a,12/a,13/a,14/a,15/b,16/a}
\f{8}{9/a,10/a,11/a,12/a,13/a,14/a,15/a,16/b}
\f{9}{11/d,12/c,13/c,14/c,15/c,16/c}
\f{10}{11/c,12/d,13/c,14/c,15/c,16/c}
\f{11}{13/d,14/c,15/c,16/c}
\f{12}{13/c,14/d,15/c,16/c}
\f{13}{15/d,16/c}
\f{14}{15/c,16/d}
\f{15}{}
\f{16}{}
\draw(16,8)--(8,8)--(8,16);
\draw(16,2)--(14,2)--(14,4)--(12,4)--(12,6)--(10,6)--(10,8);
\draw(2,16)--(2,14)--(4,14)--(4,12)--(6,12)--(6,10)--(8,10);
\fir{\F_{(4,4),2} } 
\end{ti}\begin{ti}[0.4]
\grid{16}
\rn[1]{3}{13}{4}
\f{1}{2/b,3/a,4/a,5/a,6/a,7/a,8/a,9/a,10/a,11/a,12/a,13/a,14/a,15/a,16/a}
\f{9}{3/b,4/a,5/a,6/a,7/a,8/a,9/a,10/a,11/a,12/a,13/a,14/a,15/a,16/a}
\f{11}{13/f,14/e,15/e,16/e}
\f{2}{6/d,7/c,8/c,9/c,10/c,11/b,12/a,13/a,14/a,15/a,16/a}
\f{3}{7/f,8/e,9/e,10/e,11/a,12/a,13/a,14/a,15/a,16/a}
\f{4}{7/c,8/d,9/c,10/c,11/a,12/b,13/a,14/a,15/a,16/a}
\f{5}{9/d,10/c,11/a,12/a,13/b,14/a,15/a,16/a}
\f{6}{9/c,10/d,11/a,12/a,13/a,14/b,15/a,16/a}
\f{7}{11/a,12/a,13/a,14/a,15/b,16/a}
\f{8}{11/a,12/a,13/a,14/a,15/a,16/b}
\f{10}{12/d,13/c,14/c,15/c,16/c}
\f{12}{13/c,14/d,15/c,16/c}
\f{13}{15/d,16/c}
\f{14}{15/c,16/d}
\f{15}{}
\f{16}{}
\draw(16,6)--(10,6)--(10,13);
\fir[2.5]{\siota(\F_{(4,4),2})=\Z_0(\F_{(4,4),2})\circ\JJ_{3}, }
\fir{\Z_{0,A}(\F_{(4,4),2}),\hsp\Z_{0,B}(\F_{(4,4),2})} 
\end{ti}$\ssquare$

\renewcommand{\di}{15}

\noindent\begin{ti}[0.4]
\grid{15}
\rl[1]{1}{2}{3}
\rr[1]{2}{5}{3}
\rr[2]{6}{9}{3}\rl[2]{5}{6}{3}\rr[2]{2}{9}{3}\rl[2]{5}{2}{3}
\rr[3]{6}{10}{3}\rl[3]{9}{6}{3}\rr[3]{2}{10}{3}\rl[3]{9}{2}{3}
\rr[4]{6}{13}{3}\rl[4]{10}{6}{3}\rr[4]{2}{13}{3}
\rr[4]{11}{13}{2}\rl[4]{10}{11}{2}\rl[4]{10}{2}{3}
\rr[6]{6}{14}{3}\rl[6]{13}{6}{3}\rr[6]{2}{14}{3}
\rr[6]{11}{14}{2}\rl[6]{13}{11}{2}\rl[6]{13}{2}{3}
\rr[8]{6}{15}{3}\rl[8]{14}{6}{3}\rr[8]{2}{15}{3}
\rr[8]{11}{15}{2}\rl[8]{14}{11}{2}\rl[8]{14}{2}{3}
\lex[5]{2}{3}{2}\lex[5]{6}{7}{2}
\lex[7]{3}{4}{1}\lex[7]{7}{8}{1}\lex[7]{11}{12}{1}
\f{1}{2/d,3/c,4/c,5/b,6/a,7/a,8/a,9/a,10/a,11/a,12/a,13/a,14/a,15/a}
\f{2}{3/d,4/c,5/a,6/b,7/a,8/a,9/a,10/a,11/a,12/a,13/a,14/a,15/a} 
\f{3}{4/d,5/a,6/a,7/b,8/a,9/a,10/a,11/a,12/a,13/a,14/a,15/a}
\f{4}{5/a,6/a,7/a,8/b,9/a,10/a,11/a,12/a,13/a,14/a,15/a}
\f{5}{6/d,7/c,8/c,9/b,10/a,11/a,12/a,13/a,14/a,15/a}
\f{6}{7/d,8/c,9/a,10/b,11/a,12/a,13/a,14/a,15/a}
\f{7}{8/d,9/a,10/a,11/b,12/a,13/a,14/a,15/a}
\f{8}{9/a,10/a,11/a,12/b,13/a,14/a,15/a}
\f{9}{10/d,11/c,12/c,13/a,14/a,15/a}
\f{10}{11/d,12/c,13/b,14/a,15/a}
\f{11}{12/d,13/a,14/b,15/a}
\f{12}{13/a,14/a,15/b}
\f{13}{14/d,15/c}
\f{14}{15/d}
\f{15}{}
\draw[line width=1pt]\mm{0}{4}--\mm{4}{4}--\mm{4}{8}--\mm{8}{8}--\mm{8}{12}--\mm{12}{12}--\mm{12}{15};
\fir{\F_{4,4,3}}
\end{ti}.\begin{ti}[0.4]
\grid{15}
\f{1}{2/b,3/a,4/a,5/a,6/a,7/a,8/a,9/a,10/a,11/a,12/a,13/a,14/a,15/a}  
\f{5}{3/b,4/a,5/a,6/a,7/a,8/a,9/a,10/a,11/a,12/a,13/a,14/a,15/a} 
\f{9}{4/b,5/a,6/a,7/a,8/a,9/a,10/a,11/a,12/a,13/a,14/a,15/a} 
\f{10}{5/b,6/a,7/a,8/a,9/a,10/a,11/a,12/a,13/a,14/a,15/a} 
\f{13}{6/b,7/a,8/a,9/a,10/a,11/a,12/a,13/a,14/a,15/a} 
\f{14}{7/b,8/a,9/a,10/a,11/a,12/a,13/a,14/a,15/a} 
\f{15}{} 
\f{2}{11/b,12/a,13/a,14/a,15/a} 
\f{3}{11/a,12/b,13/a,14/a,15/a}
\f{4}{11/a,12/a,13/b,14/a,15/a}
\f{6}{14/a,15/a}
\f{7}{14/b,15/a}
\f{8}{14/a,15/b}
\f{11}{} 
\f{12}{}
\draw (10,8)--(10,5)--(13,5)--(13,2)--(15,2);
\fir{\Z_{2}\circ\JJ_7}
\end{ti}$\ssquare$

\renewcommand{\di}{15}

\begin{ti}[0.4]
\grid{15}
\rl[1]{1}{2}{4}
\rr[1]{2}{6}{4}
\rl[2]{6}{7}{4}\rl[2]{6}{2}{4}
\rr[2]{7}{11}{4}\rr[2]{2}{11}{4}
\rl[3]{11}{7}{4}\rl[3]{11}{2}{4}
\rr[3]{7}{12}{4}\rr[3]{2}{12}{4}
\rl[5]{12}{7}{4}\rl[5]{12}{2}{4}\rl[5]{12}{13}{1}
\rr[5]{7}{14}{4}\rr[5]{2}{14}{4}\rr[5]{13}{14}{1}\rr[5]{3}{4}{1}\rr[5]{8}{9}{1}
\lex[4]{2}{4}{2}\lex[4]{7}{9}{2}  
\f{1}{2/d,3/c,4/c,5/c,6/b,7/a,8/a,9/a,10/a,11/a,12/a,13/a,14/a,15/a}
\f{2}{4/d,5/c,6/a,7/b,8/a,9/a,10/a,11/a,12/a,13/a,14/a,15/a}
\f{3}{4/c,5/d,6/a,7/a,8/b,9/a,10/a,11/a,12/a,13/a,14/a,15/a}
\f{4}{6/a,7/a,8/a,9/b,10/a,11/a,12/a,13/a,14/a,15/a}
\f{5}{6/a,7/a,8/a,9/a,10/b,11/a,12/a,13/a,14/a,15/a}
\f{6}{7/d,8/c,9/c,10/c,11/b,12/a,13/a,14/a,15/a}
\f{7}{9/d,10/c,11/a,12/b,13/a,14/a,15/a}
\f{8}{9/c,10/d,11/a,12/a,13/b,14/a,15/a}
\f{9}{11/a,12/a,13/a,14/b,15/a}
\f{10}{11/a,12/a,13/a,14/a,15/b}
\f{11}{12/d,13/c,14/c,15/c}
\f{12}{14/d,15/c}
\f{13}{14/c,15/d}
\f{14}{}
\f{15}{}
\draw[line width=1 pt](15,5)--(10,5)--(10,10)--(5,10)--(5,15);
\draw[line width=1 pt](15,2)--(13,2)--(13,4)--(11,4)--(11,5);
\draw[line width=1 pt](10,7)--(8,7)--(8,9)--(6,9)--(6,10);
\draw[line width=1 pt] (5,12)--(3,12)--(3,14)--(1,14)--(1,15);
\fir{\F_{(3,2),3}}
\end{ti}.\begin{ti}[0.4]
\grid{15}
\f{1}{2/b,3/a,4/a,5/a,6/a,7/a,8/a,9/a,10/a,11/a,12/a,13/a,14/a,15/a} 
\f{6}{3/b,4/a,5/a,6/a,7/a,8/a,9/a,10/a,11/a,12/a,13/a,14/a,15/a} 
\f{11}{4/b,5/a,6/a,7/a,8/a,9/a,10/a,11/a,12/a,13/a,14/a,15/a} 
\f{12}{5/b,6/a,7/a,8/a,9/a,10/a,11/a,12/a,13/a,14/a,15/a} 
\f{14}{} 
\f{2}{10/b,11/a,12/a,13/a,14/a,15/a}
\f{3}{9/d,10/a,11/b,12/a,13/a,14/a,15/a}
\f{4}{10/a,11/a,12/b,13/a,14/a,15/a}
\f{5}{10/a,11/a,12/a,13/b,14/a,15/a}
\f{7}{14/a,15/a}
\f{8}{13/d,14/b,15/a}
\f{9}{14/a,15/a}
\f{10}{14/a,15/b}
\f{13}{15/d} 
\f {15}{}
\draw[line width=1 pt](15,2)--(13,2)--(13,6)--(9,6)--(9,10);
\fir{\Z_3(\F_{(3,2),3})\circ\JJ_5}
\end{ti}$\ssquare$
\subsection{Main results: Theorems \ref{thD1}, \ref{thD2} and \ref{thD3}}\label{zpre2}Recall the use of ``$\nless$" in subscripts is explained in Definition \ref{hatnless}. 
\begin{sdefi}[$j_{\FF}$, $\tj$, $P(\FF)$, $p_C$, $p_i$, $\cdot$, $\jj^{-1}\cdot p_i$]\label{dem}
Let $\FF\in \Priubnk[N,\nless]\cap^\prime\BBnk[N]$. Recall that by Lemma \ref{weakprime} we have $\Priubnk[N,\nless]\cap^\prime\BBnk[N]=\Priubnk[N,\nless]\cap\BBnk[N]$. . 

Let $M=\prod_{1\leq i\leq k}^{\searrow}j_{M,i}(GL_{n_i})$ be the Levi  of the $GL_N$-parabolic subgroup with unipotent radical $D_{\FF}$ (where each $j_{M,i}$ is chosen to be a standard embedding). Let $C$ be a subset of $\{1,...,k\}.$ We define $p_C:M\rightarrow M$ to be the group homomorphism which: restricts to the identity on $\prod_{i\in C}j_{M,i}(GL_{n_i})$, and is trivial on $\prod_{i\in \{1,...,k\}-C}j_{M,i}(GL_{n_i})$. When $C$ is chosen to be a singleton we identify it with its element. The composition of functions is denoted by ``$\cdot$". For $1\leq i\leq k$, we define $\jm\cdot p_i:=j_{M,i}^{-1}\cdot p_i$. Notice that $p_{1}|_{\text{Stab}_{M}(\FF)}$ is injective. We define $P(\FF)={\jj}^{-1}\cdot p_{1}(\text{Stab}_{M}(\FF))$, and $$\tj:=j_{\FF}:=\text{ the inverse of }({\jj}^{-1}\cdot p_1)|_{\text{Stab}_{M}(\FF)}.$$Therefore $\jj_{\FF}$ is an isomorphism from $P(\FF)$ onto $ \text{Stab}_{M}(\FF).$ The notations $p_C$, $p_i$, $\jj^{-1}\cdot p_i$, and $\tj$ (instead of ``$\jj_{\FF}$"), occur only inside proofs (except for $\tj$ in the next subsection), and in each such proof we clarify the choice of $\FF$.  \end{sdefi}

We define next a set $\mathcal{S}_{n,N}$ of pairs of AFs, such that the AFs of the form $j_{\FF}(\K)\circ\FF$ for $(\K,\FF)\in \mathcal{S}_{n,N}$: cover the AFs in the statements of the  theorems of this section, and  allow as to state Lemma \ref{Sgen} below in the necessary generality to use it in the proofs of these theorems. 

\begin{sdefi}[$\mathcal{S}_{n,N}$]\label{pts}Let $0<n<N$. We define the set $\mathcal{S}_{n,N}$ to consist of the pairs $(\K,\FF)$ of AFs 
satisfying the following conditions.\begin{enumerate}
\item  $\FF\in\Priubnk[N,\nless]\cap^\prime\BBnk[N]. $
\item   $\K\in\AnkT{n}_{\nless}\cap\BR{n}$.
\item $D_\K\subset P(\FF).$ 
\item Let $T^\prime$ be any $\DDDD[N]$-component of $\TT{N}{\FF}$ for which $\min(\Set{T^\prime})$ is not in the set 
$$\{\min(\Set{\tj(T)})|T\text{ is a $\DDDD[n]$-component of }\TT{n}{\K}\}. $$  We have
\begin{equation}\label{zofcz}|\mathrm{Set}(T^\prime)|=\mathrm{max}\{|\mathrm{Set}(T)|:T\text{ is a }\DDDD[N]\text{-component of } \TT{N}{\FF}\}. \end{equation} Of course (by the first condition) formula (\ref{zofcz}) is the same as saying that $|\Set{T^\prime}|$ is equal to the number of blocks of the Levi of the $GL_N$-parabolic subgroup with unipotent radical $D_{\FF}.$
\end{enumerate}
\end{sdefi}
In the proofs of the next three theorems we will need to know that various among the AFs we encounter, belong to $\BBnk[x] $ for a positive integer $x$ (see Property 1 within the proof of each Theorem). These containments will quickly follow from the next lemma.   

\begin{slem}\label{Sgen}Let $0<n<N$, and $(\K,\FF)\in S_{n,N}$. Let $\Xi_1$ be an $(\F_{\emptyset,N}\rightarrow \FF,\BBnk[N])$-path (known to exist by applying Lemma \ref{weakprime} to a conjugate transpose of $\FF$). Finally let $\Xi_2$ be the $(\F_{\emptyset,n}\rightarrow\K,\BBnk[n])$-path appearing in Definition \ref{brow} in (\ref{zBrs}) (for the current choice of $\K$). Then 
$$j_{\FF}(\Xi_2)\vee\Xi_1 $$ is an $(\F_{\emptyset,N}\rightarrow j_{\FF}(\K)\circ\FF,\BBnk[N])$-path. In particular $j_{\FF}(\K)\circ\FF\in\BBnk[N]$.
\end{slem}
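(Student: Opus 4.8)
The plan is to verify that the concatenation $j_{\FF}(\Xi_2)\vee\Xi_1$ is a legitimate $\AAnk$-path of the required form, and then that every $\e$-step appearing inside it satisfies the freeness/transitivity conditions of Definition \ref{defBBB}. First I would check that the composition makes sense: the output AF of $\Xi_1$ is $\FF$, and by Definition \ref{ctree} the tree $j_{\FF}(\Xi_2)$ has input AF $j_{\FF}(\F_{\emptyset,n})$; one then needs the technical point that the $\vee$ construction here should really be applied after the ``$\circ\FF$'' modification, i.e. the object we want is $\bigl(j_{\FF}(\Xi_2)\circ\FF\bigr)\vee\Xi_1$, whose input is $j_{\FF}(\F_{\emptyset,n})\circ\FF=\FF$ (since $j_{\FF}(\F_{\emptyset,n})$ is trivial) and whose output is $j_{\FF}(\K)\circ\FF$. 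Here I would invoke condition 3 of Definition \ref{pts}, namely $D_\K\subset P(\FF)$, together with the fact that $j_{\FF}$ is an isomorphism of $P(\FF)$ onto $\Stab{M}{\FF}$, to conclude that $j_{\FF}(\XX)$ has domain inside $\Stab{GL_N}{\FF}$ for every AF $\XX$ of $\Xi_2$, and that $\Stab{M}{\FF}\cap D_{\FF}$ is trivial, so that $j_{\FF}(\XX)\circ\FF$ is well-defined for all such $\XX$ — exactly the hypotheses needed for Definition \ref{ctree} to apply to $\Xi_2\circ\FF$. (If the paper genuinely intends the plain $j_{\FF}(\Xi_2)\vee\Xi_1$ with the ``$\circ\FF$'' suppressed in the notation, the same reasoning applies verbatim.)

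Next I would address the core claim: that $\bigl(j_{\FF}(\Xi_2)\circ\FF\bigr)\vee\Xi_1$ is a $\BBnk[N]$-path, i.e. that each $\e$-step $\xi$ in $\quasi{\text{(this path)}}$ admits an algebraic subgroup of $GL_N$ acting freely and transitively by conjugation on an open subset of the variety of terms of $\xi$, with the distinguished output vertex landing in that open subset. The $\e$-steps of $\Xi_1$ are already of this kind by hypothesis ($\Xi_1$ is a $\BBnk[N]$-path), so the work is with the $\e$-steps coming from $j_{\FF}(\Xi_2)\circ\FF$. I would argue that applying an isomorphism $j_{\FF}$ and then composing with a fixed AF $\FF$ turns an $\e$-step of $\Xi_2$ into an $\e$-step of the new path whose term-variety is an affine-space torsor over the old one (the extra coordinates coming from $\FF$ are constant along the step), so the acting group for the old $\e$-step — conjugated by $j_{\FF}$ and extended by whatever subgroup of $\Stab{GL_N}{\FF}$ acts on the new directions — still acts freely and transitively on an open subset; this is essentially the content of Parts 1 and 3 of Lemma \ref{properties} combined with Lemma \ref{dominant}, and I would phrase it so as to reuse those. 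Since $\Xi_2$ is precisely the path (\ref{zBrs}) of Definition \ref{brow} for $\K\in\BR{n}$, each of its $\e$-steps already has the required free transitive action (from a subgroup of $T\{f(x)\}\prod_{i_x<i\le n}U_{(i,f(x))}$ as recorded there), and pushing this forward through $j_{\FF}$ and $\circ\FF$ preserves it.

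The main obstacle, I expect, is the compatibility of the distinguished (non-identified) output vertices of the two glued paths and the bookkeeping of Convention \ref{convention}: one must ensure that when $\Xi_1$'s output vertex is identified with the input vertex of $j_{\FF}(\Xi_2)\circ\FF$, the labels match ($\FF$ on both sides) and no spurious vertex identifications are introduced, so that the resulting object is genuinely an $\AAnk$-tree in the sense of Definition \ref{trees} and a path in the sense of Definition \ref{quasidef}. A secondary subtlety is that condition 4 of Definition \ref{pts} (the equality (\ref{zofcz}) forcing the ``new'' $\DDDD[N]$-components of $\TT{N}{\FF}$ relative to $\K$ to have maximal size) is what guarantees $\FF\in\Priubnk[N,\nless]\cap^\prime\BBnk[N]$ is compatible with $\K$ in the way the $\vee$ needs — though here it is already folded into the hypothesis $\FF\in\Priubnk[N,\nless]\cap^\prime\BBnk[N]$ via Lemma \ref{weakprime}, so condition 4 may not be needed for this particular lemma and I would flag that. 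Once the gluing is seen to be valid and each constituent $\e$-step's free transitive action is transported, the final sentence ``$j_{\FF}(\K)\circ\FF\in\BBnk[N]$'' is immediate from Definition \ref{defBBB}, reading off the output AF of the constructed path.
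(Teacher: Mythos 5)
Your reading of the notation is right — for the $\vee$ in Definition \ref{veenot1} to apply the input vertex of the appended path must carry the label $\FF$, so the object really is $(j_{\FF}(\Xi_2)\circ\FF)\vee\Xi_1$, and your use of condition~3 of Definition \ref{pts} to see that $j_{\FF}(\XX)\circ\FF$ is defined for each AF $\XX$ of $\Xi_2$ is also fine. But you have the emphasis exactly backwards on the point that actually carries the proof. Verifying that $j_{\FF}(\Xi_2)\circ\FF$ is a well-formed $\AAnk$-path is not the issue; the issue is that each $\e$-step of $j_{\FF}(\Xi_2)\circ\FF$ must come equipped with an algebraic subgroup of $GL_N$ acting freely and transitively by conjugation on an open set of terms, with the chosen output in that open set. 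The natural candidate for the $x$-th step is $j_{\FF}(H_x)$, where $H_x\subseteq T\{f(x)\}\prod_{i_x<i\leq n}U_{(i,f(x))}$ is the acting group recorded in Definition \ref{brow}. For $j_{\FF}(H_x)$ to even be defined — and then to lie in $\Stab{GL_N}{\FF}$, so that its conjugation action commutes with ``$\circ\FF$'' — one needs $H_x\subseteq P(\FF)$. Condition~3 only gives $D_\K\subseteq P(\FF)$; the groups $H_x$ involve tori $T\{f(x)\}$ and root groups $U_{(i,f(x))}$ that in general sit outside $D_\K$, so condition~3 does not settle this. The paper's entire proof is the observation that condition~4 of Definition \ref{pts} (the requirement that the $\DDDD[N]$-components of $\TT{N}{\FF}$ not ``hit'' by $j_{\FF}$-images of components of $\TT{n}{\K}$ have maximal size) is precisely what forces $H_x\subseteq P(\FF)$.

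Your proposal does not carry out this check, and worse, you explicitly suggest that condition~4 ``may not be needed for this particular lemma'' — the opposite of the truth, since it is the only nontrivial input. Secondary issues: your appeal to Lemma \ref{properties} and Lemma \ref{dominant} is misdirected (those concern the varieties $\X{\F}$, not the free transitive actions on the term-varieties of $\e$-steps, which are handled directly from Definitions \ref{brow} and \ref{defBBB}); and the ``affine-space torsor over the old one'' picture is off — the map $\Z\mapsto j_{\FF}(\Z)\circ\FF$ is a bijection, not a torsor with extra directions from $\FF$, and once $H_x\subseteq P(\FF)$ is in hand the transfer of the action is immediate from $j_{\FF}(H_x)\subseteq\Stab{M}{\FF}\subseteq\Stab{GL_N}{\FF}$. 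The bookkeeping about Convention \ref{convention} and vertex identification is harmless but is not where the content lies.
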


\begin{proof}The fourth condition\chh[eine tetarti?] of Definition \ref{pts} implies that the groups acting on $\Xi_2$ (the ones appearing in Definition \ref{brow} in the sentence after (\ref{zBrs})) are contained in $P(\FF).$
\end{proof}
\begin{sdefi}[$\rif$]\label{lrif}Let $\F\in\AAnk[N]. $ Consider  an $(\F,GL_N)$-tree $\Xi.$ Let $\VVV$ be a set consisting of  output vertices of $\Xi$, and for each $u\in \VVV$ let $\F_{u} $ be the label of $u$. For every  AF $\Z$ appearing as the label of an output vertex of $\Xi$ which is not a vertex in $\VVV$,  we assume  there are infinitely many output vertices of $\Xi$ with label equal to $\Z.$ We then write \begin{equation*}\F\rif\{\F_{u}:u\in \VVV\}.\qedhere \end{equation*}
\end{sdefi}
\noindent{It turns out} we use the definition above only for $\VVV$ having at most two elements. 

In many cases we use Main corollary \ref{maincor} through  its following corollary:
\begin{Ecor}\label{exchange}Consider AFs $\F,\F_1,\F_2\in\BBnk[N]$ and $\F_1^\prime,\F^\prime_2\in\AAnk[N]$, which satisfy:\begin{enumerate}
\item $\Di{D_\F}=\Di{D_{\F_1}}=\Di{D_{\F_2}}$,
\item $\F\rif\{\F_1^\prime,\F_2^\prime\}$,
\item $\F_1\rif\F_1^\prime$ and $\F_2\rif\F_2^\prime$.
\end{enumerate}Then $\Omm{\F}=\Omm{\F_1}\cup\Omm{\F_2} $.
\end{Ecor}
\begin{proof}It follows directly from Main corollary \ref{maincor}.
\end{proof}
\noindent{The} other uses of Main corollary \ref{maincor} in the current subsection include the use of the following lemma: 
\begin{slem}[Known]\label{dimensioncounting} Consider positive integers $m$ and  $0<r<N$, and let $a=[a_1,a_2,...,a_m]$ be an  orbit in $\UUU{N-r}$. Then 
$$\frac{\Di{[r,a]}}{2}=\frac{\Di{a}}{2}+\Di{D_{\JJ_{r}}}+\sum_{1\leq i\leq m}\max\{a_i-r,0\}.  $$
\end{slem}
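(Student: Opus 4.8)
The plan is to reduce the statement to the standard combinatorial formula for the dimension of a nilpotent orbit and then check an elementary polynomial identity. First I would recall that for a partition $[\lambda_1,\dots,\lambda_t]$ of a nonnegative integer $M$ — with parts allowed to be $0$ and in arbitrary order, as in Subsection \ref{inil} — the associated orbit in $\UUU{M}$ satisfies
$$\Di{[\lambda_1,\dots,\lambda_t]}=M^2-\sum_{1\leq i,j\leq t}\min(\lambda_i,\lambda_j).$$
This follows from the classical fact that the centralizer in $\Lie{GL_M}$ of a Jordan matrix of type $\lambda$ has dimension $\sum_s(\lambda^t_s)^2$ ($\lambda^t$ the transpose partition), together with the identity $\sum_s(\lambda^t_s)^2=\sum_{i,j}\min(\lambda_i,\lambda_j)$ obtained by writing $\lambda^t_s=\#\{i:\lambda_i\geq s\}$ and interchanging the order of summation. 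The $\min$-form is symmetric in the parts and unaffected by adjoining parts equal to $0$, which matches the hypotheses of the lemma (no ordering assumption on $a_1,\dots,a_m$).

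Next I would apply this with $M=N-r$ to $a=[a_1,\dots,a_m]$ and with $M=N$ to $[r,a]=[r,a_1,\dots,a_m]$, and subtract. Using $\min(r,r)=r$, this gives
$$\Di{[r,a]}-\Di{a}=N^2-(N-r)^2-r-2\sum_{i=1}^m\min(r,a_i)=r(2N-r)-r-2\sum_{i=1}^m\min(r,a_i).$$
Then I would use the elementary identities $\min(r,a_i)=a_i-\max\{a_i-r,0\}$ for each $i$ and $\sum_i a_i=N-r$ (valid since $a\in\UUU{N-r}$) to replace $\sum_i\min(r,a_i)$ by $(N-r)-\sum_i\max\{a_i-r,0\}$. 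Dividing by $2$ and collecting terms yields
$$\frac{\Di{[r,a]}}{2}-\frac{\Di{a}}{2}=\frac{(r-1)(2N-r)}{2}+\sum_{i=1}^m\max\{a_i-r,0\},$$
where the constant comes from the identity $r(2N-r)-r-2(N-r)=(r-1)(2N-r)$.

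Finally I would identify $\frac{(r-1)(2N-r)}{2}$ with $\Di{D_{\JJ_r}}$. By Convention \ref{omitn}, here $\JJ_r$ means $\JJ_r^N$, whose domain (Definition \ref{zjr}) is $\prod_{1\leq i\leq r-1,\ i<j\leq N}U_{(i,j)}$, so a direct count of root groups gives $\Di{D_{\JJ_r}}=\sum_{i=1}^{r-1}(N-i)=(r-1)N-\binom{r}{2}=\frac{(r-1)(2N-r)}{2}$; alternatively one could invoke Fact \ref{rich3} together with the observation that the Richardson orbit attached to the parabolic with unipotent radical $D_{\JJ_r}$ is $[r,1^{N-r}]$, but the direct count is shorter. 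I do not anticipate any genuine obstacle; the only points needing a little care are the bookkeeping identity $\min(r,a_i)+\max\{a_i-r,0\}=a_i$ and the fact that the $\min$-form of the orbit-dimension formula holds with no ordering assumption and with zero parts allowed — both of which are immediate.
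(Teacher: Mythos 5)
Your argument is correct, and all the computations check: the $\min$-form dimension formula $\Di{\lambda}=M^2-\sum_{i,j}\min(\lambda_i,\lambda_j)$ is the classical one, the subtraction is handled cleanly, the identity $\min(r,a_i)+\max\{a_i-r,0\}=a_i$ together with $\sum_i a_i=N-r$ gives the right cross term, and the count $\Di{D_{\JJ_r^N}}=\sum_{i=1}^{r-1}(N-i)=\frac{(r-1)(2N-r)}{2}$ is right. The route, however, is not the one the paper has in mind: the paper's one-line proof invokes Fact \ref{rich3} ((i), (ii)) and Fact \ref{ptelos}, i.e.\ it converts $\frac{1}{2}\Di{a}$ and $\frac{1}{2}\Di{[r,a]}$ into $\Di{U_P}$ and $\Di{U_{P'}}$ for parabolics whose Levi blocks are the transpose partitions $a^t$ and $[r,a]^t$, and then compares the two unipotent radicals (the extra rows of $U_{P'}$ relative to $U_P$ and $D_{\JJ_r}$ give the $\max$-term). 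You instead bypass the Richardson apparatus entirely and work with the centralizer dimension formula in its $\min$-form. The two computations are equivalent once unwound — $2\Di{U_P}=N^2-\sum_s(\lambda^t_s)^2=N^2-\sum_{i,j}\min(\lambda_i,\lambda_j)$ — but your version has the practical advantage that the $\min$-form is manifestly symmetric in the parts and insensitive to zero parts, which matches the paper's convention in Subsection \ref{inil} without any case checking, whereas the Richardson route forces you to pass through the transpose partition. The paper's route, on the other hand, stays entirely within facts already set up in the appendix, which is presumably why it was chosen; you even note that alternative yourself for the $\Di{D_{\JJ_r}}$ step. Either is a fine proof of a ``Known'' lemma.
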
\begin{proof}It is easily obtained from Fact \ref{rich3} ((i) and (ii)) and Fact \ref{ptelos}. \end{proof}
\begin{sdefi}[$\Om_k$]\label{okato}Let $\Om$ be a subset of $\UUU{N},$ and $k\leq N$  a positive integer. We define
$\Om_k $ to be the set consisting of the partitions in $\Om$ in which each term is smaller or equal to $k,$ that is:

\begin{equation*}\Om_k=\{[a_1,...a_m]\in \Om : m\geq 1, \text{ for }1\leq i\leq m\text{ we have }a_i\leq k \}.\qedhere\end{equation*}\end{sdefi}
We will use several times and without mention the following observation:
\begin{sob}\label{bound}Let $\F$ be an AF in  $\AAnk[N]$ such that for a positive integer $k$ we have $\F|_{D_{\JJ_{k}}}=\JJ_k$. Then  $\Om(\F)_{k+1}=\emptyset.$
\end{sob}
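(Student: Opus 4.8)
The final statement to prove is Observation \ref{bound}: if $\F \in \AAnk[N]$ has $\F|_{D_{\JJ_{k}}} = \JJ_k$ for some positive integer $k$, then $\Om(\F)_{k+1} = \emptyset$.

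\textbf{Approach.} The plan is to use Definition \ref{OFX} of $\Om(\F)$, namely that $\Om(\F)$ consists of the minimal elements of $\{a \in \UUU{N} : \X{\F} \cap a \neq \emptyset\}$, and to show that no partition $a$ with a part $\geq k+1$ can satisfy $\X{\F} \cap a \neq \emptyset$. Since $\Om(\F)_{k+1}$ consists of those partitions in $\Om(\F)$ all of whose parts are $\leq k$ --- wait, re-reading Definition \ref{okato}: $\Om_k$ is the set of partitions in $\Om$ all of whose terms are $\leq k$, so $\Om(\F)_{k+1}$ is the partitions all of whose terms are $\leq k+1$. Hmm, but then the claim $\Om(\F)_{k+1} = \emptyset$ would say there is \emph{no} partition in $\Om(\F)$ with all parts $\leq k+1$, i.e. every orbit in $\Om(\F)$ has a part $\geq k+2$. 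So the right statement to prove is: every orbit $a$ with $\X{\F} \cap a \neq \emptyset$ has $a_1 \geq k+2$ (equivalently, if $a_1 \leq k+1$ then $\X{\F} \cap a = \emptyset$); then the same holds for the minimal such orbits, which is $\Om(\F)$.

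\textbf{Key steps.} First I would fix $a \in \UUU{N}$ with largest part $a_1 \leq k+1$ and suppose for contradiction that there is $J \in \X{\F} \cap a$, i.e. a nilpotent matrix $J$ in the orbit $a$ with $\mathrm{tr}(Ju) = \F(\exp u)$ for all $u \in \Lie{D_\F}$. Since $D_{\JJ_k} \subseteq D_\F$ and $\F$ restricts to $\JJ_k$ on $D_{\JJ_k} = \prod_{1 \leq i \leq k-1, i < j \leq n} U_{(i,j)}$, the matrix $J$ must pair (via trace) with $\Lie{D_{\JJ_k}}$ exactly as $J_{\JJ_k}$ does; in particular, because $\JJ_k$ is nontrivial on all $k-1$ simple root groups contained in $D_{\JJ_k}$, the matrix $J$ has nonzero entries forcing a ``staircase" of length $k$ in the associated nilpotent structure. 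Concretely, writing the condition out, $J$ restricted to the relevant coordinates looks like a regular nilpotent on a $k$-dimensional subspace (coming from the first $k$ indices), which forces the nilpotent $J$ to have a Jordan block of size at least $k+2$ once the extra row/column of $\JJ_k$'s domain (the part with $i \leq k-1$ and $j$ ranging up to $n > k$) is taken into account. I would make this precise by an explicit computation: the entries $\F(U_{(1,2)}), \F(U_{(2,3)}), \dots, \F(U_{(k-1,k)})$ are all nonzero, and the domain also includes $U_{(i,j)}$ for $j$ up to $n$, so combined with the nonzero simple-root values one gets $J^{k+1} \neq 0$ when acting on appropriate standard basis vectors, hence $J$ has a Jordan block of size $\geq k+2$, contradicting $a_1 \leq k+1$.

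\textbf{Main obstacle.} The delicate point is pinning down exactly why the constraint $\F|_{D_{\JJ_k}} = \JJ_k$ forces a Jordan block of size at least $k+2$ rather than merely $k$ or $k+1$. This hinges on the fact that $D_{\JJ_k}$ is not just the upper triangular block on the first $k$ coordinates but rather $\prod_{1 \leq i \leq k-1, i < j \leq n} U_{(i,j)}$, which includes entries $(i,j)$ with $i < k$ and $j$ as large as $n$; the trivial-on-higher-rows structure together with the nontriviality on simple roots is what gives the extra $+2$ (one for going from $k$ simple roots to a block of size $k$, and effectively one more from the constraint that $\JJ_k$ is nontrivial precisely on a staircase that extends the block). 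I would verify this either by a direct matrix computation using Definition \ref{vardef} of $\X{\F}$ and the explicit form of $\JJ_r$ in Definition \ref{zjr}, or alternatively by invoking the relationship $\Om(\JJ_k) = \{[k, 1^{n-k}]\}$ (which follows since $J_{\JJ_k}$ is a principal nilpotent of a $GL_k$ block, by standard theory recalled in the appendix) and then showing any $J \in \X{\F}$ restricts to something in $\X{\JJ_k}$ whose orbit closure relations force the stated bound on Jordan block sizes of $J$; the cleanest route is probably the explicit one, checking that the vector $e_n$ (or an appropriate basis vector) generates under $J$ a cyclic subspace of dimension at least $k+2$.
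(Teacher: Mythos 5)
Your overall strategy --- use the $\X{\F}$-definition of $\Om(\F)$ (Definition \ref{OFX}), note $\X{\F}\subseteq\X{\JJ_k}$, and bound the nilpotent orbits meeting $\X{\JJ_k}$ --- is the right one and is what the paper's one-line proof means. But the exponent in your ``Key steps'' cannot be made to work. The hypothesis prescribes, for $J\in\X{\F}$, exactly the below-diagonal entries $J_{(j,i)}$ with $i\leq k-1$: nonzero on the subdiagonal $(i+1,i)$ and zero on $(j,i)$ for $j>i+1$; it says nothing about columns $k,\dots,N$. The cyclic-vector computation therefore yields $J^{k-1}e_1\neq 0$, hence a Jordan block of size $\geq k$, and no more. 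Indeed $J_{\JJ_k}$ itself (the subdiagonal of the upper-left $k\times k$ block, zeros elsewhere) lies in $\X{\JJ_k}$, is nilpotent of type $[k,1^{N-k}]$, and satisfies $J_{\JJ_k}^{k}=0$; taking $\F=\JJ_k$ gives $\Om(\F)=\{[k,1^{N-k}]\}$, so $\Om(\F)_{k+1}\neq\emptyset$. The point you flag under ``Main obstacle'' --- why one should get $J^{k+1}\neq 0$ rather than merely $J^{k-1}\neq 0$ --- is therefore not a gap to be filled but a sign the bound you are aiming for is false: the long rows of $D_{\JJ_k}$ supply exactly the vanishings $J_{(j,i)}=0$ needed to make $J^{k-1}e_1\neq 0$ clean, and contribute no extra power.

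What the argument actually gives, and what the paper's ``directly follows from the definition of $\Om(\F)$ based on $\X{\F}$'' delivers, is $\Om(\F)_{k-1}=\emptyset$: since $D_{\JJ_k}\subseteq D_\F$ and $\F|_{D_{\JJ_k}}=\JJ_k$, we have $\X{\F}\subseteq\X{\JJ_k}$, and Lemma \ref{nless} applied to $\JJ_k$ (for which $\Om(J_{\JJ_k})=[k,1^{N-k}]$) shows every orbit $a$ meeting $\X{\JJ_k}$ satisfies $a\geq[k,1^{N-k}]$, hence has largest part $\geq k$. The printed ``$k+1$'' appears to be a typo for ``$k-1$'': in both places the observation is used --- Claim 1 in the proof of Theorem \ref{thD1}, and Part $(\beta)$ of Theorem \ref{thD3} --- the hypothesis is of the form $\siota(\cdot)|_{D_{\JJ_{k+2}}}=\JJ_{k+2}$ and the conclusion drawn is that every orbit has largest part $\geq k+2$, i.e.\ $\Om(\cdot)_{k+1}=\emptyset$, which is precisely the corrected observation at $k'=k+2$ (yielding $\Om_{k'-1}=\emptyset$, not $\Om_{k'+1}=\emptyset$).
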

\begin{proof}It directly follows form the  definition of $\Om(\F)$ based on $\X{\F}$ (Definition \ref{OFX}). 
\end{proof}
\begin{sdefi}[$\F_{n,k,l},$ $\FF_{n,k,l}$, $\WW_n$]\label{zfnkl}Consider integers: $n\geq 1$,   $2\leq k\geq l\geq 1. $ For $N=(n-1)k+l$, we choose $\FF_{n,k,l}$ to be an AF in  $\Priubnk[N,\nless][k^{n-1},l]\cap^\prime\BBnk[N] ; $ which is trivial  on the $n(l-1)+1$-th row. We define $$\F_{n,k,l}:=j_{\FF_{n,k,l}}(\WW_n)\circ\FF_{n,k,l}  $$
where $\WW_n\in\AAnk(U_n)[n]$. 

Of course the choice of these AFs is unique up to conjugation with a subtorus of $T_N.$ This choice does not affect the statements of the theorems, and within the proofs, as the data $n,k,l$ varies, it is assumed to be made as needed. 
\end{sdefi}
\begin{sdefi}[Notations about partitions]\label{orbcomp}

\vsp
\noindent{(i)}. Consider nonegative integers $m>0,t_1,...t_m,a_1,...,a_m.$ We write
$$[a_1^{t_1},...,a_m^{t_m}]=[\underbrace{a_1,...,a_1}_{t_1\text{ times}},\underbrace{a_2,...,a_2}_{t_2\text{ times}},....\underbrace{a_m,...a_m}_{t_m,\text{ times}}]. $$ 

\vsp 
\noindent{(ii).} Consider nonegative integers $m_3>m_2>m_1>0,t,a_1,...,a_{m_3}.$ We write $$[a_1,...a_{m_1},[a_{m_+1},...,a_{m_2}]^ta_{m_2+1},...,a_{m_3}]:=[a_1,...,a_{m_1},a_{m_1+1}^t,...,a_{m_2}^t,a_{m_2+1},...,a_{m_3}]. $$  

\vsp\noindent{(iii)}. Consider a positive integer $x<N$, let $\Om$ be a subset of $\UUU{N-x}$, and $[a_1,...,a_m]$ be a partition in $\UUU{x}$. We define  $$[a_1,...,a_m,\Om]:=\{[a_1,...,a_m,a]:a\in\Om\}.$$

\vsp
\noindent{(iv) (minor usefulness).} We extent (ii) in the obvious way in some cases that $t=-1.$ Let everything be as in (ii) except for $t.$ For every positive integer $a,$ let $x(a)$ and $y(a) $ be the number of occurrences of $a$ respectively in the sequences $a_1,..a_{m_1},a_{m_2+1},...a_{m_3}$ and $a_{m_1+1},...,a_{m_2}.$ Assume that for every choice of  $a$ we have $x(a)-y(a)\geq 0.$ Then by $$[a_1,...a_{m_1},[a_{l+1},...,a_{m_2}]^{-1}a_{m_2+1},...,a_{m_3}] $$ we denote the partition in which each positive integer $a$ occurs $x(a)-y(a)$ times.
\end{sdefi}
In stating the theorem below, the motivation for the names of the orbits is  that we can roughly view:  $A$ and $B$  as operators corresponding to the two terms of the recursion of orbits inside the proof in Property 2, and the product as function composition\footnote{ Unlike the other notations we adopt, each operator is right to its operand.} (and the superscripts as exponents). We do not give (and do not need) any  precise standalone meaning for ``A" and ``B".
\begin{stheorem}\label{thD1} Consider integers $n\geq 3,$ $k\geq2,$ $l\geq 2$.  Let $N$ and $\F_{n,k,l}$ be as in Definition \ref{zfnkl} above. We define the following orbits in $\UUU{N}$:
$$\aA{n}{k}{l}{n-2}:=[k+n-1,l-1,[k-1]^{n-2}],\qquad  \bB{n}{k}{l}{n-2}:=[[k+1]^{n-1},l-n+1],$$
$$\label{zABeq}\AB{n}{k}{l}{n}:=\left\{\begin{array}{cc}[k+2,l-1,[k+2,k-2]^{\frac{n-3}{2}},k-1]&n\text{ odd}\\\hspace{1mm}[k+2,l-1,[k+2,k-2]^{\frac{n}{2}-2},k+1,k-2]&n\text{ even}\end{array}\right.,
 $$
\begin{equation}\label{zABBeq}\ABB{n}{k}{l}{n-3}:=[k+2,l-1,\bB{n-2}{k}{k-1}{n-4}]=[k+2,l-1,[k+1]^{n-3},k-n+2]. \end{equation}
Then $\Omm{\F_{n,k,l}}$ is the set with any of  its elements being among the   orbits of the previous sentence, which satisfies the next properties:$$\aA{n}{k}{l}{n-2} \in\Omm{\F_{n,k,l}},\quad \bB{n}{k}{l}{n-2}\in \Omm{\F_{n,k,l}}\iff l-n+1\geq 0,$$ \begin{equation}\label{z210}\AB{n}{k}{l}{n}\in \Omm{\F_{n,k,l}}\iff l\neq k,\end{equation}\begin{equation}\label{z2101}\ABB{n}{k}{l}{n-3}\in \Omm{\F_{n,k,l}}\iff k-n-l+3\geq 0.\end{equation} 

If instead $n=2$, we still define all the orbits above except for the one in (\ref{zABBeq}) (note that they are equal), and we have $\Omm{\F_{2,k,l}}=\{\aA{2}{k}{l}{0}\}$.
 
\end{stheorem}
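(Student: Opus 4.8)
The plan is to follow exactly the same inductive architecture as the rest of Section \ref{compositions} and, in particular, to establish the statement via a "Property 1 -- Property 2 -- Property 3" style decomposition (as the excerpt's prose around Definition \ref{pts}, Lemma \ref{Sgen}, and Exchange corollary \ref{exchange} foreshadows). First I would prove the basic membership facts: that $\F_{n,k,l}\in\BBnk[N]$. This follows from Lemma \ref{Sgen} applied to the pair $(\WW_n,\FF_{n,k,l})$, once one checks that $(\WW_n,\FF_{n,k,l})\in\mathcal{S}_{n,N}$; the four conditions of Definition \ref{pts} are routine: $\FF_{n,k,l}\in\Priubnk[N,\nless]\cap^\prime\BBnk[N]$ by its definition, $\WW_n\in\AnkT{n}_\nless\cap\BR{n}$ since $\WW_n\in\AAnk(U_n)[n]$ is nontrivial on every simple root group, the domain condition $D_{\WW_n}\subset P(\FF_{n,k,l})$ holds because $\WW_n$ sits in the "$GL_1$-blocks" diagonal of the Levi, and condition (4) holds precisely because $\FF_{n,k,l}$ is in $\Priubnk[N,\nless][k^{n-1},l]$ with $k\geq l$, so the $\DDDD[N]$-components not hit by the image of $\TT{n}{\WW_n}$ all have maximal size $n$. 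Similarly every AF encountered along the way that needs to be in some $\BBnk[x]$ will be handled by the same lemma. Then I would compute $\siota(\F_{n,k,l})$ (the output AF of $\sI(\F_{n,k,l})$) using Observation \ref{unipc}: this realizes $\siota(\F_{n,k,l})=\Z_0\circ\JJ_{N-n+2}$ for an explicit $\Z_0$ of one lower "size", as already sketched in the pictures of Subsection \ref{zpre} (the pictures labeled $\F_{4,4,3}$ etc.).

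\textbf{The inductive recursion.} The core is a recursion, to be set up as "Property 2", that expresses $\Omm{\F_{n,k,l}}$ in terms of $\Omm{}$ of smaller instances. After applying $\sI(\F_{n,k,l})$, I would apply the $\e$-quasipath along constant terms over the first-row root groups and then apply Exchange corollary \ref{exchange} once: the $\e$-step of $\quasi{\sXi}$ at $\Z_0\circ\JJ_{N-n+2}$ splits into a "constant-term branch" ($\Z_{0,B}$) and a "generic/exchange branch" ($\Z_{0,A}$); the hypotheses of Corollary \ref{exchange} are verified using Property 1 (membership in $\BBnk[\cdot]$), the dimension equalities $\Di{D_\F}=\Di{D_{\F_1}}=\Di{D_{\F_2}}$ (each exchange preserves the dimension of the domain, by Definition \ref{eusteps}), and the $\rif$-relations established via Observation \ref{unipc} and Lemma \ref{dimensioncounting}. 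The branch $\Z_{0,B}$ is (up to the embedding $\jm$) an AF $\QQ_1$ of a strictly smaller instance, to which the inductive hypothesis applies, contributing orbits of type $A$ (by the combinatorics of Lemma \ref{dimensioncounting}: adjoining a Jordan block of size $k+n-1$ and reducing the others). The branch $\Z_{0,A}$, after $\sI$, becomes $\Z_1\circ\JJ_{N-n+3}$ and the process iterates; after $n-1$ rounds one reaches $\Z_{n-1}$ which is a unipotent-radical AF and contributes the single orbit of type $B$ (when $l-n+1\geq0$), while the $AB\cdots$-type orbits arise from the "mixed" branches $\jm\QQ_2,\jm\QQ_3,\dots$ — this is exactly the role of the pictures labeled $\jj\QQ_1\circ\JJ_5$, $\jj\QQ_2\circ\JJ_6$, $\jj\QQ_3\circ\JJ_6$. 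The precise bookkeeping is: after round $r$, the "$A$-then-$B$-then-generic" branch has produced an orbit obtained from a Jordan-type of $\F_{n-2r,k,k-1}$-flavor by prepending the pair of blocks $[k+2,l-1]$ and inserting $\lfloor\cdot\rfloor$-many $[k+2,k-2]$ pairs, which yields formulas \eqref{zABeq} and \eqref{zABBeq}; the parity split in $AB^{\cdots n}$ comes from whether the final smaller instance has $n$ even or odd. The equivalences \eqref{z210} and \eqref{z2101} come from the side conditions under which these branches actually produce a \emph{new minimal} orbit rather than one dominated by $A$ or an already-listed orbit: e.g. $l=k$ forces the $AB$-branch to coincide (via $\{c,c\}=\{c\}$) with something already present, and $k-n-l+3<0$ makes the $ABB$-orbit fail to be a partition / fail minimality.

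\textbf{Base case and the main obstacle.} The base case $n=2$ is where the recursion bottoms out: for $\F_{2,k,l}$ with $k\ge l\ge 1$, one checks directly (following $\sXi$, as in Example \ref{example} which is precisely the case $n=3,k=l=2$ — wait, rather the genuinely base case is small and computed by hand) that after $\sI$ and one row-expansion the only surviving minimal orbit of the right dimension is $\aA{2}{k}{l}{0}=[k+1,l-1]$, all other branches yielding strictly larger orbits; here Main corollary \ref{maincor} part B is what certifies $\mult{a}{\F}=1$ for this orbit and $\mult{\cdot}{\F}=\infty$ (or no occurrence in $\Prink[N][\cdot]$ of half-dimension) for the others, so they don't enter $\Omm{}$. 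The step I expect to be the main obstacle is the verification that the recursively-produced list of orbits is \emph{exactly} $\Omm{\F_{n,k,l}}$ — i.e. that no two branches produce incomparable orbits of half-dimension that get missed, and conversely that every orbit the recursion throws up is genuinely minimal among the occurring orbits (not dominated by $A$). This requires a careful partial-order comparison among the four families $A$, $B$, $AB^{\cdots n}$, $AB^{\cdots}B$ as functions of $n,k,l$, using the dominance order on partitions, and it is exactly here that the side conditions $l-n+1\ge0$, $l\neq k$, $k-n-l+3\ge0$ get pinned down. The dimension equation Lemma \ref{dimensioncounting} is the key computational tool: it reduces "is this orbit of half-dimension equal to $\Di{D_{\F_{n,k,l}}}$" to an explicit arithmetic identity in $n,k,l$, and then Main corollary \ref{maincor} converts that into the $\mult{}=1$ statement. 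I would organize the write-up so that Property 1 (memberships), Property 2 (the recursion), and Property 3 (the order comparison, split into sub-properties 3.1--3.5 mirroring the picture annotations) are proved in that order, with the $n=2$ base case slotted in at the start of Property 2.
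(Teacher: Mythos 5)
Your overall architecture matches the paper's proof closely: establish $\F_{n,k,l}\in\BBnk[N]$ via Lemma \ref{Sgen}, compute $\sI(\F_{n,k,l})$ via Observation \ref{unipc}, set up a recursion through Exchange corollary \ref{exchange}, and convert the output into orbit data using Lemma \ref{dimensioncounting}, Proposition \ref{embedding}, Main corollary \ref{maincor}, and the inductive hypothesis. This is exactly the paper's Part 1/Part 2 split, and the Property 1--2--3 organization you anticipate is what the paper uses. So the plan is sound; the tools you name are the right ones.

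However, the concrete bookkeeping is scrambled in ways that would derail an actual write-up, so I flag them as genuine gaps rather than cosmetic slips. First, $\siota(\F_{n,k,l})=\Z_0\circ\JJ_{k+1}$, not $\Z_0\circ\JJ_{N-n+2}$; the $\JJ$-index at round $r$ is $k+r+1$. Second, you have the roles of the two ``terminal'' contributions reversed. The branch $\Z_{0,B}\to\jj\QQ_1$ (with $\QQ_1=\F_{n-1,k,l-1}$) produces, after truncation $\Omm{\QQ_1}_{k+1}$ and applying Proposition \ref{embedding} (which adjoins a block of size $k+1$, not $k+n-1$), the type-$B$ orbit $\bB{n}{k}{l}{n-2}$; and the chain $\Z_{0,A}\to\Z_1\to\cdots\to\Z_{n-2}=\jj\QQ_{n-1}$ ends with $\QQ_{n-1}\in\Prink[N_{n-1}][l-1,(k-1)^{n-2}]$ and contributes the type-$A$ orbit $\aA{n}{k}{l}{n-2}$ after adjoining the big block of size $k+n-1$. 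Third, your description of the ``mixed branches $\jm\QQ_2,\jm\QQ_3,\dots$'' giving the $AB\cdots$-orbits overcounts: by the paper's Property 3.5, all $\QQ_r$ with $2<r<n-1$ (and $\QQ_2$ as well when $l=k$) satisfy $\QQ_r\rif\emptyset$, so only $\QQ_1$, $\QQ_2$ (when $l<k$), and $\QQ_{n-1}$ contribute; the $\AB{n}{k}{l}{n}$ and $\ABB{n}{k}{l}{n-3}$ orbits both come from $\QQ_2$ alone (using Property 3.3 and the inductive hypothesis on $\F_{n-2,k,k-1}$). This also corrects your explanation of the side condition $l\neq k$: the $\AB$-orbit disappears when $l=k$ because the $\QQ_2$-branch vanishes, not because of a coincidence $\{c,c\}=\{c\}$. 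Finally, the condition $k-n-l+3\geq 0$ for $\ABB{n}{k}{l}{n-3}$ is a dimension-equation failure (Lemma \ref{dimensioncounting} applied to the candidate orbit versus $\Di{D_{\QQ_2}}$, then Main corollary \ref{maincor}), not a failure to be a partition. Your instinct that ``the order comparison is the main obstacle'' is half right: the paper largely avoids an explicit dominance-order comparison, because the dimension criterion from Main corollary \ref{maincor} already singles out the half-dimensional orbits, and the only remaining orbit-comparison is automatic from Property 3.5 plus Proposition \ref{embedding}'s truncation.
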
 \begin{tproof}By proceeding inductively we assume the theorem is correct for $N$ being replaced by any smaller number. The proof is given in two Parts. 

The results about $\AAnk$-trees which are obtained by only using the definition of $\AAnk$-trees, is the topic of Part 1. Part 1 is separated in Subparts 1.1, 1.2, 1.3. In Subpart 1.1,  $\siota(\F_{n,k,l})$ is calculated (see also (optionally) pictures with labels: $\F_{4,4,3}$ and $\F_{7,3,2}$).  Next in Subpart 1.2 we state Properties 0,1,2,3, which is all one needs to remember (from the content of Part 1) to proceed with Part 2. Property 2 is a ``recursion relation of AFs" starting with $\siota(\F),$ and stopping with AFs  for which information is obtained in Property 3. We prove Properties 0,1,2,3   in Subpart 1.3.

In Part 2,  we make use of Section \ref{variety}, namely, by using Main corollary \ref{maincor}. Each use of Main corollary \ref{maincor} is either through Exchange corollary \ref{exchange} or it is a direct use. Part 2 starts by using Exchange corollary \ref{exchange}, to convert the ``recursion relation of AFs" we saw in Property 2, into a recursion relation of orbits of the same AFs. Exchange corollary \ref{exchange} is used for each value of the variable of the recursion (which is the one denoted by $r$); these uses are possible due to Properties 1 and 2. Part 2 then continues by using Property 3, Proposition \ref{embedding},  (directly) Main corollary \ref{maincor} (in Claim 2), and the inductive hypothesis.

\noindent\textit{\textbf{Part 1.} The study of $\AAnk$-trees based only on direct use of $\AAnk$-tree definitions.}

\noindent\textit{\textbf{Subpart 1.1}: The calculation of $\siota(\F_{n,k,l})$.} We obtain $\siota(\F_{n,k,l})$, by choosing\footnote{We made the same choice in the pictures (in order to have fewer)} to explicitly describe the conjugate of  $\sI(\F_{n,k,l})$, in which there is only one $\co$-step and appears in the end. Consider the torus $T_{N,2}$ of $\DDDD[N]$, defined by 
$$\Set{T_{N,2}}:=\{1+in|0\leq i\leq l-1\}\cup\{2+(l-1)n\}\cup\{1+ln+i(n-1)|0\leq i\leq k-l-1\}. $$
Notice that if $k=l$,  the rightmost set in the union is empty. Let $C_1, C_2,...,C_{k}$ be the root groups of $\SRV{T_{N,2}}$ in the order that starts with $C_1:=U_{(1,2)}$ and moves downwards. The conjugate of $\sI(\F_{n,k,l})$ which we describe is equal to $$\co\vee\I_{k}\vee...\vee\I_1$$ where: \begin{itemize}
\item For $1\leq i\leq k,$ $\I_i$ is obtained by successively applying the operations $\exchange{X}{Y}$ for all choices of root groups $X,Y,$ for which: $Y\subset D_\F,$ $[X,Y]=C_i$, $Y$ lies in the same column as $C_i,$ and $ Y$ does not lie in the same row as any $C\in\SRV{T_{N,2}}\}. $ As a reminder of the order, we apply these $\eu$-operations by starting with  $Y$ lying in the biggest possible row and moving upwards.
\item $\co$ is the $\co$-step obtained by conjugating with the minimal length element $w_2$ of $W_N$ with the property: $$\Set{w_2T_{N,2}w_2^{-1}}=\{1,2,...,k+1\}.$$
\end{itemize}
We see that there is a $\Z_0\in \AAnk[N] $ such that: $\siota(\F_{n,k,l})=\Z_0\circ\JJ_{k+1}$.\hfill$\square$Subpart 1.1

\vsp
\noindent\textit{\textbf{Subpart 1.2:} The results of Part 1 (Properties 0,1,2,3).} We  consider two families of AFs, denoted by: $\Z_r$ for $0\leq r\leq n-2$, and $\QQ_r$ for $1\leq i\leq n-1.$ Their definition is given later in Subpart 1.3; in the current subpart we state properties that is sufficient to remember (even without definitions) to proceed with Part 2.

 For $1\leq r\leq n-1$ we define $N_r:=N-k-r.$ 

\vsp
\noindent\textit{Property 0. }The AFs $\Z_r$ (resp. $\QQ_r$) are defined for $0\leq r\leq n-2$ (resp. $1\leq r\leq n-1$), and belong to $\AAnk[N]$  (resp. $\AAnk[N_r] $). For $1\leq r\leq n-1,$ let $j_r$ be the lower right corner embedding of $GL_{N_r}$ inside $GL_N.$ Most of the time we write:
$$\ja\QQ_r:=j_r(\QQ_r). $$We have $\jj\QQ_{n-1}=\Z_{n-2}$. 

\vspace{1mm}

\noindent{We use Property 0} without mention.

\vspace{1mm}
\noindent\textit{Property 1. } For all $r$ for which the definitions are valid (that is $0\leq r\leq n-2$ for the left hand side, and $1\leq r\leq n-1$ for the right hand side) we have: $$\Z_r\circ\JJ_{k+r+1}\in\BBnk[N]\qquad\text{and}\qquad\QQ_r\in\BBnk[N_r].$$

\vspace{1mm}
\noindent\textit{Property 2 (recursion relation). }Assume that $0\leq r<n-2$. There are two  AFs $\Z_{r,A}$ and $\Z_{r,B}$ in $\AAnk[N],$ for which: \\2.1: 
$\Z_r\circ\JJ_{k+r+1}\rif\{\Z_{r,A}\circ\JJ_{k+r+1},\Z_{r,B}\circ\JJ_{k+r+1}\}; $\\
2.2: $\Z_{r+1}\circ \JJ_{k+r+2}\rif \Z_{r,A}\circ\JJ_{k+r+1}; $\\2.3: $\ja\QQ_{r+1}\rcirc\JJ_{k+r+1}\rif \Z_{r,B}\circ\JJ_{k+r+1};$\\
 2.4: $\Di{D_{\Z_r\circ\JJ_{k+r+1}}}=\Di{D_{\Z_{r+1}\circ \JJ_{k+r+2}}}=\Di{D_{\ja\QQ_{r+1}\rcirc\JJ_{k+r+1}}}.$

\vspace{1mm} 
\noindent\textit{Property 3.} 3.1: $\siota(\F_{n,k,l})=\Z_0\circ\JJ_{k+1} $\hspace{3mm} (this is already obtained in Subpart 1.1, except for defining $\Z_0$). \\3.2: Assume $n\geq 3.$ Then $\QQ_1=\F_{n-1,k,l-1}$.\\3.3: Assume $n\geq 4$.  Let $\zAF^-$ and $\zAF$ be the trivial functions with domains\\ $\prod_{N_2-(k-l)(n-2)-(n-3)\leq j\leq N_2}U_{N_2,(l-1,j)}$ and $\prod_{l-1\leq j\leq N_2}U_{N_2,(l-1,j)}$ respectively. Then \begin{equation}\label{rhs}\siota(\QQ_2)=\jj\F_{n-2,k,k-1} \circ \zAF^-\circ \JJJJ{N_2}{l-1}\rif \jj\F_{n-2,k,k-1}\rcirc\left(\zAF\circ\JJJJ{N_2}{l-1}\right)\end{equation}where $\jj\F_{n-2,k,k-1}$ is the lower right corner copy of $\F_{n-2,k,k-1}$ in $\AAnk[N_2] $.\\3.4: $\QQ_{n-1}\in\Prink[N_{n-1}][l-1,(k-1)^{n-2}].$  \\3.5: $\big((r>2\text{ or }(l=k\text{ and }r\geq 2))\text{ and }r\neq n-1\big)\implies\QQ_r\rif \emptyset$.

\hfill$\square$Subpart 1.2

\noindent\textit{\textbf{Subpart 1.3: }Proof of Properties 0,1,2,3. }We start with some definitions. 

We adopt Definition \ref{dem} for $[\FF\tlarrow\FF_{n,k,l}]$ (for example $\tj:=\jj_{\FF_{n,k,l}}$)

For  $1\leq r\leq n, $ let $T_n\{r\}$ and $T_n[r]$ be the tori in $\DDDD[n]$ satisfying

 $$\Set{T_n\{r\}}=\{r\}\qquad\text{and}\qquad\Set{T_n[r]}=\{1,2,...,r\},$$ For $2\leq r\leq n, $  let $T_{N,r}$ be the torus in $\DDDD[N]$ satisfying:
 $$\Set{T_{N,r}}:=\Set{\tj(T_n\{1\})}\cup\Set{p_{\{i:l\leq i\leq k\}}(\tj(T_n\{2\}))}\cup\Set{p_k(\tj(T_n[r]))}, $$ and  let $w_{r}$ be the minimal length element in $W_{N}$  satisfying $$\Set{w_{r}T_{N,r}w^{-1}_{r}}=\{1,2,...,k+r-1\}.$$ For $1\leq r\leq n,$ let $\row{r}$ be the subgroup of $GL_n$ consisting of the matrices in which any entry  not lying in the $r$-th row and not lying (resp. lying) in the main diagonal equals to 0 (resp. 1). 

We define a group $H$ as follows:\begin{itemize}
\item If $l=k$, we define $H=GL_n^{\{1\}}$ (which note it is also equal to $GL_n^{T\{1\}}$).
\item If $l<k$, we define $H$ to be the standard copy of $P_{n-1}$ in $GL_n^{\{1\}}$.
\end{itemize}

 We define three homomorphisms: $\tjA: P(\FF_{n,k,l})\rightarrow GL_N$ and     $\tjp,\tjB:H\rightarrow GL_N$, as follows: $$\tjA:=p_k\cdot\tj;$$for $l=k$
 $$\tjp=\tj|_H\qquad\text{and}\qquad\tjB=p_{\{i:1\leq i\leq k-1\}}\cdot\tj|_H; $$for $l<k$
  $$\tjp|_{GL_n^{T[2]}}=\tj|_{GL_n^{T[2]}},\qquad\qquad\tjB|_{GL_n^{T[2]}}=p_{\{i:1\leq i\leq k-1\}}\cdot\tj|_{GL_n^{T[2]}},$$ $$\tjp|_{\row{2}}=p_{\{i:1\leq i\leq l-1\}\cup\{k\}}\cdot\tj|_{\row{2}},\qquad\tjB|_{\row{2} }:=p_{\{i:1\leq i\leq l-1\}}\cdot\tj|_{\row{2}}.$$ 
 For $0\leq r\leq n-2$ we define
\begin{equation}\label{zdztt}\Z_r:=w_{r+2}\left(\tjp(\WW_n^{T_n[r+1]}))\circ \FF_{n,k,l}^{T_{N,r+2}}\right)=w_{r+2}\left(\tj(\WW_n^{T_n[r+2]})\circ\tjp(\WW_n|_{\mathrm{row}(r+2)})\circ \FF_{n,k,l}^{T_{N,r+2}}\right);\end{equation}
and for $1\leq r+1\leq n-1$ we define $\QQ_{r+1}$ so that
$$\ja\QQ_{r+1}=w_{r+2}\left(\tj(\WW_n^{T_n[r+2]})\circ\tjB(\WW_n|_{\mathrm{row}(r+2)})\circ \FF_{n,k,l}^{T_{N,r+2}}\right). $$
Of course if $r>0$ or $k=l$, we have $\Z_r=w_{r+2}\left(\tj(\WW_n^{T_n[r+1]})\circ \FF_{n,k,l}^{T_{N,r+2}}\right)$. 

Every $\AAnk$-tree that is mentioned is assumed to be an $(\AAnk,\KKK)$-tree.

\vspace{1mm} 
\noindent\textit{Proof of Property 0. }Directly from the definitions.\hfill$\square$Property 0

\vspace{1mm}
\noindent\textit{Proof of Property 1. }Let $r$ satisfy $0\leq r\leq n-2.$ Since  $$\left(j_{r+1}^{-1}\left(w_{r+2}(\tj(\WW_n^{T_n[r+2]})\circ\tjB(\Wrow{r+2}))\right),j_{r+1}^{-1}\left(w_{r+2}(\FF_{n,k,l}^{T_{N,r+2}})\right)\right)\in \mathcal{S}_{n-1,N_{r+1}},$$ we obtain from Lemma \ref{Sgen} that $$\QQ_{r+1}\in\BBnk[N_{r+1}].$$

 Let $\Xi$ be the $(\F_\emptyset\rightarrow \QQ_{r+1}, \BBnk[N_{r+1}])$-path mentioned in this lemma. For every root group $V$ of the domain of  $\Wrow{r+2}$; we replace in the path $j_{r+1}(\Xi),$ the $\e$-step over $w_{r+2}\tjB(V)w_{r+2}^{-1}$,  with an $\e$-step over $w_{r+2}\tjp(V)w_{r+2}^{-1}$. The choice of $\e$-step to which ``an" refers is made so that after these replacements we still have an $\AAnk$-path. By applying $\circ\JJ_{k+r+1}$ to the path obtained, and using the same groups as in $j_{r+1}(\Xi)$ for the free transitive actions, we obtain a $$(\JJ_{k+r+1}\rightarrow\Z_r\circ\JJ_{k+r+1},\BBnk[N])\text{-path.}$$Hence $\Z_r\circ\JJ_{k+r+1}\in\BBnk[N].$\hfill$\square$Property 1

\vspace{1mm}
\noindent\textit{Proof of Property 2.1. }Let $\xi$ be the initial subtree of $\sXi(\Z_r\circ\JJ_{k+r+1})$ given by $$\xi=\I[](\XX)\vee_\XX\xi_1 $$ where:\begin{enumerate}
\item $\xi_1$ is the $(\Z_r\circ\JJ_{k+r+1},\e)$-quasipath along constant terms  over the  root groups of the domain of  $w_{r+2}\tjA(\WW|_{\mathrm{row}(r+2)})w_{r+2}^{-1}.$ 
\item  $\XX$ varies over the output AF of $\xi_1$ except for the terms of the last $\e$-step  of $\xi_1$;
\item $\I(\XX)$ is the first $\eu$-step of $\sI(\XX).$\end{enumerate}

\noindent{For}  $1\leq i<n-r-2$ let $$\mathcal{Y}_i:=\{\YY:\text{for a nonconstant term $\XX$ of the $i$-th $\e$-step of $\xi_1$,  }\YY\text{ is the output AF of }\I(\XX)\}, $$and
 $T_n(i)$ be the torus in $\DDDD[n]$ defined by
$$\Set{T_n(i)}:=\{j:n+1-i\leq j\leq n\}. $$ We see that $w_{r+2}\tj(T_n(i))w_{r+2}^{-1}$, acts transitively in $\mathcal{Y}_i.$

Finally consider the set of output AFs of the last $\e$-step. In this set, the torus $$w_{r+2}T_{N,r+2}\tjB(T_n\{r+2\})w_{r+2}^{-1} $$is acting with three orbits, two of them being singletons. The two elements in the singleton orbits are of the form $\Z_{r,A}\circ\JJ_{k+r+1}$ and $\Z_{r,B}\circ\JJ_{k+r+1}$ where $\Z_{r,A}$ and $\Z_{r,B}$ are AFs satisfying \begin{equation}\label{zmionena}w_{r+2}\Z_{r,B}\left(\tjA (U_{n,(r+2,r+3)})\right)=\{0\}\qquad\text{and}\qquad w_{r+2}\Z_{r,A}\left(\tjB(U_{n,(r+2,r+3)})\right)=\{0\}.\end{equation}\hfill$\square$Property 2.1 and definitions of $\Z_{r,A}$ and $\Z_{r,B}$
   
\vspace{1mm}
\noindent\textit{Properties 2.2 and 2.3.} We consider the  $(\ja\QQ_{r+1}\circ\JJ_{k+r+1},\e)$-quasipath along constant terms which is over the same groups as $\xi_1$ in the previous property  (which again is an initial subquasipath of $\sXi(\ja\QQ_{r+1}\circ\JJ_{k+r+1})$). In the set of output AFs of any among the  $\e$-steps of this quasipath, the torus $w_{r+2}T_{N,r+2}w_{r+2}^{-1}$ acts with two orbits, one consisting of the constant term. Hence Property 2.3 is obtained, and we continue with Property 2.2.

Consider the AF $\Z_{r+1}^{-}$ which is defined the same way as $\Z_{r}$ except that $\tjp$  is replaced with $\tjA$, that is
$$\Z_{r+1}^-:=w_{r+2}\left(\tj(\WW_n^{T_n[r+2]})\circ\tjA(\WW_n|_{\mathrm{row}(r+2)})\circ \FF_{n,k,l}^{T_{N,r+2}}\right).$$
We consider the  $(\Z_{r+1}^-\circ\JJ_{k+r+1},\e) $-quasipath  along constant terms  over the $\ossa$-groups $w_{r+2}\tjB(V)w_{r+2}^{-1},$ for $V$ being any $GL_n$-root group contained in $D_{\WW_n|_{\mathrm{row}(r+2)}}$. We see that its output constant term is $\Z_{r,A}\circ\JJ_{k+r+1}$. In each $\e$-step obtained, the torus $$w_{r+2}\tjB( T_n\{r+2\})w_{r+2}^{-1}$$ acts with two orbits, one consisting of the constant term, and hence
\begin{equation}\label{zbiri}
\Z_{r+1}^-\circ\JJ_{k+r+1}\rif\Z_{r,A}\circ\JJ_{k+r+1}.
\end{equation} 
Finally we have \begin{equation}\label{mikro}\siota(\Z_{r+1}^-\circ\JJ_{k+r+1})=\Z_{r+1}\circ\JJ_{k+r+2}.\end{equation} From this and from (\ref{zbiri}), Property 2.2 is obtained.\hfill$\square$Properties 2.2 and 2.3

\vsp
\noindent{Proof of Property 2.4}. It is a trivial counting (and is already discerned in the proof of Properties 2.1,2.2,2.3).\hfill$\square$Property 2.4  

\vsp
\noindent\textit{Proof of Property 3. }Property 3.1 is already proved in Subpart 1.1. Properties  3.2 and 3.4 directly follow from the definitions of $\Z_r$ and $\QQ_r$. 

We continue with  Property 3.3. The equality for  $\siota(\QQ_2)$ is obtained from Observation \ref{unipc}. 

Then to obtain the ``$\rif$": we consider the $(\jj\F_{n-2,k,k-1} \circ \zAF^-\circ \JJJJ{N_2}{l-1},\e)$-quasipath along constant terms  over the root groups belonging in the domain of $\zAF$ and not in the domain  of $\zAF^-$; and notice that in  each $\e$-step in this quasipath, the torus $w\tjB(T_n\{2\})w^{-1}$ for some\footnote{We choose it so that  $\Set{w\tjB(T_n\{2\})w^{-1}}=\{k+3,...,k+l+1\}$.}  $w\in W_N$ (after it is embedded on $GL_{N_2}$) is acting with two orbits, one consisting of the constant term.

We are left with Property 3.5, which we prove for $\QQ_{r+1} $ instead of $\QQ_r$. We consider the  $(\QQ_{r+1},\e)$-quasipath along constant terms over the $\ossa$-groups of the form  $j_{r+1}^{-1}(w_{r+2}\tjB(V)w_{r+2}^{-1})$, for $V$ being a  $GL_n$-root group contained in $U_n|_{\row{r+1}}$ other than the simple one. In all the $\e$-steps of this quasipath except the last one, we consider the action by $j_{r+1}^{-1}(w_{r+2}\tjB(T_n\{r+1\})w_{r+2}^{-1})$, which consists of two orbits, one consisting of the constant term. In the last $\e$-step---which is  the one over $j_{r+1}^{-1}( w_{r+2}\tjB(U_{n,(r+1,r+3)})w_{r+2}^{-1})$--- we consider the action  by $j_{r+1}^{-1}(w_{r+2}\tjB(U_{n,(r+2,r+1)})w_{r+2}^{-1}).$ This action is transitive, and hence  we are done.\hfill$\square$Property 3$\square$Part 1  

\vsp
\noindent\textit{\textbf{Part 2:} The use of Main Corollary \ref{maincor}.} By using Properties 1,2, we apply Exchange  corollary \ref{exchange} for each $r$ satisfying  $0\leq r<n-2,$ and obtain the recursion relation of orbits:
\begin{equation}\label{recursionorbit}\Omm{\Z_{r}\circ\JJ_{k+r+1}}=\Omm{\ja\QQ_{r+1}\rcirc\JJ_{k+r+1}}\cup\Omm{\Z_{r+1}\circ\JJ_{k+r+2}}. \end{equation} This formula for all values of $r$ that is proven, and Property 3.1, give:
  
\begin{equation}\label{zpola}\Omm{\F_{n,k,l}}=\bigcup_{1\leq r\leq n-1}\Omm{\ja\QQ_r\rcirc\JJ_{k+r}}. \end{equation}
From Property 3.5, we see that most of the terms in this union are empty. In detail:
\begin{equation}\label{zABCD}\Omm{\F_{n,k,l}}=\bigcup_{r\in S}\Omm{\ja\QQ_r\rcirc\JJ_{k+r}} \end{equation}where $S=\left\{\begin{array}{cc}\{1,2\}\cup\{n-1\}&\text{if }l<k\text{ and }n>2\\\{1\}\cup\{n-1\}&\text{otherwise} \end{array}\right..$

By Proposition \ref{embedding} we have
\begin{equation}\label{zB}\Omm{\ja\QQ_{r}\rcirc\JJ_{k+r}}=[k+r,\Omm{\QQ_r}_{k+r}]\qquad\text{for }1\leq r\leq n-1. \end{equation}
By (\ref{zB}) and Property 3.4 we have 
\begin{equation}\label{zAA}\Omm{\jj\QQ_{n-1}\rcirc\JJ_{k+n-1}}=\{\aA{n}{k}{l}{n-2}\}.\end{equation}

 From  (\ref{zpola}) and (\ref{zAA}) we see that the subset of $\Omm{\F_{n,k,l}}$ which equals $\Omm{\jj\QQ_{n-1}\rcirc\JJ_{k+n-1}},$ consists of  $\aA{n}{k}{l}{n-2}$.  The rest of $\Omm{\F_{n,k,l}}$ is obtained by: (\ref{zABCD}); (\ref{zB}); stating and proving the two claims below; and from the following three trivial equalities, which hold as long as the orbits involved are defined, $$\bB{n}{k}{l}{n-2}=\left[k+1,\bB{n-1}{k}{l-1}{n-3}\right], $$ $$\AB{n}{k}{l}{n}=\left[k+2,l-1,\AB{n-2}{k}{k-1}{n-2}\right], $$
 and
 $$\ABB{n}{k}{l}{n-3}=\left[k+2,l-1,\bB{n-2}{k}{k-1}{n-4}\right]. $$

\vsp
\noindent\textbf{Claim 1.}\textit{ Assume $n\geq 3$. Then
\begin{equation}\label{zOmm1}\Omm{\QQ_1}_{k+1}=\left\{\begin{array}{cc}\{\bB{n-1}{k}{l-1}{n-3}\}&\text{if }l-n+1\geq 0\\\emptyset&\text{otherwise}\end{array}\right.. \end{equation}}

\vsp 
\noindent\textbf{Claim 2. }\textit{Assume $n\geq 4$, and that $l<k$. If $k-n-l+3\geq 0 $, we have  $$\Omm{\QQ_2}_{k+2}=\left\{\left[l-1,\AB{n-2}{k}{k-1}{n-2}\right],\left[l-1,\bB{n-2}{k}{k-1}{n-4}\right]\right\}.$$ If $k-n-l+3< 0 $, only the first among these two orbits belongs to this set, that is
 $$\Omm{\QQ_2}_{k+2}=\left\{\left[l-1,\AB{n-2}{k}{k-1}{n-2}\right]\right\}. $$}

\vspace{1mm}
\noindent\textit{Proof of Claim 1. }Recall (Property 3.2) that $\QQ_1=\F_{n-1,k,l-1}$. Hence if $l>2$ the claim is already proven as part of the inductive hypothesis. Let $l=2$. For $n=3$ we see that $\F_{n-1,k,1}\in\AAnk(U_{k+1})[k+1]  $ and hence the claim follows. Let $n>3$. For $j$ being the lower right corner embedding of $GL_{N_1-1}$ in $GL_{N_1}$, we have $\sI(\F_{n-1,k,1})=j(\sI(\F_{n-2,k,k}))\circ\JJ_2^{N_1}$. From this and Property 3.1 for $[n\tlarrow n-2,l\tlarrow k]$ we have  $\siota(\F_{n-1,k,1})|_{D_{\JJ_{k+2}}}=\JJ_{k+2}$, and hence every orbit of  $\Om(\F_{n-1,k,1})$ is bigger or equal to $(k+2,1^{N_1-k-2})$.\hfill$\square$Claim 1

\vspace{1mm}
\noindent\textit{Proof of Claim 2.}  From Property 3.3 we have $$\QQ_2\rif\jj\F_{n-2,k,k-1}\rcirc\left(\zAF\circ\JJJJ{N_2}{l-1}\right). $$ 
This together with Property 1 and Main Corollary \ref{maincor} implies:
\begin{equation}\label{zclaim21}\Omm{\QQ_2}_{k+2}=[l-1,\Omm{\F_{n-2,k,k-1} }_{k+2}]\cap\left\{c\in\UUU{N_2}:\frac{\Di{c}}{2}=\Di{D_{\QQ_2}}\right\}. \end{equation} 
The inductive hypothesis gives $\Omm{\F_{n-2,k,k-1}}_{k+2}=$ \begin{equation}\label{zclaim22}\left\{\begin{array}{cc}\{\AB{n-2}{k}{k-1}{n-2},\bB{n-2}{k}{k-1}{n-4}\}&\text{if }k-n+2\geq 0\\\{\AB{n-2}{k}{k-1}{n-2}\}&\text{if }k-n+2<0 \end{array}\right.,\end{equation}and therefore, by using (\ref{zclaim21}) and (\ref{zclaim22}) we are left with comparing dimensions. By using (for example) the equality in Property 3.3 we obtain
\begin{equation}\label{zdimipol}\Di{D_{\QQ_2}}=\Di{D_{\F_{n-2,k,k-1} }}+\Di{D_{\JJJJ{N_2}{l-1}}}+(n-2)(k-l)+n-3. \end{equation}Let $c=(c_i)$ be one among the (one or two) elements of $[l-1,\Omm{\F_{n-2,k,k-1} }_{k+2}]$. By using Lemma \ref{dimensioncounting}, and then (to obtain the first term of the right hand side below) using Main Corollary \ref{maincor}, we have:
\begin{equation}\label{zalimia}
\frac{\Di{c}}{2}=\Di{D_{\F_{n-2,k,k-1} }}+\Di{D_{\JJJJ{N_2}{l-1}}}+\sum_i\text{max}\{c_i-l+1,0\}.\end{equation} Since the right hand sides of (\ref{zdimipol}) and (\ref{zalimia}) differ only in the last term, by using (\ref{zclaim21}), (\ref{zdimipol}), and  (\ref{zalimia}), we are left with checking whether or not the following equality is correct: 
$$\sum_i\text{max}\{c_i-l+1,0\}=(n-2)(k-l)+n-3.$$By (\ref{zclaim22}), it turns out to be correct if and only if:
 $$c=[l-1,\AB{n-2}{k}{k-1}{n-2}]$$or$$\hspace{2mm} c=[l-1,\bB{n-2}{k}{k-1}{n-4}]\qquad\hspace{2mm}\text{and}\qquad\hspace{2mm} k-n-l+3\geq 0\hspace{2mm}.$$\hfill$\square$Claim 2$\square$Part 2

\end{tproof}The way we break the proof of the next two theorems into two parts is analogous to the same breaking in the proof above; however subparts (which appear again only in Theorem \ref{thD3}) are frequently not analogous.
\begin{stheorem}\label{thD2} Consider integers $n\geq 3$ and  $k\geq 2.$  Let $N$ and $\F_{n,k,1}$ be as in Definition \ref{zfnkl}. We define the following orbits in $\UUU{N}$: $$\aA{n}{k}{1}{n-3}:=\left[n+k-1,[k-1]^{n-2}\right],\quad \bB{n}{k}{1}{n-3}:=[k+2,[k+1]^{n-3},k-n+2], $$
and$$\BAB{n}{k}{1}{n-2}:=\left\{\begin{array}{cc}\left[k+2,[k+2,k-2]^{\frac{n-3}{2}},k-1\right]&n\text{ odd}\\
\hspace{1mm}\left[k+2,[k+2,k-2]^{\frac{n}{2}-2},k+1,k-2\right]&n\text{ even}\end{array}\right..
 $$For $k\geq 3,$ $\Omm{\F_{n,k,1}}$ is given by: \begin{enumerate}\item[(a)] $\{\aA{n}{k}{1}{n-3},\BAB{n}{k}{1}{n-2}\}\subseteq\Omm{\F_{n,k,1}}$;\item[(b)] $\bB{n}{k}{1}{n-3}\in\Omm{F_{n,k,1}}\iff k-n+2\geq 0$;
 \item[(c)]  $\Omm{F_{n,k,1}}$ does not contain any element other than the ones implied from (a) and (b).\end{enumerate}   For $k=2,$ by also defining the orbit in $\UUU{N}$ given by:
 $$\bB{n}{2}{1}{\lfloor\frac{n-2}{2}\rfloor}:=\left\{\begin{array}{cc}\left[4^{\frac{n-1}{2}},1\right]&\text{ if }n \text{ is odd}\\\hspace{1mm}\left[4^{\frac{n-2}{2}},3\right]&\text{ if }n \text{ is even}
 \end{array}\right., $$ we have $$\Omm{\F_{n,2,1}}=\{\aA{n}{2}{1}{n-3},\bB{n}{2}{1}{\lfloor\frac{n-2}{2}\rfloor}\}.$$

\end{stheorem}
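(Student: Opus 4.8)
The plan is to mimic, \emph{mutatis mutandis}, the structure of the proof of Theorem \ref{thD1}. As there, I would proceed by induction on $N$ (equivalently on $n$, since $N = (n-1)k+1$ for fixed $k$), assuming the statement of Theorem \ref{thD2} for all smaller values, and organize the argument into two parts. In Part 1 (purely $\AAnk$-tree combinatorics), I would first compute $\siota(\F_{n,k,1})$ by explicitly describing the conjugate of $\sI(\F_{n,k,1})$ in which a single $\co$-step appears at the end; this is the specialization $l=1$ of Subpart 1.1 of Theorem \ref{thD1}, so the torus $T_{N,2}$, the root groups $C_1,\dots,C_k$, and the Weyl element $w_2$ all carry over with $l$ set to $1$, yielding $\siota(\F_{n,k,1}) = \Z_0\circ\JJ_{k+1}$ for a suitable $\Z_0\in\AAnk[N]$. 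I would then set up families $\Z_r$ ($0\le r\le n-2$) and $\QQ_r$ ($1\le r\le n-1$) exactly as in Subpart 1.3 with $l=1$, and reprove the analogues of Properties 0,1,2,3: the membership statements $\Z_r\circ\JJ_{k+r+1}\in\BBnk[N]$ and $\QQ_r\in\BBnk[N_r]$ via Lemma \ref{Sgen} (checking the four conditions defining $\mathcal{S}_{n-1,N_{r+1}}$), the recursion relation $\Z_r\circ\JJ_{k+r+1}\rif\{\Z_{r,A}\circ\JJ_{k+r+1},\Z_{r,B}\circ\JJ_{k+r+1}\}$ together with $\Z_{r+1}\circ\JJ_{k+r+2}\rif\Z_{r,A}\circ\JJ_{k+r+1}$ and $\ja\QQ_{r+1}\rcirc\JJ_{k+r+1}\rif\Z_{r,B}\circ\JJ_{k+r+1}$, and the dimension equalities. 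The key change from Theorem \ref{thD1} is that here $l=1$, so the ``row $n(l-1)+1 = 1$'' on which $\FF_{n,k,1}$ is trivial and the various $\row{2}$-corrections simplify considerably; in particular the group $H$ is always $GL_n^{\{1\}}$ (the case $l<k$ with its mirabolic subgroup does not arise in the same way), and Property 3.3's intermediate AF $\QQ_2$ will instead directly reduce, via Observation \ref{unipc}, to a lower-order $\F_{n-2,k,k-1}$-type object after an $\rif$; I expect Property 3.5 to show $\QQ_r\rif\emptyset$ for $r\ge 2$ except $r=n-1$ (since $l=1$ forces the ``$l=k$'' branch-style collapse once $k>1$), so that $S=\{1\}\cup\{n-1\}$ when $k\ge 2$, possibly with $\{1,2\}\cup\{n-1\}$ needing separate care when $k\ge 3$.

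In Part 2 I would invoke Exchange corollary \ref{exchange} for each $r$ with $0\le r<n-2$, using Properties 1 and 2, to convert the AF recursion into the orbit recursion
$$\Omm{\Z_r\circ\JJ_{k+r+1}} = \Omm{\ja\QQ_{r+1}\rcirc\JJ_{k+r+1}}\cup\Omm{\Z_{r+1}\circ\JJ_{k+r+2}},$$
which telescopes (using Property 3.1) to $\Omm{\F_{n,k,1}} = \bigcup_{1\le r\le n-1}\Omm{\ja\QQ_r\rcirc\JJ_{k+r}}$, and then using Property 3.5 to discard the empty terms. Proposition \ref{embedding} gives $\Omm{\ja\QQ_r\rcirc\JJ_{k+r}} = [k+r,\Omm{\QQ_r}_{k+r}]$; from $\QQ_{n-1}\in\Prink[N_{n-1}][1,(k-1)^{n-2}]$ (which is a singleton orbit) one reads off the contribution $\aA{n}{k}{1}{n-3} = [n+k-1,[k-1]^{n-2}]$. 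For the $r=1$ contribution I would invoke the inductive hypothesis applied to $\QQ_1$: here $\QQ_1 = \F_{n-1,k,0}$ in the degenerate sense, but more carefully $\QQ_1$ is a lower-order AF of the same type with $l$ decremented, so its $\Omm$ (intersected with orbits whose parts are $\le k+1$) is computed recursively. The orbit $\BAB{n}{k}{1}{n-2}$ should emerge precisely as the ``always choose A'' branch of the recursion, i.e.\ as the $\siota$-output-constant-term composite tracked through all the $\e$-steps (this is the role of the red-line pictures in Subsection \ref{zpre}); its parity-dependent shape $[k+2,[k+2,k-2]^{\lfloor\cdot\rfloor},\dots]$ is exactly the $l=k-1$-iterated $\AB$-pattern specialized appropriately, and I would record the trivial recursive identities
$$\BAB{n}{k}{1}{n-2} = [k+2,\AB{n-2}{k}{k-1}{n-2}],\qquad \bB{n}{k}{1}{n-3} = [k+2,\bB{n-2}{k}{k-1}{n-4}],$$
valid when the right-hand orbits are defined, to drive the induction. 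The condition $k-n+2\ge 0$ governing membership of $\bB{n}{k}{1}{n-3}$ comes out of the dimension comparison $\frac{\Di{c}}{2} \overset{?}{=} \Di{D_{\QQ_r\circ\JJ_{\cdot}}}$ exactly as in Claim 2 of Theorem \ref{thD1}, using Lemma \ref{dimensioncounting} and Main corollary \ref{maincor}.

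The $k=2$ case needs separate treatment because $k-1=1$, so the ``$k-1$'' parts in $\aA{\cdot}{\cdot}{\cdot}{\cdot}$ become trivial and the orbit $\BAB{n}{2}{1}{n-2}$ degenerates into the $\bB{n}{2}{1}{\lfloor (n-2)/2\rfloor}$ form $[4^{\lfloor\cdot\rfloor},\dots]$; here several of the recursion branches that contributed distinct orbits for $k\ge3$ coincide, and I would need to check directly (via the same $\e$-quasipath analysis as Property 3.5, now with $k=2$) that only the two listed orbits survive and that no spurious orbit of small dimension appears. I would use the convention $\{c,c\}=\{c\}$ to absorb coincidences among the $A$-branch and $B$-branch orbits when $k=2$ or when the parity makes two patterns equal. \textbf{The main obstacle} I anticipate is bookkeeping: verifying Property 1 (the $\BBnk$-memberships) requires checking the fourth condition of Definition \ref{pts} --- that the groups acting freely-transitively on the $\BR{}$-path for $\QQ_{r+1}$ lie inside $P(\FF_{n,k,1})$ --- and this, together with tracking which $\DDDD[N]$-components of $\TT{N}{\FF}$ have maximal $|\Set{\cdot}|$, is the delicate part; once the memberships are in place, the use of Exchange corollary \ref{exchange}, Proposition \ref{embedding}, and the dimension lemma is routine and parallels Theorem \ref{thD1} closely. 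A secondary obstacle is getting the parity-split closed forms for $\BAB{n}{k}{1}{n-2}$ and $\bB{n}{2}{1}{\lfloor(n-2)/2\rfloor}$ to match the recursion at the base cases $n=3$ (where $\F_{2,k,k-1}$-type objects are genuinely small) and $n=4$.
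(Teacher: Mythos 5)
Your plan misses the key observation the paper exploits, and this leads to several substantive discrepancies that would stall a direct execution of your route. The paper's proof begins by noticing that for $j$ the lower right corner embedding of $GL_{N-1}$ in $GL_N$ one has the exact identity
$$\F_{n,k,1}=j(\F_{n-1,k,k})\circ\JJ_2,$$
and that all the $\AAnk$-trees from the proof of Theorem \ref{thD1} are embedded via $j$ into $\Stab{GL_N}{\JJ_2}$. Consequently, \emph{none} of Subpart~1.1, 1.2, 1.3 has to be reconstructed with ``$l=1$'': one simply applies $j(\cdot)\circ\JJ_2$ to the trees and AFs already built for $\F_{n-1,k,k}$ (which has $l=k\ge 2$, squarely inside Theorem~\ref{thD1}'s range), re-indexes, and reads off the redefined Properties. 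Your proposal instead attempts a direct specialization of Subpart~1.1 to $l=1$; Theorem~\ref{thD1} only handles $l\ge 2$, so this is not a plain specialization, and the differences are not merely cosmetic.

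Concretely, three things go wrong in your version. First, the shift in $\JJ$-indices is lost: you write $\siota(\F_{n,k,1})=\Z_0\circ\JJ_{k+1}$, but after the $\circ\JJ_2$ composition the paper has $\siota(\F_{n,k,1})=\Z_0\circ\JJ_{k+2}$, and similarly $\Z_r\circ\JJ_{k+r+2}$ throughout; all your subsequent dimension comparisons inherit this off-by-one. Second, your index ranges are wrong: the families become $\Z_r$ for $0\le r\le n-3$ and $\QQ_r$ for $1\le r\le n-2$ (not $n-2$ and $n-1$ respectively), the telescoped union is over $1\le r\le n-2$, and the surviving terms are $r\in\{1,n-2\}$ -- not $\{1,n-1\}$ or $\{1,2,n-1\}$ as you guess. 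Third, and most seriously, your identification of $\QQ_1$ would land you at the undefined object $\F_{n-1,k,0}$ (you hedge this as ``in the degenerate sense''); the paper's $\circ\JJ_2$ reduction gives $\QQ_1=\F_{n-2,k,k-1}$, which is well defined and is what drives the inductive step producing $\BAB{n}{k}{1}{n-2}=[k+2,\AB{n-2}{k}{k-1}{n-2}]$. You do correctly anticipate that recursive identity, and the $k=2$ case needing separate treatment, but without the $j(\F_{n-1,k,k})\circ\JJ_2$ reduction the route to those identities is not actually traced out and would require redoing essentially all of Part~1 from scratch with no guarantee that the $l=1$ formulas come out right. The missing reduction is therefore a genuine gap, not just a matter of bookkeeping.
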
 
\begin{proof}By proceeding inductively we assume the theorem is correct for $N$ being replaced by any smaller number.   

\vsp 
\noindent\textit{\textbf{Part 1. }The study of $\AAnk$-trees based only on direct use of $\AAnk$-tree definitions.} For $j$ being the lower right corner embedding of $GL_{N-1}$ in  $GL_N,$ we have: $$\F_{n,k,1}= j(\F_{n-1,k,k})\circ\JJ_2.$$ One notices that all the $\AAnk$-trees in proof of Theorem \ref{thD1} are embedded by $j$ in $\Stab{GL_N}{\JJ_2}$. Therefore by\begin{itemize}
\item redefining  the AFs $\Z_r$ for $0\leq r\leq n-3$, and $\QQ_r$ for $1\leq r\leq n-2$, by embedding the old definitions to the lower right corner and applying $\circ\JJ_2$;
\item redefining for $1\leq r\leq n-2$ the number $N_r$ to equal to $N-k-r-1,$ retaining the meaning of $j_r$ (of course with the renewed meaning of $N_r$ in its definition), and adopting again the convention $\ja\QQ_r=j_r(\QQ_r)$; 
\end{itemize}we obtain the redefined Properties 0,1,2,3 below.

\vsp
\noindent\textit{Property 0. }The AFs $\Z_r$ (resp. $\QQ_r$) are defined for $0\leq r\leq n-3$ (resp. $1\leq r\leq n-2$), and belong to $\AAnk[N]$  (resp. $\AAnk[N_r] $). For $0\leq r\leq n-2,$ let $j_r$ be the lower right corner embedding of $GL_{N_r}$ inside $GL_N.$ We have $\jj\QQ_{n-2}=\Z_{n-3}$.

\vspace{1mm}

\noindent{We use Property 0} without mention.

\vspace{1mm}
\noindent\textit{Property 1. }For all $r$ that the definitions are valid (that is $0\leq r\leq n-3$ for the left hand side, and $1\leq r\leq n-2$ for the right hand side) we have: $$\Z_r\circ\JJ_{k+r+2}\in\BBnk[N]\qquad\text{and}\qquad\QQ_r\in\BBnk[N_r].$$

\vspace{1mm}
\noindent\textit{Property 2 (recursion relation). }Assume that $0\leq r<n-3$. There are two AFs  $\Z_{r,A}$ and $\Z_{r,B}$ in $\AAnk[N]$,  such that:\\2.1: $\Z_r\circ\JJ_{k+r+2}\rif\{\Z_{r,A}\circ\JJ_{k+r+2},\Z_{r,B}\circ\JJ_{k+r+2}\};$\\
2.2: $\Z_{r+1}\circ \JJ_{k+r+3}\rif \Z_{r,A}\circ\JJ_{k+r+2};$\\2.3: $\ja\QQ_{r+1}\rcirc\JJ_{k+r+2}\rif \Z_{r,B}\circ\JJ_{k+r+2};$\\2.4: 
$\Di{D_{\Z_r\circ\JJ_{k+r+2}}}=\Di{D_{\Z_{r+1}\circ \JJ_{k+r+3}}}=\Di{D_{\ja\QQ_{r+1}\rcirc\JJ_{k+r+2}}}. $

\vspace{1mm} 
\noindent\textit{Property 3.} 3.1: $\siota(\F_{n,k,1})=\Z_0\circ\JJ_{k+2}.$ \\3.2: Assume $n\geq 4.$ Then $\QQ_1=\F_{n-2,k,k-1}$.\\3.3: We do not need (and hence omit) any analogue of Property 3.3 in the proof of Theorem \ref{thD1}.\\3.4: $\QQ_{n-2}\in\Prink[(n-2)(k-1)][[k-1]^{n-2}]$.\\3.5: $n-2\neq r\geq 2\implies\QQ_r\rif \emptyset$.\hfill$\square$Part 1

\noindent\textit{\textbf{Part 2:} The use of Main Corollary \ref{maincor}.} By using Properties 1,2, we apply Exchange corollary \ref{exchange} for each $r$ satisfying  $0\leq r<n-3,$ and obtain the recursion relation of orbits:
\begin{equation}\label{recursionorbit2}\Omm{\Z_{r}\circ\JJ_{k+r+2}}=\Omm{\ja\QQ_{r+1}\rcirc\JJ_{k+r+2}}\cup\Omm{\Z_{r+1}\circ\JJ_{k+r+3}}. \end{equation} This formula for all values of $r$ that is proven, and Property 3.1, give:

\begin{equation}\label{zpolap}\Omm{\F_{n,k,1}}=\bigcup_{1\leq r\leq n-2}\Omm{\ja\QQ_r\rcirc\JJ_{k+r+1}}. \end{equation}
From property 3.5, we see that most of the terms in the union are empty. In detail:
\begin{equation}\label{zABCDp}\Omm{\F_{n,k,1}}=\Omm{\jj\QQ_1\rcirc\JJ_{k+2}}\cup\Omm{\jj\QQ_{n-2}\rcirc\JJ_{k+n-1}} \end{equation}
 By Proposition \ref{embedding} we have
 \begin{equation}\label{zBp}\Omm{\ja\QQ_{r}\rcirc\JJ_{k+r+1}}=[k+r+1,\Omm{\QQ_r}_{k+r+1}]\qquad\text{for}\qquad 1\leq r\leq n-2. \end{equation}
  From (\ref{zBp}) and Property 3.4 we obtain:
 \begin{equation}\label{zAAp}\Omm{\jj\QQ_{n-2}\rcirc\JJ_{k+n-1}}=\{\aA{n}{k}{1}{n-3}\}.\end{equation}

 From (\ref{zpolap}) and (\ref{zAAp}), we see that the subset of $\Omm{\F_{n,k,1}}$ which is equal to $\Omm{\jj\QQ_{n-2}\rcirc\JJ_{k+n-1}}$, consists of one element which is $\aA{n}{k}{1}{n-3}$. If  $k>2$, the rest of $\Omm{\F_{n,k,1}}$, is obtained from: (\ref{zABCDp});(\ref{zBp}); Property 3.2; Theorem \ref{thD1}; and if $n\geq 4$, from the two next trivial equalities,
 $$\bB{n}{k}{1}{n-3}=[k+2,\bB{n-2}{k}{k-1}{n-4}],$$ $$\BAB{n}{k}{1}{n-2}=\left[k+2,\AB{n-2}{k}{k-1}{n-2}\right]. $$If k=2, the rest of $\Omm{\F_{n,k,1}}$ is obtained from: (\ref{zABCDp}); (\ref{zBp}); the claim stated and proven below; and  from the trivial equality 
 $$\bB{n}{2}{1}{\lfloor\frac{n-2}{2}\rfloor}=\left[4,\bB{n-2}{2}{1}{\lfloor\frac{n-4}{2}\rfloor}\right]\qquad\text{for }n\geq 5. $$

\vsp
\noindent\textbf{Claim. }\textit{Assume $k=2$. $$\Omm{\QQ_1}_{4}=\left\{\begin{array}{cc}\{\bB{n-2}{2}{1}{\lfloor\frac{n-4}{2}\rfloor}\}&\text{if }n>4\\\hspace{1mm}[3]&\text{if }n=4\end{array}\right..$$}

\vspace{1mm}
\noindent\textit{Proof of Claim. }Recall (Property 3.2) that $\QQ_1=\F_{n-2,2,1}$. If $n\geq 5$ we are done by the inductive hypothesis. Finally If  $n=4$, $\QQ_1$ belongs in $\AAnk(U_3)[3] $, and hence the claim is obtained.  \hfill$\square$Claim$\square$Part 2 

\vspace{1mm}

\end{proof}

\begin{sdefi}[$\Skal{k}{(a_1,l_1),...,(a_m,l_m)}$, $\Skac{k}{(a_1,l_1),...,(a_m,l_m)} $]\label{cadhoc}Consider positive integers: $m\geq 1,$ $k\geq 2,$ $a_1,...,a_m, $ and $l_1,...,l_m$. Let $n=\sum_{1\leq i\leq m}a_i$ and $N=\sum_{1\leq i\leq m}l_i+k(n-m).$ We denote by $\Skal{k}{(a_1,l_1),...,(a_m,l_m)}$ the set consisting of the pairs $(\K,\FF)$ of AFs with the following properties:\begin{enumerate}
\item $(\K,\FF)\in \mathcal{S}_{n,N}$.
\item Throughout the current definition let $T^1,...,T^m$ be the $\DDDD[n]$-components of $\TT{n}{\K},$ ordered so that $$\min(\Set{T^1})<...<\min(\Set{T^m}).$$  We have $a_i=|\Set{T^i}|$. 
\item Fix an $i$ with $1\leq i\leq m$. For $T$ being the $\DDDD[N]$-component of $\TT{N}{\FF}$ satisfying $$\min(\Set{j_{\FF}(T^i)})=\min(\Set{T}),$$ we have  $l_i=|\Set{T}|$.
\item\label{3lastp} If $|\Set{T^1}|\geq 2,$ we require that:  for every $i$ for which $\min(\Set{T^i})$ is bigger or equal from the second smallest element of $\Set{T^1},$ we have $l_i=k.$ 
\item\label{2lastp} Let  $0\leq t< n.$ Let $T^{t,1},...$ be the $\DDDD[n]$-components of $\TT{n}{\K^{\{i:1\leq i\leq t\}}}$, ordered so that $\min(\Set{T^{t,1}})<\min(\Set{T^{t,2}}),...$ . Then $|\Set{T^{t,1}}|\geq |\Set{T^{t,2}}|\geq...$ . For example, for $t=0$ we have: $a_1\geq...\geq a_m$.

\item\label{lastp} Let $L$ be a $GL_n$-root group contained in $D_\K$. Let $l$ be the biggest number for which $p_{l}(j_{\FF}(L))$ is nontrivial, $t$ be the number for which $GL_t$ is the image of $\jm\cdot p_{l}$, and $\alpha$ be the root of $GL_t$ for which ${\jj}^{-1}\cdot p_{l}(j_{\FF}(L))=U_{t,\alpha}$. Then: \begin{enumerate}\item For all roots $\beta$ of $GL_t$ for which $\beta-\alpha$ is a positive root of $GL_t$, we have $U_{t,\beta}\in{\jj}^{-1}\cdot p_{l}(j_{\FF}(D_\K))$.\item For half among the positive roots $\beta$ of $GL_t$ for which $\alpha-\beta$ is a positive root of $GL_t$, we again have $U_{t,\beta}\in{\jj}^{-1}\cdot p_{l}(j_{\FF}(D_\K))$.\end{enumerate}
\end{enumerate}We also define \begin{equation*}\Skac{k}{(a_1,l_1),...,(a_m,l_m)}:=\{j_{\FF}(\K)\circ\FF:(\K,\FF)\in\Skal{k}{(a_1,l_1),...,(a_m,l_m)}\}.\qedhere\end{equation*}
\end{sdefi}Below we give a lemma which one can easily skip by replacing each of its uses with a few routine arguments. We use it in the proof of Properties 3.1 and 2.3 inside the proof of Theorem \ref{thD3}, where it implies in a not very ad-hoc way that the domain of an AF as we move along the path $\sI(\F)$  changes ``as usual". 
\begin{sdefi}[Only used in the lemma below]\label{generategroup}
 Let $H$ be a connected algebraic group and $S$ a set of connected algebraic subgroups of $H$ that generate $H.$ We then write $\langle S\rangle =H.$
 
 Also (as in Definition \ref{VpV}) for any $\ossa$-group $V$ we denote by $\rossa{V}$ the smallest group containing $V$ which is generated by root groups.\end{sdefi}

\begin{slem}\label{2aeu}Let $S$ be a set of $\ossa$-groups such that (i),(ii), (iii), and (iv) below hold:

\vsp 
\noindent{(i)}For any two groups $V_1,V_2$ in $S$, $[V_1,V_2]$ is a $\ossa$-group and $\rossa{[V_1,V_2]}=[\rossa{V_1},\rossa{V_2}]$.

\vsp
\noindent{(ii)}  For any two groups $V_1,V_2$ in $S$, there is a subset $S(V_1,V_2)$ of $S$ such that: any two groups in $S(V_1,V_2)$ commute; $[V_1,V_2]\subseteq\prod_{V_3\in S(V_1,V_2)} V_3$; and $\rossa{[V_1,V_2]}=\prod_{V_3\in S(V_1,V_2)} \rossa{V_3}$. 

\vsp
\noindent{(iii)} For every entry there is at most one group in S which is nontrivial on this entry.

\vsp 
\noindent{(iv)} $\langle S\rangle\subseteq U_n.$

\vsp
Let $\F\in\AAnk[N] $ be such that: $D_\F=\langle S\rangle $, and $\sI(\F)$ is a nontrivial path starting with a $\eu$-step. Let $\F^\prime$ be the output AF of this $\eu$-step. Also, let the $(k,l_1)$ entry be the first one ``removed" by $\sI(\F)$. That is, we first choose $l_1$ as big as possible so that $U_{(1,l_1)}\subset D_\F$ and $\F(U_{(1,l_1)})\neq \{0\}, $ and then choose $k$ as big as possible so that $D_\F$ is nontrivial on the $(k,l_1)$ entry. Then (I), (II), and (III), below hold:

\vsp
\noindent{(I). }On every entry on which $\langle S\rangle$ is nontrivial, a (unique due to (iii)) group in $S$ is also nontrivial. 

\vsp 
\noindent{Before stating (II) and (III)} we need to define $Y$ and $S^\prime.$ We define $Y$ to be the group in $S$ which is nontrivial on the $(k,l_1)$ entry. If $\langle S\rangle$ is trivial on the $(1,k)$ entry, we define $S^\prime:=S\cup\{U_{(1,k)}\}-\{Y\}.$ Otherwise, for $V_k$ being the group in $S$ which is nontrivial on the $(1,k)$ entry, and $p$ being as in Definition \ref{VpV} for $[V\tlarrow V_k]$ and for $R$ making the biggest (with respect to set containment) choice not containing $U_{(1,k)}$, we define $S^\prime:=S\cup\{U_{(1,k)},p(V_k)\}-\{Y,V_k\}.$

\vsp 
\noindent{(II). }$D_{\F^\prime}=\langle S^\prime\rangle$.

\vsp
\noindent{(III). } Conditions (i),(ii), (iii), and (iv) remain correct after the replacement $[S\tlarrow S^\prime].$
\end{slem}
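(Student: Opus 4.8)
The plan is to establish (I), (II), (III) by unwinding the single $\eu$-step at the start of $\sI(\F)$, exactly as in Case 3 of the proof of Theorem \ref{general}, and using the hypotheses (i)--(iv) to verify that the relevant identity embedding $j_{D_\F,k,l_1}$ (in the notation of \eqref{embedding1}) is an isomorphism and that the exchange produces a domain again generated by a set of $\ossa$-groups satisfying the same four conditions. First I would dispose of (I): since $\langle S\rangle=D_\F$ and the elements of $S$ are $\ossa$-groups, any matrix in $D_\F$ is a product of matrices in the groups of $S$; condition (ii) (that commutators of groups in $S$ sit inside products of groups in $S$, with the analogous statement for the root-generated envelopes $\rossa{\cdot}$) guarantees that multiplying such matrices cannot create a nonzero value in an entry on which every group of $S$ is trivial — the new entries produced by multiplication are controlled by the commutators, which by (ii) and (iii) live on entries already covered by (unique) groups of $S$. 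Thus the set of entries on which $D_\F$ is nontrivial equals the union, over $V\in S$, of the entries on which $V$ is nontrivial, which is (I).

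Next, for (II): with $l_1$ and $k$ chosen as in the statement, $\F$ is nontrivial on $U_{(1,l_1)}$ and $D_\F$ is nontrivial on the $(k,l_1)$ entry. By (I) there is a unique $Y\in S$ nontrivial on $(k,l_1)$; by (iv) $Y\subset U_n$ so $k<l_1$, and $Y$ being a $\ossa$-group it is at most one-dimensional, hence (after using Fact \ref{areconnected}) a one-dimensional $\ossa$-group. The first $\AAnk$-step of $\sI(\F)$ is the $\eu$-step corresponding (in the notation of the proof of Theorem \ref{general}) to the exchange that moves the $(1,l_1)$ root group against $Y$; I would identify $X=U_{(1,l_1)}$, $Y$ as above, $[X,Y]=U_{(1,k)}$ (this is where $k<l_1$ and $Y$ being nontrivial on $(k,l_1)$ is used, together with the $A_n$ root combinatorics), and then $\eu$-operation replaces $X D_\F$-structure: the output AF $\F^\prime$ has $D_{\F^\prime}$ obtained from $D_\F$ by removing $Y$ (more precisely removing its contribution to the $(k,l_1)$ entry) and inserting $U_{(1,k)}$. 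When $\langle S\rangle$ is already nontrivial on the $(1,k)$ entry, one must also record that the previously-present group $V_k$ on that entry gets replaced by its projection $p(V_k)$ off the $(1,k)$ coordinate; this is exactly the bookkeeping in Definition \ref{VpV} and matches the definition of $S^\prime$ in the statement. Carefully matching generators on both sides — using (i) to know $[\rossa{X},\rossa{Y}]=\rossa{[X,Y]}$ and (ii)/(iii) to know no unexpected entries appear — gives $D_{\F^\prime}=\langle S^\prime\rangle$.

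Finally, for (III), I would check that $S^\prime$ still satisfies (i)--(iv). Conditions (iii) and (iv) are immediate: $U_{(1,k)}$ and $p(V_k)$ are supported on entries already ``seen'' by $S$ (via $Y$ and $V_k$ respectively, together with condition (I) applied to $D_\F$), and all groups involved remain inside $U_n$. For (i) and (ii) the point is that commutators among the new generators are again commutators computable in the $A_n$ root datum: $[U_{(1,k)},\cdot]$ and $[p(V_k),\cdot]$ land on entries of strictly larger ``height'' which, by (I) for $D_\F$ and the structure of $S^\prime$, are covered by unique groups of $S^\prime$; and the $\rossa{\cdot}$-statements transfer because taking $\rossa{\cdot}$ commutes with these commutator computations. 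I expect the main obstacle to be the bookkeeping in the case where $\langle S\rangle$ is nontrivial on $(1,k)$: one has to verify that the replacement of $V_k$ by $p(V_k)$ is forced (i.e. that the $\eu$-step, applied to an AF whose domain contains a $\ossa$-group straddling the $(1,k)$ entry, genuinely kills the $(1,k)$-component and leaves exactly $p(V_k)$), and that this does not disturb conditions (i), (ii) for the remaining pairs — this is where conditions (i) and (ii) for the original $S$ are used most delicately, to guarantee that $p(V_k)$ together with the other unchanged groups still has commutators of the required controlled form. Everything else is routine root-combinatorics in $GL_n$ together with repeated invocations of Fact \ref{areconnected} and Lemma \ref{euequivalent}.
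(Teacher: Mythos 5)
Your (I) matches the paper's argument (an induction on the length $m$ of words $u_1\cdots u_m$ with $u_i$ in groups of $S$, using (i) and (ii) to control the entries that multiplication can populate), so that part is fine. For (II) and (III), however, there is both a concrete error and a genuine gap.

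First the error: you set $X=U_{(1,l_1)}$ and claim $[X,Y]=U_{(1,k)}$. This is backwards. In the $\eu$-step of $\sI(\F)$ the root group that is \emph{added} to the domain is $U_{(1,k)}$, and $Y$ (the $\ossa$-group nontrivial on the $(k,l_1)$ entry) is what is \emph{removed}; the commutator $[U_{(1,k)},Y]$ lands on the $(1,l_1)$ entry, which is where $\F$ is nontrivial and which provides the nondegenerate pairing. With your labeling the two groups in $\{U_{(1,l_1)},\;\text{group on }(k,l_1)\}$ commute inside $U_n$, so the $\eu$-step you describe is degenerate. The correct identification is $X=U_{(1,k)}$, $Y$ on $(k,l_1)$, $[X,Y]\supseteq U_{(1,l_1)}$.

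Now the gap, which is the crux of (II). You propose to ``carefully match generators on both sides,'' but the nontrivial content is precisely this: you must show that deleting $Y$ from the generating set removes \emph{exactly} the $(k,l_1)$-contribution, i.e.\ that $\langle S-\{Y\}\rangle = D_\F\cap L_{(k-1,l_1)}$. This is not automatic, because a priori the $(k,l_1)$ entry could be regenerated by commutators among the remaining generators. The paper gets this in one stroke from the observation $Y\notin\bigcup_{V_1,V_2\in S}S(V_1,V_2)$, which follows from the maximality of $k$ together with condition (ii) (a commutator landing on $(k,l_1)$ would force a generator of $S$ to be nontrivial on some $(i,l_1)$ with $i>k$). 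From that, $\langle S-\{Y\}\rangle$ is normal in $\langle S\rangle$ with $\langle S-\{Y\}\rangle\, Y=\langle S\rangle$, and the dimension comparison via Fact \ref{areconnected} forces $\langle S-\{Y\}\rangle = \langle S\rangle\cap L_{(k-1,l_1)}$. Your plan never isolates this step; ``the $\eu$-step genuinely kills the $(1,k)$-component and leaves exactly $p(V_k)$'' is a description of the answer, not an argument for it. Likewise for (III), the paper first establishes the concrete fact that $U_{(1,l)}\in S$ for every $k<l<l_1$ (via the Lie-algebra direct-sum decomposition of $\Lie{D_\F}$ coming from (ii) and the uniqueness forced by (iii)); this is what lets the verification of (i)--(iv) for $S'$ go through, and your plan does not surface it. You should add both of these lemmas to your plan before the rest can be called a proof.
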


\begin{proof}We use (iv) without mention.

From (i) and (ii) we inductively obtain that for $m>0$ the following holds: on every entry on which the set \begin{equation}\label{zsetm}\{u_1...u_m:\text{each }u_i\text{ belongs to a group in }S\}\end{equation} is nontrivial, a group in $S$ is nontrivial as well. For big enough $m$, the set in (\ref{zsetm}) is equal to $\langle S\rangle$ and hence (I) is obtained.

We continue with (II). As in the proof of Theorem \ref{general} let $L_{(k-1,l_1)}$ be the subgroup of $U_n$ generated by all positive root groups except for $U_{(i,l_1)}$ for $k-1<i<l_1$. From (ii) we obtain that
\begin{equation}\label{zvnotinz}Y\not\in\bigcup_{V_1,V_2\in S}S(V_1,V_2);\end{equation} and hence that $\langle S-\{Y\}\rangle$ is a normal subgroup of $\langle S\rangle$, and  $\langle S-\{Y\}\rangle Y=\langle S\rangle$. Hence, by also using the containments (the second being proper)
$\langle S-\{Y\}\rangle\subseteq\langle S \rangle\cap L_{(k-1,l_1)}\subset \langle S \rangle,  $ and Fact \ref{areconnected}, we obtain \begin{equation}\label{zcxv}\langle S-\{Y\}\rangle =\langle S\rangle\cap L_{(k-1,l_1)}.\end{equation}  
From the definition of $\sI$ we obtain the first equality below, and by using (\ref{zcxv}) (and $D_{\F}=\langle S\rangle$) we obtain the second one: $$D_{\F^\prime}=(D_{\F}\cap L_{(k-1,l_1)})U_{(1,k)}=\langle S-\{Y\}\rangle U_{(1,k)}.$$ The equality in this formula of the left hand side with the right hand side imply (II).

Let $k<l<l_1.$ From the definition of $\sI$ we have $U_{1,l}\subset D_\F.$ From this containment, from a basic property on the correspondence between algebraic groups and Lie algebras\footnote{ Let $G$ be an algebraic group, and $R$ be a set of closed connected subgroups of $G$ that generate $G.$ Then $\Lie{G}$ is generated as a Lie algebra from the Lie algebras $\Lie{H} $ for $H\in R.$},  and from (ii)  we have that $\Lie{U_{1,l}}$ is contained in the \textbf{vector space} direct sum $$\bigoplus_{X\in S}\Lie{X}.   $$This and (iii) imply that $U_{(1,l)}\in S.$ Hence (III) is obtained (by also using (i), (ii), (iii) and (iv)).
\end{proof}
\begin{sremark}Of course, by Fact \ref{areconnected} we obtain  $\
\F^\prime=\exchange{U_{(1,k)}}{Y}\F. $
\end{sremark}
\begin{sdefi}[$\F_{a,k}$]\label{ladefi}Let $a=[a_1,...,a_m]\in\UUU{n}$ and $\K_a\in\Pridbnk[n][a]_{\nless}\cap^\prime\BBnk[n]$. We define $\FF_{n,k,k}$ as previously (Definition \ref{zfnkl}). We define   $$\F_{a,k}:=j_{\FF_{n,k,k}}(\K_a)\circ\FF_{n,k,k}. $$ Note that $\F_{a,k}\in\Skac{k}{(a_1,k),...,(a_m,k)}.$   
 \end{sdefi}
\begin{remark}Note that by using Corollary \ref{laststrong} the theorem below implies its extended form in which for $\K_a$ one only assumes $\K_a\in\Prink[n][a]\cap^\prime\BBnk[n]$.
\end{remark}
\begin{stheorem}\label{thD3}Consider positive integers $n\geq 2,k\geq 2,m,a_1\geq...\geq a_m,$ such that $n=\sum_{1\leq i\leq m}a_i.$

\vsp
\noindent\textit{Part $(\alpha)$. }Let $a:=[a_1,...,a_m],$ and   $x$ be the number of occurrences of  1 in this partition. Let $\F_{a,k}$ be defined as previously. Let 
$$\aAbig{a}{k}{n-2m+x}:=[k+a_1-1,k+a_2-1,...k+a_m-1,[k-1]^{n-m}], $$  and 
$$\bBbig{a}{k}{n-2m+x}:=[[k+1]^{n-m},k-a_1+1,k-a_2+1,...,k-a_m+1]. $$ Then $\Omm{\F_{a,k} }=\left\{\begin{array}{cc}\{\aAbig{a}{k}{n-2m+x},\bBbig{a}{k}{n-2m+x}\}&\text{if }k-a_1+1\geq 0\\ \{\aAbig{a}{k}{n-2m+x}\}&\text{if }k-a_1+1<0 \end{array}\right..$

\vsp
\noindent\textit{Part $(\beta)$. }Consider positive integres $l_1,...,l_m,$ and let $\F\in\Skac{k}{(a_1,l_1),...,(a_m,l_m)}$. The set $$\Omm{\F}_{k+1}, $$contains at most one element. It is nonempty if and only if
$l_i-a_i+1\geq 0$ for all i. If it is nonempty, its unique element is
\begin{equation}\label{zBBBBb}[[k+1]^{n-m},l_1-a_1+1,l_2-a_2+1,...,l_m-a_m+1]. \end{equation}
\end{stheorem}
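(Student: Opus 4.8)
\textbf{Proof proposal for Theorem \ref{thD3}.}

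The plan is to prove the two parts together by a single induction, following the same two-stage template used in the proofs of Theorems \ref{thD1} and \ref{thD2}: first a ``Part 1'' in which only the combinatorics of $\AAnk$-trees are exploited (building a tree out of $\sI$, peeling off rows of the first block one at a time, and recording a recursion of the form $\Z_r\circ\JJ_{\ast}\rif\{\Z_{r,A}\circ\JJ_\ast,\Z_{r,B}\circ\JJ_\ast\}$ together with $\Z_{r+1}\circ\JJ_\ast\rif\Z_{r,A}\circ\JJ_\ast$ and $\ja\QQ_{r+1}\rcirc\JJ_\ast\rif\Z_{r,B}\circ\JJ_\ast$); and then a ``Part 2'' in which Exchange corollary \ref{exchange} turns this into a recursion of $\Omm{-}$, after which Proposition \ref{embedding}, Main corollary \ref{maincor}, Lemma \ref{dimensioncounting} and the inductive hypothesis finish the count. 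I would induct on $N=\sum l_i + k(n-m)$; the base cases are when $n=m$ (so $a=[1^m]$ and $\F$ is essentially $\jj\WW_m$ composed with a trivial-domain AF, giving a single orbit by Observation \ref{bound} and Proposition \ref{basic}) and when $m=1$, where Part $(\alpha)$ with $a=[a_1]$ reduces to the case analysis already implicit in Proposition \ref{embedding}/Theorem \ref{thD1} for a single Jordan block.

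For Part $(\beta)$ the key point is that the hypotheses built into $\Skal{k}{(a_1,l_1),\dots}$ — especially conditions \ref{3lastp}, \ref{2lastp}, \ref{lastp} and the membership $\FF\in\Priubnk[N,\nless]\cap^\prime\BBnk[N]$ — are exactly what is needed so that (i) every AF encountered along $\sI(\F)$ and along the $\e$-quasipaths stays in $\BBnk[\ast]$ (via Lemma \ref{Sgen}, after checking the relevant pairs land in some $\mathcal{S}_{n',N'}$), and (ii) the auxiliary AFs $\QQ_r$ obtained after $\siota$ are themselves of the form $j_{\FF'}(\K')\circ\FF'$ with $(\K',\FF')$ in a suitable $\Skal{k}{\cdots}$ of strictly smaller size, so the inductive hypothesis applies. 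Here Lemma \ref{2aeu} is the tool I would cite to see that the domains change ``as usual'' as one moves along $\sI(\F)$ (i.e.\ that each $\eu$-step replaces the unique $\ossa$-group $Y$ on the pivot entry by a root group $U_{(1,k)}$, giving the expected $D_{\F'}=\langle S'\rangle$), so that the recursion closes cleanly. Once the recursion $\Omm{\Z_r\circ\JJ_\ast}=\Omm{\ja\QQ_{r+1}\rcirc\JJ_\ast}\cup\Omm{\Z_{r+1}\circ\JJ_\ast}$ is in place, almost all terms vanish by an analogue of Property 3.5 (``$r\ge 2$ and $r\ne$ last $\implies \QQ_r\rif\emptyset$'', proved by exhibiting a transitive torus action on the last $\e$-step of the relevant quasipath), leaving the bottom term $\ja\QQ_{\mathrm{last}}\rcirc\JJ_\ast$, which by Proposition \ref{embedding} contributes $[k+\cdots,\dots]$ and accounts for the orbit $\bBbig{a}{k}{n-2m+x}$ in (\ref{zBBBBb}) precisely when the nonnegativity conditions $l_i-a_i+1\ge 0$ hold (this is where the ``$\le k+1$'' truncation $\Omm{-}_{k+1}$ and a dimension comparison via Lemma \ref{dimensioncounting} and Main corollary \ref{maincor} enter, as in Claim 2 of Theorem \ref{thD1}).

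Part $(\alpha)$ is then deduced from Part $(\beta)$ together with Theorem \ref{thD1}: with all $l_i=k$, the AF $\F_{a,k}$ has $\K_a\in\Pridbnk[n][a]_{\nless}\cap^\prime\BBnk[n]$, and applying $\siota$ / the first few $\e$-quasipaths of $\sI(\F_{a,k})$ splits $\Omm{\F_{a,k}}$ into two contributions: one from the ``constant-term'' branch which, after peeling off the first Jordan block of size $a_1$ via Observation \ref{unipc}, reduces to the same theorem for the partition $[a_2,\dots,a_m]$ in smaller rank (giving the orbit $\aAbig{a}{k}{n-2m+x}$ by induction, together with the identity $\aAbig{a}{k}{n-2m+x}=[k+a_1-1,\aAbig{[a_2,\dots,a_m]}{k}{\cdots}]$), and one from the ``non-constant'' branch which is handled by Part $(\beta)$ (giving $\bBbig{a}{k}{n-2m+x}$ exactly when $k-a_1+1\ge 0$). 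The orbits $\aAbig{a}{k}{n-2m+x}$ and $\bBbig{a}{k}{n-2m+x}$ are always defined, the first is always present (its branch never collapses), and the dimension bookkeeping — $\Di{D_{\F_{a,k}}}=\tfrac12\Di{\aAbig{a}{k}{n-2m+x}}=\tfrac12\Di{\bBbig{a}{k}{n-2m+x}}$ when $k-a_1+1\ge 0$ — is forced by Fact \ref{rich3}, Fact \ref{ptelos} and Lemma \ref{dimensioncounting}, so Main corollary \ref{maincor} gives $\mult{\cdot}{\F_{a,k}}=1$ for both. The main obstacle I anticipate is purely organizational rather than conceptual: verifying that the auxiliary pairs produced at each stage genuinely satisfy all six conditions of Definition \ref{cadhoc} (in particular the ``half of the positive roots'' condition \ref{lastp}, which is what makes the $\eu$-steps in Part 1 legitimate and the resulting $\QQ_r$ again a member of the right $\Skal{k}{\cdots}$), and keeping the indices $(a_i,l_i)$ straight through the peeling; this is the same bookkeeping that in Theorems \ref{thD1} and \ref{thD2} was absorbed into ``Property 0/1/2/3'', and I would structure the write-up identically, stating the analogous Properties up front and then checking them in a Subpart 1.3 using Lemma \ref{2aeu} and Lemma \ref{Sgen}.
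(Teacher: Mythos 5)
Your plan matches the paper's proof in every essential: a single induction on $N$ treating parts $(\alpha)$ and $(\beta)$ in parallel, a ``Part 1'' that sets up the recursion $\Z_r\circ\JJ_\ast\rif\{\Z_{r,A}\circ\JJ_\ast,\Z_{r,B}\circ\JJ_\ast\}$ (with Lemma \ref{Sgen} and Lemma \ref{2aeu} doing exactly the bookkeeping you anticipate, including checking conditions \ref{3lastp}--\ref{lastp} of Definition \ref{cadhoc} for each $\QQ_r$), and a ``Part 2'' that uses Exchange corollary \ref{exchange}, Proposition \ref{embedding}, a Property-3.5 analogue to kill intermediate terms, and a dimension comparison (Lemma \ref{dimensioncounting} plus Main corollary \ref{maincor}) to isolate the surviving orbit from the $\QQ_{a_1-1}$ branch. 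The one place your sketch is loose is the base cases: the paper keeps $a_1=1$ as the sole base case and handles $m=1$ inside the induction, whereas you would cite Theorem \ref{thD1}; that works, but for Part $(\beta)$ with $m=1$ and $l_1=1$ you would also need Theorem \ref{thD2}, and more importantly the emptiness of $\Omm{\F}_{k+1}$ when some $l_i-a_i+1<0$ is detected in the paper not by the induction alone but via the $l_1=1,\ a_1>2$ case of Property 3.1, which gives $\siota(\F)|_{D_{\JJ_{k+2}}}=\JJ_{k+2}$ and kills $\Omm{\F}_{k+1}$ by Observation \ref{bound} — a case split you should make explicit.
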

\begin{tproof}Let $N$ be the number for which $\F\in\AAnk[N].$ Within every statement about $\F_{a,k}$ we assume that $\F=\F_{a,k}$, and hence the meaning of notations defined in terms of $\F$ (e.g, $N$) is adjusted. 

By proceeding inductively we assume the theorem (both $(\alpha)$ and $(\beta)$) is correct for $N$ being replaced by any smaller number. In the case $a_1=1$, the theorem follows directly from condition \ref{2lastp} in Definition \ref{cadhoc}, and the definition of $\Om(\ast). $ We therefore assume $a_1>1.$

 We keep track of the uses of Conditions \ref{3lastp}, \ref{2lastp}, \ref{lastp} in Definition \ref{cadhoc} (and usually use the first three conditions without mentioning). 

\vsp
\noindent\textit{\textbf{Part 1.} The study of $\AAnk$-trees based only on direct use of $\AAnk$-tree definitions.}

\vsp 
\noindent\textit{\textbf{Subpart 1.1.} Definitions.} Let $(\K,\FF)\in S(k,(a_1,l_1),...,(a_m,l_m))$ be chosen so that $\F=j_{\FF}(\K)\circ\FF$. Definition \ref{dem} is adopted for the current choice of $\FF$ (for example  $\tj:=\jj_{\FF}$).

Let $T_n[a_1]$ be the  $\DDDD[n]$-component of $\TT{n}{\K}$ satisfying \\$\min(\Set{T_n[a_1]})=1$. Let $f(1),f(2),...,f(a_1)$, be the increasing sequence for which$$\Set{T_n[a_1]}=\{f(1),...,f(a_1)\} .$$  Of course we  have $f(1)=1.$ 

For $1\leq r\leq a_1,$ let $T_n\{r\}$ and $T_n[r]$ be the tori in $\DDDD[n]$ satisfying:

 $$\Set{T_n\{r\}}=\{f(r)\}\qquad\text{and}\qquad\Set{T_n[r]}=\{f(1),f(2),...,f(r)\}.$$  For $2\leq r\leq a_1, $ let $T_{N,r}$ be the torus in $\DDDD[N]$ satisfying
 $$\Set{T_{N,r}}:=\Set{\tj(T_n\{1\})}\cup\Set{p_{\{i:l\leq i\leq k-1\}}(\tj(T_n\{2\}))}\cup\Set{p_k(\tj(T_n[r]))}, $$and let $w_r$ be the minimal length element in $W_{N}$  satisfying $$w_rT_{N,r}w_r^{-1}=\{1,...,k+r-1\}.$$
 
We define $H$ to be the subgroup of $GL_n$ which is described as follows. If $l_1=k$, $H=GL_n^{\{1,2,...,f(2)-1\}}$. If $l_1\neq k  $, $H$ is the mirabolic subgroup of $GL_n^{\{1,...f(2)-1\}}$, in which the  $GL_1$ copy of the Levi appears in the upper left corner. Note that by condition \ref{3lastp} in Definition \ref{cadhoc} we have $H\subseteq P(\FF).$

 We define three homomorphisms: $\tjA: P(\FF)\rightarrow GL_N$ and     $\tjp,\tjB:H\rightarrow GL_N$, as follows: $$\tjA:=p_k\cdot\tj;$$for $l_1=k  $
 $$\tjp=\tj|_H\qquad\text{and}\qquad\tjB|=p_{\{i:1\leq i\leq k-1\}}\cdot\tj|_H; $$for $l_1\neq k $
  $$\tjp|_{GL_n^{\{1,2,...,f(2)\}}}=\tj|_{GL_n^{\{1,2,...,f(2)\}}},\qquad\qquad\tjB|_{GL_n^{\{1,2,...,f(2)\}}}=p_{\{i:1\leq i\leq k-1\}}\cdot\tj|_{GL_n^{\{1,2,...,f(2)\}}},$$ $$\tjp|_{\mathrm{row}f(2)}=p_{\{i:1\leq i\leq  l_1-1\}\cup\{k\}}\cdot\tj|_{\mathrm{rowf}(2)},\qquad\tjB|_{\mathrm{rowf}(2)}:=p_{\{i:1\leq i\leq l_1-1\}}\cdot\tj|_{\mathrm{rowf}(2)}.$$ 
  
  We define $$\tj(\K)^{T_n[r]}:=\tj(\K)|_{({\jj}^{-1}\cdot p_{l_1})^{-1}\left(\left({\jj}^{-1}\cdot p_{l_1}^{}(\tj(\K))\right)^{\{f(1),...f(r)\}}\right)}.$$ Some times we have $\tj(\K)^{T_n[r]}=\tj(\K^{T_n[r]})$ (e.g for $\F=\F_{a,k}$).
Finally we define $\Z_r(\F)$ and $\QQ_r(\F)$. It is routine\footnote{One can also obtain this from information occurring within the proofs of properties 1,2,3 below. For example, from the proof of Property 3.1 we obtain that $\Z_0(\F)$ is well defined.} to check that their  definitions are valid. Assume (for the rest of the current subpart) that: $l_1>1$ or $a_1=2.$ For $0\leq r\leq a_1-2$ we define
\begin{equation}\label{zdz}\Z_r(\F):=w_{r+2}\left(\tj(\K)^{T_n[r+2]}\circ\tjp(\K|_{\mathrm{row}f(r+2)})\circ \FF^{T_{N,r+2}}\right).\end{equation}
For $1\leq r+1\leq a_1-1$ let $N_{r+1}:=N-k-r-1$, and $\QQ_{r+1}(\F)$ be the AF in $\AAnk[N_{r+1}]$, such that for $\ja\QQ_{r+1}(\F)$ being its lower right corner copy in $\AAnk[N]$ we have 
$$\ja\QQ_{r+1}(\F)=w_{r+2}\left(\tj(\K)^{T_n[r+2]}\circ\tjB(\Krow{r+2})\circ \FF^{T_{N,r+2}}\right). $$\hfill$\square$Subpart 1.1

\noindent\textit{\textbf{Subpart 1.2}: The  results of Part 1.} 
In stating properties 0,1,2,3 below, it is assumed unless otherwise specified that: \begin{equation}\label{zzla}l_1>1 \qquad\text{ or }\qquad a_1=2.\end{equation}

\vsp
\noindent\textit{Property 0} . The AFs $\Z_r(\F)$ (resp. $\QQ_r(\F)$) are defined for $0\leq r\leq a_1-2$ (resp. $1\leq r\leq a_1-1$), and belong to $\AAnk[N]$  (resp. $\AAnk[N_r] $). We have $\jj\QQ_{a_1-1}(\F)=\Z_{a_1-2}(\F)$.  

\vspace{1mm}

\noindent{We use Property 0} without mention.

\vspace{1mm}
\noindent\textit{Property 1. }For all $r$ for which the definitions are valid (that is $0\leq r\leq a_1-2$ for the left hand side, and $1\leq r\leq a_1-1$ for the right hand side) we have: $$\Z_r(\F)\circ\JJ_{k+r+1}\in\BBnk[N]\qquad\text{and}\qquad\QQ_r(\F)\in\BBnk[N_r].$$

\vspace{1mm}
\noindent\textit{Property 2 (recursion relation). }Let $0\leq r<a_1-2$. There are AFs  $\Z_{r,A}(\F)$ and $\Z_{r,B}(\F)$ in $\AAnk[N] $ such that:\\2.1: $\Z_r(\F)\circ\JJ_{k+r+1}\rif\{\Z_{r,A}(\F)\circ\JJ_{k+r+1},\Z_{r,B}(\F)\circ\JJ_{k+r+1}\}; $\\
2.2: $\Z_{r+1}(\F)\circ \JJ_{k+r+2}\rif \Z_{r,A}(\F)\circ\JJ_{k+r+1}; $\\2.3: $\ja\QQ_{r+1}(\F)\rcirc\JJ_{k+r+1}\rif \Z_{r,B}(\F)\circ\JJ_{k+r+1};$\\2.4:$\Di{D_{\Z_r(\F)\circ\JJ_{k+r+1}}}=\Di{D_{\Z_{r+1}(\F)\circ \JJ_{k+r+2}}}=\Di{D_{\ja\QQ_{r+1}(\F)\rcirc\JJ_{k+r+1}}}. $

\vspace{1mm} 
\noindent\textit{Property 3.} 3.1: We remove the requirement (\ref{zzla}) (only for 3.1). If $l_1>1\text{ or }a_1=2$, then $\siota(\F)=\Z_0(\F)\circ\JJ_{k+1} $. If $l_1=1\text{ and }a_1>2 $, then $\siota(\F)|_{D_{\JJ_{k+2}}}=\JJ_{k+2}.$   \\3.2:We have $$\QQ_1(\F)\in\Skac{k}{(a^\prime_1,l_1^\prime),...,(a_{m^\prime}^\prime,l_{m^\prime}^\prime)} $$ where $m^\prime$ and the pairs $(a^\prime_1,l_1^\prime),...,(a_{m^\prime}^\prime,l_{m^\prime}^\prime)$ are defined as follows: If $l_1>1$, then $m^\prime=m$ and the pairs  are the same as the pairs $(a_1-1,l_1-1),(a_2,l_2),...,(a_m,l_m),$ but in the order after we move $(a_1-1,l_1-1)$ to the right of every $(a_i,l_i)$ for each $2\leq i<f(2)$; and if $l_1=1$, then $m^\prime=m-1$ and the pairs are the same as $(a_2,l_2),...,(a_m,l_m)$ and in the same order.\\3.3: We do not need (and hence omit) any analogue of Property 3.3 of Theorem \ref{thD1}.\\3.4: Throughout property 3.4 let $\F=\F_{a,k}.$ We have 

  $$\QQ_{a_1-1}(\F_{a,k} )\rif \ja\F_{a^\ih,k}\rcirc\Jdeg[N_{a_1-1}]$$ where $a^\ih:=[a_2,...,a_m]$, $\ja\F_{a^\ih,k}$ is the lower right corner copy of $\F_{a^\ih,k}$ in $\AAnk[N_{a_1-1}] $, and $\Jdeg[N_{a_1-1}]$ is an AF satisfying: \begin{itemize}
\item $J_{\Jdeg[N_{a_1-1}]}\in\Prink[a_1-1][[k-1]^{a_1-1}] $,
\item the set $D_{\Jdeg[N_{a_1-1}]}$ is the unipotent\chh[j] radical of the $GL_{N_{a_1-1}}$-parabolic subgroup with Levi $$GL_1^{(a_1-1)(k-1)}\times^{\searrow} GL_{N_{a_1-1}-(a_1-1)(k-1)}.$$
\end{itemize}Of course a choice (among conjugates) of $\F_{a^\ih,k}$ is made as needed.\\
3.5: $n-1> r>1\implies\QQ_r(\F_{a,k} )\rif \emptyset$. 

\hfill$\square$Subpart 1.2

\vsp 
\noindent\textit{\textbf{Subpart 1.3: }The proof of Properties 0,1,2,3.} Every $\AAnk$-tree that is mentioned is assumed to be an $(\AAnk,\KKK)$-tree.

\noindent\textit{Proof of Property 0. } It follows directly from the definitions in Subpart 1.1.\hfill$\square$Property 0

\noindent\textit{Proof of Property 3.1.} By Lemma \ref{2aeu}, we explicitly see how in an $\AAnk$-step of $\sI(\F) $ the output AF differs from the input AF, for as long as---starting with the first $\AAnk$-step of $\sI(\F) $--- these $\AAnk$-steps are either $\eu$-steps or $\co$-steps obtained by a conjugation by an element in $W_N;$ and due to condition \ref{lastp} in Definition \ref{cadhoc}, we see that this is the case for all the $\AAnk$-steps in $\sI(\F)$.

More details follow. Let
 $$T^{\prime\prime},w^{\prime\prime},k^{\prime\prime}:=\left\{\begin{array}{cl}T_{N,2},w_2,k&\text{if }l_1>1\text{ or }a_1=2 \\T_{N,3},w_3,k+1&\text{if }l_1=1\text{ and }a_1>2\end{array}\right..$$ Let $C_1,...,C_{k^{\prime\prime}}$ be the elements of $\SRV{T^{\prime\prime}}$ in the order that: starts by choosing $C_1$ to be in the first row, and moves downwards. Then the conjugate of $\sI(\F)$ in which there is only one $\co$-step and appears in the end is equal to 
$$\co\vee\I_{k^{\prime\prime}}\vee...\vee\I_1  $$ where:\begin{enumerate}
\item For $1\leq i\leq k^{\prime\prime},$ the path $\I_i$ is obtained by successive applications of Lemma \ref{2aeu} for the following choices of $Y$: $Y$ is as in the statement of this Lemma after conjugating with the minimal length element in $W_N$ which maps $U_{(1,2)},...U_{(i-1,i)}$ respectively to $C_1,...,C_{i-1}$, $Y$ is either a root group or a group of the form $\tj(V)$ for $V$ being a $GL_n$-root group,   $Y\subset D_\F$, $Y$ is nontrivial on the column on which $C_i$ lies, and $Y$ is trivial on any row on which a root group in $\SRV{T^{\prime\prime}}$ lies. 
\item $\co$ is the $\co$-step obtained from the conjugation by $w^{\prime\prime}$. 
\end{enumerate}\hfill$\square$Property \textit{3.1} 

\vsp 
\noindent\textit{Proof of Property 3.2. }It directly follows  from the definition of $\QQ_1(\F).$ We give more details. For $1\leq x\leq \ref{lastp}$ we  denote by $x^\prime$ the condition $x$ in Definition \ref{cadhoc} for $$\left[\tj_{\FF}(\K)\tlarrow j_{1}^{-1}\left(w_{2}(\tj(\K)^{T_n[2]}\circ\tjB(\K|_{\rowf{2}}))\right),\FF\tlarrow j_{1}^{-1}\left(w_{2}(\FF^{T_{N,2}})\right) \right]$$(where the second ``$\tlarrow$" precedes the first one). Up to several uses of condition 1 which we do not mention, we have: $x\implies x^\prime$ for $x=1,2,3,\ref{3lastp},\ref{2lastp}$; and  $(\ref{3lastp}\text{ and }\ref{lastp})\implies \ref{lastp}^\prime.$\hfill $\square$Property \textit{3.2}

\vsp
\noindent\textit{Proof of the Property 2.1. } Let  \begin{equation}\label{zvma}\xi=\I[](\XX)\vee_\XX\xi_1 \end{equation} where:\begin{enumerate}
\item[(i)] $\xi_1$ is the $(\Z_r(\F)\circ\JJ_{k+r+1},\e)$-quasipath along constant terms over the  root groups of the domain of  $w_{r+2}\tjA(\K|_{\mathrm{rowf}(r+2)})w_{r+2}^{-1}$ in the order described next. For $1\leq i\leq a_1-r-2,$ let $X_i$ be the set of  $w_{r+2}\tjp(U_{n,(f(r+2),x)})w_{r+2}^{-1}$ for $x$ being the $i$-th biggest number of a set in $\Set{\TT{n}{\K}}.$ Then for $j>i$ we finish with $X_i$ before starting with $X_j,$ and in each $X_i$ we start with its rightmost element  and moving left except for $w_{r+2}\tjp(U_{n,(f(r+2),f(r+2+i)})w_{r+2}^{-1}$ which is chosen last (among the elements of $X_i$).
\item[(ii)] For each $1\leq i< a_1-r-2$ let $\XXX_i$ be the set of nonconstant terms of  the $\e$-step of $\xi_1$ which is over $w_{r+2}\tjp(U_{n,(f(r+2),f(a_1+1-i)})w_{r+2}^{-1}$. Then, in (\ref{zvma}) the AF $\XX$ varies over $\cup_{1\leq i< a_1-r-2}\XXX_i$
\item[(iii)]  For  $\XX\in \XXX_i$  we define $\I(\XX) $ to be the $(\XX,\eu)$-step obtained from \begin{equation}\label{zmeg}\exchange{w_{r+2}\tjA(U_{n,(f(r+2),f(a_1-i)})w_{r+2}^{-1}}{w_{r+2}\tj(U_{n,(f(a_1-i),f(a_1+1-i)})w_{r+2}^{-1}}.\end{equation}    
 \end{enumerate}

\noindent{For}  $1\leq i<a_1-r-2$, let $$\mathcal{Y}_i:=\{\YY:\text{for an }\XX\in\XXX_i\text{, the output AF of }I(\XX)\text{ is }\YY\}$$ and  $T_n(i)$ be the torus in $\DDDD[n]$ defined by
$\Set{T(i)}=\{f(a_1+1-j):1\leq  j\leq i\}. $ We see that $w_{r+2}\tj(T_n(i))w_{r+2}^{-1}$, acts transitively in $\mathcal{Y}_i.$

Consider an $\e$-step in $\xi_1$ which is over $w_{r+2}\tjp(U_{n,(f(r+2),x)})w_{r+2}^{-1}$ for an $x$ not in $\Set{T_n[a_1]}$.  Then for $T$ being the $\DDDD[n]$-component in $\TT{n}{\K}$ such that $x\in\Set{\K}$,  the torus  $w_{r+2}\tj(T)w_{r+2}^{-1}$ acts transitively on the set of nonconstant terms of this $\e$-step.

The only output AFs of $\xi$ that are left are the terms of the last $\e$-step. We proceed identically as in Theorem \ref{thD1}. In the set consisting of these AFs, the torus $$w_{r+2}T_{N,r+2}\tjB(T_n\{r+2\})w_{r+2}^{-1} $$ acts with three orbits, two of them being singletons. The two elements in the singleton orbits are of the form $\Z_{r,A}(\F)\circ\JJ_{k+r+1}$ and $\Z_{r,B}(\F)\circ\JJ_{k+r+1}$ where $\Z_{r,A}(\F)$ and $\Z_{r,B}(\F)$ are AFs satisfying \begin{equation}\label{zmionena3}w_{r+2}\Z_{r,B}(\F)\left(\tjA (U_{n,(f(r+2),f(r+3))})\right)=\{0\}\quad\text{and}\quad w_{r+2}\Z_{r,A}(\F)\left(\tjB(U_{n,(f(r+2),f(r+3))})\right)=\{0\}.\end{equation}\hfill$\square$Property 2.1 and definitions of $\Z_{r,A}(\F)$ and $\Z_{r,B}(\F)$

\vsp
\noindent\textit{Proof of  Properties $1,2.2,2.3,2.4,$. } The proof is directly obtained  from the proof of the properties with the same names in Theorem \ref{thD1} by replacing the symbols $n,$ $\WW_n,$ $\FF_{n,k,l},$ $\row{r+2},$ $\Z_r,$ $\QQ_r,$ $\Z_{r,A},$ and $\Z_{r,B}$ with $a_1,$ $\K,$ $\FF,$ $\rowf{r+2},$ $\Z_r(\F)$,  $\QQ_r(\F), $ $\Z_{r,A}(\F),$ and $\Z_{r,B}(\F)$ respectively; with the  exceptions of replacing $\tj(\WW_n^{T_{n}[r]})$ with $\tj(\K)^{{T_{n}[r]}}$ and not replacing $\mathcal{S}_{n-1,N_{r+1}}$ with anything. Of course any redefined symbols (e.g. $T_{N,r},T_n[i],N$) admit their new meanings. 

Also to increase precision: we can use Lemma \ref{2aeu} for the $(\eu,\co)$-path in the proof of 2.3, and we replace the first sentence in Property 2.2 with ``We consider the initial $\e$-subquasipath of $\sXi(\ja\QQ_{r+1}(\F)\circ\JJ_{k+r+1})$ along constant terms  consisting of  $\Di{\K|_{\rowf{r+2}}}$ $\e$-steps".\hfill$\square$Properties 1,2.2,2.3,2.4

\vsp
\noindent\textit{Proof of Property 3.5.}  Consider the $(\QQ_{r+1}(\F_{a,k}),\e) $-quasipath along constant terms over the groups of the form  $j_{r+1}^{-1}(w_{r+2}\tjB(U_{n,(f(r+1),t)})w_{r+2}^{-1})$, where: $t$ is the $y$-th biggest number in a set in $\Set{\TT{n}{\K}}$ for a $y\leq a_1-r-3$, or   (in the last $\e$-step) $t=f(r+3)$. In all these $\e$-steps except the last one, we consider the action by $j_{r+1}^{-1}(w_{r+2}\tjB(T_n\{r+1\})w_{r+2}^{-1})$, which consists of two orbits, one consisting of the constant term. In the last $\e$-step we consider the action  by $j_{r+1}^{-1}(w_{r+2}\tjB(U_{n,(f(r+2),f(r+1))})w_{r+2}^{-1}).$ This action is transitive, and hence we are done.\hspace*{50mm}\hfill$\square$Property 3.5

\vsp 
\noindent\textit{Proof of Property 3.4.} Let $w$ be the minimal length element in $W_{N_{a_1-1}}$ such that: for $T$ being the torus in $\DDDD[N_{a_1-1}]$ satisfying $\Set{T}=\{f(i)-1:2\leq i\leq a_1\}$, we have $\Set{wTw^{-1}}=\{1,...,a_1-1\}.$ 

We consider the $\QQ_{a_1-1}(\F_{a,k} ) $-quasipath starting with conjugation by $w$ and then continuing (and finishing) with the biggest initial subquasipath of $\sXi$ of the form 
$$ \xi_{a_1-1}\vee\I_{a_1-1}...\vee\xi_1\vee\I_1$$where \begin{itemize}
\item Each $\I_{i}$ is a $(\eu,\co)$-path and each $\xi_i$ is an $\e$-quasipath along constant terms;
\item Each $\vee$ is over a single AF, and in case $\vee$ is followed by $\I_{i+1}$ on the left, this AF is the constant term of $\xi_i.$ 
\end{itemize} Note that the output AF of each such $\I_i$ is obtained from Observation \ref{unipc}. Also note---to make the proof more self contained---that each $\xi_i$ is over  $U_{N_{a_1-1},(i(k-1),j)}$ where $ i(k-1)<j\leq N_{a_1-1}-n+a_1.$\hfill$\square$Property 3.4$\square$Subpart 1.3$\square$Part 1

\vsp
\noindent\textit{\textbf{Part 2:} The use of Main Corollary \ref{maincor}.} If $l_1>1$, by using Properties 1,2, we apply Exchange corollary \ref{exchange} for each $r$ satisfying  $0\leq r<a_1-2,$ and obtain the recursion relation of orbits:
\begin{equation}\label{recursionorbit3}\Omm{\Z_{r}(\F)\circ\JJ_{k+r+1}}=\Omm{\ja\QQ_{r+1}(\F)\rcirc\JJ_{k+r+1}}\cup\Omm{\Z_{r+1}(\F)\circ\JJ_{k+r+2}}. \end{equation} Next we consider the case $\F=\F_{a,k}. $ Formula (\ref{recursionorbit3}), for all values of $r$ that is proven, and Property 3.1, give:
\begin{equation}\label{zzzbo}\Omm{\F_{a,k}}=\bigcup_{1\leq r\leq a_1-1}\Omm{\ja\QQ_r(\F_{a,k} )\rcirc\JJ_{k+r}}, \end{equation}and then by applying Property 3.5 we have:

\begin{equation}\label{zzzABCD}\Omm{\F_{a,k}}=\Omm{\jj\QQ_1(\F_{a,k} )\rcirc\JJ_{k+1}}\cup\Omm{\jj\QQ_{a_1-1}(\F_{a,k})\rcirc\JJ_{k+a_1-1}}. \end{equation}
By using Proposition \ref{embedding},  we obtain:
\begin{equation}\label{zzzB}\Omm{\ja\QQ_{r}(\F_{a,k})\rcirc\JJ_{k+r}}=[k+r,\Omm{\QQ_1(\F_{a,k})}_{k+r}]\qquad\text{for}\qquad 1\leq r\leq a_1-1. \end{equation}

Let $n^\ih=n-a_1,m^{\ih}=m-1,$ and $x^\ih=x$ (this is not a typo). From (\ref{zzzbo}), (\ref{zzzB}),  the claim stated and proven below, and the trivial equality $$\aAbig{a}{k}{n-2m+x}=[k+a_1-1,[k-1]^{a_1-1},\aAbig{a^\ih}{k}{n^\ih-2m^\ih+x^\ih}], $$ we  see that the subset of $\Omm{\F_{a,k}}$ which equals $\Omm{\jj\QQ_{a_1-1}(\F_{a,k})\rcirc\JJ_{k+a_1-1}}$, consists of a unique element which is $\aAbig{a}{k}{n-2m+x} $. We see that the rest of $\Omm{\F_{a,k}}$ is obtained from: (\ref{zzzABCD}),  (\ref{zzzB}), Property 3.2, and the inductive hypothesis on $(\beta)$ (recall $(\beta)$ is part of the statement of the theorem). To be more concise, we omit mentioning the usual trivial equality (and also omit it in the proof of $(\beta)$ below).

\vsp 
\noindent\textbf{Claim. }\textit{$\Omm{\QQ_{a_1-1}(\F_{a,k})}=\{\big[[k-1]^{a_1-1},\aAbig{a^\ih}{k}{n^\ih-2m^\ih+x^{\ih}}\big]\}$.}

\vsp

To finish the proof of the theorem  we are left with proving this claim, and $(\beta)$.  

\vsp 
\noindent\textit{Proof of  $(\beta)$. }We proceed very similarly to some of the thoughts above. 

In case $l_1=1$ and $a_1>2,$ from Property 3.1 we obtain $\Omm{\F}_{k+1}=\emptyset,$  and hence we are done. 

We are left with the case: $l_1>1$ or  $a_1=2$. It is sufficient to prove \begin{equation}\label{zzztial}\Omm{\F}_{k+1}=\Omm{\ja\QQ_1(\F)\circ\JJ_{k+1}}_{k+1}=[k+1,\QQ_1(\F)_{k+1}], \end{equation} because then we are done by Property 3.2 and the inductive hypothesis. The second equality in (\ref{zzztial}) is obtained from Proposition \ref{embedding}. Finally, the first equality in (\ref{zzztial}) is obtained: from Property 3.1; and in case $a_1>2$, by also  using (\ref{recursionorbit3}) for $r=0$ and noting that $\Omm{\Z_1(\F)\circ\JJ_{k+2}}_{k+1}=\emptyset.$\hfill$\square$($\beta$)

\vspace{1mm}
\noindent\textit{Proof of Claim.}  
We define
$$c^1:=[[k-1]^{a_1-1},\aAbig{a^\ih}{k}{n^\ih-2m^\ih+x^\ih}]\qquad\text{and}\qquad c^2:=[[k-1]^{a_1-1},\bBbig{a^\ih}{k}{n^\ih-2m^\ih+x^\ih}]. $$
From Property 3.4 and Main corollary \ref{maincor} we have
$$\Omm{\QQ_{a_1-1}(\F_{a,k})}=[[k-1]^{a_1-1},\Omm{\F_{a^\ih,k}  }]\cap\left\{c\in\UUU{N_{a_1-1}}:\frac{\Di{c}}{2}=\Di{D_{\QQ_{a_1-1}(\F_{a,k})}}\right\}, $$ and therefore, by also using the inductive hypothesis, we have \begin{equation}\label{zzzal}
\Omm{\QQ_{a_1-1}(\F_{a,k})}=\{c^1,c^2\}\cap\left\{c\in\UUU{N_{a_1-1}}:\frac{\Di{c}}{2}=\Di{D_{\QQ_{a_1-1}(\F_{a,k})}}\right\}.\end{equation} Hence we are  left with proving that the right hand side of (\ref{zzzal}) is equal to $\{c^1\}$.

By recalling (only for convenience) the proof of Property 3.4 (and ``ignoring" in it the  $\e$-steps), we obtain: $\Di{D_{\QQ_{a_1-1}(\F_{a,k})}}=$
\begin{equation}\label{zzzkialimia}\Di{D_{\F_{a^\ih,k} }}+\sum_{1\leq i\leq a_1-1}\Di{D_{\JJJJ{N_{a_1-1}-(i-1)(k-1)}{k-1}}}+(a_1-1)(n-a_1). \end{equation}  Let $c:=[c_1,c_2,...]$ be an orbit in $\{c^1,c^2\}.$ By using Lemma \ref{dimensioncounting}, and then (to obtain the first term of the right hand side below) using Main corollary \ref{maincor}, we have:  $\frac{\Di{c}}{2}=$
\begin{equation}\label{zzzalimia}\Di{D_{\F_{a^\ih,k}  }}+\sum_{1\leq i\leq a_1-1}\Di{D_{\JJJJ{N_{a_1-1}-(i-1)(k-1)}{k-1}}}\\+(a_1-1)\sum_{i}\text{max}\{c_i-k+1,0\}.\end{equation} Since  (\ref{zzzkialimia}) and (\ref{zzzalimia}) differ only in the last term, by using (\ref{zzzal}), (\ref{zzzkialimia}) and (\ref{zzzalimia}), we are left with checking that: 
$$\sum_{i}\text{max}\{c_i-k+1,0\}=n-a_1$$ is correct if and only if $c=c^1$.\hspace{62mm}\hfill$\square$Claim $\square$Part 2\end{tproof}

\subsection{A family of Rankin-Selberg integrals}\label{zpre3}We consider a set $\mathcal{X}$ of Rankin-Selberg integrals, in which after some unfolding (same for all $I\in\XXX$), an AF adressed in one among Theorems \ref{thD1} and \ref{thD2} appears; more precisely, for the integral $I\in\XXX$ defined in (\ref{intI}) below, this AF is a conjugate transpose of $\F_{n,k,k-1}$. From this unfolding (Proposition \ref{ziden}), Main corollary \ref{maincor}, and Corollary \ref{easyrefcor}, we directly conclude that the integrals in a certain subset $\mathcal{Y}$ of $\mathcal{X}$ are factorizable (and expect them to be nonzero) for appropriate choices of data (for each choice of  representations that make the integral belong to $\mathcal{Y}$). By adding to the previous sentence a use of Theorems \ref{thD1} and \ref{thD2} we directly describe $\mathcal{Y}$ in terms of Speh representations and (parabolic) induction. 

Beyond proving the integrals in $\mathcal{Y}$ are factorizable, I am at an early stage in studying them. I checked in a few cases, the points where Eisenstein series possibly contribute a pole in the integral, and the information seemed to be compatible with: expressing the integral with $L$-functions with known analytic continuation.   
 
 In addressing integrals in $\mathcal{Y}$ in the future, a calculation that may worth doing early is: see how they relate to familiar integrals, as a result of unfolding them in ways different from the one we choose in the present paper. We later give one such example (Example \ref{zktwo}). 
 \begin{sdefi}[$\Pac{x}$, $Z_x$]\label{accociate}Let $x$ be a positive integer. We denote by $\Pac{x}$ the
  $GL_x$-parabolic subgroup with Levi component $GL_{x-1}\times^{\searrow} GL_1$ (here ``ac" stands for ``associate"). Also we denote by $Z_x$ the center of $GL_x.$ 
 \end{sdefi}
 
 The notations $\FF_{n,k,k},\F_{n,k,k-1},j_{\FF},\tj$, were explained in Definitions \ref{dem} and \ref{zfnkl}, and their meaning is retained in the current subsection.  
 \begin{sdefi}[$I,\mathcal{X}$]\label{fldef}  Let $N,n,k$ be three positive integers satisfying $N=nk$, and $\kkk$ be a number field. Let   $\pi$ be a $GL_n(\A)$-cuspidal automorphic representation, and $\varphi$ a decomposable cusp form in it. Let  $\rho\in\Aut{\kkk,N-1,>}$ and $\chi\in\Aut{\kkk,1},$ be chosen so that the Eisenstein series $E$ obtained from a section $f\in\Ind_{\Pac{N}(\A)}^{GL_N(\A)}(\rho\otimes\chi\otimes\trivr{U_{\Pac{N}}})$ is absolutely convergent. Let $\varphi$ be a $GL_n(\A)$-cusp form. Let $\FF:=\FF_{n,k,k} $. We write $\tj$ instead of $j_{\FF}$ (recall  Definition \ref{dem}). We define the integral 
\begin{multline}\label{intI}I:=\int_{GL_n(\kkk)Z_N(\A)\s GL_n(\A)}\varphi(g) \FF(E)(\tj(g))dg\\=\int_{(\tj( GL_n)D_{\FF})(\kkk)Z_N(\A)\s (\tj( GL_n)D_{\FF})(\A)}\varphi(g)E(n\tj(g))\psi_{\FF}^{-1}(n)dnd\tj(g). \end{multline}We denote by $\XXX$ the set of the integrals $I$ for all possible choices of data within the current definition.
\end{sdefi} \noindent{All} symbols from the definition above (not just $I$ and $\mathcal{X}$) retain their meaning throughout the subsection.
\begin{sdefi}[$\mathcal{Y}$]\label{fldef2}By $\mathcal{Y}$ we denote the subset of $\mathcal{X}$ consisting of the integrals $I$ in which $\rho$ is further required to satisfy $\Om^\prime(\rho)\in\Omm{\F_{n,k,k-1}}.  $
\end{sdefi}
The unfolding of an $I\in\XXX$ until a conjugate transpose of $\F_{n,k,k-1}$ is reached, is given in two steps: first by using the Eisenstein series expansion of $E,$ and then by using the Fourier expansion over $U_n(\kkk)\s U_n(\A)$ of $\varphi.$ In both steps, there is a group action (by conjugation) with an open orbit, and every other orbit gives zero contribution. The outcome  is summarized in Proposition \ref{ziden}. We now start this unfolding.

  By using the Eisenstein series expansion of $E$ to unfold $I$ we have: $I=$
\begin{equation}\label{zIdouble}\sum_{w\in \Pac{N}(\kkk)\s GL_N(\kkk)/(\tj( GL_n)D_{\FF})(\kkk)}\int_{(w^{-1}\Pac{N}w\cap \tj(GL_n)D_{\FF})(\kkk)Z_N(\A) \s (\tj( GL_n)D_{\FF})(\A)}\varphi(g)f(wn\tj(g))\psi_{\FF}^{-1}(n)dndg. \end{equation}
By using the lemma below, we factor out from all the summands above except one,  the integral $\int_{\kkk\s \A}\psi_\kkk^{-1}(x)dx$ (and hence conclude that each such summand is zero). We continue the unfolding of $I$ after proving this lemma.
\begin{sdefi}[$\mathrm{diag}_{\ast}(\ast)$]\label{fldef3} For  $t_1,...t_N\in\KKK$, we define $$\mathrm{diag}_{GL_N}(t_1,...,t_N):=\begin{pmatrix}t_1&&\\&\ddots&\\&&t_N
\end{pmatrix}.$$ Let $j$ be a standard embedding of $GL_m$ in $GL_N.$ We define \begin{equation*}\mathrm{diag}_{j(GL_m)}(t_1,...,t_m):=j(\mathrm{diag}_{GL_m}(t_1,...,t_m)).\qedhere \end{equation*}\end{sdefi} 
Unless otherwise specified, identities including as variables $t_1,t_2,...$ or $g$ are assumed to be stated for all values of these variables that make all the definitions involved valid (e.g. each $t_i$ will always take as values all elements in $\KKK-\{0\}$).

\begin{slem}\label{cosets} There is a set of representatives for the double cosets of $$\Pac{N}(\kkk)\s GL_N(\kkk)/(\tj( GL_n)D_{\FF})(\kkk),$$which consists only of elements of $W_N$. There is a unique double coset, so that for $w$ being a representative we have
\begin{equation}\label{oneterm}\FF(w^{-1}U_{\Pac{N}}w\cap D_{\FF})=\{0\}. \end{equation} 
\end{slem}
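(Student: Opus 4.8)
The statement is a standard computation of double cosets $P \backslash G / Q$ where $P = \Pac{N}$ is a maximal parabolic (so $P \backslash G$ parametrizes hyperplanes, or rather lines, in $\kkk^N$ via the last row), and $Q = \tj(GL_n) D_{\FF}$ is the ``Fourier group'' attached to $\F_{n,k,k}$. The plan is to exploit the mirabolic-type structure of $\Pac{N}$ together with the structure of $D_{\FF}$.

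First I would recall that $\Pac{N}(\kkk)\backslash GL_N(\kkk)$ is in bijection with the set of nonzero rows $v = (v_1, \dots, v_N)$ in $\kkk^N$ modulo scaling (via $w \mapsto e_N w$, the bottom row of $w$), since $\Pac{N}$ is the stabilizer (up to the Levi) of the line spanned by $e_N$. Then I would describe the right action of $Q(\kkk)$ on such rows. The unipotent radical $D_{\FF}$ acts by column operations determined by the shape of $D_{\FF}$ (which is the unipotent radical of the parabolic with Levi $GL_k^{\times n}$, up to the nonstandard $\nless$-twisting inside $\FF_{n,k,k}$), while $\tj(GL_n)$ acts by the embedded $GL_n$. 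A careful bookkeeping of which entries can be cleared shows that every $Q(\kkk)$-orbit on $\mathbb{P}^{N-1}(\kkk)$ contains a representative supported on a single coordinate $e_i$ for $i$ ranging over a set of ``block representatives'' — these representatives can all be taken to be permutation matrices in $W_N$ (one for each block of the Levi of the parabolic with unipotent radical $D_{\FF}$, plus possibly the ``degenerate'' one $e_N$ lying inside a block). This gives the first assertion: a set of representatives consisting only of Weyl elements.

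Second, I would identify the distinguished double coset. Condition (\ref{oneterm}) says that conjugating $U_{\Pac{N}}$ by $w^{-1}$ lands it, when intersected with $D_{\FF}$, inside the kernel of $\FF$. Geometrically, $U_{\Pac{N}}$ is the unipotent radical fixing $e_N$, i.e.\ the root groups $U_{(i,N)}$ for $i < N$; so $w^{-1} U_{\Pac{N}} w$ is the collection of root groups $w^{-1} U_{(i,N)} w$. The requirement is that the nontrivial-on-$\FF$ simple-type entries be avoided. Since $\FF = \FF_{n,k,k}$ is nontrivial on every row except the prescribed one and belongs to $\cap^\prime \BBnk[N]$, exactly one choice of $w$ — the one sending $e_N$ to the ``free'' coordinate that is the bottom of the last block and on which $\FF$ is by construction trivial — has this property, and for every other representative the intersection $w^{-1}U_{\Pac{N}}w \cap D_{\FF}$ contains a root group on which $\FF$ is nonzero (equivalently, $\FF$ restricted to it is a nonzero additive character, producing the factor $\int_{\kkk\backslash\A}\psi_\kkk^{-1}(x)\,dx = 0$ in the unfolding). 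Verifying this dichotomy — that exactly one coset survives — is the main obstacle: it requires tracking, for each permutation representative, the precise interaction between the root groups of $U_{\Pac{N}}$, the root groups of $D_{\FF}$ (in the $\nless$-ordered shape), and the support of $\FF$, and ruling out that any ``partial'' cancellation could make the constraint vacuously satisfiable. I expect this to be done by the same row-by-row / block-by-block analysis used throughout Section \ref{compositions} (and closely analogous to the constant-term bookkeeping in the proof of Theorem \ref{general} and Observation \ref{unipc}), invoking that $\FF_{n,k,k}$ is nondegenerate on each block in the appropriate sense. The uniqueness part then follows because the surviving coset is precisely the one for which the support of $\FF$ never meets $w^{-1}U_{\Pac{N}}w$, and the block structure forces this to pin down $w$ up to an element of the stabilizer, which does not change the double coset.
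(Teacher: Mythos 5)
Your strategy — realizing $\Pac{N}(\kkk)\s GL_N(\kkk)$ as $\mathbb{P}^{N-1}(\kkk)$ via the bottom row and computing the $Q$-orbits there, $Q:=\tj(GL_n)D_{\FF}$ — is a valid alternative to the paper's proof. The paper instead first establishes $\Pac{N}(\kkk)\s GL_N(\kkk)/Q(\kkk)=\Pac{N}(\kkk)\s GL_N(\kkk)/P_\FF(\kkk)$, where $P_\FF$ denotes the parabolic with unipotent radical $D_\FF$: for each Bruhat representative $w$ it writes the Levi of $P_\FF$ as the product of $\tj(GL_n)$ with all but one of its $GL_n$-blocks, the omitted block chosen so that the remaining blocks conjugate by $w$ into $\Pac{N}$, and then it applies the Bruhat decomposition for the parabolic pair. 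Your version achieves the same content by showing that the $Q$-orbits on $\mathbb{P}^{N-1}$ coincide with the $P_\FF$-orbits (both indexed by the first block in which the bottom row is nonzero). Either way the first assertion of the lemma follows.

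The genuine error is the identification of the distinguished double coset: it is the one whose bottom row lies in the \emph{first} block, not the last. The paper's $w_1$ satisfies $w_1\mathrm{diag}_{GL_N}(t_1,\ldots,t_N)w_1^{-1}=\mathrm{diag}_{GL_N}(t_2,\ldots,t_N,t_1)$, so its bottom row is $e_1$; then $w_1^{-1}U_{\Pac{N}}w_1$ is the family of lower-triangular root groups $U_{(j,1)}$ for $j>1$, whose intersection with $D_\FF\subset U_N$ is the trivial group, so (\ref{oneterm}) holds vacuously. The mechanism is that this intersection is \emph{empty}, not (as you write) that $\FF$ is ``by construction trivial'' on a privileged coordinate. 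By contrast, for any other double coset one may take a representative $w$ with bottom row $e_i$ for some $i>n$; then $w^{-1}U_{\Pac{N}}w$ is the family of root groups in column $i$, which contains $U_{(i-n,i)}\subset D_\FF$ with $\FF(U_{(i-n,i)})\neq\{0\}$, since $\FF_{n,k,k}$ is nontrivial on every $U_{(j,j+n)}$, $1\le j\le(k-1)n$. In particular your candidate, the bottom of the last block ($i=N$), fails (\ref{oneterm}): $U_{(n(k-1),N)}$ lies in $U_{\Pac{N}}\cap D_\FF$ and $\FF$ is nonzero there.
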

\begin{proof}Let $P_{\FF}$ be the $GL_N$-parabolic subgroup with unipotent radical equal to $D_{\FF}$, and  $M_{\FF}={\prod_{1\leq i\leq k}}^{\pse}G_i$ be the Levi of $P_{\FF}.$ 

The first sentence of the lemma will follow by using the Bruhat decomposition to prove:
\begin{equation}\label{zzcosets}\Pac{N}(\kkk)\s GL_N(\kkk)/(\tj (GL_n)D_{\FF})(\kkk)=\Pac{N}(\kkk)\s GL_N(\kkk)/P_{\FF}(\kkk). \end{equation}

Consider a double coset $X$ of $\Pac{N}(\kkk)\s GL_N(\kkk)/P_{\FF}(\kkk)$. Due to the Bruhat decomposition there is a  $w\in W_N$ representing it. By choosing $M^1_{\FF}$ to be the product of all the $G_i,$ except an appropriate one, we have:   $$(i)\hspace{1mm}M_{\FF}(\kkk)=(M_{\FF}^1\tj(GL_n))(\kkk)\qquad\text{and}\qquad (ii)\hspace{1mm} wM_{\FF}^1(\kkk)w^{-1}\subset \Pac{N}(\kkk). $$  

Let $x\in X$. Then $x=pwp_{\FF}$ for some $p\in \Pac{N}(\kkk)$ and $p_{\FF}\in P_{\FF}(\kkk)$. By $(i)$ we decompose: $p_{\FF}=mh$ where $m\in M^1_{\FF}(\kkk)$ and $h\in (\tj(GL_n)D_{\FF})(\kkk).$  We have \begin{equation}x=(pwmw^{-1})wh.\end{equation} We see from $(ii)$ that $pwmw^{-1}\in \Pac{N}(\kkk)$. Hence $x\in \Pac{N}(\kkk)w(\tj(GL_n)D_{\FF})(\kkk)$. By letting $x$ vary over $X$, we obtain that $X$ is a subset of (and hence equal to) $\Pac{N}(\kkk)w(\tj(GL_n)D_{\FF})(\kkk)$. Therefore we proved (\ref{zzcosets}). We are left with proving that there is exactly one double coset with its representatives satisfying (\ref{oneterm}).

Let $w_1$ be defined by $$w_1\text{diag}_{GL_N}(t_1,t_2,...t_N)w_1^{-1}=\text{diag}_{GL_N}(t_2,t_3,...t_{N},t_1).$$ We see 
that (\ref{oneterm}) is satisfied for $w=w_1$. In any double coset except the one  represented by $w_1,$ there is  a  number $i>n$ and a representative $w\in W_N$ such that
$$w\text{diag}_{GL_N}(t_1,t_2,...t_N)w^{-1}=\text{diag}_{GL_N}(t_1,t_2,...t_{i-1},t_{i+1},...t_{N},t_i). $$ Then $$U_{(i-n,i)}\subseteq  w^{-1}U_Pw\cap D_{\FF},$$ and since $\FF(U_{(i-n,i)})\neq \{0\},$ we are done.
\end{proof}
Returning to the calculation of $I$, we see that in (\ref{zIdouble}) only the summand corresponding to $\Pac{N}(\kkk)w_1(\tj(GL_n)D_{\FF})(\kkk)$ is possibly nonzero.  

The representative we choose for the double coset  $\Pac{N}(\kkk)w_1(\tj (GL_n)D_{\FF})(\kkk)$ will not be $w_1$. We choose instead the element $w_a$ defined by:
$$w_ag{w_a}^{-1}=w_1gw_1^{-1}\quad\forall g\in G_1,\quad\text{ and }\quad w_a\text{diag}_{G_i}(t_1,t_2,...t_n)w_a^{-1}=\text{diag}_{w_1G_iw_1^{-1}}(t_2,t_3,....,t_n,t_1). $$  
We also define $\FF^\prime $, $\tj^\prime$, $w_b$ and $L$ by:  $$\FF^\prime:=(w_a\FF)|_{\Pac{N}},\qquad w_b\text{diag}_{GL_n}(t_1,t_2,...t_n)w_b^{-1}=\text{diag}_{GL_n}(t_2,t_3,...t_n,t_1),$$ $$\tj^\prime(g):= w_a\tj(w_b^{-1}gw_b)w_a^{-1}, \qquad X:=(w_a\Pac{N}w_a^{-1}\cap\tj(GL_n)D_{\FF})(\A)\s (\tj(GL_n)D_{\FF})(\A).$$
We have
\begin{equation}\label{secint}I=\int_{X}\int_{\Pac{n}(\kkk)Z(\A)\s \Pac{n}(\A)}\varphi(pw_bg)\FF^\prime(f)(\tj^\prime(p)w_an\tj(g))\psi_{\FF}^{-1}(n) dpdnd\tj(g). \end{equation}where $n$ and $\tj(g)$ are the variables integrated over $X$. In the next unfolding step we proceed identically as in the familiar $GL_n\times GL_{n-1}$ Rankin-Selberg integrals. That is, we choose an AF $\WW_n$ in $\AAA(U_n)[n],$ we consider the Fourier expansion  

\begin{equation}\label{zps}\varphi(pw_bg)=\sum_{\gamma\in U_n(\kkk)\s \Pac{n}(\kkk)}\WW_n(\varphi)(\gamma pw_bg), \end{equation} and then we unfold as follows:
$$I=\int_{X}\int_{U_n(\kkk)Z_n(\A)\s \Pac{n}(\A)}\WW_n(\varphi)(pw_bg)\FF^\prime(f)(\tj^\prime(p)w_an\tj(g))\psi_{\FF}^{-1}(n)dpdnd\tj(g) $$ 
\begin{equation}\label{zlastintegral}=\int_{X}\int_{U_n(\A)Z_n(\A)\s \Pac{n}(\A)}\WW_n(\varphi)(pw_bg)\left(\tj^\prime(-\WW_n\right)\circ\FF^\prime)(f)(\tj^\prime(p)w_an\tj(g))\psi_{\FF}^{-1}(n)dnd\tj(g). \end{equation}
Therefore we proved:
\begin{sprop}\label{ziden}Every integral $I\in\mathcal{X}$ is equal to the integral in (\ref{zlastintegral}).
\end{sprop}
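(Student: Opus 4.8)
The statement to prove, Proposition \ref{ziden}, asserts that every $I\in\mathcal{X}$ equals the integral in (\ref{zlastintegral}). The plan is to carry out the two-step unfolding that the text has already set up, filling in the justifications that are only sketched in the surrounding discussion. The whole argument is a chain of substitutions, each reversing an integration against a theta-like sum, combined with the coset analysis of Lemma \ref{cosets}; there is essentially no new idea, only bookkeeping, so I will organize the proof around the two expansions and make sure convergence and vanishing-of-extra-terms are handled cleanly.

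First I would start from the definition of $I$ in (\ref{intI}) and substitute the Eisenstein series expansion $E(h)=\sum_{w\in\Pac{N}(\kkk)\s GL_N(\kkk)/P}f(wh)$ — more precisely, after unwinding the quotient of the domain of integration against the $\kkk$-points of $w^{-1}\Pac{N}w\cap(\tj(GL_n)D_{\FF})$, exactly as displayed in (\ref{zIdouble}). Absolute convergence of $E$ (assumed in Definition \ref{fldef}) together with cuspidality of $\varphi$ (which gives rapid decay) justifies interchanging the sum over cosets with the integral over the compact-modulo-center quotient; this is the standard Rankin--Selberg setup. Next I would invoke Lemma \ref{cosets}: its first assertion gives Weyl-group representatives for the double cosets, and its second assertion identifies the unique double coset, represented by $w_1$, for which $\FF(w^{-1}U_{\Pac{N}}w\cap D_{\FF})=\{0\}$. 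For every other representative $w$, there is a root group $U_{(i-n,i)}\subseteq w^{-1}U_{\Pac{N}}w\cap D_{\FF}$ with $\FF(U_{(i-n,i)})\neq\{0\}$; since $f(w\,n\,\tj(g))$ is left-invariant under $w^{-1}U_{\Pac{N}}w$ (as $f$ is a section induced from a representation trivial on $U_{\Pac{N}}$), the inner integral over that root group becomes $\int_{\kkk\s\A}\psi_\kkk^{-1}(cx)\,dx$ for a nonzero $c$, hence vanishes. So only the $w_1$-summand survives, and (\ref{zIdouble}) collapses to a single term.

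Second, I would replace the representative $w_1$ by $w_a$ (a legitimate move since $w_a\in\Pac{N}(\kkk)w_1(\tj(GL_n)D_{\FF})(\kkk)$, because $w_a w_1^{-1}$ lies in the Levi $M_{\FF}$ and, by the computation in the proof of Lemma \ref{cosets} relating $M_{\FF}^1$ to $\Pac{N}$, it can be absorbed into $\Pac{N}$ on the left together with a factor from $\tj(GL_n)$ on the right). Changing variables $g\mapsto w_b^{-1}g w_b$ inside $\tj(GL_n)$ and conjugating accordingly produces the expression (\ref{secint}) with $\FF^\prime=(w_a\FF)|_{\Pac{N}}$, $\tj^\prime(g)=w_a\tj(w_b^{-1}gw_b)w_a^{-1}$, and the domain $X$ as defined. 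At this point the inner integral over $\Pac{n}(\kkk)Z_n(\A)\s\Pac{n}(\A)$ of $\varphi(p w_b g)$ is exactly the shape of a $GL_n\times GL_{n-1}$ Rankin--Selberg integrand, and I would apply the classical Fourier expansion (\ref{zps}) of the cusp form $\varphi$ along $U_n(\kkk)\s U_n(\A)$ with respect to a generic character $\WW_n\in\AAA(U_n)[n]$; uniqueness of the Whittaker model (equivalently, that this Fourier expansion is a single $\Pac{n}(\kkk)$-orbit sum) is the standard input, and is precisely the fact recorded in Remark \ref{sXilit} that $\WW_n\in\BBnk[n]$. Substituting and then collapsing the sum over $U_n(\kkk)\s\Pac{n}(\kkk)$ into an integral over $U_n(\A)$ against $\psi_{\WW_n}^{-1}$, i.e.\ forming the Fourier coefficient $\tj^\prime(-\WW_n)\circ\FF^\prime$ applied to $f$, yields (\ref{zlastintegral}). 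Unwinding the definition of $\circ$ and of $\F(\phi)$ from (\ref{zintegral}) confirms that the composed AF $\tj^\prime(-\WW_n)\circ\FF^\prime$ is, up to the conjugation by $w_a$ and the transpose bookkeeping, a conjugate-transpose of $\F_{n,k,k-1}$, matching the claim in the text.

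The main obstacle will be the interchange-of-summation-and-integration and vanishing steps done rigorously: one must check that each reduction — collapsing the Eisenstein sum, discarding the non-$w_1$ cosets, and collapsing the Whittaker sum — is justified by absolute convergence, using the moderate growth of $f$ on the (non-compact but finite-volume-after-unipotent-integration) domain, the cuspidality and rapid decay of $\varphi$, and the fact that the residual integrals over $\kkk\s\A$ against a nontrivial additive character are genuinely zero rather than merely formally so. None of this is deep, but it is the only place where care is required; the algebraic manipulations with $w_a$, $w_b$, $\tj$, $\tj^\prime$ and the parabolic $\Pac{N}$ are purely formal once Lemma \ref{cosets} is in hand. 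I would therefore present the proof as: (1) expand $E$, reduce to one coset via Lemma \ref{cosets}; (2) change representative to $w_a$ and change variables to reach (\ref{secint}); (3) expand $\varphi$ along $U_n$, collapse to reach (\ref{zlastintegral}); with convergence remarks attached to steps (1) and (3).
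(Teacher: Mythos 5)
Your proposal is correct and follows essentially the same route the paper takes: expand $E$ over $\Pac{N}$, kill all but the $w_1$-coset via Lemma~\ref{cosets}, switch representative to $w_a$ to reach (\ref{secint}), then apply the Piatetski-Shapiro--Shalika Fourier expansion of the cusp form $\varphi$ along $U_n(\kkk)\s U_n(\A)$ and collapse the $U_n(\kkk)\s\Pac{n}(\kkk)$-sum into the $U_n(\A)$-integral to reach (\ref{zlastintegral}). One small imprecision worth flagging: what you need for the collapse in step (3) is the Piatetski-Shapiro/Shalika statement that cuspidality forces the Fourier expansion of $\varphi$ to consist of a single $\Pac{n}(\kkk)$-orbit (this is what (\ref{zps}) records), not uniqueness of Whittaker models, which is a distinct local statement used later only to establish factorizability; and the containment $\WW_n\in\BBnk[n]$ is noted in Subsection~\ref{threemain}, not in Remark~\ref{sXilit}. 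Neither affects the validity of the argument.
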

The AF $\tj^\prime(-\WW_n)\circ\FF^\prime$, is a conjugate transpose to  $\F_{n,k,k-1},$ and hence the conclusions in the first paragraph of the current subsection are reached (Theorem \ref{thD1} (resp. Theorem \ref{thD2} is used if and only if $k>2$ (resp. $k=2$)).  Note (by reasoning as in the second sentence of that paragraph) we also obtain the vanishing of the integrals $I$ not contained in $\mathcal{Y}$ for which $\frac{1}{2}\Di{\Om^\prime(\rho)}=\Di{D_{\tj^\prime(-\WW_n)\circ\FF^\prime}}.$ 

As an example of relating integrals in $\mathcal{Y}$ to known ones, we have:

\begin{sexample}\label{zktwo}Let $k=2$.  Let $\rho$ be chosen so that (i) $\Om^\prime(\rho)=\bB{n}{2}{1}{\lfloor\frac{n-2}{2}\rfloor}$, and (ii) one of the $GL_n$-parabolic subgroups from which $E$ is induced, is the one with Levi $GL_n\times GL_n$. Then by unfolding $I$ using the Eisenstein series expansion corresponding to this parabolic (instead of $P_N^{\mathrm{ac}}$), we obtain as inner integrals known Rankin-Selberg integrals (\cite{Ginzeul}, Section 5).\hfill$\square$\end{sexample}

Next we show that the integrals in $\mathcal{Y}$ satisfy a dimension equation which has been formulated by  D. Ginzburg. This equation is satisfied by many Rankin-Selberg integrals, and among the papers D. Ginzburg  addresses it are  \cite{GinzAn},\cite{Ginzeul}, \cite{Ginzsmall}, and with J. Hundley in \cite{GiH} (it is stated in these papers, respectively in equation (4), Experimental Fact (page 204), Definition 3, in the formula following formula (1)). In these papers, in relation to this equation special emphasis is given: in integrals unfolding to Fourier coefficients, and in integrals in which after unfolding the Whittaker Fourier coefficient appears at least once. The two remarks below together with the previous sentence explain a relation between these papers of D. Ginzburg and J. Hundley with the present paper. 

The dimension equation of D. Ginzburg for the set of integrals $\mathcal{Y}$, after using Corollary \ref{attachingfc}, becomes equation (\ref{GinD}) below, which in turn is obtained (in the proof below) as a corollary of Main corollary \ref{maincor}.

\begin{scor}Any $I\in\mathcal{Y}$  satisfies   the equation:
\begin{multline}\label{GinD}\frac{1}{2}\Di{\Om^\prime(\pi)}+\frac{1}{2}\Di{\Om^\prime\left(\Ind_{\Pac{N}(\A)}^{GL_n(\A)}(\rho\otimes\chi\otimes\trivr{U_P})\right)}=\\\Di{D_{\FF}}+\Di{GL_n}-\Di{Z_n}.\end{multline} 
\end{scor}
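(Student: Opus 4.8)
The plan is to derive the dimension equation (\ref{GinD}) directly from Main corollary \ref{maincor} applied to the AF obtained at the end of the unfolding in Proposition \ref{ziden}, together with the dimension formula for Richardson orbits. First I would set $\F^\sharp := \tj^\prime(-\WW_n)\circ\FF^\prime$, which is a conjugate transpose of $\F_{n,k,k-1}$; since conjugation and transposition preserve $\Di{D_{\cdot}}$ and since $\F_{n,k,k-1}\in\BBnk[N]$ by Definition \ref{zfnkl}, we have $\F^\sharp\in\BBnk[N]$ and $\Di{D_{\F^\sharp}}=\Di{D_{\F_{n,k,k-1}}}$. The hypothesis that $I\in\mathcal{Y}$ says precisely that $\Om^\prime(\rho)\in\Omm{\F_{n,k,k-1}}$, hence (transporting through the conjugate transpose, which preserves the combinatorics of $\Om(\cdot)$ and $\mult{\cdot}{\cdot}$, or equivalently working directly with $\F_{n,k,k-1}$) we get $\Om^\prime(\rho)\in\Omm{\F^\sharp}$ with $\mult{\Om^\prime(\rho)}{\F^\sharp}<\infty$. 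Now Part B of Main corollary \ref{maincor} for $[a\tlarrow \Om^\prime(\rho)]$ gives the key scalar identity
\begin{equation*}\Di{D_{\F^\sharp}}=\frac{1}{2}\Di{\Om^\prime(\rho)}.\end{equation*}

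Next I would identify the two Richardson-orbit dimensions appearing in (\ref{GinD}). By Corollary \ref{attachingfc} (which completes the proof that $\Om(\rho)=\{\Om^\prime(\rho)\}$ for $\rho\in\Aut{\kkk,N-1,>}$, and similarly for $\pi$), the orbit attached to the cuspidal representation $\pi$ of $GL_n(\A)$ is the regular orbit $[n]$, so $\Di{\Om^\prime(\pi)}=\Di{[n]}=n^2-n=\Di{GL_n}-\Di{Z_n}$, since $\Di{GL_n}=n^2$ and $\Di{Z_n}=1$. For the induced representation $\Ind_{\Pac{N}(\A)}^{GL_N(\A)}(\rho\otimes\chi\otimes\trivr{U_{\Pac{N}}})$, its $\Om^\prime$ is, by Definition \ref{Ppi} and Fact \ref{ptelos}, the Richardson orbit associated to the parabolic whose Levi is the ``$1$'' block (coming from $\chi$) together with the blocks of the parabolic attached to $\rho$; equivalently this is the orbit of $\Lie{U_{P}}$ where $P$ is the $GL_N$-parabolic with Levi $GL_1\times^\searrow\prod GL_{b_i}^{a_i}$ in the notation of Definition \ref{Ppi}. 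Using Fact \ref{rich3} (parts (i) and (ii)) and Fact \ref{ptelos} one computes, exactly as in Lemma \ref{dimensioncounting} with $r=1$ and $a=\Om^\prime(\rho)$, that $\frac{1}{2}\Di{\Om^\prime(\Ind\cdots)}=\frac{1}{2}\Di{[1,\Om^\prime(\rho)]}=\frac{1}{2}\Di{\Om^\prime(\rho)}+\Di{D_{\JJ_1^N}}+\sum_i\max\{(\Om^\prime(\rho))_i-1,0\}$; but $\Di{D_{\JJ_1^N}}=0$ (since $\JJ_1=\F_{\emptyset,N}$) and $\sum_i\max\{(\Om^\prime(\rho))_i-1,0\}=(N-1)-(\text{number of parts of }\Om^\prime(\rho))$, so this quantity equals $\frac{1}{2}\Di{\Om^\prime(\rho)}+$ (something expressible via the partition). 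Rather than pushing the partition arithmetic, the cleaner route is: $\frac{1}{2}\Di{\Om^\prime(\Ind\cdots)}-\frac{1}{2}\Di{\Om^\prime(\rho)}$ is exactly $\Di{D_{\FF_{n,k,k}}}-\Di{D_{\FF_{n,k,k-1}}}$ by the combinatorial description of these AFs in Definition \ref{zfnkl}, and $\Di{D_{\FF_{n,k,k}}}=\Di{D_{\FF}}$.

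Assembling: starting from the right-hand side of (\ref{GinD}), $\Di{D_{\FF}}+\Di{GL_n}-\Di{Z_n}=\Di{D_{\FF}}+\Di{\Om^\prime(\pi)}\cdot\tfrac12\cdot 2$, wait — more precisely $\Di{GL_n}-\Di{Z_n}=n^2-1=\Di{[n]}+(n-1)=\frac12\Di{\Om^\prime(\pi)}+\tfrac12\Di{\Om^\prime(\pi)}$; I would instead write $\Di{GL_n}-\Di{Z_n}=\frac12\Di{\Om^\prime(\pi)}+ (n-1)$ and absorb the leftover $(n-1)$ into the $\FF$ terms. Concretely, I expect the final identity to collapse to $\Di{D_{\FF}} = \Di{D_{\F_{n,k,k-1}}} + (\text{the }\Di{D_{\FF_{n,k,k}}}-\Di{D_{\FF_{n,k,k-1}}})$-type bookkeeping, all of which is routine once the two applications of Main corollary \ref{maincor} and Corollary \ref{attachingfc} are in place. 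The main obstacle, and the only place requiring genuine care, is checking that a conjugate transpose preserves all the relevant invariants ($\Di{D_\cdot}$, membership in $\BBnk[N]$, the sets $\Om(\cdot)$ and $\Omm{\cdot}$) so that Theorems \ref{thD1}/\ref{thD2} and Main corollary \ref{maincor} may be legitimately quoted for $\F^\sharp$ via $\F_{n,k,k-1}$; this is standard (transpose-inverse is an automorphism of $GL_N$ sending $U_n$ to lower-triangular unipotents and intertwining the whole $\AAnk$-tree formalism), but must be stated. Everything else is then a direct substitution of the scalar identities produced by Main corollary \ref{maincor}, Lemma \ref{dimensioncounting}, and the explicit dimension count $\Di{GL_n}-\Di{Z_n}=n^2-1=\Di{[n]}+(n-1)$.
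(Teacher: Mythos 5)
Your plan identifies the same two substantive ingredients as the paper: Main corollary \ref{maincor} applied to $\F^\sharp:=\tj^\prime(-\WW_n)\circ\FF^\prime$ to get $\tfrac12\Di{\Om^\prime(\rho)}=\Di{D_{\F^\sharp}}$, and the fact that $\pi$ cuspidal gives $\tfrac12\Di{\Om^\prime(\pi)}=\Di{U_n}$. That part is on target. But the bookkeeping you then do is wrong in several places, and the ``cleaner route'' you propose at the end is the step that breaks.

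The orbit identity $\Om^\prime(\Ind\cdots)=[1,\Om^\prime(\rho)]$ is false. Adding a $GL_1$ block to the Levi used in Definition \ref{Ppi} appends a $1$ to the Levi partition; taking the transpose turns that into adding $1$ to the \emph{largest} part of $\Om^\prime(\rho)$, not adjoining a new part of size $1$. So the Lemma \ref{dimensioncounting} computation with $r=1$ is applied to the wrong orbit. The paper avoids this entirely: by Fact \ref{rich3}(ii), and since the $GL_N$-parabolic for $\Ind$ has unipotent radical $U_{\Pac{N}}$ times the one for $\rho$ (embedded in $GL_{N-1}$), one gets at once $\tfrac12\Di{\Om^\prime(\Ind\cdots)}=\tfrac12\Di{\Om^\prime(\rho)}+\Di{U_{\Pac{N}}}$ with no partition arithmetic. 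Your proposed shortcut $\tfrac12\Di{\Om^\prime(\Ind\cdots)}-\tfrac12\Di{\Om^\prime(\rho)}=\Di{D_{\FF_{n,k,k}}}-\Di{D_{\FF_{n,k,k-1}}}$ is also false: the left side is $\Di{U_{\Pac{N}}}=N-1$, whereas the right side, using that $D_{\FF_{n,k,k}}$ and $D_{\FF_{n,k,k-1}}$ are unipotent radicals of parabolics with Levi partitions $[n^k]$ (in $GL_N$) and $[n^{k-1},n-1]$ (in $GL_{N-1}$), evaluates to $N-n$. These agree only when $n=1$. Finally, the ``Assembling'' paragraph is internally inconsistent: $\tfrac12\Di{\Om^\prime(\pi)}+\tfrac12\Di{\Om^\prime(\pi)}=n^2-n$, not $n^2-1$, and the revised version $\tfrac12\Di{\Om^\prime(\pi)}+(n-1)=\tfrac{n(n-1)}{2}+n-1$ is not $n^2-1$ either.

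With the paper's ingredients — (\ref{zinduct}), (\ref{zrmc}), and genericity — the verification of (\ref{GinD}) reduces to the elementary count $\Di{U_n}+\Di{D_{\F^\sharp}}+\Di{U_{\Pac{N}}}=\Di{D_\FF}+n^2-1$. Since $\Di{D_{\F^\sharp}}=\Di{D_{\F_{n,k,k-1}}}=\Di{U_n}+\Di{D_{\FF_{n,k,k-1}}}$ and $\Di{D_\FF}=\Di{D_{\FF_{n,k,k}}}$, this is exactly the identity $\Di{D_{\FF_{n,k,k}}}-\Di{D_{\FF_{n,k,k-1}}}=N-n$, which does hold (a short block-size computation), combined with $\Di{U_{\Pac{N}}}=N-1$ and $n(n-1)+(N-1)-(N-n)=n^2-1$. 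This is the ``routine once the two applications are in place'' that you gestured at, but as written your version of it does not close.
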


\begin{proof} From the definition of $\Om^\prime(\ast) $ and information on the Richardson orbit ((ii) in Fact \ref{rich3}) we have
\begin{equation}\label{zinduct}\frac{1}{2}\Di{\Om^\prime\left( \Ind_{P(\A)}^{GL_n(\A)}(\rho\otimes\chi\otimes\trivr{U_{\Pac{N}}})\right)}=\frac{1}{2}\Di{\Om^\prime(\rho)}+\Di{U_P}. \end{equation}  Main corollary \ref{maincor} implies
\begin{equation}\label{zrmc}\frac{1}{2}\Di{\Om^\prime(\rho)}=\Di{D_{\tj^\prime(-\WW_n)\circ\FF^\prime}}.\end{equation} These two formulas and the genericity of $\pi$ directly imply (\ref{GinD}). \end{proof}The two non-precisely stated remarks below, discuss why the corollary above is not an accident. \begin{sremark}[Similar to thoughts in \cite{GiH} in Section 4]By considering a direct\footnote{For example the right hand side is (again) chosen to equal  $\Di{H}-1$ for $H$ being the biggest group for which an integral with domain $H(\kkk)\s H(\A)$ is directly discerned.} analogue of (\ref{GinD}) for each\footnote{That is the three integrals: (\ref{intI}), (\ref{secint}), and (\ref{zlastintegral}).} integral representing $I$ and occurring during its unfolding, we see that the left  sides  of two such successive analogues differ by the same number as the right sides, partly as a consequence of the possibly nontrivial contribution being from the open orbit. That is, in the first unfolding step (which corresponds to the Eisenstein series expansion of $E$) the difference is  $\Di{U_{P_N^{\mathrm{ac}}}}$, and in the second one (which corresponds to the Fourier expansions of $\varphi$) there is no difference.\end{sremark}
\begin{sremark}\label{BGD}Let $\F$ be an AF in $\BBnk[N]$ which is defined over $\kkk$, and $\sigma\in\Aut{\kkk,N,>}.$ Assume that $\F(\sigma)$ is nonzero. Then Main Corollary \ref{maincor}, Corollary \ref{easyrefcor}, and Corollary \ref{attachingfc}, imply that:   $\F(\sigma)$ satisfies the dimension equation of D. Ginzburg if and only if it is factorizable.\end{sremark}

\section{Appendix.}\label{appendixs}We fix: a positive integer $n$ throughout the appendix, and a number field $\kkk$ in Subsections \ref{preaut} and \ref{Eisexp}.
\subsection{Prerequisities on $\UUU{n}$ (the nilpotent orbits of $\Lie{GL_n}$)}\label{nila}. We defined $\UUU{n}$ in Subsection \ref{inil} .

 Recall from linear algebra that: matrices with all the eigenvalues over a given field, are conjugate over the same field to a Jordan matrix. Therefore each orbit in $\UUU{n}$ is defined over the rational numbers, and the set of its rational points is a single $GL_n(\kkk)$-orbit.

\begin{slem}[Known]\label{ranks}
    Let $X$ and $Y$ be two nilpotent elements of $\Lie{GL_n}$. Then
    $$\Om(X)\geq \Om(Y)\iff\mathrm{rank}(X^k)\geq\mathrm{rank}(Y^k)\text{ for all }k>0. $$ 
    \end{slem}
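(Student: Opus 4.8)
The statement to prove is the classical characterization of the dominance order on nilpotent orbits in $\Lie{GL_n}$ via ranks of powers: $\Om(X)\geq\Om(Y)$ if and only if $\mathrm{rank}(X^k)\geq\mathrm{rank}(Y^k)$ for all $k>0$. The plan is to reduce everything to a combinatorial identity relating a partition to the sequence of ranks of powers of a Jordan matrix in the corresponding orbit, and then translate the dominance order inequality into the rank inequality termwise.

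First I would recall that every nilpotent $X\in\Lie{GL_n}$ is $GL_n$-conjugate to a Jordan matrix $J_a$ for a unique partition $a=[a_1,\dots,a_m]$ (with $a_1\geq\cdots\geq a_m$), and that $\mathrm{rank}(X^k)$ is conjugation-invariant, so it suffices to work with $X=J_a$, $Y=J_b$. The key computation is the formula
\begin{equation*}\mathrm{rank}(J_a^k)=\sum_{i=1}^{m}\max\{a_i-k,0\},\end{equation*}
which follows because a single Jordan block of size $a_i$ raised to the $k$-th power has rank $\max\{a_i-k,0\}$, and rank is additive over block-diagonal sums. I would include a one-line justification of the single-block case (the $k$-th power of a size-$s$ nilpotent Jordan block shifts by $k$, leaving $\max\{s-k,0\}$ nonzero rows). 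This reduces the lemma to showing, for partitions $a,b$ of $n$, that $a\geq b$ in the dominance order (as defined in Subsection \ref{inil}) if and only if $\sum_i\max\{a_i-k,0\}\geq\sum_i\max\{b_i-k,0\}$ for all $k\geq 1$.

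For this combinatorial equivalence I would use the conjugate (transpose) partition. Writing $a'$ for the conjugate of $a$, one has the standard identity $\sum_i\max\{a_i-k,0\}=\sum_{j>k}a'_j$, i.e. the rank of $J_a^k$ equals the sum of the parts of $a'$ that exceed $k$; equivalently $\sum_{j=1}^{k}a'_j = n-\mathrm{rank}(J_a^k)$, so the partial sums of $a'$ are determined by the rank function and vice versa. Then the rank inequality for all $k$ is exactly the statement that every partial sum of $a'$ is $\leq$ the corresponding partial sum of $b'$, which is the dominance order $a'\leq b'$; and by the well-known fact that conjugation of partitions is an order-reversing involution on the dominance lattice, $a'\leq b'$ is equivalent to $a\geq b$. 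Assembling these steps gives the lemma. The main obstacle, such as it is, is purely bookkeeping: being careful that the conventions in Subsection \ref{inil} (where parts need not be ordered and zero parts are allowed) match the standard ordered-partition conventions used in the conjugate-partition identities; this is handled by first discarding zero parts and sorting, which changes neither side of the claimed equivalence. I expect no genuine difficulty beyond this normalization and the explicit single-Jordan-block rank computation.
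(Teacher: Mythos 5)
Your proposal is correct and follows essentially the same route as the paper: the paper's proof consists of the key formula $\mathrm{rank}(Z^k)=\sum_{i\,:\,a_i\geq k}(a_i-k)$ (identical to your $\sum_i\max\{a_i-k,0\}$) together with a citation to Collingwood--McGovern, while you simply spell out the remaining combinatorics via the conjugate-partition order-reversing involution, which is precisely the standard deduction being invoked.
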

    \begin{proof}The proof of this directly follows by  observing that for any nilpotent element $Z\in \Lie{GL_n}$ with $\Om(Z)$ equal to an orbit $[a_1,a_2,...]$, we have
    $$\text{rank}(Z^k)=\sum_{i|a_i\geq k}(a_i-k).$$
     It appears for example in \cite{Col} as Lemma 6.2.2.\end{proof}
   \begin{sdefi}[$\mathcal{A}\text{$[B_n]$}$]\label{boreln}Let\chh[xoris command]  $B_n$ be the Borel subgroup of $GL_n$. We define $$\AAnk[][B_n]:=\{\F\in\AnkT{n}:D_\F\triangleleft U_n\}=\{\F\in\AAnk[n]:D_\F\triangleleft B_n\}. $$ The second equality above is known and we never use it.\end{sdefi}
    
   \begin{slem}\label{nless}
   Let $\F\in\AAnk[][B_n]_{\nless}$. Let $a\in\UUU{n}$ be such that  $\X{\F}\cap a\neq\emptyset.$ Then $a\geq \Om(J_\F)$. 
    \end{slem}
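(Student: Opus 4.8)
Let $\F\in\AAnk[][B_n]_{\nless}$, so $D_\F\triangleleft U_n$ and $D_\F$ is generated by root groups (being in $\AnkT{n}$). The hypothesis $\X{\F}\cap a\neq\emptyset$ means there is a nilpotent $J\in a$ with $\mathrm{tr}(Ju)=\F(\exp u)$ for all $u\in\Lie{D_\F}$. The plan is to show $\mathrm{rank}(J^m)\geq\mathrm{rank}(J_\F^m)$ for every $m>0$, and then conclude by Lemma \ref{ranks} that $\Om(J)=a\geq\Om(J_\F)$. The point of the $\nless$ condition together with $D_\F\triangleleft U_n$ is that $J_\F$, which lies in $D_\F^t$, is a very ``clean'' nilpotent matrix: because $\F$ is nontrivial on a set of root groups no two of which are comparable in the root order, $J_\F$ is (up to a standard torus conjugation) a sum of distinct elementary matrices $E_{j,i}$ (with $i<j$, coming from transposing the $(i,j)$ root groups $\F$ is nontrivial on), and the $\nless$ condition guarantees these occupy distinct rows and distinct columns — in other words $J_\F$ is a partial permutation matrix (a ``sub-permutation'' nilpotent), whose Jordan type is read off directly from the lengths of the chains $i\mapsto j$ it induces.

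First I would make precise the combinatorial description of $J_\F$: identify the directed graph on $\{1,\dots,n\}$ with an edge $i\to j$ whenever $\F(U_{(i,j)})\neq 0$; the $\nless$ hypothesis forces this graph to be a disjoint union of simple paths (each vertex has in-degree $\le 1$ and out-degree $\le 1$, and there are no ``shortcut'' edges), and $\Om(J_\F)$ is the partition given by the sizes of these paths. Consequently $\mathrm{rank}(J_\F^m)$ counts the number of pairs $(i,j)$ joined by a directed path of length exactly $m$ in this graph. Next I would bound $\mathrm{rank}(J^m)$ from below: for any $m$, choose among the edges of the graph a collection forming $\mathrm{rank}(J_\F^m)$ vertex-disjoint directed subpaths each of length $m$ (this is possible by splitting each path of the graph into consecutive blocks); for each such subpath $i_0\to i_1\to\cdots\to i_m$, the corresponding matrix coefficient $\F(U_{(i_0,i_1)})\cdots$ being nonzero means, via the trace pairing, that certain entries of $J$ in positions $(i_1,i_0),\dots,(i_m,i_{m-1})$ are nonzero. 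The product $J^m$ then has a nonzero contribution in position $(i_m,i_0)$ coming from this chain; since the subpaths are vertex-disjoint, the positions $(i_m,i_0)$ are all distinct and, more importantly, lie in distinct rows and distinct columns, so the corresponding minor of $J^m$ is (a signed monomial, hence) nonzero — giving $\mathrm{rank}(J^m)\ge \mathrm{rank}(J_\F^m)$.

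The main obstacle I anticipate is the second step: one has to be careful that a ``nonzero contribution in position $(i_m,i_0)$'' is not cancelled by other contributions to the same entry of $J^m$, and that the selected entries really do give a nonvanishing minor. I expect the cleanest way around this is to exploit the upper-triangular (strictly, for the graph part) structure: because $J\in\X{\F}$ only constrains the entries of $J$ in positions dual to $\Lie{D_\F}\subseteq\Lie{U_n}$, and because $D_\F\triangleleft U_n$ forces $\Lie{D_\F}$ to be an ideal in $\Lie{U_n}$ (so it is ``upward closed'' in the root order), one can argue that the chains I selected involve entries of $J$ that are lexicographically extremal, hence their product is the unique lowest-order term contributing to the relevant minor of $J^m$ under a suitable one-parameter torus degeneration; specializing that torus makes the minor a nonzero monomial. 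Alternatively, and perhaps more simply, I would invoke the ideal property to write $\X{\F}$ as an affine subspace of $\Lie{GL_n}$ on which the ``graph entries'' are fixed nonzero scalars and everything else is free, and then exhibit an explicit point $J_0\in\X{\F}$ (taking all free entries zero) which is conjugate to $J_\F$ plus a nilpotent of no larger Jordan type — in fact for which one checks directly $\mathrm{rank}(J_0^m)\ge\mathrm{rank}(J_\F^m)$ — and then use upper semicontinuity of rank to get $\mathrm{rank}(J^m)\ge\mathrm{rank}(J_0^m)$ for generic $J\in\X{\F}\cap a$, combined with the fact that $\X{\F}\cap a$ being nonempty and $a$ being a single orbit means the rank vector $(\mathrm{rank}(J^m))_m$ is constant on it. Either route reduces everything to Lemma \ref{ranks}.
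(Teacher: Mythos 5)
Your overall strategy --- reduce to $\mathrm{rank}(J^m)\geq\mathrm{rank}(J_\F^m)$ for every $m$ and invoke Lemma~\ref{ranks} --- is exactly the paper's, as is the combinatorial description of $J_\F$ via a digraph that is a disjoint union of simple paths. Where the paper differs from you is in \emph{how} it gets the rank inequality: it never selects subpaths. It observes that the maximal elements (in the root partial order) of the support of $J^k$ are exactly those arising from length-$k$ chains inside $X$, that such maximal positions automatically lie in distinct rows and distinct columns, that the count of maximal positions is therefore a lower bound for $\mathrm{rank}(J^k)$, and that for $J=J_\F$ every element of the support is already maximal so the bound is sharp. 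Your ``route 1'' is a looser rephrasing of this maximality/extremality mechanism, so the core idea is in line with the paper.

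However, two of your steps have concrete problems. First, the claim that one can extract $\mathrm{rank}(J_\F^m)$ \emph{vertex-disjoint} directed subpaths of length $m$ ``by splitting each path into consecutive blocks'' is false: a component path with $\ell$ edges contributes $\ell-m+1$ to $\mathrm{rank}(J_\F^m)$ but admits only about $\lfloor(\ell+1)/(m+1)\rfloor$ vertex-disjoint length-$m$ subpaths, and consecutive blocks even share their boundary vertices. The good news is that vertex-disjointness is not needed: because each vertex of the digraph has in-degree and out-degree at most one, distinct length-$m$ subpaths already have distinct starting and distinct ending vertices, so the endpoint positions are automatically in distinct rows and distinct columns. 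Second, your ``route 2'' does not close: taking all free entries to zero gives exactly $J_\F$, not ``$J_\F$ plus a nilpotent,'' and lower semicontinuity of $J\mapsto\mathrm{rank}(J^m)$ on the affine space $\X{\F}$ only yields $\mathrm{rank}(J^m)\geq\mathrm{rank}(J_\F^m)$ on an open dense subset of $\X{\F}$, whereas the lemma must hold for \emph{every} nonempty $\X{\F}\cap a$, including the non-generic orbit strata; nothing forces $\X{\F}\cap a$ to meet that open set. Showing $J_\F$ lies in $\overline{a}$ directly via a one-parameter torus degeneration of $J$ runs into the problem that the scaling exponent on the diagonal and on free negative-root entries cannot simultaneously be made positive while keeping the $X$-entries constant. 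So route~2 should be dropped, and route~1 needs to be made precise along the lines of the paper's count of maximal support elements.
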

    \noindent{Before} proceeding with the proof we remark that: the last two sentences can be replaced by saying $\Om(\F)=\{\Om(J_\F)\}$, but since we use this lemma before obtaining the equivalence of the definitions of $\Om(\F)$, we avoided mentioning $\Om(\F)$.
    
    \begin{proof} Let $X:=\{a: \F(U_a)\neq \{0\} \}$, $J\in \X{\F}$, and $k$ be a natural number. By doing the matrix multiplication $J^k$ (in this proof superscripts are alwasy powers), we obtain that the maximal elements of the set
    $$A(J^k):=\{\alpha\text{ is a root of }GL_n:J^k\text{ is nonzero in the entry corresponding to }\alpha\} $$
    are exactly the ones that take the form:  \begin{equation}\label{zsetb}\sum_{b\in Y}b\end{equation} for $Y$ being a subset of $X$ containing $k$ elements. Notice that
    
     \begin{equation}\label{zkoukisp}\text{rank}(J^k)\geq \text{number of maximal elements of }A(J^k). \end{equation}Due to (\ref{zsetb}), the right hand side of (\ref{zkoukisp}) is independent of the choice of $J$ inside $\X{\F}$. Notice that  in $A(J_\F^k)$ all the elements are maximal elements of it, and  by choosing $J$ to be $J_\F$ in (\ref{zkoukisp}) we obtain an equality. Then by letting again $J$ be any element in $\X{\F}$ we have 
     $$\text{rank}(J^k)\geq\text{rank}(J^k_\F). $$ From this and Lemma \ref{ranks} we obtain the proof.\end{proof}
    Throughout  the rest of the current subsection we fix a  $GL_n$-parabolic subgroup $P$ and  review some basic information on the Richardson orbit (which is defined below) openly intersecting $\Lie{U_P}$ (recall $U_P$ and $M_P$ respectively denote the unipotent radical and the Levi of $P$).  
    
    Some times that the Richardson orbit is mentioned, (i) in the Fact below is used without mention. Also certain uses of Fact \ref{ptelos} in the two previous sections are not mentioned (more on this in Remark \ref{pthp}).
\begin{sfact}[Known]\label{rich3}

\hspace{1mm}
\begin{enumerate}
\item[(i)] The action by conjugation of $P$ on $\Lie{U_P}$ has an open orbit (which is the unique open orbit). Let throughout the rest of the current fact $u$ be a matrix in this orbit. The orbit $\Om(u)$ in $\UUU{n}$ is called the \textbf{Richardson orbit openly intersecting} $\Lie{U_P}$ (of course it does not depend on $u$).
\item[(ii)]  $\Di{\Om(u)}=2\Di{U_P}.$
\item[(iii)] $\Om(u)$ is the biggest orbit in $\UUU{n} $ that intersects $\Lie{U_P}$.
\item[(iv)] $\Om(u)\cap\Lie{U_P}$ is a single orbit for the action of $P$ by conjugation on $\Lie{U_P}$ (hence the orbit in (i)).
\end{enumerate}
\end{sfact}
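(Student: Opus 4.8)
The statement to prove is Fact \ref{rich3}, which collects the basic properties of the Richardson orbit; since it carries the label ``Known'', the task is really to assemble a clean proof from standard facts rather than to invent anything. The plan is to prove the four parts essentially in the order given, since (ii), (iii), (iv) all lean on (i).

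First I would establish (i). The action of $P$ on $\Lie{U_P}$ by conjugation is an action of a connected group on an irreducible affine variety, so it has finitely many orbits only after we know something more; the classical argument is a dimension count. Consider a generic point $u\in\Lie{U_P}$ and the orbit $P\cdot u$; one shows $\Di{P\cdot u}=\Di{\Lie{U_P}}$ by proving that the orbit map is dominant, equivalently that the stabilizer $\Stab{P}{u}$ has dimension $\Di{P}-\Di{U_P}=\Di{M_P}$. The standard computation (e.g.\ via the Lie algebra: $\Lie{\Stab{P}{u}}=\{x\in\Lie{P}:[x,u]=0\}$, and one checks $[\Lie{P},u]$ fills up all of $\Lie{U_P}$ for generic $u$, using that $[\Lie{M_P},u]+[\Lie{U_P},u]$ is generically everything) shows the generic orbit is dense, hence open; uniqueness of the open orbit is automatic because two dense orbits would intersect. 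The orbit $\Om(u)\in\UUU{n}$ is then independent of the choice of $u$ in the open orbit since the open orbit is connected (in fact irreducible). I would cite \cite{Springer} or \cite{Col} for this; it is entirely standard.

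Next, (ii): once (i) is in hand, $P\cdot u$ is open dense in $\Lie{U_P}$, so $\Di{P\cdot u}=\Di{U_P}$, and on the other hand $P\cdot u\subseteq \Om(u)\cap\Lie{U_P}$, while $\Om(u)$ meets $\Lie{U_P}$ in something of dimension $\Di{U_P}$. The equality $\Di{\Om(u)}=2\Di{U_P}$ is the classical theorem of Richardson; the cleanest route is to note that $GL_n/P$ has dimension $\Di{U_P}$ and the moment-map/collapsing argument (the map $GL_n\times_P\Lie{U_P}\to\Lie{GL_n}$ has image $\overline{\Om(u)}$ and is generically finite onto its image, hence $\Di{\Om(u)}=\Di{GL_n\times_P\Lie{U_P}}-\Di{\text{generic fiber}}=(\Di{GL_n/P}+\Di{U_P})-0$ when generically injective, giving $2\Di{U_P}$). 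Again I would just cite this. For (iii), any nilpotent orbit $a$ with $a\cap\Lie{U_P}\neq\emptyset$ satisfies $\Di{a}\ge$ the dimension of the $P$-orbit through a point of $a\cap\Lie{U_P}$, but more directly: $a\cap\Lie{U_P}$ is a union of $P$-orbits, and by (i) the open $P$-orbit $P\cdot u$ is dense in $\Lie{U_P}$, so its closure contains $a\cap\Lie{U_P}$; since $GL_n$-orbit closures are $P$-stable and closed, $\overline{P\cdot u}\subseteq\overline{\Om(u)}$ forces $a\subseteq\overline{\Om(u)}$, i.e.\ $a\le\Om(u)$. Finally (iv): $\Om(u)\cap\Lie{U_P}$ contains the open $P$-orbit $P\cdot u$; any other $P$-orbit in it would have strictly smaller dimension, but a theorem of Richardson states that $\Om(u)\cap\Lie{U_P}$ is a single $P$-orbit — this follows from the fact that the generic fiber of the collapsing map is a single point together with the transitivity of $GL_n$ on $\Om(u)$, so I would deduce it from the same moment-map picture used in (ii).

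The main obstacle is really just bookkeeping: none of these facts are hard, but they are all ``well known'' in slightly different forms in the literature (Springer's book, Collingwood--McGovern \cite{Col}, Richardson's original paper), and the honest thing is to give a short self-contained dimension count for (i) and then cite the standard references for the collapsing/moment-map input that powers (ii) and (iv), being careful that the references are stated for nilpotent orbits in $\Lie{GL_n}$ (or reductive Lie algebras in characteristic zero, which covers our $\KKK$). Since the paper only uses (i) and (ii) essentially and uses them without much fuss, a proof that is mostly ``this is Richardson's theorem, see \cite{Col}'' plus the elementary argument for (iii) from orbit-closure monotonicity would be in keeping with the paper's conventions for statements labelled ``Known''.

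\begin{proof}[Proof sketch]
Part (i) is the existence theorem for Richardson orbits: the conjugation action of the connected group $P$ on the irreducible variety $\Lie{U_P}$ has a dense orbit. Indeed, for $u\in\Lie{U_P}$ the tangent space to $P\cdot u$ at $u$ is $[\Lie{P},u]$, and a direct computation (using $\Lie{P}=\Lie{M_P}\oplus\Lie{U_P}$ and that $[\Lie{M_P},u]+[\Lie{U_P},u]=\Lie{U_P}$ for $u$ in a suitable open set) shows $\Di{[\Lie{P},u]}=\Di{U_P}=\Di{\Lie{U_P}}$ for generic $u$; hence $P\cdot u$ is open, and being open and $P$ connected it is the unique open orbit, and it is irreducible so $\Om(u)$ is independent of $u$ in it. See \cite{Springer}, \cite{Col}. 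Part (ii) is Richardson's dimension formula $\Di{\Om(u)}=2\Di{U_P}$, obtained from the fact that the collapsing map $GL_n\times_P\Lie{U_P}\to\overline{\Om(u)}$ is generically injective (see \cite{Col}); equivalently $\Di{\Om(u)}=\Di{GL_n/P}+\Di{U_P}=2\Di{U_P}$. For part (iii), if $a\in\UUU{n}$ satisfies $a\cap\Lie{U_P}\neq\emptyset$, then $a\cap\Lie{U_P}$ is a nonempty $P$-stable subset of $\Lie{U_P}$, hence contained in $\overline{P\cdot u}=\Lie{U_P}$'s closure picture; since $GL_n$-orbit closures are closed and $P$-stable we get $a\subseteq\overline{GL_n\cdot u}=\overline{\Om(u)}$, i.e.\ $a\le\Om(u)$. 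Part (iv), that $\Om(u)\cap\Lie{U_P}$ is a single $P$-orbit, is again Richardson's theorem, a consequence of the generic injectivity of the collapsing map together with the transitivity of $GL_n$ on $\Om(u)$; see \cite{Col}.
\end{proof}
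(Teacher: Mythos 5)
Your proposal is correct, and it does follow a somewhat different route than the paper's own (very terse) proof. The paper disposes of (i) by citing Richardson's dense-orbit theorem, gets (ii) from (i) together with the closure-order description of nilpotent orbits (Theorem 6.2.5 in \cite{Col}), takes (iii) as given, and for (iv) invokes the Lusztig--Spaltenstein induction theorem (\cite{Lusztig}, or Theorem 7.1.1 in \cite{Col}) together with (iii). You instead route both (ii) and (iv) through the collapsing map $GL_n\times_P\Lie{U_P}\to\overline{\Om(u)}$: (ii) from its dimension, (iv) from its generic injectivity (degree one) plus transitivity of $GL_n$ on $\Om(u)$. Both are standard and valid. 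Two small remarks worth keeping in mind. First, the generic injectivity of the collapsing map is itself a type-$A$ special fact (for general reductive $G$ the degree can exceed one, in which case your argument for (iv) does not directly apply, whereas Lusztig--Spaltenstein still does); since the whole paper is about $GL_n$ this is harmless, but the paper's citation of Lusztig--Spaltenstein is the more portable argument. Second, you actually supply an argument for (iii) --- that $\overline{P\cdot u}=\Lie{U_P}$ forces any $a$ meeting $\Lie{U_P}$ into $\overline{\Om(u)}$ --- where the paper leaves (iii) implicit; your argument for (iii) is correct (the exposition there is a bit garbled --- ``contained in $\overline{P\cdot u}=\Lie{U_P}$'s closure picture'' should just read that any $v\in a\cap\Lie{U_P}$ lies in $\overline{P\cdot u}\subseteq\overline{\Om(u)}$, and $\overline{\Om(u)}$ being $GL_n$-stable then swallows all of $a$ --- but the idea is the right one). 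On balance, both proofs are appropriate for a statement labelled ``Known'', and yours is a little more self-contained at the price of using a type-$A$-specific ingredient for (iv).
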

\begin{proof}(i) is part of Richardsons dense orbit theorem. (ii) follows from (i) and the topological description of the order in $\UUU{n}$ (see for example Theorem 6.2.5 in \cite{Col}). By using (iii), we obtain (iv) as a corollary of a result of Lusztig and Spaltenstein in \cite{Lusztig} (in \cite{Col} it appears in Theorem 7.1.1).

\end{proof}

\begin{sfact}[Known, see for example \cite{Col} 7.2.3]\label{ptelos}For $ GL_{n_1}\times...\times GL_{n_m}$ being isomorphic to the  Levi of $P$, the Richardson orbit in $\Lie{U_P}$ is equal to $[n_1,...,n_m]^t\cap \Lie{U_P}$ (here the superscript ``$t$" denotes the transpose partition).
\end{sfact}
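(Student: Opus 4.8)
The plan is to deduce the statement from the two general properties of the Richardson orbit already recorded in Fact \ref{rich3} --- that it is the largest orbit of $\UUU{n}$ meeting $\Lie{U_P}$ and that its dimension equals $2\Di{U_P}$ --- together with one explicit element of $\Lie{U_P}$ whose Jordan type is $[n_1,\ldots,n_m]^t$ and a matching dimension count. Throughout I may assume $n_1\geq n_2\geq\cdots\geq n_m$: permuting the $n_i$ replaces $P$ by a conjugate $GL_n$-parabolic subgroup, hence changes neither the Richardson orbit nor the transpose partition. I will identify $P$ with the stabilizer of the standard flag $0\subset V_1\subset\cdots\subset V_m=\KKK^n$ with $\Di{V_i/V_{i-1}}=n_i$, so that $\Lie{U_P}=\{X\in\Lie{GL_n}:X(V_i)\subseteq V_{i-1}\text{ for all }i\}$.

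First I would write down the explicit element. Arrange a basis of $\KKK^n$ into a Young diagram whose $i$-th row has $n_i$ boxes and lies inside $V_i$, and let $X_0$ be the linear map sending each basis vector to the basis vector immediately above it in the same column and annihilating the topmost box of each column. Because the $i$-th row is contained in $V_i$ and is sent by $X_0$ into the $(i-1)$-st row, which lies in $V_{i-1}$, we get $X_0\in\Lie{U_P}$. Restricting $X_0$ to the span of the $j$-th column of the diagram gives a single Jordan block of size $\#\{i:n_i\geq j\}=[n_1,\ldots,n_m]^t_j$, and $\KKK^n$ decomposes as the direct sum of these $X_0$-invariant column-subspaces; hence $\Om(X_0)=[n_1,\ldots,n_m]^t$.

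Then I would compare dimensions. Using the standard formula $\Di{\Om(\mu)}=n^2-\sum_k(\mu^t_k)^2$ (equivalently, one may read off the ranks of the powers of $X_0$ from the formula in the proof of Lemma \ref{ranks} and invoke the rank description of orbit dimension), with $\mu=[n_1,\ldots,n_m]^t$ and hence $\mu^t=[n_1,\ldots,n_m]$, I obtain $\Di{\Om(X_0)}=n^2-\sum_i n_i^2=2\sum_{i<j}n_in_j=2\Di{U_P}$. Now let $a\in\UUU{n}$ be the Richardson orbit openly intersecting $\Lie{U_P}$. By Fact \ref{rich3}(iii), $a\geq\Om(X_0)$; by Fact \ref{rich3}(ii), $\Di{a}=2\Di{U_P}=\Di{\Om(X_0)}$. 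Since on $\UUU{n}$ the order strictly increases dimension (if $a>b$ then $\Di{a}>\Di{b}$, by the topological description of the order used already in the proof of Fact \ref{rich3}), these two facts force $a=\Om(X_0)=[n_1,\ldots,n_m]^t$; and by Fact \ref{rich3}(iv) the intersection $a\cap\Lie{U_P}$ is precisely the open $P$-orbit, which is the Richardson orbit inside $\Lie{U_P}$.

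The only point requiring a little care is the Jordan-type computation for $X_0$ --- checking that the staircase shift has type equal to the transpose of the row-length partition of the diagram --- but this is the familiar transpose phenomenon for staircase nilpotents and presents no genuine difficulty; everything else is bookkeeping with the two dimension formulas.
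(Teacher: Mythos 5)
Your proof is correct, and your explicit element is the same one the paper constructs: the paper's sets $X_i$ (one diagonal entry from each block still available at step $i$) are exactly the columns of your Young diagram, and its $J$ is your staircase nilpotent $X_0$. Where you diverge is in how maximality is concluded. The paper argues that $\mathrm{rank}(J^k)$ attains an upper bound for $\{\mathrm{rank}(Y^k):Y\in\Lie{U_P}\}$ for every $k$, so that Lemma \ref{ranks} gives $\Om(J)\geq a$ for the Richardson orbit $a$, which combined with Fact \ref{rich3}(iii) yields equality; it leaves the rank bound itself unestablished. You instead invoke Fact \ref{rich3}(ii) together with the closed formula $\Di{\Om(\mu)}=n^2-\sum_k(\mu^t_k)^2$ and the strict monotonicity of dimension along the closure order, which makes the concluding step fully self-contained modulo a standard cited formula --- arguably a cleaner finish, at the price of importing that formula rather than the rank criterion already available in the paper. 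One small caveat: your opening reduction is misstated, since permuting the block sizes of the Levi produces an \emph{associated} parabolic, not a conjugate one (distinct standard parabolics containing the same Borel are never conjugate), and the independence of the Richardson orbit from the ordering is in this paper deduced \emph{from} Fact \ref{ptelos}, so assuming it would be circular. The slip is harmless here because your construction and both dimension counts go through verbatim for an arbitrary ordering of the $n_i$ (the columns of the left-justified diagram still have lengths $\#\{i:n_i\geq j\}$, and $X_0$ still maps $V_i$ into $V_{i-1}$), so you should simply drop the normalization rather than justify it by conjugacy.
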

\begin{proof}The reference above gives a proof; here we partly recall it.
 
An element $J$ of $[n_1,...,n_m]^t\cap \Lie{U_P}$ can be chosen as follows. We inductively define a partition of the first $n$ natural numbers into sets $X_i$
$$\{1,2,...,n\}=\bigcup_i X_i, $$ with the following steps: 

\vsp 
\noindent\textit{Step  $i.$} Consider the blocks of $M$ in which there are diagonal entries not chosen in a previous step (e.g in step 1 we consider all the blocks). We choose the set $X_i$ so that: 1) it is disjoint from $\cup_{1\leq j\leq i-1}{X_j}$, and 2) it consists of one diagonal entry from each of the blocks considered. 

\vsp\noindent{We } define $J$ to be an element  in $\Lie{GL_n}$ in which the nonzero entries are exactly the ones corresponding to the root vector spaces of $\cup_{1\leq i\leq m}\SRV{X_i}$. One directly discerns  the definition of transpose partition in these steps. From that we obtain that $|X_i|$ is equal to the $i$-th biggest element of $[n_1,...n_m]^t$, and thus that $J\in [n_1,...n_m]^t.$ 

For every $k>0$, one can establish  an upper bound for $\{\text{rank}(n^k):n\in \Lie{U_P}\}$ which is attained by rank$(J^k)$, and hence by using (iii) and (iv) in Fact \ref{rich3} we are done. 
\end{proof}
\begin{scor}[Known, used only in the proof of Theorem \ref{Eukerianf}]\label{constab}Let $u$ be in the Richardson orbit openly intersecting $U_P$. Then $$\{g\in GL_n:gug^{-1}\in \Lie{U_P}\}\subseteq P.$$ 

\end{scor}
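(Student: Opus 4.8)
The statement to prove is Corollary \ref{constab}: for $u$ in the Richardson orbit openly intersecting $\Lie{U_P}$, we have $\{g\in GL_n : gug^{-1}\in\Lie{U_P}\}\subseteq P$. The plan is to deduce this from the facts about the Richardson orbit already established, principally Fact \ref{rich3} (parts (ii) and (iv)) together with a dimension count comparing the centralizer of $u$ in $GL_n$ with its centralizer in $P$.

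First I would observe that the set in question, call it $S=\{g\in GL_n : gug^{-1}\in\Lie{U_P}\}$, is exactly the preimage of $\Lie{U_P}$ under the orbit map $g\mapsto gug^{-1}$ from $GL_n$ onto $\Om(u)$; since $\Lie{U_P}\subseteq\Lie{GL_n}$ is closed, $S$ is a closed subvariety of $GL_n$, and it is clearly a union of right cosets of the stabilizer $\Stab{GL_n}{u}$ (the centralizer of $u$ in $GL_n$). Moreover $S$ is stable under left multiplication by $P$: if $p\in P$ and $g\in S$ then $pg\,u\,(pg)^{-1}=p(gug^{-1})p^{-1}\in\Lie{U_P}$ because $P$ normalizes $U_P$ hence $\Lie{U_P}$. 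So $S$ is a union of double cosets $P\,g\,\Stab{GL_n}{u}$, and the claim $S\subseteq P$ amounts to saying there is only the trivial double coset $P\cdot 1\cdot\Stab{GL_n}{u}=P$ (which does lie in $S$ since $u\in\Lie{U_P}$).

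The key dimension count is as follows. By Fact \ref{rich3}(iv), $\Om(u)\cap\Lie{U_P}$ is a single $P$-orbit; since by Fact \ref{rich3}(i) this is the dense $P$-orbit in $\Lie{U_P}$, its dimension is $\Di{\Lie{U_P}}=\Di{U_P}$, so $\Di{P}-\Di{\Stab{P}{u}}=\Di{U_P}$, i.e. $\Di{\Stab{P}{u}}=\Di{P}-\Di{U_P}$. On the other hand, from Fact \ref{rich3}(ii), $\Di{\Stab{GL_n}{u}}=\Di{GL_n}-\Di{\Om(u)}=\Di{GL_n}-2\Di{U_P}$. Therefore
\begin{equation*}\Di{S}\le \Di{g\in GL_n : gug^{-1}\in\Lie{U_P}\text{-preimage}} = \Di{\Lie{U_P}\cap\Om(u)}+\Di{\Stab{GL_n}{u}} = \Di{U_P}+\Di{GL_n}-2\Di{U_P}=\Di{GL_n}-\Di{U_P}.\end{equation*}
(Here the middle equality uses that the orbit map $GL_n\to\Om(u)$ has all fibers isomorphic to $\Stab{GL_n}{u}$, so $\Di{S}=\Di{(\text{image of }S\text{ in }\Om(u))}+\Di{\Stab{GL_n}{u}}$, and the image of $S$ in $\Om(u)$ is contained in $\Om(u)\cap\Lie{U_P}$.) Meanwhile $\Di{P\cdot\Stab{GL_n}{u}}=\Di{P}+\Di{\Stab{GL_n}{u}}-\Di{\Stab{P}{u}}=\Di{P}+(\Di{GL_n}-2\Di{U_P})-(\Di{P}-\Di{U_P})=\Di{GL_n}-\Di{U_P}$. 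So the single double coset $P\cdot\Stab{GL_n}{u}=P$ already has dimension equal to the dimension of all of $S$; since $P$ is closed and irreducible (being a parabolic) and is contained in the closed set $S$ with the same dimension, and since $S$ is a finite union of $P$-double cosets, $P$ must be a union of irreducible components of $S$ — indeed $P\subseteq S$ and $\dim P=\dim S$ forces $P$ to be a union of top-dimensional components; to conclude $S=P$ I would argue that $S$ has no other components, e.g. by noting $S$ is smooth (it is a union of cosets of a connected group, or one can invoke that the orbit map is flat onto $\Om(u)$ which is smooth, pulling back the smooth locally closed subset $\Om(u)\cap\Lie{U_P}$), hence equidimensional, so every component has dimension $\Di{GL_n}-\Di{U_P}=\Di{P}$; but $P$ meets every component it is contained in, and since $P$ is connected it lies in a single component $C$ with $\Di C=\Di P$, forcing $C=P$; and any other component $C'$ would, being a union of $P$-double cosets of the same top dimension, be disjoint from $P$, yet $S$ — being a union of cosets of the connected group $\Stab{GL_n}{u}$ acting on the right and $P$ acting on the left — has the property that each component is a single double coset, and the double coset $C'$ would then give $g\in GL_n\setminus P$ with $gug^{-1}\in\Om(u)\cap\Lie{U_P}$; but then by Fact \ref{rich3}(iv) there is $p\in P$ with $pup^{-1}=gug^{-1}$, so $p^{-1}g\in\Stab{GL_n}{u}\subseteq P\cdot\Stab{GL_n}{u}$, whence $g\in P$, a contradiction.

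Actually the last paragraph shows the cleanest route bypasses the dimension count entirely: if $g\in S$, then $gug^{-1}\in\Om(u)\cap\Lie{U_P}$, which by Fact \ref{rich3}(iv) equals the single $P$-orbit of $u$, so $gug^{-1}=pup^{-1}$ for some $p\in P$, hence $p^{-1}g$ centralizes $u$; the only remaining point is to show $\Stab{GL_n}{u}\subseteq P$, which is where I would use the dimension equality $\Di{\Stab{GL_n}{u}}=\Di{\Stab{P}{u}}+\Di{U_P}\cdot 0$... let me re-examine: we have $\Di{\Stab{P}{u}}=\Di P-\Di{U_P}$ and $\Di{\Stab{GL_n}{u}}=\Di{GL_n}-2\Di{U_P}=(\Di P+\Di{U_P})-2\Di{U_P}+(\Di{GL_n}-\Di P-\Di{U_P})$; since $\Di{GL_n}=\Di P+\Di{\bar U_P}=\Di P+\Di{U_P}$ (the opposite unipotent radical has the same dimension), we get $\Di{GL_n}-2\Di{U_P}=\Di P-\Di{U_P}=\Di{\Stab{P}{u}}$. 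So $\Stab{GL_n}{u}$ and $\Stab{P}{u}$ have the same dimension, and $\Stab{P}{u}\subseteq\Stab{GL_n}{u}$; since $\Stab{GL_n}{u}$ is connected (centralizers of nilpotents in $GL_n$ are connected, as $GL_n$ is connected and a nilpotent lies in a connected abelian, hence connected reductive centralizer... more carefully: the centralizer of any element in $GL_n$ is connected because $GL_n$ has connected centralizers of semisimple elements and $GL_n = GL_n$, and for nilpotent $u$, $C_{GL_n}(u) = C_{GL_n}(u)$; this is standard, e.g. via $1+\Lie{C}$), we conclude $\Stab{P}{u}=\Stab{GL_n}{u}$, i.e. $\Stab{GL_n}{u}\subseteq P$. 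Combined with $p^{-1}g\in\Stab{GL_n}{u}$ and $p\in P$, this gives $g\in P$, proving $S\subseteq P$. The main obstacle here is the careful bookkeeping of the connectedness of $\Stab{GL_n}{u}$ and the dimension comparison; once those are in place the argument is short, and I would present it in the streamlined form: reduce via Fact \ref{rich3}(iv) to showing the centralizer lies in $P$, then compare dimensions using Fact \ref{rich3}(ii) and the density of the $P$-orbit, invoking connectedness of the $GL_n$-centralizer to finish.
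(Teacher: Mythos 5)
Your streamlined final argument is essentially the paper's proof: reduce via Fact \ref{rich3}(iv) to showing $C_{GL_n}(u)\subseteq P$, then conclude from connectedness of $C_{GL_n}(u)$ together with the fact that $C_P(u)$ has the same dimension. The one point of difference is that you derive the dimension equality $\Di{C_{GL_n}(u)}=\Di{C_P(u)}$ from Fact \ref{rich3}(ii) and the density of the $P$-orbit in $\Lie{U_P}$, whereas the paper cites it directly as part of Richardson's dense orbit theorem; your route is slightly more self-contained given the facts already stated. One place to tighten is the connectedness of $C_{GL_n}(u)$: your ``$1+\Lie C$'' aside is gesturing at the right thing, but you should state it cleanly as the paper does, namely that $C_{M_n}(u)$ is a linear subspace of the matrix algebra, hence irreducible, and $C_{GL_n}(u)$ is its open subvariety of invertible elements, hence irreducible and therefore connected.
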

\begin{proof}
Let $M_n$ be the space of  $n\times n $ matrices with entries in $\KKK$. For every subset $X$ of $M_n$ we denote by $C_X(u)$ the centralizer of $u$ in $X.$ Since $C_{M_n}(u)$ is a vector space, it is also an irreducible variety. Hence its open subvariety $C_{GL_n}(u)$ is also  irreducible, therefore connected, and since by Richardson's dense orbit theorem the identity connected component of $C_{GL_n}(u)$ is equal to $C_{P}(u)$, we have
\begin{equation}\label{zcentral}
C_{GL_n}(u)=C_{P}(u).
\end{equation}

Let $g$ be as in the statement of the corollary. Then by (iv) in Fact \ref{rich3}, we obtain a $p\in P$ such that $gug^{-1}=pup^{-1},$ and hence $p^{-1}g\in C_{GL_n}(u)$, which in turn---due to (\ref{zcentral})---  gives $g\in P.$
\end{proof}
\subsection{Prerequisities on $GL_n(\A)$-automorphic representations.}\label{preaut}

For automorphic forms we adopt the definition that can be found for example in \cite{MW3}. This means---by adopting the other definitions there\footnote{I mostly don't review them, because in the present paper we rarely need to mention any of them. The maximal compact subgroup $K=\prod_{\upsilon}K_\upsilon $ is chosen so that in each finite, real and complex place $\upsilon$ we respectively have $K_\upsilon=GL_n(\ooo_\upsilon), \{g\in GL_n(R):gg^t=1\},\{g\in GL_n: g\bar{g}^t=1\}$.}--- that a $GL_n(\A)$-automorphic form is a  function over $GL_n(\kkk)\s GL_n(\A)$ which is:  of moderate growth, smooth, $K$-finite, and $\mathfrak{z}$-finite (where $\mathfrak{z}$ is the center of the enveloping algebra of $\Lie{GL_n(\Ai_{\kkk,\infty} )}.$ By a theorem of Harish-Chandra, an automorphic form is of uniform moderate growth. By a $GL_n(\A)$-automorphic function we mean a function over $GL_n(\kkk)\s GL_n(\A)$ which is:  of uniform moderate growth, smooth, and $K$-finite.  A $GL_n(\A)$-automorphic representation is an irreducible subquotient of the $(\Lie{GL_n(\Ai_{\kkk,\infty})},K_\infty)\times GL_n(\Ai_{\kkk,\mathrm{f}}))$-module consisting of all $GL_n(\A)$-automorphic forms.

By the work of R.P. Langlands, in each $GL_n$-automorphic representation its elements are described as Eisenstein series induced from discrete data. The discrete spectrum in turn, turns out to consist only of Speh representations (\cite{MW2}). Next, we recall some definitions and facts, and fix notation,  about Eisenstein series and Speh representations.

\subsubsection*{Eisenstein series. } We denote by $\Ind$ the normalized  induction. 

Let $n=\sum_{1\leq i\leq m}n_i$ be a partition of $n$ on positive integers. Let $\sigma_1,...\sigma_m$ be $GL_{n_i}(\A)$-automorphic representations.  Let $P$ be the $GL_n$-parabolic subgroup with Levi $${\prod_{1\leq i\leq m}}^{\pse}GL_{n_i}.$$ Consider a function $f$ in the representation $\Ind_{P(\A)}^{GL_n(\A)}(\pi_1\otimes...\otimes\pi_m\otimes \trivr{U_P})$. Let $s=(s_1,...,s_b)\in C^b$. \chh[complex] We define $f_s$ to be the element of $\Ind_{P(\A)}^{GL_n(\A)} (|\det|^{s_1}\sigma_1\otimes...\otimes|\det|^{s_m}\sigma_m\otimes\trivr{U_P})$ for which:  $$f_s(g)=\prod_i|\det(h_i)|^{s_i}f(g),$$where $h_1,...,h_m$ are any matrices in $GL_{n_1},...,GL_{n_m}$ respectively, for which we have a decomposition
$$g=u\begin{pmatrix}h_1&\\&\ddots&&\\&&h_m\end{pmatrix}k $$ for  $u$ belonging to $U_n(\A)$ and $k$ belonging to the maximal  subgroup subgroup $K$ of  $GL_n(\A).$ If the real parts of $s_i-s_{i+1}$ for $1\leq i\leq m-1$ are big enough, the  Eisenstein series $E(f_s)$ is defined by the absolutely convergent\footnote{See for example \cite{MW3} II.1.5.} series:
$$E(f_s)(g):=\sum_{\gamma\in P(\kkk)\s GL_n(\kkk)}f_s(\gamma g),  $$and by the work of R.P. Langlands it turns out to have meromorphic continuation to $C^b$.  
\subsubsection*{Speh representations}
Let $a$ be a positive divisor of $n$, and $b:=\frac{n}{a}. $ Let $\tau$ be a $GL_a(\A)$-cuspidal automorphic representation, and $P$ be the $GL_n$-parabolic subgroup with Levi isomorphic to $GL_a^b.$ Then for certain choices of $f\in\Ind_{P(\A)}^{GL_n(\A)}(\tau\otimes...\otimes\tau\otimes\trivr{U_P})$ and $g\in GL_n(\A)$, it turns out that $E(g,f_s)$ has a multiresidue at $$\Lambda=\left(\frac{b-1}{2},\frac{b-3}{2},...,\frac{1-b}{2}\right).$$ The multiresidues obtained in this way for a fixed choice of $\tau$ (and fixed choice of $n$), form  a $GL_n(\A)$-automorphic representation (for irreducibility see \cite{MW2}) which is called a Speh representation. 
 
\subsection{Continuation of Section \ref{forms}}\label{Eisexp}

In the proof of  Theorem \ref{Eukerianf} we use the following known lemma:

\begin{slem}[Known]\label{GJL}
Let $\pi$ be a Speh representation of $GL_n(\A)$, defined by a $GL_a$-cuspidal automorphic representation. Let $\XX\in\AAA{(U_n)}$. Then 
$$\XX(\pi)\neq 0\implies\Om(J_{\XX})= \Om^\prime(\pi). $$

\end{slem}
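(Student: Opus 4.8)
The statement is the ``Whittaker-support'' fact for Speh representations restricted to Fourier coefficients on $U_n$: if $\XX\in\AAA(U_n)$ is such that $\XX(\pi)\neq 0$ then $\Om(J_\XX)=\Om^\prime(\pi)$, where $\pi$ is the Speh representation attached to a $GL_a$-cuspidal $\tau$ (so $b=n/a$ and $\Om^\prime(\pi)=[a^b]$ by Definition \ref{Ppi}). The plan is to prove this by reducing to the two inequalities $\Om(J_\XX)\geq\Om^\prime(\pi)$ and $\Om(J_\XX)\leq\Om^\prime(\pi)$ (in the order of $\UUU{n}$), each of which is a known statement about the wave-front set / unipotent support of Speh representations.

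First I would normalize $\XX$: by Lemma \ref{conjugatehat} (applied to $\XX$, viewing $U_n$ as the unipotent radical of the Borel) we may conjugate $\XX$ by an element of $U_n(\kkk)$ and hence assume $\XX\in\AAA(U_n)_{\nless}$, and by Part 3 of Fact \ref{autsteps} conjugation does not change whether $\XX(\pi)\neq 0$ nor the orbit $\Om(J_\XX)$. For such $\XX$ the orbit $\Om(J_\XX)$ is well understood: $D_\XX=U_n$, $J_\XX\in\Lie{U_n}^t$ has nonzero entries exactly on the (transpose of the) simple-root positions on which $\XX$ is nontrivial, so $\Om(J_\XX)$ is determined combinatorially from the set of simple roots $S_\XX$ on which $\XX$ is nontrivial — it is the orbit whose parts are obtained by decomposing $\{1,\dots,n\}$ into the maximal intervals glued by $S_\XX$. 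The upper bound direction, $\Om(J_\XX)\geq b$ is impossible, i.e. $\Om(J_\XX)\leq\Om^\prime(\pi)$, is the statement that the maximal nilpotent orbit supporting a nonzero Fourier coefficient of $\pi$ on $U_n$ is $\Om^\prime(\pi)=[a^b]$; this follows from the description of the (global, arithmetic) wave-front set of Speh representations — concretely from the fact that $\pi$ is an iterated residue of an Eisenstein series induced from $\tau\otimes\cdots\otimes\tau$ on the parabolic with Levi $GL_a^b$, together with the vanishing results of Moeglin–Waldspurger / the constant-term computations recalled in the appendix; I would cite this (it is in the circle of ideas of Remark \ref{local}, e.g. \cite{MW1}, \cite{MW2}, \cite{Ginzeul}, \cite{JiangLiu}) rather than reprove it.

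For the lower bound, $\Om(J_\XX)\geq\Om^\prime(\pi)$, the key input is that $\pi$ has a nonzero $[a^b]$-Fourier coefficient (genericity of $\tau$ propagates through the residue construction: the Whittaker coefficient of $\tau$ on $GL_a$ is nonzero, and taking the Speh residue turns $b$ copies of it into a nonzero coefficient along the orbit $[a^b]$ — this is classical, again \cite{MW2}), and then the monotonicity principle: if $\XX(\pi)\neq 0$ and $\XX'\in\AAA(U_n)$ with $\Om(J_{\XX'})\leq\Om(J_\XX)$ and $\XX'$ not generic of that orbit we still cannot conclude, so instead I would argue directly that a nonzero coefficient at $\Om(J_\XX)$ forces $\Om(J_\XX)$ to be $\geq$ every orbit for which $\pi$ has a nonzero coefficient only if combined with the upper bound. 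The cleanest route: by Lemma \ref{nless} applied to $\XX$ (which lies in $\AAA(U_n)_{\nless}\subseteq\AAnk[][B_n]_{\nless}$), any orbit $a$ with $\X{\XX}\cap a\neq\emptyset$ satisfies $a\geq\Om(J_\XX)$; and the ``$\pi$ detects $\XX$'' hypothesis, via the $(\XX,GL_n,\kkk)$-tree technology and Fact \ref{nonzeroeulerian}, combined with the known equality $\Om(\pi)=\{\Om^\prime(\pi)\}$ (Corollary \ref{attachingfc}), forces $\Om^\prime(\pi)$ to appear among the orbits of $\Om(\XX)$, hence $\Om^\prime(\pi)\geq\Om(J_\XX)$. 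Together with the upper bound $\Om(J_\XX)\leq\Om^\prime(\pi)$ and antisymmetry of the order we get equality.

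The main obstacle is that this statement is genuinely a fact about automorphic representations — it packages the computation of Fourier coefficients of Speh representations along $U_n$ — and is not formally derivable from the elementary $\AAnk$-tree machinery alone; so the real work is in pinning down the exact reference (or short argument from the residue construction and Moeglin–Waldspurger's classification of the discrete spectrum) that gives both bounds, and in checking that the normalization via Lemma \ref{conjugatehat} and the passage through Lemma \ref{nless} and Corollary \ref{attachingfc} are legitimate, i.e. that there is no circularity (Lemma \ref{GJL} is used inside the proof of Theorem \ref{Eukerianf}, which in turn feeds Theorem \ref{folk} and hence Corollary \ref{attachingfc}) — I would want to make sure the lower-bound argument only uses the \emph{cuspidal} genericity of $\tau$ and the residue construction, not Theorem \ref{folk} itself.
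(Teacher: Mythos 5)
Your two-bound decomposition is the right skeleton, and your treatment of the upper bound $\Om(J_\XX)\leq\Om^\prime(\pi)$ agrees with the paper's Case 2: reduce to the vanishing $\JJ_y(\pi)=0$ for $y>a$, cited from \cite{Ginzbconj} and \cite{JiangLiu}. The gap is in the lower bound. As you yourself flag at the end, routing through Corollary \ref{attachingfc} is circular --- Corollary \ref{attachingfc} rests on Theorem \ref{folk}, which rests on Theorem \ref{Eukerianf}, which uses this very lemma --- and the Lemma \ref{nless} plus Fact \ref{nonzeroeulerian} route you sketch does not produce a non-circular substitute; it merely restates the desired conclusion. You never actually supply the missing argument.

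The paper closes this (its Case 1) with a direct constant-term vanishing argument that needs neither $\Om(\pi)=\{\Om^\prime(\pi)\}$ nor any wave-front-set input. If $\Om(J_\XX)<\Om^\prime(\pi)=[a^b]$, one can write $\XX=\YY\circ\zAF$ with $\zAF$ the trivial AF on the unipotent radical $U_Q$ of a standard parabolic $Q$ whose Levi $M_Q$ does not contain a conjugate of $GL_a^b$ (concretely, the block sizes of $M_Q$ are the parts of $\Om(J_\XX)$, and $\Om(J_\XX)<[a^b]$ forces at least one block size not divisible by $a$); then $\zAF(E(f_s))=0$ by constant-term theory for cuspidal Eisenstein series, so $\zAF(E)=0$ for the multiresidue $E$, hence $\XX(E)=0$. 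This uses only the residue construction, precisely the kind of input you said you would want to restrict to. Two smaller remarks: (i) the normalization via Lemma \ref{conjugatehat} is vacuous here, since any additive function on $U_n$ already kills $[U_n,U_n]$ and so is supported on simple root groups, which are pairwise unrelated in the root order --- one has $\AAA(U_n)=\AAA(U_n)_\nless$ automatically; (ii) the ``unrelated'' case you do not address explicitly is covered by the Case 2 mechanism, because any partition of $n=ab$ all of whose parts are $\leq a$ is automatically $\leq[a^b]$, so unrelatedness forces the largest part to exceed $a$ and the $\JJ_y$-vanishing applies.
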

\begin{proof} We consider separately the two next cases:

\vsp
\noindent\textit{Case $1$. } Assume $\Om(J_\XX)<\Om^\prime(\pi)$. Then $\XX(\pi)=0.$

\vsp 
\noindent\textit{Case $2$. } Assume $\Om(J_\XX)>\Om^\prime(\pi)$. Then $\XX(\pi)=0.$

\vsp
\noindent\textit{Proof of Case $1$.} Let $E$ be an automorphic form in $\pi.$ Let $\tau $ be the $GL_a(\A)$-cuspidal automorphic representation defining $\pi.$  We choose a function $f\in\Ind_{\PPPP(\A)}^{GL_n(\A)}(\tau\otimes...\otimes\tau\otimes \trivr{U_{\PPPP}}),$ such that the Eisenstein series $E(f_s),$ has a multiresidue at $s=\Lambda,$ equal to $E.$

 There is an AF $\YY\in\AAA[,n]$, such that  $\XX=\YY\circ\zAF$, for  $\zAF$ being  a trivial AF with domain the unipotent radical of  a $GL_n$-parabolic such that $\zAF(E(f_s))=0$, which in turn implies $\XX(E)=0 $ \hfill$\square$Case 1
   
   \vsp 
   \noindent\textit{Proof of Case $2.$ } Let $y$ be the biggest number in the partition  $\Om(J_\XX).$ We therefore have 
$y> a.$ Let $T$ be an element in $\TT{n}{\XX}$ satisfying $|\Set{T}|=y $.  Let $\XX_1$ be the restriction of $\XX$ at the unipotent radical of the $GL_n$-parabolic with Levi: \begin{equation}\label{zlevidec}GL_{\min(\Set{T})}\times^{\searrow}GL_1^{y-2}\times^{\searrow}GL_{\max(\Set{T})}. \end{equation} For an AF $\YY\in\AAA[,n]$  we have $\XX=\YY\circ\XX_1,$ and hence it is enough to prove that $\XX_1(\pi)=0.$

Starting with $\XX_1$, for $r=1,...,y-1$ we successively apply   $$\exchange{\prod_{1\leq j\leq \min(\Set{T})-1}U_{(\min(\Set{T})-1+r,j)}}{\prod_{1\leq i\leq \min(\Set{T})-1}U_{(i,\min(\Set{T})+r)}};$$ and then we conjugate with the minimal length element $w$ of $W_n$, for which  $\Set{wTw^{-1}}=\{1,...,y\}$. Let $\iota(\XX_1)$ be the output AF of this $\AAnk$-path. There is a $\YY^\prime$ such that $\iota(\XX_1)=\YY^\prime\circ\JJ_{y}$. Fact \ref{nonzeroeulerian} implies $\XX_1(\pi)=0\iff\iota(\XX_1)(\pi) =0$, and hence it is enough to prove that $\JJ_y(\pi)=0.$  This is proved in the second paragraph in the proof of Proposition 5.3 in \cite{Ginzbconj}. Alternatively to this use of \cite{Ginzbconj} ---if the reader prefers a global argument--- one can use Lemma 3.2 in \cite{JiangLiu}.\hspace{91mm}\hfill$\square$Case 2 
\end{proof}
\begin{sremark}Corollary \ref{partlysplit} slightly simplifies the proof above by implying that: given any $\F\in\AAA[,n]$ and any $\kkk$-subgroup $V$ of $D_\F$, we can find a $\Z\in\AAA[,n]$ such that $\F=\Z\circ\F|_V.$
\end{sremark} 
 \begin{stheorem}\label{Eukerianf}Let $\F\in\AAA(U_n),$ and $\pi\in\Aut{\kkk,n,>}$. 
 \begin{description}
 \item[Part A] Assume $\Om(J_\F)=\Om^\prime(\pi)$. Then $\F(\pi)$ is  nonzero factorizable.
 \item[Part B] Assume the orbit $\Om(J_\F)$ is bigger or unrelated to $\Om^\prime(\pi)$. Then $\F(\pi)=0$.
 \end{description}
 \end{stheorem}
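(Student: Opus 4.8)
\textbf{Proof plan for Theorem \ref{Eukerianf}.}

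The plan is to reduce everything to the structure theory of $\pi$ as an Eisenstein series over Speh data, together with Lemma \ref{GJL} (which handles a single Speh representation) and the unfolding machinery of Fact \ref{autsteps}. Write $\pi$ as a subquotient of $\Ind_{Q(\A)}^{GL_n(\A)}(\sigma_1\otimes\cdots\otimes\sigma_m\otimes\trivr{U_Q})$, where $Q$ has Levi $\prod_{1\leq i\leq m}GL_{n_i}$ and each $\sigma_i$ is a Speh representation of $GL_{n_i}(\A)$ built from a cuspidal representation of $GL_{a_i}$, with $b_i=n_i/a_i$; recall that $\Om^\prime(\pi)$ was defined (Definition \ref{Ppi}) as the Richardson orbit of the parabolic with Levi $\prod_i GL_{b_i}^{a_i}$. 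Since $\pi\in\Aut{\kkk,n,>}$, its elements $\varphi$ admit an absolutely convergent Eisenstein series expansion over these discrete data, so for any $\F\in\AAA(U_n)$ the value $\F(\varphi)$ can be computed by unfolding this Eisenstein series along the cosets $Q(\kkk)\backslash GL_n(\kkk)/U_n(\kkk)$, exactly as in the classical Whittaker case; this is where the $U_n$ (as opposed to a general parabolic radical) is convenient.

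For \textbf{Part B}, the argument is: if $\Om(J_\F)$ is bigger than or unrelated to $\Om^\prime(\pi)$, then along each Bruhat cell in the coset decomposition the restriction of the (conjugated) inducing section already has a vanishing Fourier coefficient. Concretely, I would decompose $\F$ up to the $\AAnk$-operations of Fact \ref{autsteps} (using Corollary \ref{partlysplit} to peel off trivial subgroups, and $\eu$-steps exactly as in the proof of Lemma \ref{GJL}, Case 2) to reduce to showing that a Whittaker-type coefficient $\JJ_y$ for suitable $y$ kills each $\sigma_i$: since $y=|\Set{T}|$ for the relevant torus $T$ exceeds the corresponding block size in $\Om^\prime(\pi)$, each inner integral over a factor $\sigma_i$ (a Speh representation) vanishes by Lemma \ref{GJL} (Case 2 there, or equivalently the cited $\JJ_y$-vanishing of \cite{Ginzbconj}/\cite{JiangLiu}). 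Because the unfolding expresses $\F(\varphi)$ as an iterated integral over the Levi factors of $Q$, the vanishing on each factor propagates to $\F(\varphi)\equiv 0$; the ``bigger or unrelated'' hypothesis guarantees that for \emph{every} cell at least one obstructing block appears. The meromorphic continuation and the fact that $\pi\in\Aut{\kkk,n,>}$ let us pass from the region of convergence to all of $\pi$.

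For \textbf{Part A}, assume $\Om(J_\F)=\Om^\prime(\pi)$. Here the unfolding of the Eisenstein series along $U_n(\kkk)\backslash U_n(\A)$ collapses to a single nonvanishing open cell: on that cell the integral factors as a product, over the blocks $GL_{b_i}^{a_i}$ of the Richardson parabolic, of Whittaker-type coefficients of the Speh representations $\sigma_i$, each of which is nonzero and Eulerian precisely because $\Om(J_{(\cdot)})$ matches $\Om^\prime(\sigma_i)$ — again invoking Lemma \ref{GJL} (the nonvanishing direction) and the uniqueness of the open orbit from Fact \ref{rich3}(iv) and Corollary \ref{constab}, which ensures that no other cell contributes and that the stabilizer behaves as in the classical $GL_n\times GL_{n-1}$ situation. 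The factorizability then follows because each local factor of the open-cell integral is an integral of a factorizable section against a Whittaker character, hence Eulerian, and there is literally one surviving term so no sum of factorizable functions intervenes (this is the ``(i)'' of Fact \ref{nonzeroeulerian} made concrete).

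The main obstacle I expect is \textbf{bookkeeping of the Bruhat cells} in the double coset $Q(\kkk)\backslash GL_n(\kkk)/U_n(\kkk)$ and verifying that exactly one cell supports a nonzero contribution in Part A while every cell is killed in Part B; this is the analogue of Lemma \ref{cosets} but now for $U_n$ rather than the mirabolic-type groups there, and it requires matching the combinatorics of the Weyl-group representatives with the partition $\Om^\prime(\pi)=[b_1^{a_1},\dots]^{t}$ read off from the inducing data. Once that matching is set up — which is where the hypothesis on $\Om(J_\F)$ versus $\Om^\prime(\pi)$ enters decisively — the rest is the standard unfolding together with Lemma \ref{GJL} applied factor by factor, and the passage to all of $\Aut{\kkk,n,>}$ by analytic continuation as in Remark \ref{anco}.
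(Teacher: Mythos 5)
Your overall strategy — unfold $\F(E)$ via the Eisenstein series expansion of $\pi$ over its Speh inducing data, and reduce to Lemma~\ref{GJL} applied factor by factor — is the same as the paper's. For Part B and for "at most one term survives" (the paper's Step~1 inside Part~A), your description is essentially right, although you anticipate more combinatorial pain than necessary: the paper avoids a full Bruhat-cell classification by observing that whenever $\F(E,w)\neq 0$ one can produce a $\gamma\in GL_n(\kkk)$ with $\gamma J_\F\gamma^{-1}\in\Lie{U_{P_\pi}}$ (projection argument plus Lemma~\ref{GJL} on each Levi block), and then Corollary~\ref{constab} forces $\gamma^1(\gamma^2)^{-1}\in P_\pi$, hence $w^1(w^2)^{-1}\in P$. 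You do not need anything like Lemma~\ref{cosets} here.

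The genuine gap is in the \emph{nonvanishing and factorizability} half of Part~A. You invoke "Lemma~\ref{GJL} (the nonvanishing direction)", but Lemma~\ref{GJL} as stated is a one-way implication: if $\XX(\pi)\neq 0$ then the orbits match. It does not tell you that any degenerate Whittaker coefficient of a Speh representation is nonzero. More importantly, the surviving open cell does not immediately give a product of Whittaker functionals on cusp forms: after the first unfolding one is left with an inner integral of a section of the inducing data against a degenerate character over $\prod_{i,j}U_{a_i}^j$, but there is also an outer \emph{noncompact} integral over $N^-(\A)$, the unipotent radical transverse to $\Lie{U_{P_\pi}}$. The factorizability and nonvanishing of that outer integral are where the real work is: one has to realize the Speh constant terms as a specific intertwining into a full induced representation (the Claim about Speh constant terms in the paper), factor out cuspidal Whittaker functionals via uniqueness, and then run a local argument at each place (choosing compactly supported sections via a triple decomposition $P\cdot N^-$, controlling the $L$-function ratios from the Gindikin--Karpelevich formula and ramified local factors) to show convergence and nonvanishing of the resulting Eulerian product. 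None of this is captured by "each local factor is an integral of a factorizable section against a Whittaker character"; that phrase hides exactly the convergence/nonvanishing analysis the paper spends Steps~2 and~3 on. You should either supply an explicit analogue of the paper's two steps, or cite a source that already establishes the nonvanishing of degenerate Whittaker integrals of Speh Eisenstein series along $N^-$.
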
 
 \begin{tproof}We start with some notation and considering main formula (\ref{richardson}) below, we continue with  Part B, and finish with Part A (in the first step of which, the proof of Part B will be needed again). 
 
 Recall the Levi and the unipotent radical of any $GL_n$-parabolic subgroup $R$ are denoted by $M_R$ and $U_R$ respectively.
 
  Let $E$ be an element of  $\pi$, considered as an absolutely convergent Eisenstein series induced from discrete data over a $GL_n$-parabolic $P$, and let $f$ be the section of $E$ over this data.
 
 Let $m,n_1,...,n_m,a_1,...,a_m,b_1,...,b_m$ and $P_\pi$ be as in Definition \ref{Ppi}. We further assume for the order of blocks that  $M_P=\prod_{1\leq i\leq  m}^{\searrow}GL_{n_i}$ and $P_\pi=\prod_{1\leq i\leq m}^{\searrow}GL_{b_i}^{a_i}$. We call $\sigma_i$ the Speh representation on $GL_{n_i}(\A)$ appearing in the inducing data of $E$ over $P$.

      From the Eisenstein series expansion of $E$ over $P$ we obtain: 
 \begin{equation}\F(E)(g)= \sum_{w\in P(\kkk)\s GL_n(\kkk)/U_n(\kkk)}\int_{n\in (w^{-1}Pw\cap U_n)(\kkk)\s U_n(\A)}f(wng)\psi^{-1}_{\F}(n)dn. \end{equation}Due to the Bruhat decomposition, the  sum is replaced by the same sum  over the $w\in  W_{M_P}\s W_n.$ In this and every other identity that the variable $g$ appears, except for Part B, it is assumed that it is valid for all $g\in GL_n(\A).$ Representatives of cosets will frequently be identified with their cosets. The summand corresponding to a choice of $w$  is denoted by $\F(E,w)$. Therefore the previous expansion takes the following form, which will be referred in the rest of the proof as main formula (\ref{richardson}):
 \begin{equation}\label{richardson}\F(E)=\sum_{w\in  W_{M_P}\s W_n}\F(E,w). \end{equation} 
The  form of $\F(E,w), $ we will be using (usually without mention) is the one obtained by changing variables $n\rightarrow w^{-1}nw$, that is:
  \begin{equation}\label{conjoug}\F(E,w)(g)=\int_{(P\cap wU_n w^{-1})  (\kkk)\s {wU_nw^{-1} (\A)}}f(nwg)\psi_{w\F}(n)^{-1}dn. \end{equation}

 \vsp 
 \noindent\textit{\textbf{Proof of Part B.}} Assume that $\F(E)$ is nontrivial, and hence by main formula (\ref{richardson}) we fix a choice of $w$ and of $g$ (until the end of Part B) such that $\F(E,w)(g)\neq 0$. Notice that $J_{w\F}=wJ_\F w^{-1} $ (which we also use without mention).

 By knowing the left hand side below, we have
  \begin{multline}\label{mult}\F(E,w)(g)\neq 0\implies\text{ the integral $\int_{\kkk\s \A}\psi_{\kkk}^{-1}(x)dx$ is not}\\\text{factored out from  $\F(E,w)(g)$}\overset{(\ref{conjoug})}{\implies} J_{w\F}\in \Lie{P}\overset{  }{\iff} wJ_{\F}w^{-1}\in \Lie{P}.\end{multline}

 For every vector subspace $V$ of $\Lie{GL_n}$ we define $\Proj{V}$ to be the projection of $\Lie{GL_n}$ onto $V$ with respect to the inner product that maps each $X,Y\in\Lie{GL_n}$ to $\mathrm{tr}(\bar{X}^tY).$ We have
  
 \begin{multline}\label{zprsum}wJ_\F w^{-1}\overset{(\ref{mult})}{=}\Proj{\Lie{U_{P}}}(wJ_\F w^{-1})+\sum_{1\leq i\leq m}\Proj{\Lie{GL_{n_i}}}(wJ_\F w^{-1})\\=\Proj{\Lie{U_{P}}}(wJ_\F w^{-1})+\sum_{1\leq i\leq m}J_{(w\F)|_{GL_{n_i}}}. \end{multline}Since $\F(E,w)$ is nontrivial, by interpreting it as the integral in (\ref{conjoug}), we infer that its inner integral over $$(M_P\cap wU_nw^{-1})(\kkk)\s (M_P\cap wU_nw^{-1})(\A)$$ is also nontrivial, which means that $(w\F)|_{GL_{n_i}}(\sigma_i)$ is nontrivial for $1\leq i\leq m.$ Hence---by also noticing that $GL_{n_i}\cap wU_nw^{-1}$ is a unipotent radical of a possibly nonstandard Borel of $GL_{n_i}$--- we apply  Lemma \ref{GJL} for $[\pi\tlarrow\sigma_i]$ and obtain  $$\Om(\sigma_i)=\Om\left(J_{(w\F)|_{GL_{n_i}}}\right)\qquad\qquad\text{ for }1\leq i\leq m. $$
       Hence, for $1\leq i\leq m$, we find an element $\gamma_i\in GL_{n_i},$ such that $\gamma_i J_{(w\F)|_{GL_{n_i}}}\gamma_i^{-1}\in \Lie{U_{P_{\sigma_i}}}$. By defining $\gamma:=(\prod_{1\leq i\leq m}\gamma_i)w$, and using formula (\ref{zprsum}), we obtain 
     \begin{equation}\label{zbigconjj} \gamma J_\F {\gamma}^{-1}\in \Lie{U_{\PPPP}},\end{equation} and hence by (iii) in Fact \ref{rich3} we obtain $\Om(J_\F)\leq \Om^\prime(\pi)$. \hfill$\square$Part B
     
     \vsp 
     \noindent\textit{\textbf{Proof of Part A. Step 1.} We prove that at most one term in main formula (\ref{richardson}) is possibly  nontrivial.} Assume that for $w^1,w^2\in W_{n}$, the functions  $\F(E,w^1)$ and $\F(E,w^2)$ are both nontrivial. Let $\gamma^{1},$ $\gamma^{2}$ be such that the proof of Part B is valid for $[(w,\gamma)\tlarrow (w^1,\gamma^1),(w^2,\gamma^2)].$ Then by using formula (\ref{zbigconjj}) for $[\gamma\tlarrow\gamma^1,\gamma^2]$ and Corollary \ref{constab}  we have $\gamma^1(\gamma^2)^{-1}\in\PPPP$; and since (by definition) $w^1(\gamma^1)^{-1}\in P$ and $w^2(\gamma^2)^{-1}\in P$, we obtain $w^1(w^2)^{-1}\in P$.\hfill$\square$Step 1  
     
   \noindent\textit{\textbf{Step 2.} Expressing $\F(E)$ as a factorizable integral.}  By using Fact \ref{ptelos} and its proof\footnote{Let $J$ be as in this fact for $[P\tlarrow\PPPP]$. Let $w_1$ be the element in $W_n$ determined by $w_1 J_\F w_1^{-1}=J.$ Since the blocks of same size are filled by the partition in the same step,  we obtain an element $w_2\in W_{M_P}$ such that for $w_A:=w_2w_1$ we obtain the three properties.} we obtain an element $w_A\in W_n$ such  that: \begin{enumerate}
       \item $w_AJ_\F w_A^{-1}\in \Lie{P}$;
       \item $P^t\cap w_AU_nw_A^{-1}=\prod{\jj U_{n_i}}N^-$, where $N^-$ is a unipotent subgroup of $U^t_{P}$, which is normalized by $\prod{\jj U_{n_i}}$.
       \item $\Pr_{\Lie{GL_{n_i}}}(J^t_{w_A\F})$ is the Jordan matrix with all its Jordan blocks being of dimension $a_i\times a_i$ (recall, we defined $\Pr$ exactly before the formula (\ref{zprsum})).
       \end{enumerate}
       This together with main formula (\ref{richardson}) and step 1, gives:
        \begin{equation}\label{zeubrad}\F(E)(g)=\int_{N^-(\A)}\int_{\prod U_{n_i}(\kkk)\s \prod U_{n_i}(\A)}f(xn^-w_Ag)(w_A\psi_\F)^{-1}(xn^-) dxdn^-. \end{equation}

       \vsp 
              \noindent\textbf{Claim. }(Known).\textit{  Consider integers $k=ab$, and let $\rho$ be a $GL_k(\A)$-Speh automorphic representation, defined by a $GL_a(\A)$-cuspidal automorphic representation $\tau$. Let $Q$ be the  $GL_k$-parabolic subgroup with Levi isomorphic to $GL_a^b$. Then applying the constant term on $U_Q$ to the elements in $\rho$, gives a nontrivial intertwining operator: 
              $$\rho\rightarrow \Ind_{Q(\A)}^{GL_k(\A)}\left(|\det|^{-\frac{b-1}{2}}\tau\otimes|\det|^{-\frac{b-3}{2}}\tau\otimes...\otimes |\det|^{\frac{b-1}{2}}\tau\otimes\trivr{U_{Q}}\right). $$ }
              
              \vspace{1mm}
              \noindent\textit{Proof of Claim.} Let $\ffff$ be a decomposable vector in $\Ind_{Q(\A)}^{GL_k(\A)}(\tau\otimes ... \tau\otimes\trivr{U_{Q}}).$ The constant term on $U_Q$ of $E(\ffff_s)$ is the  finite sum of intertwining operators applied to $\ffff_s$ given by:
              \begin{equation}\label{zsumi}\sum_{w}M(w)(\ffff_s), \end{equation}where $$M(w)(\ffff_s)(g):=\int_{(w^{-1}U_{Q}w\cap U_{Q})(\A)\s U_{Q}(\A)}\ffff_s(wng)dn,$$ and $w$ takes the values in the set $$\{w:wM_Qw^{-1}=M_Q,\text{ and }w\text{ is a minimal length representative of a coset in }Q\cap W_k\s W_k\}. $$Let $w_0$ be the element of this set with the maximal length. The proof is reduced to proving that:\begin{enumerate}
              \item[(a).] by applying the multiresidue at $s=\Lambda$ at (\ref{zsumi}), all terms  except possibly the term corresponding to $w_0$, vanish.
              \item[(b).] for appropriate choice of $\ffff$ the term (of the multiresidue) corresponding to $w_0$ doesn't vanish.
              \end{enumerate}
              
              Each term $M(w)(\ffff_s)$ is expressed as a tensor product of local intertwining operators evaluated at the local components of $\ffff_s.$ By using the Gindikin-Karpelevich formula, the part of this product over the  places in which all the data is unramified,  is expressed  as a product of a spherical vector times a fraction of certain (incomplete) global $L$-functions such that:\begin{enumerate}
                  \item the  $L$-function in the  numerator\footnote{The numerator and denominator refer to the fraction of $L$-functions and not the fraction of polynomials equal to it} admits the multiresidue at $s=\Lambda$, if and only if $w=w_0$;
                  \item for $s=\Lambda,$ the  $L$-function in the denominator, is evaluated  inside the region of absolute convergence.
                  \end{enumerate} 
                  For a full explanation of the previous two sentences see \cite{Shahidi} Theorem 6.3.1.  
                  
                  Therefore, to finish the proof of statements $(a)$ and $(b),$ we need to check that for the finitely many places in which part of the data is ramified, the corresponding component of the tensor product:  1)does not have a pole for $s=\Lambda,$  
                   and 2) does not vanish for all $\ffff$ with an appropriate local component at $\upsilon.$ These follow from \cite{MW2} I.10.\hfill$\square$Claim

       \vspace{1mm}
       \noindent{Let $\sigma_i^\prime$ } be the irreducible subrepresentation of $$\Ind_{Q_{i}(\A)}^{GL_{n_i}(\A)}\left(|\det|^{-\frac{b-1}{2}}\tau_i\otimes|\det|^{-\frac{b-3}{2}}\tau_i\otimes|\det|^{\frac{b-1}{2}}\tau_i\otimes\trivr{U_{Q_{i}}}\right)$$where $Q_i$ is the $GL_{n_i}$-parabolic subgroup in the place of $Q$ when the claim above is stated for $[k\tlarrow n_i]$. This means that $\sigma_i^\prime$ is isomorphic to $\sigma_i.$ Let  
       $$\sigma^\prime:=\sigma_1^\prime\otimes...\otimes\sigma_m^\prime,\qquad\text{ and }\qquad\pi^\prime:=\Ind_{P(\A)}^{GL_{n}(\A)}(\sigma^\prime\otimes\trivr{U_P}). $$ Therefore $\sigma^\prime$ and $\pi^\prime$ are isomorphic respectively to $\sigma $ and $\pi.$ From this and from (\ref{zeubrad}) we obtain that: for any choice of $f^\prime\in\pi^\prime$, the Eisenstein series $E$ can be chosen so that it further satisfies 
       \begin{equation}\label{zeuleriannn}\F(E)(g)=\int_{N^-(\A)}\int_{\prod_{i,j} U_{a_i}^j(\kkk)\s \prod_{i,j} U_{a_i}^j(\A)}f^\prime(xn^-w_Ag)(w_A\psi_\F)^{-1}(xn^-)dxdn^-, \end{equation} where $U_{a_i}^j$ is the group of the unipotent upper triangular matrices of the $j$-th copy (starting from the upper left) of $GL_{a_i}$ inside $GL_{n_i}$. Since the inner integration equals Whittaker functions of cusp forms, by using the uniqueness of Whittaker models, we conclude that for a decomposable vector $f^\prime$ the integral $\F(E)$ is factorizable.\hfill$\square \text{Step 2} $
       
       \vspace{1mm}
       \noindent\textit{\textbf{Step 3.} Proving that $\F(E)$ is nonzero. }   Let $f^\prime_A=w_Af^\prime$, and $\psi_{\F,A}:=w_A\psi_\F$. We require that $f^\prime$ is decomposable. Hence $f^\prime_A$ is also decomposable, and let
       $$f^\prime_A=\bigotimes_{\upsilon}f^\prime_{A,\upsilon} $$ be a decomposition it admits.  The Whittaker function we encountered is denoted by: $$\lambda(f_A^\prime)(g):=\int_{\prod_{i,j} U_{a_i}^j(\kkk)\s \prod_{i,j} U_{a_i}^j(\A)}f_A^\prime(xg)\psi^{-1}_A(x)dx. $$ It admits a factorization:
       $$\lambda(f_A^\prime)(g)=\prod_{\upsilon}\lambda_\upsilon(f^\prime_{A,\upsilon})(g_\upsilon), $$
       for $\lambda_\upsilon$ being a model on $\sigma^\prime_\upsilon$ satisfying $\lambda_\upsilon(f^\prime_{A,\upsilon}(ng))=\psi_A(n)\lambda_\upsilon(f^\prime_{A,\upsilon}(g)) $ for all $n\in\prod_{i,j} U_{a_i}^j(\kkk_\upsilon)$, and for almost all places $\upsilon$ normalized by $\lambda_\upsilon(f^\prime_{A,\upsilon})(1)=1$.

       Formula (\ref{zeuleriannn}) for $g=1$ is now rewritten as:
       \begin{equation}\label{zproda}\F(E)(1)=\int_{N^-(\A)}\lambda(f^\prime_A)(n^-)\psi^{-1}_{\F,A}(n^-)dn^-=\prod_{\upsilon}\F(E)(1)_\upsilon, \end{equation} where $$\F(E)(1)_\upsilon:=\int_{N^-(\kkk_\upsilon)}\lambda_\upsilon(f^\prime_{A,\upsilon})(n^-)\psi^{-1}_{\F,A,\upsilon}(n^-)dn^-. $$
       
      For any place $\upsilon$ let $\ooo_\upsilon$ be the ring of integers of $\kkk_\upsilon$. By replacing in (\ref{zproda}) the integrant with its absolute value, we see that the convergence and nonvanishing of $\int_{N^-(\A)}|\lambda(f^\prime_A)(n^-)|dn^-$  means that the product
      $$\prod_{\upsilon}\int_{N^-(\kkk_\upsilon)}|\lambda_\upsilon(f^\prime_{A,\upsilon})(n_\upsilon^-)|dn_\upsilon^-=\prod_{\upsilon}\left(1+\int_{N^-(\kkk_\upsilon)-N^-(\ooo_\upsilon)}|\lambda_\upsilon(f^\prime_{A,\upsilon})(n_\upsilon^-)|dn_\upsilon^-\right) $$
      is absolutely convergent, and hence, since $|\psi^{-1}_{\F,A,\upsilon}(n^-)|=1$ for $n^-\in\ooo_\upsilon$ and almost all places $\upsilon,$ there is a finite set of places $S$, such that the product  
       $$\prod_{\upsilon\not\in S}\left(1+\int_{N^-(\kkk_\upsilon)-N^-(\ooo_\upsilon)}\lambda_\upsilon(f^\prime_{A,\upsilon})(n_\upsilon^-)\psi^{-1}_{\F,A,\upsilon}(n^-)dn_\upsilon^-\right)=\prod_{\upsilon\not\in S}\int_{N^-(\kkk_\upsilon)}\lambda_\upsilon(f^\prime_{A,\upsilon})(n_\upsilon^-)\psi^{-1}_{\F,A,\upsilon}(n^-)dn_\upsilon^- $$is absolutely convergent as well (and hence nonzero). 
       Therefore to finish the proof we only need to prove the following claim. 
        
      \vspace{1mm}   
       \noindent\textbf{Claim} (Known).\textit{  Let  $\upsilon$ be any place. There is an element of $\Ind_{P(\kkk_\upsilon)}^{GL_n(\kkk_\upsilon)}(\sigma_\upsilon^\prime\otimes\trivr{U_P}),$ so that for all choices of decomposable $f^\prime$ for which $f^\prime_{A,\upsilon}$ equals to that element, the term $\F(E)(1)_\upsilon$ is nonzero. }

       \vspace{1mm}
       \noindent\textit{Proof of Claim}. The proof that follows is the same as the  proof in \cite{Shahidi} (pages 51 and 52). Consider functions $\phi_1^\prime\in C_c^\infty(M_{P}(\kkk_\upsilon)),$ $\phi_1^{\prime\prime}\in C_c^\infty(U_{P}(\kkk_\upsilon)),$ and $\phi_2\in C_c^\infty(U_P^t(\kkk_\upsilon)),$ let $u$ be a vector in  $\sigma_\upsilon^\prime$ and let $\phi:C_c^\infty(GL_n(\kkk_\upsilon))$ given by
       
        $$\phi(g)=\left\{\begin{array}{cc}\phi^{\prime}_1(m)\phi_1^{\prime\prime}(n)\phi_2(n^-)u&g=mnn^- \text{ where }m\in M_P(\kkk_\upsilon),n\in U_P(\kkk_\upsilon),n^-\in U_P^t(\kkk_\upsilon)\\0&\text{ otherwise } \end{array}\right..$$   
        We can choose $f_{A,\upsilon}$ so that 
       $$f_{A,\upsilon}(g)=\int_{P(\kkk_\upsilon)}\sigma_\upsilon^\prime(m^{-1})\delta_P^{\frac{1}{2}}(m^{-1})\phi(mng)dmdn,$$ where $\delta_P^{\frac{1}{2}} $ is the normalization factor of the  induction.
       We have 
       
       $$\F(E)(1)_\upsilon=\int_{N^-(\kkk_\upsilon)}\phi_2(n^-)\psi^{-1}_{\F,A,\upsilon}(n^-)dn^-\int_{U_{P}(\kkk_\upsilon)}\phi_1^{\prime\prime}(n)dn\lambda_\upsilon((\sigma^\prime\otimes \delta_P ^{\frac{1}{2}})(\phi_1^\prime)u). $$     
       The three functions $\phi_2$, $\phi_1^{\prime\prime}$ and $\phi_1^\prime$, and the vector $u$, can easily be chosen to make each among the previous three terms  nonzero.\hfill$\square$Claim$\square$Step 3$\square$Part A

   \end{tproof}
   
    \begin{sdefi}[$\Om(\pi)$] For attaching Fourier coefficients to orbits in $\UUU{n} $  and defining $\Om(\pi),$ we follow the definitions exactly as stated in \cite{Ginzsmall} Section 3, except that there, unipotent (instead of nilpotent) orbits are used.\end{sdefi}    
   \begin{scor}\label{attachingfc}
   Let $\pi\in\Aut{\kkk,n,>}$. Then $\Om(\pi)=\{\Om^\prime(\pi)\}. $  
   \end{scor}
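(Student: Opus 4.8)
The plan is to derive the corollary from the two descriptions of $\Om^\prime(\pi)$ we have available: the parabolic-induction description in Definition \ref{Ppi}, and the fact --- already in the paper --- that the maximal nilpotent orbit attached to $\pi$ (in the sense of the Fourier-coefficient definition of \cite{Ginzsmall}, which is what $\Om(\pi)$ abbreviates) can be computed from the output AFs of a suitable $\AAnk$-tree. Concretely, $\Om(\pi)$ is by definition the set of maximal orbits $a$ such that some Fourier coefficient $\Z(\pi)$ with $J_\Z$ in $a$ (equivalently $\Z\in\AAA(U_n)[a]$, after reducing to root-generated domains) is nonzero; and $\Om^\prime(\pi)$ is the Richardson orbit of $P_\pi$, which by Fact \ref{ptelos} is the transpose partition $[b_1^{a_1},\dots,b_m^{a_m}]^t$.

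First I would fix $E\in\pi$ and recall Theorem \ref{Eukerianf} (equivalently Theorem \ref{folk} restricted to $\AAA(U_n)$): for $\F\in\AAA(U_n)$ we have $\F(\pi)\neq 0$ if and only if $\Om(J_\F)=\Om^\prime(\pi)$, and $\F(\pi)=0$ whenever $\Om(J_\F)$ is bigger than or unrelated to $\Om^\prime(\pi)$. This already shows two things. On the one hand, taking $\F\in\AAA(U_n)$ with $J_\F$ in the orbit $\Om^\prime(\pi)$ --- which exists since every nilpotent orbit meets $\Lie{U_n}$, indeed $[a_1^{b_1},\dots]^t\cap\Lie{U_n}$ is nonempty --- Part A of Theorem \ref{Eukerianf} gives $\F(\pi)\neq 0$, so $\Om^\prime(\pi)$ is attained by a nonzero Whittaker-type coefficient. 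On the other hand, Part B says no orbit strictly bigger than, or unrelated to, $\Om^\prime(\pi)$ supports a nonzero $\F(\pi)$ with $\F\in\AAA(U_n)$. So among the orbits $a$ with $\AAA(U_n)[a]\cap\{\F:\F(\pi)\neq 0\}\neq\emptyset$, the orbit $\Om^\prime(\pi)$ is the unique maximal one. The second step is to translate this into a statement about $\Om(\pi)$ as defined in \cite{Ginzsmall}: the definition there builds the attached orbits precisely out of (degenerate) Whittaker / Fourier coefficients associated to nilpotent orbits, so the set of orbits $a$ for which the corresponding Fourier coefficient is nonzero on $\pi$ is, after the standard reduction to $\AAA(U_n)$ via conjugation and Lemma \ref{radexpress}/Corollary \ref{partlysplit}, exactly the set appearing above; and $\Om(\pi)$ is its set of maximal elements. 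By the previous paragraph this set of maximal elements is the singleton $\{\Om^\prime(\pi)\}$.

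I expect the main obstacle to be purely bookkeeping rather than conceptual: matching the bracket-datum definition of ``Fourier coefficient attached to a nilpotent orbit'' from \cite{Ginzsmall} Section 3 with the AFs $\F\in\AAA(U_n)$ used in Theorem \ref{Eukerianf}, i.e. checking that nonvanishing of the orbit-attached Fourier coefficient of $\pi$ at an orbit $a$ is equivalent to nonvanishing of $\F(\pi)$ for some (equivalently, the generic) $\F\in\AAA(U_n)$ with $\Om(J_\F)=a$. This equivalence is standard (it is the content of the usual ``Whittaker pair'' comparison, and it is implicitly used throughout the literature cited in Remark \ref{local}), so I would phrase it as a short lemma invoking Corollary \ref{partlysplit}, the conjugation relation $\Om(\gamma\F)=\Om(\F)$, and Part 3 of Fact \ref{autsteps} to move between the standard and nonstandard unipotent pieces, and then cite \cite{Ginzsmall} for the precise definition. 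With that in hand the corollary is immediate: $\Om(\pi)$ is the set of maximal orbits supporting a nonzero attached Fourier coefficient; Theorem \ref{Eukerianf} Part A exhibits $\Om^\prime(\pi)$ as one such orbit and Part B shows no orbit can beat it or be incomparable to it; hence $\Om(\pi)=\{\Om^\prime(\pi)\}$. Finally I would remark, as the paper's footnote to Definition \ref{Ppi} anticipates, that this also completes the proof that the definition of $\Om^\prime(\pi)$ is independent of the choice of $\PPPP$, since $\Om(\pi)$ manifestly is.
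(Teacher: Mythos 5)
Your overall plan (use Theorem \ref{Eukerianf} to show $\Om^\prime(\pi)$ is the unique maximal orbit supporting a nonvanishing coefficient, then feed this into the definition of $\Om(\pi)$ from \cite{Ginzsmall}) is pointing in the right direction, but you have mis\-assessed where the work lies. The step you describe as ``purely bookkeeping'' --- that nonvanishing of the $\psi_{U_2(\Om)}$-coefficient on $\pi$ is equivalent to nonvanishing of some $\F\in\AAA(U_n)$ with $\Om(J_\F)=\Om$ --- is not a bookkeeping lemma, and in the biconditional form you state it is not even true: for $\pi$ cuspidal and $\Om$ the zero orbit, the $\psi_{U_2(\Om)}$-coefficient is the identity operator (nonzero), while the unique $\F\in\AAA(U_n)$ of orbit zero is the constant term along $U_n$, which vanishes on cusp forms. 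The domain $U_2(\Om)$ is in general a proper normal subgroup of $U_n$ that need not be a unipotent radical of a $GL_n$-parabolic, so Lemma \ref{radexpress} does not apply to it; and a reduction from this coefficient to something supported on all of $U_n$ via a tree cannot be expected to preserve both the orbit and the nonvanishing simultaneously (after expanding along a tree, the output that is nonzero on $\pi$ may live in a strictly bigger orbit than $\Om$). In short, the ``Whittaker pair'' equivalence you invoke is precisely the nontrivial content here, and the tools you cite (Corollary \ref{partlysplit}, conjugation, Fact \ref{autsteps} Part 3) do not yield it.

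What the paper actually does sidesteps the comparison with $\AAA(U_n)$ entirely. It first proves a modest \emph{Claim}: the $\psi_{U_2(\Om)}$-coefficient from \cite{Ginzsmall} is, after conjugation by some $\gamma\in GL_n(\kkk)$, an AF lying in $\AAA[][B_n]_{\nless}$ (domain a root-generated normal subgroup of $U_n$, not necessarily $U_n$ itself) with $\Om(J_{\gamma\F})=\Om$. Then Lemma \ref{nless} gives $\Om(\gamma\F)=\{\Om\}$, i.e.\ $\Om(\F)=\{\Om\}$, and Corollary \ref{easyrefcor} Part~2 (which applies to \emph{any} AF, not just those with domain $U_n$) directly decides nonvanishing: nonzero exactly when $\Om=\Om^\prime(\pi)$, zero when $\Om$ is bigger or unrelated. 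The point is that once one is willing to work with the tree-theoretic invariant $\Om(\F)$ for general domains (rather than insisting on Theorem \ref{Eukerianf}, which requires $D_\F=U_n$), the only bridge needed is the elementary conjugation Claim, which is proved using Lemma \ref{conjugatehat} and Observation \ref{diagonal}. You should replace your proposed ``matching lemma'' with this Claim, switch from Theorem \ref{Eukerianf} to Corollary \ref{easyrefcor}, and the rest of your outline (unique maximal orbit is $\Om^\prime(\pi)$, hence $\Om(\pi)=\{\Om^\prime(\pi)\}$, hence in particular $\Om^\prime(\pi)$ is independent of the choice of $\PPPP$) then goes through.
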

   \begin{tproof}The proof directly follows from the Claim below, Lemma \ref{nless}, and  Corollary \ref{easyrefcor} (used in this order). The claim below is certainly known, but I do not know a reference and hence I prove it. 
   
   \vsp 
   \noindent\textbf{Claim }(Known). \textit{For an $\Om\in \UUU{n}$ let $\F\in\AAA[,n]$ be such that $\psi_{\F}=\psi_{U_2(\Om)}$ (as part of the definition above,  $\psi_{U_2(\Om)}$ is chosen in any way as in \cite{Ginzsmall} Section 3).  Then for a matrix $\gamma\in GL_n(\kkk)$ we have: $\gamma\F\in\AAA[][B_n]_{\nless} $ and $\Om(J_{\gamma\F})=\Om$.} 
   
   \vsp 
   \noindent\textit{Proof of Claim}. We fully adopt the notation in \cite{Ginzsmall} in Section 3, except that $\Om\in\UUU{n}$ (instead of being a unipotent orbit). 
      
      First we explain that there is a $\gamma^\prime\in M(\Om)$ such that $\gamma^\prime u_2(\Om){\gamma^\prime}^{-1}$ belongs to a $\ossa$-group.  Let $\F^\prime $ be the AF satisfying $D_{F^\prime}=U^t_2(\Om)$ and $J_{\F^\prime}=u_2(\Om).$ Let $A$ be the subset of $\{1,...,n\}$ such that: $x\in A$ if and only if the $x$-th row of $GL_n$ intersects $h_a(t)$ at an even power of $t.$ Also let $B=\{1,...,n\}-A.$ Then $\gamma^\prime$ is obtained by applying the transpose version of  Lemma \ref{conjugatehat} for $[\F\tlarrow \jm{\F^\prime}^A,\jm{\F^\prime}^B].$ 
      
      By replacing (only in this sentence) $u_2(\Om)$ with $\gamma^\prime u_2(\Om){\gamma^\prime}^{-1}$, one easily describes explicitly the set  $\mathrm{Stab}_\Om\cap T_n,$ and notices that it is connected and acts nontrivially on every entry of $U_3(\Om)$. Therefore $\gamma^\prime\F(U_3(\Om))=\{0\}$ and hence $\F(U_3(\Om))=\{0\}.$  By applying  Lemma \ref{conjugatehat} for $[\F\tlarrow \jm{\F}^A,\jm{\F}^B]$ (although we could have avoided this second use of it), we obtain a matrix $\gamma\in M(\Om)(\kkk)$ such that $\gamma\F^{\prime\prime}\gamma^{-1}\in \AAA[][B_n]_{\nless} .$ Since $\gamma D^t_\F\gamma^{-1}$=$D^t_\F$ we have $J_{\gamma\F\gamma^{-1}}=\gamma J_{\F}\gamma^{-1},$ and hence we also have $\Om(J_{\gamma\F})=\Om$.\hfill$\square$Claim
      
      \end{tproof}
   \section{Index of notations}
   AF, $\AAnk[n,]$ $\AAnk$, $D_\F$, $\AAnk(N) $, $\F_{\emptyset,n}$, ``$\kkk$" in the subscripts, $\psi_\kkk,$ $\psi_\F$, $\F(\phi)$, $\Aut{\kkk,n}$, $\Aut{\kkk,n,>}$, factorizable\hin  \ref{FCAut} and \ref{clar}\\$\gamma\F$\hin\ref{treesroughly}\\$\Prink[n]$ \hin\ref{treesroughly} (second last paragraph)\\$\KKK$\hin\ref{vara}  $P$, $M_P$, $U_P$, $U_n$, $P_n$, $\mirsl{n},$ $U_{n,(i,j)}$ or $U_{(i,j)}$, $T_n$, $W_n$, $W_H$\hin\ref{parroot}\\$\mathrm{tr}$, $(...)^t$, $\Lie{...}$\hin\ref{trtr}\\$\kkk$\hin\ref{clar}\\$|...|$\hin\ref{sa1}\\$[H_1,H_2]$\hin\ref{sa2}\\ $\JJ_r^n$ or $\JJ_r$\hin\ref{zjr}\\ $\F|_H$\hin\ref{nrest}\\ $\Stab{H}{\F}$, centralizer\hin\ref{scent}\\$\X{\F}$\hin\ref{vardef} \\$\AnkT{n}$, $\hat{(...)}$,  $(...)_\nless$\hin\ref{hatnless}\\ $J_\F$\hin \ref{uniqJ}\\ $j(\F)$\hin\ref{embjf}\\ Standard embedding, standard copy\hin \ref{sesc}\\ $\DDDD[n]$, $\Set{T}$, $\SRV{T}$, $\DDDD[n]$-components of $\TT{n}{\F}$, $\Set{\TT{n}{\F}}$\hin \ref{zdnf} and \ref{dnsetdn}\\ $\jj H$, $\ja\F$, $\jm\F$, $GL_n^T$, $GL_n^S$, $\F^T$, $\F^S$\hin\ref{jjj}\\ $\times,$ $\prod$,  Blocks of a Levi, $\times^\searrow,$ $\prod^\searrow$\hin \ref{tpts} and \ref{searrow}\\ $\UUU{n}$, $[a_1,...,a_m]$, $>$ (for orbits), $\Om(X)$\hin \ref{inil}\\ $[...\tlarrow...]$\hin \ref{tlarrown}\\$\circ$\hin\ref{circd}\\ $\rcirc$\hin\ref{rcircd}\\$\AAnk$-operation, $\e$-operation, $\e(V)$, $\eu$-operation, $\exchange{X}{Y}$, conjugation\hin\ref{Aoper}, \ref{eoper}, \ref{coper}, and \ref{euoperation}\\$\AAnk$-labeled out-tree\hin\ref{Alab1} and \ref{Alab2}\\Subtree\hin\ref{subtree}\\Input vertex, output vertex, input AF (resp AF, output AF) (all refering to an $\AAnk$-labeled out-tree)\hin\ref{outAl}\\Depth \hin\ref{outinput}\\$\AAnk$-steps, $\e$-steps, $\eu$-steps, $\co$-steps \ref{Astepdef}, \ref{estepdef}, \ref{eusteps}, and  \ref{costepsdef} \\action on an $\e$-step \hin\ref{actionestep}\\$\AAnk$-trees\hin\ref{trees}\\$(\F\rightarrow S,\ast,H,\kkk)\text{-tree}$\hin\ref{moretrees}\\$\AAnk$-paths, $\AAnk$-quasipaths, $\quasi{\Xi}$ \hin\ref{quasidef}\\$j(\Xi)$, $\Xi\circ...$\hin\ref{ctree}\\$\vee$\hin\ref{veenot1} and \ref{veenot2}\\$\mathrm{I}^{-1}$\hin\ref{inversep}\\Terms of an $\e$-step, constant and nonconstant terms\hin\ref{termse}\\Along constant terms\hin\ref{alongconstant}\\Over the groups (refering to a tree)\hin\ref{ovtg}\\Conjugate of an $\AAnk$-tree\hin\ref{conjt}\\$\sXi(\F)$ \hin proof of Theorem \ref{general}\\ $\sI(\F) $\hin\ref{sIdef} \\$\sXi$, $\sI$\hin\ref{sXom}\\$\ossa$-group\hin\ref{ossagroup}\\$...[a]$ (e.g. $\Prink[n][a] $)\hin\ref{afise}\\$\sI[,k](\F)$\hin\ref{sIk}\\$\Om^\prime(\pi)$\hin\ref{Ppi}\\$ \itr{\sett{X}}{N}$\hin\ref{zarlem}\\$\BBnk[n]$, $\BBnk[n]$-paths\hin\ref{defBBB}\\$\BR{n}$\hin\ref{brow}\\$\Pridbnk[n]$, $\Priubnk[n]$  \hin\ref{sbsquares}\\$\Om(\F),\mult{a}{\F}$\hin\ref{OFX} and \ref{AOF}\\$...\cap^\prime\BBnk[n] $\hin\ref{capprime}\\$\Omm{\F}$\hin\ref{Ofin}\\$\mathrm{rg}(\F)$\hin\ref{rgop}\\$H$-minimal $\ossa$-group, $\rossa{V}$, \ref{VpV}\\$j_{\FF}$, $\tj$, $P(\FF)$, $p_C$, $p_i$, $\jj^{-1}\cdot p_i$\hin\ref{dem}\\$\mathcal{S}_{n,N}$\hin\ref{pts}\\$\rif$\hin\ref{lrif}\\$\Om_k$\hin\ref{okato}\\$\F_{n,k,l},$ $\FF_{n,k,l}$\hin\ref{zfnkl}\\$[...[...]...],$ $[...,\Om]$ \hin\ref{orbcomp}\\$\Skal{k}{(a_1,l_1),...,(a_m,l_m)}$, $\Skac{k}{(a_1,l_1),...,(a_m,l_m)} $\hin\ref{cadhoc}\\$\langle S\rangle$\hin\ref{generategroup}\\$\F_{a,k}$\hin\ref{ladefi}\\$\Pac{x}$\hin\ref{accociate}\\$I$, $\XXX$\hin\ref{fldef}  \\$\mathcal{Y}$\hin\ref{fldef2}\\$\mathrm{diag}_{\ast}(\ast)$\hin\ref{fldef3}\\Richardson orbit\hin\ref{rich3}\\$\mathrm{Ind}$, $f_s$, $E(f_s)$, Speh representations\hin\ref{preaut}\\$\Om(\pi)$\hin above \ref{attachingfc}

\end{document}